\documentclass{amsart}
\usepackage{amssymb, latexsym}
\usepackage{amsthm}
\usepackage{amstext}
\usepackage{amsxtra}
\usepackage{graphicx}
\usepackage{amsfonts}
\usepackage{amsmath}
\usepackage{amscd}
\usepackage{MnSymbol}
\usepackage{enumerate}
\usepackage{tikz}
\usepackage{ytableau}
\usepackage{mathdots}
\usepackage{OrbitCombinatoricsStyle}
\usepackage{hyperref}

\begin{document}
\title{Orbits and Equivariant Local Systems Combinatorics in Graded Lie Algebras}
\author{Robert B\'edard}
\address{D\'epartement de math\'ematiques \\ Universit\'e du Qu\'ebec \`a Montr\'eal \\   C.P. 8888, Succursale Centre-Ville, Montréal, Qu\'ebec H3C 3P8, Canada }
\email{bedard.robert@uqam.ca}

\begin{abstract}
In this paper, we will describe a combinatorial object to list the orbits in the ${\mathbb Z}$-graded Lie algebra, their Jordan bloc decomposition, their dimension, the partial order and the equivariant local system (up to isomorphism) for four infinite families: two are for the symplectic groups and two are for the special orthogonal groups. These orbits and equivariant local systems appear in the study of perverse sheaves arising from graded Lie algebras.
\end{abstract}

\setcounter{tocdepth}{1}

\maketitle

\tableofcontents

\setcounter{section}{-1}
\section{Introduction}

\subsection{}\label{SS:CharacteristicK}
Let ${\mathbf k}$ be an algebraically closed field of characteristic $p \geq 0$. In the case where $p > 0$, we will assume that $p$ is a large prime number so that our operations with Lie algebras are as in the characteristic 0 case.  In the following,  $G$ will denote a classical connected reductive algebraic group over ${\mathbf k}$, $\mathfrak g$ will denote the Lie algebra of $G$ and $\iota:{\mathbf k}^{\times} \longrightarrow G$ will be a homomorphism of algebraic groups. 

\subsection{} 
It is known that the centralizer 
\[
G^{\iota} = \{g \in G \mid g \iota(t) = \iota(t) g \text{ for all $t \in {\mathbf k}^{\times}$}\}
\]
of $\iota$ is a connected reductive subgroup of $G$ and it acts  on the eigenspace 
\[
{\mathfrak g}_m = \{X \in {\mathfrak g} \mid Ad(\iota(t)) X = t^m X \text{  for all $t \in {\mathbf k}^{\times}$}\},
\]
 where $m \in {\mathbb Z}$, by restriction of the adjoint action of $G$.  For $m \ne 0$, the elements in ${\mathfrak g}_m$ are nilpotent elements.

\subsection{} 
We will study the $G^{\iota}$-orbits on the eigenspace ${\frak g}_2$ in the case of symplectic and special orthogonal groups.
The orbits of $G^{\iota}$ on these eigenspaces have already been studied by Vinberg in \cite{V1979}, by Kawanaka in \cite{K1987}, as well as by Derksen and Weyman in \cite{DW2002}. Lusztig has also given such a parametrization in \cite{L2010} and with Yun in  \cite{LY2018}.  We will use the approach of Derksen and Weyman  in \cite{DW2002}  to parametrize these orbits. We will recall their approach in the next section.  In paragraph 3.2 in \cite{C2008}, Ciubotaru gave a different approach for enumerating these $G^{\iota}$-orbits. It is a general algorithm for which the computations are proportional to the rank of the Lie algebra ${\frak g}$.  

\subsection{}
The example 5.5 in Lusztig's paper \cite{ L2010} was our starting point for our study.  To be more precise,  we will  have a finite dimensional vector space $V$ over ${\mathbf k}$ with a non-degenerate  bilinear form $\langle\ , \ \rangle:V \times V \rightarrow {\mathbf k}$ which is either symmetric or skew-symmetric and an ${\mathcal I}$-grading $V = \oplus_{i \in {\mathcal I}} V_i$ such that $\langle u, v \rangle = 0$ whenever $u \in V_i$, $v \in V_j$ and $i + j \ne 0$.  Here ${\mathcal I}$ will be a subset of integers with  the same parity (either they are all even or all odd) such that if $i \in {\mathcal I}$, then $-i \in {\mathcal I}$. ${\mathfrak g}$ will be the Lie algebra of $V$ relative to this bilinear form. In other words, ${\mathfrak g}$  will be the set of endomorphisms $X:V \rightarrow V$ of $V$ such that $\langle X(u), v\rangle + \langle v, X(v)\rangle = 0$ whenever $u, v \in V$,   $G$ will be the connected reductive  subgroup of $GL(V)$ such that  $\langle gu, gv\rangle = \langle u, v \rangle$ for all $u, v \in V$ and $g \in G$ and  whose Lie algebra is ${\mathfrak g}$ and $\iota:{\mathbf k}^{\times} \rightarrow G$ is the unique algebraic homomorphism such that $\iota(t) u = t^i u$ for all $t \in {\mathbf k}^{\times}$ whenever $u \in V_i$ and $i \in {\mathcal I}$.  

As above we have that $G^{\iota}$ acts by restriction of the adjoint action of $G$ on the eigenspace ${\mathfrak g}_m$. We will give a combinatorial object: ${\mathcal I}$-tableau  such that we have a bijection between a subset of such tableaux and the $G^{\iota}$-orbits in ${\mathfrak g}_2$. For each $G^{\iota}$-orbit in ${\mathfrak g}_2$, we will give using the corresponding ${\mathcal I}$-tableau:
\begin{itemize}
\item a Jacobson-Morozov triple $(E, H, F)$;
\item an equation of the dimension of the orbit;
\item the partition corresponding of the Jordan blocs for any element in this orbit;
\item when two $G^{\iota}$-orbits are in the same $G$-orbit;
\item the parabolic subalgebra of $G$ and the Levi subalgebra  as defined in section 5 of \cite{L1995} corresponding to this $G^{\iota}$-orbit;
\item the number of irreducible $G^{\iota}$-equivariant local systems (up to isomorphism) on this orbit;
\item the set of symbols as in sections 11 to 13 of \cite{L1984}  corresponding to the irreducible $G^{\iota}$-equivariant local systems (up to isomorphism) on this orbit.
\end{itemize}

In \cite{BI2021},  M. Boos and G. Cerulli Irelli, were able to describe the partial order on the $G^{\iota}$-orbits in ${\frak g}_2$, that is when a $G^{\iota}$-orbit ${\mathcal O}$ is in the Zariski closure $\overline{\mathcal O'}$ of the $G^{\iota}$-orbit ${\mathcal O'}$. We will  state their result to be complete using the ${\mathcal I}$-tableaux. 

\subsection{}
In section 1, we will recall the results of Derksen and Weyman on orthogonal and symplectic representations of symmetric quivers.  In section 2, we will construct for each indecomposable such representation: a Jacobson-Morozov triple. In section 3 (respectively section 4), we will study the $G^{\iota}$-orbits in ${\mathfrak g}_2$ when $\vert {\mathcal I} \vert$ is even (respectively odd).  

In the case where $\vert {\mathcal I} \vert$ is odd and the bilinear form $\langle \  , \  \rangle$ is symmetric, we have to be careful. Derksen and Weyman restrict their study to orthogonal isomorphism, but we can also ask what happens if we  consider special orthogonal isomorphism. We look at this in section 5.  In section 6, we study the parabolic subalgebra  (respectively subgroup) of $G$ and the Levi subalgebra   (respectively subgroup) as defined in section 5 of \cite{L1995}. Finally in section 7, we express the number  of irreducible $G^{\iota}$-equivariant local systems (up to isomorphism) on each orbit. We also present examples where we use Lusztig's symbols as in sections 11 to 13 of \cite{L1984} to enumerate these $G^{\iota}$-equivariant irreducible local systems.

\section{Symmetric quivers}

In this section, we will recall some of the results on symmetric quivers and their  orthogonal (resp. symplectic) representations as presented in the article \cite{DW2002} of Derksen and Weyman

\begin{definition}
A {\it quiver} $\Omega = ({\mathcal I}, {\mathcal A}, h: {\mathcal A} \rightarrow {\mathcal I},  t: {\mathcal A} \rightarrow {\mathcal I})$ consists of a finite set ${\mathcal I}$ of vertices, a finite set  ${\mathcal A}$ of arrows and two functions such that, for an arrow $(i \rightarrow j)$ in ${\mathcal A}$, then  $h(i \rightarrow j) = j$ is the head of the arrow and  $t(i \rightarrow j) = i$ is the tail of the arrow.
\end{definition}

\begin{definition} A {\it representation} $(V, \phi)$ of the quiver $\Omega = ({\mathcal I}, {\mathcal A}, h: {\mathcal A} \rightarrow {\mathcal I},  t: {\mathcal A} \rightarrow {\mathcal I})$ consists of a collection of finite-dimensional ${\mathbf k}$-vector spaces $(V_i)_{i \in {\mathcal I}}$ and a collection of linear transformations $\phi_{a}: V_{t(a)} \rightarrow V_{h(a)}$, one for each arrow $a \in  {\mathcal A}$.  
\end{definition}

\begin{definition} \label{Def1.3} A {\it morphism} between two quiver representations 
\[
\psi: (V, \phi) \rightarrow (V', \phi')
\]
 of  $\Omega = ({\mathcal I}, {\mathcal A}, h: {\mathcal A} \rightarrow {\mathcal I},  t: {\mathcal A} \rightarrow {\mathcal I})$
is a collection of linear transformations $\psi_i: V_i \rightarrow V'_i$, one for each $i \in {\mathcal I}$
such that $\phi'_a \circ \psi_{t(a)} = \psi_{h(a)} \circ \phi_a$ for each arrow $a \in {\mathcal A}$. 
We will denote by $V_{\Sigma}$: the {\it direct sum} $\oplus_{i \in {\mathcal I}} V_i$ and by $\psi_{\Sigma}: V_{\Sigma} \rightarrow V'_{\Sigma}$: the unique linear map such that $\psi_{\Sigma}(v) = \psi_i(v)$ for all $i \in {\mathcal I}$ and all $v \in V_i$.

The {\it identity morphism} $Id_{(V, \phi)}: (V, \phi) \rightarrow (V, \phi)$  and the composition of morphisms are defined as expected. 

Two quiver representations $(V, \phi)$ and $(V', \phi')$ are said to be {\it isomorphic} if there exist morphisms $\psi:(V, \phi) \rightarrow (V', \phi')$ and $\psi':(V', \phi') \rightarrow (V, \phi)$ such that $\psi' \circ \psi = Id_{(V, \phi)}$ and $\psi \circ \psi' = Id_{(V', \phi')}$.
\end{definition}

\begin{definition}
The representations of a quiver $\Omega$ form an abelian category with the morphisms as defined above. If $(V, \phi)$ and $(V', \phi')$ are representations of $\Omega$, then their {\it direct sum} $(V, \phi) \oplus (V', \phi')$ is the representation:  $(V \oplus V')_{i \in {\mathcal I}} = (V_i \oplus V'_i)_{i \in {\mathcal I}}$ and 
\[
(\phi \oplus \phi')_a = \phi_a \oplus \phi'_a: V_{t(a)} \oplus V'_{t(a)} \rightarrow V_{h(a)} \oplus V'_{h(a)}
\]
defined by
\[
(v_{t(a)}, v'_{t(a)}) \mapsto  (\phi_a(v_{t(a)}), \phi'_a(v'_{t(a)}))
\]
for each arrow $a \in {\mathcal A}$. A representation $(V, \phi)$ is said to be {\it indecomposable} if it is non-trivial and it is not the direct sum of two non-trivial representations. 
\end{definition}

\begin{definition}
A quiver $\Omega = ({\mathcal I}, {\mathcal A}, h: {\mathcal A} \rightarrow {\mathcal I},  t: {\mathcal A} \rightarrow {\mathcal I})$ with an involution $\sigma$ of the set ${\mathcal I}$ of vertices and of the set ${\mathcal A}$ of arrows such that the orientation of  all the arrows is reversed by $\sigma$ is said to be a {\it symmetric quiver} $(\Omega, \sigma)$.
\end{definition}

\begin{definition}\label{D:IsoDual}
An {\it orthogonal} (respectively {\it symplectic}) {\it representation} $(V, \phi, \langle\  , \  \rangle)$ of a symmetric quiver $(\Omega, \sigma)$ is a representation $(V, \phi)$  of the quiver $\Omega$ with a non-degenerate symmetric (respectively skew-symmetric) bilinear form  $\langle \  ,  \   \rangle$ on $V_{\Sigma}$ such that the restriction of $\langle\  , \  \rangle$ to $V_i \times V_j$ is $0$ if $j \ne \sigma(i)$ and $\langle\phi_a(v), v'\rangle + \langle v, \phi_{\sigma(a)}(v')\rangle = 0$ for all arrows $a \in {\mathcal A}$ and for all  vectors $v \in V_{t(a)}$,  $v' \in V_{\sigma(h(a))}$. 

Using the non-degenerate bilinear form $\langle\ , \  \rangle$ and the map $V_{\Sigma} \rightarrow V^*_{\Sigma}$ given by $v \mapsto \langle v, \  \rangle$, we can identify $V_{\Sigma}$ with  its dual $V^*_{\Sigma}$. 
\end{definition}

\begin{definition}
Given a symmetric quiver $(\Omega, \sigma)$ and a representation $(V, \phi)$ of the quiver $\Omega$, then the {\it dual representation} $(V, \phi)^* =  (V^*, \phi^*)$ is defined as the representation of $\Omega$ where $V^*$ is the collection of finite-dimensional vector spaces $(V^*)_{i} = (V_{\sigma(i)})^*$ for all vertices $i \in {\mathcal I}$ and $(\phi^*)_a = -(\phi_{\sigma(a)})^*$ for all arrows $a \in {\mathcal A}$. 

If $\psi:(V, \phi) \rightarrow (V', \phi')$ is a morphism of representations of the quiver $\Omega$, then its dual is $\psi^*:(V', \phi')^* \rightarrow (V, \phi)^*$ is defined by 
\[
(\psi^*)_i = (\psi_{\sigma(i)})^* :((V')^*)_{i} = (V'_{\sigma(i)})^* \rightarrow (V^*)_i = (V_{\sigma(i)})^* 
\]
for all vertices $i \in {\mathcal I}$.
\end{definition}

\begin{remark}
If $(V, \phi, \langle\  , \  \rangle)$ is an orthogonal (respectively symplectic) representation of the symmetric quiver $(\Omega, \sigma)$, then $(V, \phi)$ and $(V, \phi)^*$ can be identified using the non-degenerate bilinear form $\langle\ , \  \rangle$. Under this identification of $V_{\Sigma}$ with $V_{\Sigma}^*$ given in \ref{D:IsoDual} above, we get $(V^*)_i = (V_{\sigma(i)})^* =V_i$ and also that $(\phi^*)_a = - (\phi_{\sigma(a)})^* = \phi_a$.  This last equation follows from  $\langle\phi_a(v), v'\rangle = -  \langle v, \phi_{\sigma(a)}(v')\rangle$ for all arrows $a \in {\mathcal A}$ and for all vectors $v \in V_{t(a)}$,  $v' \in V_{\sigma(h(a))}$. In other words, an orthogonal (respectively symplectic) representation of a symmetric quiver is always {\it self-dual}.
\end{remark}

\begin{definition}
 Two orthogonal (resp.  symplectic) representations $(V, \phi, \langle\  , \   \rangle)$ and $(V', \phi', \langle\  , \   \rangle')$ of the symmetric quiver $(\Omega, \sigma)$ are {\it isomorphic} if there exists a morphism $\psi: (V, \phi) \rightarrow (V', \phi')$ of representations of the quiver $\Omega$ such that $\psi^* \circ \psi = Id_{(V, \phi)}$ and $\psi \circ \psi^* = Id_{(V', \phi')}$
\end{definition}

\begin{definition}
 If $(V, \phi, \langle\  , \   \rangle)$ and $(V', \phi', \langle\  , \   \rangle')$ are two orthogonal (respectively  symplectic) representations  of the symmetric quiver $(\Omega, \sigma)$, then their {\it direct sum} $(V, \phi, \langle\  , \   \rangle) \oplus (V', \phi', \langle\  , \   \rangle')$ is the orthogonal (resp. symplectic) representation $ = (V'', \phi'', \langle\  , \   \rangle'')$  of the symmetric quiver $(\Omega, \sigma)$, where  $(V'', \phi'')$ is the direct sum $(V, \phi) \oplus (V', \phi')$ as defined in \ref{Def1.3} and  the bilinear form $\langle\  , \   \rangle''$ on $V''_{\Sigma} = V_{\Sigma} \oplus V'_{\Sigma}$ is the direct sum of  $\langle\  , \   \rangle$ and $\langle\  , \   \rangle'$. 
\end{definition}

\begin{definition}
A orthogonal (respectively  symplectic) representation $(V, \phi, \langle\  , \   \rangle)$ is said to be {\it indecomposable} if it is non-trivial and it is not the direct sum of two non-trivial orthogonal (resp. symplectic) representations. 

Note that an indecomposable orthogonal (respectively symplectic) representation of the symmetric quiver $(\Omega, \sigma)$ need not be indecomposable as a representation of the quiver $\Omega$.
\end{definition}

\begin{theorem}\label{T:CaracterisationRepresentation}
Let   $(\Omega, \sigma)$ be a symmetric quiver  and  two orthogonal (resp. symplectic) representations  $(V, \phi, \langle\  , \   \rangle)$, $(V', \phi', \langle\  , \   \rangle')$ of $(\Omega, \sigma)$. Then $(V, \phi, \langle\  , \   \rangle)$ and $(V', \phi', \langle\  , \   \rangle')$ are isomorphic as  orthogonal (resp. symplectic) representations of the symmetric quiver $(\Omega, \sigma)$ if and only if $(V, \phi)$ and $(V', \phi')$ are isomorphic as   representations of the quiver $\Omega$.
\end{theorem}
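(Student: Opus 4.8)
The plan is to prove the two implications separately; only the converse carries content. If $(V,\phi,\langle\ ,\ \rangle)$ and $(V',\phi',\langle\ ,\ \rangle')$ are isomorphic as orthogonal (resp.\ symplectic) representations, then by definition there is a morphism $\psi\colon(V,\phi)\to(V',\phi')$ of representations of $\Omega$ with $\psi^*\circ\psi=\mathrm{Id}$ and $\psi\circ\psi^*=\mathrm{Id}$, so $\psi$ is invertible with inverse $\psi^*$, whence $(V,\phi)\cong(V',\phi')$ as representations of $\Omega$. Conversely, fix an isomorphism $\psi\colon(V,\phi)\xrightarrow{\sim}(V',\phi')$ of representations of $\Omega$ and transport the form along it: put $b_1(u,v):=\langle\psi(u),\psi(v)\rangle'$ for $u,v\in V_\Sigma$. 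Because $\psi$ intertwines $\phi$ with $\phi'$, one checks at once that $b_1$ is non-degenerate, symmetric (resp.\ skew-symmetric), vanishes on $V_i\times V_j$ whenever $j\neq\sigma(i)$, and satisfies $b_1(\phi_a(v),v')+b_1(v,\phi_{\sigma(a)}(v'))=0$, so that $(V,\phi,b_1)$ is again an orthogonal (resp.\ symplectic) representation of $(\Omega,\sigma)$. Writing $b_0:=\langle\ ,\ \rangle$, it now suffices to produce an isometry from $(V,\phi,b_0)$ to $(V,\phi,b_1)$ which is an automorphism of the quiver representation $(V,\phi)$, since composing it with $\psi$ yields a quiver isomorphism carrying $\langle\ ,\ \rangle$ to $\langle\ ,\ \rangle'$, and that is the same thing as an isomorphism of orthogonal (resp.\ symplectic) representations (an invertible isometric quiver morphism has its dual morphism equal to its inverse).

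Next I would reformulate the compatibility conditions as in the remark following Definition \ref{D:IsoDual}. A non-degenerate bilinear form on $V_\Sigma$ satisfying the two compatibility conditions of Definition \ref{D:IsoDual} is exactly an isomorphism $B\colon(V,\phi)\xrightarrow{\sim}(V,\phi)^*$ of representations of $\Omega$: the vanishing off the diagonal $j=\sigma(i)$ is precisely the statement that $B$ is graded with $B_i\colon V_i\to(V_{\sigma(i)})^*=(V^*)_i$, and the arrow relation is precisely the intertwining condition with the dual structure $(\phi^*)_a=-(\phi_{\sigma(a)})^*$. Moreover, setting $\epsilon=1$ in the orthogonal case and $\epsilon=-1$ in the symplectic case, the $\epsilon$-symmetry of the form translates, under the canonical identification $V_\Sigma^{**}=V_\Sigma$, into ${}^tB=\epsilon B$. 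Let $B_0,B_1\colon V_\Sigma\to V_\Sigma^*$ be the isomorphisms attached to $b_0,b_1$. Then $\alpha:=B_1^{-1}\circ B_0$ is an automorphism of the quiver representation $(V,\phi)$, hence a unit of the finite-dimensional $\mathbf k$-algebra $A=\mathrm{End}_\Omega(V,\phi)$; and a short computation using ${}^tB_0=\epsilon B_0$ and ${}^tB_1=\epsilon B_1$ shows that $\alpha$ is self-adjoint for $b_1$, i.e.\ $\alpha^\dagger:=B_1^{-1}\circ{}^t\alpha\circ B_1$ equals $\alpha$.

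Finally I would finish with a square-root argument. Since $\mathbf k$ is algebraically closed and $p$ is $0$ or a large prime, the invertible element $\alpha$ admits a square root of the form $g=q(\alpha)$ with $q\in\mathbf k[x]$: decompose $\alpha$ into its generalized eigenspaces, on each of which $\alpha=\lambda(1+N)$ with $\lambda\neq0$ and $N$ nilpotent, take $\sqrt\lambda$ times the finite binomial expansion of $(1+N)^{1/2}$ (legitimate because $2$ and the relevant factorials are invertible), and reassemble the pieces by polynomial interpolation. Being a polynomial in $\alpha$, $g$ lies in $A$ and is invertible, hence $g\in\mathrm{Aut}_\Omega(V,\phi)$; and being a polynomial in the $b_1$-self-adjoint element $\alpha$, it is itself $b_1$-self-adjoint, $g^\dagger=g$. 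Then for all $u,v\in V_\Sigma$, $b_1(g(u),g(v))=b_1\big(u,(g^\dagger g)(v)\big)=b_1(u,\alpha(v))=b_0(u,v)$, so $g$ is the desired isometry and $\psi\circ g$ realizes the isomorphism of orthogonal (resp.\ symplectic) representations. The only genuine obstacle is the bookkeeping of the middle paragraph---checking with the correct signs that the compatibility conditions of Definition \ref{D:IsoDual} encode a self-duality $(V,\phi)\cong(V,\phi)^*$ and that $\epsilon$-symmetry passes to self-adjointness of $\alpha$; once that is in place the square-root step is routine, and it is exactly there that the hypothesis on $p$ enters.
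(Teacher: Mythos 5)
Your proposal is correct, and it is essentially the argument behind the result the paper merely cites: the paper gives no proof of Theorem~\ref{T:CaracterisationRepresentation} beyond referring to theorem 2.6 of \cite{DW2002} (and \cite{BI2021}), and your route --- transport the form along the quiver isomorphism, view the two forms as self-dualities $B_0,B_1\colon(V,\phi)\xrightarrow{\sim}(V,\phi)^*$, observe that $\alpha=B_1^{-1}B_0$ is a $b_1$-self-adjoint automorphism, and correct $\psi$ by a polynomial square root $g=q(\alpha)$ --- is exactly the Derksen--Weyman argument. The only hypothesis the square-root step uses (invertibility of $2$ and of the relevant factorials) is guaranteed by the paper's standing assumption in \ref{SS:CharacteristicK} that $p$ is $0$ or large, so no gap remains.
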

\begin{proof}
This is theorem 2.6 in \cite{DW2002}. See also theorem 2.6 in \cite{BI2021} for another proof.
\end{proof}

Now to get a hold on the indecomposable orthogonal (resp. symplectic) representations of the symmetric quiver $(\Omega, \sigma)$,  the following proposition also proved in \cite{DW2002} is useful.

\begin{proposition}\label{PropDecomposition}
If $(V, \phi, \langle\  , \   \rangle)$ is an indecomposable orthogonal (respectively  symplectic) representation of the symmetric quiver $(\Omega, \sigma)$, then, as a representation of the quiver $\Omega$, $(V, \phi)$ is either indecomposable or is isomorphic to the direct sum of $(V', \phi')$ and $(V', \phi')^*$, where $(V', \phi')$ is an indecomposable representation of $\Omega$.
\end{proposition}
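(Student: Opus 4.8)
The plan is to start from the decomposition of $(V,\phi)$ into indecomposable representations of the quiver $\Omega$ and analyze how the duality interacts with it. Write $(V,\phi) \cong \bigoplus_{k=1}^r (W_k, \psi_k)$ with each $(W_k,\psi_k)$ indecomposable. Since, by the Remark following Definition~\ref{D:IsoDual}, an orthogonal (resp.\ symplectic) representation is self-dual, we have $(V,\phi) \cong (V,\phi)^*$, and dualizing the decomposition gives $(V,\phi) \cong \bigoplus_{k=1}^r (W_k,\psi_k)^*$. Each $(W_k,\psi_k)^*$ is again indecomposable (duality is an exact contravariant self-equivalence of the representation category of $\Omega$, so it preserves indecomposability). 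By Krull--Schmidt for the abelian category of representations of $\Omega$ — which holds since each $\mathrm{End}$ ring is finite-dimensional over ${\mathbf k}$ — the multiset $\{(W_1,\psi_1),\dots,(W_r,\psi_r)\}$ (up to isomorphism) is preserved by the involution $(W,\psi)\mapsto(W,\psi)^*$.

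Next I would use the bilinear form to rule out ``mixed'' configurations. The point is that the non-degenerate form $\langle\ ,\ \rangle$ pairs each indecomposable summand with the summands isomorphic to its dual: if $(W,\psi)$ is a summand and no summand is isomorphic to $(W,\psi)^*$, then the restriction of $\langle\ ,\ \rangle$ to $W_{\Sigma}$ together with all the other summands is degenerate, because the induced map $W_{\Sigma}\to V_{\Sigma}^*\cong V_{\Sigma}$ lands in a sum of summands none of which is $(W,\psi)^*$, forcing a nonzero radical — contradicting non-degeneracy. Hence summands come in dual pairs $(W,\psi),(W,\psi)^*$, or are self-dual, i.e.\ $(W,\psi)\cong(W,\psi)^*$.

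Finally I would assemble the argument for indecomposability of the orthogonal (resp.\ symplectic) representation. Suppose $(V,\phi)$ is not indecomposable as a representation of $\Omega$; pick an indecomposable summand $(W,\psi)$. If some other summand is isomorphic to $(W,\psi)^*$, I claim the subspace $W_{\Sigma}\oplus (W^*)_{\Sigma}$ carries a non-degenerate restriction of $\langle\ ,\ \rangle$ compatible with $\phi$, so it is a sub-orthogonal-(resp.\ symplectic)-representation and its orthogonal complement is another one, exhibiting $(V,\phi,\langle\ ,\ \rangle)$ as a direct sum of two non-trivial such representations unless $(V,\phi)\cong (W,\psi)\oplus(W,\psi)^*$ exactly; that is the stated alternative. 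The remaining case is that every indecomposable summand is self-dual; then if there are at least two summands, each $W_{\Sigma}$ already carries a non-degenerate form (by the pairing argument, since $(W,\psi)\cong(W,\psi)^*$ and no other summand can pair with it) and is thus a proper non-trivial sub-orthogonal-(resp.\ symplectic)-representation with non-trivial orthogonal complement, contradicting indecomposability of $(V,\phi,\langle\ ,\ \rangle)$. So there is exactly one summand, i.e.\ $(V,\phi)$ is indecomposable, contradicting our assumption. This leaves only the two possibilities in the statement.

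The main obstacle I anticipate is the characteristic-$2$-type subtlety in the self-dual case: when $(W,\psi)\cong(W,\psi)^*$ one must check that the induced form on $W_{\Sigma}$ is genuinely non-degenerate (and of the correct symmetry type) rather than possibly zero, and that splitting off $W_{\Sigma}$ respects the quiver maps $\phi$ via the relation $\langle\phi_a(v),v'\rangle + \langle v,\phi_{\sigma(a)}(v')\rangle = 0$; this is exactly where one invokes the standing assumption on $p$ from \ref{SS:CharacteristicK} and the self-duality identity $(\phi^*)_a = \phi_a$ from the Remark after Definition~\ref{D:IsoDual}. Everything else is a bookkeeping exercise with Krull--Schmidt and orthogonal complements.
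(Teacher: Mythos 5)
Your overall skeleton (Krull--Schmidt for representations of $\Omega$, the duality involution on the multiset of indecomposable summands, then splitting off a nondegenerate sub-form-object and invoking indecomposability of $(V,\phi,\langle\ ,\ \rangle)$) is the right idea, and it is essentially the strategy behind proposition 2.7 of \cite{DW2002}, which the paper simply cites without proof. But two steps in your write-up do not hold as stated. First, in the dual-pair case you merely assert that the form restricts nondegenerately to $W_{\Sigma}\oplus(W')_{\Sigma}$ for ``some other summand $W'$ isomorphic to $(W,\psi)^*$''. Being abstractly isomorphic to $W^*$ is not enough: such a summand may pair degenerately, even trivially, with $W$. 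The missing argument is the locality one: since $\mathrm{End}(W)$ is local and $\Phi\colon V\to V^*$ is an isomorphism, writing $\mathrm{id}_W$ as the sum of the composites through the $W_j^*$ shows that some component $W\to W_j^*$ is an isomorphism, i.e.\ the pairing between $W$ and that particular $W_j$ is perfect. One then first disposes of the case $j$ giving a perfect self-pairing of $W$ (then $V=W\perp W^{\perp}$ forces $V=W$, the first alternative), and in the remaining case one must still prove that the Gram matrix $\bigl(\begin{smallmatrix} A & B \\ \epsilon B^{T} & D \end{smallmatrix}\bigr)$ with $B$ invertible and $A$ a non-isomorphism is invertible (a Schur-complement computation using again that $\mathrm{End}(W)$ is local), before splitting off $W\oplus W_j$ and concluding $V\cong W\oplus W^*$. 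None of this is in your proposal, and it is exactly the content of the statement.

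Second, your treatment of the self-dual case is wrong. From $(W,\psi)\cong(W,\psi)^*$ you infer that ``no other summand can pair with it'' and that the restricted form on $W_{\Sigma}$ is nondegenerate; neither follows. Two isomorphic self-dual indecomposables can be paired hyperbolically, with the form vanishing identically on each copy, and this situation genuinely occurs in the paper: the indecomposable orthogonal representation of $A_m^{even}$ in proposition~\ref{Decomposition_even} (a)(iv) and the indecomposable symplectic representation of $A_m^{odd}$ in proposition~\ref{Decomposition_odd} (b)(ii) have underlying quiver representation $W\oplus W$ with $W$ self-dual, yet are indecomposable as $\epsilon$-representations. Your argument would derive a contradiction there and conclude ``exactly one summand'', i.e.\ it would prove these examples cannot exist. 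The correct conclusion in that case is the second alternative of the proposition (with $(V',\phi')^*\cong(V',\phi')$), and reaching it requires the perfect-pairing/locality argument above, not the claim that self-dual summands self-pair nondegenerately.
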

\begin{proof}
This is proposition 2.7 in \cite{DW2002}.
\end{proof}

\begin{definition}
If $(\Omega, \sigma)$ and $(\Omega', \sigma')$ are two symmetric quivers, where $\Omega = ({\mathcal I}, {\mathcal A}, h: {\mathcal A} \rightarrow {\mathcal I},  t: {\mathcal A} \rightarrow {\mathcal I})$ and $\Omega' = ({\mathcal I}', {\mathcal A}', h': {\mathcal A}' \rightarrow {\mathcal I}',  t': {\mathcal A}' \rightarrow {\mathcal I}')$, then we define the {\it disjoint union} $(\Omega, \sigma) \coprod (\Omega', \sigma')$ of $(\Omega, \sigma)$ and $(\Omega', \sigma')$ as the symmetric quiver $(\Omega'',\sigma'')$ where 
$
\Omega'' = ({\mathcal I} \coprod {\mathcal I}', {\mathcal A} \coprod {\mathcal A}', h \coprod h': {\mathcal A} \coprod {\mathcal A}' \rightarrow {\mathcal I} \coprod {\mathcal I}', t \coprod t': {\mathcal A} \coprod {\mathcal A}' \rightarrow {\mathcal I} \coprod {\mathcal I}')
$
and  the restrictions  of  $\sigma''$ to ${\mathcal I} \coprod {\mathcal I}'$ and ${\mathcal A} \coprod {\mathcal A}'$ are $\sigma$ and $\sigma'$  respectively.

A symmetric quiver is said to be {\it irreducible} if it is not the disjoint union of two non-trivial symmetric quivers.
\end{definition} 

\begin{definition}
A symmetric quiver is said to be of {\it finite type} if it has only finitely many indecomposable orthogonal (respectively symplectic) representations up to isomorphism.
\end{definition}

\begin{proposition}
\begin{enumerate}[\upshape (a)]
\item A symmetric quiver $(\Omega, \sigma)$ is of finite type if and only if the quiver $\Omega$ is of finite type.
\item If $(\Omega, \sigma)$ is an irreducible symmetric quiver, then there are two possibilities: either the quiver $\Omega$ is connected, or $\Omega$ has two connected components which are interchanged by $\sigma$
\item If $(\Omega, \sigma)$ is an irreducible symmetric quiver such that the quiver $\Omega$ has two connected components, then $\Omega$ is isomorphic to the disjoint union of two quivers $\Omega' = ({\mathcal I}', {\mathcal A}', h': {\mathcal A}' \rightarrow {\mathcal I}',  t': {\mathcal A}' \rightarrow {\mathcal I}')$ and $\Omega''= ({\mathcal I}'', {\mathcal A}'', t'': {\mathcal A}'' \rightarrow {\mathcal I}'',  h'': {\mathcal A}'' \rightarrow {\mathcal I}'')$ which  is the dual of $\Omega'$ obtained by reversing all the arrows of $\Omega'$. $\sigma$ is obtained by interchanging the vertices and arrows of $\Omega'$ with the vertices and arrows of $\Omega''$.  Moreover there is a correspondence between quiver representations of $\Omega'$ and orthogonal (respectively  symplectic) representations of the symmetric quiver $(\Omega, \sigma)$.
\item If $(\Omega, \sigma)$ is a symmetric quiver of finite type  such that the quiver $\Omega$ is connected, then $\Omega$ is a quiver of Dynkin type $A_n$.
\end{enumerate}
\end{proposition}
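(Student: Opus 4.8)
The plan is to handle the four parts essentially independently: part (a) via Theorem~\ref{T:CaracterisationRepresentation} and Proposition~\ref{PropDecomposition} together with the hyperbolic construction on $W\oplus W^*$; parts (b) and (c) via an elementary orbit analysis of the $\sigma$-action on the connected components of $\Omega$; and part (d) via Gabriel's theorem plus a combinatorial study of orientation-reversing involutions of trees.

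For (a), the "if" direction is short: assuming $\Omega$ has finitely many indecomposables $U_1,\dots,U_m$, Proposition~\ref{PropDecomposition} shows that the underlying quiver representation of an indecomposable orthogonal (resp.\ symplectic) representation of $(\Omega,\sigma)$ lies in the finite list $U_1,\dots,U_m,\,U_1\oplus U_1^*,\dots,U_m\oplus U_m^*$; since Theorem~\ref{T:CaracterisationRepresentation} says the isomorphism class of such a representation is determined by that of its underlying quiver representation, only finitely many can occur. For the converse I would argue contrapositively: given an indecomposable representation $W$ of $\Omega$, put the canonical hyperbolic symmetric (resp.\ skew-symmetric) form on $W\oplus W^*$; the compatibility condition of Definition~\ref{D:IsoDual} holds automatically from the definition of the dual representation, so one gets an orthogonal (resp.\ symplectic) representation $H(W)$ of $(\Omega,\sigma)$ with underlying quiver representation $W\oplus W^*$. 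If $(\Omega,\sigma)$ had only finitely many indecomposable orthogonal (resp.\ symplectic) representations $M^{(1)},\dots,M^{(N)}$, then writing $H(W)$ as a sum of these and comparing underlying representations via Krull--Schmidt would force $W$ to be an indecomposable summand of some underlying $M^{(k)}$, and hence $\Omega$ would have only finitely many indecomposables.

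For (b) and (c), note that $\sigma$ permutes the connected components of $\Omega$ and this permutation is an involution. If $\sigma$ fixes a component $C$ with $C\neq\Omega$, then $(\Omega,\sigma)$ is the disjoint union of the non-trivial symmetric quivers $(C,\sigma|_C)$ and $(\Omega\setminus C,\sigma|)$, contradicting irreducibility; hence for an irreducible $(\Omega,\sigma)$ either $\Omega$ is connected, or $\sigma$ fixes no component and the components fall into $\sigma$-orbits of size $2$, with exactly one such orbit by irreducibility — giving (b). In the two-component case, writing $\Omega=C_1\sqcup C_2$ with $\sigma(C_1)=C_2$, the restriction of $\sigma$ is an orientation-reversing bijection $C_1\to C_2$, identifying $C_2$ with the opposite quiver $\Omega''$ of $\Omega':=C_1$; for the correspondence in (c) I would check that $W\mapsto\Phi(W)$, where $\Phi(W)$ has underlying quiver representation $W\oplus W^*$ ($W$ on $C_1$, $W^*$ on $C_2$) with the canonical hyperbolic form, is a bijection on isomorphism classes: essential surjectivity holds because non-degeneracy of the form identifies $V|_{C_2}$ with the dual of $V|_{C_1}$ and the compatibility condition then forces the $C_2$-arrow maps to be the dual ones, and injectivity follows from Theorem~\ref{T:CaracterisationRepresentation} by restricting an isomorphism of $\Omega$-representations to the component $C_1$.

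Part (d) is where the main work lies. Combining (a) with Gabriel's theorem, if $(\Omega,\sigma)$ is of finite type with $\Omega$ connected, then $\Omega$ has no loops or multiple edges and its underlying graph is a Dynkin diagram of type $A_n$, $D_n$, or $E_n$; the task is to exclude $D$ and $E$, and the key combinatorial obstacle is to show that a Dynkin quiver admitting an orientation-reversing involution $\sigma$ must be of type $A$. The crucial point is that $\sigma$ cannot fix both endpoints of an edge: that edge carries a unique arrow, which $\sigma$ would have to send to an arrow of the reversed orientation, and none exists. Since a finite-order automorphism of a tree either inverts an edge or fixes a vertex, there are two cases. If $\sigma$ inverts an edge, deleting it splits $\Omega$ into two subtrees swapped by $\sigma$; the Dynkin conditions (maximal degree $\leq 3$, at most one vertex of degree $\geq 3$) force each subtree to be a path attached at an endpoint, so $\Omega$ is a path of type $A_{2k}$. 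If $\sigma$ fixes a vertex $c$, then by the edge observation none of the neighbours of $c$ is fixed, so $\deg(c)$ is even and hence $0$ or $2$; when $\deg(c)=2$, deleting $c$ again splits $\Omega$ into two subtrees swapped by $\sigma$, each forced to be a path attached at an endpoint, so $\Omega$ is a path of type $A_{2k+1}$. Thus $\Omega$ is of type $A_n$ in all cases. (One can also verify each type $A_n$ actually occurs, e.g.\ with the linear orientation and $\sigma$ the reflection, so the statement is not vacuous.)
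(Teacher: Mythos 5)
Your proposal is correct, but it is worth noting that the paper itself gives no argument here: it simply quotes Derksen--Weyman (\cite{DW2002}, Theorem~3.1 for (a), Propositions~3.2 and 3.3 for (c) and (d)) and declares (b) easy. What you have written is a self-contained proof built only on the two facts already recorded in this section (Theorem~\ref{T:CaracterisationRepresentation} and Proposition~\ref{PropDecomposition}), Gabriel's theorem, and elementary tree combinatorics, so in spirit it parallels the cited source rather than the paper. Your treatment of (a) via the hyperbolic form on $W\oplus W^*$ together with Krull--Schmidt is sound (the compatibility condition of Definition~\ref{D:IsoDual} does hold for the hyperbolic form in both the symmetric and skew-symmetric cases, with the sign in $(\phi^*)_a=-(\phi_{\sigma(a)})^*$ doing exactly the right job), and your component analysis gives (b) and (c), including the bijection on isomorphism classes via restriction to $\Omega'$ and Theorem~\ref{T:CaracterisationRepresentation}. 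For (d), the two decisive observations are exactly the right ones: $\sigma$ cannot fix both endpoints of an edge (since the unique arrow on that edge would have to map to a nonexistent reversed arrow), and any vertex of degree $\geq 3$ would be carried by $\sigma$ to a second such vertex in the other half, contradicting the uniqueness of the branch point in types $D$ and $E$; combined with the fact that an involution of a tree fixes the center (a vertex or an edge), this pins $\Omega$ down to type $A_{2k}$ or $A_{2k+1}$, with the parity of $\deg(c)$ argument handling the vertex-fixed case. The only gain of the paper's approach is brevity; yours makes the logical dependencies explicit and shows concretely why $D$ and $E$ are excluded, at the cost of rehearsing material available in \cite{DW2002}.
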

\begin{proof}
(a) is proved in theorem 3.1 in  \cite{DW2002}. (b) follows easily from the action of $\sigma$ of the sets of vertices and arrows.  (c) is proved in proposition 3.2 in  \cite{DW2002}. (d) is proved in proposition 3.3 in  \cite{DW2002}.
\end{proof}

\begin{notation}\label{SymmQuiver}
Let $(\Omega, \sigma)$ be a symmetric quiver of finite type  such that the quiver $\Omega$ is a quiver of Dynkin type $A_n$.  We will be considering the following two possibilities:

\medskip

\underline{\it Even case:}

\medskip

We have $n = 2m$ ,  the set of vertices is 
\[
{\mathcal I} = \{i \in {\mathbb Z} \mid i \equiv 1 \pmod 2, -2m < i < 2m\},
\]
the set of arrows is 
\[
{\mathcal A} = \{ ((i + 2) \leftarrow i) \mid  i \in {\mathcal I},  -(2m - 1) \leq i < (2m - 1)\}
\]
and the symmetric quiver is $(\Omega, \sigma)$ as illustrated below

\bigskip

\begin{center}
\begin{tikzpicture}
\draw (0, 0) node {$\bullet$} node[anchor=north]{$(2m - 1)$};
\draw (2, 0) node {$\bullet$} node[anchor=north]{$(2m - 3)$};
\draw [very thick](0,0) -- (1,0);
\draw [<-,  very thick](1,0) -- (2,0);
\draw (3,0) node {$\dots$};
\draw (4, 0) node {$\bullet$} node[anchor=north]{$1$};
\draw [very thick](4,0) -- (5,0);
\draw [<-, very thick](5,0) -- (6,0);
\draw (6,0) node {$\bullet$} node[anchor=north]{$-1$};
\draw (7,0) node {$\dots$};
\draw (8, 0) node {$\bullet$} node[anchor=north]{$-(2m - 3)$};
\draw [very thick](8,0) -- (9,0);
\draw [<-, very thick](9,0) -- (10,0);
\draw (10,0) node {$\bullet$} node[anchor=north]{$-(2m - 1)$};
\draw[<->, dashed] (0, 0.5) -- (0, 1.75) -- (10, 1.75) -- (10, 0.5);
\draw[<->, dashed] (2, 0.5) -- (2, 1.25) -- (8, 1.25) -- (8, 0.5);
\draw[<->, dashed] (4, 0.5) -- (4, 0.75) -- (6, 0.75) -- (6, 0.5);
\draw (5, 1.5) node {$\sigma$};
\end{tikzpicture}
\end{center}
where $\sigma$ is the unique  involution of the graph $A_{2m}$ such that $\sigma(i) = -i$ for all $i \in {\mathcal I}$ and reversing  the orientation of the arrows.  We will  denote this  symmetric quiver by $A_{m}^{even}$ as in \cite{DW2002}. 

\medskip

\underline{\it Odd case:}

\medskip

We have $n = (2m - 1)$ with $m \in {\mathbb N}$ and $m \geq 1$,  the set of vertices is
\[
{\mathcal I}  = \{i \in {\mathbb Z} \mid i \equiv 0 \pmod 2, -2m < i < 2m\},
\]
the set of arrows is 
 \[
 {\mathcal A} = \{ ((i + 2) \leftarrow i) \mid  i \in {\mathcal I},  -(2m - 2) \leq i < (2m - 2)\}
 \]
and the symmetric quiver is $(\Omega, \sigma)$ as illustrated below

\bigskip

\begin{center}
\begin{tikzpicture}
\fill[black] (0, 0) circle (2pt) node[anchor=north]{$(2m - 2)$};
\fill[black] (2, 0) circle (2pt)node[anchor=north]{$(2m - 4)$};
\draw [very thick](0,0) -- (1,0);
\draw [<-, very thick](1,0) -- (2,0);
\fill[black] (2.5, 0) circle (1pt);
\fill[black] (2.75, 0) circle (1pt);
\fill[black] (3, 0) circle (1pt);
\draw [very thick](3.5, 0) -- (4.5, 0);
\draw[<-, very thick](4.5, 0) -- (5.5, 0);
\fill[black] (3.5, 0) circle (2pt) node[anchor=north]{$2$};
\fill[black] (5.5, 0) circle (2pt) node[anchor=north]{$0$};
\draw [very thick](5.5, 0) -- (6.5, 0);
\draw[<-, very thick](6.5, 0) -- (7.5, 0);
\fill[black] (7.5, 0) circle (2pt) node[anchor=north]{$-2$};
\fill[black] (8, 0) circle (1pt);
\fill[black] (8.25, 0) circle (1pt);
\fill[black] (8.5, 0) circle (1pt);
\draw [very thick](9, 0) -- (10, 0);
\draw[<-, very thick](10, 0) -- (11, 0);
\fill[black] (9, 0) circle (2pt) node[anchor=north]{$-(2m - 4)$};
\fill[black] (11, 0) circle (2pt)node[anchor=north]{$-(2m - 2)$};
\draw[<->, dashed] (0, 0.5) -- (0, 2.25) -- (11, 2.25) -- (11, 0.5);
\draw[<->, dashed] (2, 0.5) -- (2, 1.75) -- (9, 1.75) -- (9, 0.5);
\draw[<->, dashed] (3.5, 0.5) -- (3.5, 1.25) -- (7.5, 1.25) -- (7.5, 0.5);
\draw[<->, dashed] (5.25, 0.5) -- (5, 0.75) -- (6, 0.75) -- (5.75, 0.5);
\draw (5.5, 2) node {$\sigma$};
\end{tikzpicture}
\end{center}
where $\sigma$ is the unique  involution of the graph $A_{2m - 1}$ such that $\sigma(i) = -i$ for all $i \in {\mathcal I}$ and reversing  the orientation of the arrows.  We will  denote this symmetric quiver by $A_{m}^{odd}$ as in \cite{DW2002}. 

For both cases above, we will denote $\{i \in {\mathcal I} \mid i \geq 0\}$ by ${\mathcal I}_+$. So in both cases,  $\vert {\mathcal I}_+ \vert = m$ and 
\[
{\mathcal I}_+ = \begin{cases} \{1, 3, \dots, (2m - 1)\}, &\text{ if we are in case $A_m^{even}$;}\\ \\ \{0, 2, 4, \dots, (2m -2)\}, &\text{  if we are in case $A_m^{odd}$.}\end{cases}
\]

\end{notation}

\begin{remark}
There are also other symmetric quivers, for example alternating quivers. See the introduction in \cite{BI2021} for these alternating examples. For us, we will only  study  homomorphisms $\iota: {\mathbf k}^{\times} \rightarrow G$ related to the symmetric quiver  defined in \ref{SymmQuiver}.

\end{remark}

Derksen and Weyman also gave in \cite{DW2002} the number of indecomposable orthogonal (respectively symplectic) representations of $A_{m}^{even}$  and $A_{m}^{odd}$ up to isomorphism and also their dimension vectors. We will now described their results.

\begin{definition}\label{DefDim}
Given the symmetric quiver $A_{m}^{even}$,  an orthogonal (respectively symplectic) representation $(V, \phi, \langle\  , \   \rangle)$ of $A_{m}^{even}$, where 
\[
{\mathcal I} = \{i \in {\mathbb Z} \mid i \equiv 1 \pmod 2, -2m <  i < 2m\}
\]
and $V$ is the collection of the vector spaces $(V_i)_{i \in {\mathcal I}}$, then we define the {\it dimension vector} of $(V, \phi, \langle\  , \   \rangle)$ as
\[
(\dim(V_i))_{i \in {\mathcal I}_+} = ( \dim(V_{2m - 1}), \dim(V_{2m - 3}), \dots, \dim(V_3), \dim(V_1)) \in {\mathbb N}^m.
\]  

Given the symmetric quiver $A_{m}^{odd}$,  an orthogonal (respectively symplectic) representation $(V, \phi, \langle\  , \   \rangle)$ of $A_{m}^{odd}$, where 
\[ 
{\mathcal I} = \{i \in {\mathbb Z} \mid i \equiv 0 \pmod 2, -2m < i < 2m\}
\]
and $V$ is the collection of the vector spaces $(V_i)_{i \in {\mathcal I}}$, we define the {\it dimension vector} of $(V, \phi, \langle\  , \   \rangle)$ as
\[
(\dim(V_i))_{i \in {\mathcal I}_+} = ( \dim(V_{2m - 2}), \dim(V_{2m - 4}), \dots, \dim(V_2), \dim(V_0)) \in {\mathbb N}^{m}.
\]  
\end{definition}

\begin{proposition}\label{Prop_Indecomposable_Ortho_Aeven}
Let $(\Omega, \sigma)$  be the symmetric quiver $A_{m}^{even}$.
\begin{enumerate}[\upshape (a)]
\item There are $(m + 1)m$ indecomposable orthogonal representations (up to isomorphism) for $(\Omega, \sigma)$. Each occurs in a different dimension. The list of dimension vectors consists of all $m$-tuples that correspond to the dimension of indecomposable representations for the quiver of type $A_m$ and also all nondecreasing $m$-tuples whose last term is 2. 
\item There are $(m + 1)m$ indecomposable symplectic representations (up to isomorphism) for  $(\Omega, \sigma)$. The list of dimension vectors consists of all $m$-tuples that correspond to the dimension of indecomposable representations for the quiver of type $A_m$ and all nondecreasing $m$-tuples whose last term is 2, and which contain at least one term equal to 1. For the dimension $(0,0, \dots, 0, 1, \dots 1)$, there are   two indecomposable symplectic representations (up to isomorphism) and for all other dimensions, there is only  one indecomposable  symplectic representation (up to isomorphism).
\end{enumerate}
\end{proposition}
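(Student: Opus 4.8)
The plan is to combine Gabriel's theorem for the underlying type $A_{2m}$ quiver with Proposition~\ref{PropDecomposition} and Theorem~\ref{T:CaracterisationRepresentation}. First I would record that, up to isomorphism, the indecomposable representations of the type $A_{2m}$ quiver are the interval modules $W_{[c,d]}$ (one-dimensional on each vertex of an interval $[c,d] \subseteq {\mathcal I}$, zero elsewhere, with identity maps inside the interval); there are $\binom{2m+1}{2} = m(2m+1)$ of them, and the duality $(-)^*$ attached to $\sigma$ sends $W_{[c,d]}$ to $W_{[-d,-c]}$. Hence $W_{[c,d]}$ is self-dual precisely when $c = -d$, giving the $m$ symmetric interval modules $W_{[-b,b]}$ with $b \in \{1, 3, \dots, 2m-1\}$.

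The key local point is to decide the symmetry type of each self-dual $W_{[-b,b]}$. Since the type $A_{2m}$ quiver is representation-finite, $W_{[-b,b]}$ is a brick, so the $\sigma$-compatible bilinear forms on it form a one-dimensional space on which transposition acts by a single sign, and that sign decides whether $W_{[-b,b]}$ is of orthogonal or symplectic type. Unwinding the relation $\langle \phi_a v, v'\rangle + \langle v, \phi_{\sigma(a)} v'\rangle = 0$ along the chain (together with the sign in $(\phi^*)_a = -(\phi_{\sigma(a)})^*$) forces $\langle e_i, e_{-i}\rangle = (-1)^{(i+b)/2}\langle e_{-b}, e_b\rangle$ on the standard basis, so $\langle e_{-i}, e_i\rangle = (-1)^{i}\langle e_i, e_{-i}\rangle = -\langle e_i, e_{-i}\rangle$ because every vertex $i$ is odd. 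Thus every self-dual indecomposable representation of $A^{even}_m$ is of symplectic type and carries no compatible non-degenerate symmetric form. (This is also contained in \cite{DW2002}.)

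Next I would invoke Proposition~\ref{PropDecomposition}: the underlying quiver representation of an indecomposable orthogonal (resp.\ symplectic) representation is either an indecomposable self-dual $W_{[-b,b]}$, or $W \oplus W^*$ with $W$ indecomposable and $W \not\cong W^*$, or $W^{\oplus 2} = W \oplus W^*$ with $W = W_{[-b,b]}$. For each of the three shapes I would determine which symmetry of non-degenerate compatible form it supports, using Krull--Schmidt together with the brick property (a proper orthogonal summand with underlying module $\cong W$ would carry a non-degenerate compatible form on $W$, i.e.\ a nonzero scalar multiple of the unique one, contradicting either non-degeneracy when $W$ is not self-dual, or the wrong symmetry when $W = W_{[-b,b]}$), and Theorem~\ref{T:CaracterisationRepresentation} for uniqueness given the underlying module. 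The result is: $W \oplus W^*$ ($W$ indecomposable, not self-dual) supports exactly one indecomposable orthogonal and one indecomposable symplectic representation, via the hyperbolic forms $f(w') \pm f'(w)$; $W_{[-b,b]}^{\oplus 2}$ supports exactly one indecomposable orthogonal representation and no symplectic one (as $W_{[-b,b]}$ is of symplectic type); and $W_{[-b,b]}$ itself supports one indecomposable symplectic representation and no orthogonal one. Counting: there are $2m^2$ indecomposable $W$ with $W \not\cong W^*$, hence $m^2$ unordered pairs $\{W, W^*\}$; adding the $m$ squares $W_{[-b,b]}^{\oplus 2}$ gives $m(m+1)$ indecomposable orthogonal representations, and adding instead the $m$ modules $W_{[-b,b]}$ gives $m(m+1)$ indecomposable symplectic ones.

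Finally I would pass to dimension vectors over ${\mathcal I}_+$. A pair $\{W_{[c,d]}, W_{[-d,-c]}\}$ with $[c,d]$ positive (equivalently $[c,d] \cap [-d,-c] = \emptyset$) runs exactly once over the $\binom{m+1}{2}$ interval $0/1$-vectors, i.e.\ the dimension vectors of $A_m$-indecomposables; a pair with $[c,d] \cap [-d,-c] \neq \emptyset$ and $[c,d] \neq [-d,-c]$ runs exactly once over the $\binom{m}{2}$ nondecreasing $m$-tuples ending in $2$ that contain a $1$ (a block of $2$'s, then $1$'s, then $0$'s); and $W_{[-b,b]}^{\oplus 2}$ gives the $m$ vectors $(0,\dots,0,2,\dots,2)$. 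These three sets of vectors are pairwise disjoint and the assignment is injective on the orthogonal side, which gives (a) with the stated dimension-vector list and total $m(m+1)$. On the symplectic side the $m$ vectors $(0,\dots,0,1,\dots,1)$ are the dimension vectors both of a self-dual $W_{[-b,b]}$ and of the disjoint pair with the same support, hence are hit twice, while every other vector is hit once; this gives (b). I expect the sign computation of the second paragraph to be the crux: the whole asymmetry between (a) and (b) — the presence of the vectors $(0,\dots,0,2,\dots,2)$ only in the orthogonal list, and the doubling at $(0,\dots,0,1,\dots,1)$ only in the symplectic list — rests on there being no self-dual indecomposable of orthogonal type; the remaining work is the bookkeeping that each admissible underlying module carries a unique compatible form of each allowed symmetry and that distinct indecomposables have distinct dimension vectors.
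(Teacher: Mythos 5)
Your argument is correct, but it is genuinely different from what the paper does: the paper offers no proof of this proposition at all, simply citing proposition 3.8 of \cite{DW2002}. Your route — Gabriel's theorem for the underlying $A_{2m}$ quiver, the action of the duality on interval modules, Proposition~\ref{PropDecomposition} to reduce to the three shapes (self-dual interval, hyperbolic pair with non-isomorphic halves, doubled self-dual interval), Theorem~\ref{T:CaracterisationRepresentation} for uniqueness of the form given the underlying module, and the explicit sign computation showing every self-dual indecomposable $W_{[-b,b]}$ admits only skew compatible forms because the vertices are odd — is a legitimate self-contained proof, and the counts $m^2+m$ and the dimension-vector bookkeeping all check out. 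It is worth noting that your crux is exactly the computation the paper performs later for other purposes: in the proof of Proposition~\ref{Decomposition_even} the relation $2x=0$ forces $\phi_{(-1\rightarrow 1)}=0$ in the orthogonal case while the symplectic case allows $\phi_{(-1\rightarrow 1)}\ne 0$, and your trichotomy of underlying modules mirrors the paper's trichotomy of $\langle\tau\rangle$-orbits (no overlap, overlap off the principal diagonal, on the principal diagonal) used in Remark~\ref{R:CardinalityIndecomposable} and Proposition~\ref{P:JMTripleProof}; so your proof buys independence from \cite{DW2002}, while the paper's citation buys brevity and defers all case analysis to its later explicit constructions. The only step I would ask you to write out in full is the claim that $W_{[-b,b]}^{\oplus 2}$ carries no indecomposable symplectic structure: by Theorem~\ref{T:CaracterisationRepresentation} any symplectic representation with that underlying module is isomorphic to the perpendicular sum of two copies of $W_{[-b,b]}$ equipped with its (unique up to scalar, skew) compatible form, which is visibly decomposable — one sentence, but it is the point that removes the vectors $(0,\dots,0,2,\dots,2)$ from the symplectic list.
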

\begin{proof}
This is proposition 3.8 in \cite{DW2002}.
\end{proof}

\begin{remark}
In proposition~\ref{Prop_Indecomposable_Ortho_Aeven} (a), the indecomposable representations are isomorphic relative to the orthogonal group. We will show later in this section that they are in fact isomorphic relative to the special orthogonal group.
\end{remark}

\begin{example}
The dimension vector $(\dim(V_5), \dim(V_3), \dim(V_1))$ for the indecomposable  orthogonal representations of $A_{3}^{even}$ are:
\[
\begin{matrix}
(1, 0, 0) & (0, 1, 0) &(0, 0, 1) &(1,1,0)\\ (0, 1, 1) & (0, 0, 2) & (1, 1, 1) & (0, 1, 2)\\  (1, 1, 2) & (0, 2, 2) & (1, 2, 2) & (2, 2, 2)
\end{matrix}
\]
with one indecomposable  orthogonal representation of $A_{3}^{even}$ (up to isomorphism) for each dimension vector.
\end{example}

\begin{example}
The dimension vector $(\dim(V_5), \dim(V_3), \dim(V_1))$ for the indecomposable  symplectic representations of $A_{3}^{even}$ are:
\[
\begin{matrix}
(1, 0, 0) & (0, 1, 0) &(0, 0, 1) \\ (1,1,0) & (0, 1, 1) & (1, 1, 1) \\ (0, 1, 2) &  (1, 1, 2) &  (1, 2, 2) 
\end{matrix}
\]
where, for the dimension vectors $(0, 0, 1)$, $(0, 1, 1)$ and $(1, 1, 1)$, there are two indecomposable symplectic representations (up to isomorphism) and for all other dimension vectors, there is only one indecomposable symplectic representation (up to isomorphism).
\end{example}

\begin{proposition}\label{Prop_Indecomposable_Ortho_Aodd}
Let $(\Omega, \sigma)$  be the symmetric quiver $A_{m}^{odd}$.
\begin{enumerate}[\upshape (a)]
\item There are $m^2$ indecomposable orthogonal representations (up to isomorphism) for $(\Omega, \sigma)$. The dimension vector of the indecomposable orthogonal representations of  $A_{m}^{odd}$ correspond naturally to the positive roots of the root system of type $B_m$ with one indecomposable orthogonal representation up to isomorphism for each dimension. If we use the same notation as in Bourbaki \cite{B1971-1975} for the root system $B_m$,  where $\alpha_1, \alpha_2, \dots, \alpha_m$ are the simple roots with $\alpha_m$ being the short root, then the positive root $(c_1 \alpha_1 + c_2 \alpha_2 + \dots + c_m \alpha_m)$ of the root system $B_m$  corresponds to the dimension vector $(c_1, c_2, \dots, c_m)$, more precisely  the corresponding orthogonal representation $V$ is such that $\dim(V_{2(m - i)}) = c_{i}$ for $i = 1, 2, \dots, m$. 
\item There are $m^2$ indecomposable symplectic representations (up to isomorphism) for $(\Omega, \sigma)$. The dimension vector of the indecomposable symplectic representations of  $A_{m}^{odd}$ correspond naturally to the positive roots of the root system of type $C_m$ with one indecomposable symplectic representation up to isomorphism for each dimension. If we use the same notation as in Bourbaki  \cite{B1971-1975}  for the root system $C_m$,  where $\alpha_1, \alpha_2, \dots, \alpha_m$ are the simple roots with $\alpha_m$ being the long root, then the positive root $(c_1 \alpha_1 + c_2 \alpha_2 + \dots + c_m \alpha_m)$ of the root system $C_m$  corresponds to the dimension vector $(c_1, c_2, \dots, 2c_m)$, more precisely  the corresponding symplectic representation $V$ is such that $\dim(V_{2(m - i)}) = c_{i}$ for $i = 1, 2, \dots, (m - 1)$ and $\dim(V_0) = 2c_m$. 
\end{enumerate}
\end{proposition}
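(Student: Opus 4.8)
The plan is to reduce everything to the classification of indecomposable representations of the underlying quiver $\Omega$ --- here a quiver of Dynkin type $A_{2m-1}$ --- and then to follow the involution $\sigma$ through that list. Read along the diagram, the vertex set of $\Omega$ is $v_1=2m-2,\ v_2=2m-4,\ \dots,\ v_m=0,\ \dots,\ v_{2m-1}=-(2m-2)$, and $\sigma$ acts on it by the order-reversal $v_p\mapsto v_{2m-p}$. The indecomposable representations of $\Omega$ are the interval modules $M[v_i,v_j]$ with $1\le i\le j\le 2m-1$, one for each of the $\binom{2m}{2}=m(2m-1)$ positive roots of $A_{2m-1}$, and each is a brick. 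Since $\sigma$ carries $M[v_i,v_j]$ to $M[v_{2m-j},v_{2m-i}]$, such a module is self-dual (isomorphic to its dual, equivalently fixed by $\sigma$) exactly when $i+j=2m$, i.e.\ when its support is a segment symmetric about the central vertex $0$. There are exactly $m$ of these, say $M_1,\dots,M_m$, with $M_k$ supported on $\{-2(m-k),\dots,-2,0,2,\dots,2(m-k)\}$ (so $M_m$ is the simple module $S_0$ at vertex $0$); the remaining $m(2m-1)-m=2m(m-1)$ indecomposables of $\Omega$ fall into $m(m-1)$ two-element orbits $\{M,M^*\}$ with $M\not\cong M^*$.

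Next I would build the indecomposable orthogonal and symplectic representations from these pieces via Proposition~\ref{PropDecomposition} --- every such indecomposable has underlying quiver representation either an indecomposable $M[v_i,v_j]$ carrying a form, or a hyperbolic $N\oplus N^*$ with $N$ indecomposable --- together with Theorem~\ref{T:CaracterisationRepresentation}, which makes an isomorphism of the underlying quiver representations lift to an isomorphism of orthogonal (resp.\ symplectic) representations; in particular any two form structures on a fixed indecomposable quiver representation give isomorphic orthogonal, resp.\ symplectic, representations. For a two-element orbit $\{M,M^*\}$, the space $M\oplus M^*$ carries both a hyperbolic symmetric and a hyperbolic skew-symmetric form, and each is indecomposable as an orthogonal (resp.\ symplectic) representation: a form-theoretic direct summand would have to be a sum of copies of $M$ and of $M^*$, and a non-degenerate form on $M$ alone would force $M\cong M^*$. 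For a self-dual $M_k$, since $M_k$ is a brick $\operatorname{Hom}(M_k,M_k^*)\cong\operatorname{End}(M_k)$ is one-dimensional, so a nonzero morphism $f\colon M_k\to M_k^*$ is unique up to scalar and an isomorphism; comparing $f$ with $f^*$ gives a scalar $\varepsilon_k$ with $\varepsilon_k^2=1$, and $\varepsilon_k=+1$ means $f$ is a non-degenerate symmetric form making $M_k$ an indecomposable orthogonal representation and $M_k\oplus M_k$ an indecomposable symplectic one (for $\varepsilon_k=-1$ the roles are swapped). The decisive point is that $\dim (M_k)_\Sigma=2(m-k)+1$ is \emph{odd}, so $(M_k)_\Sigma$ carries no non-degenerate skew-symmetric form; hence $\varepsilon_k=+1$ for all $k$, each $M_k$ is orthogonal, and its symplectic counterpart is $M_k\oplus M_k$. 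Counting, this produces $m(m-1)+m=m^2$ indecomposable orthogonal representations and likewise $m^2$ indecomposable symplectic ones, each determined up to isomorphism by its underlying quiver representation (Theorem~\ref{T:CaracterisationRepresentation}).

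Finally I would match the resulting dimension vectors, in the convention of Definition~\ref{DefDim}, with the positive roots of $B_m$ (resp.\ $C_m$) in Bourbaki's notation. One computes directly that $M_k$ has dimension vector $(0^{\,k-1},1^{\,m-k+1})$, the coefficient vector of the short root $e_k=\alpha_k+\alpha_{k+1}+\cdots+\alpha_m$ of $B_m$, and hence that $M_k\oplus M_k$ has dimension vector $(0^{\,k-1},2^{\,m-k+1})$, matching the long root $2e_k$ of $C_m$ under the stated convention $\dim V_0=2c_m$. For a two-element orbit, normalizing $M=M[v_i,v_j]$ so that its support does not extend further among the negative vertices than among the positive ones, a short case analysis shows that $M\oplus M^*$ has dimension vector the coefficient vector of a root $e_a-e_b$ (with $1\le a<b\le m$) if the support of $M$ avoids the vertex $0$, and of a root $e_a+e_b$ if it contains $0$; these two families together exhaust the long roots $e_a\pm e_b$ of $B_m$ (resp.\ the short roots of $C_m$). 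Since the $m^2$ dimension vectors so obtained are pairwise distinct and form exactly the set of positive roots of $B_m$ (resp.\ $C_m$), the correspondence is a bijection, and there is precisely one indecomposable orthogonal (resp.\ symplectic) representation in each such dimension. The main effort --- and the likeliest source of error --- is this last bookkeeping: keeping the three families of interval modules (centred at $0$, lying strictly on one side of $0$, and straddling $0$) apart and verifying the $\dim V_0=2c_m$ normalization in the symplectic case. By contrast the sign $\varepsilon_k$ is forced by a parity count, and the indecomposability of the hyperbolic forms on $M\oplus M^*$ is routine.
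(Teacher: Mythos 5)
Your argument is correct. The paper itself gives no proof of this proposition---it simply cites Proposition 3.6 of \cite{DW2002}---so what you have supplied is a self-contained derivation from the two structural facts the paper does quote, Theorem~\ref{T:CaracterisationRepresentation} and Proposition~\ref{PropDecomposition}. Your route (interval modules of the equioriented $A_{2m-1}$, the $\sigma$-orbits on them, hyperbolic forms on $M\oplus M^*$ for the two-element orbits, and the parity argument forcing the self-dual intervals---those centred at the vertex $0$, hence of odd total dimension---to carry a symmetric rather than skew form) is essentially the Derksen--Weyman classification argument, and the bookkeeping checks out: $m$ self-dual intervals plus $m(m-1)$ two-element orbits give $m^2$ classes in each symmetry type; the dimension vectors you compute are exactly the coefficient vectors of $e_k$, $e_p-e_q$, $e_p+e_q$ of $B_m$ (resp.\ $2e_k$, $e_p-e_q$, $e_p+e_q$ of $C_m$, with the $\dim V_0=2c_m$ convention) and are pairwise distinct, so the correspondence with positive roots is a bijection with one class per root. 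What your approach buys, relative to the paper's citation, is that the interval analysis at the end anticipates material the paper later re-derives by hand: the three families of supports (centred at $0$, one-sided, straddling $0$) are exactly what reappears in Proposition~\ref{Decomposition_odd} and in the $\langle \tau\rangle$-orbit construction of Section 2 (cf.\ Remark~\ref{R:CardinalityIndecomposable}, which recounts the $m^2$ orbits). One step worth stating explicitly for completeness in the symplectic case: by Proposition~\ref{PropDecomposition} an indecomposable symplectic representation could a priori have indecomposable underlying quiver representation $M$; this is excluded because a compatible non-degenerate form on $M$ forces $M\cong M^*$, hence $M=M_k$, and then your parity argument rules out the skew form---both ingredients are in your text, but the exclusion deserves its own sentence.
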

\begin{proof}
This is proposition 3.6 in \cite{DW2002}.
\end{proof}

\begin{remark}
In proposition~\ref{Prop_Indecomposable_Ortho_Aodd} (a), the indecomposable representations are isomorphic relative to the orthogonal group. We will  later describe  how an indecomposable  orthogonal representation  splits  into distinct  special orthogonal isomorphism classes and we will give a representative for each of these special orthogonal isomorphism classes.
\end{remark}

\begin{example}
The dimension vector $(\dim(V_4), \dim(V_2), \dim(V_0))$ for the indecomposable  orthogonal representations of $A_{3}^{odd}$ are:
\[
\begin{matrix}
(1, 0, 0) & (0, 1, 0) &(0, 0, 1) \\ (1,1,0) & (0, 1, 2) & (1, 1, 2) \\ (0, 1, 1) & (1, 2, 2) &  (1, 1, 1)
\end{matrix}
\]
with  one indecomposable  orthogonal representations of $A_{3}^{odd}$ (up to isomorphism) for each dimension.
\end{example}

\begin{example}
The dimension vector $(\dim(V_4), \dim(V_2), \dim(V_0))$ for the indecomposable  symplectic representations of $A_{3}^{odd}$ are:
\[
\begin{matrix}
(1, 0, 0) & (0, 1, 0) &(0, 0, 2) \\ (1,1,0) & (0, 1, 2) & (1, 1, 2) \\ (0, 2, 2) &  (1, 2, 2) &  (2, 2, 2) 
\end{matrix}
\]
with  one indecomposable  symplectic representations of $A_{3}^{odd}$ (up to isomorphism) for each dimension.
\end{example}

\subsection{}
Later we will need to know how an indecomposable orthogonal or symplectic representation $(V, \phi, \langle\ , \ \rangle)$ of the symmetric quiver $A_m^{even}$ (respectively  $A_m^{odd}$) decompose  as a representation of the quiver $A_{2m}$ (respectively $A_{2m - 1}$). By proposition \ref{PropDecomposition}, we know that $(V, \phi)$ will either be indecomposable or isomorphic to the direct sum of $(V', \phi')$ and $(V', \phi')^*$, where $(V', \phi')$ is an indecomposable representation of $A_{2m}$ (respectively $A_{2m - 1}$). The indecomposable representations of $A_{2m}$ (respectively $A_{2m - 1}$)  are well-know and their dimension vectors are  the positive roots of $A_{2m}$ (respectively $A_{2m - 1}$) up to isomorphism. It is easy in most cases to know what this decomposition of $(V, \phi)$ will be just from its dimension vector and the fact that,  in the case where $(V, \phi)$ as a representation of $A_{2m}$ (respectively $A_{2m - 1}$) is the direct sum of $(V', \phi')$ and $(V', \phi')^*$, then the dimension of $(V', \phi')^*$ is obtained from $(V', \phi')$ by the use of the involution $\sigma$. We will now enumerate these decompositions.

\begin{proposition}\label{Decomposition_even}
Using the same notation as in \ref{DefDim}, let $(V, \phi, \langle\ , \ \rangle)$ be an indecomposable orthogonal (respectively symplectic) representation of the symmetric quiver $A_m^{even}$ and denote its dimension  vector  by 
\[
\alpha = (\alpha_i)_{i \in {\mathcal I}_+}  = (\alpha_{(2m - 1)}, \alpha_{(2m - 3)}, \dots, \alpha_3, \alpha_1) \in {\mathbb N}^m,
\]
where $\dim(V_i) = \alpha_i$ for all $i \in {\mathcal I}_+$. Recall that  
\[
{\mathcal I} = \{i \in {\mathbb Z} \mid i \equiv 1 \pmod 2,  -2m < i < 2m\}  \text{ and }  {\mathcal I}_+ = \{i \in {\mathcal I} \mid i \geq 0\}.
\]  
\begin{enumerate}[\upshape (a)] 
\item In the orthogonal case, as a representation of $A_{2m}$, we have that $(V, \phi)$ decomposes as follows.

\begin{enumerate} [\upshape (i)]
\item If the dimension vector $\alpha= (\alpha_i)_{i \in {\mathcal I}_+} \in {\mathbb N}^m$  is such that 
\[
\alpha_i = \begin{cases} 1, &\text{if $i \in {\mathcal I}$ and $a \leq i \leq b$;} \\ 0, &\text{otherwise;} \end{cases}
\]
where $a, b \in {\mathcal I}$ and $1 < a \leq b$, then $(V, \phi)$  is isomorphic to the  direct sum of two  indecomposable representations of $A_{2m}$, whose dimension vectors are $\beta = (\beta_i)_{i \in {\mathcal I}}  \in {\mathbb N}^{2m}$ and $\beta' = (\beta'_i)_{i \in {\mathcal I}} \in {\mathbb N}^{2m}$,
where 
\[
\beta_i = \begin{cases} 1, &\text{if $i \in {\mathcal I}$ and $-b \leq i \leq -a$;}\\ 0, &\text{otherwise;} \end{cases}
\]
and
\[
\beta'_i = \begin{cases} 1, &\text{if $i \in {\mathcal I}$ and $a \leq i \leq b$;}\\ 0, &\text{otherwise.} \end{cases}
\]

\item If the dimension  vector $\alpha= (\alpha_i)_{i \in {\mathcal I}_+} \in {\mathbb N}^m$  is such that 
\[
\alpha_i = \begin{cases} 1, &\text{if $i \in {\mathcal I}$ and $0 < i \leq a$;} \\ 0, &\text{otherwise;} \end{cases}
\]
where $a \in {\mathcal I}$ and $0 < a$, then $(V, \phi)$  is isomorphic to the  direct sum of two indecomposable representations of $A_{2m}$, whose dimension vectors are
 $\beta = (\beta_i)_{i \in {\mathcal I}} \in {\mathbb N}^{2m}$ and $\beta' = (\beta'_i)_{i \in {\mathcal I}} \in {\mathbb N}^{2m}$,
where 
\[
\beta_i = \begin{cases} 1, &\text{if $i \in {\mathcal I}$ and $-a \leq i < 0$;}\\ 0, &\text{otherwise;} \end{cases}
\]
and
\[
\beta'_i = \begin{cases} 1, &\text{if $i \in {\mathcal I}$ and $0 < i \leq a$;}\\ 0, &\text{otherwise.} \end{cases}
\]

\item If the dimension vector $\alpha= (\alpha_i)_{i \in {\mathcal I}_+} \in {\mathbb N}^m$  is such that 
\[
\alpha_i = \begin{cases} 2, &\text{if $i \in {\mathcal I}$ and $0 < i \leq a$;} \\ 1, &\text{if $i \in {\mathcal I}$ and $a < i \leq b$;}\\ 0, &\text{otherwise;} \end{cases}
\]
where $a, b \in {\mathcal I}$ and $0 <  a < b$, then $(V, \phi)$  is isomorphic to the  direct sum of two  indecomposable representations of $A_{2m}$, whose dimension vectors are $\beta = (\beta_i)_{i \in {\mathcal I}} \in {\mathbb N}^{2m}$ and $\beta' = (\beta'_i)_{i \in {\mathcal I}} \in {\mathbb N}^{2m}$,
where 
\[
\beta_i = \begin{cases} 1, &\text{if $i \in {\mathcal I}$ and $-a \leq i \leq b$;}\\ 0, &\text{otherwise;} \end{cases}
\]
and
\[
\beta'_i = \begin{cases} 1, &\text{if $i \in {\mathcal I}$ and $-b \leq i \leq  a$;}\\ 0, &\text{otherwise.} \end{cases}
\]

\item If the dimension  vector $\alpha= (\alpha_i)_{i \in {\mathcal I}_+} \in {\mathbb N}^m$  is such that 
\[
\alpha_i = \begin{cases} 2, &\text{if $i \in {\mathcal I}$ and $0 < i \leq a$;} \\ 0, &\text{otherwise;} \end{cases}
\]
where $a \in {\mathcal I}$ and $0 < a$, then $(V, \phi)$  is isomorphic to the  direct sum of two indecomposable representations of $A_{2m}$, whose dimension vectors are
 $\beta = (\beta_i)_{i \in {\mathcal I}} \in {\mathbb N}^{2m}$ and $\beta' = (\beta'_i)_{i \in {\mathcal I}} \in {\mathbb N}^{2m}$
where 
\[
\beta_i = \begin{cases} 1, &\text{if $i \in {\mathcal I}$ and $-a \leq i \leq a$;}\\ 0, &\text{otherwise;} \end{cases}
\]
and
\[
\beta'_i = \begin{cases} 1, &\text{if $i \in {\mathcal I}$ and $-a \leq i \leq a$;}\\ 0, &\text{otherwise.} \end{cases}
\]
These two indecomposable representations of $A_{2m}$ are isomorphic.

\end{enumerate}

\item In the symplectic case, as a representation of $A_{2m}$, we have that $(V, \phi)$ decomposes as follows.

\begin{enumerate} [\upshape (i)]
\item If the dimension vector $\alpha= (\alpha_i)_{i \in {\mathcal I}_+} \in {\mathbb N}^m$  is such that 
\[
\alpha_i = \begin{cases} 1, &\text{if $i \in {\mathcal I}$ and $a \leq i \leq b$;} \\ 0, &\text{otherwise;} \end{cases}
\]
where $a, b \in {\mathcal I}$ and $1 < a \leq b$, then $(V, \phi)$  is isomorphic to the  direct sum of two  indecomposable representations of $A_{2m}$, whose dimension vectors are $\beta = (\beta_i)_{i \in {\mathcal I}}$ and $\beta' = (\beta'_i)_{i \in {\mathcal I}} \in {\mathbb N}^{2m}$
where 
\[
\beta_i = \begin{cases} 1, &\text{if $i \in {\mathcal I}$ and $-b \leq i \leq -a$;}\\ 0, &\text{otherwise;} \end{cases}
\]
and
\[
\beta'_i = \begin{cases} 1, &\text{if $i \in {\mathcal I}$ and $a \leq i \leq b$;}\\ 0, &\text{otherwise.} \end{cases}
\]

\item If the dimension vector   $\alpha= (\alpha_i)_{i \in {\mathcal I}_+} \in {\mathbb N}^m$  is such that 
\[
\alpha_i = \begin{cases} 1, &\text{if $i \in {\mathcal I}$ and $0 < i \leq a$;} \\ 0, &\text{otherwise;} \end{cases}
\]
where $a \in {\mathcal I}$ and $0 < a$, then there are two isomorphism classes of symplectic representations with this dimension vector, they are distinguished by the fact that in one class, the map $\phi_{(-1 \rightarrow 1)}: V_{-1} \rightarrow V_1$ is $0$ and in the other, it is $\ne 0$. We will distinguished these two by writing for their dimension vectors: $\alpha^0$ for the one where $\phi_{(-1 \rightarrow 1)}: V_{-1} \rightarrow V_1$ is $0$  and $\alpha^1$ for the one where $\phi_{(-1 \rightarrow 1)}: V_{-1} \rightarrow V_1$ is $\ne 0$.  

In the case of the isomorphism class of the symplectic representations corresponding to $\alpha^0$, then $(V, \phi)$  is isomorphic to the  direct sum of two indecomposable representations of $A_{2m}$, whose dimension vectors are
 $\beta = (\beta_i)_{i \in {\mathcal I}} \in {\mathbb N}^{2m}$ and $\beta' = (\beta'_i)_{i \in {\mathcal I}} \in {\mathbb N}^{2m}$
where 
\[
\beta_i = \begin{cases} 1, &\text{if $i \in {\mathcal I}$ and $-a \leq i < 0$;}\\ 0, &\text{otherwise;} \end{cases}
\]
and
\[
\beta'_i = \begin{cases} 1, &\text{if $i \in {\mathcal I}$ and $0 < i \leq a$;}\\ 0, &\text{otherwise.} \end{cases}
\]

In the case of the isomorphism class of the symplectic representations corresponding to $\alpha^1$, then $(V, \phi)$  is isomorphic to the  indecomposable representation of $A_{2m}$, whose dimension vector is
 $\beta = (\beta_i)_{i \in {\mathcal I}} \in {\mathbb N}^{2m}$
where 
\[
\beta_i = \begin{cases} 1, &\text{if $i \in {\mathcal I}$ and $-a  \leq i \leq a$;}\\ 0, &\text{otherwise.} \end{cases}
\]

\item If the dimension vector $\alpha= (\alpha_i)_{i \in {\mathcal I}_+} \in {\mathbb N}^m$  is such that 
\[
\alpha_i = \begin{cases} 2, &\text{if $i \in {\mathcal I}$ and $0 < i \leq a$;} \\ 1, &\text{if $i \in {\mathcal I}$ and $a < i \leq b$;}\\ 0, &\text{otherwise;} \end{cases}
\]
where $a, b \in {\mathcal I}$ and $0 <  a < b$, then $(V, \phi)$  is isomorphic to the  direct sum of two  indecomposable representations of $A_{2m}$, whose dimension vectors are $\beta = (\beta_i)_{i \in {\mathcal I}} \in {\mathbb N}^{2m}$ and $\beta' = (\beta'_i)_{i \in {\mathcal I}} \in {\mathbb N}^{2m}$
where 
\[
\beta_i = \begin{cases} 1, &\text{if $i \in {\mathcal I}$ and $-a \leq i \leq b$;}\\ 0, &\text{otherwise;} \end{cases}
\]
and
\[
\beta'_i = \begin{cases} 1, &\text{if $i \in {\mathcal I}$ and $-b \leq i \leq  a$;}\\ 0, &\text{otherwise.} \end{cases}
\]
\end{enumerate}
\end{enumerate}
\end{proposition}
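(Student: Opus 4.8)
\emph{Proof plan.} The plan is to reduce everything to Proposition~\ref{PropDecomposition} together with a bookkeeping of symmetric dimension vectors; the only genuinely new ingredient will be the behaviour of the unique $\sigma$-fixed arrow. First I would record three standing facts. By Proposition~\ref{PropDecomposition}, as a representation of $A_{2m}$ the module $(V,\phi)$ is \emph{either} indecomposable \emph{or} isomorphic to $(V',\phi')\oplus(V',\phi')^*$ with $(V',\phi')$ an indecomposable representation of $A_{2m}$; the indecomposable representations of $A_{2m}$ are exactly the interval modules $M_{[c,d]}$, whose dimension vector is the indicator of an interval $[c,d]$ of ${\mathcal I}$ and all of whose structure maps are isomorphisms; and if $(V',\phi')\cong M_{[c,d]}$ then $(V',\phi')^*\cong M_{[-d,-c]}$ because dualising negates the dimension vector. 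Moreover, since an orthogonal (resp.\ symplectic) representation is self-dual, $\dim V_i=\dim V_{-i}$ for all $i$, so the full dimension vector in ${\mathbb N}^{2m}$ is the symmetric extension of $\alpha$. The strategy is then: in each case write down this symmetric extension and see which of the two alternatives of Proposition~\ref{PropDecomposition} it forces.

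In cases (a)(i), (a)(iii), (a)(iv), (b)(i) and (b)(iii) the symmetric extension of $\alpha$ is \emph{not} the indicator of an interval: in the cases containing a ``$2$'' it takes the value $2$ somewhere, and in the cases $1<a\le b$ (where $a\ge 3$) its support has a gap around the absent vertex $0$. Hence $(V,\phi)$ cannot be $A_{2m}$-indecomposable, so $(V,\phi)\cong M_{[c,d]}\oplus M_{[-d,-c]}$, and it remains only to solve
\[
\mathbf 1_{[c,d]}+\mathbf 1_{[-d,-c]}=\text{(symmetric extension of }\alpha\text{)}
\]
for $[c,d]$. This is elementary: the ``doubled'' locus $[c,d]\cap[-d,-c]$ must be the set where the right-hand side equals $2$, and the total support $[c,d]\cup[-d,-c]$ must be the support of the right-hand side; these two conditions determine $\{c,d\}$ uniquely up to the involution that interchanges $[c,d]$ with $[-d,-c]$, i.e.\ up to interchanging $\beta$ with $\beta'$. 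Working this out case by case yields exactly the intervals listed.

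The remaining cases (a)(ii) and (b)(ii) are the delicate ones, since there the symmetric extension of $\alpha$ is the indicator $\mathbf 1_{[-a,a]}$ of a genuine interval, so $A_{2m}$-indecomposability is not excluded by dimension counting; the only decomposable possibility compatible with this dimension vector is $M_{[1,a]}\oplus M_{[-a,-1]}$ (the unique way to write $\mathbf 1_{[-a,a]}$ as a sum of indicators of two disjoint $\sigma$-related intervals). To decide which occurs I would examine the unique $\sigma$-fixed arrow $a_0=(-1\to 1)$. For it one has $t(a_0)=-1$, $h(a_0)=1$ and $\sigma(h(a_0))=-1$, so the identity of Definition~\ref{D:IsoDual} becomes $\langle\phi_{a_0}(v),v'\rangle+\langle v,\phi_{a_0}(v')\rangle=0$ for all $v,v'\in V_{-1}$; using the symmetry (resp.\ skew-symmetry) of $\langle\,,\,\rangle$ this says that the bilinear form $B(v,v')=\langle\phi_{a_0}(v),v'\rangle$ on $V_{-1}$ is alternating (resp.\ symmetric). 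Since $\dim V_{-1}=1$ here, in the orthogonal case $B=0$, hence $\phi_{a_0}=0$ (the pairing $V_1\times V_{-1}$ is perfect); therefore $(V,\phi)$ is not $A_{2m}$-indecomposable and must be $M_{[1,a]}\oplus M_{[-a,-1]}$, which is (a)(ii). In the symplectic case $B$ may be nonzero; by Proposition~\ref{Prop_Indecomposable_Ortho_Aeven}(b) there are exactly two isomorphism classes with this dimension vector, and by Theorem~\ref{T:CaracterisationRepresentation} their underlying $A_{2m}$-representations are non-isomorphic, so they must be precisely $M_{[-a,a]}$ (where $\phi_{a_0}$ is an isomorphism) and $M_{[1,a]}\oplus M_{[-a,-1]}$ (where $\phi_{a_0}=0$); this is (b)(ii), with $\alpha^1$ the former and $\alpha^0$ the latter.

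The hard part is exactly the computation with the $\sigma$-fixed arrow $a_0$: it is the only place where the orthogonal/symplectic dichotomy actually enters, and it is what produces the splitting of case (ii) into two subcases (and, in the orthogonal case, forces the decomposition even though the dimension vector alone would permit an indecomposable module). Everything else is routine manipulation of interval modules and symmetric dimension vectors.
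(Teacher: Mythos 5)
Your proof is correct, and its overall strategy (self-duality of the dimension vector, Proposition~\ref{PropDecomposition}, and a special analysis of the $\sigma$-fixed arrow $(-1\rightarrow 1)$) is the same as the paper's; in particular your treatment of case (ii), where you show the form $B(v,v')=\langle\phi_{(-1\rightarrow1)}(v),v'\rangle$ on $V_{-1}$ is skew-symmetric in the orthogonal case and symmetric in the symplectic case, is exactly the paper's computation in coordinate-free form. Where you genuinely diverge is in (a)(iii)--(iv) and in (b)(ii). For (a)(iii)--(iv) the paper writes out the $2\times 2$ matrix of $\phi_{(-1\rightarrow1)}$ and uses the orthogonality relations to show it is skew with zero diagonal and nonzero, hence of rank $2$, and deduces the crossing decomposition from that; you instead observe that a dimension vector taking the value $2$ rules out $A_{2m}$-indecomposability, so Proposition~\ref{PropDecomposition} forces $(V,\phi)\cong M\oplus M^{*}$, and the equation $\mathbf 1_{[c,d]}+\mathbf 1_{[-d,-c]}=\text{(symmetric extension of }\alpha)$ has a unique solution -- this is shorter, avoids choices of bases, and is in fact the same mechanism the paper itself uses for (b)(iii), so it also unifies the two cases. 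The trade-off is that the paper's explicit matrix computation yields concrete information about $\phi_{(-1\rightarrow1)}$ that is reused later when explicit representatives and Jacobson--Morozov triples are built in Section~2, which your argument does not provide. For (b)(ii) the paper argues representation by representation (if $\phi_{(-1\rightarrow1)}\ne 0$ the underlying quiver representation is the interval module on $[-a,a]$, if it is $0$ it splits), whereas you count: by Proposition~\ref{Prop_Indecomposable_Ortho_Aeven}(b) there are exactly two symplectic classes in this dimension, by Theorem~\ref{T:CaracterisationRepresentation} they have non-isomorphic underlying $A_{2m}$-representations, and Proposition~\ref{PropDecomposition} leaves only two candidates; this is clean but imports the Derksen--Weyman count as an input, while the paper's version is self-contained on this point.
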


\begin{proof}
(a) In the orthogonal case, (i)  follows easily by considering just the dimension vector. 

For (ii), we must consider the map $\phi_{(-1 \rightarrow 1)}:V_{-1} \rightarrow V_1$. We can pick bases $\{v_1\}$ and $\{v_{-1}\}$ of $V_1$ and $V_{-1}$ respectively and we can assume that our choice is such that $\langle v_1, v_{-1}\rangle = \langle v_{-1}, v_{1}\rangle = 1$. Write $\phi_{(-1\rightarrow 1)}(v_{-1}) = xv_1$, where $x \in {\mathbf k}$. Because $(V, \phi, \langle\  , \  \rangle)$ is an orthogonal representation, we have
\[
\langle \phi_{-1\rightarrow 1}(v_{-1}), v_{-1}\rangle + \langle v_{-1}, \phi_{-1\rightarrow 1}(v_{-1})\rangle = 0 \Rightarrow 2x = 0.
\]
So $\phi_{(-1 \rightarrow 1)}$ is $0$ and (ii) follows from this observation.

For (iii) and (iv), we again consider the map $\phi_{(-1 \rightarrow 1)}:V_{-1} \rightarrow V_1$. We can pick bases $\{u_1, v_1\}$ and $\{u_{-1}, v_{-1}\}$ of $V_1$ and $V_{-1}$ respectively and we can assume that our choice is such that 
\[
\begin{aligned}
\langle u_1, u_{-1}\rangle &= \langle u_{-1}, u_{1}\rangle &= 1, \quad \quad \langle v_1, v_{-1}\rangle &= \langle v_{-1}, v_{1}\rangle &= 1,\\ \langle u_1, v_{-1}\rangle &= \langle v_{-1}, u_{1}\rangle &= 0, \quad \quad \langle v_1, u_{-1}\rangle &= \langle u_{-1}, v_{1}\rangle &= 0.
\end{aligned}
\]
 Write $\phi_{(-1\rightarrow 1)}(u_{-1}) = xu_1 + yv_1$ and $\phi_{(-1\rightarrow 1)}(v_{-1}) = zu_1 + wv_1$ where $x, y, z, w \in {\mathbf k}$. Because $(V, \phi, \langle\  , \  \rangle)$ is an orthogonal representation, we have
\[
\begin{aligned}
\langle \phi_{-1\rightarrow 1}(u_{-1}), u_{-1}\rangle + \langle u_{-1}, \phi_{-1\rightarrow 1}(u_{-1})\rangle &= 0 \Rightarrow 2x = 0\\
\langle \phi_{-1\rightarrow 1}(u_{-1}), v_{-1}\rangle + \langle u_{-1}, \phi_{-1\rightarrow 1}(v_{-1})\rangle &= 0 \Rightarrow y + z = 0\\
\langle \phi_{-1\rightarrow 1}(v_{-1}), u_{-1}\rangle + \langle v_{-1}, \phi_{-1\rightarrow 1}(u_{-1})\rangle &= 0 \Rightarrow z + y = 0\\
\langle \phi_{-1\rightarrow 1}(v_{-1}), v_{-1}\rangle + \langle v_{-1}, \phi_{-1\rightarrow 1}(v_{-1})\rangle &= 0 \Rightarrow 2w = 0\\
\end{aligned}
\]
If $z = 0$, then $\phi_{(-1 \rightarrow 1)}$ is $0$ and consequently $(V, \phi, \langle\ , \ \rangle)$ would not be indecomposable. Thus $z \ne 0$ and $y \ne 0$ and (iii) and (iv) follow from this observation.

(b) In the symplectic case,  (i) follows easily by considering just the dimension vector. 

In case (ii), we will consider the map $\phi_{(-1 \rightarrow 1)}:V_{-1} \rightarrow V_1$. We can pick bases $\{v_1\}$ and $\{v_{-1}\}$ of $V_1$ and $V_{-1}$ respectively and we can assume that our choice is such that $\langle v_1, v_{-1}\rangle = 1$ and $\langle v_{-1}, v_{1}\rangle = -1$. Write $\phi_{(-1\rightarrow 1)}(v_{-1}) = xv_1$, where $x \in {\mathbf k}$. Because $(V, \phi, \langle\  , \  \rangle)$ is an symplectic representation, we have
\[
\langle \phi_{-1\rightarrow 1}(v_{-1}), v_{-1}\rangle + \langle v_{-1}, \phi_{-1\rightarrow 1}(v_{-1})\rangle = \langle xv_1, v_{-1}\rangle + \langle v_{-1}, xv_1\rangle = x - x = 0.
\]
So $\phi_{(-1 \rightarrow 1)}$ can be $0$ or $\ne 0$. If $\phi_{(-1 \rightarrow 1)} \ne 0$, then  we get easily that $(V, \phi)$  is isomorphic to the  indecomposable representation of $A_{2m}$ whose dimension vector is
 $\beta = (\beta_i)_{i \in {\mathcal I}} \in {\mathbb N}^{2m}$
where 
\[
\beta_i = \begin{cases} 1, &\text{if $i \in {\mathcal I}$ and $-a  \leq i \leq a$;}\\ 0, &\text{otherwise.} \end{cases}
\]

If $\phi_{(-1 \rightarrow 1)} = 0$ then  we get easily that $(V, \phi)$  is isomorphic to the  direct sum of two indecomposable representations of $A_{2m}$, whose dimension vectors are
 $\beta = (\beta_i)_{i \in {\mathcal I}} \in {\mathbb N}^{2m}$ and $\beta' = (\beta'_i)_{i \in {\mathcal I}} \in {\mathbb N}^{2m}$
where 
\[
\beta_i = \begin{cases} 1, &\text{if $i \in {\mathcal I}$ and $-a \leq i < 0$;}\\ 0, &\text{otherwise;} \end{cases}
\]
and
\[
\beta'_i = \begin{cases} 1, &\text{if $i \in {\mathcal I}$ and $0 < i \leq a$;}\\ 0, &\text{otherwise.} \end{cases}
\]
So (ii) follows from this observation.

As for (iii), there are two possibilities: either
\[
\beta_i = \begin{cases} 1, &\text{if $i \in {\mathcal I}$ and $-a \leq i \leq a$;}\\ 0, &\text{otherwise;} \end{cases}
\quad \text{ and } \quad
\beta'_i = \begin{cases} 1, &\text{if $i \in {\mathcal I}$ and $-b \leq i \leq  b$;}\\ 0, &\text{otherwise;} \end{cases}
\]
or 
\[
\beta_i = \begin{cases} 1, &\text{if $i \in {\mathcal I}$ and $-a \leq i \leq b$;}\\ 0, &\text{otherwise;} \end{cases}
\quad \text{ and } \quad
\beta'_i = \begin{cases} 1, &\text{if $i \in {\mathcal I}$ and $-b \leq i \leq  a$;}\\ 0, &\text{otherwise.} \end{cases}
\]
The first of these possibilities has to be excluded. Otherwise $(V, \phi, \langle\ , \ \rangle)$ would be isomorphic by theorem~\ref{T:CaracterisationRepresentation} as a symplectic representation to the direct sum of an indecomposable symplectic representation with dimension $\beta = (\beta_i)_{i \in {\mathcal I}_+}$ and an indecomposable symplectic representation with dimension $\beta' = (\beta'_i)_{i \in {\mathcal I}_+}$. But this means that $(V, \phi, \langle\ , \ \rangle)$  is not an indecomposable representation of the symmetric quiver $A_m^{even}$. This proves (iii).

\end{proof}

\begin{proposition}\label{Decomposition_odd}
Using the same notation as in \ref{DefDim},  let $(V, \phi, \langle\ , \ \rangle)$ be an indecomposable orthogonal (respectively symplectic) representation of the symmetric quiver $A_m^{odd}$ and denote its dimension vector by 
\[
\alpha= (\alpha_i)_{i \in {\mathcal I}_+} = (\alpha_{2(m - 1)}, \alpha_{2(m - 2)} \dots, \alpha_2, \alpha_0) \in {\mathbb N}^m, 
\]
where $\dim(V_i) = \alpha_i$ for all $i \in {\mathcal I}_+$.  Recall that  
\[
{\mathcal I} = \{i \in {\mathbb Z} \mid i \equiv 0 \pmod 2,  -2m <  i < 2m\}  \text{ and }  {\mathcal I}_+ = \{i \in {\mathcal I} \mid i \geq 0\}.
\]  
\begin{enumerate}[\upshape (a)] 
\item In the orthogonal case, as a representation of $A_{2m - 1}$, we have that $(V, \phi)$ decomposes as follows.

\begin{enumerate} [\upshape (i)]
\item If the dimension vector $\alpha= (\alpha_i)_{i \in {\mathcal I}_+} \in {\mathbb N}^m$  is 
\[
\alpha_i = \begin{cases} 1, &\text{if $i \in {\mathcal I}$ and $a \leq i \leq b$;} \\ 0, &\text{otherwise;} \end{cases}
\]
where $a, b \in {\mathcal I}$ and $0 < a \leq b$, then $(V, \phi)$  is isomorphic to the  direct sum of two  indecomposable representations of $A_{2m - 1}$ whose dimension vectors are $\beta = (\beta_i)_{i \in {\mathcal I}}  \in {\mathbb N}^{2m - 1}$ and $\beta' = (\beta'_i)_{i \in {\mathcal I}} \in {\mathbb N}^{2m - 1}$
where 
\[
\beta_i = \begin{cases} 1, &\text{if $i \in {\mathcal I}$ and $-b \leq i \leq -a$;}\\ 0, &\text{otherwise.} \end{cases}
\]
and
\[
\beta'_i = \begin{cases} 1, &\text{if $i \in {\mathcal I}$ and $a \leq i \leq b$.}\\ 0, &\text{otherwise;} \end{cases}
\]

\item If the dimension  vector $\alpha= (\alpha_i)_{i \in {\mathcal I}_+} \in {\mathbb N}^m$  is
\[
\alpha_i = \begin{cases} 1, &\text{if $i \in {\mathcal I}$ and $0 \leq i \leq a$;} \\ 0, &\text{otherwise;} \end{cases}
\]
where $a \in {\mathcal I}$ and $0 \leq a$, then $(V, \phi)$  is isomorphic to the  indecomposable representation of $A_{2m - 1}$ whose dimension vector is
 $\beta = (\beta_i)_{i \in {\mathcal I}} \in {\mathbb N}^{2m - 1}$
where 
\[
\beta_i = \begin{cases} 1, &\text{if $i \in {\mathcal I}$ and $-a  \leq i \leq a$;}\\ 0, &\text{otherwise.} \end{cases}
\]

\item If the dimension vector $\alpha= (\alpha_i)_{i \in {\mathcal I}_+} \in {\mathbb N}^m$  is 
\[
\alpha_i = \begin{cases} 2, &\text{if $i \in {\mathcal I}$ and $0 \leq i \leq a$;} \\ 1, &\text{if $i \in {\mathcal I}$ and $a < i \leq b$;}\\ 0, &\text{otherwise;} \end{cases}
\]
where $a, b \in {\mathcal I}$ and $0 \leq  a < b$, then $(V, \phi)$  is isomorphic to the  direct sum of two  indecomposable representations of $A_{2m - 1}$ whose dimension vectors are $\beta = (\beta_i)_{i \in {\mathcal I}} \in {\mathbb N}^{2m - 1}$ and $\beta' = (\beta'_i)_{i \in {\mathcal I}} \in {\mathbb N}^{2m - 1}$
where 
\[
\beta_i = \begin{cases} 1, &\text{if $i \in {\mathcal I}$ and $-a \leq i \leq b$;}\\ 0, &\text{otherwise;} \end{cases}
\]
and
\[
\beta'_i = \begin{cases} 1, &\text{if $i \in {\mathcal I}$ and $-b \leq i \leq  a$;}\\ 0, &\text{otherwise.} \end{cases}
\]

\end{enumerate}

\item In the symplectic case, as a representation of $A_{2m - 1}$, we have that $(V, \phi)$ decomposes as follows.

\begin{enumerate} [\upshape (i)]
\item If the dimension vector $\alpha= (\alpha_i)_{i \in {\mathcal I}_+} \in {\mathbb N}^m$  is 
\[
\alpha_i = \begin{cases} 1, &\text{if $i \in {\mathcal I}$ and $a \leq i \leq b$;} \\ 0, &\text{otherwise;} \end{cases}
\]
where $a, b \in {\mathcal I}$ and $0 < a \leq b$, then $(V, \phi)$  is isomorphic to the  direct sum of two  indecomposable representations of $A_{2m - 1}$ whose dimension vectors are $\beta = (\beta_i)_{i \in {\mathcal I}} \in {\mathbb N}^{2m - 1}$ and $\beta' = (\beta'_i)_{i \in {\mathcal I}} \in {\mathbb N}^{2m - 1}$
where 
\[
\beta_i = \begin{cases} 1, &\text{if $i \in {\mathcal I}$ and $-b \leq i \leq -a$;}\\ 0, &\text{otherwise;} \end{cases}
\]
and
\[
\beta'_i = \begin{cases} 1, &\text{if $i \in {\mathcal I}$ and $a \leq i \leq b$;}\\ 0, &\text{otherwise.} \end{cases}
\]

\item If the dimension  vector $\alpha= (\alpha_i)_{i \in {\mathcal I}_+} \in {\mathbb N}^m$  is 
\[
\alpha_i = \begin{cases} 2, &\text{if $i \in {\mathcal I}$ and $0 \leq i \leq a$;} \\ 0, &\text{otherwise;} \end{cases}
\]
where $a \in {\mathcal I}$ and $0 \leq a$, then $(V, \phi)$  is isomorphic to the  direct sum of two  indecomposable representations of $A_{2m - 1}$ whose dimension vectors are $\beta = (\beta_i)_{i \in {\mathcal I}}  \in {\mathbb N}^{2m - 1}$ and $\beta' = (\beta'_i)_{i \in {\mathcal I}} \in {\mathbb N}^{2m - 1}$
where 
\[
\beta_i = \begin{cases} 1, &\text{if $i \in {\mathcal I}$ and $-a \leq i \leq a$;}\\ 0, &\text{otherwise;} \end{cases}
\]
and
\[
\beta'_i = \begin{cases} 1, &\text{if $i \in {\mathcal I}$ and $-a \leq i \leq  a$;}\\ 0, &\text{otherwise.} \end{cases}
\]
These two indecomposable representations of $A_{2m - 1}$ are isomorphic.

\item If the dimension vector $\alpha= (\alpha_i)_{i \in {\mathcal I}_+} \in {\mathbb N}^m$  with 
\[
\alpha_i = \begin{cases} 2, &\text{if $i \in {\mathcal I}$ and $0 \leq i \leq a$;} \\ 1, &\text{if $i \in {\mathcal I}$ and $a < i \leq b$;}\\ 0, &\text{otherwise;} \end{cases}
\]
where $a, b \in {\mathcal I}$ and $0 \leq a < b$, then $(V, \phi)$  is isomorphic to the  direct sum of two  indecomposable representations of $A_{2m - 1}$ whose dimension vectors are $\beta = (\beta_i)_{i \in {\mathcal I}}  \in {\mathbb N}^{2m - 1}$ and $\beta' = (\beta'_i)_{i \in {\mathcal I}} \in {\mathbb N}^{2m - 1}$
where 
\[
\beta_i = \begin{cases} 1, &\text{if $i \in {\mathcal I}$ and $-a \leq i \leq b$;}\\ 0, &\text{otherwise;} \end{cases}
\]
and
\[
\beta'_i = \begin{cases} 1, &\text{if $i \in {\mathcal I}$ and $-b \leq i \leq  a$;}\\ 0, &\text{otherwise.} \end{cases}
\]
\end{enumerate}
\end{enumerate}
\end{proposition}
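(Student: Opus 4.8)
The plan is to argue exactly as in the proof of Proposition~\ref{Decomposition_even}, combining three facts. First, Proposition~\ref{PropDecomposition}, which says that, as a representation of the quiver $A_{2m-1}$, $(V,\phi)$ is either indecomposable or isomorphic to $(V',\phi')\oplus(V',\phi')^{*}$ for some indecomposable representation $(V',\phi')$ of $A_{2m-1}$. Second, the self-duality of an orthogonal (resp.\ symplectic) representation, which forces $\dim V_{i}=\dim V_{\sigma(i)}=\dim V_{-i}$ for all $i\in\mathcal{I}$, so that the given vector $\alpha\in\mathbb{N}^{m}$ determines the full dimension vector $\widetilde\alpha=(\widetilde\alpha_{i})_{i\in\mathcal{I}}$ over $\mathcal{I}$ by $\widetilde\alpha_{i}=\alpha_{|i|}$. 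Third, the classical description of the indecomposable representations of the type-$A$ quiver $A_{2m-1}$ as the interval modules $M[c,d]$ (with $c,d\in\mathcal{I}$, $c\le d$), whose dimension vector is the indicator function of $[c,d]\cap\mathcal{I}$ and whose dual is $M[c,d]^{*}=M[-d,-c]$.

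The key combinatorial mechanism is this: if $(V,\phi)\cong M[c,d]\oplus M[c,d]^{*}$, then $\widetilde\alpha$ is the sum of the indicator functions of $[c,d]\cap\mathcal{I}$ and $[-d,-c]\cap\mathcal{I}$; since these summands are non-negative, the two intervals must overlap exactly on $\{\,i\in\mathcal{I}\mid\widetilde\alpha_{i}\ge 2\,\}$ and must have union equal to the support of $\widetilde\alpha$ (both subsets being symmetric under $i\mapsto -i$). As $[c,d]\cap[-d,-c]$ and, when non-empty, $[c,d]\cup[-d,-c]$ are again intervals, this pins down the unordered pair $\{\,M[c,d],M[c,d]^{*}\,\}$, hence the decomposition, uniquely. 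With this, each case reduces to a short verification.

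In cases (a)(i) and (b)(i) one has $a>0$, so the vertex $0$ (where $\widetilde\alpha_{0}=0$) lies strictly between $-a$ and $a$, and the support of $\widetilde\alpha$ is the disconnected set $([-b,-a]\cup[a,b])\cap\mathcal{I}$; hence $(V,\phi)$ is not indecomposable as a quiver representation, Proposition~\ref{PropDecomposition} gives $(V,\phi)\cong M[c,d]\oplus M[c,d]^{*}$, and the only possibility is $M[a,b]\oplus M[-b,-a]$. In cases (a)(iii), (b)(ii) and (b)(iii) one has $\dim V_{0}=2$, so again $(V,\phi)$ is not an interval module and Proposition~\ref{PropDecomposition} forces $(V,\phi)\cong M[c,d]\oplus M[c,d]^{*}$; matching $\widetilde\alpha$ as above yields $M[-a,b]\oplus M[-b,a]$, which degenerates to the pair of isomorphic modules $M[-a,a]\oplus M[-a,a]$ exactly when $\widetilde\alpha$ never takes the value $1$, that is, in case (b)(ii). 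Case (a)(ii) is where the odd case genuinely differs from the even one: here $\widetilde\alpha_{0}=\alpha_{0}=1$ is odd, whereas a decomposition $M[c,d]\oplus M[c,d]^{*}$ would make $\dim V_{0}$ even; hence Proposition~\ref{PropDecomposition} forces $(V,\phi)$ to be indecomposable, and the unique indecomposable representation of $A_{2m-1}$ with dimension vector the indicator of $[-a,a]\cap\mathcal{I}$ is $M[-a,a]$. In particular, and in contrast with the even case, no case of the odd proposition requires analysing a central map such as $\phi_{(0\rightarrow 2)}$ or the form on $V_{0}$; this matches Proposition~\ref{Prop_Indecomposable_Ortho_Aodd}, which provides exactly one indecomposable orthogonal (resp.\ symplectic) representation of $A_{m}^{odd}$ for each positive root of $B_{m}$ (resp.\ $C_{m}$).

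To close, I would check exhaustiveness by going through the positive roots under the correspondence of Proposition~\ref{Prop_Indecomposable_Ortho_Aodd}: the roots $e_{i}$ (resp.\ $2e_{i}$) produce the dimension vectors of case (ii), the roots $e_{i}-e_{j}$ with $i<j$ those of case (i), and the roots $e_{i}+e_{j}$ with $i<j$ those of case (iii), so the three families treated exhaust the indecomposables. The only substantive step in the whole argument is the parity remark in case (a)(ii); the rest is bookkeeping with interval endpoints, the parity of the vertices of $A_{2m-1}$, and the boundary values $a=0$ and $a=b$. One subtlety is worth flagging: among direct sums of two interval modules with dimension vector $\widetilde\alpha$, one could a priori also write the self-dual pieces $M[-a,a]\oplus M[-b,b]$ in case (iii), but this is not of the form $M[c,d]\oplus M[c,d]^{*}$ and is therefore ruled out directly by Proposition~\ref{PropDecomposition} — or, exactly as in the even case, because, equipped with its natural form, it would be decomposable as an orthogonal (resp.\ symplectic) representation, contradicting Theorem~\ref{T:CaracterisationRepresentation} together with the indecomposability hypothesis.
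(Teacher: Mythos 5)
Your proposal is correct, and its skeleton — Proposition~\ref{PropDecomposition}, self-duality of the full dimension vector, the interval-module description of the indecomposables of $A_{2m-1}$ together with $M[c,d]^{*}=M[-d,-c]$, and then matching indicator functions — is the same as the paper's; the paper likewise disposes of (a)(i), (b)(i), (b)(ii) "by the dimension vector" and excludes the self-dual pair $M[-a,a]\oplus M[-b,b]$ in (b)(iii) exactly as you do (it is not of the form $W\oplus W^{*}$ when $a<b$), and in (a)(iii) by the variant you also mention (Theorem~\ref{T:CaracterisationRepresentation} plus indecomposability as an orthogonal representation). The one place where you genuinely diverge is case (a)(ii): the paper chooses bases of $V_{-2},V_{0},V_{2}$, uses the orthogonality relation to get $x+y=0$ for the matrix entries of $\phi_{(-2\rightarrow 0)}$ and $\phi_{(0\rightarrow 2)}$, and argues that if these vanished then $V_{0}$ (with its nondegenerate restricted form) would split off, contradicting indecomposability — so both central maps are nonzero and the representation is the full interval. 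You instead observe that any decomposition $M[c,d]\oplus M[c,d]^{*}$ contributes an even dimension at the vertex $0$, while here $\dim V_{0}=1$, so Proposition~\ref{PropDecomposition} already forces quiver-indecomposability and the dimension vector identifies the interval. Your parity argument is shorter and avoids all computation with the bilinear form; the paper's computation yields the slightly finer structural fact that the two maps adjacent to the vertex $0$ are forced to be nonzero, in the same spirit as its treatment of the even case and of the explicit models built later. Your closing exhaustiveness check against the positive roots of $B_m$/$C_m$ is a useful sanity check but is not needed for the statement, which is conditional on the dimension vector.
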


\begin{proof}
(a) In the orthogonal case, (i) follows easily by considering just the dimension vector. 

For (ii), we must consider the maps $\phi_{(-2 \rightarrow 0)}:V_{-2} \rightarrow V_0$ and $\phi_{(0 \rightarrow 2)}:V_{0} \rightarrow V_{2}$. We can pick bases $\{v_2\}$, $\{v_0\}$ and $\{v_{-2}\}$ of $V_2$,  $V_0$ and $V_{-2}$ respectively and we can assume that our choice is such that $\langle v_2, v_{-2}\rangle = \langle v_{-2}, v_{2}\rangle = 1$ and $\langle v_0, v_{0}\rangle = 1$ and $\langle v_2, v_{0}\rangle = \langle v_{0}, v_{2}\rangle = \langle v_{-2}, v_{0}\rangle = \langle v_{0}, v_{-2}\rangle  =   \langle v_{2}, v_{2}\rangle  =  \langle v_{-2}, v_{-2}\rangle = 0$. Write $\phi_{(-2\rightarrow 0)}(v_{-2}) = xv_0$ and $\phi_{(0\rightarrow 2)}(v_{0}) = yv_2$, where $x, y \in {\mathbf k}$. Because $(V, \phi, \langle\  , \  \rangle)$ is an orthogonal representation, we have
\[
\langle \phi_{-2\rightarrow 0}(v_{-2}), v_{0}\rangle + \langle v_{-2}, \phi_{0\rightarrow 2}(v_{0})\rangle = 0 \Rightarrow x + y = 0.
\]
If $x = 0$, then $y = 0$ and both $\phi_{(-2 \rightarrow 0)}:V_{-2} \rightarrow V_0$ and $\phi_{(0 \rightarrow 2)}:V_{0} \rightarrow V_{2}$ are $0$. But this is impossible because it contradicts the fact that 
$(V, \phi, \langle\ , \ \rangle)$ is indecomposable. 
So $x \ne 0$, $y \ne 0$ and $\phi_{(-2 \rightarrow 0)}:V_{-2} \rightarrow V_0$ and $\phi_{(0 \rightarrow 2)}:V_{0} \rightarrow V_{2}$ are $\ne 0$. From this observation, we get that  (ii) follows.

As for (iii), there are two possibilities: either
\[
\beta_i = \begin{cases} 1, &\text{if $i \in {\mathcal I}$ and $-a \leq i \leq a$;}\\ 0, &\text{otherwise;} \end{cases}
\quad \text{ and } \quad
\beta'_i = \begin{cases} 1, &\text{if $i \in {\mathcal I}$ and $-b \leq i \leq  b$;}\\ 0, &\text{otherwise;} \end{cases}
\]
or 
\[
\beta_i = \begin{cases} 1, &\text{if $i \in {\mathcal I}$ and $-a \leq i \leq b$;}\\ 0, &\text{otherwise;} \end{cases}
\quad \text{ and } \quad
\beta'_i = \begin{cases} 1, &\text{if $i \in {\mathcal I}$ and $-b \leq i \leq  a$;}\\ 0, &\text{otherwise.} \end{cases}
\]
The first of these possibilities has to be excluded. Otherwise $(V, \phi, \langle\ , \ \rangle)$ would be isomorphic by theorem~\ref{T:CaracterisationRepresentation} as an orthogonal representation to the direct sum of an indecomposable orthogonal representation with dimension $\beta = (\beta_i)_{i \in {\mathcal I}_+}$ and an indecomposable orthogonal representation with dimension $\beta' = (\beta'_i)_{i \in {\mathcal I}_+}$. But this means that $(V, \phi, \langle\ , \ \rangle)$  is not an indecomposable representation of the symmetric quiver $A_m^{odd}$. This prove (iii).

(b) In the symplectic case,  (i) and (ii) follow easily by considering just the dimension vectors. 

As for (iii), because $a < b$, there are two distinct possibilities: either
\[
\beta_i = \begin{cases} 1, &\text{if $i \in {\mathcal I}$ and $-a \leq i \leq a$;}\\ 0, &\text{otherwise;} \end{cases}
\quad \text{ and } \quad
\beta'_i = \begin{cases} 1, &\text{if $i \in {\mathcal I}$ and $-b \leq i \leq  b$;}\\ 0, &\text{otherwise;} \end{cases}
\]
or 
\[
\beta_i = \begin{cases} 1, &\text{if $i \in {\mathcal I}$ and $-a \leq i \leq b$;}\\ 0, &\text{otherwise;} \end{cases}
\quad \text{ and } \quad
\beta'_i = \begin{cases} 1, &\text{if $i \in {\mathcal I}$ and $-b \leq i \leq  a$;}\\ 0, &\text{otherwise.} \end{cases}
\]
The first possibility has to be excluded because $a < b$, the proposition~\ref{PropDecomposition} and the fact that $\sigma(\beta_i)_{i \in {\mathcal I}} = (\beta_i)_{i \in {\mathcal I}}$ and $\sigma(\beta'_i)_{i \in {\mathcal I}} = (\beta'_i)_{i \in {\mathcal I}}$. 
This proves the claim.
\end{proof}

\begin{lemma}\label{L:DetThetaAOdd} 
Let $(V, \phi, \langle\ , \ \rangle)$ and $(V', \phi', \langle\ , \ \rangle')$ be two   isomorphic  orthogonal  representations of the symmetric quiver $A_m^{odd}$, where $V = V'$ as ${\mathcal I}$-graded vector space and let $\theta: V \rightarrow V'$ be an ${\mathcal I}$-graded isomorphism of orthogonal representations of the symmetric quiver $A_m^{odd}$.  
If $i \in {\mathcal I}$ is such that $V_i \ne \{0\}$ and consider  the induced isomorphism $\theta_i: V_i \rightarrow V'_i$, then $\det(\theta_{-i}) = \det(\theta_i)^{-1}$. If $V_0 \ne \{0\}$, $\theta_0$ is an orthogonal linear transformation of $V_0$ and $\det(\theta_0) = \pm1$. 
\end{lemma}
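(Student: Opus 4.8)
The plan is to translate the hypothesis ``$\theta$ is an isomorphism of orthogonal representations'' into an explicit identity for the bilinear form, after which the statement reduces to elementary determinant computations on each graded piece.

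First I would unravel the defining condition $\theta^{*}\circ\theta = \mathrm{Id}_{(V,\phi)}$. Recall that, under the identification of $V_{\Sigma}$ with $V_{\Sigma}^{*}$ by $v \mapsto \langle v,\cdot\rangle$ made in Definition~\ref{D:IsoDual}, the dual morphism has components $(\theta^{*})_i = (\theta_{\sigma(i)})^{*} = (\theta_{-i})^{*}$. Chasing a vector $u \in V_i$ through these identifications, and using that $V = V'$ carries the same form, one finds that $\theta^{*}\circ\theta = \mathrm{Id}$ is equivalent to
\[
\langle \theta_i(u),\, \theta_{-i}(w)\rangle \;=\; \langle u, w\rangle \qquad\text{for all } i\in\mathcal{I},\ u\in V_i,\ w\in V_{-i}.
\]
I expect the index bookkeeping here (with $\sigma$, the transpose, and the two copies of the identification $V_{\Sigma}\cong V_{\Sigma}^{*}$) to be the only genuinely delicate part; everything afterwards is routine linear algebra.

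Next I would record that, since $\langle\ ,\ \rangle$ is non-degenerate and its restriction to $V_i\times V_j$ vanishes unless $j = \sigma(i) = -i$, the restricted pairing $V_i\times V_{-i}\to{\mathbf k}$ is non-degenerate. In particular $\dim V_i = \dim V_{-i}$, so when $V_i\ne\{0\}$ the map $\theta_{-i}$ is an automorphism of $V_{-i}$ and $\det(\theta_{-i})$ is well defined. For such an $i\ne 0$, choose a basis $(e_1,\dots,e_d)$ of $V_i$ and let $(f_1,\dots,f_d)$ be the basis of $V_{-i}$ dual to it for this pairing, so $\langle e_j,f_k\rangle = \delta_{jk}$. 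Writing $P$ and $Q$ for the matrices of $\theta_i$ and $\theta_{-i}$ in these bases, the displayed identity evaluated on the pairs $(e_j,f_k)$ becomes $P^{\mathrm{t}}Q = I_d$; taking determinants yields $\det(\theta_i)\det(\theta_{-i}) = 1$, that is, $\det(\theta_{-i}) = \det(\theta_i)^{-1}$.

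Finally, for $i = 0$ (when $V_0\ne\{0\}$) note that $\sigma(0) = 0$ and that in the orthogonal case $\langle\ ,\ \rangle$ restricts to a non-degenerate \emph{symmetric} form on $V_0$. The $i=0$ instance of the displayed identity reads $\langle\theta_0(u),\theta_0(w)\rangle = \langle u,w\rangle$ for all $u,w\in V_0$, which is exactly the assertion that $\theta_0$ is an orthogonal transformation of $V_0$. Picking a basis of $V_0$ with (invertible) Gram matrix $B$ and letting $R$ be the matrix of $\theta_0$, this identity is $R^{\mathrm{t}}BR = B$; taking determinants gives $\det(R)^2 = 1$, hence $\det(\theta_0) = \pm 1$, which finishes the proof.
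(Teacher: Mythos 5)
Your proposal is correct and follows essentially the same route as the paper: both reduce the hypothesis to the identity $\langle \theta_i(u), \theta_{-i}(w)\rangle' = \langle u, w\rangle$ and then read off $\det(\theta_{-i}) = \det(\theta_i)^{-1}$ and, for $i=0$, that $\theta_0$ is orthogonal with $\det(\theta_0)=\pm1$. You merely make explicit (via dual bases and Gram matrices) the determinant computations that the paper's proof states without detail, and like the paper you implicitly use that the two representations carry the same bilinear form on $V=V'$, which is the situation in which the lemma is applied.
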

\begin{proof}
Because  we  have $\langle \theta_i(u), \theta_{\sigma(i)}(v)\rangle' = \langle u, v \rangle$ for all $u \in V_i$, $v \in V_{\sigma(i)}$ and $i \in {\mathcal I}$. A consequence of this is that $\det(\theta_{\sigma(i)}) = \det(\theta_{-i}) = \det(\theta_i)^{-1}$ for all $i \in {\mathcal I}$. Another consequence is that  $\langle \theta_0(u), \theta_0(v)\rangle' = \langle u, v \rangle$ for all $u, v \in V_0$ From this, we get that $\theta_0$ is an orthogonal linear transformation and $\det(\theta_0) = \pm 1$. 
\end{proof}

\begin{definition}
Let $(V, \phi, \langle\ , \ \rangle)$ and $(V', \phi', \langle\ , \ \rangle')$ be two   isomorphic  orthogonal  representations of the symmetric quiver $A_m^{odd}$, where $V = V'$ as ${\mathcal I}$-graded vector space. We say they are {\it isomorphic relative to the  special orthogonal group} if and only if $V_0 \ne \{0\}$ and there exists an ${\mathcal I}$-graded isomorphism $\theta: V \rightarrow V'$  as orthogonal representations of the symmetric quiver $A_m^{odd}$ such that $\det(\theta_0) = 1$. 
\end{definition}

\begin{notation}\label{N:EpsilonValue}
Denote by
\[
\epsilon = \begin{cases} \phantom{ - }1, &\text{ if we are in the orthogonal case;} \\ -1, &\text{ if we are in the symplectic case;} \end{cases}
\]
and, for an orthogonal (respectively symplectic) representation $(V, \phi, \langle\ , \ \rangle)$  of the symmetric quiver $(\Omega, \sigma)$, we will say that $(V, \phi, \langle\ , \ \rangle)$ is an {\it $\epsilon$-representation} when $\epsilon = 1$ (respectively $\epsilon = -1$). Note that with this notation,  we have for the bilinear form $\langle v, u \rangle = \epsilon \langle u, v \rangle$ for all $u, v \in V$.

\end{notation}

\section{Jacobson-Morozov triple for  indecomposable representations}

\subsection{}\label{SS:Iset}

Fix $m \in {\mathbb N}$ such that  $m \geq 1$,  $\epsilon \in \{1, -1\}$ as in notation~\ref{N:EpsilonValue}  and the symmetric quiver $(\Omega, \sigma)$ which  is either $A_m^{even}$ or $A_m^{odd}$. Thus  ${\mathcal I}$, as in \ref{SymmQuiver}, is 
\[
{\mathcal I} = \begin{cases} \{i \in {\mathbb Z} \mid i \equiv 1 \pmod 2, -2m < i < 2m\}, &\text{ in the case $A_m^{even}$;}\\ \\
\{i \in {\mathbb Z} \mid i \equiv 0 \pmod 2, -2m < i < 2m\}, &\text{ in the case $A_m^{odd}$;} \end{cases}
\]
and $\sigma:{\mathcal I} \rightarrow {\mathcal I}$ is $\sigma(i) = -i$ for all $i \in {\mathcal I}$.

The case where $\vert {\mathcal I} \vert = 1$ is easy to deal in our study. Thus from now on, we will always assume that $\vert {\mathcal I} \vert > 1$. 

Fix also the integer 
\[
\mu_{\max} = \begin{cases} 1, &\text{if  $\vert {\mathcal I}\vert \equiv 0 \pmod 2$ and $\epsilon = 1$;}\\ 0, &\text{if  $\vert {\mathcal I}\vert \equiv 0 \pmod 2$ and $\epsilon = -1$;}\\ 0, &\text{if $\vert {\mathcal I}\vert \equiv 1 \pmod 2$ and $\epsilon = 1$;}\\ 1, &\text{if  $\vert {\mathcal I}\vert \equiv 1 \pmod 2$ and $\epsilon = -1$.}\\
\end{cases}
\]

\begin{definition}\label{D:IBoxSet}
An {\it ${\mathcal I}$-diagram } is the Young diagram corresponding to the partition
\[
\vert{\mathcal I}\vert > (\vert{\mathcal I}\vert - 1) > \dots > 3 > 2 > 1
\]
where the $r^{th}$ part is $(\vert{\mathcal I}\vert - r + 1)$ for $r = 1, 2, \dots, \vert{\mathcal I}\vert$.

The rows are indexed by the elements of ${\mathcal I}$ in decreasing order from the first row to the last row and the columns are indexed by the elements of ${\mathcal I}$ in increasing order from the first column to the last column. Consequently we have a box at the intersection of the $i^{th}$ row and the $j^{th}$ column with $i, j \in {\mathcal I}$ if and only if $i \geq j$. 

The {\it principal diagonal} of the ${\mathcal I}$-diagram will correspond to the set of boxes on the row $i$ and the column $-i$ for $i \in \{i \in {\mathcal I} \mid i \geq 0\}$ and the {\it dimension diagonal} of the ${\mathcal I}$-diagram will correspond to the set of boxes on the row $i$ and the column $i$ for $i \in  {\mathcal I}$.

The ${\mathcal I}$-diagram come with a {\it box multiplicity function}
\[
\mu:\{(i, j) \in {\mathcal I} \times {\mathcal I} \mid i \geq j\} \rightarrow \{0, 1\}
\]
where 
\[
\mu(i, j) = \begin{cases} 0, &\text{if $i + j \ne 0$;}\\ \mu_{\max}, &\text{if $i + j = 0$.}\\
\end{cases}
\]

We define the set of {\it ${\mathcal I}$-boxes} of the ${\mathcal I}$-diagram to be the set
\[
{\mathbf B} = \{b(i, j, k) \mid i, j \in {\mathcal I}, i \geq j, 0 \leq k \leq \mu(i, j)\}.
\]
\end{definition}

\begin{example} 
For the symmetric quiver $A_3^{even}$, then   ${\mathcal I} = \{5, 3, 1, -1, -3, -5\}$  and the  ${\mathcal I}$-diagram is 

\begin{center}
\ytableausetup{centertableaux}
\begin{ytableau}
\none & \none [-5] & \none [-3] & \none [-1] & \none [1] & \none [3] & \none [5]\\ \none [{\phantom -}5] & & &  &  &  &  \\ \none [{\phantom -}3] &  &  &  &  & \\  \none [{\phantom -}1] &  &  & &  \\ \none [-1] & & & \\ \none [-3] & & \\ \none [-5] & \\
\end{ytableau}
\end{center}
where the indices for the rows and columns are inscribed.  

For the ${\mathcal I}$-boxes, the boxes in the picture of the ${\mathcal I}$-diagram are counted once if $i + j  \ne 0$ (i.e. not on the principal diagonal)  and counted $\mu_{max} + 1$ times if $i + j = 0$ (i.e. on the principal diagonal), that is once in the symplectic case and twice in the orthogonal case.
\end{example}

\begin{example} 
For the symmetric quiver $A_3^{odd}$, then  ${\mathcal I} = \{4, 2, 0, -2, -4\}$  and the  ${\mathcal I}$-diagram is 

\begin{center}
\ytableausetup{centertableaux}
\begin{ytableau}
\none & \none [-4] & \none [-2] & \none [0] & \none [2] & \none [4]  \\ \none [{\phantom -}4] & & &  &  &   \\ \none [{\phantom -}2] &  &  &  &  \\  \none [{\phantom -}0] &  &  &  \\ \none [-2] & & \\ \none [-4] &  \\ 
\end{ytableau}
\end{center}
where the indices for the rows and columns are inscribed.

For the ${\mathcal I}$-boxes, the boxes in the picture of the ${\mathcal I}$-diagram are counted once if $i + j  \ne 0$ (i.e. not on the principal diagonal)  and counted $\mu_{max} + 1$ times if $i + j = 0$ (i.e. on the principal diagonal), that is once in the orthogonal case and twice in the symplectic case.

\end{example}

\begin{notation}\label{N:TauDefinition}
Let $\tau$  denotes  the involution $\tau:{\mathbf B} \rightarrow {\mathbf B}$ on the set ${\mathbf B}$ of ${\mathcal I}$-boxes given by 
\[
\tau(b(i, j, k)) = \begin{cases} b(-j, -i, k), &\text{ if $i + j \ne 0$;}\\ b(i, j, k), &\text{ if $i + j = 0$ and $\mu_{max} = 0$;} \\ b(i, j, 1 - k), &\text{ if $i + j = 0$ and $\mu_{max} = 1$} \end{cases}
\]
for $i, j \in {\mathcal I}$, $i \geq j$ and $0 \leq k \leq \mu(i, j)$. 

Because $\vert{\mathcal I}\vert > 1$, then $\tau \ne Id_{\mathcal B}$.  Denote by $\langle \tau \rangle$: the group generated by $\tau$ and,  by our hypothesis, $\langle \tau \rangle$ is isomorphic to ${\mathbb Z}/2{\mathbb Z}$. 
\end{notation}

\begin{remark}\label{R:CardinalityIndecomposable}
We can count the number of $\langle \tau \rangle$-orbits in ${\mathbf B}$. If we are in the case of $A_m^{even}$, then $\vert {\mathcal I}\vert = 2m$ is even,  it is easy to see that this number  of $\langle \tau \rangle$-orbits is 
\[
2m + 2(m - 1) + \dots + 4 + 2 = 2\left( \frac{m(m + 1)}{2}\right) = m(m + 1); 
\]
while if we are in the case of $A_m^{odd}$, then $\vert {\mathcal I} \vert = 2m - 1$ is odd, it is also easy to see that this number is
\[
(2m - 1) + (2m - 3) + \dots + 3 + 1 = 2\left(\frac{m(m + 1)}{2}\right) - m = m^2.
\]
We see that this number of $\langle \tau \rangle$-orbits in ${\mathbf B}$ is the same as the number of isomorphism classes  of indecomposable orthogonal or symplectic representations. 
\end{remark}

\subsection{} \label{SS:JMTriple}
In this section,  we will construct for each  $\langle \tau \rangle$-orbit ${\mathcal O}$ in ${\mathbf B}$:
\begin{itemize}
\item a ${\mathcal I}$-graded vector space $V({\mathcal O}) = \oplus_{i \in {\mathcal I}} V_i({\mathcal O})$ over ${\mathbf k}$ with an ${\mathcal I}$-graded basis $B_{\mathcal O} = \coprod_{i \in {\mathcal I}} B_i$;
\item a non-degenerate   $\epsilon$-bilinear form $\langle\  , \  \rangle_{\mathcal O}$ on  $V({\mathcal O})$  such that the restriction of $\langle\  , \  \rangle_{\mathcal O}$ to $V_i({\mathcal O}) \times V_{j}({\mathcal O})$ is $0$ if $j \ne \sigma(i)$ for all $i, j \in {\mathcal I}$;
\item  three ${\mathcal I}$-graded  linear transformations  $E_{\mathcal O}:V({\mathcal O}) \rightarrow V({\mathcal O})$,  $H_{\mathcal O}:V({\mathcal O}) \rightarrow V({\mathcal O})$ and $F_{\mathcal O}:V({\mathcal O}) \rightarrow V({\mathcal O})$ such that 
\begin{itemize}
\item $E_{\mathcal O}$ is of degree 2, $H_{\mathcal O}$ is of degree 0 and $F_{\mathcal O}$ is of degree -2;
\item $\langle Xu, v\rangle_{\mathcal O} + \langle u, Xv\rangle_{\mathcal O} = 0$ for all $X \in \{E_{\mathcal O}, H_{\mathcal O}, F_{\mathcal O}\}$ and all $u, v \in V({\mathcal O})$;
\item $[H_{\mathcal O}, E_{\mathcal O}] = 2E_{\mathcal O}$, $[H_{\mathcal O}, F_{\mathcal O}] = -2F_{\mathcal O}$ and $[E_{\mathcal O}, F_{\mathcal O}] = H_{\mathcal O}$. In other words,  $(E_{\mathcal O}, H_{\mathcal O}, F_{\mathcal O})$ is an ${\mathcal I}$-graded Jacobson-Morozov triple for the Lie algebra corresponding to $(V({\mathcal O}), \langle\   ,  \   \rangle_{\mathcal O})$;
\end{itemize}
\item the ${\mathcal I}$-graded linear transformation $\phi_{\mathcal O}: V({\mathcal O}) \rightarrow V({\mathcal O})$ given by 
\[
\phi_{{\mathcal O}, i} = {E_{\mathcal O}}\vert_{V_i({\mathcal O})}: V_{i}({\mathcal O}) \rightarrow V_{i + 2}({\mathcal O}) \quad \text{for all $i \in {\mathcal I}$  such that $(i + 2) \in {\mathcal I}$},
\]
is  such that $(V({\mathcal O}), \phi_{\mathcal O}, \langle\ , \ \rangle)$ is an $\epsilon$-representation.
\end{itemize}
Moreover $\{(V({\mathcal O}), \phi_{\mathcal O}, \langle\ , \ \rangle) \mid \text{ ${\mathcal O}$  belongs to the set of $\langle \tau \rangle$-orbits  in ${\mathbf B}$}\}$   is a set of representatives of the isomorphism classes of indecomposable $\epsilon$-representations of $(\Omega, \sigma)$.

\begin{definition}
An ${\mathcal I}$-box $b = b(i, j, k) \in {\mathbf B}$ with $i, j \in {\mathcal I}$, $i \geq j$ and $0 \leq k \leq \mu(i, j)$,  is said to be {\it strictly above the principal diagonal} (respectively {\it on the principal diagonal}, {\it strictly below the principal diagonal} ) if $i + j > 0$ (respectively $i + j = 0$, $i + j < 0$). 
\end{definition}

\begin{definition}
Given an ${\mathcal I}$-box $b = b(i, j, k) \in {\mathbf B}$, the {\it support of $b$} denoted $Supp(b)$ is defined to be the subset $Supp(b) = \{i' \in {\mathcal I} \mid j \leq i' \leq i\}$ of ${\mathcal I}$. 
\end{definition}

\begin{lemma}\label{L:TauBox}
Let $b = b(i, j, k) \in {\mathbf B}$ be an ${\mathcal I}$-box with $i, j \in {\mathcal I}$, $i \geq j$ and $0 \leq k \leq \mu(i, j)$.
\begin{enumerate}[\upshape (a)] 
\item $Supp(\tau(b)) = \sigma(Supp(b))$.
\item If $b$ is strictly above (respectively below) the principal diagonal, then $\tau(b)$ is strictly below (respectively above) the principal diagonal.
\item If $b$ is not on the principal diagonal, then $\tau(b) \ne b$.
\item If $b$ is on the principal diagonal and $\tau(b) \ne b$, then $\mu_{\max} = 1$ and $\tau(b) = b(i, j, 1 - k)$.
\item If  $b$ is on the principal diagonal , $\tau(b) = b$ and $\vert {\mathcal I}\vert$ is even, then  $\mu_{\max} = 0$,  $0 \not\in {\mathcal I}$ and $\epsilon = -1$.
\item If  $b$ is on the principal diagonal , $\tau(b) = b$ and $\vert {\mathcal I}\vert$ is odd, then $\mu_{\max} = 0$, $0 \in {\mathcal I}$ and $\epsilon = 1$.
\end{enumerate}
\end{lemma}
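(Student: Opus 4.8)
The plan is to verify each of the six assertions directly from the definition of $\tau$ in Notation~\ref{N:TauDefinition} together with the definitions of $Supp$ and $\sigma$, since each is essentially an unwinding of the case-split defining $\tau$. First I would dispose of (a): if $b = b(i,j,k)$ is not on the principal diagonal, then $\tau(b) = b(-j,-i,k)$, so $Supp(\tau(b)) = \{i' \in {\mathcal I} \mid -i \leq i' \leq -j\} = -\{i' \in {\mathcal I} \mid j \leq i' \leq i\} = \sigma(Supp(b))$ because $\sigma(i') = -i'$; and if $b$ is on the principal diagonal then $\tau(b)$ agrees with $b$ in its first two coordinates, so $Supp(\tau(b)) = Supp(b)$, while simultaneously $i + j = 0$ forces $Supp(b)$ to be symmetric under $\sigma$ (it is an interval in ${\mathcal I}$ centered at $0$), hence $\sigma(Supp(b)) = Supp(b)$ as well. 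Part (b) is immediate from the same formula: above the diagonal means $i + j > 0$, and then $\tau(b) = b(-j,-i,k)$ has coordinate-sum $(-j) + (-i) = -(i+j) < 0$, i.e.\ strictly below; the reverse direction is symmetric.

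Next I would handle (c) and (d), which are almost tautological. For (c), if $b$ is not on the principal diagonal then $i + j \neq 0$, so $\tau(b) = b(-j,-i,k)$; this equals $b(i,j,k)$ only if $-j = i$ and $-i = j$, i.e.\ $i + j = 0$, contradiction, so $\tau(b) \neq b$. For (d), if $b$ is on the principal diagonal, the definition gives $\tau(b) = b$ when $\mu_{\max} = 0$ and $\tau(b) = b(i,j,1-k)$ when $\mu_{\max} = 1$; so $\tau(b) \neq b$ forces $\mu_{\max} = 1$ and then $\tau(b) = b(i,j,1-k)$ as claimed (and indeed $b(i,j,0) \neq b(i,j,1)$ since $\mu(i,j) = \mu_{\max} = 1$ makes both genuine ${\mathcal I}$-boxes).

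Finally, parts (e) and (f) are the ones requiring a small argument rather than pure unwinding, so I expect them to be the main (though still modest) obstacle. In both, the hypotheses are: $b = b(i,j,k)$ is on the principal diagonal (so $i + j = 0$, $j = -i$) and $\tau(b) = b$. By the contrapositive of (d), $\tau(b) = b$ with $b$ on the diagonal forces $\mu_{\max} = 0$ — unless one worries about the case where the two candidate boxes $b(i,j,0)$ and $b(i,j,1)$ coincide, which cannot happen once $\mu(i,j) = 1$, so $\mu_{\max} = 0$ is forced. Now I invoke the explicit formula for $\mu_{\max}$ from \ref{SS:Iset}. For (e), $\vert {\mathcal I} \vert$ even means we are in case $A_m^{even}$, where ${\mathcal I}$ consists of odd integers, hence $0 \notin {\mathcal I}$; and among the four cases defining $\mu_{\max}$ with $\vert {\mathcal I} \vert$ even, the value $\mu_{\max} = 0$ occurs precisely when $\epsilon = -1$, so $\epsilon = -1$. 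For (f), $\vert {\mathcal I} \vert$ odd means case $A_m^{odd}$, where ${\mathcal I}$ consists of even integers and (since $\vert {\mathcal I} \vert = 2m - 1$ is odd and ${\mathcal I}$ is symmetric about $0$) we have $0 \in {\mathcal I}$; and among the cases with $\vert {\mathcal I} \vert$ odd, $\mu_{\max} = 0$ occurs precisely when $\epsilon = 1$, so $\epsilon = 1$. This completes all six parts. The only subtlety worth stating carefully is why the existence of a diagonal box $b$ with $\tau(b) = b$ is not automatically excluded by $\mu_{\max} = 1$: one must note that $b(i,-i,0)$ is always an ${\mathcal I}$-box on the diagonal regardless of $\mu_{\max}$, and when $\mu_{\max} = 1$ it pairs with the distinct box $b(i,-i,1)$, so no diagonal box is $\tau$-fixed in that regime — which is exactly the reasoning underlying (d).
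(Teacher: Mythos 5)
Your proof is correct and is exactly the argument the paper has in mind: the paper dismisses this lemma with "follows easily from the definitions," and your case-by-case unwinding of $\tau$, $Supp$, $\sigma$ and the table defining $\mu_{\max}$ is precisely that verification, including the right observation in (d)–(f) that $\mu_{\max}=1$ makes every diagonal box pair with a distinct partner, so a $\tau$-fixed diagonal box forces $\mu_{\max}=0$.
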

\begin{proof}
This lemma follows easily from the definitions.
\end{proof}
 
\begin{notation}\label{N:BasisVOrbit}
For the ${\mathcal I}$-box $b \in {\mathbf B}$, $V(b)$ denotes the ${\mathcal I}$-graded  finite dimensional ${\bf k}$-vector space 
\[
V(b) = \bigoplus_{i \in Supp(b)} V_i(b)
\]
with the ${\mathcal I}$-graded basis ${\mathcal B}_b = \{v_i(b) \mid i \in Supp(b)\}$ such that  $v_i(b) \in V_i(b)$.  For the $\langle \tau \rangle$-orbit ${\mathcal O}$, $V({\mathcal O})$ denotes  the ${\mathcal I}$-graded  finite dimensional ${\bf k}$-vector space 
\[
V({\mathcal O}) = \bigoplus_{b \in {\mathcal O}} V(b) 
\]
with  the ${\mathcal I}$-graded basis ${\mathcal B}_{\mathcal O} =\coprod_{b \in {\mathcal O}} {\mathcal B}_b$.  

We will denote the $i$-component of $V({\mathcal O})$ by 
\[
V_i({\mathcal O}) =  \bigoplus_{\substack{b \in {\mathcal O}\\ i \in Supp(b)}} V_i(b). 
\]

Consider the unique bilinear form $\langle\ , \  \rangle_{\mathcal O}: V({\mathcal O}) \times V({\mathcal O})  \rightarrow {\mathbf k}$ such that, for $b = b(i_1, j_1, k_1)  \in {\mathcal O}, b' = b(i_2,  j_2, k_2) \in {\mathcal O}$, $i \in Supp(b)$ and $i' \in Supp(b')$, we have

\[
\langle v_i(b), v_{i'}(b')\rangle_{\mathcal O} 
= \begin{cases} 1, &\text{if $b' = \tau(b)$, $i > 0$ and $i + i' = 0$; } \\
\epsilon, &\text{if $b' = \tau(b)$, $i < 0$ and $i + i' = 0$; }\\ 
1, &\text{if $b$ is above  the principal diagonal, $b' = \tau(b)$}\\  &\text{and $i = i' = 0$; } \\ 
\epsilon, &\text{if $b$ is below  the principal diagonal, $b' = \tau(b)$}\\  &\text{and $i = i' = 0$; } \\ 
\epsilon^{k_1}, &\text{if $b$ is on the principal diagonal, $b \ne \tau(b)$, }\\  &\text{$b' = \tau(b)$ and $i = i' = 0$; }\\ 
1, &\text{if $b$ is on the principal diagonal, $b = \tau(b) = b'$}\\  &\text{and $i = i' = 0$; }\\ 
0, &\text{otherwise.}
\end{cases}
\]
\end{notation}

\begin{lemma}
$\langle \  , \  \rangle_{\mathcal O}$ is a non-degenerate $\epsilon$-bilinear form on $V({\mathcal O})$  such that the restriction of $\langle\  , \  \rangle_{\mathcal O}$ to $V_i({\mathcal O}) \times V_j({\mathcal O})$ is $0$ if $j \ne \sigma(i)$ for all $i, j \in {\mathcal I}$. 
\end{lemma}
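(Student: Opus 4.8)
The plan is to verify each of the three asserted properties directly from the explicit formula for $\langle\ ,\ \rangle_{\mathcal O}$ given in Notation~\ref{N:BasisVOrbit}, reducing everything to a computation on the ${\mathcal I}$-graded basis ${\mathcal B}_{\mathcal O} = \coprod_{b \in {\mathcal O}} {\mathcal B}_b$. First I would record the bookkeeping: a basis vector $v_i(b)$ has degree $i$, and the defining formula assigns a nonzero value to $\langle v_i(b), v_{i'}(b')\rangle_{\mathcal O}$ only when $i + i' = 0$ (the first five cases all force $i+i'=0$, and the sixth has $i=i'=0$). This immediately gives the grading-compatibility statement: if $u \in V_i({\mathcal O})$ and $v \in V_j({\mathcal O})$ with $j \ne \sigma(i) = -i$, then expanding $u, v$ in the basis we get a sum of terms $\langle v_i(b), v_j(b')\rangle_{\mathcal O}$ with $i + j \ne 0$, each of which is $0$ by the last ("otherwise") case. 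So $\langle u, v\rangle_{\mathcal O} = 0$.

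Next I would prove the $\epsilon$-symmetry, i.e.\ $\langle v, u\rangle_{\mathcal O} = \epsilon\,\langle u, v\rangle_{\mathcal O}$ for basis vectors (which suffices by bilinearity). Here one compares $\langle v_i(b), v_{i'}(b')\rangle_{\mathcal O}$ with $\langle v_{i'}(b'), v_i(b)\rangle_{\mathcal O}$, case by case, using that $\tau$ is an involution so $b' = \tau(b) \iff b = \tau(b')$, and that $b$ is above the principal diagonal iff $b' = \tau(b)$ is below it (Lemma~\ref{L:TauBox}(b)), and likewise $i > 0 \iff i' = -i < 0$. Thus swapping the arguments exchanges the "$i>0$" case (value $1$) with the "$i<0$" case (value $\epsilon$), and exchanges the "above, $i=i'=0$" case (value $1$) with the "below, $i=i'=0$" case (value $\epsilon$), and in each of these $1 = \epsilon\cdot\epsilon$ and $\epsilon = \epsilon\cdot 1$, consistent with $\epsilon \in \{1,-1\}$. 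For the diagonal self-paired box case ($b = \tau(b) = b'$, $i = i' = 0$) the value $1$ is symmetric, so I must check $\epsilon = 1$ there; this is exactly Lemma~\ref{L:TauBox}(f) (when such a box exists, $\vert{\mathcal I}\vert$ is odd, $0 \in {\mathcal I}$, and $\epsilon = 1$). For the case $b$ on the principal diagonal, $b \ne \tau(b)$, $b' = \tau(b)$, $i=i'=0$ (value $\epsilon^{k_1}$): swapping gives $\langle v_0(b'), v_0(b)\rangle_{\mathcal O} = \epsilon^{k_2}$ where $b = \tau(b')$; by Lemma~\ref{L:TauBox}(d), $\mu_{\max}=1$ and $\tau(b) = b(i,j,1-k)$, so $k_2 = 1 - k_1$, and one needs $\epsilon^{k_1} = \epsilon\cdot\epsilon^{1-k_1} = \epsilon^{2 - k_1}$, i.e.\ $\epsilon^{2(k_1-1)} = 1$, which holds since $\epsilon^2 = 1$.

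Finally I would establish non-degeneracy by exhibiting the Gram matrix of $\langle\ ,\ \rangle_{\mathcal O}$ in the basis ${\mathcal B}_{\mathcal O}$ as block-diagonal (after reordering) with invertible blocks. Each $\langle\tau\rangle$-orbit ${\mathcal O}$ is either $\{b, \tau(b)\}$ with $b \ne \tau(b)$, or $\{b\}$ with $b = \tau(b)$. For an orbit $\{b, \tau(b)\}$, the vectors $\{v_i(b) : i \in Supp(b)\} \cup \{v_{i'}(\tau(b)) : i' \in Supp(\tau(b))\}$ pair among themselves: since $Supp(\tau(b)) = \sigma(Supp(b)) = -Supp(b)$ (Lemma~\ref{L:TauBox}(a)), for each $i \in Supp(b)$ the vector $v_i(b)$ pairs nontrivially only with $v_{-i}(\tau(b))$, with value $1$ or $\epsilon$ or $\epsilon^{k_1}$ — always a unit in ${\mathbf k}$ — so the corresponding $2\times 2$ (or $1\times1$, when $i=0$ and $b=\tau(b)$ is excluded here) sub-block is anti-diagonal with unit entries, hence invertible; these sub-blocks for distinct $\pm i$ are orthogonal to each other. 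For a self-paired orbit $\{b\}$ with $b = \tau(b)$, by Lemma~\ref{L:TauBox}(c) $b$ is on the principal diagonal, so $Supp(b)$ is symmetric and again $v_i(b)$ pairs only with $v_{-i}(b)$, giving anti-diagonal unit blocks for each pair $\{i,-i\}$ with $i \ne 0$, and the $1\times1$ block for $i = 0$ has entry $1$ by the sixth case. Collecting all these across all orbits $b \in {\mathcal O}$ shows the full Gram matrix is a permutation of a block-diagonal matrix with invertible blocks, hence invertible, i.e.\ $\langle\ ,\ \rangle_{\mathcal O}$ is non-degenerate. The only real subtlety — the "main obstacle" — is getting the case analysis for the principal-diagonal boxes exactly right, in particular correctly invoking Lemma~\ref{L:TauBox}(d)--(f) to know when a diagonal box is or is not $\tau$-fixed and what $\epsilon$ and $\mu_{\max}$ must then be; everything else is routine bilinear-form bookkeeping.
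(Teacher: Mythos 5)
Your proposal is correct and follows essentially the same route as the paper: both verify the $\epsilon$-symmetry and the grading condition case by case on the basis ${\mathcal B}_{\mathcal O}$ using Lemma~\ref{L:TauBox} to control the principal-diagonal boxes, and both obtain non-degeneracy from the fact that $v_i(b)$ pairs nontrivially (with a unit value) only against $v_{-i}(\tau(b))$ — your block-diagonal Gram-matrix formulation is just a repackaging of the paper's argument that testing against these dual vectors kills all coefficients.
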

\begin{proof}
To show that the bilinear form $\langle\ , \  \rangle_{\mathcal O}$ is an $\epsilon$-bilinear form, it is enough to show it on the basis ${\mathcal B}_{\mathcal O}$ of $V({\mathcal O})$. We will consider separately the cases when  $\vert {\mathcal I}\vert$ is even and when it is odd.  

First assume $\vert {\mathcal I} \vert$ is even then $0 \not \in {\mathcal I}$. Assume that the orbit ${\mathcal O}$ has two elements: $\{b, \tau(b)\}$ with $b \ne \tau(b)$. If $b$ is not on the principal diagonal, then $\tau(b)$ is also not on the principal diagonal. Note that $\langle v_i(b), v_j(\tau(b))\rangle_{\mathcal O}$ and $\langle v_i(\tau(b)), v_j(b)\rangle_{\mathcal O}$ are $\ne 0$ if and only if $i + j = 0$. Thus it is enough to consider
\[
\langle v_i(b), v_{-i}(\tau(b))\rangle_{\mathcal O} = \begin{cases} 1, &\text{if $i > 0$;} \\ \epsilon, &\text{if $i < 0$;} \end{cases} \quad \text{ and } \quad \langle v_{-i}(\tau(b)), v_{i}(b)\rangle_{\mathcal O} = \begin{cases} \epsilon, &\text{if $i > 0$;} \\ 1, &\text{if $i < 0$.} \end{cases}
\] 
A similar argument works if $b$ is on the principal diagonal  and  $\tau(b) \ne b$. This is left to the reader.  Assume now that the orbit ${\mathcal O}$ has exactly one element: $\{b\}$ with $b = \tau(b)$. This happens only when $b$ is on the principal diagonal. Note that in this case $\langle v_i(b), v_j(b)\rangle_{\mathcal O}$ is  $\ne 0$ if and only if $i + j = 0$. Thus it is enough to consider
\[
\langle v_i(b), v_{-i}(b)\rangle_{\mathcal O} = \begin{cases} 1, &\text{if $i > 0$;} \\ \epsilon, &\text{if $i < 0$;} \end{cases} \quad \text{ and } \quad \langle v_{-i}(b), v_{i}(b)\rangle_{\mathcal O} = \begin{cases} \epsilon, &\text{if $i > 0$;} \\ 1, &\text{if $i < 0$.} \end{cases}
\] 
So if $\vert {\mathcal I} \vert$ is even, then we have $\langle u, v \rangle_{\mathcal O} = \epsilon \langle v, u \rangle_{\mathcal O}$ for all $u, v \in {\mathcal B}_{\mathcal O}$. 

Secondly assume $\vert {\mathcal I} \vert$ is odd,  then $0 \in {\mathcal I}$. Assume that the orbit ${\mathcal O}$ has two elements: $\{b, \tau(b)\}$ with $b \ne \tau(b)$. If $b$ is not on the principal diagonal, then $\tau(b)$ is also not on the principal diagonal. More precisely, if $b$ is above (respectively below) the principal diagonal, then $\tau(b)$ is below (respectively above) the principal diagonal. Note that $\langle v_i(b), v_j(\tau(b))\rangle_{\mathcal O}$ and $\langle v_i(\tau(b)), v_j(b)\rangle_{\mathcal O}$ are $\ne 0$ if and only if $i + j = 0$.  Assume that $b$ is above the principal diagonal, then 
\[
\langle v_i(b), v_{-i}(\tau(b))\rangle_{\mathcal O} = \begin{cases} 1, &\text{if $i \geq 0$;} \\ \epsilon, &\text{if $i < 0$;} \end{cases} \quad \text{ and } \quad \langle v_{-i}(\tau(b)), v_{i}(b)\rangle_{\mathcal O} = \begin{cases} \epsilon, &\text{if $i \geq 0$;} \\ 1, &\text{if $i < 0$.} \end{cases}
\] 
A similar argument works if $b$ is below the principal diagonal.  Assume now that $b$ is on the principal diagonal and $\tau(b) \ne b$, then,  as seen in lemma~\ref{L:TauBox} (d), $\mu_{\max} = 1$ and if $b = b(i_1, j_1, k_1)$ with $i_1, j_1 \in {\mathcal I}$, $i_1 \geq j_1$ and $0 \leq k_1 \leq \mu(i_1, j_1)$, then $\tau(b) = b(i_1, j_1, 1 - k_1)$. Note that $\langle v_i(b), v_j(\tau(b))\rangle_{\mathcal O}$ and $\langle v_i(\tau(b)), v_j(b)\rangle_{\mathcal O}$ are $\ne 0$ if and only if $i + j = 0$. Thus it is enough to consider
\[
\langle v_i(b), v_{-i}(\tau(b))\rangle_{\mathcal O} = \begin{cases} 1, &\text{if $i > 0$;} \\ \epsilon^{k_1}, &\text{if $i = 0$;} \\ \epsilon, &\text{if $i < 0$;} \end{cases} \  \text{ and } \  \langle v_{-i}(\tau(b)), v_{i}(b)\rangle_{\mathcal O} = \begin{cases} \epsilon, &\text{if $i > 0$;} \\ \epsilon^{1 - k_1}, &\text{if $i = 0$;}\\ 1, &\text{if $i < 0$.} \end{cases}
\] 
Assume now that the orbit ${\mathcal O}$ has exactly one element: $\{b\}$ with $b = \tau(b)$. This  happens only when $b$ is on the principal diagonal. By lemma~\ref{L:TauBox} (f), $\epsilon = 1$. Note that in this case $\langle v_i(b), v_j(b)\rangle_{\mathcal O}$ is  $\ne 0$ if and only if $i + j = 0$.  Thus we consider 
\[
\langle v_i(b), v_{-i}(b)\rangle_{\mathcal O} = \begin{cases} 1, &\text{if $i \geq 0$;} \\ 1, &\text{if $i < 0$;} \end{cases} \quad \text{ and } \quad \langle v_{-i}(b), v_{i}(b)\rangle_{\mathcal O} = \begin{cases} 1, &\text{if $i \geq 0$;} \\ 1, &\text{if $i < 0$.} \end{cases}
\] 
So if $\vert {\mathcal I} \vert$ is odd, then we have $\langle u, v \rangle_{\mathcal O} = \epsilon \langle v, u \rangle_{\mathcal O}$ for all $u, v \in {\mathcal B}_{\mathcal O}$.

To show that $\langle\  ,\   \rangle_{\mathcal O}$ is non-degenerate, consider $v \in V({\mathcal O})$ such that $\langle v, v' \rangle_{\mathcal O} = 0$ for all $v' \in V({\mathcal O})$. Write $v = \sum_{b \in {\mathcal O}} \sum_{i \in Supp(b)} c_{i, b} v_i(b)$ with $c_{i, b} \in {\mathbf k}$.  For any $b' \in {\mathcal O}$ and $i' \in Supp(b')$, we have $\tau(b') \in {\mathcal O}$, $-i' = \sigma(i') \in Supp(\tau(b'))$ by lemma~\ref{L:TauBox} (a) and $v_{-i'}(\tau(b')) \in V({\mathcal O})$.  We get that 
\[
\begin{aligned}
0 &= \langle v, v_{-i'}(\tau(b'))\rangle_{\mathcal O} =   \sum_{b \in {\mathcal O}} \sum_{i \in Supp(b)} c_{i, b} \langle v_i(b), v_{-i'}(\tau(b'))\rangle_{\mathcal O}\\  &= c_{i', b'} \langle v_{i'}(b'), v_{-i'}(\tau(b'))\rangle_{\mathcal O}
\end{aligned}
\]
and $ \langle v_{i'}(b'), v_{-i'}(\tau(b'))\rangle_{\mathcal O} \ne 0$, from our definition of $\langle\  , \  \rangle_{\mathcal O}$. Thus  we get that $c_{i', b'} = 0$ for all $b' \in {\mathcal O}$ and $i' \in Supp(b')$ and $v = 0$. So $\langle\  , \  \rangle_{\mathcal O}$ is non-degenerate. 

Let $v \in V_i({\mathcal O})$ and $v' \in V_{i'}({\mathcal O})$ with $i' \ne \sigma(i) = -i$.  Write 
\[
v = \sum_{\substack{b \in {\mathcal O}  \\ i \in Supp(b)}} c_{i, b} v_i(b) \quad \text{ and } \quad v' = \sum_{\substack{b' \in {\mathcal O}, \\ i' \in Supp(b')}} c_{i', b'} v_{i'}(b').
\]
Then 
\[
\langle v, v' \rangle_{\mathcal O} = \sum_{\substack{b, b' \in {\mathcal O}\\ i \in Supp(b)\\ i' \in Supp(b')}} c_{i, b} c_{i', b'} \langle v_i(b), v_{i'}(b')\rangle_{\mathcal O} = 0 
\]
from our definition of $\langle\  , \  \rangle_{\mathcal O}$ and because $i' \ne -i$. 
\end{proof}

\begin{definition}
For the ${\mathcal I}$-box $b = b(i, j, k) \in  {\mathbf B}$ with $i, j, \in {\mathcal I}$, $i \geq j$ and $0 \leq k \leq \mu(i, j)$  and for $i' \in Supp(b)$, then we define
\[
h_{i'}(b) = \#\{j'' \in {\mathcal I} \mid j \leq j'' \leq i'\}  - \#\{i'' \in {\mathcal I} \mid i' \leq i'' \leq i\}
\]
and 
\[
f_{i'}(b) = \#\{ (i'', j'') \in {\mathcal I} \times {\mathcal I} \mid i'' \geq j'',\   i' \leq i'' \leq i,\    j < j'' \leq i'\}.
\]
\end{definition}

\begin{lemma}\label{L:RelationCoefficientsHandF}
Let $b \in  {\mathbf B}$ be the ${\mathcal I}$-box $b = b(i, j, k)$  with $i, j, \in {\mathcal I}$, $i \geq j$ and $0 \leq k \leq \mu(i, j)$  and  let $i' \in Supp(b)$. Then we have
\begin{enumerate}[\upshape (a)] 
\item $h_{-i'}(\tau(b)) = - h_{i'}(b)$;
\item If $i' \ne \min(Supp(b))$, then $(-i' + 2) \in Supp(\tau(b))$, $(-i' + 2) \ne \min(Supp(\tau(b))$ and $f_{-i' + 2}(\tau(b)) = f_{i'}(b)$;
\item If $i' \ne \max(Supp(b))$, then $(i' + 2) \in Supp(b)$ and $(h_{i' + 2}(b) - h_{i'}(b)) = 2$. 
\item If $i' \ne \min(Supp(b))$, then $(i' - 2) \in Supp(b)$ and $(h_{i' - 2}(b) - h_{i'}(b)) = -2$. 
\item If $i' = \min(Supp(b)) < \max(Supp(b))$, then $(i' - 2) \not\in Supp(b)$, $(i' + 2) \in Supp(b)$,  $f_{i'}(b) = 0$ and $f_{i' + 2}(b) = -h_{i'}(b)$;
\item If $\min(Supp(b)) < i' =  \max(Supp(b))$, then $(i' - 2) \in Supp(b)$, $(i' + 2) \not\in Supp(b)$ and $f_{i'}(b) = h_{i'}(b)$;
\item If $\min(Supp(b)) < i' <  \max(Supp(b))$, then $(i' - 2), (i' + 2)  \in Supp(b)$ and $(f_{i'}(b) - f_{i' + 2}(b)) = h_{i'}(b)$.
\end{enumerate}
\end{lemma}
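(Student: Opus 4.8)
The plan is to reduce all seven statements to two explicit closed formulas, after which everything is elementary arithmetic. The one structural fact needed is that, in both cases of Subsection~\ref{SS:Iset}, the set ${\mathcal I}$ consists of all integers of a fixed parity lying strictly between $-2m$ and $2m$; in particular consecutive elements of ${\mathcal I}$ differ by $2$, every integer of that parity in a closed interval with endpoints in ${\mathcal I}$ again lies in ${\mathcal I}$, and $\#\{z \in {\mathcal I} \mid x \le z \le y\} = \tfrac{y-x}{2}+1$ whenever $x \le y$ in ${\mathcal I}$. Moreover $\sigma$ preserves ${\mathcal I}$, so by Lemma~\ref{L:TauBox}(a) one has $-i' \in Supp(\tau(b))$ whenever $i' \in Supp(b)$, which makes the quantities $h_{-i'}(\tau(b))$ and $f_{-i'+2}(\tau(b))$ below well defined.

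First I would record, for $b = b(i,j,k)$ and $i' \in Supp(b)$ (so $j \le i' \le i$), that
\[
h_{i'}(b) = \Bigl(\tfrac{i'-j}{2}+1\Bigr) - \Bigl(\tfrac{i-i'}{2}+1\Bigr) = i' - \tfrac{i+j}{2}.
\]
For $f_{i'}(b)$, the defining conditions $j'' \le i'$ and $i' \le i''$ already force $j'' \le i''$, so the constraint ``$i'' \ge j''$'' is vacuous and the count factors:
\[
f_{i'}(b) = \#\{i'' \in {\mathcal I} \mid i' \le i'' \le i\}\cdot\#\{j'' \in {\mathcal I} \mid j < j'' \le i'\} = \Bigl(\tfrac{i-i'}{2}+1\Bigr)\cdot\tfrac{i'-j}{2},
\]
the second factor vanishing precisely when $i' = j = \min Supp(b)$.

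With these in hand the verifications are routine. For (a), if $i+j\ne 0$ then $\tau(b)=b(-j,-i,k)$ has ``row maximum'' $-j$ and ``column minimum'' $-i$, whence $h_{-i'}(\tau(b)) = -i' - \tfrac{(-j)+(-i)}{2} = -h_{i'}(b)$; if $i+j=0$ then $\tau(b)$ keeps the pair $(i,-i)$ and $h_{-i'}(\tau(b)) = -i' = -h_{i'}(b)$. Parts (c) and (d) are immediate from $h_{i'\pm 2}(b) - h_{i'}(b) = \pm 2$, together with the observation that $i' \ne \max Supp(b)$ (resp.\ $\ne \min$) and convexity of ${\mathcal I}$ give $i'+2 \in Supp(b)$ (resp.\ $i'-2 \in Supp(b)$). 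For (b), $j < i' \le i$ gives $-i \le -i'+2 \le -j$ with $-i'+2 \ne -i$, hence $-i'+2$ lies in $Supp(\tau(b))$ and is not its minimum; substituting the row maximum and column minimum of $\tau(b)$ (whether or not $i+j=0$) into the product formula for $f$ reproduces $\bigl(\tfrac{i-i'}{2}+1\bigr)\cdot\tfrac{i'-j}{2} = f_{i'}(b)$. For (e), $f_{i'}(b) = f_j(b) = 0$ and $f_{i'+2}(b) = f_{j+2}(b) = \tfrac{i-j}{2} = -h_j(b)$; for (f), $f_{i'}(b) = f_i(b) = \tfrac{i-j}{2} = h_i(b)$. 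Finally (g) is the identity $(p+1)q - p(q+1) = q-p$ with $p = \tfrac{i-i'}{2}$, $q = \tfrac{i'-j}{2}$, which gives $f_{i'}(b) - f_{i'+2}(b) = q - p = i' - \tfrac{i+j}{2} = h_{i'}(b)$, the memberships $i'\pm 2 \in Supp(b)$ again following from $j < i' < i$ and convexity.

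There is no genuinely hard step; the content is bookkeeping with arithmetic progressions. The one point deserving care, used in deriving the formula for $f_{i'}(b)$ and hence throughout (e)--(g), is the claim that ``$i'' \ge j''$'' is automatic in the definition of $f_{i'}(b)$, since this is exactly what makes the count a clean product; and in (a)--(b) one must track the position of $b$ relative to the principal diagonal, though by Lemma~\ref{L:TauBox} only the ``above $\leftrightarrow$ below'' interchange and the ``$\tau$-fixed diagonal box'' cases arise, and both produce the same formula.
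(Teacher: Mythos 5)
Your proof is correct, and it takes a mildly but genuinely different route from the paper's. The paper argues directly on the counting sets in the definitions of $h_{i'}(b)$ and $f_{i'}(b)$, manipulating cardinalities case by case (with an inclusion--exclusion-style computation for (g)); it never writes down closed formulas inside this proof, and the identity $i'-h_{i'}(b)=(i+j)/2$ only appears afterwards as a separate lemma. You instead front-load two explicit formulas, $h_{i'}(b)=i'-\tfrac{i+j}{2}$ and $f_{i'}(b)=\bigl(\tfrac{i-i'}{2}+1\bigr)\tfrac{i'-j}{2}$, the latter resting on the correct observation that the constraint $i''\ge j''$ in the definition of $f_{i'}(b)$ is implied by $j''\le i'\le i''$ and hence the count factors as a product; both formulas use the step-two arithmetic-progression structure of ${\mathcal I}$, which you state explicitly and which the paper uses only implicitly. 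After that, all seven parts, including the membership claims $i'\pm 2\in Supp(b)$ and $-i'+2\in Supp(\tau(b))$ and the diagonal versus off-diagonal behaviour of $\tau$, reduce to one-line arithmetic, and your verifications check out (e.g.\ (g) is indeed $(p+1)q-p(q+1)=q-p$ with $p=\tfrac{i-i'}{2}$, $q=\tfrac{i'-j}{2}$). What your packaging buys is uniformity and brevity—one formula handles (e), (f), (g) at once and absorbs the later $\lambda(b)$ lemma for free; what the paper's buys is that it stays closer to the raw definitions and does not depend on spelling out the convexity/parity structure of ${\mathcal I}$ as a preliminary step.
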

\begin{proof}
(a) We have that $\tau(b) = b(-j, -i, k')$ where $k' \in \{k, (1 - k)\}$. Note also that $-i' \in Supp(\tau(b))$.  We have 
\[
h_{i'}(b) = \#\{j'' \in {\mathcal I} \mid j \leq j'' \leq i'\}  - \#\{i'' \in {\mathcal I} \mid i' \leq i'' \leq i\}
\]
and 
\[
\begin{aligned}
h_{-i'}(\tau(b)) &= \#\{j'' \in {\mathcal I} \mid -i \leq j'' \leq -i'\}  - \#\{i'' \in {\mathcal I} \mid -i' \leq i'' \leq -j\}\\
&=  \#\{j'' \in {\mathcal I} \mid i \geq -j'' \geq i'\}  - \#\{i'' \in {\mathcal I} \mid i' \geq -i'' \geq j\} = -h_{i'}(b)
\end{aligned}
\]

(b) So we have $j < i' \leq i$, because $i' \ne \min(Supp(b))$. Thus $(-j + 2) > (-i' + 2) \geq (-i + 2)$. Note that $Supp(\tau(b)) = \{i'' \in {\mathcal I} \mid -j \geq i'' \geq -i\}$. Clearly $(-i' + 2) \in Supp(\tau(b))$, $(-i' + 2) \ne \min(Supp(\tau(b))$.  Now we have 
that 
\[
f_{i'}(b) = \#\{ (i'', j'') \in {\mathcal I} \times {\mathcal I} \mid i'' \geq j'',\   i' \leq i'' \leq i,\    j < j'' \leq i'\}
\]
and $f_{-i' + 2}(\tau(b))$ is equal to 
\[
\begin{aligned}
 &\#\{ (i'', j'') \in {\mathcal I} \times {\mathcal I} \mid i'' \geq j'',\  (- i' + 2) \leq i'' \leq -j,\    -i < j'' \leq (-i' + 2)\}\\
 &=\#\left\{ (i'', j'') \in {\mathcal I} \times {\mathcal I} \left \vert  \begin{aligned} &(- j'' + 2) \leq (-i'' + 2),\   i'  \geq (-i'' + 2) > j,\\   & \quad i \geq (- j'' + 2)  \geq i' \end{aligned}\right. \right\}\\
 &= f_{i'}(b).
 \end{aligned}
\]

(c) Clearly $(i' + 2) \in Supp(b)$. Now we have
\[
\begin{aligned}
h_{i' + 2}(b) - h_{i'}(b) &= (\#\{j'' \in {\mathcal I} \mid j \leq j'' \leq (i' + 2)\}  - \#\{i'' \in {\mathcal I} \mid (i' + 2) \leq i'' \leq i\})\\ &\quad - (\#\{j'' \in {\mathcal I} \mid j \leq j'' \leq i'\}  - \#\{i'' \in {\mathcal I} \mid i' \leq i'' \leq i\})\\ &= 2.
\end{aligned}
\]

(d) The proof is similar to the proof of $(c)$ and is left to the reader.

(e) If $i' = \min(Supp(b)) < \max(Supp(b))$, then clearly $i' = j$, $(i' - 2) \not \in Supp(b)$, $(i' + 2) \in Supp(b)$  and 
\[
 \{ (i'', j'') \in {\mathcal I} \times {\mathcal I} \mid i'' \geq j'',\   j \leq i'' \leq i,\    j < j'' \leq j\} = \emptyset. 
\]
So 
\[
f_{i'}(b) = \#\{ (i'', j'') \in {\mathcal I} \times {\mathcal I} \mid i'' \geq j'',\   j \leq i'' \leq i,\    j < j'' \leq j\} = 0.
\]

If we now consider
\[
\begin{aligned}
f_{i' + 2}(b) &= \{ (i'', j'') \in {\mathcal I} \times {\mathcal I} \mid i'' \geq j'',\   (i' + 2) \leq i'' \leq i,\    j < j'' \leq (i' + 2)\}\\
&=  \{ (i'', (i' + 2)) \in {\mathcal I} \times {\mathcal I} \mid i'' \geq j'',\   (i' + 2) \leq i'' \leq i\} = - h_{i'}(b),
\end{aligned}
\]
because 
\[
\begin{aligned}
h_{i'}(b) &= \#\{j'' \in {\mathcal I} \mid j \leq j'' \leq  i'\}  - \#\{i'' \in {\mathcal I} \mid i' \leq i'' \leq i\}\\ 
&=  - \#\{i'' \in {\mathcal I} \mid i'< i'' \leq i\}.
\end{aligned}
\]

(f)  If $\min(Supp(b)) < i' =  \max(Supp(b))$, then clearly $i' = i$, $(i' - 2) \in Supp(b)$, $(i' + 2)\not  \in Supp(b)$  and 
\[
\begin{aligned}
f_{i'}(b) &= \#\{ (i'', j'') \in {\mathcal I} \times {\mathcal I} \mid i'' \geq j'',\   i' \leq i'' \leq i,\    j < j'' \leq i'\}\\
&= \#\{ (i, j'') \in {\mathcal I} \times {\mathcal I} \mid   j < j'' \leq i'\} = h_{i'}(b).
\end{aligned}
\]

(g) If $\min(Supp(b)) < i' <  \max(Supp(b))$, then clearly  $(i' - 2), (i' + 2) \in Supp(b)$  and 
\[
\begin{aligned}
f_{i'}(b) - f_{i' + 2}(b) &= \#\{(i'', j'') \in {\mathcal I} \times {\mathcal I} \mid i'' \geq j'', i' \leq i'' \leq i,\   j < j'' \leq i'\}\\  & - \#\{(i'', j'') \in {\mathcal I} \times {\mathcal I} \mid i'' \geq j'', (i' + 2) \leq i'' \leq i, j < j'' \leq (i' + 2)\} \\ &= \#\{(i', j'') \in {\mathcal I} \times {\mathcal I} \mid \   j < j'' \leq i'\} \\ &\quad +  \#\{(i'', j'') \in {\mathcal I} \times {\mathcal I} \mid i'' \geq j'', (i' + 2) \leq i'' \leq i, j < j'' \leq i'\}\\ &\quad - \#\{(i'', j'') \in {\mathcal I} \times {\mathcal I} \mid i'' \geq j'', (i' + 2) \leq i'' \leq i, j < j'' \leq i'\}\\ &\quad - \#\{(i'', (i' + 2)) \in {\mathcal I} \times {\mathcal I} \mid (i' + 2) \leq i'' \leq i\}\\ &= \#\{(i', j'') \in {\mathcal I} \times {\mathcal I} \mid  j < j'' \leq i'\} \\ & \quad -\#\{(i'', (i' + 2)) \in {\mathcal I} \times {\mathcal I} \mid  (i' + 2) \leq i'' \leq i\} \\&= \#\{(i', j'') \in {\mathcal I} \times {\mathcal I} \mid  j \leq j'' \leq i'\} \\ & \quad -\#\{(i'', (i' + 2)) \in {\mathcal I} \times {\mathcal I} \mid i' \leq i'' \leq i\}
\\& = h_{i'}(b).
\end{aligned}
\]

\end{proof}

\subsection{}\label{SS:EHFDefinition}
To construct the Jacobson-Morozov triple $(E_{\mathcal O}, H_{\mathcal O}, F_{\mathcal O})$ as in \ref{SS:JMTriple}, we will need to consider separately the two cases:  when $\vert {\mathcal I} \vert$ is even and when  $\vert {\mathcal I} \vert$ is odd.

{\bf Even case:} $\vert {\mathcal I} \vert$ is even; more precisely the symmetric quiver $(\Omega, \sigma)$ is $A_m^{even}$ and ${\mathcal I} =  \{i \in {\mathbb Z} \mid i \equiv 1 \pmod 2, -2m < i < 2m\}$.   We will also have to distinguish two situations for our  $\langle \tau\rangle$-orbit ${\mathcal O}$. If  $b$ is an ${\mathcal I}$-box in ${\mathcal O}$ then either $Supp(b) \cap Supp(\tau(b)) = \emptyset$ or $Supp(b) \cap Supp(\tau(b)) \ne \emptyset$. This condition of being empty or not empty is independant of the choice of the ${\mathcal I}$-box $b$ in ${\mathcal O}$. 

If $Supp(b) \cap Supp(\tau(b)) = \emptyset$, then we will say that the orbit ${\mathcal O}$ has {\it no overlapping ${\mathcal I}$-box supports}; while if $Supp(b) \cap Supp(\tau(b)) \ne \emptyset$, then we will say that the orbit ${\mathcal O}$ has {\it overlapping ${\mathcal I}$-box supports} 

Note that   the orbit ${\mathcal O}$ has overlapping ${\mathcal I}$-box supports  if and only if $\{1, -1\} \subset Supp(b) \cap Supp(\tau(b))$. One direction is obvious. If $b = b(i, j, k)$ with $i, j, \in {\mathcal I}$, $i \geq j$ and $0 \leq k \leq \mu(i, j)$, then $Supp(b) = \{i' \in {\mathcal I} \mid i \geq i' \geq j\}$ and $Supp(\tau(b)) = \{i' \in {\mathcal I} \mid -j \geq i' \geq -i\}$.   If $Supp(b) \cap Supp(\tau(b)) \ne \emptyset$, say $i' \in Supp(b) \cap Supp(\tau(b))$, then $ i \geq i' \geq j$ and $ i \geq -i' \geq j$. From which,  we get that $\{1, -1\} \subset Supp(b)$.  Similarly we get that $\{1, -1\} \subset Supp(\tau(b))$. From this, we get also that $\max(Supp(b)) \geq 1$ and $\min(Supp(b)) \leq -1$ when  the orbit ${\mathcal O}$ has overlapping ${\mathcal I}$-box supports. 

We can now define the ${\mathcal I}$-graded linear transformations 
\[
E_{\mathcal O}:V({\mathcal O}) \rightarrow V({\mathcal O}), \quad   H_{\mathcal O}:V({\mathcal O}) \rightarrow V({\mathcal O}) \quad  \text{ and } \quad F_{\mathcal O}:V({\mathcal O}) \rightarrow V({\mathcal O}).
\] 

If the orbit ${\mathcal O}$ has no overlapping ${\mathcal I}$-box supports, then for $b \in {\mathcal O}$ and $i \in Supp(b)$, we define
\[
E_{\mathcal O}(v_i(b)) = \begin{cases} {\phantom -} 0, &\text{ if $i = \max(Supp(b))$;}\\ {\phantom -} v_{i + 2}(b), &\text{ if $i \ne \max(Supp(b))$ and $i > 0$;}\\  -v_{i + 2}(b), &\text{ if $i \ne \max(Supp(b))$ and $i < 0$;} \end{cases}
\]

\[
H_{\mathcal O}(v_i(b)) = h_i(b) v_i(b)
\]

and 

\[
F_{\mathcal O}(v_i(b)) = \begin{cases} {\phantom -} 0, &\text{ if $i = \min(Supp(b))$;}\\ {\phantom -} f_i(b)v_{i - 2}(b), &\text{ if $i \ne \min(Supp(b))$ and $i > 0$;}\\  -f_i(b)v_{i - 2}(b), &\text{ if $i \ne \min(Supp(b))$ and $i < 0$.} \end{cases}
\]

If the orbit ${\mathcal O}$ has overlapping ${\mathcal I}$-box supports, then for $b = b(i_1, j_1, k_1) \in {\mathcal O}$ and $i \in Supp(b)$, we define
\[
E_{\mathcal O}(v_i(b)) = \begin{cases} {\phantom -} 0, &\text{ if $i = \max(Supp(b))$;}\\ {\phantom -} v_{i + 2}(b), &\text{ if $i \ne \max(Supp(b))$ and $i \geq 1$;}\\  -v_{i + 2}(b), &\text{ if  $i < -1$;}\\ {\phantom -} v_1(b), &\text{ if  $i = -1$ and $b$ is below  the principal diagonal;}\\ - \epsilon v_1(b), &\text{ if $i = -1$ and $b$ is above  the principal diagonal;}\\  (- \epsilon)^{k_1} v_1(b), &\text{ if $i = -1$ and $b$ is on  the principal diagonal;}
\end{cases}
\]

\[
H_{\mathcal O}(v_i(b)) = h_i(b) v_i(b)
\]

and

\[
F_{\mathcal O}(v_i(b)) = \begin{cases} {\phantom -} 0, &\text{ if $i = \min(Supp(b))$;}\\ {\phantom -} f_i(b)v_{i - 2}(b), &\text{ if $i > 1$;}\\  -f_i(b)v_{i - 2}(b), &\text{ if $i \ne \min(Supp(b))$ and $i \leq -1$.}\\ {\phantom -} f_1(b)v_{-1}(b), &\text{ if  $i = 1$ and $b$ is below  the principal diagonal;} \\ -\epsilon f_1(b)v_{-1}(b), &\text{ if  $i = 1$ and $b$ is above  the principal diagonal;}\\  (- \epsilon)^{k_1} f_1(b)v_{-1}(b), &\text{ if  $i = 1$ and $b$ is on the principal diagonal;}  \end{cases}
\]

{\bf Odd case:} $\vert {\mathcal I} \vert$ is odd, more precisely the symmetric quiver $(\Omega, \sigma)$ is $A_m^{odd}$ and ${\mathcal I} =  \{i \in {\mathbb Z} \mid i \equiv 0 \pmod 2, -2m  <  i < 2m\}$. We will also have to distinguish two situations for our  $\langle \tau\rangle$-orbit ${\mathcal O}$. If  $b$ is an ${\mathcal I}$-box in ${\mathcal O}$ then either $Supp(b) \cap Supp(\tau(b)) = \emptyset$ or $Supp(b) \cap Supp(\tau(b)) \ne \emptyset$. This condition of being empty or not empty is independant of the choice of the ${\mathcal I}$-box $b$ in ${\mathcal O}$.  

If $Supp(b) \cap Supp(\tau(b)) = \emptyset$, then we will say that the orbit ${\mathcal O}$ has {\it no overlapping ${\mathcal I}$-box supports} and if $Supp(b) \cap Supp(\tau(b)) \ne \emptyset$, then we will say that the orbit ${\mathcal O}$ has {\it overlapping ${\mathcal I}$-box supports} 

Note that   the orbit ${\mathcal O}$ has overlapping ${\mathcal I}$-box supports  if and only if $0 \in Supp(b)$. If $0 \in Supp(b)$, then $\sigma(0) = 0 \in Supp(\tau(b))$ and  one direction of the equivalence follows. If $b = b(i, j, k)$ with $i, j, \in {\mathcal I}$, $i \geq j$ and $0 \leq k \leq \mu(i, j)$, then $Supp(b) = \{i' \in {\mathcal I} \mid i \geq i' \geq j\}$ and $Supp(\tau(b)) = \{i' \in {\mathcal I} \mid -j \geq i' \geq -i\}$.   If  $i' \in Supp(b) \cap Supp(\tau(b))$, then $ i \geq i' \geq j$ and $ i \geq -i' \geq j$. From which,  we get that $0 \in Supp(b)$, because the elements of ${\mathcal I}$ consists of even integers.  From this, we also get  that $\max(Supp(b)) \geq 0$ and $\min(Supp(b)) \leq 0$, when  the orbit ${\mathcal O}$ has overlapping ${\mathcal I}$-box supports. 

If the orbit ${\mathcal O}$ has no overlapping ${\mathcal I}$-box supports, then for $b \in {\mathcal O}$ and $i \in Supp(b)$, we define
\[
E_{\mathcal O}(v_i(b)) = \begin{cases} {\phantom -} 0, &\text{ if $i = \max(Supp(b))$;}\\ {\phantom -} v_{i + 2}(b), &\text{ if $i \ne \max(Supp(b))$ and $i > 0$;}\\  -v_{i + 2}(b), &\text{ if $i \ne \max(Supp(b))$ and $i < 0$;} \end{cases}
\]

\[
H_{\mathcal O}(v_i(b)) = h_i(b) v_i(b)
\]

and 

\[
F_{\mathcal O}(v_i(b)) = \begin{cases} {\phantom -} 0, &\text{ if $i = \min(Supp(b))$;}\\ {\phantom -} f_i(b)v_{i - 2}(b), &\text{ if $i \ne \min(Supp(b))$ and $i > 0$;}\\  -f_i(b)v_{i - 2}(b), &\text{ if $i \ne \min(Supp(b))$ and $i < 0$.} \end{cases}
\]

If the orbit ${\mathcal O}$ has overlapping ${\mathcal I}$-box supports, then for $b = b(i_1, j_1, k_1) \in {\mathcal O}$ and $i \in Supp(b)$, we define
\[
E_{\mathcal O}(v_i(b)) = \begin{cases} {\phantom -} 0, &\text{if $i = \max(Supp(b))$;}\\ {\phantom -} v_{i + 2}(b), &\text{if $i \ne \max(Supp(b))$ and $i > 0$;}\\  -v_{i + 2}(b), &\text{if  $i < 0$;}\\   {\phantom -} \epsilon v_2(b), &\text{if $i = 0$, $ \max(Supp(b)) > 0$ and $b$ is below the}\\ &\text{principal diagonal;}\\ {\phantom -}  v_2(b), &\text{if $i = 0$, $ \max(Supp(b)) > 0$ and $b$ is above the}\\ &\text{principal diagonal;} \\ {\phantom -} \epsilon^{k_1}v_{2}(b), &\text{if $i = 0$, $ \max(Supp(b)) > 0$ and $b$ is on the principal}\\&\text{diagonal;}  \end{cases}
\]

\[
H_{\mathcal O}(v_i(b)) = h_i(b) v_i(b)
\]

and 

\[
F_{\mathcal O}(v_i(b)) = \begin{cases} {\phantom -} 0, &\text{ if $i = \min(Supp(b))$;}\\ {\phantom -} f_i(b)v_{i - 2}(b), &\text{ if  $i > 2$;}\\  -f_i(b)v_{i - 2}(b), &\text{ if $i \ne \min(Supp(b))$ and $i \leq 0$.}\\  {\phantom -}\epsilon  f_2(b)v_{0}(b), &\text{ if $i = 2$ and and $b$ is below the principal diagonal;}\\ {\phantom -} f_2(b)v_{0}(b), &\text{ if $i = 2$ and and $b$ is above the principal diagonal;}\\ {\phantom -} \epsilon^{k_1} f_2(b)v_{0}(b), &\text{ if $i = 2$ and and $b$ is on the principal diagonal;} \end{cases}
\]

\begin{proposition}\label{P:JMTripleProof}
 With the above definition, we have that the three ${\mathcal I}$-graded  linear transformations  
 \[
 E_{\mathcal O}:V({\mathcal O}) \rightarrow V({\mathcal O}), \quad  H_{\mathcal O}:V({\mathcal O}) \rightarrow V({\mathcal O}) \quad \text{ and } \quad F_{\mathcal O}:V({\mathcal O}) \rightarrow V({\mathcal O})
 \]
are such that 
\begin{itemize}
\item $E_{\mathcal O}$ is of degree $2$, $H_{\mathcal O}$ is of degree $0$ and $F_{\mathcal O}$ is of degree $-2$;
\item $\langle Xu, v\rangle_{\mathcal O} + \langle u, Xv\rangle_{\mathcal O} = 0$ for all $X \in \{E_{\mathcal O}, H_{\mathcal O}, F_{\mathcal O}\}$ and all $u, v \in V({\mathcal O})$;
\item $[H_{\mathcal O}, E_{\mathcal O}] = 2E_{\mathcal O}$, $[H_{\mathcal O}, F_{\mathcal O}] = -2F_{\mathcal O}$ and $[E_{\mathcal O}, F_{\mathcal O}] = H_{\mathcal O}$, in other words,  $(E_{\mathcal O}, H_{\mathcal O}, F_{\mathcal O})$ is an ${\mathcal I}$-graded  Jacobson-Morozov triple for the Lie algebra corresponding to $(V({\mathcal O}), \langle \  , \   \rangle_{\mathcal O})$;
\item  the ${\mathcal I}$-graded linear transformation $\phi_{\mathcal O}: V({\mathcal O}) \rightarrow V({\mathcal O})$ given by 
\[
\phi_{{\mathcal O}, i} = {E_{\mathcal O}}\vert_{V_i({\mathcal O})}: V_{i}({\mathcal O}) \rightarrow V_{i + 2}({\mathcal O})  \quad \text{for all $i \in {\mathcal I}$  such that $(i + 2) \in {\mathcal I}$},
\]
is  such that $(V({\mathcal O}), \phi_{\mathcal O}, \langle\ , \ \rangle_{\mathcal O})$ is an $\epsilon$-representation.
\end{itemize}
Moreover $\{(V({\mathcal O}), \phi_{\mathcal O}, \langle\ , \ \rangle_{\mathcal O}) \mid \text{ ${\mathcal O}$  belongs to the set of $\langle \tau \rangle$-orbits  in ${\mathbf B}$}\}$   is a set of representatives of the isomorphism classes of indecomposable $\epsilon$-representations of $(\Omega, \sigma)$.
\end{proposition}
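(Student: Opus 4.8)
The plan is to verify the four bulleted properties directly on the $\mathcal{I}$-graded basis $\mathcal{B}_{\mathcal{O}}$, and then deduce the indecomposability and completeness statement from Remark~\ref{R:CardinalityIndecomposable} together with the decomposition propositions. The degree statement is immediate from the definitions of $E_{\mathcal{O}}, H_{\mathcal{O}}, F_{\mathcal{O}}$ in \ref{SS:EHFDefinition}: each sends $v_i(b)$ into $V_{i+2}(\mathcal{O})$, $V_i(\mathcal{O})$, $V_{i-2}(\mathcal{O})$ respectively. For the $\mathfrak{sl}_2$-relations, since $H_{\mathcal{O}}$ is diagonal in the basis with eigenvalue $h_i(b)$ on $v_i(b)$, the brackets $[H_{\mathcal{O}},E_{\mathcal{O}}]=2E_{\mathcal{O}}$ and $[H_{\mathcal{O}},F_{\mathcal{O}}]=-2F_{\mathcal{O}}$ reduce to Lemma~\ref{L:RelationCoefficientsHandF}(c),(d) (the relations $h_{i'+2}(b)-h_{i'}(b)=2$ and $h_{i'-2}(b)-h_{i'}(b)=-2$), being careful about the endpoints of $Supp(b)$ where $E_{\mathcal{O}}$ or $F_{\mathcal{O}}$ vanishes. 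The relation $[E_{\mathcal{O}},F_{\mathcal{O}}]=H_{\mathcal{O}}$ is the heart of the $\mathfrak{sl}_2$-check: applying it to $v_{i'}(b)$ and using the sign conventions, one gets a telescoping identity $f_{i'}(b)-f_{i'+2}(b)=h_{i'}(b)$ in the generic interior case (Lemma~\ref{L:RelationCoefficientsHandF}(g)), $f_{i'+2}(b)=-h_{i'}(b)$ at the bottom and $f_{i'}(b)=h_{i'}(b)$ at the top (Lemma~\ref{L:RelationCoefficientsHandF}(e),(f)). One must check the signs work out: when $i'>0$ the coefficient of $E_{\mathcal{O}}$ is $+1$ and of $F_{\mathcal{O}}$ is $+f$, so $EF-FE$ acts by $f_{i'}(b)-f_{i'+2}(b)$; when $i'<0$ both signs flip and the two minus signs cancel, again giving $f_{i'}(b)-f_{i'+2}(b)$; the crossing at $\{-1,1\}$ (even case) or at $0$ (odd case) is where the extra $\epsilon$, $-\epsilon$, or $(-\epsilon)^{k_1}$ factors enter, and one checks they conspire so that $EF-FE$ still equals $h_{i'}(b)$ there, using $h_1(b)=-h_{-1}(b)$, resp.\ $h_0(b)=0$, from Lemma~\ref{L:RelationCoefficientsHandF}(a).

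For the invariance property $\langle Xu,v\rangle_{\mathcal{O}}+\langle u,Xv\rangle_{\mathcal{O}}=0$ it suffices, by bilinearity, to take $u=v_{i}(b)$, $v=v_{i'}(b')$ with $b,b'\in\mathcal{O}$. Since $\langle\ ,\ \rangle_{\mathcal{O}}$ is supported on pairs with $i+i'=0$ and $b'=\tau(b)$ (or $b'=b=\tau(b)$), the check for $H_{\mathcal{O}}$ is just $h_i(b)+h_{-i}(\tau(b))=0$, which is Lemma~\ref{L:RelationCoefficientsHandF}(a). For $E_{\mathcal{O}}$ one needs: if $E_{\mathcal{O}}(v_i(b))$ has a nonzero $v_{i+2}(b)$-component with coefficient $c$, and $\langle v_{i+2}(b), v_{-(i+2)}(\tau(b))\rangle_{\mathcal{O}}\ne 0$, then the matching term $\langle v_i(b), E_{\mathcal{O}}(v_{-(i+2)}(\tau(b)))\rangle_{\mathcal{O}}$ has coefficient $-c$ times the same form-value; since $F_{\mathcal{O}}$ rather than $E_{\mathcal{O}}$ decreases the index and the $f$-coefficients of $\tau(b)$ match those of $b$ by Lemma~\ref{L:RelationCoefficientsHandF}(b) ($f_{-i'+2}(\tau(b))=f_{i'}(b)$), this is a finite sign bookkeeping. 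The case analysis splits according to whether the support overlaps (even: whether $\{1,-1\}\subset Supp(b)$; odd: whether $0\in Supp(b)$) and whether $b$ is above, below, or on the principal diagonal; the sign conventions in the definitions of $E_{\mathcal{O}},F_{\mathcal{O}}$ and the values $1,\epsilon,\epsilon^{k_1}$ of the form in Notation~\ref{N:BasisVOrbit} were chosen precisely to make these cancellations hold, so the verification is routine but lengthy. The statement about $\phi_{\mathcal{O}}=E_{\mathcal{O}}$ being an $\epsilon$-representation of $(\Omega,\sigma)$ is then immediate: the defining identity $\langle\phi_a(v),v'\rangle+\langle v,\phi_{\sigma(a)}(v')\rangle=0$ is exactly the already-verified $E_{\mathcal{O}}$-invariance restricted to consecutive graded pieces, and non-degeneracy with the orthogonality $\langle V_i,V_j\rangle=0$ for $j\ne\sigma(i)$ was established in the lemma preceding the Definition of $h_{i'}, f_{i'}$.

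For the final clause, one argues as follows. Each $(V(\mathcal{O}),\phi_{\mathcal{O}},\langle\ ,\ \rangle_{\mathcal{O}})$ is an $\epsilon$-representation; I claim it is indecomposable. If $\mathcal{O}=\{b,\tau(b)\}$ with $Supp(b)\cap Supp(\tau(b))=\emptyset$, then as a quiver representation $V(\mathcal{O})=V(b)\oplus V(\tau(b))$ with $V(b)$ indecomposable of the dimension vector read off from $Supp(b)$ and $V(\tau(b))\cong V(b)^*$, so Proposition~\ref{PropDecomposition} (in its converse direction, which holds because these are genuinely non-isomorphic summands interchanged by duality, hence cannot be further split compatibly with the form) gives indecomposability as an $\epsilon$-representation; if the supports overlap, or if $\mathcal{O}=\{b\}$ with $\tau(b)=b$, the definition of $E_{\mathcal{O}}$ makes $V(\mathcal{O})$ indecomposable already as a quiver representation (a single interval of $1$'s, or the overlap forces $\phi_{(-1\to1)}\ne0$, resp.\ $\phi_{(-2\to0)},\phi_{(0\to2)}\ne0$, linking the two halves), so it is a fortiori indecomposable as an $\epsilon$-representation. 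Distinct orbits give non-isomorphic $\epsilon$-representations: by Theorem~\ref{T:CaracterisationRepresentation} it is enough to see they are non-isomorphic as quiver representations, which follows from their dimension vectors and, in the one ambiguous symplectic case of Proposition~\ref{Decomposition_even}(b)(ii), from whether $\phi_{(-1\to1)}$ vanishes — and this is governed by $\mu_{\max}$ and the box-coordinate $k$, which differ across the two orbits. Finally, by Remark~\ref{R:CardinalityIndecomposable} the number of $\langle\tau\rangle$-orbits in $\mathbf{B}$ equals the number of isomorphism classes of indecomposable $\epsilon$-representations of $(\Omega,\sigma)$ (namely $m(m+1)$ or $m^2$), so the injective assignment $\mathcal{O}\mapsto(V(\mathcal{O}),\phi_{\mathcal{O}},\langle\ ,\ \rangle_{\mathcal{O}})$ hits every class, completing the proof.

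I expect the main obstacle to be the sign/coefficient bookkeeping in the $E_{\mathcal{O}}$- and $F_{\mathcal{O}}$-invariance of the form across the principal-diagonal crossing, where the interaction of the three cases (above/on/below the diagonal), the parity of $|\mathcal{I}|$, the overlap condition, and the factors $1,\epsilon,\epsilon^{k_1},(-\epsilon)^{k_1}$ all come together; organizing this into a clean finite case table, rather than a sprawling computation, is the real work.
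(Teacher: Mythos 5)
Your verification plan for the first four bullets (degrees, the $\mathfrak{sl}_2$ relations via Lemma~\ref{L:RelationCoefficientsHandF}(c)--(g), and the form-invariance via (a),(b) with case-by-case sign bookkeeping at the diagonal crossing) is exactly the paper's route, and the sketched sign analysis is the part the paper simply carries out in full. The gap is in your indecomposability argument for the last clause. You claim that when the orbit ${\mathcal O}=\{b,\tau(b)\}$ has overlapping supports, ``the overlap forces $\phi_{(-1\to1)}\ne0$ \dots linking the two halves,'' so $V({\mathcal O})$ is indecomposable already as a quiver representation. This is false: by construction (\ref{SS:EHFDefinition}), $E_{\mathcal O}$ maps each $v_i(b)$ to a multiple of $v_{i+2}(b)$, so $E_{\mathcal O}$ preserves each summand $V(b)$, and for a two-element orbit $V({\mathcal O})=V(b)\oplus V(\tau(b))$ is a direct sum of two interval representations, hence decomposable as a quiver representation (compare Proposition~\ref{Decomposition_even}(a)(iii),(iv) and \ref{Decomposition_odd}(b)(ii),(iii)). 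The nonvanishing map $V_{-1}({\mathcal O})\to V_1({\mathcal O})$ links the two ends of each single box $V(b)$, not the two boxes of the orbit.

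For overlapping two-element orbits off the principal diagonal you can rescue the conclusion by the same duality argument you used in the non-overlapping case ($V(\tau(b))\cong V(b)^*\not\cong V(b)$ since the supports differ, so no $\epsilon$-splitting is possible). But for boxes on the principal diagonal with $\tau(b)\ne b$ (even orthogonal with $\mu_{\max}=1$, odd symplectic with $\mu_{\max}=1$, cf.\ Lemma~\ref{L:TauBox}(d)) one has $V(b)\cong V(\tau(b))$ as quiver representations, so neither of your two arguments applies, and the fact that $V({\mathcal O})$ is nonetheless indecomposable as an $\epsilon$-representation genuinely requires input from the Derksen--Weyman classification: one must know there exists an indecomposable $\epsilon$-representation whose underlying quiver representation is two copies of the interval $[-i_1,i_1]$ (Propositions~\ref{Prop_Indecomposable_Ortho_Aeven}, \ref{Decomposition_even}(a)(iv), \ref{Decomposition_odd}(b)(ii)), and then conclude via Theorem~\ref{T:CaracterisationRepresentation} that $V({\mathcal O})$ is isomorphic to it. This is precisely how the paper argues in every case (match the dimension vector of $\phi_{\mathcal O}=\oplus_b\phi_b$ against Propositions~\ref{Decomposition_even}/\ref{Decomposition_odd}, then apply Theorem~\ref{T:CaracterisationRepresentation}); note also that your final pigeonhole step via Remark~\ref{R:CardinalityIndecomposable} does not repair this, since injectivity of ${\mathcal O}\mapsto V({\mathcal O})$ plus equality of cardinalities does not by itself force each $V({\mathcal O})$ to be indecomposable.
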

\begin{proof}
This proof is not difficult, but there are many cases to consider. To be complete all the details are provided. 

The first statement about the degrees of $E_{\mathcal O}$, $H_{\mathcal O}$ and $F_{\mathcal O}$ is obvious. To check that $\langle Xu, v\rangle_{\mathcal O} + \langle u, Xv\rangle_{\mathcal O} = 0$ for all $X \in \{E_{\mathcal O}, H_{\mathcal O}, F_{\mathcal O}\}$ and all $u, v \in V({\mathcal O})$, it is enough to verify this on the ${\mathcal I}$-graded basis  ${\mathcal B}_{\mathcal O}$ of $V({\mathcal O})$. Thus we can assume that $u = v_{i_1}(b_1)$, $v = v_{i_2}(b_2)$ with $b_1, b_2 \in {\mathcal O}$, $i_1 \in Supp(b_1)$ and $i_2 \in Supp(b_2)$,

{\bf Even case:}

Consider first the proof of the proposition when $\vert {\mathcal I} \vert$  is even. In this case,  $0 \not\in {\mathcal I}$ and we will start with $X = E_{\mathcal O}$.

If  the orbit ${\mathcal O}$ has no overlapping ${\mathcal I}$-box supports, we have 
\[
\langle E_{\mathcal O}(v_{i_1}(b_1)), v_{i_2}(b_2)\rangle_{\mathcal O} = \begin{cases} {\phantom -} 0, &\text{ if $i_1 = \max(Supp(b_1))$;}\\ {\phantom -} \langle v_{i_1 + 2}(b_1), v_{i_2}(b_2)\rangle_{\mathcal O} , &\text{ if $i_1 \ne \max(Supp(b_1))$}\\ &\text{ and $i_1 > 0$;}\\  -\langle v_{i_1+ 2}(b_1), v_{i_2}(b_2)\rangle_{\mathcal O} ,  &\text{ if $i_1 \ne \max(Supp(b_1))$}\\ &\text{and $i_1 < 0$;} \end{cases}
\] 
and 
\[
\langle v_{i_1}(b_1), E_{\mathcal O}(v_{i_2}(b_2))\rangle_{\mathcal O}  = \begin{cases} {\phantom -} 0, &\text{ if $i_2 = \max(Supp(b_2))$;}\\ {\phantom -} \langle v_{i_1}(b_1), v_{i_2 + 2}(b_2)\rangle_{\mathcal O} , &\text{ if $i_2 \ne \max(Supp(b_2))$}\\ &\text{and $i_2 > 0$;}\\  -\langle v_{i_1}(b_1), v_{i_2 + 2}(b_2)\rangle_{\mathcal O} ,  &\text{ if $i_2 \ne \max(Supp(b_2))$}\\ &\text{and $i_2 < 0$.} \end{cases}
\]

If $i_1 = \max(Supp(b_1))$, then $\langle E_{\mathcal O}(v_{i_1}(b_1)), v_{i_2}(b_2)\rangle_{\mathcal O}  = 0$,  by the definition of $E_{\mathcal O}$ and we have $\langle v_{i_1}(b_1), E_{\mathcal O}(v_{i_2}(b_2))\rangle_{\mathcal O}  = 0$,  otherwise if  $\langle v_{i_1}(b_1), E_{\mathcal O}(v_{i_2}(b_2))\rangle_{\mathcal O}  \ne 0$, then  $i_2 \ne \max(Supp(b_2))$, $b_2 = \tau(b_1)$ and  $i_1 + 2 + i_2 = 0$. This is impossible,  because $Supp(b_2) = Supp(\tau(b_1)) = \sigma(Supp(b_1))$ and $\min(Supp(b_2)) = -i_1$, but from $i_1 + 2 + i_2 = 0$, we get the contradiction because  $i_2 = -(i_1 + 2) \in Supp(b_2)$. Thus 
\[
\langle E_{\mathcal O}(v_{i_1}(b_1)), v_{i_2}(b_2)\rangle_{\mathcal O}  + \langle v_{i_1}(b_1), E_{\mathcal O}(v_{i_2}(b_2))\rangle_{\mathcal O}  = 0
\]
if $i_1 = \max(Supp(b_1)$. By symmetry, we also have this equality if $i_2 = \max(Supp(b_2))$. Thus we can now assume that $i_1 \ne \max(Supp(b_1))$ and  $i_2 \ne \max(Supp(b_2))$.

So we can now assume that $i_1 \ne \max(Supp(b_1))$ and $i_2 \ne \max(Supp(b_2))$. Moreover if either $\langle E_{\mathcal O}(v_{i_1}(b_1)), v_{i_2}(b_2)\rangle_{\mathcal O} \ne 0$ or $\langle v_{i_1}(b_1), E_{\mathcal O}(v_{i_2}(b_2))\rangle_{\mathcal O} \ne 0$, then $b_2 = \tau(b_1)$ and $i_1 + i_2 + 2 = 0$  from our definition of the bilinear form $\langle\  , \  \rangle_{\mathcal O}$. From our hypothesis on the orbit ${\mathcal O}$, we have that $\tau(b_1) \ne b_1$ and $\tau(b_2) \ne b_2$.  If $b_1$ is not on the principal diagonal, then $b_2 = \tau(b_1)$ is also not on the principal diagonal. If $i_1 > 0$, then $i_2 = -i_1 - 2 < 0$ because $i_1 + i_2 + 2 = 0$. Thus we get that 
\begin{multline*}
\langle E_{\mathcal O}(v_{i_1}(b_1)), v_{i_2}(b_2)\rangle_{\mathcal O} + \langle v_{i_1}(b_1), E_{\mathcal O}(v_{i_2}(b_2))\rangle_{\mathcal O} = \\ \langle v_{i_1 + 2}(b_1), v_{i_2}(b_2)\rangle_{\mathcal O} + \langle v_{i_1}(b_1), - v_{i_2 + 2}(b_2)\rangle_{\mathcal O} = 1 - 1 = 0.
\end{multline*}
If $i_1 < 0$ and $i_1 \ne -1$, then $i_1 + 2 < 0$ and $i_2 > 0$. Thus we get that
\begin{multline*}
\langle E_{\mathcal O}(v_{i_1}(b_1)), v_{i_2}(b_2)\rangle_{\mathcal O} + \langle v_{i_1}(b_1), E_{\mathcal O}(v_{i_2}(b_2))\rangle_{\mathcal O} = \\ \langle -v_{i_1 + 2}(b_1), v_{i_2}(b_2)\rangle_{\mathcal O} + \langle v_{i_1}(b_1),  v_{i_2 + 2}(b_2)\rangle_{\mathcal O} = -\epsilon   + \epsilon = 0.
\end{multline*}
To conclude this case, we can observe that $i_1 \ne -1$. Otherwise $i_2 = -i_1 - 2 = -1$ implies that $1 \in Supp(b_1)$. So $\{1, -1\} \subset Supp(b_1)$ and similarly $\{1, -1\} \subset Supp(\tau(b_1))$. But by  \ref{SS:EHFDefinition}, this is equivalent to the orbit ${\mathcal O}$ has overlapping  ${\mathcal I}$-box supports contradicting our hypothesis. 

Note that $b_1$ cannot be on the principal diagonal, otherwise  $b_2 = \tau(b_1)$ is also on the principal diagonal and  $Supp(b_1) = Supp(b_2)$. So we are in the overlapping ${\mathcal I}$-box supports case contradicting our hypothesis on the orbit ${\mathcal O}$. 

We now consider the case where  the orbit ${\mathcal O}$ has overlapping ${\mathcal I}$-box supports.  If $i_1 = \max(Supp(b_1))$, the proof is the same as when the orbit ${\mathcal O}$ has no overlapping ${\mathcal I}$-box supports and is left to the reader. As above we only have to consider the case where $i_1 \ne \max(Supp(b_1))$ and $i_2 \ne \max(Supp(b_2))$. Moreover if either $\langle E_{\mathcal O}(v_{i_1}(b_1)), v_{i_2}(b_2)\rangle_{\mathcal O} \ne 0$ or $\langle v_{i_1}(b_1), E_{\mathcal O}(v_{i_2}(b_2))\rangle_{\mathcal O} \ne 0$, then $b_2 = \tau(b_1)$ and $i_1 + i_2 + 2 = 0$  from our definition of the bilinear form $\langle\  , \  \rangle_{\mathcal O}$.  So we will assume that $b_2 = \tau(b_1)$ and $i_1 + i_2 + 2 = 0$.  Clearly if $b_1$ is below (respectively above, on)  the principal diagonal, then $b_2$ is above (respectively below, on) the principal diagonal. In the case where the ${\mathcal I}$-boxes $b_1$ and $b_2$ are on the principal diagonal, then  we will write $b_1 = b(i'_1, j'_1, k'_1)$ and $b_2 = \tau(b_1) = b(i'_1, j'_1, k''_1)$ where $i'_1, j'_1 \in {\mathcal I}, i'_1 \geq j'_1$ and $k''_1 \in \{k'_1, (1 - k'_1)\}$. More precisely,  if $\mu_{\max} = 0$, then $k'_1 = k''_1 = 0$ and if $\mu_{\max} = 1$, then $k''_1 = (1 - k'_1)$.

We have that $\langle E_{\mathcal O}(v_{i_1}(b_1)), v_{i_2}(b_2)\rangle_{\mathcal O}$ is equal to
\[
 \begin{cases} {\phantom -} 0, &\text{ if $i_1 = \max(Supp(b_1))$;}\\ {\phantom -} \langle v_{i_1 + 2}(b_1), v_{i_2}(b_2)\rangle_{\mathcal O}, &\text{ if $i_1 \ne \max(Supp(b_1))$ and $i_1 > 0$;}\\  -\langle v_{i_1+ 2}(b_1), v_{i_2}(b_2)\rangle_{\mathcal O},  &\text{ if $i_1 < -1$;}\\ {\phantom -} \langle v_{1}(b_1), v_{i_2}(b_2)\rangle_{\mathcal O}, &\text{ if $i_1 = -1$ and $b_1$ is below the principal diagonal;}\\ -\epsilon \langle v_{1}(b_1), v_{i_2}(b_2)\rangle_{\mathcal O}, &\text{ if $i_1 = -1$ and $b_1$ is above the principal diagonal;}\\ (-\epsilon)^{k'_1} \langle v_{1}(b_1), v_{i_2}(b_2)\rangle_{\mathcal O}, &\text{ if $i_1 = -1$ and $b_1$ is on the principal diagonal;}\end{cases}
\] 
and $\langle v_{i_1}(b_1), E_{\mathcal O}(v_{i_2}(b_2))\rangle_{\mathcal O}$ is equal to
\[
 \begin{cases} {\phantom -} 0, &\text{ if $i_2 = \max(Supp(b_2))$;}\\ {\phantom -} \langle v_{i_1}(b_1), v_{i_2 + 2}(b_2)\rangle_{\mathcal O}, &\text{ if $i_2 \ne \max(Supp(b_2))$ and $i_2 > 0$;}\\  -\langle v_{i_1}(b_1), v_{i_2 + 2}(b_2)\rangle_{\mathcal O},  &\text{ if $i_2 < -1$;}\\ {\phantom - } \langle v_{i_1}(b_1), v_{1}(b_2)\rangle_{\mathcal O},  &\text{ if $i_2 = -1$ and $b_2$ is below the principal diagonal;}\\  - \epsilon \langle v_{i_1}(b_1), v_{1}(b_2)\rangle_{\mathcal O},  &\text{ if $i_2 = -1$ and $b_2$ is above the principal diagonal;} \\ (- \epsilon)^{ k''_1} \langle v_{i_1}(b_1), v_{1}(b_2)\rangle_{\mathcal O},  &\text{ if $i_2 = -1$ and $b_2$ is on  the principal diagonal.} \end{cases}
\]

The proofs when $i_1 > 0$ and also when $i_1 < 0$, $i_1 \ne -1$ are similar to the proofs when the orbit ${\mathcal O}$ has no overlapping ${\mathcal I}$-box supports and are left to the reader. We will just consider the case where $i_1 = -1$ and consequently $i_2 = -1$. 

We have that  $\langle E_{\mathcal O}(v_{-1}(b_1)), v_{-1}(b_2)\rangle_{\mathcal O}$ is equal to 
\[
 \begin{cases} \langle v_1(b_1), v_{-1}(b_2) \rangle_{\mathcal O} = 1, &\text{ if $b_1$ is below the principal diagonal;}\\ -\epsilon \langle  v_1(b_1), v_{-1}(b_2)\rangle_{\mathcal O} = -\epsilon , &\text{ if $b_1$ is above the principal diagonal;}\\  (-\epsilon)^{k'_1} \langle v_1(b_1), v_{-1}(b_2)\rangle_{\mathcal O} = (-\epsilon)^{k'_1}, &\text{ if $b_1$ is on the principal diagonal;}\end{cases}
\]
and   $\langle v_{-1}(b_1), E_{\mathcal O}(v_{-1}(b_2))\rangle_{\mathcal O}$ is equal to 
\[
\begin{cases} -\epsilon \langle v_{-1}(b_1),  v_{1}(b_2) \rangle_{\mathcal O} = -\epsilon^2 , &\text{ if $b_1$ is below the principal diagonal;}\\  \langle  v_{-1}(b_1), v_{1}(b_2)\rangle_{\mathcal O} = \epsilon, &\text{ if $b_1$ is above the principal diagonal;}\\ (-\epsilon)^{k''_1} \langle v_{-1}(b_1), v_{1}(b_2)\rangle_{\mathcal O} = (-\epsilon)^{k''_1} \epsilon&\text{ if $b_1$ is on the principal diagonal.} \end{cases}
\]
From this, we get easily if $b_1$ is below or above the principal diagonal that 
\[
\langle E_{\mathcal O}(v_{i_1}(b_1)), v_{i_2}(b_2)\rangle_{\mathcal O} + \langle v_{i_1}(b_1), E_{\mathcal O}(v_{i_2}(b_2))\rangle_{\mathcal O} = 0.
\]

In the case that the ${\mathcal I}$-box $b_1$ is on the principal diagonal, we have to consider two cases: either $\tau(b_1) \ne b_1$  or $\tau(b_1) = b_1$. By lemma~\ref{L:TauBox} (d), if $\tau(b_1) \ne b_1$, then  $\mu_{\max} = 1$ and $k''_1 = 1 - k'_1$. Moreover because of \ref{SS:Iset} and ${\mathcal I}$ is even, then $\epsilon = 1$. So in this case, 
\[
\langle E_{\mathcal O}(v_{i_1}(b_1)), v_{i_2}(b_2)\rangle_{\mathcal O} + \langle v_{i_1}(b_1), E_{\mathcal O}(v_{i_2}(b_2))\rangle_{\mathcal O} = (-\epsilon)^{k'_1} + (-\epsilon)^{1 - k'_1} \epsilon = 0.
\]
By lemma~\ref{L:TauBox} (e), if $\tau(b_1) = b_1$, then $\mu_{\max} = 0$, $k''_1 = k'_1$ and $\epsilon = -1$. 
So in this case, 
\[
\langle E_{\mathcal O}(v_{i_1}(b_1)), v_{i_2}(b_2)\rangle_{\mathcal O} + \langle v_{i_1}(b_1), E_{\mathcal O}(v_{i_2}(b_2))\rangle_{\mathcal O} = (-\epsilon)^{k'_1} + (-\epsilon)^{k'_1} \epsilon = 0.
\]

If $X = H_{\mathcal O}$, then for either $\langle H_{\mathcal O}(v_{i_1}(b_1)), v_{i_2}(b_2)\rangle_{\mathcal O} =  h_{i_1}(b_1) \langle v_{i_1}(b_1), v_{i_2}(b_2)\rangle_{\mathcal O} \ne 0$ or  $\langle v_{i_1}(b_1), H_{\mathcal O}(v_{i_2}(b_2))\rangle_{\mathcal O}  = h_{i_2}(b_2) \langle v_{i_1}(b_1), v_{i_2}(b_2)\rangle_{\mathcal O} \ne 0$, we must have that $b_2 = \tau(b_1)$ and $i_1 + i_2 = 0$. Note that in that case, both are $\ne 0$. So we can assume that $b_2 = \tau(b_1)$ and $i_2 = -i_1$. By lemma \ref{L:RelationCoefficientsHandF} (a),  we get that
\begin{multline*}
\langle H_{\mathcal O}(v_{i_1}(b_1)), v_{i_2}(b_2)\rangle_{\mathcal O} + \langle v_{i_1}(b_1), H_{\mathcal O}(v_{i_2}(b_2))\rangle_{\mathcal O}\\  = (h_{i_1}(b_1) + h_{i_2}(b_2)) \langle v_{i_1}(b_1), v_{i_2}(b_2)\rangle_{\mathcal O} = 0
\end{multline*}

Now consider  $X = F_{\mathcal O}$. The proof is similar with small modification with the one of $E_{\mathcal O}$. If  the orbit ${\mathcal O}$ has no overlapping ${\mathcal I}$-box supports, then we have that $\langle F_{\mathcal O}(v_{i_1}(b_1)), v_{i_2}(b_2)\rangle_{\mathcal O}$ is equal to 
\[
\begin{cases} {\phantom -} 0, &\text{ if $i_1 = \min(Supp(b_1))$;}\\ {\phantom -}  f_{i_1}(b_1) \langle v_{i_1 - 2}(b_1), v_{i_2}(b_2)\rangle_{\mathcal O}, &\text{ if $i_1 \ne \min(Supp(b_1))$ and $i_1 > 0$;}\\  - f_{i_1}(b_1) \langle v_{i_1 - 2}(b_1), v_{i_2}(b_2)\rangle_{\mathcal O},  &\text{ if $i_1 \ne \min(Supp(b_1))$ and $i_1 < 0$;} \end{cases}
\] 
and $\langle v_{i_1}(b_1), F_{\mathcal O}(v_{i_2}(b_2))\rangle_{\mathcal O}$ is equal to 
\[
= \begin{cases} {\phantom -} 0, &\text{ if $i_2 = \min(Supp(b_2))$;}\\ {\phantom -} f_{i_2}(b_2) \langle v_{i_1}(b_1), v_{i_2 - 2}(b_2)\rangle_{\mathcal O}, &\text{ if $i_2 \ne \min(Supp(b_2))$ and $i_2 > 0$;}\\  - f_{i_2}(b_2) \langle v_{i_1}(b_1), v_{i_2 - 2}(b_2)\rangle_{\mathcal O},  &\text{ if $i_2 \ne \min(Supp(b_2))$ and $i_2 < 0$.} \end{cases}
\]

If $i_1 = \min(Supp(b_1))$, then $\langle F_{\mathcal O}(v_{i_1}(b_1)), v_{i_2}(b_2)\rangle_{\mathcal O} = 0$,  by the definition of $F_{\mathcal O}$ and we have $\langle v_{i_1}(b_1), F_{\mathcal O}(v_{i_2}(b_2))\rangle_{\mathcal O} = 0$,  otherwise if  $\langle v_{i_1}(b_1), F_{\mathcal O}(v_{i_2}(b_2))\rangle_{\mathcal O} \ne 0$, then  $i_2 \ne \min(Supp(b_2))$, $b_2 = \tau(b_1)$ and  $i_1 - 2 + i_2 = 0$. This is impossible,  because $Supp(b_2) = Supp(\tau(b_1)) = \sigma(Supp(b_1))$ and $\max(Supp(b_2)) = -i_1$, but from $i_1 - 2 + i_2 = 0$, we get the contradiction because  $i_2 = -i_1 + 2 \in Supp(b_2)$. Thus 
\[
\langle F_{\mathcal O}(v_{i_1}(b_1)), v_{i_2}(b_2)\rangle_{\mathcal O} + \langle v_{i_1}(b_1), F_{\mathcal O}(v_{i_2}(b_2))\rangle_{\mathcal O} = 0
\]
if $i_1 = \min(Supp(b_1)$. By symmetry, we also have this equality if $i_2 = \min(Supp(b_2))$. 

So we can now assume that $i_1 \ne \min(Supp(b_1))$ and $i_2 \ne \min(Supp(b_2))$. Moreover if either $\langle F_{\mathcal O}(v_{i_1}(b_1)), v_{i_2}(b_2)\rangle_{\mathcal O} \ne 0$ or $\langle v_{i_1}(b_1), F_{\mathcal O}(v_{i_2}(b_2))\rangle_{\mathcal O} \ne 0$, then $b_2 = \tau(b_1)$ and $i_1 + i_2 - 2 = 0$  from our definition of the bilinear form $\langle\  , \  \rangle_{\mathcal O}$. From our hypothesis on the orbit ${\mathcal O}$, we have that $\tau(b_1) \ne b_1$ and $\tau(b_2) \ne b_2$.  If $b_1$ is not on the principal diagonal, then $b_2 = \tau(b_1)$ is also not on the principal diagonal. If $i_1 > 0$ and $i_1 \ne 1$, then $i_2 = -i_1 + 2 < 0$ because $i_1 + i_2 - 2 = 0$. Thus we get that 
\begin{multline*}
\langle F_{\mathcal O}(v_{i_1}(b_1)), v_{i_2}(b_2)\rangle_{\mathcal O} + \langle v_{i_1}(b_1), F_{\mathcal O}(v_{i_2}(b_2))\rangle_{\mathcal O} = \\ f_{i_1}(b_1) \langle v_{i_1 - 2}(b_1), v_{i_2}(b_2)\rangle_{\mathcal O} - f_{i_2}(b_2) \langle v_{i_1}(b_1),  v_{i_2 - 2}(b_2)\rangle_{\mathcal O} =\\ (f_{i_1}(b_1) - f_{-i_1 + 2}(\tau(b_1)) = 0
\end{multline*}
because of  lemma~\ref{L:RelationCoefficientsHandF} (b) and our definition of the bilinear form.

If $i_1 < 0$, then $i_2 = -i_1 + 2 > 0$. Thus we get that
\begin{multline*}
\langle F_{\mathcal O}(v_{i_1}(b_1)), v_{i_2}(b_2)\rangle_{\mathcal O} + \langle v_{i_1}(b_1), F_{\mathcal O}(v_{i_2}(b_2))\rangle_{\mathcal O} = \\ -f_{i_1}(b_1) \langle v_{i_1 - 2}(b_1), v_{i_2}(b_2)\rangle_{\mathcal O} + f_{i_2}(b_2) \langle v_{i_1}(b_1),  v_{i_2 - 2}(b_2)\rangle_{\mathcal O} = \\ ( -\epsilon f_{i_1}(b_1)  + \epsilon f_{i_2}(b_2)) = 0
\end{multline*}
because of  lemma~\ref{L:RelationCoefficientsHandF} (b) and our definition of the bilinear form.
To conclude this case, we can observe that $i_1 \ne 1$. Otherwise $i_2 = -i_1 + 2 = 1$ implies that $-1 \in Supp(b_1)$. So $\{1, -1\} \subset Supp(b_1)$ and similarly $\{1, -1\} \subset Supp(\tau(b_1))$. But by  \ref{SS:EHFDefinition}, this is equivalent to the orbit ${\mathcal O}$ has overlapping  ${\mathcal I}$-box supports contradicting our hypothesis. 

Note that $b_1$ cannot be on the principal diagonal, otherwise  $b_2 = \tau(b_1)$ is also on the principal diagonal and  $Supp(b_1) = Supp(b_2)$. So we are in the overlapping ${\mathcal I}$-box supports case contradicting our hypothesis on the orbit ${\mathcal O}$.

We now consider the case where  the orbit ${\mathcal O}$ has overlapping ${\mathcal I}$-box supports.  If $i_1 = \min(Supp(b_1))$, the proof is the same as when the orbit ${\mathcal O}$ has no overlapping ${\mathcal I}$-box supports and is left to the reader. As above we only have to consider the case where $i_1 \ne \min(Supp(b_1))$ and $i_2 \ne \min(Supp(b_2))$. Moreover if either $\langle F_{\mathcal O}(v_{i_1}(b_1)), v_{i_2}(b_2)\rangle_{\mathcal O} \ne 0$ or $\langle v_{i_1}(b_1), F_{\mathcal O}(v_{i_2}(b_2))\rangle_{\mathcal O} \ne 0$, then $b_2 = \tau(b_1)$ and $i_1 + i_2 - 2 = 0$  from our definition of the bilinear form $\langle\  , \  \rangle_{\mathcal O}$.  So we will assume that $b_2 = \tau(b_1)$ and $i_1 + i_2 - 2 = 0$.  Clearly if $b_1$ is below (respectively above, on)  the principal diagonal, then $b_2$ is above (respectively below, on) the principal diagonal. In the case where the ${\mathcal I}$-boxes $b_1$ and $b_2$ are on the principal diagonal, then  we will write $b_1 = b(i'_1, j'_1, k'_1)$ and $b_2 = \tau(b_1) = b(i'_1, j'_1,  k''_1)$ where $i'_1, j'_1 \in {\mathcal I}, i'_1 \geq j'_1$ and $k''_1 \in \{k'_1, (1 - k'_1)\}$.

We have that $\langle F_{\mathcal O}(v_{i_1}(b_1)), v_{i_2}(b_2)\rangle_{\mathcal O}$ is equal to
\[
 \begin{cases} {\phantom -} 0, &\text{ if $i_1 = \min(Supp(b_1))$;}\\ {\phantom -} f_{i_1}(b_1)\langle v_{i_1 - 2}(b_1), v_{i_2}(b_2)\rangle_{\mathcal O}, &\text{ if $i_1 > 1$;}\\  -f_{i_1}(b_1)\langle v_{i_1- 2}(b_1), v_{i_2}(b_2)\rangle_{\mathcal O},  &\text{ if $i_1 \ne \min(Supp(b_1))$ and $i_1 \leq -1$;}\\ {\phantom -} f_1(b_1)\langle v_{-1}(b_1), v_{i_2}(b_2)\rangle_{\mathcal O}, &\text{ if $i_1 = 1$ and $b_1$ is below the principal diagonal;}\\ -\epsilon f_1(b_1) \langle v_{-1}(b_1), v_{i_2}(b_2)\rangle_{\mathcal O}, &\text{ if $i_1 = 1$ and $b_1$ is above the principal diagonal;}\\ (-\epsilon)^{k'_1}f_1(b_1) \langle v_{-1}(b_1), v_{i_2}(b_2)\rangle_{\mathcal O}, &\text{ if $i_1 = 1$ and $b_1$ is on the principal diagonal;}\end{cases}
\] 
and $\langle v_{i_1}(b_1), F_{\mathcal O}(v_{i_2}(b_2))\rangle_{\mathcal O}$ is equal to
\[
 \begin{cases} {\phantom -} 0, &\text{ if $i_2 = \min(Supp(b_2))$;}\\ {\phantom -} f_{i_2}(b_2)\langle v_{i_1}(b_1), v_{i_2 - 2}(b_2)\rangle_{\mathcal O}, &\text{ if $i_2 > 1$;}\\  -f_{i_2}(b_2)\langle v_{i_1}(b_1), v_{i_2 - 2}(b_2)\rangle_{\mathcal O},  &\text{ if $i_2 \ne \min(Supp(b_2)$ and $i_2 \leq -1$;}\\ {\phantom - } f_1(b_2)\langle v_{i_1}(b_1), v_{-1}(b_2)\rangle_{\mathcal O},  &\text{ if $i_2 = 1$ and $b_2$ is below the principal diagonal;}\\  - \epsilon f_1(b_2) \langle v_{i_1}(b_1), v_{-1}(b_2)\rangle_{\mathcal O},  &\text{ if $i_2 = 1$ and $b_2$ is above the principal diagonal;} \\ (- \epsilon)^{ k''_1}f_1(b_2) \langle v_{i_1}(b_1), v_{-1}(b_2)\rangle_{\mathcal O},  &\text{ if $i_2 = 1$ and $b_2$ is on  the principal diagonal.} \end{cases}
\]

The proofs when $i_1 > 0$  and $i_1 \ne 1$ or when $i_1 < 0$  are similar to the proofs when the orbit ${\mathcal O}$ has no overlapping ${\mathcal I}$-box supports and are left to the reader. We will just consider the case where $i_1 = 1$ and consequently $i_2 = 1$. 
We have that $\langle F_{\mathcal O}(v_{i_1}(b_1)), v_{i_2}(b_2)\rangle_{\mathcal O}$ is equal to
\[
 \begin{cases}  {\phantom -} f_1(b_1)\langle v_{-1}(b_1), v_{1}(b_2)\rangle_{\mathcal O}, &\text{ if  $b_1$ is below the principal diagonal;}\\ -\epsilon f_1(b_1) \langle v_{-1}(b_1), v_{1}(b_2)\rangle_{\mathcal O}, &\text{ if  $b_1$ is above the principal diagonal;}\\ (-\epsilon)^{k'_1}f_1(b_1) \langle v_{-1}(b_1), v_{1}(b_2)\rangle_{\mathcal O}, &\text{ if  $b_1$ is on the principal diagonal;}\end{cases}
\] 
and $\langle v_{i_1}(b_1), F_{\mathcal O}(v_{i_2}(b_2))\rangle_{\mathcal O}$ is equal to
\[
 \begin{cases}  - \epsilon f_1(b_2) \langle v_{1}(b_1), v_{-1}(b_2)\rangle_{\mathcal O},  &\text{ if  $b_1$ is below the principal diagonal;} \\  {\phantom - } f_1(b_2)\langle v_{1}(b_1), v_{-1}(b_2)\rangle_{\mathcal O},  &\text{ if $b_1$ is above the principal diagonal;}\\  (- \epsilon)^{ k''_1}f_1(b_2) \langle v_{1}(b_1), v_{-1}(b_2)\rangle_{\mathcal O},  &\text{ if  $b_1$ is on  the principal diagonal.} \end{cases}
\]

 From this and lemma~\ref{L:RelationCoefficientsHandF} (b), we get easily if $b_1$ is below or above the principal diagonal that 
\[
\langle F_{\mathcal O}(v_{1}(b_1)), v_{1}(b_2)\rangle_{\mathcal O} + \langle v_{1}(b_1), F_{\mathcal O}(v_{1}(b_2))\rangle_{\mathcal O} = 0.
\]

In the case that the ${\mathcal I}$-box $b_1$ is on the principal diagonal, we have to consider two cases: either $\tau(b_1) \ne b_1$  or $\tau(b_1) = b_1$. By lemma~\ref{L:TauBox} (d), if $\tau(b_1) \ne b_1$, then  $\mu_{\max} = 1$ and $k''_1 = 1 - k'_1$. Moreover because of \ref{SS:Iset} and ${\mathcal I}$ is even, then $\epsilon = 1$. So in this case, 
\[
\langle F_{\mathcal O}(v_{1}(b_1)), v_{1}(b_2)\rangle_{\mathcal O} + \langle v_{-1}(b_1), F_{\mathcal O}(v_{1}(b_2))\rangle_{\mathcal O} = (-\epsilon)^{k'_1}f_1(b_1)\epsilon + (-\epsilon)^{1 - k'_1}f_1(b_2) \epsilon = 0.
\]
by lemma~\ref{L:RelationCoefficientsHandF} (b).
By lemma~\ref{L:TauBox} (e), if $\tau(b_1) = b_1$, then $\mu_{\max} = 0$, $k''_1 = k'_1$ and $\epsilon = -1$. 
So in this case, 
\[
\langle F_{\mathcal O}(v_{1}(b_1)), v_{1}(b_2)\rangle_{\mathcal O} + \langle v_{1}(b_1), F_{\mathcal O}(v_{1}(b_2))\rangle_{\mathcal O} = (-\epsilon)^{k'_1} f_1(b_1) \epsilon+ (-\epsilon)^{k'_1} f_1(b_2) = 0.
\]
again by lemma~\ref{L:RelationCoefficientsHandF} (b).

We have now proved that $\langle X(u), v\rangle_{\mathcal O} + \langle u, X(v)\rangle_{\mathcal O} = 0$ for all $X \in \{E_{\mathcal O}, H_{\mathcal O}, F_{\mathcal O}\}$ and all $u, v \in V({\mathcal O})$ when $\vert {\mathcal I} \vert$ is even.

We want to prove now the Jacobson-Morozov triple relations in the case where $\vert {\mathcal I} \vert$ is even. In other words, $[H_{\mathcal O}, E_{\mathcal O}] = 2E_{\mathcal O}$, $[H_{\mathcal O}, F_{\mathcal O}] = -2F_{\mathcal O}$ and $[E_{\mathcal O}, F_{\mathcal O}] = H_{\mathcal O}$. It is enough to verify these only on the ${\mathcal I}$-graded basis ${\mathcal B}_{\mathcal O}$ of $V({\mathcal O})$. 

If the orbit ${\mathcal O}$ has no overlapping ${\mathcal I}$-box supports, then we have already computed  above our expressions  for $E_{\mathcal O}$, $H_{\mathcal O}$ and $F_{\mathcal O}$ on ${\mathcal B}_{\mathcal O}$. Let $b$ be the ${\mathcal I}$-box in ${\mathcal O}$ and $i \in Supp(b)$. Then $[H_{\mathcal O}, E_{\mathcal O}](v_i(b)) =  (H_{\mathcal O} E_{\mathcal O} - E_{\mathcal O} H_{\mathcal O}) (v_i(b))$ is equal to
\[
\begin{aligned}
&=\begin{cases} 0 - 0, &\text{ if $i = \max(Supp(b))$;}\\ h_{i + 2}(b) v_{i + 2}(b) - h_i(b) v_{i + 2}(b), &\text{ if $i \ne \max(Supp(b))$ and $i > 0$;}\\  -h_{i + 2}(b) v_{i + 2}(b) + h_i(b) v_{i + 2}(b), &\text{ if $i \ne \max(Supp(b))$ and $i <  0$;}
\end{cases}\\
& =  \quad (h_{i + 2}(b) - h_i(b))E_{\mathcal O}(v_i(b)) = 2 E_{\mathcal O}(v_i(b))
\end{aligned}
\]
by lemma~\ref{L:RelationCoefficientsHandF} (c).

Secondly we have  $[H_{\mathcal O}, F_{\mathcal O}](v_i(b)) =  (H_{\mathcal O} F_{\mathcal O} - F_{\mathcal O} H_{\mathcal O}) (v_i(b))$ is equal to
\[
\begin{aligned}
&=\begin{cases} 0 - 0, &\text{ if $i = \min(Supp(b))$;}\\ h_{i - 2}(b) f_i(b) v_{i - 2}(b) - h_i(b) f_i(b) v_{i - 2}(b), &\text{ if $i \ne \min(Supp(b))$ and $i > 0$;}\\  -h_{i - 2}(b) f_i(b) v_{i - 2}(b) + h_i(b) f_i(b) v_{i - 2}(b), &\text{ if $i \ne \min(Supp(b))$ and $i <  0$;}
\end{cases}\\
& =  \quad (h_{i - 2}(b) - h_i(b))F_{\mathcal O}(v_i(b)) = -2 F_{\mathcal O}(v_i(b))
\end{aligned}
\]
by lemma~\ref{L:RelationCoefficientsHandF} (d).

Thirdly we have to compute $[E_{\mathcal O}, F_{\mathcal O}](v_i(b)) =  (E_{\mathcal O} F_{\mathcal O} - F_{\mathcal O} E_{\mathcal O}) (v_i(b))$. Write $b = b(i_1, j_1, k_1)$ with $i_1, j_1 \in {\mathcal I}$, $i_1 \geq j_1$ and $0 \leq k \leq \mu(i_1, j_1)$  and for $i \in Supp(b)$. Thus $i_1 \geq i \geq j_1$. We have different cases to consider. Because ${\mathcal O}$ has no overlapping ${\mathcal I}$-box supports, then either $j_1 \geq 1$ or $i_1 \leq -1$. If $i_1 = j_1 = i$, then easily we have $h_i(b) = 0$ and  $(E_{\mathcal O} F_{\mathcal O} - F_{\mathcal O} E_{\mathcal O}) (v_i(b)) = 0 = H_{\mathcal O} (v_i(b))$. If $i_1 > j_1$ and $i = i_1 = \max(Supp(b))$, then 
\[
\begin{aligned}
f_i(b) &= \#\{(i'', j'') \in {\mathcal I} \times {\mathcal I}  \mid i'' \geq j'', i \leq i'' \leq i_1, j_1 < j'' \leq i\}\\
&= \#\{(i'', j'') \in {\mathcal I} \times {\mathcal I}  \mid i'' \geq j'', i = i'' = i_1, j_1 < j'' \leq i\} = h_i(b),
\end{aligned}
\]
because of lemma~\ref{L:RelationCoefficientsHandF} (f).

So
\[
(E_{\mathcal O} F_{\mathcal O} - F_{\mathcal O} E_{\mathcal O}) (v_i(b)) = f_i(b) v_i(b) = H_{\mathcal O}(v_i(b))
\]
If $i_1 > j_1$ and $i = j_1 = \min(Supp(b))$, then 
\[
\begin{aligned}
f_{i + 2}(b) &= \#\{(i'', j'') \in {\mathcal I} \times {\mathcal I}  \mid i'' \geq j'', (i + 2) \leq i'' \leq i_1, j_1 < j'' \leq (i + 2)\}\\ 
&=   \#\{(i'', j'') \in {\mathcal I} \times {\mathcal I}  \mid i'' \geq j'', (i + 2) \leq i'' \leq i_1,  j'' = (i + 2)\} = -h_i(b),
\end{aligned}
\]
because of lemma~\ref{L:RelationCoefficientsHandF} (e).

So
\[
(E_{\mathcal O} F_{\mathcal O} - F_{\mathcal O} E_{\mathcal O}) (v_i(b)) = 0 - f_{i + 2}(b) v_i(b) = H_{\mathcal O}(v_i(b))
\]
If $i_1 > i > j_1$, then both $(i - 2)$, $(i + 2)$ belongs to $Supp(b)$. Consequently they are both either simultanously $> 0$ or simultanously $< 0$.  So
\[
(E_{\mathcal O} F_{\mathcal O} - F_{\mathcal O} E_{\mathcal O}) (v_i(b)) = (f_i(b) - f_{i + 2}(b)) v_i(b) = H_{\mathcal O}(v_i(b))
\]
because of lemma~\ref{L:RelationCoefficientsHandF} (g).

If the orbit ${\mathcal O}$ has  overlapping ${\mathcal I}$-box supports, then we have already computed  above our expressions  for $E_{\mathcal O}$, $H_{\mathcal O}$ and $F_{\mathcal O}$ on ${\mathcal B}_{\mathcal O}$. Let $b = b(i_1, j_1, k_1)$ be the ${\mathcal I}$-box in ${\mathcal O}$ and $i \in Supp(b)$. We have  $[H_{\mathcal O}, E_{\mathcal O}](v_i(b)) =  (H_{\mathcal O} E_{\mathcal O} - E_{\mathcal O} H_{\mathcal O}) (v_i(b))$ is equal to
\[
\begin{cases}
0, &\text{ if $i = \max(Supp(b))$;}\\  (h_{i + 2}(b) - h_i(b))v_{i + 2}(b) = 2v_{i + 2}(b), &\text{ if $i \ne \max(Supp(b))$ and $i > 0$;}\\  -(h_{i + 2}(b) - h_i(b)) v_{i + 2}(b)=  - 2v_{i + 2}(b), &\text{ if  $i < -1$;}\\ (h_{1}(b) - h_{-1}(b)) v_{1}(b)=  2v_{1}(b), &\text{ if  $i = -1$ and $b$ is below the }\\ &\text{principal  diagonal;}\\ -\epsilon(h_{1}(b) - h_{-1}(b)) v_{1}(b)=  -\epsilon 2v_{1}(b), &\text{ if  $i = -1$ and $b$ is above the }\\ &\text{principal  diagonal;}\\ (-\epsilon)^{k_1} (h_{1}(b) - h_{-1}(b)) v_{1}(b)=  (-\epsilon)^{k_1} 2v_{1}(b), &\text{ if  $i = -1$ and $b$ is on the }\\ &\text{principal  diagonal;}\\ 
\end{cases}
\]
by lemma~\ref{L:RelationCoefficientsHandF} (c). Clearly we get that $[H_{\mathcal O}, E_{\mathcal O}](v_i(b)) = 2E_{\mathcal O}(v_i(b))$.  

Similarly we have 
$[H_{\mathcal O}, F_{\mathcal O}](v_i(b)) =  (H_{\mathcal O} F_{\mathcal O} - F_{\mathcal O} H_{\mathcal O}) (v_i(b))$ is equal to
\[
\begin{cases}
0, &\text{ if $i = \min(Supp(b))$;}\\  (h_{i - 2}(b) - h_i(b))f_i(b)v_{i - 2}(b) = -2f_i(b)v_{i - 2}(b), &\text{ if $i > 1$;}\\  -(h_{i - 2}(b) - h_i(b)) f_i(b)v_{i - 2}(b)=  2f_i(b)v_{i - 2}(b), &\text{ if  $i \ne \min(Supp(b))$}\\ &\text{ and $i \leq -1$;}\\ (h_{-1}(b) - h_{1}(b))f_1(b) v_{-1}(b)=  -2f_1(b)v_{-1}(b), &\text{ if  $i = 1$ and $b$ is below the }\\ &\text{principal  diagonal;}\\ -\epsilon(h_{-1}(b) - h_{1}(b)) f_1(b) v_{-1}(b)=   -2(-\epsilon)f_1(b)v_{-1}(b), &\text{ if  $i = 1$ and $b$ is above the }\\ &\text{principal  diagonal;}\\ (-\epsilon)^{k_1} (h_{-1}(b) - h_{1}(b)) f_1(b)v_{-1}(b) &\text{ if  $i = 1$ and $b$ is on the }\\ \quad\quad =  (-2) (-\epsilon)^{k_1} f_1(b) v_{-1}(b) &\text{principal  diagonal;}\\ 
\end{cases}
\]
by lemma~\ref{L:RelationCoefficientsHandF} (d). Clearly we get that $[H_{\mathcal O}, F_{\mathcal O}](v_i(b)) = -2F_{\mathcal O}(v_i(b))$. 

Finally we have to prove $[E_{\mathcal O}, F_{\mathcal O}] = H_{\mathcal O}$. Assume $b = b(i_1, j_1, k_1)$ with $i_1, j_1 \in {\mathcal I}$, $i_1 \geq j_1$ and $0 \leq k_1 \leq \mu(i_1, j_1)$ and let $i \in Supp(b)$. By our hypothesis, we get that $i_1 \geq 1 > -1 \geq j_1$. Note that $i_1 \geq i \geq j_1$.  

If $i = i_1 = \max(Supp(b))$, then $[E_{\mathcal O}, F_{\mathcal O}](v_i(b)) = (E_{\mathcal O} F_{\mathcal O} - F_{\mathcal O} E_{\mathcal O})(v_i(b))$ is equal to 
\[
\begin{aligned}
&\begin{cases} f_i(b) v_i(b), &\text{if $i \ne 1$;}\\ f_1(b) v_1(b), &\text{if $i = 1$ and $b$ is below the principal diagonal;}\\  (-\epsilon)^2 f_1(b) v_1(b), &\text{if $i = 1$ and $b$ is above the principal diagonal;}\\ (-\epsilon)^{2k_1} f_1(b) v_1(b), &\text{if $i = 1$ and $b$ is on the principal diagonal;}
\end{cases}\\
&= h_i(b) v_i(b) = H_{\mathcal O}(v_i(b))
\end{aligned}
\]
by lemma~\ref{L:RelationCoefficientsHandF} (f).

If $i = j_1 = \min(Supp(b))$, then $[E_{\mathcal O}, F_{\mathcal O}](v_i(b)) = (E_{\mathcal O} F_{\mathcal O} - F_{\mathcal O} E_{\mathcal O})(v_i(b))$ is equal to 
\[
\begin{aligned}
&\begin{cases} - f_{i + 2}(b) v_i(b), &\text{if $i < -1$;}\\ -f_1(b) v_{-1}(b), &\text{if $i = -1$ and $b$ is below the principal diagonal;}\\  -(-\epsilon)^2 f_1(b) v_{-1}(b), &\text{if $i = -1$ and $b$ is above the principal diagonal;}\\ -(-\epsilon)^{2k_1} f_1(b) v_{-1}(b), &\text{if $i = -1$ and $b$ is on the principal diagonal;}
\end{cases}\\
&= h_i(b) v_i(b) = H_{\mathcal O}(v_i(b))
\end{aligned}
\]
by lemma~\ref{L:RelationCoefficientsHandF} (e).

If $i_1 > i > j_1$, then $[E_{\mathcal O}, F_{\mathcal O}](v_i(b)) = (E_{\mathcal O} F_{\mathcal O} - F_{\mathcal O} E_{\mathcal O})(v_i(b))$ is equal to 
\[
\begin{aligned}
&\begin{cases} (f_i(b) - f_{i + 2}(b)) v_i(b), &\text{if $i >1$;}\\ (f_i(b) - f_{i + 2}(b)) v_{i}(b), &\text{if $i < -1$;}\\  (f_1(b)  - f_3(b))v_{1}(b), &\text{if $i = 1$ and $b$ is below the principal diagonal;}\\ ((-\epsilon)^2f_1(b)  - f_3(b))v_{1}(b), &\text{if $i = 1$ and $b$ is above the principal diagonal;}\\ ((-\epsilon)^{2k_1} f_1(b) - f_3(b)) v_{1}(b), &\text{if $i = 1$ and $b$ is on the principal diagonal;}\\ (f_{-1}(b)  - f_1(b))v_{-1}(b), &\text{if $i = -1$ and $b$ is below the principal diagonal;}\\ (f_{-1}(b)  - (-\epsilon)^2 f_1(b))v_{-1}(b), &\text{if $i = -1$ and $b$ is above the principal diagonal;}\\ ( f_{-1}(b) - (-\epsilon)^{2k_1}f_1(b)) v_{-1}(b), &\text{if $i = -1$ and $b$ is on the principal diagonal;}
\end{cases}\\
&= (f_i(b) - f_{i + 2}(b))v_i(b) = h_i(b) v_i(b) = H_{\mathcal O}(v_i(b))
\end{aligned}
\]
by lemma~\ref{L:RelationCoefficientsHandF} (g).

Thus we have proved that the triple $E_{\mathcal O}$, $H_{\mathcal O}$ and $F_{\mathcal O}$ is an ${\mathcal I}$-graded Jacobson-Morozov triple when $\vert {\mathcal I} \vert$ is even. 

We now want to prove that 
\[
\{(V({\mathcal O}), \phi_{\mathcal O}, \langle\ , \ \rangle) \mid \text{ ${\mathcal O}$
belongs to the set of $\langle \tau \rangle$-orbits  in ${\mathbf B}$}\}
\]
is a set of representatives of the isomorphism classes of indecomposable $\epsilon$-representa~- tions of $(\Omega, \sigma)$ when $\vert {\mathcal I} \vert$ is even.

Let the ${\mathcal I}$-box $b = b(i_1, j_1, k_1) \in {\mathbf B}$ be an element of the $\langle \tau \rangle$-orbit ${\mathcal O}$ with $i_1, j_1 \in {\mathcal I}$, $i_1 \geq j_1$ and $0 \leq k_1 \leq \mu(i_1, j_1)$. If we consider the restriction of $E_{\mathcal O}$ to $V_i(b)$: $\phi_{b, i} = E_{\mathcal O}\vert_{V_i(b)}:V_i(b) \rightarrow V_{i + 2}(b)$ for each $i \in Supp(b)$, $i \ne \max(Supp(b))$, we get a representation $\phi_b:V(b) \rightarrow V(b)$ of the quiver $\Omega$ of type $A_{2m}$. Here $\vert {\mathcal I} \vert = 2m$. From our definition of $E_{\mathcal O}$, we get easily that 
\begin{itemize}
\item $\phi_b$ is an indecomposable representation of $A_{2m}$ of dimension $\dim(\phi_b) = (\beta_i)_{i \in {\mathcal I}}$ where 
\[
\beta_i = \begin{cases} 1, &\text{if $i_1 \geq i \geq j_1$;}\\ 0, &\text{otherwise.}\end{cases}
\]
\item $\phi_{\mathcal O}:V({\mathcal O}) \rightarrow V({\mathcal O})$ is the direct sum $\oplus_{b \in {\mathcal O}} \phi_b$ as a representation of $A_{2m}$.
\end{itemize}
Because $\langle E_{\mathcal O}(u), v\rangle_{\mathcal O} + \langle u, E_{\mathcal O}(v)\rangle_{\mathcal O} = 0$ for all $u, v \in V({\mathcal O})$, we get that the representation $(V({\mathcal O}), \phi_{\mathcal O}, \langle\ , \ \rangle_{\mathcal O})$ is an $\epsilon$-representation. To show that $(V({\mathcal O}), \phi_{\mathcal O}, \langle \  , \  \rangle_{\mathcal O})$ is  indecomposable, we can use theorem~\ref{T:CaracterisationRepresentation}.

Each $\langle \tau \rangle$-orbit ${\mathcal O}$ contains at least one ${\mathcal I}$-box $b = b(i_1, j_1, k_1) \in {\mathbf B}$ such that $(i_1 + j_1) \geq 0$. If $(i_1 + j_1) > 0$, this ${\mathcal I}$-box is unique in the orbit, while if $(i_1 + j_1) = 0$, then there are $\mu_{\max}$ ${\mathcal I}$-boxes in the orbit. If $(i_1 + j_1) > 0$, then $\tau(b) \ne b$ and ${\mathcal O} = \{b, \tau(b)\}$. We have $i_1 \geq j_1$ and $i_1 \geq \vert j_1 \vert$ because  $(i_1 + j_1) > 0$.  As in the definition~\ref{DefDim}, we will write the dimension vector of $(V({\mathcal O}), \phi_{\mathcal O}, \langle\  , \  \rangle_{\mathcal O})$ as representation of the symmetric quiver $({\Omega}, \sigma)$ by
\[
\dim(V({\mathcal O}))  = (\dim(V_i({\mathcal O}))_{i \in {\mathcal I}_+} = (\alpha_i)_{i \in {\mathcal I}_+} = \alpha.
\]
We will write also the dimensions of $\dim(\phi_b)$ and $\dim(\phi_{\tau(b)})$ as representation of the quiver $A_{2m}$ by 
\[
\dim(\phi_b) = \beta' = (\beta'_i)_{i \in {\mathcal I}} \quad \text{ and } \quad  \dim(\phi_{\tau(b)}) = \beta = (\beta_i)_{i \in {\mathcal I}}.
\]  

If $j_1 > 0$, then 
\[
\beta'_i = \begin{cases}1, &\text{if $-j_1 \geq i \geq -i_1$;}\\ 0, &\text{otherwise; } \end{cases} \quad  \quad 
\beta_i = \begin{cases}1, &\text{if $i_1 \geq i \geq j_1$;}\\ 0, &\text{otherwise;} \end{cases}
\]
for $i \in {\mathcal I}$ and 
\[
\alpha_i =\begin{cases}  1,&\text{if $i_1 \geq i \geq j_1$;}\\ 0, &\text{otherwise;}\end{cases}
\]
for $i \in {\mathcal I}_+$.  Using proposition~\ref{Decomposition_even} (a) (i) and (b)(i) when $j_1 > 1$, proposition~\ref{Decomposition_even} (a) (ii) and (b) (ii) for $\alpha^0$ when $j_1 = 1$ and theorem~\ref{T:CaracterisationRepresentation}, this shows that $\phi_{\mathcal O}$ is indecomposable and isomorphic to the indecomposable representation of dimension $\alpha$.

If $0 > j_1$, then 
\[
\beta'_i = \begin{cases}1, &\text{if $-j_1 \geq i \geq -i_1$;}\\ 0, &\text{otherwise; } \end{cases} \quad  \quad 
\beta_i = \begin{cases}1, &\text{if $i_1 \geq i \geq j_1$;}\\ 0, &\text{otherwise;} \end{cases}
\]
for $i \in {\mathcal I}$ and 
\[
\alpha_i =\begin{cases}  2,&\text{if $i_1 \geq i > 0$;}\\ 1,&\text{if $i_1 \geq i > \vert j_1 \vert$;}\\ 0, &\text{otherwise;}\end{cases}
\]
for $i \in {\mathcal I}_+$.  Using proposition~\ref{Decomposition_even} (a) (iii) and (b)(iii)  and theorem~\ref{T:CaracterisationRepresentation}, this shows that $\phi_{\mathcal O}$ is indecomposable and isomorphic to the indecomposable representation of dimension $\alpha$.

If $(i_1 + j_1) = 0$, then the ${\mathcal I}$-box $b(i_1, j_1, k_1)$ is on the principal diagonal and $i_1 > 0$, because $i_1 \geq j_1$. There are two cases to consider: either $\tau(b) \ne b$ or $\tau(b) = b$. When $\tau(b) \ne b$, then ${\mathcal O} = \{b, \tau(b)\}$ and by lemma~\ref{L:TauBox} (d), $\mu_{\max} = 1$, $\epsilon = 1$, $\tau(b) = b(i_1, j_1, (1 - k_1))$. So 
\[
\beta'_i = \beta_i = \begin{cases}1, &\text{if $i_1 \geq i \geq -i_1$;}\\ 0, &\text{otherwise; } \end{cases}
\]
for $i \in {\mathcal I}$ and 
\[
\alpha_i =\begin{cases}  2,&\text{if $i_1 \geq i > 0$;}\\  0, &\text{otherwise;}\end{cases}
\]
for $i \in {\mathcal I}_+$. Using proposition~\ref{Decomposition_even} (a) (iv) and theorem~\ref{T:CaracterisationRepresentation}, this shows that $\phi_{\mathcal O}$ is indecomposable and isomorphic to the indecomposable representation of dimension $\alpha$. 

When $\tau(b) = b$, then ${\mathcal O} = \{b \}$ and by lemma~\ref{L:TauBox} (e), $\mu_{\max} = 0$, $\epsilon = -1$, $V({\mathcal O}) = V_b$. We will write $\dim(\phi_b) = \beta = (\beta_i)_{i \in {\mathcal I}}$. So 
\[
\beta_i = \begin{cases}1, &\text{if $i_1 \geq i \geq -i_1$;}\\ 0, &\text{otherwise; } \end{cases}
\]
for $i \in {\mathcal I}$ and 
\[
\alpha_i =\begin{cases}  1,&\text{if $i_1 \geq i > 0$;}\\  0, &\text{otherwise.}\end{cases}
\]
for $i \in {\mathcal I}_+$. Using proposition~\ref{Decomposition_even} (b) (ii) for $\alpha^1$ and theorem~\ref{T:CaracterisationRepresentation}, this shows that $\phi_{\mathcal O}$ is indecomposable and isomorphic to the indecomposable representation of dimension $\alpha^1$. 

To finish the proof when $\vert {\mathcal I} \vert$ is even, it suffices to notice by inspection of the dimension vectors of $(V({\mathcal O}), \phi_{\mathcal O}, \langle\  , \  \rangle_{\mathcal O})$,  the direct sum decomposition $\phi_{\mathcal O} = \oplus_{b \in {\mathcal O}} \phi_b$ and theorem~\ref{T:CaracterisationRepresentation}, that $(V({\mathcal O}), \phi_{\mathcal O}, \langle\  , \  \rangle_{\mathcal O})$ and $(V({\mathcal O'}), \phi_{\mathcal O'}, \langle\  , \  \rangle_{{\mathcal O}'})$ are not isomorphic for two distinct $\langle \tau \rangle$-orbits ${\mathcal O}$ and ${\mathcal O}'$. By remark~\ref{R:CardinalityIndecomposable}, we get the result that $\{(V({\mathcal O}), \phi_{\mathcal O}, \langle\ , \ \rangle_{\mathcal O}) \mid \text{ ${\mathcal O}$  belongs to the set of $\langle \tau \rangle$-orbits  in ${\mathbf B}$}\}$   is a set of representatives of the isomorphism classes of indecomposable $\epsilon$-representations of $(\Omega, \sigma)$.


{\bf Odd case:}

Consider secondly the proof of the proposition when $\vert {\mathcal I} \vert$ is odd. In this case, $0 \in {\mathcal I}$. We want to prove that $\langle X(u), v\rangle_{\mathcal O} + \langle u, X(v)\rangle_{\mathcal O} = 0$ for all basis vectors $u, v \in {\mathcal B}_{\mathcal O}$ and $X \in \{E_{\mathcal O}, H_{\mathcal O}, F_{\mathcal O}\}$. We will start with $X = E_{\mathcal O}$.

If the orbit ${\mathcal O}$ has no overlapping ${\mathcal I}$-box supports, then we saw in \ref{SS:EHFDefinition} that this is equivalent to $0 \not\in Supp(b)$ or equivalently $0 \not \in Supp(\tau(b))$,  when $b \in {\mathcal O}$. We have that $\langle E_{\mathcal O}(v_{i_1}(b_1)), v_{i_2}(b_2)\rangle_{\mathcal O}$  is equal to 
\[
\begin{cases} {\phantom -}0, &\text{if $i_1 = \max(Supp(b_1))$;} \\  {\phantom -}\langle v_{i_1 + 2}(b_1), v_{i_2}(b_2)\rangle_{\mathcal O}, &\text{if $i_1 \ne \max(Supp(b_1))$ and $i_1 > 0$;}\\  -\langle v_{i_1 + 2}(b_1), v_{i_2}(b_2)\rangle_{\mathcal O}, &\text{if $i_1 \ne \max(Supp(b_1))$ and $i_1 < 0$;}\end{cases}
\]
and that $\langle v_{i_1}(b_1), E_{\mathcal O}(v_{i_2}(b_2))\rangle_{\mathcal O}$  is equal to 
\[
\begin{cases}  {\phantom -}0, &\text{if $i_2 = \max(Supp(b_2))$;} \\  {\phantom -}\langle v_{i_1}(b_1), v_{i_2 + 2}(b_2)\rangle_{\mathcal O}, &\text{if $i_2 \ne \max(Supp(b_1))$ and $i_2 > 0$;}\\  -\langle v_{i_1}(b_1), v_{i_2 + 2}(b_2)\rangle_{\mathcal O}, &\text{if $i_2 \ne \max(Supp(b_2))$ and $i_2 < 0$.}\end{cases}
\]

If $i_1 = \max(Supp(b_1))$, then $\langle E_{\mathcal O}(v_{i_1}(b_1)), v_{i_2}(b_2)\rangle_{\mathcal O} = 0$ by the definition of $E_{\mathcal O}$ and we have $\langle v_{i_1}(b_1), E_{\mathcal O}(v_{i_2}(b_2))\rangle_{\mathcal O} = 0$, otherwise if $\langle v_{i_1}(b_1), E_{\mathcal O}(v_{i_2}(b_2))\rangle_{\mathcal O} \ne 0$, then $i_2 \ne \max(Supp(b_2))$, $b_2 = \tau(b_1)$ and $i_1 + i_2 +2 = 0$. This is impossible, because $Supp(b_2) = \sigma(Supp(b_1))$ and $\min(Supp(b_2))= -i_1$, but from $i_1 + i_2 + 2 = 0$, we get a contradiction because $i_2 = (-i_1 - 2) \in Supp(b_2)$.  Thus
\[
\langle E_{\mathcal O}(v_{i_1}(b_1)), v_{i_2}(b_2)\rangle_{\mathcal O} + \langle v_{i_1}(b_1), E_{\mathcal O}(v_{i_2}(b_2))\rangle_{\mathcal O} = 0
\]
if $i_1 = \max(Supp(b_1))$. By symmetry, we also have this equality if $i_2 = \max(Supp(b_2))$.

So we can now assume that $i_1 \ne \max(Supp(b_1))$ and $i_2 \ne \max(Supp(b_2))$. Moreover if either $\langle E_{\mathcal O}(v_{i_1}(b_1)), v_{i_2}(b_2)\rangle_{\mathcal O} \ne 0$ or $\langle v_{i_1}(b_1), E_{\mathcal O}(v_{i_2}(b_2))\rangle_{\mathcal O} \ne 0$, then $b_2 = \tau(b_1)$ and $i_1 + i_2 + 2 = 0$ from our definition of the bilinear form $\langle\  , \  \rangle$. From our hypothesis on the orbit ${\mathcal O}$, we have that $\tau(b_1) \ne b_1$ and $\tau(b_2) \ne b_2$. If $b_1$ is not on the principal diagonal, then $b_2 = \tau(b_1)$ is also not on the principal diagonal. If $i_1 > 0$, then $i_2 = -i_1 -2 < 0$, because $i_1 + i_2 + 2 = 0$. Thus we get 
\begin{multline*}
\langle E_{\mathcal O}(v_{i_1}(b_1)), v_{i_2}(b_2)\rangle_{\mathcal O} + \langle v_{i_1}(b_1), E_{\mathcal O}(v_{i_2}(b_2))\rangle_{\mathcal O}\\ =\langle v_{i_1 + 2}(b_1), v_{i_2}(b_2)\rangle + \langle v_{i_1}(b_1), -v_{i_2 + 2}(b_2)\rangle = 1 - 1 = 0.
\end{multline*}
If $i_1 < 0$ and $i_1 \ne -2$, then $i_2 = -(i_1 + 2) > 0$. Thus we get that 
\begin{multline*}
\langle E_{\mathcal O}(v_{i_1}(b_1)), v_{i_2}(b_2)\rangle_{\mathcal O} + \langle v_{i_1}(b_1), E_{\mathcal O}(v_{i_2}(b_2))\rangle_{\mathcal O}\\ =\langle -v_{i_1 + 2}(b_1), v_{i_2}(b_2)\rangle_{\mathcal O} + \langle v_{i_1}(b_1), v_{i_2 + 2}(b_2)\rangle_{\mathcal O} = -\epsilon + \epsilon = 0.
\end{multline*}
Note that  we cannot have $i_1 = -2$, otherwise we get that $i_2 = 0 \in Supp(\tau(b_2))$, but this contradicts our hypothesis on the orbit. 

Note that $b_1$ cannot be on the principal diagonal, otherwise $b_2 = \tau(b_1)$ is also on the principal diagonal  and $Supp(b_2) = Supp(b_1)$. This contradicts the fact that ${\mathcal O}$ has no overlapping ${\mathcal I}$-box supports.

We will now consider the case where the orbit has overlapping  ${\mathcal I}$-box supports. If $i_1 = \max(Supp(b_1))$, the proof is the same as when the orbit ${\mathcal O}$ has no overlapping ${\mathcal I}$-box supports  and is left to the reader. As above we only have to consider the case where $i_1 \ne \max(Supp(b_1))$ and $i_2 \ne \max(Supp(b_2))$. Moreover if either $\langle E_{\mathcal O}(v_{i_1}(b_1)), v_{i_2}(b_2)\rangle_{\mathcal O} \ne 0$ or $\langle v_{i_1}(b_1), E_{\mathcal O}(v_{i_2}(b_2))\rangle_{\mathcal O} \ne 0$, then $b_2 = \tau(b_1)$ and $i_1 + i_2 + 2 = 0$ from our definition of $\langle\  , \  \rangle_{\mathcal O}$. Clearly if $b_1$ is below (respectively above, on) the principal diagonal, then $b_2$ is above (respectively below, on) the principal diagonal. In the case where the ${\mathcal I}$-boxes $b_1$ and $b_2$ are on the principal diagonal, then we will write $b_1 = b_1(i'_1, j'_1, k'_1)$ and $b_2 = \tau(b_1) = b(i'_1, j'_1, k''_1)$ where $i'_1, j'_1 \in {\mathcal I}$, $i'_1 \geq j'_1$ and $k''_1 \in \{k'_1, (1 - k'_1)\}$. More precisely, if $\mu_{\max} = 0$, then $k''_1 = k'_1$  and if $\mu_{\max} = 1$, then $k''_1 = (1 - k'_1)$.

We have that $\langle E_{\mathcal O}(v_{i_1}(b_1)), v_{i_2}(b_2)\rangle_{\mathcal O}$  is equal to 
\[
\begin{cases} 0, &\text{if $i_1 = \max(Supp(b_1))$;}\\ \langle v_{i_1 + 2}(b_1), v_{i_2}(b_2)\rangle_{\mathcal O}, &\text{if $i_1 \ne \max(Supp(b_1))$ and $i_1 > 0$;}\\ \langle -v_{i_1 + 2}(b_1), v_{i_2}(b_2)\rangle_{\mathcal O}, &\text{if $i_1 < 0$;}\\ \langle \epsilon v_{2}(b_1), v_{-2}(b_2)\rangle_{\mathcal O}, &\text{if $i_1 \ne \max(Supp(b_1))$,  $i_1 = 0$ and $b_1$ is below the}\\ &\text{principal diagonal;}\\ \langle  v_{2}(b_1), v_{-2}(b_2)\rangle_{\mathcal O}, &\text{if $i_1 \ne \max(Supp(b_1))$,  $i_1 = 0$ and $b_1$ is above the}\\ &\text{principal diagonal;}\\ \langle \epsilon^{k'_1} v_{2}(b_1), v_{-2}(b_2)\rangle_{\mathcal O}, &\text{if $i_1 \ne \max(Supp(b_1))$,  $i_1 = 0$ and $b_1$ is on the}\\ &\text{principal diagonal;}
\end{cases}
\]
and $\langle v_{i_1}(b_1), E_{\mathcal O}(v_{i_2}(b_2))\rangle_{\mathcal O}$  is equal to 
\[
\begin{cases} 0, &\text{if $i_2 = \max(Supp(b_2))$;}\\ \langle v_{i_1}(b_1), v_{i_2 + 2}(b_2)\rangle_{\mathcal O}, &\text{if $i_2 \ne \max(Supp(b_2))$ and $i_2 > 0$;}\\ \langle v_{i_1}(b_1), -v_{i_2 + 2}(b_2)\rangle_{\mathcal O}, &\text{if $i_2 < 0$;}\\ \langle v_{-2}(b_1),  \epsilon v_{2}(b_2)\rangle_{\mathcal O}, &\text{if $i_2 \ne \max(Supp(b_2))$,  $i_2 = 0$ and $b_2$ is below the}\\ &\text{principal diagonal;}\\ \langle  v_{-2}(b_1), v_{2}(b_2)\rangle_{\mathcal O}, &\text{if $i_2 \ne \max(Supp(b_2))$,  $i_2 = 0$ and $b_2$ is above the}\\ &\text{principal diagonal;}\\ \langle  v_{-2}(b_1), \epsilon^{k''_1} v_{2}(b_2)\rangle_{\mathcal O}, &\text{if $i_2 \ne \max(Supp(b_2))$,  $i_2 = 0$ and $b_2$ is on the}\\ &\text{principal diagonal.}
\end{cases}
\]
The proofs when $i_1 > 0$ and  when $i_1 < -2$ are similar to the proofs when the orbit ${\mathcal O}$ has no overlapping ${\mathcal I}$-box supports and are left to the reader. We will only consider the cases where $i_1 = 0$ and $i_1 = -2$.

If $i_1 = 0$, then $i_2 = -2$ and consequently $2 \in Supp(b_1)$, because $-2 \in Supp(\tau(b_1))$. So $i_1 \ne \max(Supp(b_1))$.  We have that $\langle E_{\mathcal O}(v_{0}(b_1)), v_{-2}(b_2)\rangle_{\mathcal O}$  is equal to 
\[
\begin{cases}  \langle \epsilon v_{2}(b_1), v_{-2}(b_2)\rangle_{\mathcal O} = \epsilon, &\text{if  $b_1$ is below the principal diagonal;}\\ \langle  v_{2}(b_1), v_{-2}(b_2)\rangle_{\mathcal O} = 1, &\text{if $b_1$ is above the principal diagonal;}\\ \langle \epsilon^{k'_1} v_{2}(b_1), v_{-2}(b_2)\rangle_{\mathcal O} = \epsilon^{k'_1}, &\text{if  $b_1$ is on the principal diagonal;}
\end{cases}
\]
and $\langle v_{0}(b_1), E_{\mathcal O}(v_{-2}(b_2))\rangle_{\mathcal O} = \langle v_0(b_1), - v_0(b_2)\rangle_{\mathcal O}$  is equal to 
\[
\begin{cases} -\epsilon, &\text{if $b_1$ is below the principal diagonal;}\\ -1, &\text{if  $b_1$ is above the principal diagonal;}\\ -\epsilon^{k'_1}, &\text{if $b_1$ is on the principal diagonal and $\tau(b_1) \ne b_1$;} \\ -1, &\text{if $b_1$ is on the principal diagonal and $\tau(b_1) = b_1$;}
\end{cases}
\]

From this, we get easily if $b_1$ is below or above the principal diagonal that
\[
\langle E_{\mathcal O}(v_{0}(b_1)), v_{-2}(b_2)\rangle_{\mathcal O} + \langle v_{0}(b_1), E_{\mathcal O}(v_{-2}(b_2))\rangle_{\mathcal O} = 0. \quad\quad (*)
\]
In the case that the ${\mathcal I}$-box $b_1$ is on the principal diagonal, we have to consider two cases: either $\tau(b_1) \ne b_1$ or $\tau(b_1) = b_1$. If $\tau(b_1) \ne b_1$, we easily get the equation (*). If $\tau(b_1) = b_1$, then, by lemma~\ref{L:TauBox} (f), $\epsilon = 1$ and $(*)$ is also verified.

If $i_1 = -2$, then $i_2 = 0$ and by symmetry we also get $(*)$. 

If $X = H_{\mathcal O}$, then for either  $\langle H_{\mathcal O}(v_{i_1}(b_1)), v_{i_2}(b_2)\rangle_{\mathcal O} =  h_{i_1}(b_1) \langle v_{i_1}(b_1), v_{i_2}(b_2)\rangle_{\mathcal O} \ne 0$ or for $\langle v_{i_1}(b_1), H_{\mathcal O}(v_{i_2}(b_2))\rangle_{\mathcal O} =  h_{i_2}(b_2) \langle v_{i_1}(b_1), v_{i_2}(b_2)\rangle_{\mathcal O} \ne 0$, we must have $b_2 = \tau(b_1)$ and $i_1 + i_2 = 0$. Note that in both cases, $\langle v_{i_1}(b_1), v_{i_2}(b_2)\rangle_{\mathcal O} \ne 0$. So we can assume that $b_2 = \tau(b_1)$ and $i_2 = -i_1$. By lemma~\ref{L:RelationCoefficientsHandF} (a), we get that 
\begin{multline*}
\langle H_{\mathcal O}(v_{0}(b_1)), v_{-2}(b_2)\rangle_{\mathcal O} + \langle v_{0}(b_1), H_{\mathcal O}(v_{-2}(b_2))\rangle_{\mathcal O} \\ =(h_{i_1}(b_1) + h_{i_2}(b_2))\langle v_{i_1}(b_1), v_{i_2}(b_2)\rangle_{\mathcal O} = 0.
\end{multline*}

Now we consider $X = F_{\mathcal O}$. If the orbit has no overlapping ${\mathcal I}$-box supports, then we have that $\langle F_{\mathcal O}(v_{i_1}(b_1)), v_{i_2}(b_2)\rangle_{\mathcal O}$ is equal to 
\[
\begin{cases} {\phantom -}0, &\text{if $i_1 = \min(Supp(b_1))$;}\\  {\phantom -} \langle f_{i_1}(b_1) v_{i_1 - 2}(b_1), v_{i_2}(b_2)\rangle_{\mathcal O}, &\text{if $i_1 \ne\min(Supp(b_1))$ and $i_1 > 0$;}\\ - \langle f_{i_1}(b_1) v_{i_1 - 2}(b_1), v_{i_2}(b_2)\rangle_{\mathcal O}, &\text{if $i_1 \ne \min(Supp(b_1))$ and $i_1 < 0$;} \end{cases}
\]
and 
$\langle v_{i_1}(b_1), F_{\mathcal O}(v_{i_2}(b_2))\rangle_{\mathcal O}$ is equal to 
\[
\begin{cases} {\phantom -}0, &\text{if $i_2 = \min(Supp(b_2))$;}\\  {\phantom -} \langle  v_{i_1}(b_1), f_{i_2}(b_2)(v_{i_2 - 2}(b_2)\rangle_{\mathcal O}, &\text{if $i_2 \ne\min(Supp(b_2))$ and $i_2 > 0$;}\\ - \langle  v_{i_1}(b_1), f_{i_2}(b_2) v_{i_2 - 2}(b_2)\rangle_{\mathcal O}, &\text{if $i_2 \ne \min(Supp(b_2))$ and $i_2 < 0$;} \end{cases}
\]

If $i_1 = \min(Supp(b_1))$, then $\langle F_{\mathcal O}(v_{i_1}(b_1)), v_{i_2}(b_2)\rangle_{\mathcal O} = 0$ by definition of $F_{\mathcal O}$ and we have $\langle v_{i_1}(b_1), F_{\mathcal O}(v_{i_2}(b_2))\rangle_{\mathcal O} = 0$, otherwise if  $\langle v_{i_1}(b_1), F_{\mathcal O}(v_{i_2}(b_2))\rangle \ne 0$, then $i_2 \ne \min(Supp(b_2))$, $b_2 = \tau(b_1)$ and $i_1 + i_2 - 2 = 0$. This is impossible, because $Supp(b_2) = \sigma(Supp(b_1))$ and $\max(Supp(b_2)) = -i_1$, but from $i_2 = (-i_1 + 2) \in Supp(b_2)$ we get the contradiction. Thus   
\[
\langle F_{\mathcal O}(v_{i_1}(b_1)), v_{i_2}(b_2)\rangle_{\mathcal O} + \langle v_{i_1}(b_1), F_{\mathcal O}(v_{i_2}(b_2))\rangle_{\mathcal O} = 0
\]
if $i_1 = \min(Supp(b_1))$. By symmetry, we also have this last equality if $i_2 = \min(Supp(b_2))$. From now on, we will assume that $i_1 \ne \min(Supp(b_1))$ and $i_2 \ne \min(Supp(b_2))$.  

If either $\langle F_{\mathcal O}(v_{i_1}(b_1)), v_{i_2}(b_2)\rangle_{\mathcal O} \ne 0$ or  $\langle v_{i_1}(b_1), F_{\mathcal O}(v_{i_2}(b_2))\rangle_{\mathcal O} \ne  0$, then $b_2 = \tau(b_1)$ and $i_1 + i_2 - 2 = 0$ from our definition of the bilinear form $\langle\  , \  \rangle_{\mathcal O}$. From our hypothesis on the orbit ${\mathcal O}$, we have that $\tau(b_1) \ne b_1$ and $\tau(b_2) \ne b_2$. If $b_1$ is not on the principal diagonal, then $b_2 = \tau(b_1)$ is also not on the principal diagonal. If $i_1 > 0$ and $i_1 \ne 2$, then $i_2 = -i_1 + 2 < 0$. Thus we get that 
\begin{multline*}
\langle F_{\mathcal O}(v_{i_1}(b_1)), v_{i_2}(b_2)\rangle_{\mathcal O} + \langle v_{i_1}(b_1), F_{\mathcal O}(v_{i_2}(b_2))\rangle_{\mathcal O} \\ =\langle f_{i_1}(b_1) v_{i_1 - 2}(b_1), v_{i_2}(b_2)\rangle_{\mathcal O} - f_{i_2}(b_2) \langle v_{i_1}(b_1), v_{i_2 - 2}(b_2)\rangle_{\mathcal O} \\ (f_{i_1}(b_1) - f_{i_2}(b_2)) = 0
\end{multline*}
by lemma~\ref{L:RelationCoefficientsHandF} (b) and our definition of $\langle\  , \  \rangle_{\mathcal O}$.

If $i_1 < 0$, then $i_2 = - i_1 + 2 > 2$ and we get that 
\begin{multline*}
\langle F_{\mathcal O}(v_{i_1}(b_1)), v_{i_2}(b_2)\rangle_{\mathcal O} + \langle v_{i_1}(b_1), F_{\mathcal O}(v_{i_2}(b_2))\rangle_{\mathcal O} \\ = -\langle f_{i_1}(b_1) v_{i_1 - 2}(b_1), v_{i_2}(b_2)\rangle_{\mathcal O} + f_{i_2}(b_2) \langle v_{i_1}(b_1), v_{i_2 - 2}(b_2)\rangle_{\mathcal O} \\ (-f_{i_1}(b_1) + f_{i_2}(b_2))\epsilon = 0
\end{multline*}
by lemma~\ref{L:RelationCoefficientsHandF} (b) and our definition of $\langle\  , \  \rangle_{\mathcal O}$.

To conclude this case, we can observe that $i_1 \ne 2$. Otherwise $i_2 = 0$ and, by \ref{SS:EHFDefinition}, this means that $0 \in Supp(b_1) \cap Supp(b_2)$ and ${\mathcal O}$ has overlapping ${\mathcal I}$-box supports contradicting our hypothesis on ${\mathcal O}$. 

Note that $b_1$ cannot be on the principal diagonal, otherwise $b_2 = \tau(b_1)$ is also on the principal diagonal and $Supp(b_2) = Supp(b_1)$. So ${\mathcal O}$ has overlapping ${\mathcal I}$-box supports contradicting our hypothesis on ${\mathcal O}$. We also have $i_1 \ne 0$, otherwise ${\mathcal O}$ has overlapping ${\mathcal I}$-box supports contradicting our hypothesis.

Consider now the case where the orbit ${\mathcal O}$ has overlapping ${\mathcal I}$-box supports. If $i_1 = \min(Supp(b_1))$, then the proof is the same as when the orbit ${\mathcal O}$ has no overlapping ${\mathcal I}$-box supports and is left to the reader. As above we only have to consider the case where $i_1 \ne \min(Supp(b_1))$ and $i_2 \ne \min(Supp(b_2))$. Moreover if either $\langle F_{\mathcal O}(v_{i_1}(b_1)), v_{i_2}(b_2)\rangle_{\mathcal O} \ne 0$ or $\langle v_{i_1}(b_1), F_{\mathcal O}(v_{i_2}(b_2))\rangle_{\mathcal O} \ne 0$, then $b_2 = \tau(b_1)$ and $i_1 + i_2 - 2 = 0$.  Clearly if $b_1$ is below (respectively above, on) the principal diagonal, then $b_2$ is above (respectively below, on) the principal diagonal. In the case where the ${\mathcal I}$-boxes $b_1$ and $b_2$ are on the principal diagonal, then we will write $b_1 = b(i'_1, j'_1, k'_1)$ and $b_2 = \tau(b_1) = b(i'_1, j'_1, k''_1)$ where $i'_1, j'_1 \in {\mathcal I}$, $i'_1 \geq j'_1$ and $k''_1 \in \{k'_1, (1 - k'_1)\}$. More precisely, if $\mu_{\max} = 0$, then $k''_1 = k'_1$  and if $\mu_{\max} = 1$, then $k''_1 = (1 - k'_1)$.

We have that $\langle F_{\mathcal O}(v_{i_1}(b_1)), v_{i_2}(b_2)\rangle_{\mathcal O}$ is equal to 
\[
\begin{cases}0, &\text{if $i_1 = \min(Supp(b_1))$;} \\ \langle f_{i_1}(b_1) v_{i_1 - 2}(b_1), v_{i_2}(b_2)\rangle_{\mathcal O}, &\text{if $i_1 > 2$;}\\ \langle -f_{i_1}(b_1) v_{i_1 - 2}(b_1), v_{i_2}(b_2)\rangle_{\mathcal O}, &\text{if $i_1 \ne \min(Supp(b_1))$ and $i_1 \leq 0$;}\\ \langle \epsilon f_{2}(b_1) v_{0}(b_1), v_{0}(b_2)\rangle_{\mathcal O}, &\text{if $i_1 = 2$ and $b_1$ is below the principal diagonal;}\\ \langle f_{2}(b_1) v_{0}(b_1), v_{0}(b_2)\rangle_{\mathcal O}, &\text{if $i_1 = 2$ and $b_1$ is above the principal diagonal;}\\ \langle \epsilon^{k'_1} f_{2}(b_1) v_{0}(b_1), v_{0}(b_2)\rangle_{\mathcal O}, &\text{if $i_1 = 2$ and $b_1$ is on the principal diagonal;}\\
\end{cases}
\]
and $\langle v_{i_1}(b_1), F_{\mathcal O}(v_{i_2}(b_2))\rangle_{\mathcal O}$ is equal to 
\[
\begin{cases}0, &\text{if $i_2 = \min(Supp(b_2))$;} \\ \langle v_{i_1}(b_1),  f_{i_2}(b_2) v_{i_2 - 2}(b_2)\rangle_{\mathcal O}, &\text{if $i_2 > 2$;}\\ \langle  v_{i_1}(b_1), -f_{i_2}(b_2)v_{i_2 - 2}(b_2)\rangle_{\mathcal O}, &\text{if $i_2 \ne \min(Supp(b_2))$ and $i_2 \leq 0$;}\\ \langle v_{0}(b_1),  \epsilon f_{2}(b_2) v_{0}(b_2)\rangle_{\mathcal O}, &\text{if $i_2 = 2$ and $b_2$ is below the principal diagonal;}\\ \langle v_{0}(b_1),  f_{2}(b_2) v_{0}(b_2)\rangle_{\mathcal O}, &\text{if $i_2 = 2$ and $b_2$ is above the principal diagonal;}\\ \langle  v_{0}(b_1), \epsilon^{k''_1} f_{2}(b_2) v_{0}(b_2)\rangle_{\mathcal O}, &\text{if $i_2 = 2$ and $b_2$ is on the principal diagonal.}\\
\end{cases}
\]

The proofs when $i_1 > 2$ and when $i_1 < 0$ are similar  to the proofs when the orbit ${\mathcal O}$ has no overlapping ${\mathcal I}$-box supports and are left to the reader. We will just consider the cases where $i_1 = 0$ and $i_1 = 2$.  

If $i_1 = 0$, then $i_2 = 2$. We have that  $\langle F_{\mathcal O}(v_{i_1}(b_1)), v_{i_2}(b_2)\rangle_{\mathcal O}$ 
is equal to 
\[
-\langle f_0(b_1) v_{-2}(b_1), v_2(b_2)\rangle = -\epsilon f_0(b_1)
\]
when $b_1$ is either not on the principal diagonal or on the principal diagonal and  $\langle v_0(b_1), F_{\mathcal O}(v_2(b_2))\rangle_{\mathcal O}$ is equal to
\[
\begin{cases}
\langle v_0(b_1), \epsilon f_2(b_2) v_0(b_2)\rangle_{\mathcal O}, &\text{if $b_2$ is below the principal diagonal;}\\ \langle v_0(b_1),  f_2(b_2) v_0(b_2)\rangle_{\mathcal O}, &\text{if $b_2$ is above the principal diagonal;}\\ \langle v_0(b_1), \epsilon^{k''_1} f_2(b_2) v_0(b_2)\rangle_{\mathcal O}, &\text{if $b_2$ is on the principal diagonal.}\\
\end{cases}
\]

Thus $\langle F_{\mathcal O}(v_{i_1}(b_1)), v_{i_2}(b_2)\rangle_{\mathcal O} + \langle v_0(b_1), F_{\mathcal O}(v_2(b_2))\rangle_{\mathcal O}$ is equal to 
\[
\begin{cases} (-\epsilon f_0(b_1) + \epsilon f_2(b_2))&\text{if $b_1$ is above the principal diagonal;}\\ (-\epsilon f_0(b_1) + \epsilon f_2(b_2))&\text{if $b_1$ is below the principal diagonal;}\\ (-\epsilon f_0(b_1) + \epsilon^{k''_1 + k'_1} f_2(b_2))&\text{if $b_1$ is on the principal diagonal and $\tau(b_1) \ne b_1$;}\\  (-\epsilon f_0(b_1) + \epsilon^{k''_1} f_2(b_2))&\text{if $b_1$ is on the principal diagonal and $\tau(b_1) =  b_1$.}\end{cases}
\]
If $b_1$ is not on the principal diagonal, then, by lemma~\ref{L:RelationCoefficientsHandF} (b), we get that 
\[
\langle F_{\mathcal O}(v_{i_1}(b_1)), v_{i_2}(b_2)\rangle_{\mathcal O} + \langle v_0(b_1), F_{\mathcal O}(v_2(b_2))\rangle_{\mathcal O} = 0. \quad\quad (**)
\]
If $b_1$ is on the principal diagonal and $\tau(b_1) = b_1$, then $\epsilon = 1$  by lemma~\ref{L:TauBox} (f). If $b_1$ is on the principal diagonal and $\tau(b_1) \ne b_1$, then $k''_1 = (1 - k'_1)$ and $\epsilon^{k''_1 + k'_1} = \epsilon$ by lemma~\ref{L:TauBox} (d).  In all cases, the equation $(**)$ is verified.

If $i_1 = 2$, then $i_2 = 0$ and we also get that $(**)$ is verified by symmetry. We have now prove that $\langle X(u), v\rangle_{\mathcal O} + \langle u, X(v)\rangle_{\mathcal O} = 0$ for all $X \in \{E_{\mathcal O}, H_{\mathcal O}, F_{\mathcal O}\}$ and all $u, v \in V({\mathcal O})$ when $\vert {\mathcal I} \vert$ is odd.

We want to prove the Jacobson-Morozov triple relations in the case where $\vert {\mathcal I}\vert$ is odd. In other words, $[H_{\mathcal O}, E_{\mathcal O}] = 2E_{\mathcal O}$, $[H_{\mathcal O}, F_{\mathcal O}] = -2F_{\mathcal O}$ and $[E_{\mathcal O}, F_{\mathcal O}] = H_{\mathcal O}$. It is enough to verify these only on the ${\mathcal I}$-graded basis ${\mathcal B}_{\mathcal O}$ of $V({\mathcal O})$.

If the orbit ${\mathcal O}$ has no overlapping ${\mathcal I}$-box supports, then we have already computed above our expressions for $E_{\mathcal O}$, $H_{\mathcal O}$ and $F_{\mathcal O}$. Let $b \in {\mathbf B}$, an ${\mathcal I}$-box in the orbit ${\mathcal O}$ and $i \in Supp(b)$. Then $[H_{\mathcal O}, E_{\mathcal O}](v_i(b)) = (H_{\mathcal O} E_{\mathcal O} - E_{\mathcal O} H_{\mathcal O})(v_i(b))$ is equal to 
\[
\begin{aligned}&
\begin{cases}
0, &\text{if $i = \max(Supp(b))$;}\\ (h_{i + 2}(b) - h_i(b))v_{i + 2}(b), &\text{if $i \ne \max(Supp(b))$ and $i > 0$;}\\  (-h_{i + 2}(b) + h_i(b))v_{i + 2}(b), &\text{if $i \ne \max(Supp(b))$ and $i  < 0$;}\\
\end{cases}\\ &=(h_{i + 2}(b) - h_i(b)) E_{\mathcal O}(v_i(b)) = 2 E_{\mathcal O}(v_i(b))
\end{aligned}
\]
by lemma~\ref{L:RelationCoefficientsHandF} (c). We have also $[H_{\mathcal O}, F_{\mathcal O}](v_i(b)) = (H_{\mathcal O} F_{\mathcal O} - F_{\mathcal O} H_{\mathcal O})(v_i(b))$ is equal to 
\[
\begin{aligned}&
\begin{cases}
0, &\text{if $i = \min(Supp(b))$;}\\ (h_{i - 2}(b) - h_i(b))f_i(b)v_{i - 2}(b), &\text{if $i \ne \min(Supp(b))$ and $i > 0$;}\\  (-h_{i - 2}(b) + h_i(b))f_i(b)v_{i - 2}(b), &\text{if $i \ne \min(Supp(b))$ and $i  < 0$;}\\
\end{cases}\\ &=(h_{i - 2}(b) - h_i(b)) F_{\mathcal O}(v_i(b)) = -2 F_{\mathcal O}(v_i(b))
\end{aligned}
\]
by lemma~\ref{L:RelationCoefficientsHandF} (d).

We now need to compute $[E_{\mathcal O}, F_{\mathcal O}](v_i(b)) = (E_{\mathcal O} F_{\mathcal O} - F_{\mathcal O} E_{\mathcal O})(v_i(b))$. Write $b = b(i_1, j_1, k_1)$ with $i_1, j_1 \in {\mathcal I}$, $i_1 \geq j_1$ and $0 \leq k_1 \leq \mu(i_1, j_1)$ and $i \in Supp(b)$.  Thus $i_1 \geq i \geq j_1$. We have different cases to consider. Because ${\mathcal O}$ has no overlapping ${\mathcal I}$-box supports, then either $j_1 > 0$ or $0 > i_1$.

If $i_1 = j_1 = i$, then we get easily that $i = \max(Supp(b)) = \min(Supp(b))$, $h_i(b) = 0$ and $(E_{\mathcal O}F_{\mathcal O} - F_{\mathcal O}E_{\mathcal O})(v_i(b)) = 0 = H_{\mathcal O}(v_i(b))$. 

If $i_1 > j_1$ and $i = i_1 = \max(Supp(b))$, then $f_i(b) = h_i(b)$ by lemma~\ref{L:RelationCoefficientsHandF} (f) and 
\[
\begin{aligned}
(E_{\mathcal O} F_{\mathcal O} - F_{\mathcal O}E_{\mathcal O})(v_i(b)) &= \begin{cases} f_i(b) v_i(b),&\text{if $j_1 > 0$;}\\ f_i(b) v_i(b),&\text{if $i_1 < 0$;}\end{cases}\\
&= h_i(b) v_i(b) = H_{\mathcal O}(v_i(b)).
\end{aligned}
\]

If $i_1 > j_1$ and $i = j_1 = \min(Supp(b))$, then $f_{i + 2}(b) = -h_i(b)$ by lemma~\ref{L:RelationCoefficientsHandF} (e) and 
\[
\begin{aligned}
(E_{\mathcal O} F_{\mathcal O} - F_{\mathcal O}E_{\mathcal O})(v_i(b)) &= \begin{cases} -f_{i + 2}(b) v_i(b),&\text{if $j_1 > 0$;}\\ -f_{i + 2}(b) v_i(b),&\text{if $i_1 < 0$;}\end{cases}\\
&= h_i(b) v_i(b) = H_{\mathcal O}(v_i(b)).
\end{aligned}
\]

If $i_1 > i > j_1$, then both $(i - 2), (i + 2) \in Supp(b)$. Thus they are both either simultanously $> 0$ or simultanously $< 0$. So $f_i(b) - f_{i + 2}(b) = h_i(b)$ by lemma~\ref{L:RelationCoefficientsHandF} (g) and 
\[
\begin{aligned}
(E_{\mathcal O} F_{\mathcal O} - F_{\mathcal O}E_{\mathcal O})(v_i(b)) &= \begin{cases} (f_i(b) -f_{i + 2}(b)) v_i(b),&\text{if $j_1 > 0$;}\\ (f_i(b) - f_{i + 2}(b)) v_i(b),&\text{if $i_1 < 0$;}\end{cases}\\
&= h_i(b) v_i(b) = H_{\mathcal O}(v_i(b)).
\end{aligned}
\]

If the orbit ${\mathcal O}$ has overlapping ${\mathcal I}$-box supports, then we have already expressed above our expressions for $E_{\mathcal O}$, $H_{\mathcal O}$ and $F_{\mathcal O}$ on ${\mathcal B}_{\mathcal O}$. Let $b = b(i_1, j_1, k_1)$ be an ${\mathcal I}$-box in ${\mathcal O}$ and $i \in Supp(b)$. We have that $[H_{\mathcal O}, E_{\mathcal O}](v_i(b)) = (H_{\mathcal O} E_{\mathcal O} - E_{\mathcal O} H_{\mathcal O})(v_i(b))$ is equal to
\[
\begin{aligned}
&\begin{cases}
0, &\text{if $i = \max(Supp(b))$;}\\ (h_{i + 2}(b) - h_i(b)) v_{i + 2}(b), &\text{if $i \ne \max(Supp(b))$ and $i > 0$;}\\ -(h_{i + 2}(b) - h_i(b)) v_{i + 2}(b), &\text{if $i  < 0$;}\\ \epsilon(h_{2}(b) - h_0(b)) v_{2}(b), &\text{if $i = 0$ and $b$ is below the principal diagonal;}\\(h_{2}(b) - h_0(b)) v_{2}(b), &\text{if $i = 0$ and $b$ is above the principal diagonal;}\\ \epsilon^{k_1}(h_{2}(b) - h_0(b)) v_{2}(b), &\text{if $i = 0$ and $b$ is on  the principal diagonal;;}
\end{cases}\\
&= 2 E_{\mathcal O}(v_i(b))
\end{aligned}
\]
by lemma~\ref{L:RelationCoefficientsHandF} (c). So $[H_{\mathcal O}, E_{\mathcal O}] = 2E_{\mathcal O}$.

Similarly we have $[H_{\mathcal O}, F_{\mathcal O}](v_i(b)) = (H_{\mathcal O} F_{\mathcal O} - F_{\mathcal O}H_{\mathcal O})(v_i(b))$ is equal to  
\[
\begin{aligned}
&\begin{cases}
0, &\text{if $i = \min(Supp(b))$;}\\ (h_{i - 2}(b) - h_i(b)) f_i(b)v_{i - 2}(b), &\text{if $i > 2$;}\\ -(h_{i - 2}(b) - h_i(b))f_i(b) v_{i - 2}(b), &\text{if $i  \ne \min(Supp(b))$ and $i \leq 0$;}\\ \epsilon(h_{0}(b) - h_2(b)) f_2(b) v_{0}(b), &\text{if $i = 2$ and $b$ is below the principal diagonal;}\\(h_{0}(b) - h_2(b)) v_{0}(b), &\text{if $i = 2$ and $b$ is above the principal diagonal;}\\ \epsilon^{k_1}(h_{0}(b) - h_2(b)) f_2(b)v_{0}(b), &\text{if $i = 2$ and $b$ is on  the principal diagonal;;}
\end{cases}\\
&= -2 F_{\mathcal O}(v_i(b))
\end{aligned}
\]
by lemma~\ref{L:RelationCoefficientsHandF} (d). So $[H_{\mathcal O}, F_{\mathcal O}] = -2F_{\mathcal O}$.

Finally we want to compute $[E_{\mathcal O}, F_{\mathcal O}](v_i(b)) = (E_{\mathcal O} F_{\mathcal O} - F_{\mathcal O}E_{\mathcal O})(v_i(b))$. We have $i_1 \geq i \geq j_1$. Because ${\mathcal O}$ has overlapping ${\mathcal I}$-support, then we have $i_1 \geq 0 \geq j_1$. We have different cases to consider.

If $i_1 = j_1 = i$, we have $i = \max(Supp(b)) = \min(Supp(b)) = 0$, $h_i(b) = 0$ and 
\[
(E_{\mathcal O} F_{\mathcal O} - F_{\mathcal O} E_{\mathcal O})(v_i(b)) = 0 = H_{\mathcal O}(v_i(b)).
\]

We now consider $i_1 > j_1$.  There are three cases to study: $i_1 = i$, $j_1 = i$ and $i_1 > i > j_1$.  If $i_1 = i > j_1$, then $i = \max(Supp(b))$ and $(E_{\mathcal O} F_{\mathcal O} - F_{\mathcal O} E_{\mathcal O})(v_i(b)) = E_{\mathcal O} F_{\mathcal O}(v_i(b))$ is equal to 
\[
\begin{aligned}
&\begin{cases} f_i(b) v_i(b), &\text{if $i > 2$;}\\ \epsilon^2 f_2(b)v_2(b), &\text{if $i = 2$ and $b$ is below the principal diagonal;}\\ f_2(b)v_2(b), &\text{if $i = 2$ and $b$ is above the principal diagonal;}\\ (\epsilon)^{2k_1} f_2(b)v_2(b), &\text{if $i = 2$ and $b$ is on the principal diagonal;}\\ f_0(b) v_0(b), &\text{if $i = 0$;}
\end{cases}\\
&= h_i(b) v_i(b) = H_{\mathcal O}(v_i(b))
\end{aligned}
\]
by lemma~\ref{L:RelationCoefficientsHandF} (f).

If $i_1 > i = j_1$, then $i = \min(Supp(b))$ and $(E_{\mathcal O} F_{\mathcal O} - F_{\mathcal O} E_{\mathcal O})(v_i(b)) = -F_{\mathcal O} E_{\mathcal O}(v_i(b))$ is equal to 
\[
\begin{aligned}
&\begin{cases} - \epsilon^2 f_2(b) v_0(b), &\text{if $i = 0$ and $b$ is below the principal diagonal;}\\  - f_2(b) v_0(b), &\text{if $i = 0$ and $b$ is above the principal diagonal;}\\ - (\epsilon)^{2k_1} f_2(b) v_0(b), &\text{if $i = 0$ and $b$ is on the principal diagonal;}\\ - f_{i + 2}(b)v_i(b), &\text{if $i < 0$;}
\end{cases}\\
&= h_i(b) v_i(b) = H_{\mathcal O}(v_i(b))
\end{aligned}
\]
by lemma~\ref{L:RelationCoefficientsHandF} (e).

Finally if $i_1 > i > j_1$, then $(i + 2), (i - 2) \in Supp(b)$ and $(E_{\mathcal O} F_{\mathcal O} - F_{\mathcal O} E_{\mathcal O})(v_i(b))$ is equal to 
\[
\begin{aligned}
&\begin{cases}
(f_i(b) - f_{i + 2}(b))v_i(b), &\text{if $i > 2$;}\\
(\epsilon^2 f_2(b) - f_4(b)) v_2(b), &\text{$i = 2$ and $b$ is below the principal diagonal;}\\
(f_2(b) - f_4(b)) v_2(b), &\text{$i = 2$ and $b$ is above the principal diagonal;}\\
((\epsilon)^{2k_1} f_2(b) - f_4(b)) v_2(b), &\text{$i = 2$ and $b$ is on the principal diagonal;}\\
(f_0(b) - \epsilon^2 f_2(b)) v_0(b), &\text{$i = 0$ and $b$ is below the principal diagonal;}\\
(f_0(b) - f_2(b)) v_0(b), &\text{$i = 0$ and $b$ is above the principal diagonal;}\\
(f_0(b) - (\epsilon)^{2k_1}f_2(b)) v_0(b), &\text{$i = 0$ and $b$ is on the principal diagonal;}\\
(f_i(b) - f_{i + 2}(b))v_i(b), &\text{if $i \leq -2$;}
\end{cases}\\
&= h_i(b) v_i(b) = H_{\mathcal O}(v_i(b))
\end{aligned}
\]
by lemma~\ref{L:RelationCoefficientsHandF} (g).

Thus we have proved that the triple $E_{\mathcal O}$, $H_{\mathcal O}$ and $F_{\mathcal O}$ is an ${\mathcal I}$-graded  Jacobson-Morozov triple when $\vert {\mathcal I} \vert$ is odd.

We now want to prove that 
\[
\{(V({\mathcal O}), \phi_{\mathcal O}, \langle\ , \ \rangle_{\mathcal O}) \mid \text{ ${\mathcal O}$
belongs to the set of $\langle \tau \rangle$-orbits  in ${\mathbf B}$}\}
\]
is a set of representatives of the isomorphism classes of indecomposable $\epsilon$-represen~- tations of $(\Omega, \sigma)$ when $\vert {\mathcal I} \vert$ is odd.

Let the ${\mathcal I}$-box $b = b(i_1, j_1, k_1) \in {\mathbf B}$ be an element of the $\langle \tau \rangle$-orbit ${\mathcal O}$ with $i_1, j_1 \in {\mathcal I}$, $i_1 \geq j_1$ and $0 \leq k_1 \leq \mu(i_1, j_1)$. If we consider the restriction of $E_{\mathcal O}$ to $V_i(b)$: $\phi_{b, i} = E_{\mathcal O}\vert_{V_i(b)}:V_i(b) \rightarrow V_{i + 2}(b)$ for each $i \in Supp(b)$, $i \ne \max(Supp(b))$, we get a representation $\phi_b:V(b) \rightarrow V(b)$ of the quiver $\Omega$ of type $A_{2m-1}$. Here $\vert {\mathcal I} \vert = (2m - 1)$. From our definition of $E_{\mathcal O}$, we get easily that 
\begin{itemize}
\item $\phi_b$ is an indecomposable representation of $A_{2m-1}$ of dimension $\dim(\phi_b) = (\beta_i)_{i \in {\mathcal I}}$ where 
\[
\beta_i = \begin{cases} 1, &\text{if $i_1 \geq i \geq j_1$;}\\ 0, &\text{otherwise.}\end{cases}
\]
\item $\phi_{\mathcal O}:V({\mathcal O}) \rightarrow V({\mathcal O})$ is the direct sum $\oplus_{b \in {\mathcal O}} \phi_b$ as a representation of $A_{2m-1}$.
\end{itemize}
Because $\langle E_{\mathcal O}(u), v\rangle_{\mathcal O} + \langle u, E_{\mathcal O}(v)\rangle_{\mathcal O} = 0$ for all $u, v \in V({\mathcal O})$, then $(V({\mathcal O}), \phi_{\mathcal O}, \langle\ , \ \rangle_{\mathcal O})$ is an $\epsilon$-representation. To show that $(V({\mathcal O}), \phi_{\mathcal O}, \langle \  , \  \rangle_{\mathcal O})$ is  indecomposable, we can use theorem~\ref{T:CaracterisationRepresentation}.

Each $\langle \tau \rangle$-orbit ${\mathcal O}$ contains at least one ${\mathcal I}$-box $b = b(i_1, j_1, k_1) \in {\mathbf B}$ such that $(i_1 + j_1) \geq 0$. If $(i_1 + j_1) > 0$, this ${\mathcal I}$-box is unique in the orbit, while if $(i_1 + j_1) = 0$, then there are $\mu_{\max}$ ${\mathcal I}$-boxes in the orbit. If $(i_1 + j_1) > 0$, then $\tau(b) \ne b$ and ${\mathcal O} = \{b, \tau(b)\}$. We have $i_1 \geq j_1$ and $i_1 \geq \vert j_1 \vert$ because  $(i_1 + j_1) > 0$.  As in the definition~\ref{DefDim}, we will write the dimension vector of $(V_{\mathcal O}, \phi_{\mathcal O}, \langle\  , \  \rangle)$ as representation of the symmetric quiver $(\Omega, \sigma)$ by
\[
\dim(V_{\mathcal O})  = (\dim(V_{{\mathcal O}, i}))_{i \in {\mathcal I}_+} = (\alpha_i)_{i \in {\mathcal I}_+} = \alpha.
\]
We will write also the dimension of $\dim(\phi_b)$ and $\dim(\phi_{\tau(b)})$ as
\[
\dim(\phi_b) = \beta' = (\beta'_i)_{i \in {\mathcal I}} \quad \text{ and } \quad  \dim(\phi_{\tau(b)}) = \beta = (\beta_i)_{i \in {\mathcal I}}.
\]  

If $j_1 > 0$, then 
\[
\beta_i = \begin{cases}1, &\text{if $-j_1 \geq i \geq -i_1$;}\\ 0, &\text{otherwise; } \end{cases} \quad  \quad 
\beta'_i = \begin{cases}1, &\text{if $i_1 \geq i \geq j_1$;}\\ 0, &\text{otherwise;} \end{cases}
\]
for $i \in {\mathcal I}$ and 
\[
\alpha_i =\begin{cases}  1,&\text{if $i_1 \geq i \geq j_1$;}\\ 0, &\text{otherwise;}\end{cases}
\]
for $i \in {\mathcal I}_+$.  Using proposition~\ref{Decomposition_odd} (a) (i) and (b)(i)  and theorem~\ref{T:CaracterisationRepresentation}, this shows that $\phi_{\mathcal O}$ is indecomposable and isomorphic to the indecomposable representation of dimension $\alpha$.

If $0 > j_1$, then 
\[
\beta_i = \begin{cases}1, &\text{if $-j_1 \geq i \geq -i_1$;}\\ 0, &\text{otherwise; } \end{cases} \quad  \quad 
\beta'_i = \begin{cases}1, &\text{if $i_1 \geq i \geq j_1$;}\\ 0, &\text{otherwise;} \end{cases}
\]
for $i \in {\mathcal I}$ and 
\[
\alpha_i =\begin{cases}  2,&\text{if $\vert j_1\vert  \geq i \geq 0$;}\\ 1,&\text{if $i_1 \geq i > \vert j_1 \vert$;}\\ 0, &\text{otherwise;}\end{cases}
\]
for $i \in {\mathcal I}_+$.  Using proposition~\ref{Decomposition_odd} (a) (iii) and (b)(iii)  and theorem~\ref{T:CaracterisationRepresentation}, this shows that $\phi_{\mathcal O}$ is indecomposable and isomorphic to the indecomposable representation of dimension $\alpha$.

If $(i_1 + j_1) = 0$, then the ${\mathcal I}$-box $b(i_1, j_1, k_1)$ is on the principal diagonal and $i_1 \geq 0$, because $i_1 \geq j_1$. There are two cases to consider: either $\tau(b) \ne b$ or $\tau(b) = b$. When $\tau(b) \ne b$, then ${\mathcal O} = \{b, \tau(b)\}$ and by lemma~\ref{L:TauBox} (d) and \ref{SS:Iset}, we have that $\mu_{\max} = 1$, $\epsilon = -1$, $\tau(b) = b(i_1, j_1, (1 - k_1))$. So 
\[
\beta'_i = \beta_i = \begin{cases}1, &\text{if $i_1 \geq i \geq -i_1$;}\\ 0, &\text{otherwise; } \end{cases}
\]
for $i \in {\mathcal I}$ and 
\[
\alpha_i =\begin{cases}  2,&\text{if $i_1 \geq i \geq 0$;}\\  0, &\text{otherwise;}\end{cases}
\]
for $i \in {\mathcal I}_+$. Using proposition~\ref{Decomposition_odd} (b) (ii) and theorem~\ref{T:CaracterisationRepresentation}, this shows that $\phi_{\mathcal O}$ is indecomposable and isomorphic to the indecomposable representation of dimension $\alpha$. 

When $\tau(b) = b$, then ${\mathcal O} = \{b \}$ and by lemma~\ref{L:TauBox} (e), $\mu_{\max} = 0$, $\epsilon = 1$, $V({\mathcal O}) = V_b$. We will write $\dim(\phi_b) = \beta = (\beta_i)_{i \in {\mathcal I}}$. So 
\[
\beta_i = \begin{cases}1, &\text{if $i_1 \geq i \geq -i_1$;}\\ 0, &\text{otherwise; } \end{cases}
\]
for $i \in {\mathcal I}$ and 
\[
\alpha_i =\begin{cases}  1,&\text{if $i_1 \geq i \geq 0$;}\\  0, &\text{otherwise.}\end{cases}
\]
for $i \in {\mathcal I}_+$. Using proposition~\ref{Decomposition_odd} (a) (ii)  and theorem~\ref{T:CaracterisationRepresentation}, this shows that $\phi_{\mathcal O}$ is indecomposable and isomorphic to the indecomposable representation of dimension $\alpha^1$. 

To finish the proof when $\vert {\mathcal I} \vert$ is odd, it suffices to notice by inspection of the dimension vectors of $(V({\mathcal O}), \phi_{\mathcal O}, \langle\  , \  \rangle_{\mathcal O})$,  the direct sum decomposition $\phi_{\mathcal O} = \oplus_{b \in {\mathcal O}} \phi_b$ and theorem~\ref{T:CaracterisationRepresentation}, that $(V({\mathcal O}), \phi_{\mathcal O}, \langle\  , \  \rangle_{\mathcal O})$ and $(V({\mathcal O'}), \phi_{\mathcal O'}, \langle\  , \  \rangle_{\mathcal O'})$ are not isomorphic for two distinct $\langle \tau \rangle$-orbits ${\mathcal O}$ and ${\mathcal O}'$. By remark~\ref{R:CardinalityIndecomposable}, we get the result that $\{(V({\mathcal O}), \phi_{\mathcal O}, \langle\ , \ \rangle_{\mathcal O}) \mid \text{ ${\mathcal O}$  belongs to the set of $\langle \tau \rangle$-orbits  in ${\mathbf B}$}\}$   is a set of representatives of the isomorphism classes of indecomposable $\epsilon$-representations of $(\Omega, \sigma)$.
\end{proof}

We will need the following lemma and notation in statements, formulas and propositions for the next sections.

\begin{lemma}
Let $b = b(i, j, k) \in{\mathbf B}$, where $i, j \in {\mathcal I}$, $i \geq j$ and $0 \leq k \leq \mu(i, j)$,  and $i_1, i_2 \in Supp(b) \subseteq {\mathcal I}$. Then $i_1 - h_{i_1}(b) = i_2 - h_{i_2}(b) = (i + j)/2$.
\end{lemma}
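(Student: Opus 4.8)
The key structural fact is that in both cases $\mathcal I$ consists of consecutive integers of a fixed parity, so that any interval $[j,i]\cap\mathcal I$ is an arithmetic progression with common difference $2$; hence $Supp(b)=\{j,j+2,\dots,i\}$ and, more importantly, for any subinterval the cardinality is determined by the endpoints. The proof then splits into two easy steps: first show that $i'\mapsto i'-h_{i'}(b)$ is constant on $Supp(b)$, and second evaluate this constant at an endpoint.

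For the first step I would invoke Lemma~\ref{L:RelationCoefficientsHandF}~(c): if $i'\in Supp(b)$ with $i'\ne\max(Supp(b))$, then $i'+2\in Supp(b)$ and $h_{i'+2}(b)-h_{i'}(b)=2$, so $(i'+2)-h_{i'+2}(b)=i'-h_{i'}(b)$. Since any two elements $i_1,i_2$ of $Supp(b)=\{j,j+2,\dots,i\}$ are joined by a chain of such $+2$ steps, we conclude $i_1-h_{i_1}(b)=i_2-h_{i_2}(b)$. (One can avoid citing the lemma and instead note directly from the definition of $h$ that replacing $i'$ by $i'+2$ adds one element to the first counting set and removes one from the second, again giving an increment of $2$.)

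For the second step I would take $i'=i=\max(Supp(b))$. Then $\#\{i''\in\mathcal I\mid i\le i''\le i\}=1$ and $\#\{j''\in\mathcal I\mid j\le j''\le i\}=\tfrac{i-j}{2}+1$ because this set is an arithmetic progression of difference $2$ with endpoints $j$ and $i$ (here $i\ge j$ and $i\equiv j\pmod 2$). Hence $h_i(b)=\bigl(\tfrac{i-j}{2}+1\bigr)-1=\tfrac{i-j}{2}$, so $i-h_i(b)=i-\tfrac{i-j}{2}=\tfrac{i+j}{2}$. Combining with the first step gives $i_1-h_{i_1}(b)=i_2-h_{i_2}(b)=\tfrac{i+j}{2}$ for all $i_1,i_2\in Supp(b)$, as claimed. (Evaluating at $i'=j=\min(Supp(b))$ works equally well: there $h_j(b)=1-\bigl(\tfrac{i-j}{2}+1\bigr)=-\tfrac{i-j}{2}$, whence $j-h_j(b)=\tfrac{i+j}{2}$.)

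There is essentially no obstacle here; the only point requiring a little care is the remark that $\mathcal I$ is an arithmetic progression of difference $2$, which is what makes the cardinality of $[a,b]\cap\mathcal I$ equal to $\tfrac{b-a}{2}+1$ whenever $a\le b$ lie in $\mathcal I$. Everything else is a one-line computation from the definition of $h_{i'}(b)$.
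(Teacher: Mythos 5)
Your proof is correct and follows essentially the same route as the paper: establish that $i'\mapsto i'-h_{i'}(b)$ is constant on $Supp(b)$ by counting how the two sets in the definition of $h$ change under a shift by $2$ (the paper does this in one step with a shift by $2s$, you chain single steps via Lemma~\ref{L:RelationCoefficientsHandF}~(c), which is the same computation), and then evaluate at the endpoint $i_1=i$ to obtain $(i+j)/2$. No gaps.
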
 
\begin{proof}
We can assume that $i_1 > i_2$. Note that $i_1$ and $i_2$ have the same parity by definition of ${\mathcal I}$. So $i_1 = i_2 + 2s$ where $s \in {\mathbb N}$. We have
\[
\begin{aligned}
i_1 - h_{i_1}(b) &= (i_1) - \#\{j'' \in {\mathcal I} \mid j \leq j'' \leq  i_1\}  + \#\{i'' \in {\mathcal I}  \mid i_1 \leq i'' \leq i \}\\
&= (i_2 + 2s) - ( \#\{j'' \in {\mathcal I} \mid j \leq j'' \leq  i_2 \}  + s) + (\#\{ i'' \in {\mathcal I}  \mid i_2 \leq i'' \leq i\} - s)\\ &= i_2 - h_{i_2}(b).
\end{aligned}
\]
If we now consider $i_1 = i$, then 
\[
i - h_i(b) = i - \#\{j'' \in {\mathcal I} \mid j \leq j'' \leq  i\}  + \#\{i'' \in {\mathcal I}  \mid i \leq i'' \leq i \} = \frac{(i + j)}{2}
\]
\end{proof}

\begin{notation}\label{N:LambdaValue}
We will denote the above constant value by $\lambda(b)$.  So for $b = b(i, j, k)$ as above, we have $\lambda(b) = (i + j)/2$. 
\end{notation}

\subsection{} In the case where $\vert {\mathcal I} \vert$ is odd and $\epsilon = 1$, more precisely in the orthogonal case for the symmetric quiver $A_m^{odd}$, we have associated to each $\langle \tau \rangle$-orbit ${\mathcal O}$: an orthogonal isomorphism class of indecomposable orthogonal representations $( V({\mathcal O}), \phi_{\mathcal O}, \langle\  , \  \rangle_{\mathcal O})$. Two such indecomposable orthogonal representations in the same orthogonal isomorphism class of representations are not necessarily isomorphic relative to the special orthogonal group.  With the notation of lemma~\ref{L:DetThetaAOdd}, we have $\det(\theta_0) = \pm 1$, but not necessarily $\det(\theta_0) = 1$.  

To conclude this section, we will now discuss  how an orthogonal isomorphism class of indecomposable orthogonal representations splits into isomorphism classes relative to the special orthogonal group.  We will also give an ${\mathcal I}$-graded  Jacobson-Morozov triple for each isomorphism class relative to the special orthogonal group.

\subsection {}\label{SS:SpecialOrthoRepresentationsInSameOrthoClass}
Let $(\Omega, \sigma)$ be the symmetric quiver $A_m^{odd}$ and $\epsilon = 1$. Recall that 
\[
{\mathcal I} = \{i \in {\mathbb Z} \mid i \equiv 0 \pmod 2 \text{ and } -2m <  i  < 2m\}.
\]
Let $b = b(i_1, j_1, 0)$ be an ${\mathcal I}$-box such that $i_1, j_1 \in {\mathcal I}$, $i_1 \geq 0 \geq j_1$ and $i_1 + j_1 \ne 0$. Then we have easily  that  
\begin{itemize}
\item $\tau(b) = b(-j_1, -i_1, 0) \ne b$;
\item the orbit ${\mathcal O}$ of $b$ is $\{b, \tau(b)\}$ and has overlapping ${\mathcal I}$-box supports;
\item $Supp(b) = \{ i \in {\mathcal I} \mid i_1 \geq i \geq j_1\}$ and $Supp(\tau(b)) = \{ i \in {\mathcal I} \mid -j_1 \geq i \geq -i_1\}$;
\end{itemize} 
In the following, we will assume that $b$ is such that $i_1 + j_1 > 0$. Otherwise we just need to interchange $b$ and $\tau(b)$. By this hypothesis, we get that $i_1 > \vert j_1 \vert \geq j_1$. 

In this case, 
\[
V({\mathcal O}) = \oplus_{i \in {\mathcal I}} V_i({\mathcal O}), \quad \text{where} \quad \dim(V_i({\mathcal O})) = \begin{cases}2, &\text{if $j_1 \leq \vert i \vert  \leq \vert j_1 \vert$;}\\ 1, &\text{if $\vert j_1\vert  < \vert i \vert \leq i_1$;}\\ 0, &\text{otherwise.}
\end{cases}
\]
We have constructed a basis ${\mathcal B}_{\mathcal O}$ for $V({\mathcal O})$ in \ref{N:BasisVOrbit}.  From this, we get that $V_i({\mathcal O})$ has basis
\[
\begin{cases} \{v_i(b), v_i(\tau(b))\}, &\text{if $j_1 \leq i \leq \vert j_1\vert$;} \\ \{v_i(b) \} &\text{if $\vert j_1\vert  < i \leq i_1$;}\\  \{v_i(\tau(b)) \} &\text{if $-i _1  \leq i < j_1$;} \end{cases}
\]
and we have also expressed the values of the bilinear form $\langle \ , \ \rangle_{\mathcal O}$  and  the action of $E_{\mathcal O}$ on these basis vectors.

We have constructed the ${\mathcal I}$-graded Jacobson-Morozov triple $(E_{\mathcal O}, H_{\mathcal O}, F_{\mathcal O})$ and an orthogonal representation $(V({\mathcal O}), \phi_{\mathcal O}, \langle\  , \  \rangle_{\mathcal O})$ in \ref{SS:EHFDefinition} and in the proposition~\ref{P:JMTripleProof}.

Consider now the  linear transformation $S:V({\mathcal O}) \rightarrow V({\mathcal O})$ defined on the basis ${\mathcal B}_{\mathcal O}$ as follows: 
\[
S(u) = \begin{cases} u, &\text{if $u \in V_i({\mathcal O})$ with $i \ne 0$;}\\ v_0(\tau(b)), &\text{if $i = 0$ and $u = v_0(b)$;}\\ v_0(b), &\text{if $i = 0$ and $u = v_0(\tau(b))$.} \end{cases}
\]
It is obviously a ${\mathcal I}$-graded  linear transformation of degree $0$ and it is an involution.

From our definition of $\langle\  , \  \rangle_{\mathcal O}$, we get easily  that $\langle S(u), S(v) \rangle_{\mathcal O} = \langle u, v \rangle_{\mathcal O}$ for all $u, v \in V({\mathcal O})$. Set $E'_{\mathcal O} = S E_{\mathcal O} S$, $H'_{\mathcal O} = S H_{\mathcal O} S$ and $F'_{\mathcal O} = S F_{\mathcal O} S$. We also get easily that 
\begin{itemize}
\item $E'_{\mathcal O}$ is of degree 2, $H'_{\mathcal O}$ is of degree 0 and $F'_{\mathcal O}$ is of degree -2;
\item $\langle Xu, v\rangle + \langle u, Xv\rangle = 0$ for all $X \in \{E'_{\mathcal O}, H'_{\mathcal O}, F'_{\mathcal O}\}$ and all $u, v \in V({\mathcal O})$;
\item $[H'_{\mathcal O}, E'_{\mathcal O}] = 2E'_{\mathcal O}$, $[H'_{\mathcal O}, F'_{\mathcal O}] = -2F'_{\mathcal O}$ and $[E'_{\mathcal O}, F'_{\mathcal O}] = H'_{\mathcal O}$, in other words,  $(E'_{\mathcal O}, H'_{\mathcal O}, F'_{\mathcal O})$ is an ${\mathcal I}$-graded  Jacobson-Morozov triple;
\item  the ${\mathcal I}$-graded linear transformation $\phi'_{\mathcal O}: V({\mathcal O}) \rightarrow V({\mathcal O})$ given by 
\[
\phi'_{{\mathcal O}, i} = {E'_{\mathcal O}}\vert_{V_i({\mathcal O})}: V_{i}({\mathcal O}) \rightarrow V_{i + 2}({\mathcal O}) \quad \text{for all $i \in {\mathcal I}$  such that $(i + 2) \in {\mathcal I}$},
\]
is  such that $(V({\mathcal O}), \phi'_{\mathcal O}, \langle\ , \ \rangle_{\mathcal O})$ is an  orthogonal representation.
\end{itemize}

From our construction, we get that $(V({\mathcal O}), \phi_{\mathcal O}, \langle\  , \  \rangle_{\mathcal O})$ and $(V({\mathcal O}), \phi'_{\mathcal O}, \langle\  , \  \rangle_{\mathcal O})$ are isomorphic relative to the orthogonal group with the isomorphism  given by 
\[
S:(V({\mathcal O}), \phi_{\mathcal O}, \langle\  , \  \rangle_{\mathcal O}) \rightarrow (V({\mathcal O}), \phi'_{\mathcal O}, \langle\  , \  \rangle_{\mathcal O}).
\]

We will now show that these two orthogonal representations  $(V_{\mathcal O}, \phi_{\mathcal O}, \langle\  , \  \rangle_{\mathcal O})$ and $(V_{\mathcal O}, \phi'_{\mathcal O}, \langle\  , \  \rangle_{\mathcal O})$ are not isomorphic relatively to the special orthogonal group.

Assume that $\theta:(V({\mathcal O}), \phi_{\mathcal O}, \langle\ , \ \rangle_{\mathcal O}) \rightarrow (V({\mathcal O}), \phi_{\mathcal O}, \langle\ , \ \rangle_{\mathcal O})$, $\theta':(V({\mathcal O}), \phi'_{\mathcal O}, \langle\ , \ \rangle_{\mathcal O}) \rightarrow (V({\mathcal O}), \phi'_{\mathcal O}, \langle\ , \ \rangle_{\mathcal O})$ and $\theta'':(V({\mathcal O}), \phi_{\mathcal O}, \langle\ , \ \rangle_{\mathcal O}) \rightarrow (V({\mathcal O}), \phi'_{\mathcal O}, \langle\ , \ \rangle_{\mathcal O})$ are orthogonal isomorphisms for these orthogonal representations. In other words, there are isomorphisms $\theta_i:V_i({\mathcal O}) \rightarrow V_i({\mathcal O})$ (respectively  $\theta'_i:V_i({\mathcal O}) \rightarrow V_i({\mathcal O})$,  $\theta''_i:V_i({\mathcal O}) \rightarrow V_i({\mathcal O})$) for each $i \in {\mathcal I}$, $\vert i \vert \leq i_1$ such that 
\begin{itemize}
\item $\langle \theta_i(u), \theta_{-i}(v)\rangle_{\mathcal O} = \langle u, v\rangle_{\mathcal O}$  (resp.  $\langle \theta'_i(u), \theta'_{-i}(v)\rangle_{\mathcal O} = \langle u, v\rangle_{\mathcal O}$ , $\langle \theta''_i(u), \theta''_{-i}(v)\rangle_{\mathcal O} = \langle u, v\rangle_{\mathcal O}$ ) for all $u \in V_i({\mathcal O})$ and $v \in V_{-i}({\mathcal O})$;
\item $\theta_{i + 2}\circ  \phi_{{\mathcal O}, i} = \phi_{{\mathcal O}, i} \circ \theta_i$ (respectively $\theta'_{i + 2}\circ  \phi'_{{\mathcal O}, i} = \phi'_{{\mathcal O}, i} \circ \theta'_i$ , $\theta''_{i + 2}\circ  \phi_{{\mathcal O}, i} = \phi'_{{\mathcal O}, i} \circ \theta''_i$) 
for all $i \in {\mathcal I}$ and $\vert i \vert < i_1$;
\item $\det(\theta_i) \ne 0$ and $\det(\theta_{-i}) = (\det(\theta_i))^{-1}$  (respectively  $\det(\theta'_i) \ne 0$ and $\det(\theta'_{-i}) = (\det(\theta'_i))^{-1}$ ,  $\det(\theta''_i) \ne 0$ and $\det(\theta''_{-i}) = (\det(\theta''_i))^{-1}$ ) for all $i \in {\mathcal I}$, $\vert i \vert \leq i_1$.
\end{itemize}

\begin{lemma}\label{L:DeterminantTheta0} With the above notation, we have that
\begin{enumerate}[\upshape (a)]
\item $\det(\theta_0) = 1$;
\item $\det(\theta'_0) = 1$;
\item $\det(\theta''_0) = -1$.
\end{enumerate}
\end{lemma}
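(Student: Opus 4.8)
The plan is to reduce all three parts to part (a) by exploiting the isometry $S$ constructed in \ref{SS:SpecialOrthoRepresentationsInSameOrthoClass}. Recall that $S\colon (V(\mathcal O),\phi_{\mathcal O},\langle\ ,\ \rangle_{\mathcal O})\to (V(\mathcal O),\phi'_{\mathcal O},\langle\ ,\ \rangle_{\mathcal O})$ is an orthogonal isomorphism whose degree-$0$ component $S_0$ interchanges the two basis vectors $v_0(b)$ and $v_0(\tau(b))$ of the $2$-dimensional space $V_0(\mathcal O)$, so $\det(S_0)=-1$. If $\theta'$ is an orthogonal automorphism of $(V(\mathcal O),\phi'_{\mathcal O})$, then $S^{-1}\theta' S$ is an orthogonal automorphism of $(V(\mathcal O),\phi_{\mathcal O})$, and since conjugation preserves determinants, $\det(\theta'_0)=\det\big((S^{-1}\theta'S)_0\big)$, which equals $1$ once (a) is known; this yields (b). Likewise, if $\theta''$ is an orthogonal isomorphism $(V(\mathcal O),\phi_{\mathcal O})\to(V(\mathcal O),\phi'_{\mathcal O})$, then $S^{-1}\theta''$ is an orthogonal automorphism of $(V(\mathcal O),\phi_{\mathcal O})$, so by (a) we get $\det(S_0)^{-1}\det(\theta''_0)=\det\big((S^{-1}\theta'')_0\big)=1$, hence $\det(\theta''_0)=\det(S_0)=-1$, which is (c).

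For part (a), I would first forget the bilinear form and regard $\theta$ as an automorphism of the representation $\phi_{\mathcal O}$ of the quiver of type $A_{2m-1}$. As established in the proof of Proposition~\ref{P:JMTripleProof}, $\phi_{\mathcal O}=\phi_b\oplus\phi_{\tau(b)}$, where $\phi_b$ and $\phi_{\tau(b)}$ are the indecomposable (interval) representations supported on $Supp(b)=\{i\in{\mathcal I}\mid j_1\le i\le i_1\}$ and $Supp(\tau(b))=\{i\in{\mathcal I}\mid -i_1\le i\le -j_1\}$ respectively. Since the quiver is linearly oriented with arrows increasing the index and, by the standard criterion, $\operatorname{Hom}(M_{[a,b]},M_{[c,d]})\ne 0$ only when $c\le a\le d\le b$, there is no nonzero morphism $\phi_{\tau(b)}\to\phi_b$: here $(a,b)=(-i_1,-j_1)$ and $(c,d)=(j_1,i_1)$, and $c=j_1>-i_1=a$ precisely because $i_1+j_1>0$. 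As indecomposables over a type $A$ quiver are bricks, $\operatorname{End}(\phi_b)=\operatorname{End}(\phi_{\tau(b)})={\mathbf k}$. It follows that $\theta$ carries the subrepresentation $\phi_{\tau(b)}$ into itself, acting on it by a scalar $c'\in{\mathbf k}^{\times}$ and on the quotient $\phi_b$ by a scalar $c\in{\mathbf k}^{\times}$. Evaluating at degree $0$, where $V_0(\mathcal O)={\mathbf k}\,v_0(b)\oplus{\mathbf k}\,v_0(\tau(b))$ and ${\mathbf k}\,v_0(\tau(b))$ is the degree-$0$ part of $\phi_{\tau(b)}$, the matrix of $\theta_0$ in the ordered basis $(v_0(b),v_0(\tau(b)))$ is lower triangular, $\theta_0=\left(\begin{smallmatrix} c & 0\\ s & c'\end{smallmatrix}\right)$ for some $s\in{\mathbf k}$, so $\det(\theta_0)=cc'$.

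Finally I would reinstate the form. Since $b$ is strictly above and $\tau(b)$ strictly below the principal diagonal and $\epsilon=1$, Notation~\ref{N:BasisVOrbit} gives $\langle v_0(b),v_0(\tau(b))\rangle_{\mathcal O}=\langle v_0(\tau(b)),v_0(b)\rangle_{\mathcal O}=1$ and $\langle v_0(b),v_0(b)\rangle_{\mathcal O}=\langle v_0(\tau(b)),v_0(\tau(b))\rangle_{\mathcal O}=0$, so the Gram matrix of the restriction of $\langle\ ,\ \rangle_{\mathcal O}$ to $V_0(\mathcal O)$ in this basis is $\left(\begin{smallmatrix}0&1\\ 1&0\end{smallmatrix}\right)$. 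The orthogonality of $\theta_0$, i.e. $\theta_0^{\,t}\left(\begin{smallmatrix}0&1\\ 1&0\end{smallmatrix}\right)\theta_0=\left(\begin{smallmatrix}0&1\\ 1&0\end{smallmatrix}\right)$, forces $cc'=1$ (and $cs=0$). Hence $\det(\theta_0)=cc'=1$, proving (a), and then (b) and (c) follow from the first paragraph.

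The main obstacle is the block-triangular shape of $\theta_0$ obtained in the second step: it hinges entirely on the vanishing $\operatorname{Hom}(\phi_{\tau(b)},\phi_b)=0$, which is exactly where the hypothesis $i_1+j_1>0$ is used and where one must be careful to keep the orientation conventions of \ref{SymmQuiver} straight so as not to reverse the roles of $b$ and $\tau(b)$ (note that $\operatorname{Hom}(\phi_b,\phi_{\tau(b)})$ is nonzero and supported at degree $0$, which is why the off-diagonal entry $s$ can be nonzero). Everything beyond this point is the short $2\times 2$ computation above together with the formal reduction of (b), (c) to (a).
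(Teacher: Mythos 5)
Your proof is correct, and it is organized differently from the paper's. The paper treats (a), (b), (c) by three parallel hands-on computations: it writes $\theta_0,\theta'_0,\theta''_0$ as generic $2\times 2$ matrices, extracts the relations $2\alpha\gamma=0$, $2\beta\xi=0$, $\alpha\xi+\gamma\beta=1$ from orthogonality, and then kills one entry in each matrix by evaluating the composite $\phi_{{\mathcal O},\vert j_1\vert}\circ\dots\circ\phi_{{\mathcal O},2}\circ\phi_{{\mathcal O},0}$ (resp.\ its primed and mixed versions) on $v_0(b)$ and $v_0(\tau(b))$, using that $V_{(\vert j_1\vert+2)}({\mathcal O})$ is one-dimensional and that $Supp(\tau(b))$ stops at $\vert j_1\vert$. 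You instead prove only (a) and deduce (b) and (c) formally by composing with the isometry $S$ of \ref{SS:SpecialOrthoRepresentationsInSameOrthoClass} and using $\det(S_0)=-1$; this is legitimate because the lemma's $\theta$ is an arbitrary orthogonal automorphism of $(V({\mathcal O}),\phi_{\mathcal O},\langle\ ,\ \rangle_{\mathcal O})$, so (a) applies to $S^{-1}\theta'S$ and $S^{-1}\theta''$. For (a) itself you replace the composite-map evaluation by the vanishing $\operatorname{Hom}(M_{[-i_1,-j_1]},M_{[j_1,i_1]})=0$ for interval modules over the equioriented quiver, which holds exactly because $i_1+j_1>0$ --- the same place the paper's composite argument uses the hypothesis --- and the brick property gives the scalars $c,c'$, after which the $2\times 2$ Gram-matrix computation with $\left(\begin{smallmatrix}0&1\\ 1&0\end{smallmatrix}\right)$ (correct by \ref{N:BasisVOrbit}, since $b$ is above and $\tau(b)$ below the principal diagonal and $\epsilon=1$) forces $cc'=1$. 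What each route buys: yours is shorter, avoids triplicating the computation, and makes the structural reason for the triangularity of $\theta_0$ transparent; the paper's is self-contained (no appeal to the general Hom criterion for type $A$ interval modules) and records the explicit matrix relations it reuses implicitly elsewhere. One cosmetic remark: $\operatorname{Hom}(\phi_b,\phi_{\tau(b)})$ is one-dimensional with image the interval $[j_1,-j_1]$, so "supported at degree $0$" is loose phrasing, but your use of it (the off-diagonal entry $s$ may be nonzero at degree $0$) is accurate.
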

\begin{proof}
From our hypothesis, we have $i_1 > \vert j_1 \vert$.  $V_0({\mathcal O})$ has $\{v_0(b), v_0(\tau(b))\}$ and we know how the elements $E_{\mathcal O}$ and $E'_{\mathcal O}$ act on these vectors.  Write 
\[
\left\{
\begin{aligned}
\theta_0(v_0(b)) &= \alpha v_0(b) + \gamma v_0(\tau(b))\\ \theta_0(v_0(\tau(b))) &= \beta v_0(b) + \xi v_0(\tau(b))
\end{aligned}
\right.
\]
\[
\left\{
\begin{aligned}
\theta'_0(v_0(b)) &= \alpha' v_0(b) + \gamma' v_0(\tau(b))\\ \theta'_0(v_0(\tau(b))) &= \beta' v_0(b) + \xi' v_0(\tau(b))
\end{aligned}
\right. 
\] 
and 
\[
\left\{
\begin{aligned}
\theta''_0(v_0(b)) &= \alpha'' v_0(b) + \gamma'' v_0(\tau(b))\\ \theta''_0(v_0(\tau(b))) &= \beta'' v_0(b) + \xi'' v_0(\tau(b))
\end{aligned}
\right. 
\] 
where $\alpha, \beta, \gamma, \xi, \alpha', \beta', \gamma', \xi', \alpha'', \beta'', \gamma'', \xi'' \in {\mathbf k}$.  
Because 
\[
\langle \theta_0(u), \theta_{0}(v)\rangle_{\mathcal O} = \langle u, v\rangle_{\mathcal O}, \     \langle \theta'_0(u), \theta'_{0}(v)\rangle_{\mathcal O}  = \langle u, v\rangle_{\mathcal O} \text{ and  }  \langle \theta''_0(u), \theta''_{0}(v)\rangle_{\mathcal O}  = \langle u, v\rangle_{\mathcal O} 
\]
for all $u, v \in V_0({\mathcal O})$, we get by considering the following  pairs: $(u, v) = (v_0(b), v_0(b))$, $(v_0(\tau(b)), v_0(\tau(b)))$ and $(v_0(b), v_0(\tau(b)))$, we will get easily that 
\begin{itemize}
\item $2\alpha \gamma = 0$, \quad  $2\alpha' \gamma' = 0$, \quad  $2\alpha'' \gamma'' = 0$; 
\item $2\beta \xi = 0$,\quad  $2\beta' \xi' = 0$, \quad $2\beta'' \xi'' = 0$; 
\item $\alpha\xi + \gamma \beta = 1$,\quad  $\alpha'\xi' + \gamma' \beta' = 1$ \quad and \quad $\alpha'' \xi'' + \gamma'' \beta'' = 1$.
\end{itemize}

Note that $V_{(\vert j_1 \vert +2)}({\mathcal O})$ has dimension $1$ and $\{v_{(\vert j_1 \vert + 2)}(b)\}$ is a basis of  $V_{(\vert j_1 \vert +2)}({\mathcal O})$. Also we have that  $\theta_{(\vert j_1 \vert + 2)}:V_{(\vert j_1 \vert +2)}({\mathcal O}) \rightarrow V_{(\vert j_1 \vert +2)}({\mathcal O})$,  $\theta'_{(\vert j_1 \vert + 2)}:V_{(\vert j_1 \vert +2)}({\mathcal O}) \rightarrow V_{(\vert j_1 \vert +2)}({\mathcal O})$ and $\theta''_{(\vert j_1 \vert + 2)}:V_{(\vert j_1 \vert +2)}({\mathcal O}) \rightarrow V_{(\vert j_1 \vert +2)}({\mathcal O})$ are isomorphisms. Thus $\theta(v_{(\vert j_1 \vert + 2)}(b)) = \chi v_{(\vert j_1 \vert + 2)}(b)$, $\theta'(v_{(\vert j_1 \vert + 2)}) = \chi' v_{(\vert j_1 \vert + 2)}(b)$ and $\theta''(v_{(\vert j_1 \vert + 2)}(b)) = \chi'' v_{(\vert j_1 \vert + 2)}(b)$ with $\chi, \chi', \chi'' \in {\mathbf k}$, $\chi \ne 0$, $\chi' \ne 0$ and $\chi'' \ne 0$.

(a) We have 
\[
\phi_{{\mathcal O}, \vert j_1 \vert} \circ \dots \circ \phi_{{\mathcal O}, 2} \circ \phi_{{\mathcal O}, 0} \circ \theta_0 = \theta_{(\vert j_1 \vert + 2)} \circ \phi_{{\mathcal O}, \vert j_1 \vert} \circ \dots \circ \phi_{{\mathcal O}, 2} \circ \phi_{{\mathcal O}, 0}
\]
If we evaluate this expression on each of the basis vectors of $V_0({\mathcal O})$, we get
\[
\begin{cases} v_0(b) \mapsto \alpha v_{(\vert j_1\vert + 2)}(b)\\ v_0(\tau(b)) \mapsto \beta v_{(\vert j_1 \vert + 2)}(b)\end{cases} \quad \text{ for the left side map }
\]
and
\[
\begin{cases} v_0(b) \mapsto \chi v_{(\vert j_1\vert + 2)}(b)\\ v_0(\tau(b)) \mapsto 0\end{cases} \quad  \text{ for the right side map.}
\]
Thus $\beta = 0$ and we get that $\alpha \xi = 1$ and $\det(\theta_0) = 1$.

(b) We have 
\[
\phi'_{{\mathcal O}, \vert j_1 \vert} \circ \dots \circ \phi'_{{\mathcal O}, 2} \circ \phi'_{{\mathcal O}, 0} \circ \theta'_0 = \theta'_{(\vert j_1 \vert + 2)} \circ \phi'_{{\mathcal O}, \vert j_1 \vert} \circ \dots \circ \phi'_{{\mathcal O}, 2} \circ \phi'_{{\mathcal O}, 0}
\]
If we evaluate this expression on each of the basis vectors of $V_0({\mathcal O})$, we get
\[
\begin{cases} v_0(b) \mapsto \gamma' v_{(\vert j_1\vert + 2)}(b)\\ v_0(\tau(b)) \mapsto \xi' v_{(\vert j_1 \vert + 2)}(b)\end{cases} \quad \text{ for the left side map }
\]
and
\[
\begin{cases} v_0(b) \mapsto 0\\ v_0(\tau(b)) \mapsto \chi' v_{(\vert j_1\vert + 2)}(b)\end{cases} \quad  \text{ for the right side map.}
\]
Thus $\gamma' = 0$ and we get that $\alpha' \xi' = 1$ and $\det(\theta'_0) = 1$.

(c) We have 
\[
\phi'_{{\mathcal O}, \vert j_1 \vert} \circ \dots \circ \phi'_{{\mathcal O}, 2} \circ \phi'_{{\mathcal O}, 0} \circ \theta''_0 = \theta''_{(\vert j_1 \vert + 2)} \circ \phi_{{\mathcal O}, \vert j_1 \vert} \circ \dots \circ \phi_{{\mathcal O}, 2} \circ \phi_{{\mathcal O}, 0}
\]
If we evaluate this expression on each of the basis vectors of $V_0({\mathcal O})$, we get
\[
\begin{cases} v_0(b) \mapsto \gamma'' v_{(\vert j_1\vert + 2)}(b)\\ v_0(\tau(b)) \mapsto \xi'' v_{(\vert j_1 \vert + 2)}(b)\end{cases} \quad \text{ for the left side map }
\]
and
\[
\begin{cases} v_0(b) \mapsto \chi'' v_{(\vert j_1\vert + 2)}(b)\\ v_0(\tau(b)) \mapsto 0\end{cases} \quad  \text{ for the right side map.}
\]
Thus $\xi'' = 0$ and we get that $\gamma'' \beta'' = 1$ and $\det(\theta''_0) = -1$.
\end{proof}

\begin{remark}\label{SS:ExampleIsomorphismSpecialOrtho}
So $(V({\mathcal O}), \phi_{\mathcal O}, \langle\  , \  \rangle_{\mathcal O})$ and $(V({\mathcal O}), \phi'_{\mathcal O}, \langle\  , \  \rangle_{\mathcal O})$ are isomorphic relative to the orthogonal group, but not isomorphic relative to the special orthogonal group. Because the index of the special orthogonal subgroup in the orthogonal group is 2, we get that any representation $(V_{\mathcal O}, \phi''_{\mathcal O}, \langle\  ,  \  \rangle_{\mathcal O})$ isomorphic relative to the orthogonal group to either isomorphic to $(V({\mathcal O}), \phi_{\mathcal O}, \langle\  , \  \rangle_{\mathcal O})$ or $(V({\mathcal O}), \phi'_{\mathcal O}, \langle\  , \  \rangle_{\mathcal O})$ relative to the special orthogonal group. In other words, the orthogonal isomorphism class of $(V({\mathcal O}), \phi_{\mathcal O}, \langle\  , \  \rangle_{\mathcal O})$ splits into two isomorphism classes relatively to the special orthogonal group.

Note it is easy to see that if ${\mathcal O}$ is the $\langle \tau \rangle$-orbit of the ${\mathcal I}$-box $b = b(i_1, j_1, 0) \in {\mathbf B}$ such that either $i_1 + j_1 = 0$ or $i_1 \geq j_1 > 0$ or $0 > i_1 \geq j_1$, then the orthogonal isomorphism class of $(V({\mathcal O}), \phi_{\mathcal O}, \langle\  , \  \rangle_{\mathcal O})$ does not split and is a unique special orthogonal group isomorphism class.  
\end{remark}

\section{$G^{\iota}$-orbits in ${\mathfrak g}_2$ when $\vert {\mathcal I}\vert$ is even.}

\subsection{}\label{S:SetUpAeven}
In this section,  
\begin{itemize}
\item $m$ is an integer $> 0$;
\item $G$ is the connected component of the identity element of the group of automorphisms of a finite dimensional vector space $V$ over ${\mathbf k}$ preserving a fixed non-degenerate symmetric (respectively skew-symmetric) bilinear form $\langle\ , \  \rangle: V \times V \rightarrow {\mathbf k}$, in other words  $G = SO(V)$ (respectively $G = Sp(V)$); 
\item $\mathfrak g$ is the Lie algebra of $G$, more precisely $X \in {\mathfrak g}$ if and only if $X:V \rightarrow V$ is an endomorphism of $V$ such that $\langle X(v_1), v_2\rangle + \langle v_1 , X(v_2) \rangle = 0$ for all $v_1, v_2 \in V$;
\item $V$ is identified to its dual $V^*$ by the isomorphism $\phi: V \rightarrow V^*$ defined by $v \mapsto \phi_v$, where $\phi_v: V \rightarrow {\mathbf k}$ is $\phi_v(x) = \langle v, x\rangle$ for all $x \in V$;  
\item ${\mathcal I} = \{n \in {\mathbb Z} \mid n \equiv 1 \pmod 2, -2m <  n < 2m \}$;
\item $\oplus_{i \in {\mathcal I}} V_i$ is a direct sum decomposition of $V$ by  vector subspaces $V_i \ne 0$ for all $i \in {\mathcal I}$ such that 
\begin{itemize} \item $\langle v, v'\rangle = 0$ whenever $v \in V_i$, $v' \in V_j$ and $i + j \ne 0$;
			\item $V_{-i} = V_i^*$ for all $i \in {\mathcal I}$ under the identification given by $\phi$ above;
\end{itemize} 
\item ${\mathcal B} = \coprod_{i\in {\mathcal I}} {\mathcal B}_i$ is a basis of $V$ such that each ${\mathcal B}_i = \{u_{i, j} \mid 1 \leq j \leq \delta_i \}$  is a basis of $V_i$ for each $i \in {\mathcal I}$, where $\delta_i$ is the dimension of $V_i$. Moreover these bases are such that, for $i, j \in {\mathcal I}$, $1 \leq r \leq \delta_i$, $1 \leq s \leq \delta_j$, we have
\[
\langle u_{i, r}, u_{j, s}\rangle = \begin{cases} 1, &\text{if $i + j = 0$, $r = s$ and $i > 0$;}\\ \epsilon, &\text{if $i + j = 0$, $r = s$ and $i < 0$;}\\ 0, &\text{otherwise.} \end{cases}
\]
\item $\iota: {\mathbf k}^{\times} \rightarrow G$ is the homomorphism defined by $\iota(t) v = t^i v$ for all $i \in {\mathcal I}$, $v \in V_i$ and $t \in {\mathbf k}^{\times}$;
\item ${\mathbf B}$ is the set of ${\mathcal I}$-boxes;
\item ${\mathbf B}/\tau$ denote the set of $\langle \tau \rangle$-orbits ${\mathcal O}$ in ${\mathbf B}$; 
\item $\nu = \vert {\mathbf B}/\tau \vert = m(m + 1)$;
\end{itemize}

\begin{lemma}\label{G2AsRepresEven}
\begin{enumerate}[\upshape (a)]
\item $g \in G^{\iota}$ if and only if all the following conditions are verified:
\begin{itemize}
\item $g(V_i) \subseteq V_i$   for all $i  \in {\mathcal I}$;
\item the restriction $g_{\vert_{V_i}} = g_i$ of $g$ to $V_i$  belongs to $GL(V_i)$ for all $i \in {\mathcal I}$;
\item $g_{-i}^* g_{i} = Id_{V_i}$ for all $i \in {\mathcal I}$.
\end{itemize}
Here $g_{-i}^*: V_{i} \rightarrow V_{i}$ is defined using the identification of $V_{i}$ with $V_{-i}^*$ given by $\phi$.  So $\langle g_{-i}^* (u), v \rangle = \langle u, g_{-i}(v) \rangle$ for all $u \in V_{i}$ and $v \in V_{-i}$.

\item $X \in {\mathfrak g}_2$  if and only if all the following conditions are verified:
\begin{itemize}
\item $X(V_i) \subseteq V_{i + 2}$ for all $i \in {\mathcal I}$, $i \ne (2m - 1)$;
\item $X_{-i} = -X_{i - 2}^T$ for all $i \in {\mathcal I}$  and $i \geq 3$;
\item $X_{-1} = -\epsilon X_{-1}^T$;
\end{itemize}
where $X_i$ is the matrix of the linear transformation $X\vert_{V_i}: V_i \rightarrow V_{i + 2}$ of the restriction of $X$ to $V_i$ relative to the bases ${\mathcal B}_i$ and ${\mathcal B}_{i + 2}$ and $X_i^T$ is its transpose for all $i \in {\mathcal I}$ with $i \ne (2m - 1)$. Moreover the subset of elements 
\[
(X_i)_{\begin{subarray}{l} i \in {\mathcal I} \\ i \ne (2m - 1)\end{subarray}} \in \left(\bigoplus_{\begin{subarray}{c} i \in {\mathcal I}\\ i \ne (2m - 1)\end{subarray}} Hom(V_i, V_{i + 2})\right),
\]
  such that the above conditions for the $X_i$ are satisfied,  is a subspace of 
  \[
  \bigoplus_{\begin{subarray}{c} i \in {\mathcal I} \\ i \ne (2m - 1)\end{subarray}} Hom(V_i, V_{i + 2})
  \]
   isomorphic to ${\mathfrak g}_2$.

 \item $Z \in {\mathfrak g}_0$ if and only if all the following conditions are verified:
\begin{itemize}
\item $Z(V_i) \subseteq V_{i}$ for all $i \in {\mathcal I}$;
\item $Z_{-i} = -Z_{i}^T$ for all $i \in {\mathcal I}$;
\end{itemize}
where we denote  $Z_i$ is the matrix of the linear transformation $Z\vert_{V_i}: V_{i} \rightarrow V_{i}$ the restriction of $Z$ to $V_i$ relative to the bases ${\mathcal B}_i$ and ${\mathcal B}_{i}$  and $Z_i^T$ is its transpose for all $i \in {\mathcal I}$. Moreover the subset of elements 
\[
(Z_i)_{i \in {\mathcal I}} \in \left(\bigoplus_{ i \in {\mathcal I}} Hom(V_i, V_{i})\right),
\]
 such that the above conditions for the $Z_i$ are satisfied,  is a subspace of 
 \[
 \bigoplus_{ i \in {\mathcal I}} Hom(V_i, V_{i})
 \]
  isomorphic to ${\mathfrak g}_{0}$.

\item $\{(g_i)_{i \in {\mathcal I}} \in \prod_{i \in {\mathcal I}} GL(V_i) \mid g_{-i} = (g_i^*)^{-1} \text{ for all $i > 0$}\}$ is a closed subgroup of  $\prod_{i \in {\mathcal I}} GL(V_i)$ and is isomorphic to $\prod_{i \in {\mathcal I}, i > 0} GL(V_i)$. Moreover the function 
\[
\Phi: G^{\iota} \rightarrow \left. \left\{(g_i)_{i \in {\mathcal I}} \in \prod_{i \in {\mathcal I}} GL(V_i)\   \right\vert g_{-i} = (g_i^*)^{-1} \text{ for all $i > 0$} \right\}
\]
defined by $\Phi(g) = (g_{i})_{i \in {\mathcal I}}$  is a well-defined  isomorphism of algebraic groups. 

\item The restriction of the adjoint action of $G$ to $G^{\iota}$ acts on ${\mathfrak g}_2$ and,  under the description of ${\mathfrak g}_2$ as a subspace of $\oplus_{i \in {\mathcal I}, i \ne (2m - 1)} Hom(V_i, V_{i + 2})$ in (b), this action  is given by 
\[
Ad(g)(X) \mapsto (g_{i + 2}X_{i}g_{i}^{-1})_{i \in {\mathcal I}, i \ne (2m - 1)}
\]
for all $g \in G^{\iota}$ and where $\Phi(g) = (g_{i})_{i \in {\mathcal I}}$. 
\end{enumerate}
\end{lemma}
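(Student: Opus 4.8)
The statement is a direct translation exercise: every condition is obtained by writing out what $Ad(\iota(t))$, the defining relation of $\mathfrak g$, and the identity $\langle g\,\cdot,g\,\cdot\rangle=\langle\cdot,\cdot\rangle$ become once one uses the eigenspace decomposition $V=\oplus_{i\in{\mathcal I}}V_i$ and the normalized bases ${\mathcal B}_i$ fixed in \ref{S:SetUpAeven}. The plan is to record first the two facts that drive everything. First: an endomorphism $Y$ of $V$ commutes with $\iota(t)$ for all $t$ (respectively satisfies $Ad(\iota(t))Y=t^{2}Y$) if and only if $Y(V_i)\subseteq V_i$ for all $i$ (respectively $Y(V_i)\subseteq V_{i+2}$ for all $i$ with $i+2\in{\mathcal I}$ and $Y(V_{2m-1})=0$), since the $V_i$ are precisely the eigenspaces of $Ad(\iota(t))$. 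Second: the only nonzero pairings are between $V_i$ and $V_{-i}$, and the bases ${\mathcal B}_i,{\mathcal B}_{-i}$ are dual up to the recorded scalar ($1$ if $i>0$, $\epsilon$ if $i<0$), so that transposing matrices taken in the ${\mathcal B}_i$'s corresponds, up to these normalizations, to passing to adjoints with respect to the pairing $V_i\times V_{-i}\to{\mathbf k}$.

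For (a) and (d) I would start from the first fact: $g\in G^{\iota}$ forces $g(V_i)\subseteq V_i$ and $g_i:=g|_{V_i}\in GL(V_i)$. The identity $\langle gu,gv\rangle=\langle u,v\rangle$ holds trivially for $u\in V_i$, $v\in V_j$ with $i+j\neq0$ (both sides vanish), and for $u\in V_i$, $v\in V_{-i}$ it reads $\langle g_{-i}^{\,*}g_i u,v\rangle=\langle u,v\rangle$ by the very definition of $g_{-i}^{\,*}$; nondegeneracy of the pairing $V_i\times V_{-i}\to{\mathbf k}$ then forces $g_{-i}^{\,*}g_i=Id_{V_i}$, which is (a). Conversely any block-diagonal $g$ with $g_{-i}^{\,*}g_i=Id$ preserves $\langle\ ,\ \rangle$ by the same computation, hence lies in $O(V)$ (resp.\ $Sp(V)$); and since $\det g_{-i}=\det((g_i^{*})^{-1})=(\det g_i)^{-1}$ we get $\det g=\prod_{i>0}(\det g_i)(\det g_{-i})=1$, so in the orthogonal case $g$ automatically lies in the identity component $SO(V)$. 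Thus $G^{\iota}$ is exactly this set of block-diagonal elements, and $\Phi$ is visibly a bijection onto $\{(g_i):g_{-i}=(g_i^{*})^{-1}\ \forall i>0\}$, with inverse reassembling the block-diagonal map with blocks $g_i$ on $V_i$ and $(g_i^{*})^{-1}$ on $V_{-i}$. That this target is a subgroup follows from $(g_ih_i)^{*}=h_i^{*}g_i^{*}$, it is closed in $\prod_i GL(V_i)$ because the equations $g_{-i}g_i^{*}=Id$ are polynomial, and projection to the factors with $i>0$ identifies it with $\prod_{i>0}GL(V_i)$; both $\Phi$ and its inverse are morphisms, so $\Phi$ is an isomorphism of algebraic groups.

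For (b) and (c) I would use the second fact together with $\langle Yu,v\rangle+\langle u,Yv\rangle=0$. For $Z\in{\mathfrak g}_0$ only the pairs $u\in V_i$, $v\in V_{-i}$ contribute, and evaluating on the bases gives $(Z_i)_{sr}+(Z_{-i})_{rs}=0$, i.e.\ $Z_{-i}=-Z_i^{T}$ (the cases $i>0$ and $i<0$ give the same relation after transposing, and $0\notin{\mathcal I}$ so no index is self-paired). For $X\in{\mathfrak g}_2$ the contributing pairs are $u\in V_i$, $v\in V_{-i-2}$; since $i\mapsto-i-2$ is an involution of $\{i\in{\mathcal I}:i\neq 2m-1\}$ with unique fixed point $-1$, each orbit $\{i-2,-i\}$ with $i\geq3$ yields $X_{-i}=-X_{i-2}^{T}$, while the fixed point yields $(X_{-1})_{sr}+\epsilon(X_{-1})_{rs}=0$, i.e.\ $X_{-1}=-\epsilon X_{-1}^{T}$, the factor $\epsilon$ appearing precisely because $u$ and $v$ both lie in $V_{-1}$, whose pairing with $V_1$ is normalized by $\epsilon$. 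In each case $Y\mapsto(Y_i)_i$ is injective (an element of ${\mathfrak g}_0$ or ${\mathfrak g}_2$ is determined by its restrictions to the $V_i$, the restriction to $V_{2m-1}$ being forced to $0$ in the ${\mathfrak g}_2$ case), and its image is exactly the subspace cut out by the transpose relations, since running the computation backwards shows that any such collection reassembles into an element of $\mathfrak g$ of the prescribed degree. Finally (e) is immediate: for $g\in G^{\iota}$ and $v\in V_i$ one has $(gXg^{-1})(v)=g_{i+2}\bigl(X_i(g_i^{-1}v)\bigr)$, so the $i$-th block of $Ad(g)(X)$ is $g_{i+2}X_ig_i^{-1}$.

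The only point that is not purely mechanical is the determinant computation in (a)/(d) showing that the $\iota$-fixed locus does not distinguish $O(V)$ from $SO(V)$; everything else is careful bookkeeping of the index shifts $i\leftrightarrow i+2$, of the involution $i\mapsto-i-2$ and its fixed point, and of where the scalar $\epsilon$ and the sign conventions $i>0$ versus $i<0$ enter. I expect the bulk of the write-up to be exactly this bookkeeping rather than any genuine difficulty.
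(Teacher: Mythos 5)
Your proposal is correct and follows essentially the same route as the paper: identify the $V_i$ as eigenspaces of $Ad(\iota(t))$ to get the block structure, then expand the form-preservation (resp.\ the relation $\langle Xu,v\rangle+\langle u,Xv\rangle=0$) on the normalized bases, with your index involution $i\mapsto -i-2$ and its fixed point $-1$ producing exactly the transpose relations $X_{-i}=-X_{i-2}^{T}$ and the $\epsilon$-twisted relation $X_{-1}=-\epsilon X_{-1}^{T}$, and parts (d), (e) following formally. Your explicit determinant computation $\det(g_{-i})=(\det g_i)^{-1}$, showing that a block-diagonal form-preserving element automatically has determinant $1$ and hence lies in $SO(V)$, is a worthwhile addition: the paper's converse in (a) simply asserts $g\in G$ without addressing the $SO(V)$ versus $O(V)$ distinction (it only returns to this point later, in the proof of Proposition~\ref{Prop_Indexation_Orbits_Even}), and your argument settles it on the spot.
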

\begin{proof}

(a) $\Rightarrow$) For  $i \in {\mathcal I}$, $V_i$ is the eigenspace of $\iota(t)$ with eigenvalue $t^i$ for all $t \in {\mathbf k}^{\times}$. Moreover  when $i, j \in {\mathcal I}$ and $i \ne j$,  these eigenvalues $t^i$ and $t^j$ are distinct.  Because $g \iota(t) = \iota(t) g$ for all $t \in {\mathbf k}^{\times}$ when $g \in G^{\iota}$, then $g$ preserves these eigenspaces and we get $g(V_i) \subseteq V_i$ for all $i \in I$. $g_i \in GL(V_i)$ follows easily  because $g \in GL(V)$ and $g(V_i) \subseteq V_i$.

Because we have $\langle g(v), g(v') \rangle = \langle v, v' \rangle$ for all $v, v' \in V$ and our conditions on the direct sum decomposition, we get for all $v \in V_i$ and $v' \in V_{-i}$ that 
\[
\langle g_i(v), g_{-i}(v')\rangle = \langle g_{-i}^* g_i(v), v'\rangle = \langle v, v'\rangle \quad \Rightarrow \quad (g_{-i}^*) g_i = Id_{V_i} 
 \]
 for all $i \in I$. 
 
 $\Leftarrow$) If $g(V_i) \subseteq V_i$, $g_i \in GL(V_i)$ and $(g_{-i})^* g_i = Id_{V_i}$ for all $i \in {\mathcal I}$, then $g$ is an automorphism and, because of the condition on our direct sum decomposition of $V$ and the fact that $(g_{-i})^* g_i = Id_{V_i}$ for all $i \in {\mathcal I}$, we get that $g$ preserves the non-degenerate form $\langle\ , \ \rangle$, in other words $g \in G$. Because $g(V_i) \subseteq V_i$, we get that $g$ commutes with $\iota(t)$ for all $t \in {\mathbf k}^{\times}$ and $g \in G^{\iota}$.  (a) is proved.

(b) $\Rightarrow$)  Because $Ad(\iota(t)) X = t^2 X$ for all $t \in {\mathbf k}^{\times}$ when $X \in {\mathfrak g}_2$, we have $\iota(t) X = t^2 X \iota(t)$ for all $t \in {\mathbf k}^{\times}$.  So if $v \in V_i$, then $\iota(t) X(v) =  t^2 X \iota(t) (v) = t^{i + 2} X(v)$ and consequently $X(V_i) \in V_{i + 2}$ for all $i \in {\mathcal I}$, $i \ne (2m - 1)$. Note that this last equation means also that $X(V_{2m - 1}) = 0$

Note that if $i \geq 1$, then $-i = i - 2$ if and only if $i = 1$.  Because $\langle X(v), v' \rangle + \langle v, X(v')\rangle = 0$ for all $v, v' \in V$ and our conditions on the direct sum decomposition, we get for $i \in {\mathcal I}$, $i \geq 1$ and for all $v \in V_{-i}$,  $v' \in V_{i - 2}$ that  
\[
\langle X_{-i}(v), v'\rangle + \langle v, X_{i - 2} (v')\rangle = 0.
\]
In this last equation, if we substitute 
\[
X_{-i}(u_{-i, s}) = \sum_{q' = 1}^{{\delta}_{i - 2}} \xi_{q',s}^{(-i)} u_{(-i + 2), q'} \quad \text{ and } \quad X_{(i - 2)}(u_{(i - 2), r}) = \sum_{q = 1}^{{\delta}_{i}} \xi_{q, r}^{(i - 2)} u_{i,q},
\]
we get for $i \geq 3$ that $\epsilon \xi_{r,s}^{(-i)} + \epsilon \xi_{s,r}^{(i - 2)} = 0$ for all $1 \leq s \leq {\delta}_i$ and $1 \leq r \leq {\delta}_{i - 2}$ and thus $X_{-i} = -X_{i - 2}^T $. While if $i = 1$, then we get that  $\xi_{r,s}^{(-1)} + \epsilon \xi_{s,r}^{(-1)} = 0$ for all $1 \leq r, s \leq {\delta}_1$ and thus $X_{-1} = -\epsilon X_{-1}^T$.

$\Leftarrow$)  If $X_{-i} = -X_{i  - 2}^T$ for all $i \in {\mathcal I}$ and $i \geq 3$, and $X_{-1} = -\epsilon X_{-1}^T$, then we get that $\langle X(v), v' \rangle + \langle v, X(v')\rangle = 0$ for all $v, v' \in V$ by only checking it on the basis ${\mathcal B}$  and this means that $X \in {\mathfrak g}$. If we add the condition  $X(V_i) \subseteq V_{i + 2}$ for all $i \in {\mathcal I}$, $i \ne (2m - 1)$, we have  that $X \in {\mathfrak g}_2$. The rest of the statements in (b) are easily proved. 

(c) $\Rightarrow$)  Because $Ad(\iota(t)) Z = Z$ for all $t \in {\mathbf k}^{\times}$ when $Z \in {\mathfrak g}_0$, we have $\iota(t) Z =  Z \iota(t)$ for all $t \in {\mathbf k}^{\times}$.  So if $v \in V_i$, then $\iota(t) Z(v) = Z \iota(t) (v) = t^{i} Z(v)$ and consequently $Z(V_i) \in V_{i}$ for all $i \in {\mathcal I}$. 

Because $\langle Z(v), v' \rangle + \langle v, Z(v')\rangle = 0$ for all $v, v' \in V$ and our conditions on the direct sum decomposition, we get for $i \in {\mathcal I}$ and for all $v \in V_{i}$,  $v' \in V_{-i}$ that  
\[
\langle Z_{i}(v), v'\rangle + \langle v, Z_{-i} (v')\rangle = 0.
\]
In this last equation, if we substitute 
\[
Z_{i}(u_{i, s}) = \sum_{q' = 1}^{{\delta}_{i}} \zeta_{q', s}^{(i)} u_{i, q'} \quad \text{ and } \quad Z_{-i}(u_{-i, r}) = \sum_{q = 1}^{{\delta}_{i}} \zeta_{q, r}^{(-i)} u_{-i, q},
\]
we get for $i \geq 1$ that $ \zeta_{r, s}^{(i)} +  \zeta_{s, r}^{(-i)} = 0$ for all $1 \leq s, r \leq {\delta}_i$ and for $i \leq -1$ that $ \epsilon\zeta_{r, s}^{(i)} +  \epsilon\zeta_{s, r}^{(-i)} = 0$ for all $1 \leq s, r \leq {\delta}_i$.
 Thus $Z_{-i} = -Z_{i}^T $ for all $i \in {\mathcal I}$.

$\Leftarrow$)  If $Z_{-i} = -Z_{i}^T$ for all $i \in {\mathcal I}$, then we get that $\langle Z(v), v' \rangle + \langle v, Z(v')\rangle = 0$ for all $v, v' \in V$ by only checking it on the basis ${\mathcal B}$  and this means that $Z \in {\mathfrak g}$. If we add the condition  $Z(V_i) \subseteq V_i$ for all $i \in {\mathcal I}$, we have  that $Z \in {\mathfrak g}_0$. The rest of the statements in (d) are easily proved. 

(d) follows easily from (a).

(e) follows easily from (b) and the fact that $Ad(g)(X) = gXg^{-1}$ for all $g \in G^{\iota}$. 
\end{proof}

\begin{corollary}\label{C:DimensionG2EvenCase}
 The dimension $dim({\mathfrak g}_2)$  of ${\mathfrak g}_2$  is equal to
\[
 \left(\frac{(\delta_1 - \epsilon) \delta_1}{2}\right) + \sum_{\substack{ i \in {\mathcal I}\\ 1 \leq i < (2m - 1)}} \delta_i \delta_{i + 2}
\]
\end{corollary}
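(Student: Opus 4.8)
The plan is to read the dimension off directly from the description of $\mathfrak{g}_2$ given in Lemma~\ref{G2AsRepresEven}~(b). There $\mathfrak{g}_2$ is identified with the subspace of $\bigoplus_{i \in {\mathcal I},\, i \ne 2m-1} \mathrm{Hom}(V_i, V_{i+2})$ cut out by the conditions $X_{-i} = -X_{i-2}^T$ for $i \in {\mathcal I}$, $i \geq 3$, and $X_{-1} = -\epsilon X_{-1}^T$. So the first step is to observe that the data $(X_i)_{i \in {\mathcal I},\, i \ne 2m-1}$ splits into two kinds of blocks: the ``linked pairs'' $\{X_{i-2}, X_{-i}\}$ for $i \in {\mathcal I}$ with $i \geq 3$, and the single ``self-linked'' block $X_{-1}$.

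Second, I would count the free parameters in each type of block. For a linked pair indexed by $i \geq 3$ in ${\mathcal I}$, the constraint $X_{-i} = -X_{i-2}^T$ determines $X_{-i}$ from $X_{i-2}$, so the pair contributes exactly $\dim \mathrm{Hom}(V_{i-2}, V_i) = \delta_{i-2}\delta_i$ free parameters; letting $i-2$ range over the positive odd integers $1, 3, \dots, 2m-3$ (equivalently $i$ over $3, 5, \dots, 2m-1$), these contribute $\sum_{i \in {\mathcal I},\, 1 \leq i < 2m-1} \delta_i \delta_{i+2}$. For the self-linked block $X_{-1} \in \mathrm{Hom}(V_{-1}, V_1)$, using the chosen bases ${\mathcal B}_{-1}$ and ${\mathcal B}_1$ the condition $X_{-1} = -\epsilon X_{-1}^T$ says the $\delta_1 \times \delta_1$ matrix is skew-symmetric when $\epsilon = 1$ and symmetric when $\epsilon = -1$; in the former case the dimension is $\binom{\delta_1}{2} = \delta_1(\delta_1-1)/2$ and in the latter it is $\binom{\delta_1+1}{2} = \delta_1(\delta_1+1)/2$. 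Both are captured uniformly by $(\delta_1 - \epsilon)\delta_1/2$.

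Third, I would note that every index $i \in {\mathcal I}$ with $i \ne 2m-1$ appears in exactly one of these blocks: the positive indices $1, 3, \dots, 2m-3$ are the ``$i-2$'' slots of the linked pairs, the negative indices $-3, -5, \dots, -(2m-1)$ are the ``$-i$'' slots of the linked pairs, and $-1$ is the self-linked slot. Hence the total dimension is the sum of the contributions, which is exactly
\[
\left(\frac{(\delta_1 - \epsilon)\delta_1}{2}\right) + \sum_{\substack{i \in {\mathcal I}\\ 1 \leq i < 2m-1}} \delta_i \delta_{i+2}.
\]
This follows from Lemma~\ref{G2AsRepresEven}~(b), which asserts that the constrained subspace is isomorphic to $\mathfrak{g}_2$.

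\textbf{Main obstacle.} There is no real obstacle here; the only point requiring a moment's care is the bookkeeping that the indexing sets match up — that the linked pairs $\{i-2, -i\}$ for $i \in {\mathcal I}$, $i \geq 3$, together with the singleton $\{-1\}$, partition ${\mathcal I} \setminus \{2m-1\}$ — and the uniform treatment of the symmetric versus skew-symmetric case via the formula $(\delta_1-\epsilon)\delta_1/2$. Both are routine, so the corollary is essentially immediate from the lemma.
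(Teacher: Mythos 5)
Your proposal is correct and follows the same route as the paper, which simply derives the formula from Lemma~\ref{G2AsRepresEven}~(b); you have merely spelled out the parameter count (linked pairs contributing $\delta_i\delta_{i+2}$ and the self-linked block $X_{-1}$ contributing $(\delta_1-\epsilon)\delta_1/2$) that the paper leaves implicit. The bookkeeping of the indices and the uniform formula for the symmetric/skew-symmetric case are both accurate.
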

\begin{proof}
This follows easily from (b) of the previous proposition.
\end{proof}

We will now present the combinatorics needed to enumerate the $G^{\iota}$-orbits in  ${\mathfrak g}_2$. 

\begin{definition}\label{D:CoefficientFunction}
A function $c:{\mathbf B} \rightarrow {\mathbb N}$ is said to be {\it symmetric} if and only if $c(\tau(b)) = c(b)$ for all $b \in {\mathbf B}$.  Given such a symmetric function $c:{\mathbf B} \rightarrow {\mathbb N}$, we can associated to it a unique well-defined {\it coefficient function} ${\mathbf c}: {\mathbf B}/\tau \rightarrow {\mathbb N}$ as follows: ${\mathbf c}({\mathcal O}) = c(b)$ where $b$ is any ${\mathcal I}$-box $b \in {\mathcal O}$.  Reciprocally given a  coefficient function ${\mathbf c}: {\mathbf B}/\tau \rightarrow {\mathbb N}$, we can associated to it a unique well-defined symmetric function $c:{\mathbf B} \rightarrow {\mathbf N}$ as follows: $c(b) = {\mathbf c}({\mathcal O})$, where ${\mathcal O}$ is the unique $\langle \tau \rangle$-orbit containing $b$.

Clearly the set of symmetric functions $c:{\mathbf B} \rightarrow {\mathbb N}$ is in bijection with the set of coefficient function ${\mathbf c}: {\mathbf B}/\tau \rightarrow {\mathbb N}$. 

Given a symmetric function $c:{\mathbf B} \rightarrow {\mathbb N}$ and its associated coefficient function ${\mathbf c}: {\mathbf B}/\tau \rightarrow {\mathbb N}$, we can associated to them their {\it dimension vector} $\delta({\mathbf c}) = \delta(c) = (\delta_i({\mathbf c}))_{i \in {\mathcal I}} = (\delta_i(c))_{i \in {\mathcal I}} \in {\mathbb N}^{\mathcal I}$ as follows: 
\[
\delta_i(c) = \delta_i({\mathbf c}) = \sum_{\substack{b \in {\mathbf B}\\ i \in Supp(b)}} c(b).
\]
It is easy to verify that $\delta_{-i}(c) = \delta_i(c)$ for all $i \in {\mathcal I}$. 
\end{definition}

\begin{notation}\label{N:CoefficientFunctionSet}
Denote by ${\mathfrak C}$: the set of coefficient functions ${\mathbf c}: {\mathbf B}/\tau \rightarrow {\mathbb N}$. If $\delta = (\delta_i)_{i \in {\mathcal I}} \in {\mathbb N}^{\mathcal I}$ is an ${\mathcal I}$-tuple  such that $\delta_{-i} = \delta_i$ for all $i \in {\mathcal I}$. We will denote by  ${\mathfrak C}_{\delta}$: the set of coefficient functions ${\mathbf c}: {\mathbf B}/\tau \rightarrow {\mathbb N}$ such that $\delta({\mathbf c}) = \delta$.
\end{notation}

\begin{notation}\label{N:CCorrespondance}
Let $\delta = (\delta_i)_{i \in {\mathcal I}} \in {\mathbb N}^{\mathcal I}$ be an ${\mathcal I}$-tuple such that $\delta_{-i} = \delta_i$ for all $i \in {\mathcal I}$ and let  ${\mathbf c} \in  {\mathfrak C}_{\delta}$ be a coefficient function of dimension $\delta$.  Denote by $V({\mathbf c})$: the direct sum as ${\mathcal I}$-graded vector space and  as $\epsilon$-representations of ${\mathbf c}({\mathcal O})$ copies of $V({\mathcal O})$ defined in \ref{SS:EHFDefinition} with a ${\mathcal I}$-basis 
\[
{\mathcal B}_{\mathbf c} = \coprod_{{\mathcal O} \in {\mathbf B}/\tau} \coprod_{b \in {\mathcal O}} \{v_i^j(b) \mid i \in Supp(b), 1 \leq j \leq {\mathbf c}({\mathcal O})\}
\]
such that, for $i, i' \in {\mathcal I}$, $1 \leq j \leq c(b)$ and $1 \leq j' \leq c(b')$, we have 
\[
\langle v_i^j(b), v_{i'}^{j'}(b')\rangle_{\mathbf c} 
= \begin{cases} 1, &\text{if $j' = j$, $b' = \tau(b)$, $i > 0$ and $i + i' = 0$; } \\
\epsilon, &\text{if $j' = j$,  $b' = \tau(b)$, $i < 0$ and $i + i' = 0$; }\\ 
0, &\text{otherwise.}
\end{cases}
\]
This is just from the formulae of \ref{N:BasisVOrbit} adapted to our situation. Here $0 \not\in {\mathcal I}$

We will denote by ${\mathfrak g}(V({\mathbf c}))$:  the Lie algebra corresponding to $(V({\mathbf c}),  \langle\  , \   \rangle_{\mathbf c})$. In other words, ${\mathfrak g}(V({\mathbf c}))$ is the vector space of linear transformations $X:V({\mathbf c}) \rightarrow V({\mathbf c})$ such that $\langle X(u), v\rangle_{\mathbf c} + \langle u, X(v)\rangle_{\mathbf c} = 0$ for all  $u, v \in V({\mathbf c})$.

From our construction, we have that $V({\mathbf c}) = \oplus_{i \in {\mathcal I}} V_i({\mathbf c})$, where 
$V_i({\mathbf c})$ is ${\mathbf c}({\mathcal O})$ copies of $V_i({\mathcal O})$. We get that  $\dim(V_i({\mathbf c})) = \delta_i$ for all $i \in {\mathcal I}$.  We get this last equation from
\[
\begin{aligned}
\dim(V_i({\mathbf c})) &= \sum_{{\mathcal O} \in {\mathbf B}/\tau} {\mathbf c}({\mathcal O}) \dim(V_i({\mathcal O})) =  \sum_{{\mathcal O} \in {\mathbf B}/\tau} \sum_{b \in {\mathcal O}} c(b) \dim(V_{i}(b))\\ &= \sum_{\substack{b \in {\mathbf B}\\ i \in Supp(b)}} c(b) = \delta_i({\mathbf c}), 
\end{aligned}
\]
because $\dim(V_i(b)) = 1$ when $i \in Supp(b)$. 

Let the three ${\mathcal I}$-graded  linear transformations  
\[
E_{\mathbf c}:V({\mathbf c}) \rightarrow V({\mathbf c}), \quad   H_{\mathbf c}:V({\mathbf c}) \rightarrow V({\mathbf c}) \quad \text{ and } \quad F_{\mathbf c}:V({\mathbf c}) \rightarrow V({\mathbf c})
\]
denote the direct sum as ${\mathcal I}$-graded linear transformation of ${\mathbf c}({\mathcal O})$ copies of the linear transformations 
\[
E_{\mathcal O}:V({\mathcal O}) \rightarrow V({\mathcal O}), \quad  H_{\mathcal O}:V({\mathcal O}) \rightarrow V({\mathcal O}) \quad \text{ and } \quad F_{\mathcal O}:V({\mathcal O}) \rightarrow V({\mathcal O}).
\]
By applying proposition~\ref{P:JMTripleProof} on each summand, we get that these three linear transformations are such that 
\begin{itemize}
\item $E_{\mathbf c}$ is of degree $2$, $H_{\mathbf c}$ is of degree $0$ and $F_{\mathbf c}$ is of degree $-2$;
\item $E_{\mathbf c}, H_{\mathbf c}, F_{\mathbf c}$ belong to the Lie algebra  ${\mathfrak g}(V({\mathbf c}))$;
\item $[H_{\mathbf c}, E_{\mathbf c}] = 2E_{\mathbf c}$, $[H_{\mathbf c}, F_{\mathbf c}] = -2F_{\mathbf c}$ and $[E_{\mathbf c}, F_{\mathbf c}] = H_{\mathbf c}$; in other words,  the triple $(E_{\mathbf c}, H_{\mathbf c}, F_{\mathbf c})$ is an ${\mathcal I}$-graded Jacobson-Morozov triple for the Lie algebra of $(V({\mathbf c}),  \langle\  , \   \rangle_{\mathbf c})$;
\item  the ${\mathcal I}$-graded linear transformation $\phi_{\mathbf c}: V({\mathbf c}) \rightarrow V({\mathbf c})$ given by 
\[
\phi_{{\mathbf c}, i} = {E_{\mathbf c}}\vert_{V_i({\mathbf c})}: V_i({\mathbf c}) \rightarrow V_{i + 2}({\mathbf c}) \quad \text{for all $i \in {\mathcal I}$  such that $(i + 2) \in {\mathcal I}$},
\]
is  such that $(V({\mathbf c}), \phi_{\mathbf c}, \langle\ , \ \rangle_{\mathbf c})$ is an $\epsilon$-representation.
\end{itemize}

As such we can express these linear transformations on the elements of the basis ${\mathcal B}_{\mathbf c}$ using the expressions in \ref{SS:EHFDefinition}. For example, let ${\mathcal O}$ denote the $\langle \tau \rangle$-orbit of $b$, then,  if the orbit ${\mathcal O}$ has no overlapping ${\mathcal I}$-box supports,  $1 \leq j \leq {\mathbf c}({\mathcal O})$ and $i \in Supp(b)$, we have
\[
E_{\mathbf c}(v_i^j(b)) = \begin{cases} {\phantom -} 0, &\text{ if $i = \max(Supp(b))$;}\\ {\phantom -} v_{i + 2}^j(b), &\text{ if $i \ne \max(Supp(b))$ and $i > 0$;}\\  -v_{i + 2}^j(b), &\text{ if $i \ne \max(Supp(b))$ and $i < 0$;} \end{cases}
\]
while if the orbit ${\mathcal O}$ has overlapping ${\mathcal I}$-box supports,  $b = b(i_1, j_1, k_1) \in {\mathcal O}$, $1 \leq j \leq {\mathbf c}({\mathcal O})$  and $i \in Supp(b)$, then we have 
\[
E_{\mathbf c}(v_i^j(b)) = \begin{cases} {\phantom -} 0, &\text{ if $i = \max(Supp(b))$;}\\ {\phantom -} v_{i + 2}^j(b), &\text{ if $i \ne \max(Supp(b))$ and $i \geq 1$;}\\  -v_{i + 2}^j(b), &\text{ if  $i < -1$;}\\ {\phantom -} v_1^j(b), &\text{ if  $i = -1$ and $b$ is below  the principal diagonal;}\\ - \epsilon v_1^j(b), &\text{ if $i = -1$ and $b$ is above  the principal diagonal;}\\  (- \epsilon)^{k_1} v_1^j(b), &\text{ if $i = -1$ and $b$ is on  the principal diagonal;}
\end{cases}
\]
\end{notation}

\subsection{}\label{SS:coefficientOrbit}
Given an element $X \in {\mathfrak g}_2$, we will now associate to it a coefficient function ${\mathbf c}_X:{\mathbf B}/\tau \rightarrow {\mathbb N}$ as follows. By lemma~\ref{G2AsRepresEven} (b) and with the notation of this lemma, $X \in {\mathfrak g}_2$ is equivalent to giving the $\epsilon$-representation  $(V, \phi_V, \langle\  ,\  \rangle)$,where 
\[
\phi_{V, i} = X\vert_{V_i} : V_i \rightarrow  V_{i + 2} \quad v \mapsto X(v) \quad  \text{ for all $v \in V_i$}
\]
is the restriction of $X$ on $V_i$ for all $i \in {\mathcal I}$ such that  $(i + 2) \in {\mathcal I}$. By the theorem of Krull-Remak-Schmidt, this $\epsilon$-representation can be written in essentially unique way as a direct sum of indecomposable $\epsilon$-representations (up to isomorphism). We saw in proposition~\ref{P:JMTripleProof} that the set of indecomposable $\epsilon$-representations is in bijection with the set ${\mathbf B}/\tau$. More precisely, for each orbit ${\mathcal O} \in {\mathbf B}/\tau$, we have constructed a representative $(V({\mathcal O}), \phi_{\mathcal O}, \langle \  , \  \rangle_{\mathcal O})$ of the isomorphism class of the indecomposable $\epsilon$-representation. Denote by ${\mathbf c}_X({\mathcal O})$: the multiplicity  of the indecomposable $\epsilon$-representation $(V({\mathcal O}), \phi_{\mathcal O}, \langle \  , \  \rangle_{\mathcal O})$ (up to isomorphism). So the coefficient function ${\mathbf c}_X:{\mathbf B}/\tau \rightarrow {\mathbb N}$ is defined by ${\mathcal O} \mapsto {\mathbf c}_X({\mathcal O})$.

\subsection{}\label{SS:IsomorphismV(c)andV}
We will now constructed an ${\mathcal I}$-graded isomorphism $T_{\mathbf c}:V({\mathbf c}) \rightarrow V$ such that $\langle T_{\mathbf c}(u), T_{\mathbf c}(v)\rangle = \langle u, v \rangle_{\mathbf c}$ for all $u, v \in V({\mathbf c})$. We will define this isomorphism on the bases ${\mathcal B}_{\mathbf c}$ of $V({\mathbf c})$ and ${\mathcal B}$ of $V$. 

Let $i \in {\mathcal I}$, $i > 0$. We get easily that 
\[
\{b \in {\mathbf B} \mid i \in Supp(b)\} \rightarrow \{b' \in {\mathbf B} \mid -i = \sigma(i) \in Supp(b')\} \text{ defined by } b \mapsto \tau(b)
\]
is a bijection.  Let $c:{\mathbf B} \rightarrow {\mathbb N}$ be the unique symmetric function corresponding to the coefficient function ${\mathbf c}: {\mathbf B}/\tau \rightarrow {\mathbb N}$.  Fix a total order $b_1 > b_2 > \dots > b_n$ on the set $\{b \in {\mathbf B} \mid i \in Supp(b)\}$ such that $c(b_r) \geq c(b_s)$ whenever $r < s$ for $1 \leq r, s \leq n$ and let $b'_1 > b'_2 > \dots > b'_n$ be the total order on the set $\{b' \in {\mathbf B} \mid -i = \sigma(i) \in Supp(b')\}$ such that     $b'_r = \tau(b_r)$ for $1 \leq r \leq n$. 
Consider the Young diagram corresponding to the partition $c(b_1) \geq c(b_2) \geq \dots \geq c(b_n)$. So the row indexed by $b_r$ will have $c(b_r)$ columns.  We would get the same Young diagram $c(b'_1) \geq c(b'_2) \geq \dots \geq c(b'_n)$ if we had started with the total order  $b'_1 > b'_2 > \dots > b'_n$  on the set $\{b' \in {\mathbf B} \mid -i = \sigma(i) \in Supp(b')\}$. This follows from the fact that $c$ is symmetric.  This partition is a partition of the integer $\delta_i$ from~\ref{N:CCorrespondance}.  In this Young diagram, we fill the boxes with the integers from 1 to $\delta_i$ in strictly increasing order from left to right and top to bottom.   We define $T_{{\mathbf c}, i}: V_i({\mathbf c}) \rightarrow V_i$ (respectively $T_{{\mathbf c}, -i}: V_{-i}({\mathbf c}) \rightarrow V_{-i}$)  to be the unique linear transformation such that if the entry in the box at row $r$ and column $s$ is $j$, then 
$T_{{\mathbf c}, i}(v_i^s(b_r)) = u_{i, j}$ (respectively $T_{{\mathbf c}, -i}(v_{-i}^s(b'_r)) = T_{{\mathbf c}, -i}(v_{-i}^s(\tau(b_r))) = u_{-i, j}$).  

Finally $T_{\mathbf c}:V_{\mathbf c} \rightarrow V$ is the direct sum of the linear transformations $T_{{\mathbf c}, i}: V_i({\mathbf c}) \rightarrow V_i$ and $T_{{\mathbf c}, -i}: V_{-i}({\mathbf c}) \rightarrow V_{-i}$ for all $i \in {\mathcal I}$, $i > 0$.  Note that  the dimension vector  $\delta({\mathbf c}) $ of ${\mathbf c}$ is  $\delta({\mathbf c}) = (\delta_i)_{i \in {\mathcal I}}$, where $\delta_i = \dim(V_i)$ for all $i \in {\mathcal I}$.

\begin{lemma}\label{L:TcIsomorphism}
With the above definition, we have that $T_{\mathbf c}:V({\mathbf c}) \rightarrow V$ is an ${\mathcal I}$-graded isomorphism such that $\langle T_{\mathbf c}(u), T_{\mathbf c}(v)\rangle = \langle u, v \rangle_{\mathbf c}$ for all $u, v \in V({\mathbf c})$.
\end{lemma}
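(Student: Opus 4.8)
The plan is to verify the two claimed properties of $T_{\mathbf c}$ separately: first that it is an $\mathcal I$-graded isomorphism of vector spaces, and second that it intertwines the bilinear forms. The first property is essentially built into the construction. Indeed $T_{\mathbf c}$ is defined as a direct sum over $i \in \mathcal I_+$ of the maps $T_{{\mathbf c}, i}$ and $T_{{\mathbf c}, -i}$, and each of these is defined by sending a basis of $V_i({\mathbf c})$ (respectively $V_{-i}({\mathbf c})$) bijectively to a basis of $V_i$ (respectively $V_{-i}$). Here the key combinatorial point is that for fixed $i > 0$, the map $b \mapsto \tau(b)$ gives a bijection between $\{b \in {\mathbf B} \mid i \in Supp(b)\}$ and $\{b' \in {\mathbf B} \mid -i \in Supp(b')\}$ (this uses Lemma~\ref{L:TauBox}(a)), and that $c$ is symmetric, so the Young diagram attached to $i$ coincides with the one attached via the $b'$'s; hence the filling by $1, 2, \dots, \delta_i$ is well-defined and $T_{{\mathbf c}, i}$ is an isomorphism onto $V_i$. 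Since $\delta_i(\mathbf c) = \dim V_i$ by the computation in Notation~\ref{N:CCorrespondance} and the hypothesis on the dimension vector, the source and target have matching dimensions in each degree, so $T_{\mathbf c}$ is an $\mathcal I$-graded isomorphism.

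For the second property it suffices to check $\langle T_{\mathbf c}(u), T_{\mathbf c}(v)\rangle = \langle u, v\rangle_{\mathbf c}$ on pairs of basis vectors $u = v_i^s(b_r)$ and $v = v_{i'}^{s'}(b_{r'}')$ of ${\mathcal B}_{\mathbf c}$. By the definition of $\langle\ ,\ \rangle_{\mathbf c}$ in Notation~\ref{N:CCorrespondance}, the left-hand pairing $\langle u, v\rangle_{\mathbf c}$ is nonzero only when $i + i' = 0$, $b_{r'}' = \tau(b_r)$ and $s = s'$ (and it equals $1$ if $i > 0$, $\epsilon$ if $i < 0$). On the other side, $T_{\mathbf c}(u) = u_{i, j}$ for the entry $j$ in row $r$, column $s$ of the Young diagram for $i$, and $T_{\mathbf c}(v) = u_{i', j'}$ for the entry $j'$ in row $r'$, column $s'$ of the Young diagram for $i' = -i$; by the definition of $\langle\ ,\ \rangle$ on the basis ${\mathcal B}$ in \ref{S:SetUpAeven}, $\langle u_{i,j}, u_{-i, j'}\rangle$ is nonzero exactly when $j = j'$ (and equals $1$ if $i > 0$, $\epsilon$ if $i < 0$). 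So the whole claim reduces to: for $i > 0$, the entry of the Young box at $(r, s)$ in the diagram for $i$ equals the entry of the box at $(r', s)$ in the diagram for $-i$ precisely when $b_{r'}' = \tau(b_r)$. This holds because both diagrams are literally the same filled Young diagram once one identifies row $r$ (indexed by $b_r$) with row $r$ of the $-i$ diagram (indexed by $b_r' = \tau(b_r)$): the total orders were chosen compatibly so that $b_r' = \tau(b_r)$, and the two partitions coincide because $c$ is symmetric and the fillings are by the same rule. One then matches up the four sign cases ($i>0$ vs $i<0$; and, if one worries about $0 \in {\mathcal I}$, note here $0 \notin {\mathcal I}$ in the even case) and checks numerically that $1 \leftrightarrow 1$ and $\epsilon \leftrightarrow \epsilon$.

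The main obstacle — really the only nontrivial point — is bookkeeping the correspondence between the combinatorial data (rows of the Young diagram indexed by $\mathcal I$-boxes, columns indexed by the copy-index of the indecomposable summands) and the flat index $j \in \{1, \dots, \delta_i\}$ on the standard basis ${\mathcal B}_i$, and making sure that the bijection $b \mapsto \tau(b)$ is exactly the one implicitly used when the same $j$ appears on the $V_i$ side and the $V_{-i}$ side. Once one fixes notation carefully this is a direct verification; I would present it by writing out the two filled Young diagrams side by side and observing they are identical, then reading off the pairing. No deep input beyond Lemma~\ref{L:TauBox}(a), the symmetry of $c$, and the definitions of the two bilinear forms is needed.
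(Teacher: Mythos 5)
Your proposal is correct and follows essentially the same route as the paper's proof: the graded-isomorphism part is read off from the construction (bijection of bases, symmetry of $c$, matching dimensions), and the form-compatibility is checked on basis vectors by reducing to the case $i+i'=0$ and matching the entries of the shared filled Young diagram via the bijection $b \mapsto \tau(b)$, with the $i<0$ case handled by the $\epsilon$-symmetry of both forms. The only difference is presentational: the paper spells out the $i<0$ case explicitly through $\langle u,v\rangle = \epsilon\langle v,u\rangle$, which is exactly the "numerical check of the sign cases" you defer.
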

\begin{proof}
From our construction, it is clear that $T_{\mathbf c}$ is an ${\mathcal I}$-graded isomorphism.  We use the fact that $T_{\mathbf c}$ restricted to ${\mathcal B}_{\mathbf c}$ is a bijection with the basis ${\mathcal B}$ of $V$. To check that $\langle T_{\mathbf c}(u), T_{\mathbf c}(v)\rangle = \langle u, v \rangle_{\mathbf c}$ for all $u, v \in V({\mathbf c})$, it is enough to check it on the basis ${\mathcal B}_{\mathbf c}$. We want to consider $\langle T_{\mathbf c}(v_i^s(b)), T_{\mathbf c}(v_{i'}^{s'}(b'))\rangle$ where $i, i' \in {\mathcal I}$, $i \in Supp(b)$, $1 \leq s \leq c(b)$, $i' \in Supp(b')$ and $1 \leq s' \leq c(b')$. Because $T_{\mathbf c}$ is ${\mathcal I}$-graded, our hypothesis in~\ref{S:SetUpAeven} on the basis ${\mathcal B}$ of $V$ and our construction in~\ref{N:CCorrespondance}, we get if $i + i' \ne 0$, that 
\[
\langle T_{\mathbf c}(v_i^s(b)), T_{\mathbf c}(v_{i'}^{s'}(b'))\rangle = 0 = \langle v_i^s(b), v_{i'}^{s'}(b')\rangle_{\mathbf c}.
\]

Now let $i' = -i$ and  assume first that $i > 0$. Consider the same total orders on the sets $\{b \in {\mathbf B} \mid i \in Supp(b)\}$  and $\{b' \in {\mathbf B} \mid -i = \sigma(i) \in Supp(b')\}$ as in \ref{SS:IsomorphismV(c)andV}. Proceeding as above with the Young diagram, assume that   $b = b_r$ in the total order $b_1 > b_2 > \dots > b_n$ on the set $\{b \in {\mathbf B} \mid i \in Supp(b)\}$ and   $b' = b'_{r'} = \tau(b_{r'})$ in the total order $b'_1 > b'_2 > \dots > b'_n$ on the set $\{b' \in {\mathbf B} \mid -i = \sigma(b') \in Supp(b')\}$ , the entry in the box at row $r$ and column $s$ is $j$ and the entry in the box at row $r'$ and column $s'$ is $j'$, then 
\[
\begin{aligned}
\langle T_{\mathbf c}(v_i^s(b)), T_{\mathbf c}(v_{-i}^{s'}(b'))\rangle &= \langle T_{\mathbf c}(v_i^s(b_r)), T_{\mathbf c}(v_{-i}^{s'}(b'_{r'}))\rangle = \langle u_{i, j}, u_{-i, j'}\rangle\\ &= \begin{cases} 1, &\text{if $j = j'$;} \\ 0, &\text{otherwise;}\end{cases} = \begin{cases} 1, &\text{if $r= r'$ and $s = s'$;} \\ 0, &\text{otherwise.}\end{cases} 
\end{aligned}
\]
If we now consider the bilinear form $\langle  \  ,  \  \rangle_{\mathbf c}$  in $V({\mathbf c})$, we have
\[
\begin{aligned}
\langle v_i^s(b), v_{-i}^{s'}(b')\rangle_{\mathbf c} &= \langle v_i^s(b_r), v_{-i}^{s'}(b'_{r'})\rangle_{\mathbf c} = \langle v_i^s(b_r), v_{-i}^{s'}(\tau(b_{r'}))\rangle_{\mathbf c}\\ &= \begin{cases} 1, &\text{if $r= r'$ and $s = s'$;} \\ 0, &\text{otherwise.}\end{cases} 
\end{aligned}
\]
So if $i > 0$, then $\langle T_{\mathbf c}(v_i^s(b)), T_{\mathbf c}(v_{-i}^{s'}(b'))\rangle = \langle v_i^s(b), v_{-i}^{s'}(b')\rangle_{\mathbf c}$.

If $i \in {\mathcal I}$, $i < 0$, then 
\[
\begin{aligned}
\langle T_{\mathbf c}(v_i^s(b)), T_{\mathbf c}(v_{-i}^{s'}(b'))\rangle &= \epsilon \langle T_{\mathbf c}(v_{-i}^{s'}(b')), T_{\mathbf c}(v_{i}^{s}(b))\rangle\\ &=  \epsilon  \langle v_{-i}^{s'}(b'), v_{i}^{s}(b)\rangle_{\mathbf c} =  \langle v_{i}^{s}(b), v_{-i}^{s'}(b')\rangle_{\mathbf c}.
\end{aligned}
\]
We have proved that $\langle T_{\mathbf c}(u), T_{\mathbf c}(v)\rangle = \langle u, v \rangle$ for all $u, v \in V$.

\end{proof}

\begin{proposition} \label{Prop_Indexation_Orbits_Even}
Let $\delta = (\delta_i)_{i \in {\mathcal I}}$ be as in \ref{S:SetUpAeven}  and ${\mathfrak g}_2/G^{\iota}$ denotes the set of  $G^{\iota}$-orbits  in ${\mathfrak g}_2$. Then the map 
\[
\Upsilon:{\mathfrak g}_2/G^{\iota} \rightarrow  {\mathfrak C}_{\delta} \quad \text{ defined by } \quad G^{\iota} \cdot X \mapsto {\mathbf c}_X,
\]
is a well-defined bijection.
\end{proposition}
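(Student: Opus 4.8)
The plan is to exhibit an explicit inverse, or at least to check injectivity and surjectivity separately, using the machinery built up in \ref{SS:coefficientOrbit}, \ref{SS:IsomorphismV(c)andV} and Lemma~\ref{L:TcIsomorphism}. First I would verify that $\Upsilon$ is well-defined: if $X$ and $X'$ lie in the same $G^{\iota}$-orbit, say $X' = Ad(g)(X)$ with $g \in G^{\iota}$, then by Lemma~\ref{G2AsRepresEven}(e) the associated $\epsilon$-representations $(V, \phi_V, \langle\ ,\ \rangle)$ and $(V, \phi_{V'}, \langle\ ,\ \rangle)$ are isomorphic as $\epsilon$-representations of $(\Omega,\sigma)$ (the isomorphism being $g$ itself, which preserves the bilinear form). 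By Theorem~\ref{T:CaracterisationRepresentation} this is the same as being isomorphic as representations of the quiver $\Omega$, so by Krull--Remak--Schmidt the multiplicities of each indecomposable are the same; hence ${\mathbf c}_X = {\mathbf c}_{X'}$ and $\Upsilon$ descends to orbits.

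Next, surjectivity. Given ${\mathbf c} \in {\mathfrak C}_{\delta}$, I would use the construction of \ref{N:CCorrespondance}: form $V({\mathbf c})$ with its form $\langle\ ,\ \rangle_{\mathbf c}$, its Jacobson--Morozov triple, and in particular $E_{\mathbf c}$, and transport everything along the isometry $T_{\mathbf c}:V({\mathbf c}) \to V$ of Lemma~\ref{L:TcIsomorphism}. Concretely set $X = T_{\mathbf c} E_{\mathbf c} T_{\mathbf c}^{-1}$. Since $T_{\mathbf c}$ is ${\mathcal I}$-graded and an isometry, and $E_{\mathbf c}$ has degree $2$ and satisfies $\langle E_{\mathbf c}u, v\rangle_{\mathbf c} + \langle u, E_{\mathbf c}v\rangle_{\mathbf c} = 0$, the element $X$ is in ${\mathfrak g}_2$ by Lemma~\ref{G2AsRepresEven}(b). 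Its associated $\epsilon$-representation is isomorphic (via $T_{\mathbf c}$) to $(V({\mathbf c}), \phi_{\mathbf c}, \langle\ ,\ \rangle_{\mathbf c})$, which by the final assertion of Proposition~\ref{P:JMTripleProof} and the direct-sum construction in \ref{N:CCorrespondance} decomposes as $\bigoplus_{{\mathcal O}} {\mathbf c}({\mathcal O})$ copies of $(V({\mathcal O}), \phi_{\mathcal O}, \langle\ ,\ \rangle_{\mathcal O})$. Hence ${\mathbf c}_X = {\mathbf c}$, and $\delta({\mathbf c}_X) = \delta({\mathbf c}) = \delta$ confirms $X \in {\mathfrak g}_2$ lands in the right fibre (the dimension-vector identity of \ref{N:CCorrespondance} matches $\dim V_i$).

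Finally, injectivity. Suppose ${\mathbf c}_X = {\mathbf c}_{X'}$ for $X, X' \in {\mathfrak g}_2$. Then the two $\epsilon$-representations $(V, \phi_V, \langle\ ,\ \rangle)$ and $(V, \phi_{V'}, \langle\ ,\ \rangle)$ have the same multiplicities of each indecomposable $(V({\mathcal O}), \phi_{\mathcal O}, \langle\ ,\ \rangle_{\mathcal O})$, so both are isomorphic as $\epsilon$-representations of $(\Omega,\sigma)$ to $(V({\mathbf c}_X), \phi_{{\mathbf c}_X}, \langle\ ,\ \rangle_{{\mathbf c}_X})$, and hence to each other. Let $\psi: (V, \phi_V, \langle\ ,\ \rangle) \to (V, \phi_{V'}, \langle\ ,\ \rangle)$ be such an isometric isomorphism; composing with $T_{{\mathbf c}_X}$ on both sides if needed, one may take $\psi$ to be ${\mathcal I}$-graded and to satisfy $\langle \psi u, \psi v\rangle = \langle u, v\rangle$. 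This $\psi$ is then an element of $G$ (it preserves the form) which commutes with $\iota$ (it is ${\mathcal I}$-graded), i.e.\ $\psi \in G^{\iota}$; one small point to address is that $\psi$ should lie in the identity component $G = SO(V)$ rather than merely $O(V)$, but in the even case $0 \notin {\mathcal I}$ so $G$ is already the full orthogonal/symplectic group, or one adjusts $\psi$ on a single $V_i$ by a reflection which does not change the intertwining property. The relation $\phi_{V', i} \circ \psi_i = \psi_{i+2} \circ \phi_{V, i}$ for all $i$ says exactly $X' \psi = \psi X$ on each graded piece, i.e.\ $X' = Ad(\psi)(X)$, so $X$ and $X'$ are in the same $G^{\iota}$-orbit.

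The main obstacle I anticipate is the bookkeeping in the injectivity step: one must pass carefully between ``isomorphic as $\epsilon$-representations'' (an isomorphism $\psi$ with $\psi^* \circ \psi = \mathrm{Id}$, $\psi \circ \psi^* = \mathrm{Id}$ in the sense of the symmetric-quiver category) and ``conjugate by an element of $G^{\iota}$'' (an honest isometry commuting with $\iota$), and to confirm that the category-theoretic isomorphism can always be chosen ${\mathcal I}$-graded and unitary — this is where Theorem~\ref{T:CaracterisationRepresentation} together with the explicit forms $\langle\ ,\ \rangle_{\mathbf c}$ and $\langle\ ,\ \rangle$ (both put in the same normal form by the basis conditions of \ref{S:SetUpAeven} and \ref{N:CCorrespondance}) must be invoked, and the subtlety about $SO$ versus $O$ in the odd case is deferred to later sections but is vacuous here.
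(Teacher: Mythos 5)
Your proposal is correct and follows essentially the same route as the paper: well-definedness and injectivity via Krull--Remak--Schmidt together with Theorem~\ref{T:CaracterisationRepresentation}, and surjectivity by transporting $E_{\mathbf c}$ along $T_{\mathbf c}$, i.e.\ $X = T_{\mathbf c}E_{\mathbf c}T_{\mathbf c}^{-1}$, using Lemma~\ref{L:TcIsomorphism}. One small correction on the $SO$-versus-$O$ point: in the orthogonal even case $G$ is $SO(V)$, not the full orthogonal group, and the issue is vacuous not because of any reflection adjustment but because an ${\mathcal I}$-graded isometry satisfies $\det(\psi_{-i}) = \det(\psi_i)^{-1}$ for all $i$ (Lemma~\ref{G2AsRepresEven}(a)), hence has determinant $1$, so $O(V)^{\iota} = SO(V)^{\iota}$ --- which is exactly the remark the paper makes in its proof.
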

\begin{proof}

By our last observation in \ref{SS:coefficientOrbit}, if $G^{\iota} \cdot X = G^{\iota} \cdot X'$, then  the corresponding $\epsilon$-representations for $X$ and $X'$ are isomorphic, we get  that ${\mathbf c}_X = {\mathbf c}_{X'}$. So the  function $\Upsilon$ is well-defined. 

To show that $\Upsilon$ is an injective function,  assume that ${\mathbf c}_X = {\mathbf c}_{X'}$, then this means that, by the Krull-Remak-Schmidt theorem, the $\epsilon$-representations for $X$ and $X'$ are isomorphic. Note that in the case of the orthogonal representation (i.e. $\epsilon = 1$), the group $G$ is not $SO(V)$, but rather the group ${\widehat G} = O(V)$, but it is easy to see that  $G^{\iota} = {\widehat G}^{\iota}$ as seen in lemma~\ref{G2AsRepresEven} and the isomorphism is the same for the case of $SO(V)$ and $O(V)$. So $X$ and $X'$ are in the same $G^{\iota}$-orbit and $G^{\iota} \cdot X = G^{\iota} \cdot X'$ 

To prove that $\Upsilon$ is surjective, we need to show that there is an element $X \in {\mathfrak g}_2$ such that ${\mathbf c}_X = {\mathbf c} \in {\mathfrak C}_{\delta}$.  Consider $X = T_{\mathbf c} E_{\mathbf c} T_{\mathbf c}^{-1}:V \rightarrow V$, where $T_{\mathbf c}:V({\mathbf c}) \rightarrow V$ is the ${\mathcal I}$-graded isomorphism defined in \ref{SS:IsomorphismV(c)andV}.  Because $E_{\mathbf c} \in {\mathfrak g}_2(V({\mathbf c}))$ and lemma~\ref{L:TcIsomorphism}, $X \in {\mathfrak g}_2$. From our construction of $E_{\mathbf c}: V({\mathbf c}) \rightarrow V({\mathbf c})$, it is isomorphic as ${\mathcal I}$-graded and $\epsilon$-representation  to the direct sum of ${\mathbf c}({\mathcal O})$ copies of the linear transformation $E_{\mathcal O}:V({\mathcal O}) \rightarrow V({\mathcal O})$. By carrying this direct sum using $T_{\mathbf c}$ to $V$, we do get that ${\mathbf c}_X = {\mathbf c}$ and the proof is complete. 
\end{proof}

\begin{notation}
Let $\delta = (\delta_i)_{i \in {\mathcal I}} \in {\mathbb N}^{\mathcal I}$ be such that $\delta_i = \dim(V_i)$ for all $i \in {\mathcal I}$ as in \ref{S:SetUpAeven}. If ${\mathbf c} \in {\mathfrak C}_{\delta}$, we will denote by ${\mathcal O}_{\mathbf c}$: the unique $G^{\iota}$-orbit in ${\mathfrak g}_2$ such that $\Upsilon({\mathcal O}_{\mathbf c}) = {\mathbf c}$.
\end{notation}

\begin{remark}\label{R:IsomoVVc}
Using the isomorphism $T_{\mathbf c}:V({\mathbf c}) \rightarrow V$, we can transfer results about the orbit ${\mathcal O}_{\mathbf c}$ in the Lie algebra ${\mathfrak g}$ to the orbit of $E_{\mathbf c}$ in the Lie algebra 
${\mathfrak g}(V({\mathbf c}))$.
\end{remark}

\subsection{}
 To compute the dimension of the orbit ${\mathcal O}_{\mathbf c}$ corresponding to the coefficient function ${\mathbf c} \in  {\mathfrak C}_{\delta}$, we need to recall Lusztig's construction of a parabolic subalgebra ${\mathfrak p}_X$ in ${\mathfrak g}$ associated to an element $X \in {\mathfrak g}_2$ and the formula  for the dimension of ${\mathcal O}_{\mathbf c}$ involving the Lie algebra ${\mathfrak p}_X$,  where $X \in {\mathcal O}_{\mathbf c}$.

\begin{definition}\label{D:LieHomo}
Let $X \in {\mathfrak g}_2$. It is known that we can find a Lie algebra homomorphism $\phi: {\mathfrak sl}_2({\mathbf k}) \rightarrow {\mathfrak g}$ such that 
\[
\phi\begin{pmatrix}0 & 1\\ 0 & 0\end{pmatrix} = X \in {\mathfrak g}_2, \qquad \phi\begin{pmatrix}1 & \phantom{-}0\\ 0 & -1\end{pmatrix} \in {\mathfrak g}_0, \qquad \phi\begin{pmatrix}0 & 0\\ 1 & 0\end{pmatrix} \in {\mathfrak g}_{-2}.
\]
Denote by $\widetilde{\phi}: SL_2({\mathbf k}) \rightarrow G$: the homomorphism of algebraic groups whose differential is $\phi$ and by $\iota':{\mathbf k}^{\times} \rightarrow G$: the homomorphism defined by 
\[
\iota'(t) = \widetilde{\phi}\begin{pmatrix}t & 0\phantom{-} \\ 0 & t^{-1}\end{pmatrix} \qquad  \text{ for all $t \in {\mathbf k}^{\times}$}.
\]
This gives us  a second grading for the Lie algebra ${\mathfrak g} = \oplus_r  ({ }_r {\mathfrak g})$ where 
\[
({ }_r {\mathfrak g}) = \{ X \in {\mathfrak g} \mid Ad(\iota'(t))(X) = t^r X \quad \text{ for all $t \in {\mathbf k}^{\times}$}\}
\]
and,  because $\iota'({\mathbf k}^{\times}) \subset G^{\iota}$, we have the direct sum decomposition 
\[
{\mathfrak g} = \bigoplus_{r, r'}  ({ }_r{\mathfrak g}_{r'}).
\]
In  5.2 of \cite{L1995}, Lusztig defines the {\it parabolic subalgebra ${\mathfrak p}_X$} as 
\[
{\mathfrak p}_X = \bigoplus_{\begin{subarray}{c} r, r' \\ r' \leq r \end{subarray}}  ({ }_r{\mathfrak g}_{r'})
\]
\end{definition}

\begin{proposition}[Lusztig]
If $X \in {\mathcal O}_{\mathbf c}$ where ${\mathbf c} \in  {\mathfrak C}_{\delta}$,  then 
\[
\dim({\mathcal O}_{\mathbf c}) = \dim({\mathfrak g}_0) - \dim({\mathfrak p}_0) + \dim({\mathfrak p}_2),
\]
where ${\mathfrak p}$ is the parabolic subalgebra ${\mathfrak p}_X$  associated above to $X$. 
\end{proposition}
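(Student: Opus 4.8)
This is a classical formula of Lusztig (appearing in section 5 of \cite{L1995}), so my plan is to deduce it from the general theory rather than reprove everything from scratch.

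\medskip

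\textbf{Approach.} The key point is to identify the tangent space to the orbit ${\mathcal O}_{\mathbf c} = G^{\iota}\cdot X$ at $X$ and compute its dimension via the centralizer. First I would recall that, since $G^{\iota}$ acts on ${\mathfrak g}_2$ by the restriction of the adjoint action, one has
\[
\dim({\mathcal O}_{\mathbf c}) = \dim(G^{\iota}) - \dim(Z_{G^{\iota}}(X)) = \dim({\mathfrak g}_0) - \dim({\mathfrak z}_{{\mathfrak g}_0}(X)),
\]
where ${\mathfrak z}_{{\mathfrak g}_0}(X) = \{Z \in {\mathfrak g}_0 \mid [Z,X] = 0\}$ is the centralizer of $X$ in ${\mathfrak g}_0$ (this uses that $G^{\iota}$ is connected reductive with Lie algebra ${\mathfrak g}_0$, as recalled in the introduction, and that the stabilizer of $X$ has Lie algebra ${\mathfrak z}_{{\mathfrak g}_0}(X)$). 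So the task reduces to showing
\[
\dim({\mathfrak z}_{{\mathfrak g}_0}(X)) = \dim({\mathfrak g}_0) - \dim({\mathfrak p}_0) + \dim({\mathfrak p}_0) - \dim({\mathfrak p}_2) = \dim({\mathfrak g}_0) - \dim({\mathfrak p}_2) + \dim({\mathfrak p}_0) - \dim({\mathfrak g}_0),
\]
i.e. equivalently $\dim({\mathfrak z}_{{\mathfrak g}_0}(X)) = \dim({\mathfrak p}_0) - \dim({\mathfrak p}_2)$, which is exactly the content I need to extract from Lusztig's setup in Definition~\ref{D:LieHomo}.

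\medskip

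\textbf{Key steps.} Using the $\mathfrak{sl}_2$-triple $(X, H', Y)$ coming from $\phi$ and the associated bigrading ${\mathfrak g} = \bigoplus_{r,r'} ({}_r{\mathfrak g}_{r'})$, I would argue as follows. The operator $\operatorname{ad}(X): {}_r{\mathfrak g}_2 \to {}_{r+2}{\mathfrak g}_4$ is, for the first grading, governed by $\mathfrak{sl}_2$-representation theory: by the standard lemma on $\mathfrak{sl}_2$-modules, $\operatorname{ad}(X): {}_r{\mathfrak g} \to {}_{r+2}{\mathfrak g}$ is injective for $r \le -1$ and surjective for $r \ge -1$. Restricting to the piece ${\mathfrak g}_0 = \bigoplus_r ({}_r{\mathfrak g}_0)$: the kernel of $\operatorname{ad}(X)$ on ${}_r{\mathfrak g}_0$ is zero unless $r \le 0$ (more precisely the kernel of $\operatorname{ad}(X)\colon{}_r{\mathfrak g}_0\to{}_{r+2}{\mathfrak g}_2$ is nonzero only for $r\le 0$, i.e. $r\le -1$ strictly gives injectivity by the displayed sign, but the lowest-weight vectors sit in degrees $\le 0$). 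Counting dimensions of lowest weight spaces, $\dim {\mathfrak z}_{{\mathfrak g}_0}(X) = \sum_{r} \big(\dim({}_r{\mathfrak g}_0) - \dim({}_{r+2}{\mathfrak g}_2)\big)^{+}$ collapses, via the $\mathfrak{sl}_2$-symmetry $r \leftrightarrow -r$, to $\sum_{r \le 0}\big(\dim({}_r{\mathfrak g}_0) - \dim({}_{r+2}{\mathfrak g}_2)\big)$. Now I match this against ${\mathfrak p}_X = \bigoplus_{r' \le r}({}_r{\mathfrak g}_{r'})$: one has ${\mathfrak p}_0 = \bigoplus_{r \ge 0}({}_r{\mathfrak g}_0)$ and ${\mathfrak p}_2 = \bigoplus_{r \ge 2}({}_r{\mathfrak g}_2)$, so $\dim({\mathfrak p}_0) - \dim({\mathfrak p}_2) = \sum_{r\ge 0}\dim({}_r{\mathfrak g}_0) - \sum_{r\ge 2}\dim({}_r{\mathfrak g}_2)$, and using the weight symmetry of the $\mathfrak{sl}_2$-module ${\mathfrak g}_0$ (resp. the shifted one for ${\mathfrak g}_2$) this equals exactly $\sum_{r\le 0}\big(\dim({}_r{\mathfrak g}_0) - \dim({}_{r+2}{\mathfrak g}_2)\big)$. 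Combining with the first displayed equation yields $\dim({\mathcal O}_{\mathbf c}) = \dim({\mathfrak g}_0) - \dim({\mathfrak p}_0) + \dim({\mathfrak p}_2)$. Alternatively — and this is cleaner — I would simply cite that this identity is proved in 5.2--5.3 of \cite{L1995}, since the construction of ${\mathfrak p}_X$ there is made precisely so that this formula holds; the role of the present proposition is only to record it in the notation of this paper, noting that $\dim({\mathfrak g}_0)$, $\dim({\mathfrak p}_0)$, $\dim({\mathfrak p}_2)$ depend only on the $G^{\iota}$-orbit of $X$, hence only on ${\mathbf c}$.

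\medskip

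\textbf{Main obstacle.} The one genuinely delicate point is checking that the stabilizer $Z_{G^{\iota}}(X)$ is smooth with Lie algebra exactly ${\mathfrak z}_{{\mathfrak g}_0}(X)$, so that $\dim({\mathcal O}_{\mathbf c}) = \dim G^{\iota} - \dim {\mathfrak z}_{{\mathfrak g}_0}(X)$; in characteristic $p$ this is where the hypothesis from \ref{SS:CharacteristicK} that $p$ is large is used — it guarantees the $\mathfrak{sl}_2$-theory and the separability of the orbit map behave as in characteristic $0$. Everything else is bookkeeping with the bigrading and the standard structure of finite-dimensional $\mathfrak{sl}_2$-modules. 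Given that the excerpt allows citing earlier results, I would keep the proof short: invoke \cite{L1995} for the formula and the orbit-map smoothness, and only spell out that the three dimensions are orbit invariants so the right-hand side is well-defined on ${\mathfrak C}_{\delta}$.
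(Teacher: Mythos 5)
The paper itself gives no argument here beyond a citation (``See 4.1 in \cite{L2010}''), so your ``cleaner'' route --- quote Lusztig and observe that $\dim({\mathfrak g}_0)$, $\dim({\mathfrak p}_0)$, $\dim({\mathfrak p}_2)$ are orbit invariants --- is exactly what the paper does (it cites \cite{L2010}, 4.1, rather than \cite{L1995}, but that is immaterial). Your reduction to $\dim({\mathfrak z}_{{\mathfrak g}_0}(X)) = \dim({\mathfrak p}_0) - \dim({\mathfrak p}_2)$ is the right target, and the $\mathfrak{sl}_2$ sketch is the standard proof behind Lusztig's formula; however, two slips should be repaired before it can stand as a proof. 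First, the displayed equation in your ``Approach'' paragraph is mangled algebra (its two sides are not equal and neither is the identity you want); only the sentence after it states the correct reduction. Second, and more substantively, you have the weight direction backwards: $\ker(\operatorname{ad}X)$ consists of \emph{highest}-weight vectors of the $\mathfrak{sl}_2$-submodules (those on which the difference $r - r'$ is constant), so $\ker(\operatorname{ad}X)\cap{}_r{\mathfrak g}_0$ vanishes for $r \leq -1$ and the centralizer dimension is
\[
\dim {\mathfrak z}_{{\mathfrak g}_0}(X) \;=\; \sum_{r \geq 0}\bigl(\dim({}_r{\mathfrak g}_0) - \dim({}_{r+2}{\mathfrak g}_2)\bigr),
\]
using surjectivity of $\operatorname{ad}X\colon {}_r{\mathfrak g}_0 \to {}_{r+2}{\mathfrak g}_2$ for $r \geq 0$ inside each such submodule. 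This already equals $\dim({\mathfrak p}_0) - \dim({\mathfrak p}_2)$ on the nose, so no ``collapse via the symmetry $r \leftrightarrow -r$'' is needed; as written, your sum over $r \leq 0$ together with that symmetry appeal is not justified (the graded pieces ${}_r{\mathfrak g}_2$ have no symmetry about $r = 2$ in general), even though the identity you conclude with is correct. With those corrections, and your remark on separability of the orbit map for large $p$, the sketch is a valid self-contained proof; citing Lusztig as the paper does is of course also sufficient.
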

\begin{proof}
See 4.1 in \cite{L2010}.
\end{proof}

We now want to use this formula to compute the dimension $\dim({\mathcal O}_{\mathbf c})$ of the orbit ${\mathcal O}_{\mathbf c} \in {\mathfrak g}_2/G^{\iota}$ in term of the coefficient function ${\mathbf c} \in {\mathfrak C}_{\delta} $. This is done in the next proposition.

\begin{proposition}\label{DimensionFormulaEven}
Let ${\mathbf c}  \in  {\mathfrak C}_{\delta}$  and consider the symmetric function $c:{\mathbf B} \rightarrow {\mathbb N}$ corresponding to ${\mathbf c}$.  Then the dimension $\dim({\mathcal O}_{\mathbf c})$ of the orbit ${\mathcal O}_{\mathbf c}$ is equal to
\[
\left[
\begin{aligned}
&\sum_{\begin{subarray}{c} i \in {\mathcal I}\\ i > 0\end{subarray}} \delta_i^2 - \sum_{\begin{subarray}{c}i \in {\mathcal I}\\ i > 0\\ (b, b') \in{\mathbf B} \times {\mathbf B}\\ i \in Supp(b) \cap Supp(b')\\ \lambda(b) \geq\lambda(b') \end{subarray}} c(b) c(b') + \sum_{\begin{subarray}{c} i \in {\mathcal I}\\ 0 < i < (2m - 1)\\ (b, b')\in {\mathbf B} \times {\mathbf B}\\ i \in Supp(b)\\ (i + 2) \in Supp(b')\\\lambda(b) \geq \lambda(b') \end{subarray}} c(b) c(b') \\ \\ & + \frac {1}{2}\sum_{\begin{subarray}{c}(b, b')\in {\mathbf B} \times {\mathbf B} \\ \tau(b) \ne b' \\ 1 \in Supp(b')\\ -1 \in   Supp(b) \\ \lambda(b') \leq \lambda(b)\end{subarray}} c(b) c(b') +  \frac {1}{2}\sum_{\begin{subarray}{c} b \in {\mathbf B}\\ -1 \in Supp(b)\\ \lambda(b) \geq 0 \end{subarray}} c(b) (c(b) - 1) \\ \\  & + \frac{(1 - \epsilon)}{2} \sum_{\begin{subarray}{c} b \in {\mathbf B}\\ -1 \in Supp(b)\\\lambda(b) \geq 0 \end{subarray}} c(b)
\end{aligned}
\right]
\]
Recall that the value $\lambda(b)$ for $b \in {\mathbf B}$ has been defined in notation~\ref{N:LambdaValue}. 
\end{proposition}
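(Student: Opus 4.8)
The plan is to apply Lusztig's dimension formula $\dim(\mathcal O_{\mathbf c}) = \dim(\mathfrak g_0) - \dim(\mathfrak p_0) + \dim(\mathfrak p_2)$ and compute each of the three terms explicitly in terms of the symmetric function $c$. By Remark~\ref{R:IsomoVVc} it suffices to work inside $\mathfrak g(V(\mathbf c))$ with the Jacobson--Morozov triple $(E_{\mathbf c}, H_{\mathbf c}, F_{\mathbf c})$; the second grading $\mathfrak g = \oplus_r ({}_r\mathfrak g)$ is the eigenspace decomposition for $\mathrm{ad}(H_{\mathbf c})$, since $\iota'(t)$ acts on the $r$-eigenspace of $H_{\mathbf c}$ by $t^r$. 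First I would record, using Lemma~\ref{G2AsRepresEven}(b),(c) and Corollary~\ref{C:DimensionG2EvenCase}, that $\dim(\mathfrak g_0) = \sum_{i\in\mathcal I,\, i>0}\delta_i^2 + \tfrac{(\delta_1-\epsilon)\delta_1}{2} \cdot$(correction: actually $\dim \mathfrak g_0 = \sum_{i>0}\delta_i^2 + \tfrac{\delta_1(\delta_1-\epsilon)}{2}$ needs care — $\mathfrak g_0$ consists of $(Z_i)_{i\in\mathcal I}$ with $Z_{-i} = -Z_i^T$, so the data is free on $\{Z_i : i>0\}$ plus an antisymmetry/symmetry condition on $Z_1 \in \mathrm{Hom}(V_1,V_1)$... ), so I would carefully identify $\dim(\mathfrak g_0)$ as a sum of $\delta_i^2$ over positive $i$ together with the self-dual contribution at the middle. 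The first summand $\sum_{i>0}\delta_i^2$ of the target formula will come from $\dim(\mathfrak g_0)$ once one checks that the $-\dim(\mathfrak p_0)+\dim(\mathfrak p_2)$ corrections absorb the middle term.

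Next, the heart of the computation is to express $\dim(\mathfrak p_0) = \dim({}_{\le 0}\mathfrak g_0)$ and $\dim(\mathfrak p_2) = \dim({}_{\le 2}\mathfrak g_2)$, i.e. the dimensions of the subspaces of $\mathfrak g_0$ and $\mathfrak g_2$ on which $\mathrm{ad}(H_{\mathbf c})$ acts with eigenvalue $\le 0$ (resp. $\le 2$). An element of $\mathfrak g_0$ (resp. $\mathfrak g_2$) is a collection of maps $Z_i : V_i(\mathbf c)\to V_i(\mathbf c)$ (resp. $X_i : V_i(\mathbf c)\to V_{i+2}(\mathbf c)$) subject to the self-duality constraint. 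Since $V_i(\mathbf c) = \bigoplus_{b: i\in Supp(b)} V_i(b)^{\oplus c(b)}$ and $H_{\mathbf c}$ acts on $V_i(b)$ by the scalar $h_i(b)$, an elementary matrix unit from $V_i(b)^{\oplus c(b)}$ to $V_i(b')^{\oplus c(b')}$ lies in ${}_r\mathfrak g_0$ with $r = h_i(b') - h_i(b)$, and by the Lemma following Notation~\ref{N:LambdaValue} this equals $(i - \lambda(b')) - (i - \lambda(b)) = \lambda(b) - \lambda(b')$. Hence the $\le 0$ condition on $\mathfrak p_0$ becomes exactly $\lambda(b) \ge \lambda(b')$ — this is where the conditions "$\lambda(b)\ge\lambda(b')$" in the stated formula originate. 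Similarly for $\mathfrak p_2$: a matrix unit $V_i(b)^{\oplus c(b)} \to V_{i+2}(b')^{\oplus c(b')}$ lies in ${}_r\mathfrak g_2$ with $r = h_{i+2}(b') - h_i(b) = (i+2-\lambda(b')) - (i - \lambda(b)) = 2 + \lambda(b) - \lambda(b')$, so the $\le 2$ condition is again $\lambda(b)\ge\lambda(b')$. Counting free parameters, $\dim(\mathfrak p_0)$ before imposing self-duality is $\sum_{i>0}\#\{(b,b') : i\in Supp(b)\cap Supp(b'),\, \lambda(b)\ge\lambda(b')\}\,c(b)c(b')$ plus the contributions at $i = \pm 1$ where $V_1$ and $V_{-1}$ are dual to each other; and $\dim(\mathfrak p_2)$ similarly is $\sum_{0<i<2m-1}\#\{(b,b'): i\in Supp(b),\, i+2\in Supp(b'),\,\lambda(b)\ge\lambda(b')\}c(b)c(b')$ plus a contribution from the constrained block $X_{-1} = -\epsilon X_{-1}^T$ at the middle (giving the $\tfrac12\sum c(b)(c(b)-1)$ and $\tfrac{1-\epsilon}{2}\sum c(b)$ terms, and the $\tfrac12\sum_{\tau(b)\ne b'} c(b)c(b')$ cross-term).

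The main obstacle — and the step requiring genuine bookkeeping rather than routine algebra — is the careful treatment of the self-dual blocks near the principal/dimension diagonals, namely the pieces involving $V_{\pm 1}$. Here the constraints $Z_{-i} = -Z_i^T$ (for $\mathfrak g_0$) and $X_{-1} = -\epsilon X_{-1}^T$ (for $\mathfrak g_2$) are not free; one must intersect these symmetry conditions with the $\mathrm{ad}(H_{\mathbf c})$-eigenvalue condition and count fixed/antifixed subspaces correctly. Concretely: the block $\mathrm{Hom}(V_1(\mathbf c), V_1(\mathbf c))$ inside $\mathfrak g_0$ decomposes by boxes $b, b'$ with $1 \in Supp(b)\cap Supp(b')$, and pairs $(b,b')$ with $\{b,b'\}$ not $\tau$-related contribute freely while those with $b' = \tau(b)$ or $b = b' = \tau(b)$ give symmetric/antisymmetric contributions; the $(1-\epsilon)/2$ factor distinguishes the orthogonal and symplectic cases, reflecting whether the relevant form on the middle piece is symmetric or skew. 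I would organize this by splitting $\mathbf B$ according to the sign of $\lambda(b)$ (equivalently, the position of $b$ relative to the principal diagonal), handle the generic off-diagonal pairs first to get the first three sums, then treat the $\lambda(b), \lambda(b')$ near-zero contributions to extract the remaining three terms, using the Lemma on $i - h_i(b) = \lambda(b)$ repeatedly, and finally verify that the middle-term of $\dim(\mathfrak g_0)$ cancels against the overcounting in $-\dim(\mathfrak p_0) + \dim(\mathfrak p_2)$. A sanity check on a small case such as $A_2^{even}$ or $A_3^{even}$ with the regular nilpotent would confirm the constants.
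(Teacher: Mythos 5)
Your route is the same as the paper's: apply Lusztig's formula $\dim({\mathcal O}_{\mathbf c}) = \dim({\mathfrak g}_0) - \dim({\mathfrak p}_0) + \dim({\mathfrak p}_2)$ to the explicit triple $(E_{\mathbf c},H_{\mathbf c},F_{\mathbf c})$ on $V({\mathbf c})$, decompose ${\mathfrak g}_0$ and ${\mathfrak g}_2$ into matrix-unit blocks indexed by pairs of ${\mathcal I}$-boxes, read off the $ad(H_{\mathbf c})$-eigenvalues $h_i(b')-h_i(b)$, resp. $h_{i+2}(b')-h_i(b)$, translate them into $\lambda$-inequalities via notation~\ref{N:LambdaValue}, and treat the constrained block $X_{-1}=-\epsilon X_{-1}^T$ separately. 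However, two concrete points in your write-up are wrong and must be fixed. First, the filtration direction: by definition~\ref{D:LieHomo}, ${\mathfrak p}_X = \oplus_{r' \leq r}\,({}_r{\mathfrak g}_{r'})$, so ${\mathfrak p}_0 = \oplus_{r \geq 0}({}_r{\mathfrak g}_0)$ and ${\mathfrak p}_2 = \oplus_{r \geq 2}({}_r{\mathfrak g}_2)$, whereas you impose eigenvalue $\leq 0$, resp. $\leq 2$. As written your argument is internally inconsistent: you correctly find the eigenvalue of a unit $V_i(b)\to V_i(b')$ to be $\lambda(b)-\lambda(b')$, and the condition ``$\leq 0$'' would then give $\lambda(b)\leq\lambda(b')$, the opposite of what you state and of what the proposition requires; only with $r\geq 0$ (resp. $h_{i+2}(b')-h_i(b)\geq 2$) does one get $\lambda(b)\geq\lambda(b')$.

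Second, your treatment of ${\mathfrak g}_0$ rests on a misconception. In the even case $0\notin{\mathcal I}$, and by lemma~\ref{G2AsRepresEven}(c) an element of ${\mathfrak g}_0$ is freely determined by the components $Z_i$ for $i>0$: the relation $Z_{-i}=-Z_i^T$ merely defines $Z_{-i}$ and imposes no symmetry on $Z_1$, since $1\neq -1$. Hence $\dim({\mathfrak g}_0)=\sum_{i>0}\delta_i^2$ exactly; there is no ``self-dual contribution at the middle'' of ${\mathfrak g}_0$, no middle term to be ``absorbed'' by $-\dim({\mathfrak p}_0)+\dim({\mathfrak p}_2)$, and ${\mathfrak p}_0$ involves no symmetry constraints at $V_{\pm1}$ to intersect with the eigenvalue condition. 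The quantity $\tfrac{\delta_1(\delta_1-\epsilon)}{2}$ you tried to attach to ${\mathfrak g}_0$ is in fact the middle block of ${\mathfrak g}_2$ (corollary~\ref{C:DimensionG2EvenCase}); the only genuinely constrained block in the whole computation is $X_{-1}\in Hom(V_{-1},V_1)$ with $X_{-1}=-\epsilon X_{-1}^T$ inside ${\mathfrak p}_2$, and your description of that part (off-diagonal pairs giving the half cross-term and $\tfrac12\sum c(b)(c(b)-1)$, diagonal entries surviving only when $\epsilon=-1$ and giving $\tfrac{1-\epsilon}{2}\sum c(b)$) is the correct one and matches the paper.
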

\begin{proof}Taking into account our remark~\ref{R:IsomoVVc}, we will  assume that $V = V({\mathbf c})$.  We have that $\dim(V_i) = \delta_i$ for all $i \in {\mathcal I}$ from the fact that $\delta({\mathbf c}) = \delta$.
We have constructed in~\ref{N:CCorrespondance}  a Jacobson-Morozov triple  $(E_{\mathbf c}, H_{\mathbf c}, F_{\mathbf c})$ with $E_{\mathbf c} \in {\mathcal O}_{\mathbf c}$; in other words,  we have 
\[
[H_{\mathbf c}, E_{\mathbf c}] = 2 E_{\mathbf c}, \qquad [H_{\mathbf c}, F_{\mathbf c}] = -2 F_{\mathbf c}, \qquad [E_{\mathbf c}, F_{\mathbf c}] = H_{\mathbf c},
\]
where $E_{\mathbf c} \in {\mathfrak g}_2$, $H_{\mathbf c} \in {\mathfrak g}_0$ and $F_{\mathbf c} \in {\mathfrak g}_{-2}$, and such that $E_{\mathbf c} \in {\mathcal O}_{\mathbf c}$.   Thus we have a Lie algebra homomorphism $\phi: {\mathfrak sl}_2({\mathbf k}) \rightarrow {\mathfrak g}$ such that 
\[
\phi\begin{pmatrix}0 & 1\\ 0 & 0\end{pmatrix} = E_{\mathbf c}, \qquad \phi\begin{pmatrix}1 & \phantom{-}0\\ 0 & -1\end{pmatrix} = H_{\mathbf c}, \qquad \phi\begin{pmatrix}0 & 0\\ 1 & 0\end{pmatrix} = F_{\mathbf c}. 
\]
As in definition~\ref{D:LieHomo}, let $\widetilde{\phi}: SL_2({\mathbf k}) \rightarrow G$ be the homomorphism of algebraic groups whose differential is $\phi$ and by $\iota':{\mathbf k}^{\times} \rightarrow G$: the homomorphism defined by 
\[
\iota'(t) = \widetilde{\phi}\begin{pmatrix}t & 0\phantom{-} \\ 0 & t^{-1}\end{pmatrix} \qquad  \text{ for all $t \in {\mathbf k}^{\times}$.}
\]
We have the grading for the Lie algebra ${\mathfrak g} = \oplus_r  ({ }_r {\mathfrak g})$ where 
\[
({ }_r {\mathfrak g}) = \{ X \in {\mathfrak g} \mid Ad(\iota'(t))(X) = t^r X \quad \text{ for all $t \in {\mathbf k}^{\times}$}\}
\]
By taking the derivative, we get that 
\[
({ }_r {\mathfrak g}) = \{ X \in {\mathfrak g} \mid ad(H_{\mathbf c})(X) =r X \}.
\]
By lemma~\ref{G2AsRepresEven} (c), we have that 
\[
\dim({\mathfrak g}_0) = \sum_{\begin{subarray}{c} i \in {\mathcal I}\\ i > 0\end{subarray}} \delta_i^2.
\]
Let ${\mathfrak p}$ be the parabolic subalgebra associated to the nilpotent element $E_{\mathbf c}$ as in the definition~\ref{D:LieHomo} . From the definition of ${\mathfrak p}$, we get that 
\[
{\mathfrak p}_0 = \bigoplus_{r \geq 0}  ({ }_r{\mathfrak g}_0) \qquad \text{ and } \qquad {\mathfrak p}_2 = \bigoplus_{r \geq 2}  ({ }_r{\mathfrak g}_2).
\]
By our description of ${\mathfrak g}_0$ and ${\mathfrak g}_2$ in lemma~\ref{G2AsRepresEven}, we get that 
\[
 {\mathfrak p}_0 = \bigoplus_{r \geq 0}\bigoplus_{\begin{subarray}{c} i \in {\mathcal I}\\ 0 < i  \end{subarray}} \{(Z_i : V_i \rightarrow V_i) \in Hom(V_i, V_i) \mid ad(H_{\mathbf c}\vert_{V_i}) Z_i = r Z_i \}
\]
and ${\mathfrak p}_2 = {\mathfrak p}'_2 \oplus {\mathfrak p}''_2$ where 
\[
{\mathfrak p}'_2 = 
\bigoplus_{r \geq 2}\bigoplus_{\begin{subarray}{c} i \in {\mathcal I}\\ 0 < i < (2m - 1)\end{subarray}} \{(X_i : V_i \rightarrow V_{i + 2}) \in Hom(V_i, V_{i + 2}) \mid ad(H_{\mathbf c}\vert_{V_i}) X_i = r X_i \}
\]
and 
\[
{\mathfrak p}''_2 = 
\bigoplus_{r \geq 2}\left\{(X_{-1}:V_{-1} \rightarrow V_1) \in Hom(V_{-1}, V_1) \left\vert \begin{aligned} &X_{-1} = -\epsilon X_{-1}^*  \text{ and } \\  &ad(H_{\mathbf c}\vert_{V_{-1}}) X_{-1}  = r X_{-1} \end{aligned}\right.\right\}
\]
We can give  bases for the spaces $Hom(V_i, V_i)$ and $Hom(V_i, V_{i + 2})$ that are eigenvectors for $ad(H_{\mathbf c}\vert_{V_i})$.  From our construction of the Jacobson-Morozov triple $\{E_{\mathbf c}, H_{\mathbf c}, F_{\mathbf c}\}$ and notation~\ref{N:CCorrespondance},  we have for each $i \in {\mathcal I}$ a basis  
\[
{\mathcal B}_i = \{v_{i}^{j}(b) \in V_i  \mid b \in {\mathbf B}, i \in Supp(b), 1 \leq j \leq c(b)\}
\]
of $V_i$ such that $H_{\mathbf c}(v_{i}^{j}(b)) = h_{i}(b) v_{i}^{j}(b)$ for all $b \in {\mathbf B}$, $i \in Supp(b)$ and $1 \leq j \leq c(b)$. For each $b, b' \in {\mathbf B}$ with $i \in Supp(b) \cap Supp(b')$, $1 \leq j \leq c(b)$ and $1 \leq j' \leq c(b')$, let $Z_{(b, j)}^{(b', j')} \in Hom(V_i, V_i)$ be defined on the basis ${\mathcal B}_i $ by 
\[
Z_{(b, j)}^{(b', j')}(v_i^{j''}(b'')) = \begin{cases} v_i^{j'}(b'), &\text{if $b'' = b$ and $j'' = j$;}\\ 0, &\text{otherwise.}\end{cases}
\]

Clearly these linear transformations $Z_{(b, j)}^{(b', j')}$ with  $b, b' \in {\mathbf B}$,  $i \in Supp(b) \cap Supp(b')$, $1 \leq j \leq c(b)$ and $1 \leq j' \leq c(b')$ form a basis of $Hom(V_i, V_i)$ and $Z_{(b, j)}^{(b', j')}$ is an eigenvector of $ad(H_{\mathbf c}\vert_{V_i})$ with
\[
ad(H_{\mathbf c}) (Z_{(b, j)}^{(b', j')}) = (h_{i}(b') - h_{i}(b)) Z_{(b, j)}^{(b', j')}
\]
by computing both sides  on the basis vector $v_{i}^{j''}(b'')$. Consequently
\[
\dim({\mathfrak p}_0) = 
\sum_{\begin{subarray}{c}i \in {\mathcal I}\\ i > 0\\ (b, b') \in {\mathbf B} \times {\mathbf B}\\ i \in Supp(b) \cap Supp(b')\\ (h_{i}(b') - h_{i}(b)) \geq 0\end{subarray}} c(b) c(b') = \sum_{\begin{subarray}{c}i \in {\mathcal I}\\ i > 0\\ (b, b') \in {\mathbf B} \times {\mathbf B}\\ i \in Supp(b) \cap Supp(b')\\ \lambda(b) \geq  \lambda(b') \end{subarray}} c(b) c(b')
\]
because $(h_i(b') - h_i(b)) \geq 0$ is equivalent to $(i - h_i(b)) - (i - h_i(b')) \geq 0$.  This is simply $\lambda(b) \geq \lambda(b')$.

Let now $i \in {\mathcal I}$ and $0 < i  < (2m - 1)$.  For each $b, b' \in {\mathbf B}$ with $i \in Supp(b)$, $(i + 2) \in Supp(b')$, $1 \leq j \leq c(b)$ and $1 \leq j' \leq c(b')$, let $X_{(b, j)}^{(b', j')} \in Hom(V_i, V_{i + 2})$ be defined on the basis ${\mathcal B}_i $ by 
\[
X_{(b, j)}^{(b', j')}(v_i^{j''}(b'')) = \begin{cases} v_{(i + 2)}^{j'}(b'), &\text{if $b'' = b$ and $j'' = j$;}\\ 0, &\text{otherwise.}\end{cases}
\]
Clearly these linear transformations $X_{(b, j)}^{(b', j')}$ with  $b, b' \in {\mathbf B}$ with $i \in Supp(b)$,  $(i + 2) \in Supp(b')$, $1 \leq j \leq c(b)$ and $1 \leq j' \leq c(b')$ form a basis of $Hom(V_i, V_{i + 2})$ and $X_{(b, j)}^{(b', j')}$ is an eigenvector of $ad(H_{\mathbf c}\vert_{V_i})$ with 
\[
ad(H_{\mathbf c}\vert_{V_i}) (X_{(b, j)}^{(b', j')}) = (h_{(i + 2)}(b') - h_{i}(b)) X_{(b, j)}^{(b', j')}
\]
by computing both sides  on the basis vector $v_{i}^{j''}(b'')$. Consequently
\[
\dim({\mathfrak p}'_2) = 
\sum_{\begin{subarray}{c}i \in {\mathcal I}\\0 < i < (2m - 1)\\ (b, b') \in {\mathbf B} \times {\mathbf B}\\ i \in Supp(b)\\ (i + 2) \in  Supp(b')\\ (h_{(i + 2)}(b') - h_i(b)) \geq 2\end{subarray}} c(b) c(b') = \sum_{\begin{subarray}{c}i \in {\mathcal I}\\0 < i < (2m - 1)\\ (b, b') \in {\mathbf B} \times {\mathbf B}\\ i \in Supp(b)\\ (i + 2) \in  Supp(b')\\ \lambda(b) \geq  \lambda(b') \end{subarray}} c(b) c(b')
\]
because $(h_{(i + 2)}(b') - h_i(b)) \geq 2$ is equivalent to $(i - h_i(b)) - ((i + 2) - h_{(i + 2)}(b')) \geq 0$.  This is simply $\lambda(b) \geq \lambda(b')$.

Finally we need to compute the dimension of  ${\mathfrak p}''_2$.  If we  use the basis as in notation~\ref{N:CCorrespondance} appearing in $V_{-1}$, then we get the basis   
\[
\{v_{-1}^j(b) \mid b \in {\mathbf B}, -1 \in Supp(b), 1 \leq j \leq c(b)\}.
\]
of $V_{-1}$.

The involution $\tau$ restricted to $\{b \in {\mathbf B}\mid -1 \in Supp(b)\}$ gives a bijection between  $\{b \in {\mathbf B}\mid -1 \in Supp(b)\}$ and  $\{b \in {\mathbf B}\mid 1 \in Supp(b)\}$. We can use this bijection to index the basis used  in notation~\ref{N:CCorrespondance} for $V_{1}$ and also the fact that the function $c$ is symmetric. Thus we get the basis   
 \[
 \{v_1^j(\tau(b)) \mid b \in{\mathbf B}, -1 \in Supp(b), 1 \leq j \leq c(b)\}.
 \]
 of $V_1$.

For these bases, we have 
\[
\langle v_1^{j'}(\tau(b')), v_{-1}^j(b)\rangle_{\mathbf c} = \epsilon \langle v_{-1}^j(b), v_1^{j'}(\tau(b'))\rangle_{\mathbf c} = \begin{cases} 1, &\text{$b = b', j' = j$;}\\ 0, &\text{ otherwise;} \end{cases}
\]
where $b, b' \in{\mathbf B}$, $-1 \in Supp(b) \cap Supp(b')$, $1 \leq j \leq c(b)$ and $1 \leq j' \leq c(b')$ . 

For each homomorphism $X_{-1}:V_{-1} \rightarrow V_1$, we can write
\[
X_{-1}(v_{-1}^j(b)) = \sum_{(b'', j'')} \xi_{(b, j)}^{(b'', j'')} v_1^{j''}(\tau(b'')) 
\]
and 
\[
X_{-1}(v_{-1}^{j'}(b')) = \sum_{(b''', j''')} \xi_{(b', j')}^{(b''', j''')} v_1^{j'''}(\tau(b'''))
\]
where the sums run over  the set of pairs $(b'', j'')$, $(b''', j''')$ such that $b'', b''' \in {\mathbf B}$,  $-1 \in Supp(b'') \cap Supp(b''')$,   $1 \leq j'' \leq c(b'')$ and $1 \leq j''' \leq c(b''')$.

If we add the condition of being in the $Hom(V_{-1}, V_1)$-component  of ${\mathfrak g}_2$,  that is
\[
\langle X_{-1}(v), v'\rangle + \langle v, X_{-1}(v')\rangle = 0 \quad \text{ for all $v, v' \in V_{-1}$,}
\]
this means that when $v = v_{-1}^j(b)$ and $v' = v_{-1}^{j'}(b')$, we get 
\begin{equation}
\xi_{(b, j)}^{(b', j')} + \epsilon \xi_{(b', j')}^{(b, j)} = 0
\end{equation}
for all $b, b' \in {\mathbf B}$, $-1 \in Supp(b) \cap Supp(b')$, $1 \leq j \leq c(b)$ and $1 \leq j' \leq c(b')$. The converse is also true. More precisely,  if we impose the conditions above for all these coefficients $\xi$, then the corresponding linear transformation $X_{-1}$ belongs to the $Hom(V_{-1}, V_1)$-component  of  ${\mathfrak g}_2$.

We will use this to give a basis of the $Hom(V_{-1}, V_1)$-component  of  ${\mathfrak g}_2$.  For $b, b' \in {\mathbf B}$, $-1 \in Supp(b) \cap Supp(b')$,  $1 \leq j \leq c(b)$ and $1 \leq j' \leq c(b')$.  First we assume that $(b', j') \ne (b, j)$, then we define the linear transformation $X_{(b, j)}^{(b', j')}:V_{-1} \rightarrow V_1$ as follows:
\[
X_{(b, j)}^{(b', j')}(v_{-1}^{j''}(b'')) = \begin{cases} {\phantom -}v_1^{j'}(\tau(b')), &\text{ if $(b'', j'') = (b, j)$}; \\ \\ -\epsilon v_1^j(\tau(b)), &\text{ if $(b'', j'') = (b', j')$;}\\ \\ 0, &\text{ otherwise.}
\end{cases}
\]
In other words, we have 
\[
\xi_{(b, j)}^{(b'', j'')} = \begin{cases}1, &\text{if $(b'', j'') = (b', j')$;}\\ 0, &\text{otherwise;}\end{cases}
\]
\[
\xi_{(b', j')}^{(b''', j''')} = \begin{cases}-\epsilon, &\text{if $(b''', j''') = (b, j)$;}\\ {\phantom -}0, &\text{otherwise}\end{cases}
\]
and equation (1) is satisfied. Thus $X_{(b, j)}^{(b', j')}$ belongs to the $Hom(V_{-1}, V_1)$-component  of  ${\mathfrak g}_2$. By computing on all of the basis vector $v_{-1}^{j''}(b'')$, we get that 
\[
X_{(b, j)}^{(b', j')} = -\epsilon X_{(b', j')}^{(b, j)}.
\]

$X_{(b, j)}^{(b', j')}$ is an eigenvector of $ad(H_{\mathbf c}\vert_{V_{-1}})$ with
\[
ad(H_{\mathbf c}\vert_{V_{-1}})X_{(b, j)}^{(b', j')} = (h_1(\tau(b)) - h_{-1}(b')) X_{(b, j)}^{(b', j')}
\]
by computing both sides  on the basis vector $v_{-1}^{j''}(b'')$. Note that we have used here  lemma~\ref{L:RelationCoefficientsHandF} (a). 

Note that $(b', j') \ne (b, j)$ is equivalent to either $b' \ne b$ or ($b' = b$ and $j' \ne j$).

If we are in the orthogonal case, in other words when $\epsilon = 1$, then we get from equation (1) that $\xi_{(b, j)}^{(b, j)} = 0$ for all $b \in{\mathbf B}$, $-1 \in Supp(b)$ and $1 \leq j \leq c(b)$. We can totally order the finite set $\{(b, j)\mid b \in {\mathbf B}, -1 \in Supp(b)\}$. We will write $<$ for this total order.  With this,  we get easily that the linear transformations  $X_{(b, j)}^{(b', j')}$ where $(b, j) < (b', j')$ is a basis of the $Hom(V_{-1}, V_1)$-component  of  ${\mathfrak g}_2$ and, because of our computation of $ad(H_{\mathbf c}\vert_{V_{-1}})$ on these basis elements, we get that
\[
\dim({\mathfrak p}''_2) = \frac {1}{2}\sum_{\begin{subarray}{c}(b, b')\in {\mathbf B} \times {\mathbf B}\\ b' \ne b\\ -1 \in Supp(b) \cap Supp(b')\\ (h_1(\tau(b)) -  h_{-1}(b') \geq 2\end{subarray}} c(b) c(b')  + \frac {1}{2}\sum_{\begin{subarray}{c} b \in {\mathbf B} \\ -1 \in Supp(b)\\ -h_{-1}(b) \geq 1\end{subarray}} c(b) (c(b) - 1)
\]
Note that $h_1(\tau(b)) - h_{-1}(b') \geq 2$ is equivalent to $-1 - h_{-1}(b) \geq 1 - h_1(\tau(b'))$ by lemma~\ref{L:RelationCoefficientsHandF} (a).  This is simply $\lambda(b) \geq \lambda(\tau(b'))$. Similarly $-h_{-1}(b) \geq 1$ is equivalent to $-1 - h_{-1}(b) \geq 0$. This is $\lambda(b) \geq 0$. In the first sum, we can use $\tau(b')$ rather than $b'$. Finally we get that 
\[
\dim({\mathfrak p}''_2) = \frac {1}{2}\sum_{\begin{subarray}{c}(b, b')\in {\mathbf B} \times {\mathbf B}\\ \tau(b') \ne b\\ -1 \in Supp(b)\\ 1 \in Supp(b')\\ \lambda(b) \geq \lambda(b')\end{subarray}} c(b) c(b')  + \frac {1}{2}\sum_{\begin{subarray}{c} b \in {\mathbf B} \\ -1 \in Supp(b)\\ \lambda(b) \geq 0\end{subarray}} c(b) (c(b) - 1)
\]

If we are in the symplectic case, in other words when $\epsilon = -1$, then we don't necessarily have that $\xi_{(b, j)}^{(b, j)} = 0$. For $b \in {\mathbf B}$, $-1 \in Supp(b)$ and $1 \leq j \leq c(b)$.  Then we can define the linear transformation $X_{(b, j)}^{(b, j)}:V_{-1} \rightarrow V_1$ as follows:
\[
X_{(b, j)}^{(b,  j)}(v_{-1}^{j''}(b'')) = \begin{cases} v_1^j(\tau(b)), &\text{ if $(b'', j'') = (b, j)$}; \\ \\ 0, &\text{ otherwise.}
\end{cases}
\]
In other words, we have 
\[
\xi_{(b, j)}^{(b'', j'')} = \begin{cases}1, &\text{if $(b'', j'') = (b, j)$;}\\ 0, &\text{otherwise;}\end{cases}
\]
and equation (1) is satisfied. Consequently $X_{(b, j)}^{(b, j)}$ belongs to the $Hom(V_{-1}, V_1)$-component  of  ${\mathfrak g}_2$. 

$X_{(b, j)}^{(b, j)}$ is an eigenvector of $ad(H_{\mathbf c}\vert_{V_{-1}})$ with 
\[
ad(H_{\mathbf c}\vert_{V_{-1}})X_{(b, j)}^{(b, j)} = (h_1(\tau(b)) - h_{-1}(b)) X_{(b, j)}^{(b, j)}. 
\]
As above we can totally order the finite set $\{(b, j)\mid b \in {\mathbf B}, -1 \in Supp(b)\}$. We will write $<$ for this total order.  With this,  we get easily that the linear transformations  $X_{(b, j)}^{(b', j')}$ where $(b, j) < (b', j')$ with the linear transformations $X_{(b, j)}^{(b, j)}$ form a basis of the $Hom(V_{-1}, V_1)$-component  of  ${\mathfrak g}_2$ and, because of our computation of $ad(H_{\mathbf c}\vert_{V_{-1}})$ on these basis elements, we get that

\[
\dim({\mathfrak p}''_2) = \frac {1}{2}\sum_{\begin{subarray}{c}(b, b')\in {\mathbf B} \times {\mathbf B}\\ \tau(b') \ne b\\ -1 \in Supp(b)\\ 1 \in Supp(b')\\ \lambda(b) \geq \lambda(b')\end{subarray}} c(b) c(b')  + \frac {1}{2}\sum_{\begin{subarray}{c} b \in {\mathbf B} \\ -1 \in Supp(b)\\ \lambda(b) \geq 0\end{subarray}} c(b) (c(b) - 1) +  \sum_{\begin{subarray}{c} b \in {\mathbf B}\\ -1 \in Supp(b)\\ h_{\beta}(1) \geq 1\end{subarray}} c(b)
\]
For the first two sums above, it is similar to the orthogonal  case. For the third sum above, we have to noted that $h_1(\tau(b)) - h_{-1}(b) \geq 2$ is equivalent to $-1 - h_{-1}(b) - 1 - h_{-1}(b) \geq 0$. This is simply $\lambda(b) \geq 0$. 

From all the previous computation, the proposition follows.
\end{proof}

We will now describe the  partition  of $\dim(V)$ corresponding to the Jordan type for any element in an orbit ${\mathcal O}_{\mathbf c}$. 

\begin{proposition}\label{P:Height_Jordan_Type}
Let $X$ be an element of the $G^{\iota}$-orbit ${\mathcal O}_{\mathbf c}$ where ${\mathbf c}  \in  {\mathfrak C}_{\delta}$ and let $c:{\mathbf B} \rightarrow {\mathbb N}$ the symmetric function corresponding to ${\mathbf c}$.  Then the partition corresponding to the Jordan type of $X$ is 
\[
\prod_{b \in {\mathbf B}} \vert Supp(b) \vert^{c(b)}.
\]
Here we have described the partition in multiplicative form.
\end{proposition}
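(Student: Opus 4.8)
The plan is to reduce everything to the structure already built. By Remark~\ref{R:IsomoVVc} we may assume $V = V({\mathbf c})$ and $X = E_{\mathbf c}$, since conjugating by the isometry $T_{\mathbf c}$ does not change the Jordan type. The Jordan type of a nilpotent endomorphism is an isomorphism invariant of $(V, X)$ as a vector space with an endomorphism (equivalently, as a ${\mathbf k}[t]$-module), so it is insensitive to the grading and to the bilinear form; hence it suffices to compute the Jordan type of $E_{\mathbf c}$ acting on $V({\mathbf c})$ viewed just as a linear operator.

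Next I would use the direct sum decomposition. By Notation~\ref{N:CCorrespondance}, $(V({\mathbf c}), E_{\mathbf c})$ is the direct sum over $\langle\tau\rangle$-orbits ${\mathcal O}$ of ${\mathbf c}({\mathcal O})$ copies of $(V({\mathcal O}), E_{\mathcal O})$, and by Notation~\ref{N:BasisVOrbit} each $(V({\mathcal O}), E_{\mathcal O})$ is itself the direct sum over $b \in {\mathcal O}$ of the pieces $(V(b), E_{\mathcal O}|_{V(b)})$. Writing this out: $(V({\mathbf c}), E_{\mathbf c})$ is the direct sum, over all $b \in {\mathbf B}$, of $c(b)$ copies of $(V(b), E_{\mathcal O}|_{V(b)})$, where ${\mathcal O}$ is the orbit of $b$ and $c$ is the symmetric function attached to ${\mathbf c}$; here I use $c(b) = {\mathbf c}({\mathcal O})$ for $b \in {\mathcal O}$ from Definition~\ref{D:CoefficientFunction}. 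Since Jordan type is additive over direct sums, the partition of $X$ is $\prod_{b \in {\mathbf B}} \big(\text{Jordan type of } (V(b), E_{\mathcal O}|_{V(b)})\big)^{c(b)}$, and it remains only to identify the Jordan type of the single summand $(V(b), E_{\mathcal O}|_{V(b)})$.

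The last step is the computation on one indecomposable piece. By the proof of Proposition~\ref{P:JMTripleProof} (the paragraph identifying $\phi_b = E_{\mathcal O}|_{V(b)}$), the restriction of $E_{\mathcal O}$ to $V(b) = \bigoplus_{i \in Supp(b)} V_i(b)$ is, up to signs on the basis vectors $v_i(b)$ that do not affect the Jordan structure, the nilpotent quiver representation of $A_{|{\mathcal I}|}$ with dimension vector supported on $Supp(b)$ with all entries $1$: each $v_i(b)$ is sent to $\pm v_{i+2}(b)$ when $(i+2) \in Supp(b)$ and to $0$ when $i = \max(Supp(b))$. Rescaling the basis to absorb the signs, $E_{\mathcal O}|_{V(b)}$ is a single Jordan block of size $|Supp(b)|$. (When the orbit ${\mathcal O}$ has overlapping ${\mathcal I}$-box supports and $b$ is below/on/above the principal diagonal, the chain still passes through the middle index $-1$ or $0$ exactly once, with a nonzero scalar, by the formulas in~\ref{SS:EHFDefinition}, so it is still one Jordan block of length $|Supp(b)|$.) Therefore the summand attached to $b$ contributes the partition $|Supp(b)|^{c(b)}$, and multiplying over all $b \in {\mathbf B}$ gives $\prod_{b \in {\mathbf B}} |Supp(b)|^{c(b)}$, as claimed.

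I expect the only real care needed is the bookkeeping in the overlapping-supports case: one must check that $E_{\mathcal O}$ restricted to $V(b)$ really is a single Jordan chain $v_{j_1}(b) \to v_{j_1+2}(b) \to \dots \to v_{i_1}(b)$ with no accidental extra zero in the middle (at index $-1$ for $|{\mathcal I}|$ even, index $0$ for $|{\mathcal I}|$ odd), which follows because the relevant scalar $(-\epsilon)^{k_1}$ or $\epsilon^{k_1}$ is a unit. Everything else — additivity of Jordan type over direct sums, invariance under isometry, and the translation between ${\mathcal I}$-boxes and dimension vectors — is already in place in the earlier sections.
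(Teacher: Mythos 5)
Your argument is correct and follows essentially the same route as the paper's own proof: reduce via $T_{\mathbf c}$ to $(V({\mathbf c}), E_{\mathbf c})$, decompose into $c(b)$ copies of the pieces $V(b)$, and observe that each $V(b)$ is a single cyclic Jordan chain of length $\vert Supp(b)\vert$ since the scalars appearing in the formulas of~\ref{SS:EHFDefinition} are units. Your extra check of the overlapping-supports case is just a more explicit version of what the paper leaves implicit.
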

\begin{proof}
Taking into account our remark~\ref{R:IsomoVVc}, we can assume that $V = V({\mathbf c})$. If $(E_{\bf c}, H_{\bf c}, F_{\bf c})$ is the Jacobson-Morozov triple corresponding to the orbit ${\mathcal O}_{\mathbf c}$ as in~\ref{N:CCorrespondance}, then we can take $E_{\bf c}$ as a representative for the orbit ${\mathcal O}_{\mathbf c}$. As a representation of the quiver $A_{2m}$, we know that  $E_{\bf c}$ is  a direct sum over $b \in {\mathbf B}$  of $c(b)$ copies of an  indecomposable representation $V(b)$ as in notation~\ref{N:BasisVOrbit} and with the formulae in \ref{SS:EHFDefinition}. Thus from these, we get that there is a unique Jordan bloc for the restriction of $E_{\mathbf c}$ on $V(b)$. In other words, $V(b)$ is a cyclic subspace and the partition of the restriction of $E_{\mathbf c}$ on $V(b)$ is $\vert Supp(b) \vert$. From this, we get that the Jordan type of $E_{\mathbf c}$ is as stated in the proposition. 
\end{proof}

\subsection{}
We want to give now a compact way to describe the symmetric function $c: {\mathbf B} \rightarrow {\mathbb N}$ corresponding to the coefficient function ${\mathbf c} \in  {\mathfrak C}_{\delta}$. 

\begin{definition}
A   {\it  symplectic symmetric ${\mathcal I}$-tableau} is the ${\mathcal I}$-diagram with its boxes filled with elements of ${\mathbb N}$ such that if $c_{i,j}$ denoted the entry in the row indexed by $i$ and in the column indexed by $j$,   then we have $c_{-j,-i} = c_{i, j}$ for all $i, j \in {\mathcal I}$ and $i \geq j$. An  {\it  orthogonal symmetric ${\mathcal I}$-tableau} is  a symplectic symmetric ${\mathcal I}$-Young tableau with the added condition that $c_{i, -i}$ is an even integer for all $i \in {\mathcal I}$, $i > 0$ (i.e. the entries on the principal diagonal are even). 
\end{definition}

\begin{example} \label{ExampleYoungDiagramEvenCase}
For $m = 3$ and ${\mathcal I} = \{5, 3, 1, -1, -3, -5\}$, then 

\begin{center}
${\mathcal T}$ = 
\ytableausetup{centertableaux}
\begin{ytableau}
\none & \none [-5] & \none [-3] & \none [-1] & \none [1] & \none [3] & \none [5]\\ \none [{\phantom -}5] & 0 & 0 & 2 & 0 & 0  & 1 \\ \none [{\phantom -}3] & 0 & 0 & 3 & 4 & 1\\  \none [{\phantom -}1] & 2 & 3 & 3 & 0 \\ \none [-1] & 0 & 4 & 0\\ \none [-3] & 0  & 1\\ \none [-5] & 1 \\
\end{ytableau}
\end{center}
is a symplectic symmetric ${\mathcal I}$-tableau, but not an orthogonal symmetric ${\mathcal I}$-tableau. 

\begin{center}
${\mathcal T}'$ = 
\ytableausetup{centertableaux}
\begin{ytableau}
\none & \none [-5] & \none [-3] & \none [-1] & \none [1] & \none [3] & \none [5]\\ \none [{\phantom -}5] & 2 & 0 & 1 & 2 & 0  & 2 \\ \none [{\phantom -}3] & 0 & 4 & 0 & 3 & 1\\  \none [{\phantom -}1] & 1 & 0 & 0 & 1 \\ \none [-1] & 2 & 3 & 1\\ \none [-3] & 0  & 1\\ \none [-5] & 2 \\
\end{ytableau}
\end{center}
is an  orthogonal symmetric ${\mathcal I}$-tableau. We have kept the indices for the rows and columns  inscribed.
\end{example}

\begin{definition}
Given the symplectic (respectively  orthogonal ) symmetric ${\mathcal I}$-tableau ${\mathcal T} = (c_{i, j})_{i, j \in {\mathcal I}, i \geq j}$, we define its {\it dimension vector} $\delta({\mathcal T})$ as the ${\mathcal I}$-tuple $\delta({\mathcal T}) = (\delta_i)_{i \in {\mathcal I}} \in {\mathbb N}^{\mathcal I}$, where
\[
\delta_i = \sum_{\begin{subarray}{c} u, v \in {\mathcal I}\\ u \geq i \geq v \end{subarray}} c_{u, v} \quad \text{for all $i \in {\mathcal I}$.}
\]
\end{definition}

\begin{example} 
If ${\mathcal T}$ and ${\mathcal T}'$  are the symmetric ${\mathcal I}$-tableaux of example~\ref{ExampleYoungDiagramEvenCase}, then their dimension vectors are  respectively  
\[
\begin{aligned}
\delta({\mathcal T}) &= (\delta_{5}, \delta_3, \delta_1, \delta_{-1}, \delta_{-3},  \delta_{-5}) =  (3, 10, 17, 17, 10, 3) \quad \text{ and }\\
\delta({\mathcal T}') &= (\delta_{5}, \delta_3, \delta_1, \delta_{-1}, \delta_{-3},  \delta_{-5}) =  (7, 13, 14, 14, 13, 7).
\end{aligned}
\]
\end{example}

\begin{lemma}\label{L:RankComponentTableau}
Let $\delta = (\delta_i)_{i \in {\mathcal I}} \in {\mathbb N}^{\mathcal I}$ with $\delta_{-i} = \delta_i$ for all $i \in {\mathcal I}$. Denote by ${\mathfrak T}_{\delta}$: the set of symplectic (respectively  orthogonal ) symmetric ${\mathcal I}$-tableaux ${\mathcal T}$ of dimension $\delta({\mathcal T}) = \delta = (\delta_i)_{i \in {\mathcal I}}$ and by ${\mathfrak R}_{\delta}$: the set of symplectic (respectively orthogonal )  symmetric ${\mathcal I}$-tableaux ${\mathcal R} = (r_{i, j})_{i, j \in I, i \geq j}$ such that 
\[
r_{i, i} = \delta_i \quad \text{ and } \quad (r_{i, j} - r_{i + 2, j} - r_{i , j - 2} + r_{i + 2, j - 2}) \geq 0 \quad \text{ for all $i, j \in {\mathcal I}$ and $i \geq j$}
\]
where we set $r_{i, j} = 0$ if either $i \not\in {\mathcal I}$ or $j \not\in {\mathcal I}$ or both. Then the map $\varTheta:{\mathfrak T}_{\delta} \longrightarrow {\mathfrak R}_{\delta}$
\[
\varTheta({\mathcal T}) = \varTheta((c_{i, j})_{i, j \in {\mathcal I}, i \geq j}) = (r_{i, j})_{i, j \in I, i \geq j} \quad \text{where $r_{i, j} = \sum_{\begin{subarray}{c} u, v \in {\mathcal I}\\ u \geq i \geq j \geq v\end{subarray}} c_{u, v}$}  
\]
for all $i , j \in {\mathcal I}$, $i \geq j$. is a well-defined bijection.
\end{lemma}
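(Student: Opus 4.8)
The plan is to show that $\varTheta$ is well-defined (i.e. lands in ${\mathfrak R}_\delta$), construct an explicit inverse, and check the two composites are identities. All three steps are essentially bookkeeping with inclusion–exclusion on the ${\mathcal I}$-diagram, so the main work is organizing the notation carefully rather than any deep idea.

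\textbf{Step 1: $\varTheta$ is well-defined.} Given a symplectic (resp.\ orthogonal) symmetric ${\mathcal I}$-tableau ${\mathcal T} = (c_{i,j})$, set $r_{i,j} = \sum_{u \geq i \geq j \geq v} c_{u,v}$ for $i \geq j$. First I would verify $(r_{i,j})$ is again a symmetric ${\mathcal I}$-tableau: symmetry $r_{-j,-i} = r_{i,j}$ follows by substituting $u \mapsto -v$, $v \mapsto -u$ in the defining sum and using $c_{-v,-u} = c_{u,v}$; in the orthogonal case the diagonal entry $r_{i,-i} = \sum_{u \geq i,\ -i \geq v} c_{u,v}$ must be shown even — here I would pair off the off-diagonal terms $c_{u,v}$ with $c_{-v,-u}$ (which are equal and distinct when $u + v \neq 0$, hence contribute an even amount) and note the remaining terms $c_{u,-u}$ with $u \geq i$ are each even by hypothesis. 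Next, $r_{i,i} = \sum_{u \geq i \geq v} c_{u,v} = \delta_i$ is exactly the definition of $\delta({\mathcal T})$. Finally the nonnegativity condition: one computes directly that $r_{i,j} - r_{i+2,j} - r_{i,j-2} + r_{i+2,j-2}$ telescopes to the single sum $\sum_{\substack{u,v \in {\mathcal I},\ j-2 < v \leq i,\ j \leq ? }} c_{u,v}$ — more precisely it equals $\sum_{u \geq i,\ v \leq j,\ u \text{ or } v \text{ "just inside"}} c_{u,v}$; the clean statement is that the alternating sum equals $\sum_{\substack{u,v\in{\mathcal I}\\ u \geq i,\ v \leq j\\ u < i+2 \text{ forced? }}}$, so I would just carefully expand the four sums over the nested regions $\{u \geq i \geq j \geq v\}$, etc., and see that only the boxes with $u = i$ or... — in any case the alternating sum collapses to a sum of a subfamily of the $c_{u,v} \geq 0$, hence is $\geq 0$. (Concretely: $r_{i,j}-r_{i+2,j}$ counts $c_{u,v}$ with $u\in\{i\}$, $i\geq j\geq v$ plus those with $u\geq i+2$... it is cleanest to note $r_{i,j} - r_{i+2,j}$ is the sum over $u=i$, $j \geq v$, $v \in {\mathcal I}$ — no: $r_{i,j}$ ranges $u \geq i$, $r_{i+2,j}$ ranges $u \geq i+2$, so the difference is $\sum_{u = i,\ i \geq j \geq v} c_{i,v}$; then subtracting the analogous $j$-difference removes $v = j$, leaving $\sum_{u=i,\ j-2 \geq v \text{? }}$ — the upshot being $\sum_{\substack{v \in {\mathcal I}\\ v \leq j-2 \text{ or } v=j}}$ minus overlaps. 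I will present the honest four-term expansion and identify the result as $\sum_{\substack{u,v\in{\mathcal I}\\ i \leq u,\ v \leq j,\ (u,v) \text{ "new"}}} c_{u,v}\geq 0$.)

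\textbf{Step 2: the inverse.} Given ${\mathcal R} = (r_{i,j}) \in {\mathfrak R}_\delta$, define $c_{i,j} = r_{i,j} - r_{i+2,j} - r_{i,j-2} + r_{i+2,j-2}$ for $i \geq j$, with the convention $r_{i,j}=0$ when an index leaves ${\mathcal I}$. By the very definition of ${\mathfrak R}_\delta$ these are nonnegative integers, and symmetry $c_{-j,-i} = c_{i,j}$ follows from symmetry of $r$; in the orthogonal case the parity of $c_{i,-i}$ is inherited from the parity conditions on the $r_{i,-i}$ (which are even because ${\mathcal R}$ is an orthogonal tableau). So $(c_{i,j})$ is a symplectic (resp.\ orthogonal) symmetric ${\mathcal I}$-tableau. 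The one thing that needs checking is that its dimension vector is $\delta$, i.e.\ that $\sum_{u \geq i \geq v} c_{u,v} = \delta_i$; this is a telescoping sum that collapses to $r_{i,i} = \delta_i$ using the boundary convention. Call this map $\varTheta'$.

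\textbf{Step 3: mutually inverse.} That $\varTheta' \circ \varTheta = \mathrm{id}$ is the telescoping identity $r_{i,j} - r_{i+2,j} - r_{i,j-2} + r_{i+2,j-2} = c_{i,j}$ applied to $r_{i,j} = \sum_{u\geq i\geq j\geq v} c_{u,v}$ — exactly the computation already done in Step 1 in the special case $i = j$ extended to general $i \geq j$, and it gives back $c_{i,j}$ on the nose. That $\varTheta \circ \varTheta' = \mathrm{id}$ is the reciprocal telescoping: $\sum_{u \geq i \geq j \geq v}(r_{u,v} - r_{u+2,v} - r_{u,v-2} + r_{u+2,v-2}) = r_{i,j}$, again using the convention that $r$ vanishes outside ${\mathcal I}$ so all the "overshoot" terms cancel. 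I expect the main obstacle to be purely organizational: getting the index ranges and the zero-padding convention exactly right so that both telescoping computations close up cleanly, especially near the principal diagonal and the boundary of the ${\mathcal I}$-diagram where the symmetry $\sigma$ and the parity conditions interact. No genuinely hard step is involved; I would write out one of the four-term expansions in full and then invoke symmetry of the argument for the others.
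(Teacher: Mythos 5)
Your proposal is correct, and the main difference from the paper is that the paper does not reprove the underlying bijection at all: it quotes section 2 of \cite{AD1982} for the correspondence between component tableaux and rank tableaux in the non-symmetric setting, and then only verifies the extra constraints, namely that $\varTheta$ and the four-term alternating-sum inverse preserve symmetry and, in the orthogonal case, the evenness of the diagonal entries. Your route is self-contained: you construct the same inverse and check both composites by telescoping, which is more work but entirely elementary, and it buys independence from the citation. Two points would tighten your sketch. First, your Step 1 can be stated cleanly: for ${\mathcal R} = \varTheta({\mathcal T})$ one has $r_{i,j} - r_{i+2,j} = \sum_{v \leq j} c_{i,v}$, so the four-term alternating sum collapses to exactly the single entry $c_{i,j}$ (not merely "a subfamily"); this gives nonnegativity at once and is simultaneously the identity $\varTheta^{-1}\circ\varTheta = \mathrm{id}$, so your Steps 1 and 3 partially merge. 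Second, the one place where more than "inheritance" is needed is the evenness of $c_{i,-i}$ for the inverse map in the orthogonal case: you must combine the symmetry of ${\mathcal R}$, which gives $r_{i,-i-2} = r_{i+2,-i}$, with the evenness of the diagonal entries to obtain
\[
c_{i,-i} = r_{i,-i} - r_{i+2,-i} - r_{i,-i-2} + r_{i+2,-i-2} = r_{i,-i} - 2 r_{i+2,-i} + r_{i+2,-(i+2)} \equiv 0 \pmod 2,
\]
which is exactly the computation the paper records; your phrase that the parity is "inherited from the parity conditions on the $r_{i,-i}$" should be replaced by this one-line argument. With those adjustments your proof is complete.
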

\begin{proof}
This is a known result. See for example section 2 in \cite{AD1982}. In fact in \cite{AD1982}, the  ${\mathcal I}$-tableaux are not necessarily symmetric, but it is easy to check that $\varTheta$ sends symmetric  ${\mathcal I}$-tableaux to symmetric ${\mathcal I}$-tableaux. 

In the orthogonal case, we also have to see that the entries $r_{i, -i}$ are even for all $i \in {\mathcal I}$, $i > 0$. But this follows from the fact that 
\[
\begin{aligned}
r_{i, -i} &= \sum_{\begin{subarray}{c}u, v \in {\mathcal I}\\ u \geq i  \geq -i \geq v \end{subarray}} c_{u, v} = \sum_{\begin{subarray}{c} u \in {\mathcal I}\\ u \geq i \end{subarray}} c_{u, -u} + \sum_{\begin{subarray}{c} u, v \in {\mathcal I}\\ u \geq i \geq -i \geq v\\ (u + v) > 0\end{subarray}} c_{u, v} + \sum_{\begin{subarray}{c} u, v \in {\mathcal I}\\ u \geq i \geq -i \geq v\\ 0 > (u + v)\end{subarray}} c_{u, v}\\ &= \sum_{\begin{subarray}{c} u \in {\mathcal I}\\ u \geq i \end{subarray}} c_{u, -u} + \sum_{\begin{subarray}{c} u, v \in {\mathcal I}\\ u \geq i \geq -i \geq v\\ (u + v) > 0 \end{subarray}} c_{u, v} + \sum_{\begin{subarray}{c} u, v \in {\mathcal I}\\ u \geq i \geq -i \geq v\\ 0 > (u + v)\end{subarray}} c_{-v, -u}\\ &=  \sum_{\begin{subarray}{c} u \in {\mathcal I}\\ u \geq i \end{subarray}} c_{u, -u} + 2 \sum_{\begin{subarray}{c} u, v \in {\mathcal I}\\ u \geq i \geq -i \geq v\\ (u + v) > 0\end{subarray}} c_{u, v}  
\end{aligned}
\]
and the fact that the $c_{u, -u}$ are even.  

The inverse map of $\varTheta$ is  $\varTheta^{-1}:{\mathfrak R}_{\delta} \longrightarrow {\mathfrak T}_{\delta}$ is 
\[
 \varTheta^{-1}((r_{i, j})_{i, j \in {\mathcal I}, i \geq j}) = (c_{i, j})_{i, j \in I, i \geq j} \quad \text{where $c_{i, j} =  (r_{i, j} - r_{i + 2, j} - r_{i , j - 2} + r_{i + 2, j - 2})$}
\]
We have also to prove that the entries $c_{i, -i}$ are even for all $i \in {\mathcal I}$, $i > 0$ in the orthogonal case.  This follows from
\[
c_{i, -i} = r_{i, -i} - r_{i+2, -i} - r_{i, -i - 2} + r_{i + 2, -i - 2} = r_{i, -i} - 2r_{i + 2, -i}  + r_{i + 2, -i - 2}
\]
and the fact that both $r_{i, -i}$  and $r_{(i + 2), -(i + 2)}$ are even.
\end{proof}

\begin{example}
If ${\mathcal T}$ and ${\mathcal T}'$  are the symmetric ${\mathcal I}$-tableaux of example~\ref{ExampleYoungDiagramEvenCase}, then 
\begin{center}
$\varTheta({\mathcal T})$ = 
\ytableausetup{centertableaux}
\begin{ytableau}
\none & \none [-5] & \none [-3] & \none [-1] & \none [1] & \none [3] & \none [5]\\ \none [{\phantom -}5] & 0 & 0 & 2 & 2 & 2  & 3 \\ \none [{\phantom -}3] & 0 & 0 & 5 & 9 & 10\\  \none [{\phantom -}1] & 2 & 5 & 13 & 17 \\ \none [-1] & 2 & 9 & 17\\ \none [-3] & 2  & 10\\ \none [-5] & 3 \\
\end{ytableau}
\quad
\text{and}
\quad
$\varTheta({\mathcal T}')$ = 
\ytableausetup{centertableaux}
\begin{ytableau}
\none & \none [-5] & \none [-3] & \none [-1] & \none [1] & \none [3] & \none [5]\\ \none [{\phantom -}5] & 2 & 2 & 3 & 5 & 5  & 7 \\ \none [{\phantom -}3] & 2 & 6 & 7 & 12 & 13\\  \none [{\phantom -}1] & 3 & 7 & 8 & 14 \\ \none [-1] & 5 & 12 & 14\\ \none [-3] & 5  & 13\\ \none [-5] & 7 \\
\end{ytableau}.
\end{center}
We have kept the indices for the rows and columns  inscribed.
\end{example}

\begin{definition}\label{D:SymmetricTableauIevenOrtho}
Let $\delta = (\delta_i)_{i \in {\mathcal I}} \in {\mathbb N}^{\mathcal I}$ with $\delta_{-i} = \delta_i$ for all $i \in {\mathcal I}$ and  the coefficient function ${\mathbf c} \in {\mathfrak C}_{\delta}$.  To ${\mathbf c}$, denote the corresponding symmetric function by $c: {\mathbf B} \rightarrow {\mathbb N}$. We define the symmetric ${\mathcal I}$-tableau ${\mathcal T}({\mathbf c})$ as follows: if $c_{i, j}$ is the entry at the row $i$ and column $j$ with $i, j \in {\mathcal I}$ and $i \geq j$, then 
\[
c_{i, j} = \begin{cases} c(b(i, j, 0)), &\text{ if $i + j \ne 0$;}\\ c(b(i, j, 0)), &\text{ if $i + j = 0$ in the symplectic case;} \\  c(b(i, j, 0)) + c(b(i, j, 1)), &\text{ if $i + j = 0$ in the orthogonal case.} \end{cases}
\]

Because $c: {\mathbf B} \rightarrow {\mathbb N}$ is symmetric,  ${\mathcal T}({\mathbf c})$ is clearly a symmetric ${\mathcal I}$-tableau. In the orthogonal case,   we have that $c(b(i, j, 0)) = c(b(i, j, 1))$ when $i + j = 0$ and consequently $c_{i, -i}$ is even. Thus ${\mathcal T}({\mathbf c})$ is an  orthogonal symmetric ${\mathcal I}$-tableau.  

We have 
\[
\begin{aligned}
 \sum_{\begin{subarray}{c} u, v \in {\mathcal I}\\ u \geq i \geq v \end{subarray}} c_{u, v} &=  \sum_{\begin{subarray}{c} u, v \in {\mathcal I}\\ u \geq i \geq v \\ u + v \ne 0 \end{subarray}} c(b(u, v, 0)) \\ & \quad + \begin{cases} \displaystyle \sum_{\begin{subarray}{c} u \in {\mathcal I}\\ u \geq i \geq -u \end{subarray}} c(b(u, -u, 0)), &\text{in the symplectic case;}\\ \\ \displaystyle \sum_{\begin{subarray}{c} u \in {\mathcal I}\\ u \geq i \geq -u \end{subarray}} c(b(u, -u, 0)) + c(b(u, -u, 1)), &\text{in the orthogonal case;} \end{cases} \\
 &= \sum_{\begin{subarray}{c} b \in {\mathbf B}\\ i \in Supp(b) \end{subarray}} c(b) = \delta_i.
 \end{aligned}
 \] 
Consequently $\delta({\mathcal T}({\mathbf c})) = \delta$ and ${\mathcal T}({\mathbf c}) \in {\mathfrak T}_{\delta}$. 
\end{definition}

\begin{lemma}\label{L:TableauOrbitEven}
Let $\delta = (\delta_i)_{i \in {\mathcal I}} \in {\mathbb N}^{\mathcal I}$ with $\delta_{-i} = \delta_i$ for all $i \in {\mathcal I}$ and let ${\mathfrak g}_2/G^{\iota}$ denotes the set of  $G^{\iota}$-orbits  in ${\mathfrak g}_2$. Then
\begin{enumerate}[\upshape (a)]
\item the map ${\mathcal T}: {\mathfrak C}_{\delta} \rightarrow {\mathfrak T}_{\delta}$, where   ${\mathbf c} \mapsto {\mathcal T}({\mathbf c})$ is a bijection;
\item the map ${\mathfrak g}_2/G^{\iota} \rightarrow  {\mathfrak T}_{\delta} \quad \text{ defined by } \quad G^{\iota} \cdot X \mapsto {\mathcal T}({\mathbf c}_X)$ is a well-defined bijection. 
\end{enumerate}
\end{lemma}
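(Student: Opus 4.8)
The statement bundles together two assertions, and the strategy is to reduce both to results already established in the excerpt. For part (a), the map $\mathcal{T}: \mathfrak{C}_\delta \to \mathfrak{T}_\delta$ is defined box-by-box from the symmetric function $c: \mathbf{B} \to \mathbb{N}$ attached to $\mathbf{c}$, and I have already checked in Definition~\ref{D:SymmetricTableauIevenOrtho} that $\mathcal{T}(\mathbf{c})$ lands in $\mathfrak{T}_\delta$ (it is a symplectic, resp.\ orthogonal, symmetric $\mathcal{I}$-tableau of dimension $\delta$). So the work is to exhibit an inverse. Given a symmetric $\mathcal{I}$-tableau $(c_{i,j})$ of dimension $\delta$, I define a symmetric function $c: \mathbf{B} \to \mathbb{N}$ by reading off the entries: for an $\mathcal{I}$-box $b(i,j,k)$ with $i+j \ne 0$ set $c(b(i,j,0)) = c_{i,j}$; in the symplectic case with $i+j=0$ set $c(b(i,j,0)) = c_{i,-i}$; and in the orthogonal case with $i+j=0$ (so $\mu_{\max}=1$, $k \in \{0,1\}$) set $c(b(i,j,0)) = c(b(i,j,1)) = c_{i,-i}/2$, which is an integer precisely because an orthogonal symmetric $\mathcal{I}$-tableau has even principal-diagonal entries. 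One checks $c(\tau(b)) = c(b)$ from the symmetry $c_{-j,-i} = c_{i,j}$ and from the fact that $\tau$ fixes the pair $\{b(i,j,0), b(i,j,1)\}$ setwise on the principal diagonal. Passing to the associated coefficient function $\mathbf{c} \in \mathfrak{C}$ and computing $\delta_i(\mathbf{c}) = \sum_{b : i \in Supp(b)} c(b)$, the same telescoping computation already carried out in Definition~\ref{D:SymmetricTableauIevenOrtho} shows $\delta(\mathbf{c}) = \delta$, so $\mathbf{c} \in \mathfrak{C}_\delta$. The two constructions are visibly mutually inverse on the level of entries/box-values, so $\mathcal{T}$ is a bijection.

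\textbf{Part (b).} Here the plan is simply to compose known bijections. Proposition~\ref{Prop_Indexation_Orbits_Even} gives a bijection $\Upsilon: \mathfrak{g}_2/G^\iota \to \mathfrak{C}_\delta$ sending $G^\iota \cdot X \mapsto \mathbf{c}_X$. Composing with the bijection $\mathcal{T}: \mathfrak{C}_\delta \to \mathfrak{T}_\delta$ of part (a) yields a bijection $\mathfrak{g}_2/G^\iota \to \mathfrak{T}_\delta$, and by construction this composite sends $G^\iota \cdot X$ to $\mathcal{T}(\mathbf{c}_X)$, which is exactly the map in the statement. The only thing to spell out is that ``$G^\iota \cdot X \mapsto \mathcal{T}(\mathbf{c}_X)$'' is well-defined independently of the representative $X$ — but that is immediate since $\Upsilon$ is well-defined (this was checked via Krull--Remak--Schmidt in the proof of Proposition~\ref{Prop_Indexation_Orbits_Even}) and $\mathcal{T}$ is a function on $\mathfrak{C}_\delta$. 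Bijectivity of the composite is automatic from bijectivity of the two factors.

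\textbf{Main obstacle.} There is no deep obstacle: the proposition is essentially a repackaging of Proposition~\ref{Prop_Indexation_Orbits_Even} through the combinatorial dictionary between coefficient functions and symmetric $\mathcal{I}$-tableaux. The one point demanding genuine care is the orthogonal case in part (a): one must verify that halving the principal-diagonal entry really produces a well-defined (integer-valued, $\langle\tau\rangle$-symmetric) coefficient function and that this inverts the ``$c_{i,-i} = c(b(i,j,0)) + c(b(i,j,1))$'' clause of Definition~\ref{D:SymmetricTableauIevenOrtho} correctly — i.e.\ that the two $\mathcal{I}$-boxes $b(i,j,0)$ and $b(i,j,1)$ on the principal diagonal genuinely lie in a single $\langle\tau\rangle$-orbit and are forced to carry equal multiplicities, so that no information is lost or double-counted. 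This is guaranteed by Notation~\ref{N:TauDefinition} (which has $\tau(b(i,j,k)) = b(i,j,1-k)$ when $i+j=0$ and $\mu_{\max}=1$) together with the symmetry requirement $c(\tau(b)) = c(b)$, so the verification is short. I would write out this check explicitly and treat the rest — the dimension telescoping and the composition of bijections — as routine, citing Definition~\ref{D:SymmetricTableauIevenOrtho} and Proposition~\ref{Prop_Indexation_Orbits_Even}.
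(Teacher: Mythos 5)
Your proposal is correct and follows essentially the same route as the paper, which simply declares part (a) immediate from Definition~\ref{D:SymmetricTableauIevenOrtho} and part (b) a corollary of Proposition~\ref{Prop_Indexation_Orbits_Even}; you have just written out explicitly the inverse construction (including the halving of the even principal-diagonal entries in the orthogonal case) that the paper leaves to the reader.
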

\begin{proof}
(a) is obvious from the definition of ${\mathcal T}$ above and (b) is a corollary of proposition~\ref{Prop_Indexation_Orbits_Even}.
\end{proof}

\begin{notation}
We will denote the $G^{\iota}$-orbit  in ${\mathfrak g}_2$ corresponding to the ${\mathcal I}$-tableau ${\mathcal T}  \in {\mathfrak T}_{\delta}$ as defined in the bijection of lemma~\ref{L:TableauOrbitEven} (b)  by ${\mathcal O}_{\mathcal T}$
\end{notation}

In \cite{BI2021},  M. Boos and G. Cerulli Irelli, were able to describe the partial order on the $G^{\iota}$-orbits in ${\frak g}_2$ for symmetric quivers in the orthogonal and symplectic cases for  symmetric quivers of type $A_m^{even}$ and $A_m^{odd}$. We will now describe their result in our case (i.e. $\vert {\mathcal I} \vert$ even).

\begin{proposition}[Boos and Cerulli Irelli]\label{P:PartialOrderIEven}
(In the situation of \ref{S:SetUpAeven}) Let  ${\mathcal I}$-tableaux ${\mathcal T}$, ${\mathcal T}' \in  {\mathfrak T}_{\delta}$ and  the corresponding $G^{\iota}$-orbits ${\mathcal O}_{\mathcal T}$ , ${\mathcal O}_{{\mathcal T}'}$ in ${\frak g}_2$. Then the  $G^{\iota}$-orbit  ${\mathcal O}_{\mathcal T}$ is contained in the Zariski closure of the   $G^{\iota}$-orbit ${\mathcal O}_{{\mathcal T}'}$ if and only if
\[
(r_{i, j})_{i, j \in {\mathcal I}, i \geq j} = {\mathcal R} = \varTheta({\mathcal T}) \leq \varTheta({\mathcal T}') = {\mathcal R}' = (r'_{i, j})_{i, j \in {\mathcal I}, i \geq j}
\]
where the inequality here between the tableaux ${\mathcal R}$ and ${\mathcal R}'$ means $r_{i, j} \leq r'_{i, j}$ for all $i, j \in {\mathcal I}$, $i \geq j$. 
\end{proposition}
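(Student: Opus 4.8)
The statement is attributed to Boos and Cerulli Irelli, so the plan is to transport their degeneration criterion for orthogonal/symplectic representations of the symmetric quiver $(\Omega,\sigma)$ through the dictionary built up in this section, rather than to reprove it from scratch. The key translation device is Lemma~\ref{L:TableauOrbitEven}: a $G^{\iota}$-orbit ${\mathcal O}_{\mathcal T}$ corresponds to a coefficient function ${\mathbf c}\in{\mathfrak C}_\delta$, hence (by \ref{SS:coefficientOrbit} and Proposition~\ref{P:JMTripleProof}) to the isomorphism class of the $\epsilon$-representation $\bigoplus_{{\mathcal O}}{\mathbf c}({\mathcal O})\,(V({\mathcal O}),\phi_{\mathcal O},\langle\ ,\ \rangle_{\mathcal O})$ of $(\Omega,\sigma)$. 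First I would recall that, by Theorem~\ref{T:CaracterisationRepresentation}, two such orthogonal (resp. symplectic) representations are isomorphic as symmetric-quiver representations iff they are isomorphic as ordinary $\Omega$-representations, and that Boos--Cerulli Irelli show that ${\mathcal O}_{\mathcal T}\subseteq\overline{{\mathcal O}_{{\mathcal T}'}}$ iff the underlying $A_{2m}$-representation of ${\mathcal T}$ degenerates to that of ${\mathcal T}'$ in the usual sense, i.e. iff for every interval $[j,i]\subseteq{\mathcal I}$ the rank of the composite map along that interval in the representation attached to ${\mathcal T}$ is $\le$ the corresponding rank for ${\mathcal T}'$. (This is the "rank criterion" for type-$A$ quiver degenerations, going back to Abeasis--Del Fra; for symmetric quivers Boos--Cerulli Irelli prove exactly that the $G^{\iota}$-closure order is the restriction of the $GL$-closure order.)

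The second step is to identify these interval-ranks with the entries of $\varTheta({\mathcal T})$. Here I would use the explicit basis ${\mathcal B}_{\mathbf c}$ and the formulas for $E_{\mathbf c}$ from \ref{N:CCorrespondance} and \ref{SS:EHFDefinition}: the composite $E_{\mathbf c}^{\,s}$ restricted to $V_j({\mathbf c})\to V_{j+2s}({\mathbf c})$ (for $i=j+2s$, $i\ge j$ in ${\mathcal I}$) sends $v_j^\ell(b)$ to a nonzero multiple of $v_i^\ell(b)$ precisely when $[j,i]\subseteq Supp(b)$, i.e. when $b=b(u,v,k)$ with $u\ge i\ge j\ge v$, and kills $v_j^\ell(b)$ otherwise. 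Hence the rank of that composite is $\sum_{u\ge i\ge j\ge v}c(b(u,v,\cdot))$ counted with multiplicity ${\mathbf c}$, which is exactly $r_{i,j}=\sum_{u\ge i\ge j\ge v}c_{u,v}$, the $(i,j)$-entry of $\varTheta({\mathcal T}({\mathbf c}))$ as in Lemma~\ref{L:RankComponentTableau} and Definition~\ref{D:SymmetricTableauIevenOrtho}. (The extra care on the principal diagonal about $\mu_{\max}$, overlapping supports, and the signs $(-\epsilon)^{k}$ is irrelevant for the rank, since those signs and the branch "$i=-1$, $b$ on the principal diagonal" only affect whether the coefficient is $1$, $\epsilon$ or $(-\epsilon)^{k}$, never whether it vanishes.) This shows the interval-rank function of the $A_{2m}$-representation underlying ${\mathcal O}_{\mathcal T}$ is literally ${\mathcal R}=\varTheta({\mathcal T})$, and likewise ${\mathcal R}'=\varTheta({\mathcal T}')$.

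Combining the two steps: ${\mathcal O}_{\mathcal T}\subseteq\overline{{\mathcal O}_{{\mathcal T}'}}$ $\iff$ (Boos--Cerulli Irelli) the rank criterion holds for all intervals $[j,i]$ $\iff$ $r_{i,j}\le r'_{i,j}$ for all $i\ge j$ in ${\mathcal I}$ $\iff$ $\varTheta({\mathcal T})\le\varTheta({\mathcal T}')$ entrywise. I would also remark, as elsewhere in the paper, that in the orthogonal case $G=SO(V)$ but $G^{\iota}=O(V)^{\iota}$ (Lemma~\ref{G2AsRepresEven}), so the $G^{\iota}$-orbits and their closures coincide with the $O(V)^{\iota}$-orbits, which is the setting of \cite{BI2021}. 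The one genuine obstacle is pinning down the precise form of the Boos--Cerulli Irelli theorem and checking that their rank invariants are indexed by intervals $[j,i]$ with $i,j$ of the same parity in ${\mathcal I}$ in the way matching our $\varTheta$; once that bookkeeping is done the argument is a translation. I would therefore phrase the proof as: cite the relevant theorem of \cite{BI2021} (stating it as a lemma in our notation), then give the short rank computation above identifying the invariants with the entries of $\varTheta({\mathcal T})$, then conclude. Everything here is routine given the earlier results, so I will not grind through the sign/diagonal case analysis in detail.
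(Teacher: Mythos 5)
Your proposal follows essentially the same route as the paper's proof: identify the entries $r_{i,j}$ of $\varTheta({\mathcal T})$ with the ranks of the composites $X_{i-2}\circ\dots\circ X_j:V_j\to V_i$, invoke the Abeasis--Del Fra rank criterion for the equioriented $A_{2m}$ closure order together with Theorem 7.1 of \cite{BI2021} (that the $G^{\iota}$-closure order agrees with the ordinary quiver order), and dispose of the $SO(V)$ versus $O(V)$ issue via $G^{\iota}={\widehat G}^{\iota}$. The only cosmetic difference is that you verify the rank identification directly on the basis ${\mathcal B}_{\mathbf c}$ from the $E_{\mathbf c}$ formulas, where the paper simply cites section 2 of \cite{AD1982}; both are fine.
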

\begin{proof}
First we need to observe that if we consider an element $X$ in the $G^{\iota}$-orbit ${\mathcal O}_{{\mathcal T}''}$ in ${\mathfrak g}_2$, where ${\mathcal T}'' \in {\mathfrak T}_{\delta}$, and, as in part (b) of lemma~\ref{G2AsRepresEven}, we write the linear transformations $X_i:V_i \rightarrow V_{i + 2}$ for $i \in {\mathcal I}$, $i \ne (2m - 1)$ associated to $X$ and that gives us a representation of the quiver $A_{2m}$, then the rank of the composition
\[
X_{i - 2} \circ X_{i- 4} \circ \dots \circ X_{j + 2} \circ X_j:V_j \longrightarrow  V_i \quad \text{for $i, j \in {\mathcal I}$, $i >  j$}
\]
is ${r}_{i, j}''$, where $(r_{i, j}'')_{i, j \in {\mathcal I}, i \geq j} = {\mathcal R}'' = \varTheta({\mathcal T}'')$. This is proved in section 2   in \cite{AD1982} for example.  Note also that ${r}_{i, i}'' = \dim(V_i)$ for all $i \in {\mathcal I}$. 

If we apply this observation for the two orbits  ${\mathcal O}_{\mathcal T}$ and ${\mathcal O}_{{\mathcal T}'}$,  where ${\mathcal O}_{\mathcal T}$ is contained in the Zariski closure $ \overline{{\mathcal O}_{{\mathcal T}'}}$  of the orbit ${\mathcal O}_{{\mathcal T}'}$, then by proposition 2.8 in \cite{AD1982}, it follows that 
\[
(r_{i, j})_{i, j \in {\mathcal I}, i \geq j} = {\mathcal R} = \varTheta({\mathcal T}) \leq \varTheta({\mathcal T}') = {\mathcal R}' = (r'_{i, j})_{i, j \in {\mathcal I}, i \geq j}. 
\]
Proposition 2.8 in \cite{AD1982} states that these ranks cannot increase in a degeneration. 

As for the reciprocal, we must use theorem 7.1 in  \cite{BI2021}.  In our case, this theorem means that the partial order  of symplectic (respectively orthogonal) representations is the same as the partial order induced by forgetting  the symplectic (respectively special orthogonal)  condition and just looking at the representation of the quiver $A_{2m}$. But for the equioriented quiver of type $A_{2m}$, from theorem 5.2 in  \cite{AD1982},  we get that  
\[
{\mathcal R} = \varTheta({\mathcal T}) \leq \varTheta({\mathcal T}') = {\mathcal R}' \quad \Rightarrow \quad {\mathcal O}_{\mathcal T} \subseteq \overline{{\mathcal O}_{{\mathcal T}'}}.
\]
Note that in the article  \cite{BI2021} of M. Boos and G. Cerulli Irelli for orthogonal representations, the isomorphism is the orthogonal isomorphism that is used in the article, but the orthogonal isomorphism is equivalent to special orthogonal isomorphism in the case $A_m^{even}$ as we saw in the proof of proposition~ \ref{Prop_Indexation_Orbits_Even}. For this reason, we can use theorem 7.1 of  \cite{BI2021}.
\end{proof}

\begin{example}\label{E:4.23}
(In the situation of \ref{S:SetUpAeven}) Let $m = 2$, $G$ be the group of automorphisms of the finite dimensional vector space $V$ over ${\mathbf k}$ preserving  a fixed non-degenerate skew-symmetric bilinear  form $\langle\ , \  \rangle:V \times V \rightarrow {\mathbf k}$,   ${\mathcal I} = \{3, 1, -1, -3\}$, $\dim(V_1) = \dim(V_{-1}) = 2$, $\dim(V_3) = \dim(V_{-3}) = 1$. In other words, $\delta_1 = \delta_{-1} = 2$, $\delta_3 = \delta_{-3} = 1$,  $G = Sp_6({\mathbf k})$ and $\iota(t)$ is given on $V_i$ for $i \in {\mathcal I}$ by $\iota(t) v = t^i v$ for all $v \in V_i$ and $t \in {\mathbf k}^{\times}$. In this case,  $\dim({\mathfrak g}_2) = 5$ and there are 8 orbits. We will write for each of these orbits:  the symplectic  symmetric  ${\mathcal I}$-tableau ${\mathcal T} \in {\mathfrak T}_{\delta}$, the ${\mathcal I}$-tableau ${\mathcal R} = \varTheta({\mathcal T})   \in  {\mathfrak R}_{\delta}$ and its dimension. 

\begin{table}[h]
	\begin{center}\renewcommand{\arraystretch}{1.25}
		\begin{tabular} {| l || r  | r  | r  | r  | r |  r | c |}
			\hline
			Orbit &  $10$  & $01_0$ & $01_1$ & $11_0$ & $11_1$ & $12$ & Jordan Decomposition \\ \hline
			${\mathcal O}_5^1$ & $0$ & $0$ & $1$ & $0$ & $1$ & $0$ & $2^1 4^1$\\ \hline
			${\mathcal O}_4^1$ & $0$ & $0$ & $0$ & $0$ & $0$ & $1$ & $3^2$\\ \hline
			${\mathcal O}_4^2$ & $0$ & $1$ & $0$ & $0$ & $1$ & $0$ & $1^2 4$\\ \hline
			${\mathcal O}_3^1$ & $0$ & $0$ & $1$ & $1$ & $0$ & $0$ & $2^3$\\ \hline
			${\mathcal O}_3^2$ & $1$ & $0$ & $2$ & $0$ & $0$ & $0$ & $1^2 2^2$\\ \hline
			${\mathcal O}_2^1$ & $0$ & $1$ & $0$ & $1$ & $0$ & $0$ & $1^2 2^2$\\ \hline
			${\mathcal O}_2^2$ & $1$ & $1$ & $1$ & $0$ & $0$ & $0$ & $1^4 2^1$\\ \hline
			${\mathcal O}_0^1$ & $1$ & $2$ & $0$ & $0$ & $0$ & $0$  & $1^6$\\ \hline
		\end{tabular}
	\end{center}
	\caption{List of the values of coefficient function ${\mathbf c}$.}\label{T:Table1}
\end{table}
The columns of the  table~\ref{T:Table1}  are indexed by the  dimension of the indecomposable symplectic representation of $A_2^{even}$ and Jordan type for each orbit.	
The subscript in the notation for each orbit is its dimension and the upperscript is there to distinguish between the different orbits of the same dimension.
Now  the tableau ${\mathcal T} \in {\mathfrak T}_{\delta}$, the tableau ${\mathcal R} \in  {\mathfrak R}_{\delta}$ for each orbit are
\begin{itemize}
\item For the orbit ${\mathcal O}_5^1$ of dimension 5

\begin{center}
${\mathcal T}_5^1$ = 
\ytableausetup{centertableaux}
\begin{ytableau}
\none & \none [-3] & \none [-1] & \none [1] & \none [3] \\ \none [{\phantom -}3] & 1 & 0 & 0 & 0  \\ \none [{\phantom -}1] & 0 & 1 & 0\\  \none [-1] & 0 & 0 \\ \none [-3] & 0\\ \end{ytableau}
\quad
\text{and}
\quad
${\mathcal R}_5^1$ = 
\ytableausetup{centertableaux}
\begin{ytableau}
\none & \none [-3] & \none [-1] & \none [1] & \none [3] \\ \none [{\phantom -}3] & 1 & 1 & 1 & 1  \\ \none [{\phantom -}1] & 1 & 2 & 2\\  \none [-1] & 1 & 2 \\ \none [-3] & 1\\ 
\end{ytableau}.
\end{center}

\item For the orbit ${\mathcal O}_4^1$ of dimension 4

\begin{center}
${\mathcal T}_4^1$ = 
\ytableausetup{centertableaux}
\begin{ytableau}
\none & \none [-3] & \none [-1] & \none [1] & \none [3] \\ \none [{\phantom -}3] & 0 & 1 & 0 & 0  \\ \none [{\phantom -}1] & 1 & 0 & 0\\  \none [-1] & 0 & 0 \\ \none [-3] & 0\\ \end{ytableau}
\quad
\text{and}
\quad
${\mathcal R}_4^1$ = 
\ytableausetup{centertableaux}
\begin{ytableau}
\none & \none [-3] & \none [-1] & \none [1] & \none [3] \\ \none [{\phantom -}3] & 0 & 1 & 1 & 1  \\ \none [{\phantom -}1] & 1 & 2 & 2\\  \none [-1] & 1 & 2 \\ \none [-3] & 1\\ 
\end{ytableau}.
\end{center}

\item For the orbit ${\mathcal O}_4^2$ of dimension 4

\begin{center}
${\mathcal T}_4^2$ = 
\ytableausetup{centertableaux}
\begin{ytableau}
\none & \none [-3] & \none [-1] & \none [1] & \none [3] \\ \none [{\phantom -}3] & 1 & 0 & 0 & 0  \\ \none [{\phantom -}1] & 0 & 0 & 1\\  \none [-1] & 0 & 1 \\ \none [-3] & 0\\ \end{ytableau}
\quad
\text{and}
\quad
${\mathcal R}_4^2$ = 
\ytableausetup{centertableaux}
\begin{ytableau}
\none & \none [-3] & \none [-1] & \none [1] & \none [3] \\ \none [{\phantom -}3] & 1 & 1 & 1 & 1  \\ \none [{\phantom -}1] & 1 & 1 & 2\\  \none [-1] & 1 & 2 \\ \none [-3] & 1\\ 
\end{ytableau}.
\end{center}

\item For the orbit ${\mathcal O}_3^1$ of dimension 3

\begin{center}
${\mathcal T}_3^1$ = 
\ytableausetup{centertableaux}
\begin{ytableau}
\none & \none [-3] & \none [-1] & \none [1] & \none [3] \\ \none [{\phantom -}3] & 0 & 0 & 1 & 0  \\ \none [{\phantom -}1] & 0 & 1 & 0\\  \none [-1] & 1 & 0 \\ \none [-3] & 0\\ \end{ytableau}
\quad
\text{and}
\quad
${\mathcal R}_3^1$ = 
\ytableausetup{centertableaux}
\begin{ytableau}
\none & \none [-3] & \none [-1] & \none [1] & \none [3] \\ \none [{\phantom -}3] & 0 & 0 & 1 & 1  \\ \none [{\phantom -}1] & 0 & 1 & 2\\  \none [-1] & 1 & 2 \\ \none [-3] & 1\\ 
\end{ytableau}.
\end{center}

\item For the orbit ${\mathcal O}_3^2$ of dimension 3

\begin{center}
${\mathcal T}_3^2$ = 
\ytableausetup{centertableaux}
\begin{ytableau}
\none & \none [-3] & \none [-1] & \none [1] & \none [3] \\ \none [{\phantom -}3] & 0 & 0 & 0 & 1  \\ \none [{\phantom -}1] & 0 & 2 & 0\\  \none [-1] & 0 & 0 \\ \none [-3] & 1\\ \end{ytableau}
\quad
\text{and}
\quad
${\mathcal R}_3^2$ = 
\ytableausetup{centertableaux}
\begin{ytableau}
\none & \none [-3] & \none [-1] & \none [1] & \none [3] \\ \none [{\phantom -}3] & 0 & 0 & 0 & 1  \\ \none [{\phantom -}1] & 0 & 2 & 2\\  \none [-1] & 0 & 2 \\ \none [-3] & 1\\ 
\end{ytableau}.
\end{center}

\item For the orbit ${\mathcal O}_2^1$ of dimension 2

\begin{center}
${\mathcal T}_2^1$ = 
\ytableausetup{centertableaux}
\begin{ytableau}
\none & \none [-3] & \none [-1] & \none [1] & \none [3] \\ \none [{\phantom -}3] & 0 & 0 & 1 & 0  \\ \none [{\phantom -}1] & 0 & 0 & 1\\  \none [-1] & 1 & 1 \\ \none [-3] & 0\\ \end{ytableau}
\quad
\text{and}
\quad
${\mathcal R}_2^1$ = 
\ytableausetup{centertableaux}
\begin{ytableau}
\none & \none [-3] & \none [-1] & \none [1] & \none [3] \\ \none [{\phantom -}3] & 0 & 0 & 1 & 1  \\ \none [{\phantom -}1] & 0 & 0 & 2\\  \none [-1] & 1 & 2 \\ \none [-3] & 1\\ 
\end{ytableau}.
\end{center}

\item For the orbit ${\mathcal O}_2^2$ of dimension 2

\begin{center}
${\mathcal T}_2^2$ = 
\ytableausetup{centertableaux}
\begin{ytableau}
\none & \none [-3] & \none [-1] & \none [1] & \none [3] \\ \none [{\phantom -}3] & 0 & 0 & 0 & 1  \\ \none [{\phantom -}1] & 0 & 1 & 1\\  \none [-1] & 0 & 1 \\ \none [-3] & 1\\ \end{ytableau}
\quad
\text{and}
\quad
${\mathcal R}_2^2$ = 
\ytableausetup{centertableaux}
\begin{ytableau}
\none & \none [-3] & \none [-1] & \none [1] & \none [3] \\ \none [{\phantom -}3] & 0 & 0 & 0 & 1  \\ \none [{\phantom -}1] & 0 & 1 & 2\\  \none [-1] & 0 & 2 \\ \none [-3] & 1\\ 
\end{ytableau}.
\end{center}

\item For the orbit ${\mathcal O}_0^1$ of dimension 0

\begin{center}
${\mathcal T}_0^1$ = 
\ytableausetup{centertableaux}
\begin{ytableau}
\none & \none [-3] & \none [-1] & \none [1] & \none [3] \\ \none [{\phantom -}3] & 0 & 0 & 0 & 1  \\ \none [{\phantom -}1] & 0 & 0 & 2\\  \none [-1] & 0 & 2 \\ \none [-3] & 1\\ \end{ytableau}
\quad
\text{and}
\quad
${\mathcal R}_0^1$ = 
\ytableausetup{centertableaux}
\begin{ytableau}
\none & \none [-3] & \none [-1] & \none [1] & \none [3] \\ \none [{\phantom -}3] & 0 & 0 & 0 & 1  \\ \none [{\phantom -}1] & 0 & 0 & 2\\  \none [-1] & 0 & 2 \\ \none [-3] & 1\\ 
\end{ytableau}.
\end{center}

\end{itemize}

\begin{figure}[p]
\begin{center}
\begin{tikzpicture}
\draw (0, 0) -- (-2, 2);
\draw (0, 0) -- (2, 2);
\draw (-2, 2) -- (-2, 3);
\draw  (2, 2) -- (2, 3);
\draw (2, 2) -- (-2, 3);
\draw (-2, 3) -- (-2, 4);
\draw (-2, 3) -- (2, 4);
\draw (2, 3) -- (2, 4); 
\draw (-2, 4) -- (0, 5);
\draw (2, 4) -- (0, 5);
\draw (0, 0) node[below] {${\mathcal O}_0^1$};
\draw (-2, 2) node[left] {${\mathcal O}_2^1$};
\draw (2, 2) node[right] {${\mathcal O}_2^2$};
\draw (-2, 3) node[left] {${\mathcal O}_3^1$};
\draw (2, 3) node[right] {${\mathcal O}_3^2$};
\draw (-2, 4) node[left] {${\mathcal O}_4^2$};
\draw (2, 4) node[right] {${\mathcal O}_4^1$};
\draw (0, 5) node[above] {${\mathcal O}_5^1$};
\draw (0, 0) node {$\bullet$};
\draw (-2, 2) node {$\bullet$};
\draw (2, 2) node {$\bullet$};
\draw (-2, 3) node {$\bullet$};
\draw (2, 3) node {$\bullet$};
\draw (-2, 4) node {$\bullet$};
\draw (2, 4) node {$\bullet$};
\draw (0, 5) node {$\bullet$};
\end{tikzpicture}
\end{center}
\caption{Poset of orbits obtained by the rank tableaux comparaison}\label{FigurePosetExample4.23}
\end{figure}

We have illustrated in figure~\ref{FigurePosetExample4.23} the Hasse diagram of the poset of orbits obtained by comparing the rank tableaux ${\mathcal R}$ for the set of orbits.
\end{example}

\begin{example}\label{E:4.24}
(In the situation of \ref{S:SetUpAeven})  Let $m = 2$, $G$ be the connected component of the identity of the group of automorphisms of the finite dimensional vector space $V$ over ${\mathbf k}$ preserving  a fixed non-degenerate  symmetric bilinear form $\langle\ , \  \rangle:V \times V \rightarrow {\mathbf k}$,   ${\mathcal I} = \{3, 1, -1, -3\}$, $\dim(V_1) = \dim(V_{-1}) = 3$, $\dim(V_3) = \dim(V_{-3}) = 2$. In other words,  $\delta_1 = \delta_{-1} = 3$,  $\delta_3 = \delta_{-3} = 2$, $G = SO_{10}({\mathbf k})$ and $\iota(t)$ on $V_i$ for $i \in {\mathcal I}$ is given by $\iota(t) v = t^i v$ for all $v \in V_i$ and $t \in {\mathbf k}^{\times}$.  In this case,  $\dim({\mathfrak g}_2) = 9$ and there are 8 orbits. We will write for each of these orbits:  the orthogonal symmetric ${\mathcal I}$-tableau ${\mathcal T} \in {\mathfrak T}_{\delta}$, the ${\mathcal I}$-tableau ${\mathcal R}= \varTheta({\mathcal T}) \in  {\mathfrak R}_{\delta}$ and its dimension. 

\begin{table}[h]
	\begin{center}\renewcommand{\arraystretch}{1.25}
		\begin{tabular} {| l || r  | r  | r  | r  | r |  r | c |}
			\hline
			Orbit &  $10$  & $01$ & $11$ & $02$ & $12$ & $22$ & Jordan Decomposition \\ \hline
			${\mathcal O}_9^1$ & $0$ & $1$ & $0$ & $0$ & $0$ & $1$ & $1^2 4^2$\\ \hline
			${\mathcal O}_8^1$ & $0$ & $0$ & $1$ & $0$ & $1$ & $0$ & $2^2 3^2$\\ \hline
			${\mathcal O}_7^1$ & $1$ & $1$ & $0$ & $0$ & $1$ & $0$ & $1^4 3^2$\\ \hline
			${\mathcal O}_6^1$ & $0$ & $1$ & $2$ & $0$ & $0$ & $0$ & $1^2 2^4$\\ \hline
			${\mathcal O}_5^1$ & $1$ & $0$ & $1$ & $1$ & $0$ & $0$ & $1^2 2^4$\\ \hline
			${\mathcal O}_4^1$ & $1$ & $2$ & $1$ & $0$ & $0$ & $0$ & $1^6 2^2$\\ \hline
			${\mathcal O}_3^1$ & $2$ & $1$ & $0$ & $1$ & $0$ & $0$ & $1^6 2^2$\\ \hline
			${\mathcal O}_0^1$ & $2$ & $3$ & $0$ & $0$ & $0$ & $0$  & $1^{10}$\\ \hline
		\end{tabular}
	\end{center}
	\caption{List of the values of the coefficient function ${\mathbf c}$.}\label{T:Table2} 	
\end{table}
The columns of the  table~\ref{T:Table2}  are indexed by the  dimension of the indecomposable orthogonal representation of $A_2^{even}$ and Jordan type for each orbit.	
The subscript in the notation for each orbit is its dimension and the upperscript is there to distinguish between the different orbits of the same dimension.
Now  the tableau ${\mathcal T} \in {\mathfrak T}_{\delta}$, the tableau ${\mathcal R} \in  {\mathfrak R}_{\delta}$ for each orbit are

\begin{itemize}
\item For the orbit ${\mathcal O}_9^1$ of dimension 9

\begin{center}
${\mathcal T}_9^1$ = 
\ytableausetup{centertableaux}
\begin{ytableau}
\none & \none [-3] & \none [-1] & \none [1] & \none [3] \\ \none [{\phantom -}3] & 2 & 0 & 0 & 0  \\ \none [{\phantom -}1] & 0 & 0 & 1\\  \none [-1] & 0 & 1 \\ \none [-3] & 0\\ \end{ytableau}
\quad
\text{and}
\quad
${\mathcal R}_9^1$ = 
\ytableausetup{centertableaux}
\begin{ytableau}
\none & \none [-3] & \none [-1] & \none [1] & \none [3] \\ \none [{\phantom -}3] & 2 & 2 & 2 & 2  \\ \none [{\phantom -}1] & 2 & 2 & 3\\  \none [-1] & 2 & 3 \\ \none [-3] & 2\\ 
\end{ytableau}.
\end{center}

\item For the orbit ${\mathcal O}_8^1$ of dimension 8

\begin{center}
${\mathcal T}_8^1$ = 
\ytableausetup{centertableaux}
\begin{ytableau}
\none & \none [-3] & \none [-1] & \none [1] & \none [3] \\ \none [{\phantom -}3] & 0 & 1 & 1 & 0  \\ \none [{\phantom -}1] & 1 & 0 & 0\\  \none [-1] & 1 & 0 \\ \none [-3] & 0\\ \end{ytableau}
\quad
\text{and}
\quad
${\mathcal R}_8^1$ = 
\ytableausetup{centertableaux}
\begin{ytableau}
\none & \none [-3] & \none [-1] & \none [1] & \none [3] \\ \none [{\phantom -}3] & 0 & 1 & 2 & 2  \\ \none [{\phantom -}1] & 1 & 2 & 3\\  \none [-1] & 2 & 3 \\ \none [-3] & 2\\ 
\end{ytableau}.
\end{center}

\item For the orbit ${\mathcal O}_7^1$ of dimension 7

\begin{center}
${\mathcal T}_7^1$ = 
\ytableausetup{centertableaux}
\begin{ytableau}
\none & \none [-3] & \none [-1] & \none [1] & \none [3] \\ \none [{\phantom -}3] & 0 & 1 & 0 & 1  \\ \none [{\phantom -}1] & 1 & 0 & 1\\  \none [-1] & 0 & 1 \\ \none [-3] & 1\\ \end{ytableau}
\quad
\text{and}
\quad
${\mathcal R}_7^1$ = 
\ytableausetup{centertableaux}
\begin{ytableau}
\none & \none [-3] & \none [-1] & \none [1] & \none [3] \\ \none [{\phantom -}3] & 0 & 1 & 1 & 2  \\ \none [{\phantom -}1] & 1 & 2 & 3\\  \none [-1] & 1 & 3 \\ \none [-3] & 2\\ 
\end{ytableau}.
\end{center}

\item For the orbit ${\mathcal O}_6^1$ of dimension 6

\begin{center}
${\mathcal T}_6^1$ = 
\ytableausetup{centertableaux}
\begin{ytableau}
\none & \none [-3] & \none [-1] & \none [1] & \none [3] \\ \none [{\phantom -}3] & 0 & 0 & 2 & 0  \\ \none [{\phantom -}1] & 0 & 0 & 1\\  \none [-1] & 2 & 1 \\ \none [-3] & 0\\ \end{ytableau}
\quad
\text{and}
\quad
${\mathcal R}_6^1$ = 
\ytableausetup{centertableaux}
\begin{ytableau}
\none & \none [-3] & \none [-1] & \none [1] & \none [3] \\ \none [{\phantom -}3] & 0 & 0 & 2 & 2  \\ \none [{\phantom -}1] & 0 & 0 & 3\\  \none [-1] & 2 & 3 \\ \none [-3] & 2\\ 
\end{ytableau}.
\end{center}

\item For the orbit ${\mathcal O}_5^1$ of dimension 5

\begin{center}
${\mathcal T}_5^1$ = 
\ytableausetup{centertableaux}
\begin{ytableau}
\none & \none [-3] & \none [-1] & \none [1] & \none [3] \\ \none [{\phantom -}3] & 0 & 0 & 1 & 1  \\ \none [{\phantom -}1] & 0 & 2 & 0\\  \none [-1] & 1 & 0 \\ \none [-3] & 1\\ \end{ytableau}
\quad
\text{and}
\quad
${\mathcal R}_5^1$ = 
\ytableausetup{centertableaux}
\begin{ytableau}
\none & \none [-3] & \none [-1] & \none [1] & \none [3] \\ \none [{\phantom -}3] & 0 & 0 & 1 & 2  \\ \none [{\phantom -}1] & 0 & 2 & 3\\  \none [-1] & 1 & 3 \\ \none [-3] & 2\\ 
\end{ytableau}.
\end{center}

\item For the orbit ${\mathcal O}_4^1$ of dimension 4

\begin{center}
${\mathcal T}_4^1$ = 
\ytableausetup{centertableaux}
\begin{ytableau}
\none & \none [-3] & \none [-1] & \none [1] & \none [3] \\ \none [{\phantom -}3] & 0 & 0 & 1 & 1  \\ \none [{\phantom -}1] & 0 & 0 & 2\\  \none [-1] & 1 & 2 \\ \none [-3] & 1\\ \end{ytableau}
\quad
\text{and}
\quad
${\mathcal R}_4^1$ = 
\ytableausetup{centertableaux}
\begin{ytableau}
\none & \none [-3] & \none [-1] & \none [1] & \none [3] \\ \none [{\phantom -}3] & 0 & 0 & 1 & 2  \\ \none [{\phantom -}1] & 0 & 0 & 3\\  \none [-1] & 1 & 3 \\ \none [-3] & 2\\ 
\end{ytableau}.
\end{center}

\item For the orbit ${\mathcal O}_3^1$ of dimension 3

\begin{center}
${\mathcal T}_3^1$ = 
\ytableausetup{centertableaux}
\begin{ytableau}
\none & \none [-3] & \none [-1] & \none [1] & \none [3] \\ \none [{\phantom -}3] & 0 & 0 & 0 & 2  \\ \none [{\phantom -}1] & 0 & 2 & 1\\  \none [-1] & 0 & 1 \\ \none [-3] & 2\\ \end{ytableau}
\quad
\text{and}
\quad
${\mathcal R}_3^1$ = 
\ytableausetup{centertableaux}
\begin{ytableau}
\none & \none [-3] & \none [-1] & \none [1] & \none [3] \\ \none [{\phantom -}3] & 0 & 0 & 0 & 2  \\ \none [{\phantom -}1] & 0 & 2 & 3\\  \none [-1] & 0 & 3 \\ \none [-3] & 2\\ 
\end{ytableau}.
\end{center}

\item For the orbit ${\mathcal O}_0^1$ of dimension 0

\begin{center}
${\mathcal T}_0^1$ = 
\ytableausetup{centertableaux}
\begin{ytableau}
\none & \none [-3] & \none [-1] & \none [1] & \none [3] \\ \none [{\phantom -}3] & 0 & 0 & 0 & 2  \\ \none [{\phantom -}1] & 0 & 0 & 3\\  \none [-1] & 0 & 3 \\ \none [-3] & 2\\ \end{ytableau}
\quad
\text{and}
\quad
${\mathcal R}_0^1$ = 
\ytableausetup{centertableaux}
\begin{ytableau}
\none & \none [-3] & \none [-1] & \none [1] & \none [3] \\ \none [{\phantom -}3] & 0 & 0 & 0 & 2  \\ \none [{\phantom -}1] & 0 & 0 & 3\\  \none [-1] & 0 & 3 \\ \none [-3] & 2\\ 
\end{ytableau}.
\end{center}

\end{itemize}

\begin{figure}[p]
\begin{center}
\begin{tikzpicture}
\draw (0, 0) -- (-2, 4);
\draw (0, 0) -- (2, 3);
\draw (2, 3) -- (2, 5);
\draw  (-2, 4) -- (2, 5);
\draw (-2, 4) -- (-2, 6);
\draw (2, 5) -- (2, 7);
\draw (2, 7) -- (0, 8);
\draw (-2, 6) -- (0, 8);
\draw (0, 8) -- (0, 9); 
\draw (0, 0) node[below] {${\mathcal O}_0^1$};
\draw (-2, 4) node[left] {${\mathcal O}_4^1$};
\draw (2, 5) node[right] {${\mathcal O}_5^1$};
\draw (-2, 6) node[left] {${\mathcal O}_6^1$};
\draw (2, 7) node[right] {${\mathcal O}_7^1$};
\draw (0, 8) node[right] {\hskip 0.1in ${\mathcal O}_8^1$ };
\draw (2, 3) node[right] {${\mathcal O}_3^1$};
\draw (0, 9) node[above] {${\mathcal O}_9^1$};
\draw (0, 0) node {$\bullet$};
\draw (-2, 4) node {$\bullet$};
\draw (2, 3) node {$\bullet$};
\draw (-2, 6) node {$\bullet$};
\draw (2, 7) node {$\bullet$};
\draw (0, 8) node {$\bullet$};
\draw (2, 5) node {$\bullet$};
\draw (0, 9) node {$\bullet$};
\end{tikzpicture}
\end{center}
\caption{Poset of orbits obtained by the rank tableaux comparaison}\label{FigurePosetExample4.24}
\end{figure}

We have illustrated in figure~\ref{FigurePosetExample4.24} the Hasse diagram of the poset of orbits obtained by comparing the rank tableaux ${\mathcal R}$ for the set of orbits  
\end{example}

\section{$G^{\iota}$-orbits in ${\mathfrak g}_2$ when $\vert {\mathcal I} \vert$ is odd.}

\subsection{}
In this section, we will restrict ourself to the symplectic and orthogonal cases when $\vert {\mathcal I} \vert$ is odd. It is simpler and more similar to what we did in the previous section. The special orthogonal case will be treated in the next section. The reason for this is that special orthogonal isomorphism is more restricted that orthogonal isomorphism as we saw in \ref{SS:ExampleIsomorphismSpecialOrtho}.

\subsection{}\label{S:SetUpOdd}
In this section,  
\begin{itemize}
\item $m$ is an integer $> 0$;
\item $G$ is the group of automorphisms of a finite dimensional vector space $V$ over ${\mathbf k}$ preserving a fixed non-degenerate  symmetric (respectively skew-symmetric) form $\langle\ , \  \rangle: V \times V \rightarrow {\mathbf k}$, in other words $G = O(V)$ (respectively $G = Sp(V)$); 
\item $\mathfrak g$ is the Lie algebra of $G$, more precisely $X \in {\mathfrak g}$ if and only if $X:V \rightarrow V$ is an endomorphism of $V$ such that $\langle X(v_1), v_2\rangle + \langle v_1 , X(v_2) \rangle = 0$ for all $v_1, v_2 \in V$;
\item $V$ is identified to its dual $V^*$ by the isomorphism $\phi: V \rightarrow V^*$ defined by $v \mapsto \phi_v$, where $\phi_v: V \rightarrow {\mathbf k}$ is $\phi_v(x) = \langle v, x\rangle$ for all $x \in V$;  
\item ${\mathcal I} = \{n \in {\mathbb Z} \mid n \equiv 0 \pmod 2, -2m <  n  < 2m \}$;
\item $\oplus_{i \in {\mathcal I}} V_i$ is a direct sum decomposition of $V$ by  vector subspaces $V_i \ne 0$ such that 
\begin{itemize} \item $\langle v, v'\rangle = 0$ whenever $v \in V_i$, $v' \in V_j$ and $i + j \ne 0$;
			\item $V_{-i} = V_i^*$ for all $i \in {\mathcal I}$ under the identification given by $\phi$ above.
\end{itemize} 
Note that the restriction of the non-degenerate symmetric (respectively skew-symmetric) form bilinear $\langle\ , \ \rangle$ to $V_0 \times V_0$ is a non-degenerate symmetric (respectively skew-symmetric)  form.
\item ${\mathcal B} = \coprod_{i\in {\mathcal I}} {\mathcal B}_i$ is a basis of $V$ such that each ${\mathcal B}_i = \{u_{i, j}  \mid 1 \leq j \leq \delta_i\}$  is a basis of $V_i$ for each $i \in {\mathcal I}$ and $i \ne 0$, where $\delta_i$ is the dimension of $V_i$, and ${\mathcal B}_0 = \{u_{0, j}\mid 1 \leq j \leq \delta_0\}$ (respectively ${\mathcal B}_0 = \{u_{0, j}, u_{0, -j} \mid 1 \leq j \leq \delta'_0\}$) is a basis of $V_0$, where $\delta_0$ (respectively $\delta_0 = 2\delta'_0$) is the dimension of $V_0$  and these bases are such that, for $i, j \in {\mathcal I}$, $i \ne 0$, $j \ne 0$, $1 \leq r \leq \delta_i$ and $1 \leq s \leq \delta_j$, we have
\[
\langle u_{i, r}, u_{j, s}\rangle = \begin{cases}  1, &\text{if $i + j = 0$, $r = s$ and $i > 0$;}\\ \epsilon, &\text{if $i + j = 0$, $r = s$ and $i< 0$;} \\ 0, &\text{otherwise;} \end{cases}
\]
while for $1 \leq r, s \leq \delta_0$, we have 
\[
\langle u_{0, r}, u_{0, s}\rangle = \begin{cases} 1, &\text{if $r + s = \delta_0 + 1$;}\\ 0, &\text{otherwise;}\end{cases}
\]
(respectively for $1 \leq \vert r \vert \leq \delta'_0$ and $1 \leq \vert s \vert \leq \delta'_0$, we have
\[
\langle u_{0, r}, u_{0, s}\rangle = \begin{cases} 1, &\text{if $r + s = 0$ and $r> 0$;}\\ \epsilon, &\text{if $r + s = 0$ and $r < 0$;} \\ 0, &\text{otherwise;)}\end{cases}
\]
and, if $i \ne 0$, $1 \leq s \leq \delta_i$, $1 \leq r \leq \delta_0$(respectively $1 \leq \vert r \vert \leq \delta'_0$), we have 
\[
\langle u_{0, r}, u_{i, s}\rangle = \langle  u_{i, s},  u_{0, r}\rangle = 0.
\]
\item $\iota: {\mathbf k}^{\times} \rightarrow G$ is the homomorphism defined by $\iota(t) v = t^i v$ for all $i \in {\mathcal I}$, $v \in V_i$ and $t \in {\mathbf k}^{\times}$;
\item ${\mathbf B}$ is the set of ${\mathcal I}$-boxes;
\item ${\mathbf B}/\tau$ denote the set of $\langle \tau \rangle$-orbits ${\mathcal O}$ in ${\mathbf B}$; 
\item $\nu = \vert {\mathbf B}/\tau \vert = m^2$;
\item In the symplectic case, $V_0^+$ denotes the subspace of $V_0$ generated  by  ${\mathcal B}_0^+ = \{u_{0, j} \mid 1 \leq j \leq \delta'_0 \}$ and $V_0^-$ denotes the subspace of $V_0$ generated by  ${\mathcal B}_0^- = \{u_{0, -j} \mid 1 \leq j \leq \delta'_0 \}$. Obviously ${\mathcal B}_0^+ $ and ${\mathcal B}_0^- $ are respectively bases of $V_0^+$ and  $V_0^-$
\end{itemize}

\begin{lemma}\label{G2AsRepresOdd}
\begin{enumerate}[\upshape (a)]

\item $g \in G^{\iota}$ if and only if all the following conditions are verified:
\begin{itemize}
\item $g(V_i) \subseteq V_i$   for all $i  \in {\mathcal I}$;
\item the restriction $g_{\vert_{V_i}} = g_i$ of $g$ to $V_i$  belongs to $GL(V_i)$ for all $i \in {\mathcal I}$;
\item $(g_{-i})^* g_i = Id_{V_i}$ for all $i \in {\mathcal I}$, $i \ne 0$;
\item $g_0 \in O(V_0)$ (respectively $g_0 \in Sp(V_0)$) relative to the non-degenerate bilinear form $\langle\  , \  \rangle$ restricted to $V_0 \times V_0$.
\end{itemize}
Here $g_{-i}^*: V_i \rightarrow V_i$ is defined using the identification of $V_i$ with $V_{-i}^*$ given by $\phi$. So $\langle g_{-i}^*(u), v \rangle = \langle u, g_{-i}(v)\rangle$ for all $u \in V_i$ and $v \in V_{-i}$.

\item $X \in {\mathfrak g}_2$  if and only if all the following conditions are verified:
\begin{itemize}
\item $X(V_i) \subseteq V_{i + 2}$ for all $i \in {\mathcal I}$, $i \ne (2m - 2)$;
\item $X_{-i} = -X_{i - 2}^T$ for all $i \in {\mathcal I}$  and $i \geq 4$;
\item $X_{-2} = - JX^T_0$ where $J$ is the $\delta_0 \times \delta_0$-matrix equal to 
\[
J = \begin{pmatrix} 0 & 0 & \dots & 0 & 1\\ 0 & 0 & \dots & 1 & 0 \\ \vdots & \vdots & \udots & \vdots & \vdots\\ 0 & 1 &\dots & 0 & 0\\ 1 & 0 & \dots & 0 & 0 \end{pmatrix};
\] 
\[
\left(\text{respectively  }
X_0 = (X_0^+ ,  X_0^-) \text{  and   } X_{-2} = \begin{pmatrix} X_{-2}^+ \\ X_{-2}^- \end{pmatrix} = \begin{pmatrix}{\phantom -} (X_0^-)^T \\  -(X_0^+)^T \end{pmatrix} \right),
\]
\end{itemize}
where $X_i$ is the matrix of the linear transformation $X\vert_{V_i}: V_i \rightarrow V_{i + 2}$ of the restriction of $X$ to $V_i$ relative to the bases ${\mathcal B}_i$ and ${\mathcal B}_{i + 2}$ for all $i \in {\mathcal I}$ and $i \ne (2m - 2)$,  $X_0^+$ is the matrix of the restriction of $X_0$ to $V_0^+$ relative to the bases ${\mathcal B}_0^+$ and ${\mathcal B}_{2}$ , $X_0^-$ is the matrix of the restriction of $X_0$ to $V_0^-$  relative to the bases ${\mathcal B}_0^-$ and ${\mathcal B}_{2}$, $X_{-2}^+$ is the matrix of the linear transformation obtained from $X_{-2}$ followed by the projection of $V_0^+ \oplus V_0^-$ to $V_0^+$  relative to the bases ${\mathcal B}_{-2}$ and ${\mathcal B}_{0}^+$ and $X_{-2}^-$ is the matrix of the linear transformation obtained from $X_{-2}$ followed by the projection of $V_0^+ \oplus V_0^-$ to $V_0^-$  relative to the bases ${\mathcal B}_{-2}$ and ${\mathcal B}_{0}^-$. Above $M^T$ denotes the transpose of the matrix $M$. 

Moreover the subset of elements 
\[
(X_i)_{\begin{subarray}{l} i \in {\mathcal I}\\ i \ne (2m - 2)\end{subarray}} \in \left(\bigoplus_{\begin{subarray}{c} i \in {\mathcal I}\\ i \ne (2m - 2)\end{subarray}} Hom(V_i, V_{i + 2})\right),
\]
  such that the above conditions for the $X_i$ are satisfied,  is a subspace of 
  \[
  \bigoplus_{\begin{subarray}{c} i \in {\mathcal I}\\ i \ne (2m - 2)\end{subarray}} Hom(V_i, V_{i + 2})
  \]
   isomorphic to ${\mathfrak g}_2$.

\item $Z \in {\mathfrak g}_0$ if and only if all the following conditions are verified
\begin{itemize}
\item $Z(V_i) \subseteq V_{i}$ for all $i \in {\mathcal I}$;
\item $Z_{-i} = -Z_{i}^T$ for all $i \in {\mathcal I}$ and $i \geq 2$;
\item  $Z_0$ is such that $\langle Z_0(v), v'\rangle + \langle v, Z_0(v') \rangle = 0$ for all $v, v' \in V_0$; in other words,  $Z_0$ is in the Lie algebra corresponding to $V_0$ and the bilinear form $\langle\ , \  \rangle$;
\end{itemize}
where we denote  $Z_i$ is the matrix of the linear transformation $Z\vert_{V_i}: V_{i} \rightarrow V_{i}$ the restriction of $Z$ to $V_i$ relative to the bases ${\mathcal B}_i$ and ${\mathcal B}_{i}$ for all $i \in {\mathcal I}$. 

Moreover the subset of elements 
\[
(Z_i)_{i \in {\mathcal I}} \in \left(\bigoplus_{ i \in {\mathcal I}} Hom(V_i, V_{i})\right),
\]
 such that the above conditions for the $Z_i$ are satisfied,  is a subspace of 
 \[
 \bigoplus_{ i \in {\mathcal I}} Hom(V_i, V_{i})
 \]
  isomorphic to ${\mathfrak g}_{0}$.
 

\item $\{(g_i)_{i \in {\mathcal I}} \in \prod_{i \in {\mathcal I}} GL(V_i) \mid g_{-i} = (g_i^*)^{-1} \text{ for all $i > 0$ and } g_0 \in G_0\}$ is a closed subgroup of  $\prod_{i \in {\mathcal I}} GL(V_i)$ and is isomorphic to $\prod_{i \in {\mathcal I}, i > 0} GL(V_i) \times G_0$,  where $G_0$  is the orthogonal group $O(V_0)$ (respectively the symplectic group $Sp(V_0)$). 

Moreover the function 
\[
\Phi: G^{\iota} \rightarrow \left. \left\{(g_i)_{i \in {\mathcal I}} \in \prod_{i \in {\mathcal I}} GL(V_i)\   \right\vert g_{-i} = (g_i^*)^{-1} \text{ for all $i > 0$ and } g_0 \in G_0 \right\}
\]
defined by $\Phi(g) = (g_{i})_{i \in {\mathcal I}}$  is a well-defined  isomorphism of groups.


\item The restriction of the adjoint action of $G$ to $G^{\iota}$ acts on ${\mathfrak g}_2$ and,  under the description of ${\mathfrak g}_2$ as a subspace of $\oplus_{i \in {\mathcal I}, i \ne (2m - 2)} Hom(V_i, V_{i + 2})$ in (b), this action  is given by 
\[
Ad(g)(X) \mapsto (g_{i + 2}X_{i}g_{i}^{-1})_{i \in {\mathcal I}, i \ne (2m - 2)}
\]
for all $g \in G^{\iota}$ and where $\Phi(g) = (g_{i})_{i \in {\mathcal I}}$. 
\end{enumerate}
\end{lemma}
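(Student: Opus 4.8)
The plan is to prove parts (a) through (e) of Lemma~\ref{G2AsRepresOdd} by closely following the structure of the proof of Lemma~\ref{G2AsRepresEven}, adapting each step to accommodate the extra middle component $V_0$ and the fact that $0 \in {\mathcal I}$. The guiding principle throughout is that $V_i$ is exactly the $t^i$-eigenspace of $\iota(t)$, that distinct $i \in {\mathcal I}$ give distinct eigenvalues, and that the conditions defining $\mathfrak g$, $G$, $\mathfrak g_2$, $\mathfrak g_0$ are all bilinear or linear conditions that can be checked on the basis ${\mathcal B}$.

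First I would prove (a). For the forward direction: since $g\iota(t) = \iota(t) g$ and the $V_i$ are the distinct eigenspaces of each $\iota(t)$, $g$ preserves each $V_i$, so $g_i := g|_{V_i} \in GL(V_i)$. The condition $\langle g u, g v\rangle = \langle u, v\rangle$ restricted to $u \in V_i$, $v \in V_{-i}$ with $i \ne 0$ gives $\langle g_{-i}^* g_i(u), v\rangle = \langle u, v\rangle$ hence $g_{-i}^* g_i = \mathrm{Id}_{V_i}$; restricted to $u, v \in V_0$ it says exactly $g_0 \in O(V_0)$ (resp. $Sp(V_0)$). The converse is a direct check that the listed conditions force $g \in G$ and $g\iota(t)=\iota(t)g$. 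Then (d) is an immediate corollary: the map $\Phi(g) = (g_i)_{i\in{\mathcal I}}$ is a bijective group homomorphism onto the stated subgroup, and the subgroup is manifestly closed (being cut out by the polynomial equations $g_{-i} = (g_i^*)^{-1}$ and $g_0 \in G_0$) and isomorphic to $\prod_{i>0} GL(V_i) \times G_0$ since the $g_{-i}$ for $i>0$ are determined by the $g_i$.

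Next I would treat (b) and (c), which are parallel. For (b): $Ad(\iota(t))X = t^2 X$ translates to $\iota(t) X = t^2 X\iota(t)$, so $X(V_i) \subseteq V_{i+2}$ for $i \ne 2m-2$ (and $X(V_{2m-2}) = 0$). The condition $\langle X v, v'\rangle + \langle v, X v'\rangle = 0$ evaluated on $v \in V_{-i}$, $v' \in V_{i-2}$ with $i \ge 4$ gives $X_{-i} = -X_{i-2}^T$ exactly as in the even case. The new feature is the pair $(V_{-2}, V_0)$: evaluating the identity on $v \in V_{-2}$ and $v' \in V_0$, and on $v, v' \in V_0$ (note $X(V_0) \subseteq V_2$ and $X(V_{-2}) \subseteq V_0$), and unwinding it in the chosen basis of $V_0$ — where the Gram matrix is $J$ in the orthogonal case and the standard symplectic matrix in the symplectic case — yields $X_{-2} = -J X_0^T$ in the orthogonal case and the block relation $X_{-2} = \bigl(\begin{smallmatrix} (X_0^-)^T \\ -(X_0^+)^T\end{smallmatrix}\bigr)$ in the symplectic case. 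The converse and the statement that the resulting set of tuples $(X_i)$ is a subspace isomorphic to $\mathfrak g_2$ follow by checking the bilinear identity on ${\mathcal B}$. Part (c) is the same argument with degree $0$ in place of degree $2$: $Z(V_i) \subseteq V_i$, $Z_{-i} = -Z_i^T$ for $i \ge 2$, and $Z_0$ lies in the Lie algebra of $(V_0, \langle\,,\,\rangle)$ because the identity restricted to $V_0 \times V_0$ is precisely the defining condition of that Lie algebra.

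Finally (e) follows formally: for $g \in G^\iota$ and $X \in \mathfrak g_2$, $Ad(g)(X) = gXg^{-1}$, and restricting to $V_i$ gives $(gXg^{-1})|_{V_i} = g_{i+2} X_i g_i^{-1}: V_i \to V_{i+2}$, which is the claimed formula once one notes (from (d)) that $\Phi(g) = (g_i)_{i\in{\mathcal I}}$ records exactly these restrictions, and that $Ad(g)$ indeed preserves $\mathfrak g_2$ since $\iota'$ commutes appropriately — more simply, because $\mathfrak g_2$ is an eigenspace of $Ad(\iota(t))$ and $g$ commutes with $\iota(t)$. The main obstacle I anticipate is purely bookkeeping: getting the signs and the matrix $J$ (or the symplectic block form) right in the $(V_{-2}, V_0)$ interaction of part (b), since this is where the odd case genuinely differs from the even case and where the non-diagonal Gram matrix on $V_0$ enters; everything else is a routine transcription of the even-case proof. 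I would carry out that sign computation carefully on basis elements $u_{-2,r}$, $u_{0,s}$ and $u_{2,q}$ using the explicit pairings fixed in \ref{S:SetUpOdd}, and relegate the remaining verifications to "easily checked on the basis ${\mathcal B}$."
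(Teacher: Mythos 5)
Your proposal is correct and follows essentially the same route as the paper's proof: the eigenspace argument for (a), basis-by-basis verification of the bilinear identity for (b) and (c) with the special care reserved for the $(V_{-2},V_0)$ pairing via the Gram matrix $J$ (resp.\ the symplectic block form), and (d), (e) as formal consequences. The only superfluous step is checking the identity on $v,v'\in V_0$ in part (b), which is vacuous since $X(V_0)\subseteq V_2$ pairs trivially with $V_0$; this does not affect correctness.
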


\begin{proof}
(a) $\Rightarrow$) For  $i \in {\mathcal I}$, $V_i$ is the eigenspace of $\iota(t)$ with eigenvalue $t^i$ for all $t \in {\mathbf k}^{\times}$. Moreover  when $i, j \in {\mathcal I}$ and $i \ne j$,  these eigenvalues $t^i$ and $t^j$ are distinct.  Because $g \iota(t) = \iota(t) g$ for all $t \in {\mathbf k}^{\times}$ when $g \in G^{\iota}$, then $g$ preserves these eigenspaces and we get $g(V_i) \subseteq V_i$ for all $i \in I$. $g_i \in GL(V_i)$ follows easily  because $g \in GL(V)$ and $g(V_i) \subseteq V_i$.

Because we have $\langle g(v), g(v') \rangle = \langle v, v' \rangle$ for all $v, v' \in V$ and our conditions on the direct sum decomposition, we get for all $v \in V_i$ and $v' \in V_{-i}$ that 
\[
\langle g_i(v), g_{-i}(v')\rangle = \langle g_{-i}^* g_i(v), v'\rangle = \langle v, v'\rangle  \Rightarrow  (g_{-i}^*) g_i = Id_{V_i} 
\]
 for all $i \in I$, $ i \ne 0$.  For $i = 0$, we get that $\langle g_{0}(v), g_{0}(v')\rangle = \langle v, v'\rangle$ for all $v, v' \in V_0$.
 Consequently  $g_0 \in O(V_0)$ in the orthogonal case (respectively $g_0 \in Sp(V_0)$ in the symplectic case) relative to the non-degenerate bilinear form $\langle\  , \  \rangle$ restricted to $V_0 \times V_0$.
  
 $\Leftarrow$) If $g(V_i) \subseteq V_i$, $g_i \in GL(V_i)$, $(g_{-i})^* g_i = Id_{V_i}$ for all $i \in {\mathcal I}$, $i \ne 0$ and $g_0 \in O(V_0)$ in the orthogonal case  (respectively $g_0 \in Sp(V_0)$ in the symplectic case), then $g$ is an automorphism. Because of the condition on our direct sum decomposition of $V$,  we get easily that $g$ preserves the non-degenerate bilinear form $\langle\ , \ \rangle$, in other words $g \in G$. Because $g(V_i) \subseteq V_i$, we get that $g$ commutes with $\iota(t)$ for all $t \in {\mathbf k}^{\times}$ and $g \in G^{\iota}$.  (a) is proved.

(b) $\Rightarrow$)  Because $Ad(\iota(t)) X = t^2 X$ for all $t \in {\mathbf k}^{\times}$ when $X \in {\mathfrak g}_2$, we have $\iota(t) X = t^2 X \iota(t)$ for all $t \in {\mathbf k}^{\times}$.  So if $v \in V_i$, then $\iota(t) X(v) =  t^2 X \iota(t) (v) = t^{i + 2} X(v)$ and consequently $X(V_i) \in V_{i + 2}$ for all $i \in {\mathcal I}$, $i \ne (2m - 2)$. Note that this last equation means also that $X(V_{2m - 2}) = 0$

Note that if $i \in {\mathcal I}$, then $-i \ne i - 2$, because $-i = i -2$ we would have $i = 1$ and the elements of ${\mathcal I}$ are even numbers.  Because $\langle X(v), v' \rangle + \langle v, X(v')\rangle = 0$ for all $v, v' \in V$ and our conditions on the direct sum decomposition, we get for $i \in {\mathcal I}$, $i \geq 2$ and for all $v \in V_{-i}$,  $v' \in V_{i - 2}$ that  
\[
\langle X_{-i}(v), v'\rangle + \langle v, X_{i - 2} (v')\rangle = 0.
\]
In this last equation, if we substitute 
\[
X_{-i}(u_{-i, s}) = \sum_{q' = 1}^{\delta_{i - 2}} \xi_{q', s}^{(-i)} u_{(-i + 2), q'} \quad \text{ and } \quad X_{(i - 2)}(u_{(i - 2), r}) = \sum_{q = 1}^{\delta_{i}} \xi_{q, r}^{(i - 2)} u_{i, q},
\]
we get for $i \geq 4$ that $\epsilon \xi_{rs}^{(-i)} + \epsilon \xi_{sr}^{(i - 2)} = 0$ for all $1 \leq s \leq \delta_i$ and $1 \leq r \leq \delta_{i - 2}$ and thus $X_{-i} = -X_{i - 2}^T $. 

If $i = 2$ and we are in the orthogonal case, from $\langle X_{-2}(v), v'\rangle + \langle v, X_0(v')\rangle = 0$ for all $v \in V_{-2}$, $v' \in V_0$ and writing 
\[
X_{-2}(u_{-2, s}) = \sum_{q' = 1}^{\delta_{0}} \xi_{q',s}^{(-2)} u_{0, q'}  \quad \text{ and } \quad X_{0}(u_{0, r}) = \sum_{q = 1}^{\delta_{2}} \xi_{q, r}^{(0)} u_{2, q},
\]
we get that $\xi_{(\delta_0 + 1 - r), s}^{(-2)} + \xi_{s, r}^{(0)} = 0$ for all $1 \leq r \leq \delta_0$, $1 \leq s \leq \delta_2$ and $X_{-2} = -J X_0^T$.

If $i = 2$ and we are in the symplectic case,  from $\langle X_{-2}(v), v'\rangle + \langle v, X_{0} (v')\rangle = 0$ for all $v \in V_{-2}$,  $v' \in V_{0}$ and writing
\[
X_{-2}(u_{-2, s}) = \sum_{q' = 1}^{\delta'_{0}} \xi_{-q',s}^{(-2)} u_{0, -q'}  +  \sum_{q'' = 1}^{\delta'_{0}} \xi_{q'', s}^{(-2)} u_{0, q''} \quad \text{ and } \quad X_{0}(u_{0, r}) = \sum_{q = 1}^{\delta_{2}} \xi_{q, r}^{(0)} u_{2, q},
\]
for $s = 1, 2, \dots, \delta_2$ and $r = -\delta'_0, \dots, -2, -1, 1, 2, \dots, \delta'_0$, we get that  
\[
\begin{cases} \xi_{-r, s}^{(-2)} -  \xi_{s, r}^{(0)} = 0 &\text{if  $r < 0$;}\\ \\ -\xi_{-r, s}^{(-2)} -  \xi_{s, r}^{(0)} = 0 &\text{if  $r > 0$.}  \end{cases}
\] 

Consequently
\[
X_{-2} =  \begin{pmatrix} X_{-2}^+\\ X_{-2}^- \end{pmatrix} = \begin{pmatrix} \phantom -(X_0^-)^T\\ -(X_0^+)^T \end{pmatrix} \text{ where }  X_0 = (X_0^+ ,  X_0^-).
\]

$\Leftarrow$)  If $X_{-i} = -X_{i  - 2}^T$ for all $i \in {\mathcal I}$, $i \geq 4$,  and in the orthogonal case, we have $X_{-2} = -X_0^T$; while in the symplectic case, we have 
\[
X_0 = (X_0^+ ,  X_0^-) \quad \text{  and } X_{-2} =  \begin{pmatrix} X_{-2}^+\\ X_{-2}^- \end{pmatrix} = \begin{pmatrix} \phantom -(X_0^-)^T\\ -(X_0^+)^T \end{pmatrix},
\]
with the above notation,  then we get that $\langle X(v), v' \rangle + \langle v, X(v')\rangle = 0$ for all $v, v' \in V$ by only checking it on the basis ${\mathcal B}$  and this means that $X \in {\mathfrak g}$. If we add the condition  $X(V_i) \subseteq V_{i + 2}$ for all $i \in {\mathcal I}$, $i \ne (2m - 2)$, we have  that $X \in {\mathfrak g}_2$. The rest of the statements in (b) are easily proved. 

(c) $\Rightarrow$)  Because $Ad(\iota(t)) Z = Z$ for all $t \in {\mathbf k}^{\times}$ when $Z \in {\mathfrak g}_0$, we have $\iota(t) Z =  Z \iota(t)$ for all $t \in {\mathbf k}^{\times}$.  So if $v \in V_i$, then $\iota(t) Z(v) = Z \iota(t) (v) = t^{i} Z(v)$ and consequently $Z(V_i) \in V_{i}$ for all $i \in {\mathcal I}$. 

Because $\langle Z(v), v' \rangle + \langle v, Z(v')\rangle = 0$ for all $v, v' \in V$ and our conditions on the direct sum decomposition, we get for $i \in {\mathcal I}$ and for all $v \in V_{i}$,  $v' \in V_{-i}$ that  
\[
\langle Z_{i}(v), v'\rangle + \langle v, Z_{-i} (v')\rangle = 0.
\]
In this last equation, if we substitute 
\[
Z_{i}(u_{i, s}) = \sum_{q' = 1}^{\delta_{i}} \zeta_{q', s}^{(i)} u_{i, q'} \quad \text{ and } \quad Z_{-i}(u_{-i, r}) = \sum_{q = 1}^{\delta_{i}} \zeta_{q, r}^{(-i)} u_{-i, q},
\]
we get for $i \geq 2$ that $ \zeta_{r,s}^{(i)} +  \zeta_{s,r}^{(-i)} = 0$ for all $1 \leq s, r \leq \delta_i$. Thus $Z_{-i} = -Z_{i}^T $ for all $i \in {\mathcal I}$ and $i \geq 2$.

For $i = 0$, we get that 
\[
\langle Z_{0}(v), v'\rangle + \langle v, Z_{0} (v')\rangle = 0 \quad \text{ for all $v, v' \in V_0$,}
\]
in other words, $Z_0$  is in the Lie algebra corresponding to $V_0$ and the bilinear form $\langle \  , \  \rangle$.

$\Leftarrow$)  If $Z_{-i} = -Z_{i}^T$ for all $i \in {\mathcal I}$,  $i \geq 2$ and if $Z_0$  is in the Lie algebra corresponding to $V_0$ and the bilinear form $\langle \  , \  \rangle$, then we get that $\langle Z(v), v' \rangle + \langle v, Z(v')\rangle = 0$ for all $v, v' \in V$ by only checking it on the basis ${\mathcal B}$  and this means that $Z \in {\mathfrak g}$. If we add the condition  $Z(V_i) \subseteq V_{i}$ for all $i \in {\mathcal I}$, we have  that $Z \in {\mathfrak g}_0$. The rest of the statements in (c) are easily proved. 

(d) follows easily from (a).

(e) follows easily from (b) and the fact that $Ad(g)(X) = gXg^{-1}$ for all $g \in G^{\iota}$. 
\end{proof}

\begin{corollary} The dimension $\dim({\mathfrak g}_2)$ of ${\mathfrak g}_2$  is equal to
\[
 \sum_{i \in {\mathcal I}, 0 \leq i < (2m - 2)} \delta_i \delta_{i + 2}
\]
\end{corollary}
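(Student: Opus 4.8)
The statement to prove is the formula $\dim({\mathfrak g}_2) = \sum_{i \in {\mathcal I},\, 0 \leq i < (2m-2)} \delta_i \delta_{i+2}$ in the odd case. The plan is to read this off directly from part (b) of Lemma~\ref{G2AsRepresOdd}, which gives an explicit description of ${\mathfrak g}_2$ as a subspace of $\bigoplus_{i \in {\mathcal I},\, i \ne (2m-2)} Hom(V_i, V_{i+2})$ cut out by linear conditions. First I would set up a direct-sum decomposition of the ambient space. Because ${\mathcal I}$ is symmetric under $i \mapsto -i$ and $0 \in {\mathcal I}$, the indices $i$ with $i \ne (2m-2)$ split into three groups: the strictly positive ones $i$ with $0 < i \le (2m-4)$, the value $i = 0$, and the strictly negative ones $i$ with $-(2m-2) \le i < 0$. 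Correspondingly $\bigoplus_{i} Hom(V_i, V_{i+2})$ decomposes, and the conditions $X_{-i} = -X_{i-2}^T$ for $i \ge 4$ together with the relation between $X_{-2}$ and $X_0$ express each $X_i$ with $i \le -2$ uniquely in terms of the $X_j$ with $j \ge 0$ (respectively in terms of $X_0$ for the block producing $X_{-2}$). Hence the projection of ${\mathfrak g}_2$ onto the "nonnegative part" $\bigoplus_{i \in {\mathcal I},\, 0 \le i \le (2m-4)} Hom(V_i, V_{i+2})$ is a linear isomorphism, with no further constraints imposed on that part. So $\dim({\mathfrak g}_2) = \sum_{i \in {\mathcal I},\, 0 \le i \le (2m-4)} \dim Hom(V_i, V_{i+2}) = \sum_{i \in {\mathcal I},\, 0 \le i < (2m-2)} \delta_i \delta_{i+2}$, which is exactly the claimed formula.

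The only point requiring a moment of care is the block involving $X_0$ and $X_{-2}$, because here the counting is slightly different in the orthogonal versus the symplectic case, and one must check that in both cases the freedom is precisely $\dim Hom(V_0, V_2) = \delta_0\delta_2$, with nothing extra. In the orthogonal case $X_0 \in Hom(V_0, V_2)$ is free and $X_{-2} = -JX_0^T$ is then determined, contributing $\delta_0 \delta_2$. In the symplectic case one writes $X_0 = (X_0^+, X_0^-)$ with $X_0^\pm \in Hom(V_0^\pm, V_2)$, so $X_0$ ranges freely over $Hom(V_0, V_2)$ of dimension $(\delta'_0 + \delta'_0)\delta_2 = \delta_0\delta_2$, and $X_{-2}$ is again pinned down by $X_{-2}^+ = (X_0^-)^T$, $X_{-2}^- = -(X_0^+)^T$; so the contribution is once more $\delta_0\delta_2$. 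This is the step I expect to be the main (if minor) obstacle — everything else is bookkeeping — but since Lemma~\ref{G2AsRepresOdd}(b) is already proved and explicitly states that the subset of tuples satisfying these conditions is a subspace isomorphic to ${\mathfrak g}_2$, the argument reduces to counting the free parameters among the $X_i$ for $i \ge 0$. I would therefore phrase the proof as a single short paragraph: invoke Lemma~\ref{G2AsRepresOdd}(b), note that the $X_i$ with $i < 0$ are determined by those with $i \ge 0$ via the stated relations, and conclude that $\dim({\mathfrak g}_2) = \sum_{i \in {\mathcal I},\, 0 \le i < (2m-2)} \delta_i \delta_{i+2}$.
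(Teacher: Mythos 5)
Your proof is correct and follows the same route as the paper, which simply derives the formula from Lemma~\ref{G2AsRepresOdd}(b) by observing that the components $X_i$ with $i<0$ are determined by those with $0\le i\le 2m-4$, which are unconstrained. Your extra check that the $X_0$/$X_{-2}$ block contributes exactly $\delta_0\delta_2$ in both the orthogonal and symplectic cases is a correct and welcome elaboration of the paper's ``follows easily'' remark.
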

\begin{proof}
This follows easily from proposition~\ref{G2AsRepresOdd} (b).
\end{proof}

\subsection{}
In (b) and (e) above, we have shown that ${\mathfrak g}_2$ with the action of the restriction of the adjoint action of $G$ to $G^{\iota}$ is isomorphic to the space of orthogonal (respectively symplectic) representations of the symmetric quiver of type $A_{m}^{odd}$. 

\subsection{}
The notions of symmetric function $c : {\mathbf B} \rightarrow {\mathbb N}$, of associated coefficient function ${\mathbf c}:{\mathbf B}/\tau \rightarrow {\mathbb N}$ and dimension vector $\delta({\mathbf c}) = \delta(c)$ for ${\mathbf c}$ and $c$ are identical to the ones when ${\mathcal I}$ is even as in definition~\ref{D:CoefficientFunction}.

${\mathfrak C}$ and ${\mathfrak C}_{\delta}$ for $\delta = (\delta_i)_{i \in {\mathcal I}} \in {\mathbb N}^{\mathcal I}$ such that $\delta_i = \delta_{-i}$ for all $i \in {\mathcal I}$ denote respectively sets of coefficient functions as in notation~\ref{N:CoefficientFunctionSet}. 

\begin{notation}\label{N:CCorrespondanceOdd}
Let $\delta = (\delta_i)_{i \in {\mathcal I}} \in {\mathbb N}^{\mathcal I}$ be a vector such that $\delta_{-i} = \delta_i$ for all $i \in {\mathcal I}$ and let  ${\mathbf c} \in  {\mathfrak C}_{\delta}$ be a coefficient function.  Denote by $V({\mathbf c})$: the direct sum as ${\mathcal I}$-graded vector space and  as  $\epsilon$-representations of ${\mathbf c}({\mathcal O})$ copies of $V({\mathcal O})$ with a ${\mathcal I}$-basis 
\[
{\mathcal B}_{\mathbf c} = \coprod_{{\mathcal O} \in {\mathbf B}/\tau} \coprod_{b \in {\mathcal O}} \{v_i^j(b) \mid i \in Supp(b), 1 \leq j \leq {\mathbf c}({\mathcal O})\}
\]
such that, for  $b = b(i_1, j_1, k_1)$, $b' = b(i_2, j_2, k_2)$, $i, i' \in {\mathcal I}$, $1 \leq j \leq c(b)$ and $1 \leq j' \leq c(b')$, we have 
\[
\langle v_i^j(b), v_{i'}^{j'}(b')\rangle_{\mathbf c} 
= \begin{cases} 1, &\text{if $j' = j$, $b' = \tau(b)$, $i > 0$ and $i + i' = 0$; } \\
\epsilon, &\text{if $j' = j$,  $b' = \tau(b)$, $i < 0$ and $i + i' = 0$; }\\ 
1, &\text{if $j' = j$,  $b' = \tau(b)$, $i = i' = 0$ and $b$ is above}\\ &\text{the principal diagonal; }\\ 
\epsilon, &\text{if $j' = j$,  $b' = \tau(b)$, $i = i' = 0$ and $b$ is below}\\ &\text{the principal diagonal; }\\ 
\epsilon^{k_1}, &\text{if $j' = j$,  $b' = \tau(b)$,  $\tau(b) \ne b$,  $i = i' = 0$ and $b$ is on}\\ &\text{the principal diagonal; }\\ 
1, &\text{if $j' = j$,  $b' = \tau(b) = b$,  $i = i' = 0$ and $b$ is on}\\ &\text{the principal diagonal; }\\
0, &\text{otherwise.}
\end{cases}
\]
This is just from the formulae of \ref{N:BasisVOrbit} adapted to our situation.

We will denote by ${\mathfrak g}(V({\mathbf c}))$:  the Lie algebra corresponding to $(V({\mathbf c}),  \langle\  , \   \rangle_{\mathbf c})$. In other words, ${\mathfrak g}(V({\mathbf c}))$ is the vector space of linear transformations $X:V({\mathbf c}) \rightarrow V({\mathbf c})$ such that $\langle X(u), v\rangle_{\mathbf c} + \langle u, X(v)\rangle_{\mathbf c} = 0$ for all  $u, v \in V({\mathbf c})$.

From our construction and proved similarly as in notation~\ref{N:CCorrespondance}, we have that $V({\mathbf c}) = \oplus_{i \in {\mathcal I}} V_i({\mathbf c})$ and $\dim(V_i({\mathbf c})) = \delta_i$ for all $i \in {\mathcal I}$.  

Let the three ${\mathcal I}$-graded  linear transformations  
\[
E_{\mathbf c}:V({\mathbf c}) \rightarrow V({\mathbf c}), \quad   H_{\mathbf c}:V({\mathbf c}) \rightarrow V({\mathbf c}) \quad \text{ and } \quad F_{\mathbf c}:V({\mathbf c}) \rightarrow V({\mathbf c})
\]
denote the direct sum as ${\mathcal I}$-graded linear transformation of ${\mathbf c}({\mathcal O})$ copies of the linear transformations 
\[
E_{\mathcal O}:V({\mathcal O}) \rightarrow V({\mathcal O}), \quad  H_{\mathcal O}:V({\mathcal O}) \rightarrow V({\mathcal O}) \quad \text{ and } \quad F_{\mathcal O}:V({\mathcal O}) \rightarrow V({\mathcal O}).
\]
By applying proposition~\ref{P:JMTripleProof} on each summand, we get that these three linear transformations are such that 
\begin{itemize}
\item $E_{\mathbf c}$ is of degree $2$, $H_{\mathbf c}$ is of degree $0$ and $F_{\mathbf c}$ is of degree $-2$;
\item $E_{\mathbf c}, H_{\mathbf c}, F_{\mathbf c}$ belong to the Lie algebra  ${\mathfrak g}(V({\mathbf c}))$;
\item $[H_{\mathbf c}, E_{\mathbf c}] = 2E_{\mathbf c}$, $[H_{\mathbf c}, F_{\mathbf c}] = -2F_{\mathbf c}$ and $[E_{\mathbf c}, F_{\mathbf c}] = H_{\mathbf c}$, in other words,  the triple $(E_{\mathbf c}, H_{\mathbf c}, F_{\mathbf c})$ is an ${\mathcal I}$-graded Jacobson-Morozov triple for the Lie algebra of $(V({\mathbf c}),  \langle\  , \   \rangle_{\mathbf c})$;
\item  the ${\mathcal I}$-graded linear transformation $\phi_{\mathbf c}: V({\mathbf c}) \rightarrow V({\mathbf c})$ given by 
\[
\phi_{{\mathbf c}, i} = {E_{\mathbf c}}\vert_{V_i({\mathbf c})}: V_i({\mathbf c}) \rightarrow V_{i + 2}({\mathbf c}) \quad \text{for all $i \in {\mathcal I}$  such that $(i + 2) \in {\mathcal I}$},
\]
is  such that $(V({\mathbf c}), \phi_{\mathbf c}, \langle\ , \ \rangle_{\mathbf c})$ is an $\epsilon$-representation.
\end{itemize}

As such we can express these linear transformations on the elements of the basis ${\mathcal B}_{\mathbf c}$ using the expressions in \ref{SS:EHFDefinition}. For example, let ${\mathcal O}$ denote the $\langle \tau \rangle$-orbit of $b$, then,  if the orbit ${\mathcal O}$ has no overlapping ${\mathcal I}$-box supports,  $1 \leq j \leq {\mathbf c}({\mathcal O})$ and $i \in Supp(b)$, we have
\[
E_{\mathbf c}(v_i^j(b)) = \begin{cases} {\phantom -} 0, &\text{ if $i = \max(Supp(b))$;}\\ {\phantom -} v_{i + 2}^j(b), &\text{ if $i \ne \max(Supp(b))$ and $i > 0$;}\\  -v_{i + 2}^j(b), &\text{ if $i \ne \max(Supp(b))$ and $i < 0$;} \end{cases}
\]
while if the orbit ${\mathcal O}$ has overlapping ${\mathcal I}$-box supports,  $b = b(i_1, j_1, k_1) \in {\mathcal O}$, $1 \leq j \leq {\mathbf c}({\mathcal O})$  and $i \in Supp(b)$, then we have 
\[
E_{\mathbf c}(v_i^j(b)) = \begin{cases} {\phantom -} 0, &\text{if $i = \max(Supp(b))$;}\\ {\phantom -} v_{i + 2}^j(b), &\text{if $i \ne \max(Supp(b))$ and $i > 0$;}\\  -v_{i + 2}^j(b), &\text{if  $i < 0$;}\\   {\phantom -} \epsilon v_2^j(b), &\text{if $i = 0$, $\max(Supp(b)) > 0$ and $b$ is below the principal}\\ &\text{diagonal;}\\ {\phantom -}  v_2^j(b), &\text{if $i = 0$, $\max(Supp(b)) > 0$ and $b$ is above the principal}\\ &\text{diagonal;} \\ {\phantom -} \epsilon^{k_1}v_{2}^j(b), &\text{if $i = 0$, $\max(Supp(b)) > 0$ and $b$ is on the principal}\\ &\text{diagonal;}  \end{cases}
\]
\end{notation}

\subsection{}\label{SS:coefficientOrbitOdd}
Given an element $X \in {\mathfrak g}_2$, we will now associate to it a coefficient function ${\mathbf c}_X:{\mathbf B}/\tau \rightarrow {\mathbb N}$ as follows. By lemma~\ref{G2AsRepresOdd}  (b) and with the notation of this lemma, $X \in {\mathfrak g}_2$ is equivalent to giving the $\epsilon$-representation  $(V, \phi_V, \langle\  ,\  \rangle)$, where 
\[
\phi_{V, i} = X\vert_{V_i} : V_i \rightarrow  V_{i + 2} \quad v \mapsto X(v) \quad  \text{ for all $v \in V_i$}
\]
is the restriction of $X$ on $V_i$ for all $i \in {\mathcal I}$, $i \ne (2m - 1)$. By the theorem of Krull-Remak-Schmidt, this $\epsilon$-representation can be written in essentially unique way as a direct sum of indecomposable  $\epsilon$-representations (up to isomorphism). We saw in proposition~\ref{P:JMTripleProof} that the set of indecomposable $\epsilon$-representations is in bijection with the set ${\mathbf B}/\tau$. More precisely, for each orbit ${\mathcal O} \in {\mathbf B}/\tau$, we have constructed a representative $(V({\mathcal O}), \phi_{\mathcal O}, \langle \  , \  \rangle_{\mathcal O})$ of the isomorphism class of the indecomposable $\epsilon$-representation. Denote by ${\mathbf c}_X({\mathcal O})$: the multiplicity (up to isomorphism) of the indecomposable  $\epsilon$-representation $(V({\mathcal O}), \phi_{\mathcal O}, \langle \  , \  \rangle_{\mathcal O})$. So the coefficient function ${\mathbf c}_X:{\mathbf B}/\tau \rightarrow {\mathbb N}$ is defined by ${\mathcal O} \mapsto {\mathbf c}_X({\mathcal O})$.

\begin{remark}
Above in the case of  $\epsilon = 1$, i.e. the orthogonal case, we are in the  situation where $G = O(V)$  as in the article \cite{DW2002} of Derksen and Weyman. We will treat the case of $SO(V)$ in the next section. Because we are in the orthogonal case, the coefficient function ${\mathbf c}_X:{\mathbf B}/\tau \rightarrow {\mathbb N}$  is well-defined.
\end{remark}

\subsection{}\label{SS:IsomorphismV(c)andVOddCase}
We will now constructed an ${\mathcal I}$-graded isomorphism $T_{\mathbf c}:V({\mathbf c}) \rightarrow V$ such that $\langle T_{\mathbf c}(u), T_{\mathbf c}(v)\rangle = \langle u, v \rangle_{\mathbf c}$ for all $u, v \in V({\mathbf c})$. We will define this isomorphism on the bases ${\mathcal B}_{\mathbf c}$ of $V({\mathbf c})$ and ${\mathcal B}$ of $V$. 

Let $i \in {\mathcal I}$. We will first define the function if $i > 0$ and secondly if $i = 0$.  In the following, let $c:{\mathbf B} \rightarrow {\mathbb N}$ be the unique symmetric function corresponding to the coefficient function ${\mathbf c}: {\mathbf B}/\tau \rightarrow {\mathbb N}$.  

Start with $i > 0$. The proof is similar to the one when $\vert {\mathcal I} \vert$ is even in proposition~\ref{Prop_Indexation_Orbits_Even}.   We get easily that 
\[
\{b \in {\mathbf B} \mid i \in Supp(b)\} \rightarrow \{b' \in {\mathbf B} \mid -i = \sigma(i) \in Supp(b')\} \text{ defined by } b \mapsto \tau(b)
\]
is a bijection.  Fix a total order $b_1 > b_2 > \dots > b_n$ on the set $\{b \in {\mathbf B} \mid i \in Supp(b)\}$ such that $c(b_r) \geq c(b_s)$ whenever $r < s$ for $1 \leq r, s \leq n$ and let $b'_1 > b'_2 > \dots > b'_n$ be the total order on the set $\{b' \in {\mathbf B} \mid -i = \sigma(i) \in Supp(b')\}$ such that     $b'_r = \tau(b_r)$ for $1 \leq r \leq n$. 
Consider the Young diagram corresponding to the partition $c(b_1) \geq c(b_2) \geq \dots \geq c(b_n)$. So the row indexed by $b_r$ will have $c(b_r)$ columns.  We would get the same Young diagram $c(b'_1) \geq c(b'_2) \geq \dots \geq c(b'_n)$ if we had started with the total order  $b'_1 > b'_2 > \dots > b'_n$  on the set $\{b' \in {\mathbf B} \mid -i = \sigma(i) \in Supp(b')\}$. This follows from the fact that $c$ is symmetric.  This partition is a partition of $\delta_i$ from~\ref{N:CCorrespondanceOdd}.  In this Young diagram, we fill the boxes with the integers from 1 to $\delta_i$ in strictly increasing order from left to right and top to bottom.   We define $T_{{\mathbf c}, i}: V_i({\mathbf c}) \rightarrow V_i$ (respectively $T_{{\mathbf c}, -i}: V_{-i}({\mathbf c}) \rightarrow V_{-i}$)  to be the unique linear transformation such that if the entry in the box at row $r$ and column $s$ is $j$, then 
$T_{{\mathbf c}, i}(v_i^s(b_r)) = u_{i, j}$ (respectively $T_{{\mathbf c}, -i}(v_{-i}^s(b'_r)) = T_{{\mathbf c}, -i}(v_{-i}^s(\tau(b_r))) = u_{-i, j}$).

Secondly we consider the case $i = 0$.  In \ref{SS:EHFDefinition} (second case), we have noted that  the orbit ${\mathcal O} \in {\mathbf B}/\tau$ has overlapping ${\mathcal I}$-box supports if and only if  $0 \in Supp(b)$ for any ${\mathcal I}$-box $b \in {\mathcal O}$. 

In the symplectic case and when $\vert {\mathcal I} \vert$ is odd, all the orbits ${\mathcal O} \in {\mathbf B}/\tau$ are such that ${\mathcal O} = \{b, \tau(b)\}$ with $\tau(b) \ne b$ for some $b \in {\mathbf B}$, in other words, $\vert {\mathcal O} \vert = 2$. From this observation, we get that $c(b) = c(\tau(b)) = {\mathbf c}({\mathcal O})$. we have that
\[
\sum_{\begin{subarray}{c} b \in {\mathbf B}\\ 0 \in Supp(b)\end{subarray}} c(b) = 2 \sum_{\mathcal O} {\mathbf c}({\mathcal O}) = \delta_0 = 2\delta'_0 \quad \Rightarrow \quad \sum_{\mathcal O} {\mathbf c}({\mathcal O}) = \delta'_0,
\]
where the second sum over ${\mathcal O}$ is over the set of orbits ${\mathcal O}$ having overlapping ${\mathcal I}$-box supports. Fix a total order ${\mathcal O}_1 > {\mathcal O}_2 > \dots > {\mathcal O}_{n'}$ on the set 
\[
\{{\mathcal O} \in {\mathbf B}/\tau \mid {\mathcal O} \text{ has overlapping ${\mathcal I}$-box supports}\} 
\]
such that  ${\mathbf c}({\mathcal O}_r) \geq {\mathbf c}({\mathcal O}_s)$ whenever $r < s$ for $1 \leq r, s \leq n'$. Consider the Young diagram corresponding to the partition ${\mathbf c}({\mathcal O}_1) \geq {\mathbf c}({\mathcal O}_2) \geq \dots \geq {\mathbf c}({\mathcal O}_{n'})$. So the row indexed by ${\mathcal O}_r$ will have ${\mathbf c}({\mathcal O}_r)$ columns. This partition is a partition of $\delta'_0$ from our observation above.  In this Young diagram, we fill the boxes with the integers from 1 to $\delta'_0$ in strictly increasing order from left to right and top to bottom.   We define $T_{{\mathbf c}, 0}: V_0({\mathbf c}) \rightarrow V_0$  to be the unique linear transformation such that if the entry in the box at row $r$ corresponding to the orbit ${\mathcal O}_r$ and column $s$, where $1 \leq s \leq {\mathbf c}({\mathcal O}_r)$ is $j$,   then  we define
\[
T_{{\mathbf c}, 0}(v_0^s(b)) = \begin{cases} u_{0, j}, &\text{if $b$ is above the principal diagonal;}\\ u_{0, -j}, &\text{if $b$ is below the principal diagonal;}\\ u_{0, \epsilon^{k_1} j}, &\text{if $b$ is on the principal diagonal and $b = b(i_1, j_1, k_1)$;} \end{cases} 
\]
for $b \in {\mathcal O}_r$. 

In the orthogonal case and when $\vert {\mathcal I}\vert$ is odd,  the $\langle \tau \rangle$-orbits are of two types: either the $\langle \tau \rangle$-orbit ${\mathcal O} = \{ b \}$, where $b \in {\mathbf B}$, $\tau(b) = b$ and $b$ is on the principal diagonal, or the $\langle \tau \rangle$-orbit ${\mathcal O} = \{ b, \tau(b) \}$, where $b \in {\mathbf B}$, $\tau(b) \ne b$ and $b$ is not on the principal diagonal. Denote by ${\mathbf B}^+$: the set of ${\mathcal I}$-boxes strictly above the principal diagonal and by ${\mathbf B}^-$: the set of ${\mathcal I}$-boxes strictly below the principal diagonal.  Write 
\[
\sum_{\begin{subarray}{c} b \in {\mathbf B}\\ 0 \in Supp(b)\\ \tau(b) = b \end{subarray}} c(b) = \delta''_0 \quad \text{ et } \quad \sum_{\begin{subarray}{c} b \in {\mathbf B}^+\\ 0 \in Supp(b) \end{subarray}} c(b) = \delta'''_0 
\]
Note that $\delta''_0$ has the same parity than $\delta_0$ because
\[
\delta_0 = \sum_{\begin{subarray}{c} b \in {\mathbf B}\\ 0 \in Supp(b) \end{subarray}} c(b) = \sum_{\begin{subarray}{c} b \in {\mathbf B}\\ 0 \in Supp(b)\\ \tau(b) = b \end{subarray}} c(b) + 2 \sum_{\begin{subarray}{c} b \in {\mathbf B}^+\\ 0 \in Supp(b) \end{subarray}} c(b) = \delta''_0 + 2 \delta'''_0.
\]
Here we have used the fact that the function $c: {\mathbf B} \rightarrow {\mathbb N}$ is symmetric. 

We get easily that $\{b \in {\mathbf B}^+ \mid 0 \in Supp(b) \} \rightarrow \{b \in {\mathbf B}^- \mid 0 \in Supp(b) \}$ defined by $b \mapsto \tau(b)$ is a bijection. Fix a total order $b_1 > b_2 > \dots > b_n$ on the set $\{ b \in {\mathbf B}^+ \mid  0 \in Supp(b)\}$ such that $c(b_r) \geq c(b_s)$ whenever $r < s$ for $1 \leq r, s \leq n$ and let $b'_1 > b'_2 > \dots > b'_n$ be the total order on the set $\{ b' \in {\mathbf B}^- \mid  0 \in Supp(b')\}$ such that $b'_r = \tau(b_r)$ for $1 \leq r \leq n$. Consider the Young diagram ${\mathcal Y}_1$ corresponding to the partition $c(b_1) \geq c(b_2) \geq \dots \geq c(b_n)$. So the row indexed by $b_r$ will have $c(b_r)$ columns. We would get the same Young diagram $c(b'_1) \geq c(b'_2) \geq \dots \geq c(b'_n)$ if we had started with the total order $b'_1 > b'_2 > \dots > b'_n$, because $c$ is symmetric. This partition is a partition of $\delta'''_0$. In the Young diagram ${\mathcal Y}_1$, fill the boxes with the integers from $1$ to $\delta'''_0$ with strictly increasing order from left to right and top to bottom.

Consider also $\{ b'' \in {\mathbf B} \mid 0 \in Supp(b''), \tau(b'') = b''\}$. Fix a total order $b''_1 > b''_2 > \dots > b''_{n'}$ on this last set such that $c(b''_r) \geq c(b''_s)$ whenever $1 \leq r < s \leq n'$. Consider the Young diagram ${\mathcal Y}_2$ corresponding to the partition $c(b''_1) \geq c(b''_2) \geq \dots \geq c(b''_{n'})$. So the row indexed by $b''_r$ will have $c(b''_r)$ columns. This partition is a partition of $\delta''_0$.  In the Young diagram ${\mathcal Y}_2$, fill the boxes with the integers from $(\delta'''_0 + 1)$ to $(\delta'''_0 + \delta''_0)$ with strictly increasing order from left to right and top to bottom.

To define $T_{{\mathbf c}, 0}:V_0({\mathbf c}) \rightarrow V_0$, we need to consider two situations:  $\delta_0$ is either even or odd. Fix  a square root $\sqrt 2$  of $2$ and a square root $\sqrt{-2}$ of $-2$ in the field ${\mathbf k}$.  Recall our hypothesis about the characteristic $p$ of the field ${\mathbf k}$ in \ref{SS:CharacteristicK}. 

 First we start with $\delta_0$ being even and consequently $\delta''_0$ is even. Define with the same notation as above and for $b \in {\mathbf B}$, $0 \in Supp(b)$
\[
T_{{\mathbf c}, 0}(v_0^s(b)) = \begin{cases} 
u_{0, j}, &\text{if $b \in {\mathbf B}^+$ and $b = b_r$;}\\ \\
u_{0, (\delta_0 + 1 - j)}, &\text{if $b \in {\mathbf B}^-$ and $b = \tau(b_r) = b'_r$;}\\ \\
\frac{1}{\sqrt 2}\left(u_{0, j'} + u_{0, (\delta_0 + 1 - j')}\right), &\text{if $b \in {\mathbf B}$, $\tau(b) = b$, $b = b''_r$ and }\\ &\text{$(\delta'''_0 + 1) \leq j' \leq (\delta'''_0 + (\delta''_0 /2))$; }\\ \\
\frac{1}{\sqrt {-2}}\left(u_{0, j'} - u_{0, (\delta_0 + 1 - j')}\right), &\text{if $b \in {\mathbf B}$, $\tau(b) = b$, $b = b''_r$ and }\\ &\text{$(\delta'''_0 + (\delta''_0/2) + 1) \leq j' \leq (\delta'''_0 + \delta''_0)$; }
\end{cases}
\]
where the entry in the filled Young diagram ${\mathcal Y}_1$ in the box at row $r$ and column $s$ is $j$ and the entry in the filled Young diagram  ${\mathcal Y}_2$ in the box at row $r$ and column $s$ is $j'$.

Secondly consider the case where $\delta_0$ being odd and consequently $\delta''_0$ is also odd. Define with the same notation as above and for $b \in {\mathbf B}$, $0 \in Supp(b)$
\[
T_{{\mathbf c}, 0}(v_0^s(b)) = \begin{cases} 
u_{0, j}, &\text{if $b \in {\mathbf B}^+$ and $b = b_r$;}\\ \\
u_{0, (\delta_0 + 1 - j)}, &\text{if $b' \in {\mathbf B}^-$ and $b = \tau(b_r) = b'_r$;}\\ \\
\frac{1}{\sqrt 2}\left(u_{0, j'} + u_{0, (\delta_0 + 1 - j')}\right), &\text{if $b \in {\mathbf B}$, $\tau(b) = b$, $b = b''_r$ and }\\ &\text{$(\delta'''_0 + 1) \leq j' < (\delta'''_0 + ((\delta''_0 + 1)/2))$; }\\ \\
u_{0, j'} &\text{if $b \in {\mathbf B}$, $\tau(b) = b$, $b = b''_r$ and }\\ &\text{$j' = (\delta'''_0 + (\delta''_0 + 1)/2)$; }\\ \\
\frac{1}{\sqrt {-2}}\left(u_{0, j'} - u_{0, (\delta_0 + 1 - j')}\right), &\text{if $b \in {\mathbf B}$, $\tau(b) = b$, $b = b''_r$ and }\\ &\text{$(\delta'''_0 + (\delta''_0 + 1)/2) < j' \leq (\delta'''_0 + \delta''_0)$; }
\end{cases}
\]
where the entry in the filled Young diagram ${\mathcal Y}_1$ in the box at row $r$ and column $s$ is $j$ and the entry in the filled Young diagram  ${\mathcal Y}_2$ in the box at row $r$ and column $s$ is $j'$.

Finally $T_{\mathbf c}:V({\mathbf c}) \rightarrow V$ is the direct sum of the linear transformations $T_{{\mathbf c}, i}: V_i({\mathbf c}) \rightarrow V_i$ and $T_{{\mathbf c}, -i}: V_{-i}({\mathbf c}) \rightarrow V_{-i}$ for all $i \in {\mathcal I}$, $i > 0$ and of the linear transformation $T_{{\mathbf c}, 0}: V_0({\mathbf c}) \rightarrow V_0$.   Note that  the dimension vector  $\delta({\mathbf c}) $ of ${\mathbf c}$ is  $\delta({\mathbf c}) = (\delta_i)_{i \in {\mathcal I}}$, where $\delta_i = \dim(V_i)$ for all $i \in {\mathcal I}$.

\begin{lemma}\label{L:TcIsomorphismIodd}
With the above definition, we have that $T_{\mathbf c}:V({\mathbf c}) \rightarrow V$ is an ${\mathcal I}$-graded isomorphism such that $\langle T_{\mathbf c}(u), T_{\mathbf c}(v)\rangle = \langle u, v \rangle_{\mathbf c}$ for all $u, v \in V({\mathbf c})$.
\end{lemma}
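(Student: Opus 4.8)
The statement is that the explicitly constructed $\mathcal I$-graded map $T_{\mathbf c}:V(\mathbf c)\to V$ is an isomorphism and carries $\langle\ ,\ \rangle_{\mathbf c}$ to $\langle\ ,\ \rangle$. The proof mirrors Lemma~\ref{L:TcIsomorphism} for the even case but must treat the degree-zero component $V_0$ carefully, since that is where the new features (the square roots $\sqrt2,\sqrt{-2}$, the split into $\mathbf B^+$, $\mathbf B^-$ and the $\tau$-fixed boxes) enter. First I would observe that $T_{\mathbf c}$ is an $\mathcal I$-graded isomorphism: by construction each $T_{\mathbf c,i}$ for $i\ne 0$ sends the basis $\{v_i^s(b)\}$ bijectively onto $\mathcal B_i$ via the filled Young diagram, hence is invertible; for $i=0$ one checks separately in the symplectic case (where $T_{\mathbf c,0}$ permutes $\mathcal B_0$ up to the signs $\epsilon^{k_1}$, hence is invertible) and in the orthogonal case (where the images of the $v_0^s(b)$ are the $u_{0,j}$ for boxes in $\mathbf B^+$, the $u_{0,\delta_0+1-j}$ for boxes in $\mathbf B^-$, and the vectors $\tfrac1{\sqrt2}(u_{0,j'}+u_{0,\delta_0+1-j'})$, $\tfrac1{\sqrt{-2}}(u_{0,j'}-u_{0,\delta_0+1-j'})$, possibly a single $u_{0,j'}$, for $\tau$-fixed boxes). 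The crucial point in the orthogonal $V_0$ case is that these vectors form a basis of $V_0$ of the right size $\delta_0=\delta_0''+2\delta_0'''$, which I would check by writing down the change-of-basis matrix and noting it is block-triangular with invertible $2\times2$ blocks $\begin{pmatrix}1/\sqrt2 & 1/\sqrt2\\ 1/\sqrt{-2} & -1/\sqrt{-2}\end{pmatrix}$ of determinant $-2/(\sqrt2\sqrt{-2})\ne0$ (using the hypothesis on $p$).

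Next, to verify $\langle T_{\mathbf c}(u),T_{\mathbf c}(v)\rangle=\langle u,v\rangle_{\mathbf c}$ it suffices to check on the basis $\mathcal B_{\mathbf c}$. Since both forms vanish between $V_i$ and $V_{i'}$ when $i+i'\ne0$ (by the hypotheses in \ref{S:SetUpOdd} on $\mathcal B$ and the definition of $\langle\ ,\ \rangle_{\mathbf c}$ in \ref{N:CCorrespondanceOdd}), and $T_{\mathbf c}$ is $\mathcal I$-graded, only the pairings $\langle T_{\mathbf c}(v_i^s(b)),T_{\mathbf c}(v_{-i}^{s'}(b'))\rangle$ need attention. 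For $i>0$ the argument is identical to Lemma~\ref{L:TcIsomorphism}: using the common total order on $\{b:i\in Supp(b)\}$ and $\{b':-i\in Supp(b')\}$ matched by $\tau$, and the fact that $c$ is symmetric so the two Young diagrams coincide, one gets $\langle u_{i,j},u_{-i,j'}\rangle=\delta_{jj'}=\langle v_i^s(b),v_{-i}^{s'}(b')\rangle_{\mathbf c}$; for $i<0$ one reduces to the $i>0$ case by the $\epsilon$-symmetry $\langle u,v\rangle=\epsilon\langle v,u\rangle$ of both forms. This part is routine.

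The main work is the case $i=0$ in the orthogonal setting. Here I would expand $\langle T_{\mathbf c,0}(v_0^s(b)),T_{\mathbf c,0}(v_0^{s'}(b'))\rangle$ using $\langle u_{0,r},u_{0,s}\rangle=1$ iff $r+s=\delta_0+1$, and compare with $\langle v_0^s(b),v_0^{s'}(b')\rangle_{\mathbf c}$, which is $1$ (resp. $\epsilon=1$, resp. $\epsilon^{k_1}=1$, resp. $1$) exactly when $b'=\tau(b)$, $s'=s$, and $b$ is above (resp. below, resp. on with $\tau(b)\ne b$, resp. on with $\tau(b)=b$) the principal diagonal, and $0$ otherwise. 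One runs through the four kinds of box for $b$: if $b\in\mathbf B^+$ then $\tau(b)\in\mathbf B^-$ gets label $\delta_0+1-j$ against $b$'s label $j$, giving pairing $1$ and matching; if $b$ is $\tau$-fixed we compute $\langle\tfrac1{\sqrt2}(u_{0,j'}+u_{0,\delta_0+1-j'}),\tfrac1{\sqrt2}(u_{0,j'}+u_{0,\delta_0+1-j'})\rangle=\tfrac12(1+1)=1$, $\langle\tfrac1{\sqrt{-2}}(u_{0,j'}-u_{0,\delta_0+1-j'}),\tfrac1{\sqrt{-2}}(u_{0,j'}-u_{0,\delta_0+1-j'})\rangle=\tfrac1{-2}(-1-1)=1$, the cross terms between the two families vanish since their label sets are disjoint and $j'+(\delta_0+1-j'')\ne\delta_0+1$ unless $j'=j''$ whereupon the $+/-$ signs cancel, and in the odd-$\delta_0$ case the middle vector $u_{0,j'}$ with $2j'=\delta_0+1$ self-pairs to $1$ and is orthogonal to the others; all of this reproduces $\langle v_0^s(b),v_0^{s'}(b)\rangle_{\mathbf c}=1$ and orthogonality to distinct boxes. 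Finally for the symplectic $i=0$ case one checks $\langle T_{\mathbf c,0}(v_0^s(b)),T_{\mathbf c,0}(v_0^{s'}(b'))\rangle$ using $\langle u_{0,r},u_{0,s}\rangle=1$ iff $r+s=0$, $r>0$ (resp. $\epsilon$ if $r<0$), matching the formula for $\langle\ ,\ \rangle_{\mathbf c}$ on the degree-zero part. I expect the bookkeeping with the signs $\epsilon^{k_1}$ and the interaction of the two Young diagrams ${\mathcal Y}_1,{\mathcal Y}_2$ in the orthogonal $V_0$ case to be the only genuinely delicate point; everything else is a transcription of the even-case proof.
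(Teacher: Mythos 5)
Your proposal is correct and follows essentially the same route as the paper's proof: reduce to basis vectors, treat $i\ne 0$ exactly as in Lemma~\ref{L:TcIsomorphism}, and do the case analysis at $i=0$ separately in the symplectic and orthogonal settings, with the explicit pairings of the $\tfrac1{\sqrt2}(u_{0,j'}\pm u_{0,\delta_0+1-j'})$ vectors (your stated values and the vanishing of the cross terms, which cancel precisely when the labels satisfy $j'+j''=\delta_0+1$, agree with the paper's computation). The only addition is your explicit invertibility check of $T_{\mathbf c,0}$ via the change-of-basis determinant, which the paper leaves as ``clear from the construction.''
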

\begin{proof}
This proof is similar to the one in lemma~\ref{L:TcIsomorphism}. There are just  more cases to consider. To be complete , we have included the proof.

From our construction, it is clear that $T_{\mathbf c}$ is an ${\mathcal I}$-graded isomorphism.
To check that $\langle T_{\mathbf c}(u), T_{\mathbf c}(v)\rangle = \langle u, v \rangle_{\mathbf c}$ for all $u, v \in V({\mathbf c})$, it is enough to check it on the basis ${\mathcal B}_{\mathbf c}$. We want to consider $\langle T_{\mathbf c}(v_i^s(b)), T_{\mathbf c}(v_{i'}^{s'}(b'))\rangle$ where $i, i' \in {\mathcal I}$, $i \in Supp(b)$, $1 \leq s \leq c(b)$, $i' \in Supp(b')$ and $1 \leq s' \leq c(b')$. Because $T_{\mathbf c}$ is ${\mathcal I}$-graded, our hypothesis in~\ref{S:SetUpOdd} on the basis ${\mathcal B}$ of $V$ and our construction in~\ref{N:CCorrespondanceOdd}, we get if $i + i' \ne 0$, that 
\[
\langle T_{\mathbf c}(v_i^s(b)), T_{\mathbf c}(v_{i'}^{s'}(b'))\rangle = 0 = \langle v_i^s(b), v_{i'}^{s'}(b')\rangle_{\mathbf c}.
\]

Now let $i' = -i$ and  assume first that $i > 0$.  Consider the same total orders as above on the sets $\{b \in {\mathbf B} \mid i \in Supp(b)\}$  and $\{b' \in {\mathbf B} \mid -i = \sigma(i) \in Supp(b)\}$. Proceeding as above with the Young diagram, assume that   $b = b_r$ in the total order $b_1 > b_2 > \dots > b_n$ on the set $\{b \in {\mathbf B} \mid i \in Supp(b)\}$ and   $b' = b'_{r'}$ in the total order $b'_1 > b'_2 > \dots > b'_n$ on the set $\{b' \in {\mathbf B} \mid -i = \sigma(b) \in Supp(b')\}$, the entry in the box at row $r$ and column $s$ is $j$ and the entry in the box at row $r'$ and column $s'$ is $j'$, then 
\[
\begin{aligned}
\langle T_{\mathbf c}(v_i^s(b)), T_{\mathbf c}(v_{-i}^{s'}(b'))\rangle &= \langle T_{\mathbf c}(v_i^s(b_r)), T_{\mathbf c}(v_{-i}^{s'}(b'_{r'}))\rangle = \langle u_{i, j}, u_{-i, j'}\rangle\\ &= \begin{cases} 1, &\text{if $j = j'$;} \\ 0, &\text{otherwise;}\end{cases} = \begin{cases} 1, &\text{if $r= r'$ and $s = s'$;} \\ 0, &\text{otherwise.}\end{cases} 
\end{aligned}
\]
If we now consider the bilinear form on $V({\mathbf c})$, we have
\[
\begin{aligned}
\langle v_i^s(b), v_{-i}^{s'}(b')\rangle_{\mathbf c} &= \langle v_i^s(b_r), v_{-i}^{s'}(b'_{r'})\rangle_{\mathbf c} = \langle v_i^s(b_r), v_{-i}^{s'}(\tau(b_{r'}))\rangle_{\mathbf c}\\ &= \begin{cases} 1, &\text{if $r= r'$ and $s = s'$;} \\ 0, &\text{otherwise.}\end{cases} 
\end{aligned}
\]
So if $i > 0$, then $\langle T_{\mathbf c}(v_i^s(b)), T_{\mathbf c}(v_{-i}^{s'}(b'))\rangle = \langle v_i^s(b), v_{-i}^{s'}(b')\rangle_{\mathbf c}$.

If $i \in {\mathcal I}$, $i < 0$, then 
\[
\begin{aligned}
\langle T_{\mathbf c}(v_i^s(b)), T_{\mathbf c}(v_{-i}^{s'}(b'))\rangle &= \epsilon \langle T_{\mathbf c}(v_{-i}^{s'}(b')), T_{\mathbf c}(v_{i}^{s}(b))\rangle\\ &=  \epsilon  \langle v_{-i}^{s'}(b'), v_{i}^{s}(b)\rangle_{\mathbf c} =  \langle v_{-i}^{s}(b), v_{i}^{s'}(b')\rangle_{\mathbf c}.
\end{aligned}
\]

Assume now that $i = 0$ and we are in the symplectic case.  Consider the same total order as above on the set 
\[
\{{\mathcal O} \in {\mathbf B}/\tau \mid {\mathcal O} \text{ has overlapping ${\mathcal I}$-box supports}\}. 
\]
Proceeding as above with the Young diagram, assume that $0 \in Supp(b) \cap Supp(b')$, $b \in {\mathcal O}_r$, $b' \in {\mathcal O}_{r'}$, the entry in the box at row $r$ and column $s$ is $j$ and the entry in the box at row $r'$ and column $s'$ is $j'$. From our definition, we have that
\[
T_{\mathbf c}(v_0^s(b)) = \begin{cases} u_{0, j}, &\text{if $b$ is above the principal diagonal;}\\ u_{0, -j}, &\text{if $b$ is below the principal diagonal;}\\ u_{0, \epsilon^{k_1} j}, &\text{if $b$ is on the principal diagonal and $b = b(i_1, j_1, k_1)$;} \end{cases} 
\]
and 
\[
T_{\mathbf c}(v_0^{s'}(b')) = \begin{cases} u_{0, j'}, &\text{if $b'$ is above the principal diagonal;}\\ u_{0, -j'}, &\text{if $b'$ is below the principal diagonal;}\\ u_{0, \epsilon^{k'_1} j'}, &\text{if $b'$ is on the principal diagonal and $b' = b(i'_1, j'_1, k'_1)$.} \end{cases} 
\]

If $s' \ne s$, then $\langle T_{\mathbf c}(v_0^s(b)), T_{\mathbf c}(v_0^{s'}(b'))\rangle = 0$, because  $j \ne \pm j'$ from our construction above. 

If $s' = s$ and $b' \not \in {\mathcal O}_r$, then  $\langle T_{\mathbf c}(v_0^s(b)), T_{\mathbf c}(v_0^{s}(b'))\rangle = 0$,  because  $r' \ne r$ and again $j \ne \pm j'$. 

If $s' = s$ and $b' \in {\mathcal O}_r$, then either $b' = b$ or $b' = \tau(b)$. Recall that in the symplectic case, we never have $\tau(b) = b$, in other words the orbit ${\mathcal O}_r = \{b, \tau(b)\}$ has cardinality 2.  Now in the first case:  $b' = b$, we get that 
$\langle T_{\mathbf c}(v_0^s(b)), T_{\mathbf c}(v_0^{s}(b))\rangle = 0$ because $\langle\  , \  \rangle$ is a symplectic bilinear form.
In the second case, $b' = \tau(b)$, we get that  $\langle T_{\mathbf c}(v_0^s(b)), T_{\mathbf c}(v_0^{s}(\tau(b)))\rangle$ is equal to
\[
\begin{cases} \langle u_{0, j}, u_{0, -j}\rangle = 1&\text{if $b$ is above the principal diagonal;}\\  \langle u_{0, -j}, u_{0, j}\rangle = \epsilon &\text{if $b$ is below the principal diagonal;}\\  \langle u_{0, \epsilon^{k_1} j}, u_{0, \epsilon^{(1 - k_1)}j}\rangle = \epsilon^{k_1}, &\text{if $b = b(i_1, j_1, k_1)$ is on the principal diagonal.}
\end{cases}
\]
In this last case when the ${\mathcal I}$-box $b$ is on the principal diagonal, there are two situations: either $b = b(i_1, j_1, 0)$ or $b = b(i_1, j_1, 1)$. We have respectively $\tau(b) = b(i_1, j_1, 1)$ and $\tau(b) = b(i_1, j_1, 0)$. So we get 
\[
\begin{cases} \langle u_{0, j}, u_{0, -j}\rangle = 1,  &\text{if $b = b(i_1, j_1, 0)$ is on the principal diagonal.}\\  \langle u_{0, -j}, u_{0, j}\rangle = \epsilon,  &\text{if $b = b(i_1, j_1, 1)$ is on the principal diagonal.}
\end{cases}
\]

Note that we have 
\[
\langle v_0^s(b), v_{0}^{s'}(b')\rangle_{\mathbf c} 
= \begin{cases} 1, &\text{if $s' = s$,  $b' = \tau(b)$ and $b$ is above the principal diagonal;}\\ 
\epsilon, &\text{if $s' = s$,  $b' = \tau(b)$ and $b$ is below the principal diagonal;}\\ 
\epsilon^{k_1}, &\text{if $s' = s$,  $b' = \tau(b)$ and $b = b(i_1, j_1, k_1)$ is on the}\\ &\text{principal diagonal; }\\ 
0, &\text{otherwise.}
\end{cases}
\]

So we have $\langle T_{\mathbf c}(v_i^s(b)), T_{\mathbf c}(v_{i'}^{s'}(b'))\rangle = \langle v_i^s(b), v_{i'}^{s'}(b')\rangle_{\mathbf c}$ in the symplectic case,   where $i, i' \in {\mathcal I}$, $i \in Supp(b)$, $1 \leq s \leq c(b)$, $i' \in Supp(b')$ and $1 \leq s' \leq c(b')$. 

Assume now that $i = 0$ and we are in the orthogonal case. Consider the same total order: $b_1 > b_2 > \dots > b_n$ on the set $\{b \in {\mathbf B}^+ \mid 0 \in Supp(b)\}$ as above, the corresponding total order: $b'_1 > b'_2 > \dots > b'_n$ on the set $\{b' \in {\mathbf B}^- \mid 0 \in Supp(b')\}$ where $b'_r = \tau(b_r)$ for $1 \leq r \leq n$ as above and the same total order: $b''_1 > b''_2 > \dots > b''_{n'}$ on the set $\{b'' \in {\mathbf B} \mid 0 \in Supp(b''), \tau(b'') = b''\}$ as above.  By our definition, we have 
\[
\begin{cases}
T_{{\mathbf c}, 0}(v_0^s(b)) = u_{0, j}, &\text{if $b \in {\mathbf B}^+$ and $b = b_r$;}\\ \\
T_{{\mathbf c}, 0}(v_0^s(b')) = u_{0, (\delta_0 + 1 -  j)}, &\text{if $b \in {\mathbf B}^-$ and $b' = b'_r = \tau(b_r)$;}
\end{cases}
\]
where $j$ is the entry in the Young diagram ${\mathcal Y}_1$ in the box at row $r$ and column $s$.  In this case, we have that $1 \leq j \leq \delta'''_0$ and $(\delta''_0 + \delta'''_0 + 1) \leq (\delta_0 + 1 - j) \leq \delta_0$.

We also have that 
\[
T_{{\mathbf c}, 0}(v_0^s(b)) = \begin{cases} 
\frac{1}{\sqrt 2}\left(u_{0, j'} + u_{0, (\delta_0 + 1 - j')}\right), &\text{if $b \in {\mathbf B}$, $\tau(b) = b$, $b = b''_r$ and }\\ &\text{$(\delta'''_0 + 1) \leq j' \leq (\delta'''_0 + (\delta''_0 /2))$; }\\ \\
\frac{1}{\sqrt {-2}}\left(u_{0, j'} - u_{0, (\delta_0 + 1 - j')}\right), &\text{if $b \in {\mathbf B}$, $\tau(b) = b$, $b = b''_r$ and }\\ &\text{$(\delta'''_0 + (\delta''_0/2) + 1) \leq j' \leq (\delta'''_0 + \delta''_0)$; }
\end{cases}
\]
when $\delta_0$ is even and 
\[
T_{{\mathbf c}, 0}(v_0^s(b)) = \begin{cases} 
\frac{1}{\sqrt 2}\left(u_{0, j'} + u_{0, (\delta_0 + 1 - j')}\right), &\text{if $b \in {\mathbf B}$, $\tau(b) = b$, $b = b''_r$ and }\\ &\text{$(\delta'''_0 + 1) \leq j' < (\delta'''_0 + ((\delta''_0 + 1)/2))$; }\\ \\
u_{0, j'} &\text{if $b \in {\mathbf B}$, $\tau(b) = b$, $b = b''_r$ and }\\ &\text{$j' = (\delta'''_0 + (\delta''_0 + 1)/2)$; }\\ \\
\frac{1}{\sqrt {-2}}\left(u_{0, j'} - u_{0, (\delta_0 + 1 - j')}\right), &\text{if $b \in {\mathbf B}$, $\tau(b) = b$, $b = b''_r$ and }\\ &\text{$(\delta'''_0 + (\delta''_0 + 1)/2) < j' \leq (\delta'''_0 + \delta''_0)$; }
\end{cases}
\]
when $\delta_0$ is odd, where  the entry in the filled Young diagram  ${\mathcal Y}_2$ in the box at row $r$ and column $s$ is $j'$.
In this case we have that $(\delta'''_0 + 1) \leq j' \leq (\delta'''_0 + \delta''_0)$ and  $(\delta'''_0 + 1) \leq (\delta_0 + 1 - j') \leq (\delta'''_0 + \delta''_0)$.  Because of the above inequalities for $j$ and $j'$ and our hypothesis in  \ref{S:SetUpOdd}, we have for $b, {\overline b} \in {\mathbf B}$, $0 \in Supp(b) \cap Supp({\overline b})$, $1 \leq s \leq c(b)$ and $1 \leq {\overline s} \leq c({\overline b})$ that 
$\langle T_{{\mathbf c}, 0}(v_0^s(b)), T_{{\mathbf c}, 0}(v_0^{\overline s}({\overline b}))\rangle \ne 0$ only in the following cases:
\begin{itemize}
\item $b \in {\mathbf B}^+$, ${\overline b} \in {\mathbf B}^-$;
\item $b \in {\mathbf B}^-$, ${\overline b} \in {\mathbf B}^+$;
\item $b, {\overline b} \in {\mathbf B}$, $\tau(b) = b$, $\tau({\overline b}) = ({\overline b})$;   
\end{itemize}
and similarly we have that
$\langle v_0^s(b), v_0^{\overline s}({\overline b})\rangle_{\mathbf c} \ne 0$ only in the same cases as above:
\begin{itemize}
\item $b \in {\mathbf B}^+$, ${\overline b} \in {\mathbf B}^-$;
\item $b \in {\mathbf B}^-$, ${\overline b} \in {\mathbf B}^+$;
\item $b, {\overline b} \in {\mathbf B}$, $\tau(b) = b$, $\tau({\overline b}) = ({\overline b})$.   
\end{itemize}
We will now consider each of these cases.

If $b \in {\mathbf B}^+$, $0 \in Supp(b)$, $b = b_r$, $1 \leq s \leq c(b_r)$, ${\overline b} \in {\mathbf B}^-$, $0 \in Supp({\overline b})$, ${\overline b} = b'_{\overline r}  = \tau(b_{\overline r})$, $1 \leq {\overline s} \leq c({\overline b})$ and $j$ (respectively ${\overline j}$) is the entry in the Young diagram ${\mathcal Y}_1$ in the box at row $r$ (respectively ${\overline r}$)  and column $s$ (respectively ${\overline s}$), then
\[
\begin{aligned}
\langle T_{{\mathbf c}, 0}(v_0^s(b)), T_{{\mathbf c}, 0}(v_0^{\overline s}({\overline b}))\rangle &= \langle u_{0, j}, u_{0, (\delta_0 + 1 - {\overline j})}\rangle = \begin{cases} 1, &\text{if $j = {\overline j}$;}\\ 0, &\text{otherwise} \end{cases}\\ &= \begin{cases} 1, &\text{if $r = {\overline r}$ and $s = {\overline s}$;}\\ 0, &\text{otherwise} \end{cases}
\end{aligned}
\] 
but we also have
\[
\langle v_0^s(b_r), v_0^{\overline s}(b'_{\overline r})\rangle_{\mathbf c} = \langle v_0^s(b_r), v_0^{\overline s}(\tau(b_{\overline r}))\rangle_{\mathbf c} = \begin{cases} 1, &\text{if $r = {\overline r}$ and $s = {\overline s}$;}\\ 0, &\text{otherwise} \end{cases}
\]
and consequently 
\[
\langle T_{{\mathbf c}, 0}(v_0^s(b)), T_{{\mathbf c}, 0}(v_0^{\overline s}({\overline b}))\rangle = \langle v_0^s(b), v_0^{\overline s}({\overline b})\rangle_{\mathbf c}. 
\]

If $b \in {\mathbf B}^-$, $0 \in Supp(b)$, $b = b'_r = \tau(b_r)$, $1 \leq s \leq c(b'_r)$, ${\overline b} \in {\mathbf B}^+$, $0 \in Supp({\overline b})$, ${\overline b} = b_{\overline r}$, $1 \leq {\overline s} \leq c({\overline b})$ , then we just use  the previous case, as well as the symmetry of the bilinear forms $\langle\ , \ \rangle$ and $\langle\ , \ \rangle_{\mathbf c}$  to get that 
\[
\langle T_{{\mathbf c}, 0}(v_0^s(b)), T_{{\mathbf c}, 0}(v_0^{\overline s}({\overline b}))\rangle = \langle v_0^s(b), v_0^{\overline s}({\overline b})\rangle_{\mathbf c}.
\] 

We have now to consider the last case where $b, {\overline b} \in {\mathbf B}$, $0 \in Supp(b) \cap Supp({\overline b})$, $\tau(b) = b$ and $\tau({\overline b}) = {\overline b}$.  First note that if $\delta_0$ is even, $(\delta'''_0 + 1) \leq j'_1, j'_2 \leq (\delta'''_0 + (\delta''_0/2))$ and $(\delta'''_0 + (\delta''_0/2) + 1) \leq j''_1, j''_2 \leq (\delta'''_0 + \delta''_0)$,we get easily that
\[
\left\langle\frac{1}{\sqrt 2}\left(u_{0, j'_1} + u_{0, (\delta_0 + 1 - j'_1)}\right), \frac{1}{\sqrt 2}\left(u_{0, j'_2} + u_{0, (\delta_0 + 1 - j'_2)}\right) \right\rangle = \begin{cases} 1, &\text{if $j'_1 = j'_2$;} \\ 0, &\text{ otherwise;}\end{cases}
\]

\[
\left\langle\frac{1}{\sqrt 2}\left(u_{0, j'_1} + u_{0, (\delta_0 + 1 - j'_1)}\right), \frac{1}{\sqrt {-2}}\left(u_{0, j''_1} - u_{0, (\delta_0 + 1 - j''_1)}\right) \right\rangle = 0 
\]
 for all $j'_1$ and $j''_1$ as above;
\[
\left\langle\frac{1}{\sqrt {-2}}\left(u_{0, j''_1} - u_{0, (\delta_0 + 1 - j''_1)}\right), \frac{1}{\sqrt {-2}}\left(u_{0, j''_2} - u_{0, (\delta_0 + 1 - j''_2)}\right) \right\rangle =  \begin{cases} 1, &\text{if $j''_1 = j''_2$;} \\ 0, &\text{ otherwise;}\end{cases}
\]
while if $\delta_0$ is odd, $(\delta'''_0 + 1) \leq j'_1, j'_2 < (\delta'''_0 + (\delta''_0 + 1)/2)$,  $(\delta'''_0 + (\delta''_0 + 1)/2)) < j''_1, j''_2 \leq (\delta'''_0 + \delta''_0)$ and $j''' = (\delta'''_0 + (\delta''_0 + 1)/2)$, we also get easily that
\[
\left\langle\frac{1}{\sqrt 2}\left(u_{0, j'_1} + u_{0, (\delta_0 + 1 - j'_1)}\right), \frac{1}{\sqrt 2}\left(u_{0, j'_2} + u_{0, (\delta_0 + 1 - j'_2)}\right) \right\rangle = \begin{cases} 1, &\text{if $j'_1 = j'_2$;} \\ 0, &\text{ otherwise;}\end{cases}
\]

\[
\left\langle\frac{1}{\sqrt 2}\left(u_{0, j'_1} + u_{0, (\delta_0 + 1 - j'_1)}\right), \frac{1}{\sqrt {-2}}\left(u_{0, j''_1} - u_{0, (\delta_0 + 1 - j''_1)}\right) \right\rangle = 0 
\]
 for all $j'_1$ and $j''_1$ as above;
\[
\left\langle\frac{1}{\sqrt {-2}}\left(u_{0, j''_1} - u_{0, (\delta_0 + 1 - j''_1)}\right), \frac{1}{\sqrt {-2}}\left(u_{0, j''_2} - u_{0, (\delta_0 + 1 - j''_2)}\right) \right\rangle =  \begin{cases} 1, &\text{if $j''_1 = j''_2$;} \\ 0, &\text{ otherwise;}\end{cases}
\]
\[
\left\langle\frac{1}{\sqrt 2}\left(u_{0, j'_1} + u_{0, (\delta_0 + 1 - j'_1)}\right), u_{0, j'''} \right\rangle = \left\langle\frac{1}{\sqrt {-2}}\left(u_{0, j''_1} - u_{0, (\delta_0 + 1 - j''_1)}\right), u_{0, j'''} \right\rangle = 0 
\]
and $\langle u_{0, j'''}, u_{0, j'''} \rangle = 1$.

Consider now $b, {\overline b} \in {\mathbf B}$, $0 \in Supp(b) \cap Supp({\overline b})$, $\tau(b) = b$, $\tau({\overline b}) = {\overline b}$, $b = b''_r$, ${\overline b} = b''_{\overline r}$ and let $j$ (respectively ${\overline j}$) be the entry in the Young diagram ${\mathcal Y}_2$ in the box at row $r$ (respectively  ${\overline r}$) and column $s$ (respectively ${\overline s}$). In this case, $(\delta'''_0 + 1) \leq j, {\overline j} \leq (\delta'''_0 + \delta''_0)$. By our above computations, we get that 
\[
\langle T_{{\mathbf c}, 0}(v_0^s(b)), T_{{\mathbf c}, 0}(v_0^{\overline s}({\overline b}))\rangle  = \begin{cases} 1, &\text{if $j = {\overline j}$;}\\ 0, &\text{otherwise} \end{cases} = \begin{cases} 1, &\text{if $r = {\overline r}$ and $s = {\overline s}$;}\\ 0, &\text{otherwise} \end{cases}
\] 
but we also have
\[
\langle v_0^s(b), v_0^{\overline s}(\overline b)\rangle_{\mathbf c} = \langle v_0^s(b''_r), v_0^{\overline s}(b''_{\overline r})\rangle_{\mathbf c} = \begin{cases} 1, &\text{if $r = {\overline r}$ and $s = {\overline s}$;}\\ 0, &\text{otherwise} \end{cases}
\]
and consequently 
\[
\langle T_{{\mathbf c}, 0}(v_0^s(b)), T_{{\mathbf c}, 0}(v_0^{\overline s}({\overline b}))\rangle = \langle v_0^s(b), v_0^{\overline s}({\overline b})\rangle_{\mathbf c}. 
\] 

Thus we have proved that $\langle T_{\mathbf c}(u), T_{\mathbf c}(v)\rangle = \langle u, v \rangle_{\mathbf c}$ for all $u, v \in V({\mathbf c})$.

\end{proof}

\begin{proposition} \label{Prop_Indexation_Orbits_Odd}
Let $\delta = (\delta_i)_{i \in {\mathcal I}}$ be as in \ref{S:SetUpOdd}  and ${\mathfrak g}_2/G^{\iota}$ denotes the set of  $G^{\iota}$-orbits  in ${\mathfrak g}_2$. Then the map 
\[
\Upsilon:{\mathfrak g}_2/G^{\iota} \rightarrow  {\mathfrak C}_{\delta} \quad \text{ defined by } \quad G^{\iota} \cdot X \mapsto {\mathbf c}_X,
\]
is a well-defined bijection.
\end{proposition}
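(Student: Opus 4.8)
\textbf{Plan for the proof of Proposition~\ref{Prop_Indexation_Orbits_Odd}.}
The strategy is to mirror the argument of Proposition~\ref{Prop_Indexation_Orbits_Even} exactly, using the odd-case lemmas in place of the even-case ones. First I would check that $\Upsilon$ is well-defined: if $X, X' \in {\mathfrak g}_2$ lie in the same $G^{\iota}$-orbit, then by Lemma~\ref{G2AsRepresOdd}(e) the corresponding $\epsilon$-representations $(V, \phi_V, \langle\ , \ \rangle)$ and $(V, \phi_{V'}, \langle\ , \ \rangle)$ differ by an element of $G^{\iota}$, hence are isomorphic as $\epsilon$-representations of $A_m^{odd}$ (by the explicit form of the action and Lemma~\ref{G2AsRepresOdd}(a)); therefore their Krull--Remak--Schmidt decompositions agree and ${\mathbf c}_X = {\mathbf c}_{X'}$, so $\Upsilon$ descends to a function on ${\mathfrak g}_2/G^{\iota}$. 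Note that $\delta({\mathbf c}_X) = \delta$ because the dimension vector of the direct sum of indecomposables equals $(\dim V_i)_{i \in {\mathcal I}} = \delta$; this uses that $\dim V_i({\mathcal O})$ counts the supports of ${\mathcal I}$-boxes through $i$, so the image genuinely lands in ${\mathfrak C}_{\delta}$.

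Next, injectivity: if ${\mathbf c}_X = {\mathbf c}_{X'}$, then by Krull--Remak--Schmidt (applied to $\epsilon$-representations of the symmetric quiver, with Theorem~\ref{T:CaracterisationRepresentation} reducing isomorphism of symplectic/orthogonal representations to isomorphism of the underlying quiver representation) the two $\epsilon$-representations are isomorphic. In the orthogonal case one works with $\widehat G = O(V)$ rather than $SO(V)$, but as in the even case $G^{\iota} = \widehat G{}^{\iota}$ by Lemma~\ref{G2AsRepresOdd}(a) and the relevant remark, so an orthogonal isomorphism between the representations already lies in $G^{\iota}$ (this is precisely why the special orthogonal subtlety is postponed to the next section). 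Hence $X$ and $X'$ are $G^{\iota}$-conjugate and $\Upsilon$ is injective.

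For surjectivity, given ${\mathbf c} \in {\mathfrak C}_{\delta}$, form $V({\mathbf c})$ and the triple $(E_{\mathbf c}, H_{\mathbf c}, F_{\mathbf c})$ of Notation~\ref{N:CCorrespondanceOdd}. Then $E_{\mathbf c} \in {\mathfrak g}_2(V({\mathbf c}))$, and Lemma~\ref{L:TcIsomorphismIodd} provides an ${\mathcal I}$-graded isometry $T_{\mathbf c}: V({\mathbf c}) \to V$. Transport $X := T_{\mathbf c} E_{\mathbf c} T_{\mathbf c}^{-1}: V \to V$; since $T_{\mathbf c}$ preserves the bilinear form and respects the grading, $X \in {\mathfrak g}_2$, and because $E_{\mathbf c}$ is by construction the direct sum of ${\mathbf c}({\mathcal O})$ copies of each $E_{\mathcal O}$, the associated $\epsilon$-representation of $X$ is isomorphic to $\bigoplus_{\mathcal O} {\mathbf c}({\mathcal O})\,(V({\mathcal O}), \phi_{\mathcal O}, \langle\ , \ \rangle_{\mathcal O})$, whence ${\mathbf c}_X = {\mathbf c}$ by Proposition~\ref{P:JMTripleProof}. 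This completes the proof.

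The only genuinely new ingredient relative to the even case is the construction of $T_{{\mathbf c},0}$ on the zero-weight space and the verification that it is an isometry there; but that has already been discharged in Lemma~\ref{L:TcIsomorphismIodd}, so the main obstacle is really bookkeeping rather than mathematics --- one must be careful that the orthogonal-group ambiguity on $V_0$ (as opposed to $SO(V_0)$) is exactly what makes the orthogonal isomorphism class, not a finer special-orthogonal class, the right invariant here, which is what keeps the argument parallel to the even case and keeps $\Upsilon$ well-defined and injective.
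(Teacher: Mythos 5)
Your overall argument is the paper's own: well-definedness and injectivity via the Krull--Remak--Schmidt decomposition of the associated $\epsilon$-representation, and surjectivity by transporting $E_{\mathbf c}$ along the isometry $T_{\mathbf c}$ of Lemma~\ref{L:TcIsomorphismIodd} (the zero-weight construction being the only new ingredient, as you say). So in structure the proof is correct and matches the paper.

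One justification, however, is wrong and should be excised: you assert that, as in the even case, $G^{\iota} = \widehat G{}^{\iota}$ with $\widehat G = O(V)$. When $\vert{\mathcal I}\vert$ is odd this equality fails: by Lemma~\ref{G2AsRepresOdd}(a) an element of $O(V)^{\iota}$ has $\det(g) = \det(g_0)$ with $g_0 \in O(V_0)$ arbitrary, so $SO(V)^{\iota} \subsetneq O(V)^{\iota}$ whenever $V_0 \neq 0$; this is precisely the phenomenon exploited in the next section (Lemma~\ref{L:DeterminantTheta0}, Proposition~\ref{P:OrbitSplittingSO}), where $O(V)^{\iota}$-orbits can split into two $SO(V)^{\iota}$-orbits. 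Fortunately your proof does not actually need the equality: in the setting of \ref{S:SetUpOdd} the group $G$ is taken to be $O(V)$ (resp. $Sp(V)$) itself, so an ${\mathcal I}$-graded isomorphism of orthogonal representations automatically lies in $G^{\iota}$, and injectivity follows with no comparison between $O(V)$ and $SO(V)$ at all --- which is also why the paper's proof of this proposition contains no analogue of the even-case remark. Replace the false equality by this observation and the argument is complete; leaving it in, a reader could be misled into thinking the same statement holds for $SO(V)$, where it is false (that case requires the refined parametrization of Section 5).
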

\begin{proof}
By our last observation in \ref{SS:coefficientOrbitOdd}, if $G^{\iota} \cdot X = G^{\iota} \cdot X'$, then  the corresponding $\epsilon$-representations for $X$ and $X'$ are isomorphic, we get  that ${\mathbf c}_X = {\mathbf c}_{X'}$ and the  function $\Upsilon$ is well-defined. 

To show that $\Upsilon$ is an injective function,  assume that ${\mathbf c}_X = {\mathbf c}_{X'}$, then this means that, by the Krull-Remak-Schmidt theorem, the  $\epsilon$-representations for $X$ and $X'$ are isomorphic. So $X$ and $X'$ are in the same $G^{\iota}$-orbit and $G^{\iota} \cdot X = G^{\iota} \cdot X'$.

To prove that $\Upsilon$ is surjective, we need to show that there is an element $X \in {\mathfrak g}_2$ such that ${\mathbf c}_X = {\mathbf c} \in {\mathfrak C}_{\delta}$.  Consider $X = T_{\mathbf c} E_{\mathbf c} T_{\mathbf c}^{-1}:V \rightarrow V$, where $T_{\mathbf c}:V({\mathbf c}) \rightarrow V$ is the ${\mathcal I}$-graded isomorphism defined in \ref{SS:IsomorphismV(c)andVOddCase}.  Because $E_{\mathbf c} \in {\mathfrak g}_2(V({\mathbf c}))$ and lemma~\ref{L:TcIsomorphismIodd}, $X \in {\mathfrak g}_2$. From our construction of $E_{\mathbf c}: V({\mathbf c}) \rightarrow V({\mathbf c})$, it is isomorphic as ${\mathcal I}$-graded and $\epsilon$-representation  to the direct sum of ${\mathbf c}({\mathcal O})$ copies of the linear transformation $E_{\mathcal O}:V({\mathcal O}) \rightarrow V({\mathcal O})$. By carrying this direct sum using $T_{\mathbf c}$ to $V$, we do get that ${\mathbf c}_X = {\mathbf c}$ and the proof is complete. 

\end{proof}

\begin{notation}
Let $\delta = (\delta_i)_{i \in {\mathcal I}}$ be such that $\delta_i = \dim(V_i)$ for all $i \in {\mathcal I}$. If ${\mathbf c} \in {\mathfrak C}_{\delta}$, we will denote by ${\mathcal O}_{\mathbf c}$: the unique $G^{\iota}$-orbit in ${\mathfrak g}_2$ such that $\Upsilon({\mathcal O}_{\mathbf c}) = {\mathbf c}$.
\end{notation}

\begin{remark}\label{R:IsomoVVcSymplecticOdd}
Using the isomorphism $T_{\mathbf c}:V({\mathbf c}) \rightarrow V$, we can transfer results about the orbit ${\mathcal O}_{\mathbf c}$ in the Lie algebra ${\mathfrak g}$ to the orbit of $E_{\mathbf c}$ in the Lie algebra 
${\mathfrak g}(V({\mathbf c}))$.
\end{remark}

\subsection{}\label{SS:ObservationDimensionOrthogonal}
To compute the dimension of the orbit ${\mathcal O}_{\mathbf c}$  corresponding to ${\mathbf c} \in {\mathfrak C}_{\delta}$, we will proceed as in the previous section by considering a parabolic subalgebra ${\mathfrak p}_X$ associated by Lusztig to an element $X \in {\mathcal O}_{\mathbf c} \subseteq {\mathfrak g}_2$ and then using the formula
\[
\dim({\mathcal O}_{\mathbf c}) = \dim({\mathfrak g}_0) - \dim({\mathfrak p}_0) + \dim({\mathfrak p}_2),
\]
where ${\mathfrak p}$ is the parabolic subalgebra ${\mathfrak p}_X$.

We now want to use this formula to compute the dimension $\dim({\mathcal O}_{\mathbf c})$ of the orbit ${\mathcal O}_{\mathbf c}$ in term of the coefficient function ${\mathbf c}$. This is done in the next proposition.

Note that we have to be careful in the orthogonal case, because the above formula for the dimension $\dim({\mathcal O}_{\mathbf c})$ of the orbit ${\mathcal O}_{\mathbf c}$ proved using 5.4 (a) and 5.9 of \cite{L1995} is under the hypothesis that the group $G$ is connected. In the orthogonal case, $G$ is the orthogonal group $O(V)$ and $G^{\iota}$ is not connected as we saw in lemma~\ref{G2AsRepresOdd} (a). From our definition of $\iota$, it is easy to see that the connected component of the identity $(G^{\iota})^0$ of $G^{\iota}$ is equal to $(G^0)^{\iota}$: the centralizer of $\iota$ in the connected component $G^0$ of $G$. Here $G^0$ is the special orthogonal group and is connected. Because of this observation, we can also used the above formula for the dimension of the orbit ${\mathcal O}_{\mathbf c}$.

\begin{proposition}\label{DimensionFormulaSymplecticOdd}
Let ${\mathbf c}  \in  {\mathfrak C}_{\delta}$  and consider the symmetric function $c:{\mathbf B} \rightarrow {\mathbb N}$ corresponding to ${\mathbf c}$.  Then the dimension $\dim({\mathcal O}_{\mathbf c})$ of the orbit ${\mathcal O}_{\mathbf c}$ is equal to 
\[
\left[
\begin{aligned}
&\frac{\delta_0(\delta_0 - \epsilon)}{2} + \sum_{\begin{subarray}{c} i \in {\mathcal I}\\ 0 <  i\end{subarray}} \delta_i^2 + \sum_{\begin{subarray}{c} i \in {\mathcal I}\\ 0 \leq i < (2m - 2)\\ (b, b') \in {\mathbf B} \times {\mathbf  B}\\ i \in Supp(b)\\ (i + 2) \in Supp(b')\\ \lambda(b) \geq \lambda(b') \end{subarray}} c(b) c(b')   - \sum_{\begin{subarray}{c}i \in {\mathcal I}\\ i > 0\\ (b, b') \in {\mathbf B} \times {\mathbf B}\\ i \in Supp(b) \cap Supp(b')\\ \lambda(b) \geq \lambda(b')\end{subarray}} c(b) c(b') \\ \\ & - \frac {1}{2} \sum_{\begin{subarray}{c} (b, b') \in {\mathbf B} \times {\mathbf B}\\ 0 \in Supp(b) \cap Supp(b') \\ b \ne \tau(b')\\ \lambda(b)\geq \lambda(b') \end{subarray}} c(b) c(b')  - \frac{1}{2}
\sum_{\begin{subarray}{c} b \in {\mathbf B}\\ 0 \in Supp(b)\\\lambda(b)\geq \lambda(\tau(b)) \end{subarray}} c(b) (c(b) - \epsilon)
\end{aligned}
\right]
\]
\end{proposition}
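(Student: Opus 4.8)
The plan is to follow the same strategy as in the proof of Proposition~\ref{DimensionFormulaEven}, using the formula $\dim({\mathcal O}_{\mathbf c}) = \dim({\mathfrak g}_0) - \dim({\mathfrak p}_0) + \dim({\mathfrak p}_2)$ recalled in \ref{SS:ObservationDimensionOrthogonal}. By Remark~\ref{R:IsomoVVcSymplecticOdd} we may assume $V = V({\mathbf c})$, so that the Jacobson-Morozov triple $(E_{\mathbf c}, H_{\mathbf c}, F_{\mathbf c})$ built in \ref{N:CCorrespondanceOdd} has $E_{\mathbf c} \in {\mathcal O}_{\mathbf c}$, and the associated second grading is the eigenspace decomposition of $\mathrm{ad}(H_{\mathbf c})$. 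First I would record $\dim({\mathfrak g}_0)$ from lemma~\ref{G2AsRepresOdd}~(c): the $Hom(V_i, V_i)$-summands with $i > 0$ contribute $\sum_{i > 0}\delta_i^2$, while the $V_0$-summand contributes $\dim {\mathfrak{so}}(V_0)$ or $\dim {\mathfrak{sp}}(V_0)$, i.e. $\tfrac{\delta_0(\delta_0 - \epsilon)}{2}$ (this is the Lie algebra of $O(V_0)$ respectively $Sp(V_0)$). That accounts for the first two terms of the claimed formula.

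Next I would compute $\dim({\mathfrak p}_0)$ and $\dim({\mathfrak p}_2)$. Using the eigenbasis $v_i^j(b)$ of $V_i$ with $H_{\mathbf c}(v_i^j(b)) = h_i(b) v_i^j(b)$, the elementary matrices $Z_{(b,j)}^{(b',j')}$ (for $i \in Supp(b)\cap Supp(b')$, $i>0$) and $X_{(b,j)}^{(b',j')}$ (for $i \in Supp(b)$, $(i+2)\in Supp(b')$, $0\le i<(2m-2)$) are $\mathrm{ad}(H_{\mathbf c})$-eigenvectors with eigenvalues $h_i(b')-h_i(b)$ and $h_{i+2}(b')-h_i(b)$; the conditions ``$\ge 0$'' and ``$\ge 2$'' translate by the lemma after Notation~\ref{N:LambdaValue} into $\lambda(b)\ge\lambda(b')$. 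This gives the $i>0$ parts of $\dim({\mathfrak p}_0)$ (the term $-\sum c(b)c(b')$ over $i\in Supp(b)\cap Supp(b')$ with $i>0$) and of $\dim({\mathfrak p}'_2)$ (the term $+\sum c(b)c(b')$ over $i\in Supp(b)$, $(i+2)\in Supp(b')$ with $0< i<(2m-2)$, to which one must add the $i=0$ slice), exactly as in the even case. The genuinely new work is the $i = 0$ part: here one must analyze $Hom(V_0,V_0)\cap{\mathfrak g}_0$ and $Hom(V_0,V_2)$-component of ${\mathfrak g}_2$, which is where the subtlety of the orthogonal/symplectic form on $V_0$ enters.

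For the $V_0$ contributions I would proceed as in the computation of $\dim({\mathfrak p}''_2)$ in Proposition~\ref{DimensionFormulaEven}, but now for the pair $(V_0, V_2)$ and also for ${\mathfrak g}_0$ restricted to $V_0$. Using lemma~\ref{G2AsRepresOdd}~(b), an element of the $Hom(V_0,V_2)$-component is a pair $(X_0^+, X_0^-)$ with $X_{-2}$ determined by it, so effectively one parametrizes linear maps $V_0 \to V_2$ subject to no extra symmetry, and the relevant operator is $\mathrm{ad}(H_{\mathbf c}|_{V_0})$ followed by the identification $h_2(\tau(b)) - h_0(b')$, i.e. $\lambda(b)\ge\lambda(\tau(b'))$. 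Expanding the basis $v_0^j(b)$ in terms of $b$ above / on / below the principal diagonal and using that $c(b)=c(\tau(b))$, together with $h_0(\tau(b)) = -h_0(b)$ from lemma~\ref{L:RelationCoefficientsHandF}~(a), I expect to obtain the terms $-\tfrac12\sum_{0\in Supp(b)\cap Supp(b'),\, b\ne\tau(b')} c(b)c(b')$ and $-\tfrac12\sum_{0\in Supp(b)} c(b)(c(b)-\epsilon)$, paralleling the $\pm\tfrac12 c(b)(c(b)\mp 1)$ and diagonal-adjustment terms of the even case but with opposite sign because these spaces lie in ${\mathfrak g}_0$ and ${\mathfrak p}_0$ rather than ${\mathfrak p}_2$, and with the $\epsilon$-dependence coming from whether the form on $V_0$ is symmetric or skew. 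Finally I would assemble $\dim({\mathfrak g}_0) - \dim({\mathfrak p}_0) + \dim({\mathfrak p}_2)$ and check the bookkeeping of signs and of the $i=0$ slice of ${\mathfrak p}_2$ matches the displayed formula.

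The main obstacle is the precise treatment of the $V_0$-block: one must carefully set up eigenbases compatible with the three-way split (above/on/below the principal diagonal) of the ${\mathcal I}$-boxes containing $0$ in their support, keep track of the constraint relating $X_0^+$, $X_0^-$ and $X_{-2}$ from lemma~\ref{G2AsRepresOdd}~(b), and verify that the self-duality conditions produce exactly the factor $\tfrac12$ and the $(c(b)-\epsilon)$ correction rather than some other combination; the orthogonal case additionally requires the observation of \ref{SS:ObservationDimensionOrthogonal} that $(G^{\iota})^0 = (G^0)^{\iota}$ so that Lusztig's dimension formula (valid for connected groups) still applies. Everything else is a routine, if lengthy, transcription of the even-case argument.
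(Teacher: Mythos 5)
Your proposal takes essentially the same route as the paper's proof: reduce to $V({\mathbf c})$ via $T_{\mathbf c}$, apply Lusztig's formula $\dim({\mathfrak g}_0)-\dim({\mathfrak p}_0)+\dim({\mathfrak p}_2)$ (with the connectedness observation of \ref{SS:ObservationDimensionOrthogonal} in the orthogonal case), read $\dim({\mathfrak g}_0)$ from lemma~\ref{G2AsRepresOdd}~(c), diagonalize $\mathrm{ad}(H_{\mathbf c})$ on elementary maps so that the $i>0$ blocks and the $Hom(V_i,V_{i+2})$ blocks give the unrestricted sums with $\lambda(b)\geq\lambda(b')$, and isolate the $Hom(V_0,V_0)$-block of ${\mathfrak p}_0$ as the place where the $\epsilon$-form on $V_0$ produces the $\tfrac{1}{2}$-factors and the $(c(b)-\epsilon)$ correction --- exactly the paper's decomposition ${\mathfrak p}_0={\mathfrak p}'_0\oplus{\mathfrak p}''_0$. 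One small slip to fix in the write-up: for the $i=0$ slice of ${\mathfrak p}_2$ the eigenvalue condition is $h_2(b')-h_0(b)\geq 2$, i.e.\ $\lambda(b)\geq\lambda(b')$ with no $\tau$, since the $Hom(V_0,V_2)$-component of ${\mathfrak g}_2$ carries no symmetry constraint (only $X_{-2}$ is determined by $X_0$); your condition ``$\lambda(b)\geq\lambda(\tau(b'))$'' is a harmless leftover from the even-case $Hom(V_{-1},V_1)$ computation, where the basis of $V_1$ was re-indexed by $\tau$.
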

\begin{proof}
Taking into account our observation in \ref{SS:ObservationDimensionOrthogonal} in the orthogonal case and our remark~\ref{R:IsomoVVcSymplecticOdd},  we will assume that $V = V({\mathbf c})$ and ${\mathfrak g}$ will be ${\mathfrak g}(V({\mathbf c}))$.  We have that $\dim(V_i) = \delta_i$ for all $i \in {\mathcal I}$ from the fact that $\delta({\mathbf c}) = \delta$.
We have constructed in~\ref{N:CCorrespondanceOdd}  a Jacobson-Morozov triple  $(E_{\mathbf c}, H_{\mathbf c}, F_{\mathbf c})$ with $E_{\mathbf c} \in {\mathcal O}_{\mathbf c}$; in other words we have 
\[
[H_{\mathbf c}, E_{\mathbf c}] = 2 E_{\mathbf c}, \qquad [H_{\mathbf c}, F_{\mathbf c}] = -2 F_{\mathbf c}, \qquad [E_{\mathbf c}, F_{\mathbf c}] = H_{\mathbf c},
\]
where $E_{\mathbf c} \in {\mathfrak g}_2$, $H_{\mathbf c} \in {\mathfrak g}_0$ and $F_{\mathbf c} \in {\mathfrak g}_{-2}$  and such that $E_{\mathbf c} \in {\mathcal O}_{\mathbf c}$.   Thus we have a Lie algebra homomorphism $\phi: {\mathfrak sl}_2({\mathbf k}) \rightarrow {\mathfrak g}$ such that 
\[
\phi\begin{pmatrix}0 & 1\\ 0 & 0\end{pmatrix} = E_{\mathbf c}, \qquad \phi\begin{pmatrix}1 & \phantom{-}0\\ 0 & -1\end{pmatrix} = H_{\mathbf c}, \qquad \phi\begin{pmatrix}0 & 0\\ 1 & 0\end{pmatrix} = F_{\mathbf c}. 
\]
As in definition~\ref{D:LieHomo}, let $\widetilde{\phi}: SL_2({\mathbf k}) \rightarrow G$ be the homomorphism of algebraic groups whose differential is $\phi$ and by $\iota':{\mathbf k}^{\times} \rightarrow G$: the homomorphism defined by 
\[
\iota'(t) = \widetilde{\phi}\begin{pmatrix}t & 0\phantom{-} \\ 0 & t^{-1}\end{pmatrix} \qquad  \text{ for all $t \in {\mathbf k}^{\times}$.}
\]
We have the grading for the Lie algebra ${\mathfrak g} = \oplus_r  ({ }_r {\mathfrak g})$ where 
\[
({ }_r {\mathfrak g}) = \{ X \in {\mathfrak g} \mid Ad(\iota'(t))(X) = t^r X \quad \text{ for all $t \in {\mathbf k}^{\times}$}\}
\]
By taking the derivative, we get that 
\[
({ }_r {\mathfrak g}) = \{ X \in {\mathfrak g} \mid ad(H_{\mathbf c})(X) =r X \}.
\]

By lemma~\ref{G2AsRepresOdd} (c), we have that 
\[
\dim({\mathfrak g}_0) = \frac{\delta_0(\delta_0 - \epsilon)}{2} + \sum_{\begin{subarray}{c} i \in {\mathcal I}\\ i > 0\end{subarray}} \delta_i^2.
\]

Let ${\mathfrak p}$ be the parabolic subalgebra associated to the nilpotent element $E_{\mathbf c}$. From the definition of ${\mathfrak p}$, we get that ${\mathfrak p}_0$ and  ${\mathfrak p}_2$ are such that
\[
{\mathfrak p}_0 = \bigoplus_{r \geq 0}  ({ }_r{\mathfrak g}_0) \qquad \text{ and } \qquad {\mathfrak p}_2 = \bigoplus_{r \geq 2}  ({ }_r{\mathfrak g}_2) 
\]
By our description of ${\mathfrak g}_0$ and ${\mathfrak g}_2$ in lemma~\ref{G2AsRepresOdd}, we get that ${\mathfrak p}_0 =  {\mathfrak p}'_0 \oplus {\mathfrak p}''_0$, where 
\[
 {\mathfrak p}'_0 = \bigoplus_{r \geq 0} \bigoplus_{\begin{subarray}{c} i \in {\mathcal I}\\ 0 < i  \end{subarray}} \{(Z_i : V_i \rightarrow V_i) \in Hom(V_i, V_i) \mid ad(H_{\mathbf c}\vert_{V_i}) Z_i = r Z_i \},
\]
\[
{\mathfrak p}''_0 = \bigoplus_{r \geq 0}\left\{ Z_0: V_0 \rightarrow V_0 \in Hom(V_0, V_0)  \left\vert\  \begin{aligned} &ad(H_{\mathbf c}\vert_{V_0}) Z_0 = r Z_0 \text{ and }\\ &\langle Z_0(v), v'\rangle_{\mathbf c} + \langle v, Z_0(v')\rangle_{\mathbf c}  = 0\\ &\text{ for all $v, v' \in V_0$.} \end{aligned} \right.\right\}, 
\]
and 
\[
{\mathfrak p}_2 = 
\bigoplus_{r \geq 2} \bigoplus_{\begin{subarray}{c} i \in {\mathcal I}\\ 0 \leq i < (2m - 2)\end{subarray}} \{(X_i : V_i \rightarrow V_{i + 2}) \in Hom(V_i, V_{i + 2}) \mid ad(H_{\mathbf c}\vert_{V_i}) X_i = r X_i  \}.
\]

We can give  bases for the spaces $Hom(V_i, V_i)$ and $Hom(V_i, V_{i + 2})$ that are eigenvectors for $ad(H_{\mathbf c}\vert_{V_i})$.  From our construction of the triple $\{E_{\mathbf c}, H_{\mathbf c}, F_{\mathbf c}\}$, we have for each $i \in {\mathcal I}$ a basis  
\[
{\mathcal B}_i = \{v_i^j(b)\in V_i  \mid b \in {\mathbf B}, i \in Supp(b), 1 \leq j \leq c(b)\}
\]
of $V_i$. 

Let now $i \in {\mathcal I}$ and $0 \leq i  < (2m - 2)$.  For each $b, b' \in {\mathbf B}$ with $i \in Supp(b)$, $(i + 2) \in Supp(b')$, $1 \leq j \leq c(b)$ and $1 \leq j' \leq c(b')$, let $X_{(b, j)}^{(b', j')} \in Hom(V_i, V_{i + 2})$ be defined on the basis ${\mathcal B}_i $ by 
\[
X_{(b, j)}^{(b', j')}(v_i^{j''}(b'')) = \begin{cases} v_{i + 2}^{j'}(b'), &\text{if $b'' = b$ and $j'' = j$;}\\ 0, &\text{otherwise.}\end{cases}
\]
Clearly these linear transformations $X_{(b, j)}^{(b', j')}$ with  $b,  b' \in {\mathbf B}$,  $i \in Supp(b)$,  $(i + 2) \in Supp(b')$, $1 \leq j \leq c(b)$ and $1 \leq j' \leq c(b')$ form a basis of $Hom(V_i, V_{i + 2})$. 

We have that  $H_{\mathbf c}(v_i^j(b)) = h_i(b) v_i^j(b)$ for all $b \in {\mathbf B}$, $i \in Supp(b)$ and $1 \leq j \leq c(b)$ and  $X_{(b, j)}^{(b', j')}$ is an eigenvector of $ad(H_{\mathbf c}\vert_{V_i})$ with
\[
ad(H_{\mathbf c}\vert_{V_i}) (X_{(b, j)}^{(b', j')}) = (h_{(i + 2)}(b') - h_i(b)) X_{(b, j)}^{(b', j')}
\]
by computing both sides  on the basis vector $v_i^{j''}(b'')$. Consequently  we get that 
\[
\dim({\mathfrak p}_2) = 
\sum_{\begin{subarray}{c}i \in {\mathcal I}\\0 \leq i < (2m - 2)\\ (b, b') \in {\mathbf B} \times {\mathbf B}\\ i \in Supp(b)\\ (i + 2) \in  Supp(b')\\ \lambda(b) \geq \lambda(b')\end{subarray}} c(b) c(b')
\]
because $(h_{i + 2}(b') - h_i(b)) \geq 2$ is equivalent to $\lambda(b) \geq \lambda(b')$ as we saw in the proof of proposition~\ref{DimensionFormulaEven}

We can now consider the vector space ${\mathfrak p}'_0$. Let $i > 0$. For each $b, b' \in {\mathbf B}$ with $i \in Supp(b) \cap Supp(b')$, $1 \leq j \leq c(b)$ and $1 \leq j' \leq c(b')$, let $Z_{(b, j)}^{(b', j')} \in Hom(V_i, V_i)$ be defined on the basis ${\mathcal B}_i $ by 
\[
Z_{(b, j)}^{(b', j')}(v_i^{j''}(b'')) = \begin{cases} v_i^{j'}(b'), &\text{if $b'' = b$ and $j'' = j$;}\\ 0, &\text{otherwise.}\end{cases}
\]
These linear transformations $Z_{(b, j)}^{(b', j')}$ with  $b, b' \in {\mathbf B}$,  $i \in Supp(b) \cap Supp(b')$, $1 \leq j \leq c(b)$ and $1 \leq j' \leq c(b')$ form a basis of $Hom(V_i, V_i)$. Moreover $Z_{(b, j)}^{(b', j')}$ is an eigenvector of $ad(H_{\mathbf c}\vert_{V_i})$ with
\[
ad(H_{\mathbf c}) (Z_{(b, j)}^{(b', j')}) = (h_i(b') - h_i(b)) Z_{(b, j)}^{(b', j')}
\]
by computing both sides  on the basis vector $v_i^{j''}(b'')$. Consequently
\[
\dim({\mathfrak p}'_0) = 
\sum_{\begin{subarray}{c}i \in {\mathcal I}\\ i > 0\\ (b, b') \in {\mathbf B} \times{\mathbf  B}\\ i \in Supp(b) \cap Supp(b')\\ \lambda(b)  \geq  \lambda(b') \end{subarray}} c(b)c(b')
\]
because $(h_i(b') - h_i(b)) \geq 0$ is equivalent to $\lambda(b) \geq \lambda(b')$ as we saw in the proof of proposition~\ref{DimensionFormulaEven}

We must now consider the dimension $\dim({\mathfrak p}''_0)$ and we will  prove that 
\[
\dim({\mathfrak p}''_0) =  \frac {1}{2} \left(\sum_{\begin{subarray}{c} (b, b') \in {\mathbf B} \times {\mathbf B}\\ 0 \in Supp(b) \cap Supp(b') \\ b \ne \tau(b')\\ \lambda(b)\geq \lambda(b') \end{subarray}} c(b) c(b')  +
\sum_{\begin{subarray}{c} b \in {\mathbf B}\\ 0 \in Supp(b)\\\lambda(b)\geq \lambda(\tau(b)) \end{subarray}} c(b) (c(b) - \epsilon)\right).
\]

Recall that $V_0$ has  a basis ${\mathcal B}_0 = \{v_0^j(b) \mid b \in {\mathbf B}, 0 \in Supp(b), 1 \leq j \leq c(b)\}$ such that 
\[
\langle v_0^s(b), v_{0}^{s'}(b')\rangle 
= \begin{cases} 1, &\text{if $s' = s$,  $b' = \tau(b)$ and $b$ is above the principal diagonal;}\\ 
\epsilon, &\text{if $s' = s$,  $b' = \tau(b)$ and $b$ is below the principal diagonal;}\\ 
\epsilon^{k_1}, &\text{if $s' = s$,  $b' = \tau(b)$ and $b = b(i_1, j_1, k_1)$ is on the}\\ &\text{principal diagonal; }\\ 
0, &\text{otherwise.}
\end{cases}
\]

Let  ${\mathcal E} = \{b \in {\mathbf B} \mid 0 \in Supp(b)\}$ and  $(Z_0:V_0 \rightarrow V_0) \in Hom(V_0, V_0)$ be such that 
\begin{equation}\label{E:LieCondition}
\langle Z_0(v), v' \rangle_{\mathbf c} + \langle v, Z_0(v')\rangle_{\mathbf c} = 0 \quad \text{for all $v, v' \in V_0$.}
\end{equation}

For $v_0^j(b), v_0^{j'}(b') \in {\mathcal B}_0$, then we can write
\[
Z_0(v_0^j(b)) = \sum_{(b'', j'')} \xi_{(b, j)}^{(b'', j'')} v_0^{j''}(\tau(b''))
\]
where the sum runs over all $(b'', j'')$ such that $b'' \in {\mathcal E}$ and $1 \leq j'' \leq c(b'')$ and 
\[
Z_0(v_0^{j'}(b')) = \sum_{(b''', j''')} \xi_{(b', j')}^{(b''', j''')} v_0^{j'''}(\tau(b'''))
\]
where the sum runs over all $(b''', j''')$ such that $b''' \in {\mathcal E}$ and $1 \leq j''' \leq c(b''')$, 

We will first consider the symplectic case. 
The set ${\mathcal E}$ can be partitionned as follows: ${\mathcal E} = {\mathcal E}^+ \coprod {\mathcal E}^-$ where
\[
\begin{aligned}
{\mathcal E}^+ &= \{b \in {\mathcal E} \mid  b(i, j, 0) \text{ where $i, j \in {\mathcal I}$, $i \geq j$ and $i + j \geq 0$} \} \quad \text{ and }\\
{\mathcal E}^- &= \{b \in {\mathcal E} \mid b(i, j, 0) \text{ where $i, j \in {\mathcal I}$, $i \geq j$ and $i + j < 0$} \}\\ &\quad \bigcup \{b \in {\mathcal E} \mid b(i, -i, 1) \text{ where $i \in {\mathcal I}$ and $i \geq 0$} \}.
\end{aligned}
\]

${\mathcal E}^+$ consists of the ${\mathcal I}$-boxes strictly above the principal diagonale and of half of the ${\mathcal I}$-boxes on the principal diagonale, while  ${\mathcal E}^-$ consists of the ${\mathcal I}$-boxes strictly below the principal diagonale and the other half of the ${\mathcal I}$-boxes on the principal diagonale. Note that if $b \in {\mathcal E}^+$, then $\langle v_0^s(b), v_0^s(\tau(b))\rangle_{\mathbf c} = 1$  for any $1 \leq s \leq c(b)$, while if $b \in {\mathcal E}^-$, then $\langle v_0^s(b), v_0^s(\tau(b))\rangle_{\mathbf c} = -1$  for any $1 \leq s \leq c(b)$. 

Setting $v = v_0^j(b)$ and $v' = v_0^{j'}(b')$ in the equation~(\ref{E:LieCondition}) and using the values of  $\langle \   ,  \  \rangle_{\mathbf c}$  in notation~\ref{N:CCorrespondanceOdd}, we get  the equations: 
\begin{equation}\label{CoordinateEquationSymplecticOdd}
\left\{
\begin{aligned}
-\xi_{(b, j)}^{(b', j')} + \xi_{(b', j')}^{(b, j)} &= 0, \quad \text{ if $b, b' \in {\mathcal E}^+$;}\\ \\
 \xi_{(b, j)}^{(b', j')} + \xi_{(b', j')}^{(b, j)} &= 0, \quad \text{ if $b \in {\mathcal E}^+$ and $b' \in {\mathcal E}^-$;}\\ \\
-\xi_{(b, j)}^{(b', j')} - \xi_{(b', j')}^{(b, j)} &= 0, \quad \text{ if $b \in {\mathcal E}^-$ and $b' \in {\mathcal E}^+$;}\\ \\
\xi_{(b, j)}^{(b', j')} - \xi_{(b', j')}^{(b, j)} &= 0, \quad \text{ if $b, b' \in {\mathcal E}^-$;}
\end{aligned}
\right.
\end{equation}
Reciprocally if the above equations are satisfied for all $b, b' \in {\mathcal  E}$,  $1 \leq j \leq c(b)$ and $1 \leq j' \leq c(b')$, then $Z_0$ defined above on the vectors $v_0^j(b)$ will be such that the equation~(\ref{E:LieCondition}) is satisfied.

We will now give a basis of the vector space 
\[
\{(Z_0:V_0 \rightarrow V_0) \in Hom(V_0, V_0) \mid \langle Z_0(v), v' \rangle_{\mathbf c} + \langle v, Z_0(v')\rangle_{\mathbf c} = 0 \text{ for all $v, v' \in V_0$}\}
\]
and each of its element will be an eigenvector of $ad(H_{\mathbf c}\vert_{V_0})$.

Order the basis ${\mathcal B}_0$ such that the first elements are the basis vectors of the form $v_0^j(b)$ with $b \in {\mathcal E}^+$ and $1 \leq j \leq c(b)$ and  the following elements are the basis vectors of the form $v_0^j(\tau(b))$  with $b \in {\mathcal E}^+$ and $1 \leq j \leq c(b)$ and if $b \in {\mathcal E}^+$, then  $v_0^j(b)$ and $v_0^j(\tau(b))$ are respectively in the $m_1^{th}$ and $m_2^{th}$ positions with $(m_2 - m_1) = \delta_0'$. Relative to this ordered basis, the linear transformation $Z_0$ as above has the form
\begin{equation}\label{MatrixZ0Symplect}
\begin{pmatrix}
A_{11} & A_{12}\\ A_{21} & A_{22}
\end{pmatrix} 
\quad \text{with} \quad A_{22} = -A^t_{11}, \quad A_{12} = A^t_{12}, \quad A_{21} = A^t_{21}
\end{equation}

We will define  linear transformations $Z_{(b, j)}^{(b', j')}:V_0 \rightarrow V_0$ when $b, b' \in {\mathcal E}$,  $1 \leq j \leq c(b)$ and $1 \leq j' \leq c(b')$ for three cases:
\begin{enumerate}[\upshape (i)]
\item either $b, b' \in {\mathcal E}^+$ or $b, b' \in {\mathcal E}^-$;

\item $(b, j) \ne (\tau(b'), j')$ and either  $b \in {\mathcal E}^+$, $b' \in {\mathcal E}^-$ or  $b \in {\mathcal E}^-$, $b' \in {\mathcal E}^+$;

\item $(b, j) = (\tau(b'), j')$ and either  $b \in {\mathcal E}^+$, $b' \in {\mathcal E}^-$ or  $b \in {\mathcal E}^-$, $b' \in {\mathcal E}^+$.
\end{enumerate}

Case (i): If either $b, b' \in {\mathcal E}^+$ or $b, b' \in {\mathcal E}^-$, we define
\[
Z_{(b, j)}^{(b', j')}(v_0^{j''}(b''))= \begin{cases} {\phantom -}v_0^{j'}(b'), &\text{ if $(b'', j'') = (b, j)$;}\\ -v_0^j(\tau(b)), &\text{ if $(b'', j'') = (\tau(b'), j')$;} \\ {\phantom -} 0, &\text{ otherwise.} \end{cases}
\]

This means
\[
\xi_{(b_1, j_1)}^{(b_2,  j_2)} = \begin{cases} {\phantom -}1, &\text{ if $(b_1, j_1) = ( b, j)$ and $(b_2, j_2) = (\tau(b'), j')$;}\\ -1, &\text{ if $(b_1, j_1) = ( \tau(b'), j')$ and $(b_2, j_2) = (b, j)$;}\\ {\phantom -}0, &\text{otherwise.} \end{cases}
\]

We get easily that  
\[
\langle Z_{(b, j)}^{(b', j')}(v), v' \rangle_{\mathbf c} + \langle v, Z_{(b, j)}^{(b', j')}(v')\rangle_{\mathbf c} = 0 
\]
 for all $v, v' \in V_0$ from the  equations~(\ref{CoordinateEquationSymplecticOdd}).  Note that  the pair $\tau(b'), \tau(b)$ falls also in the  case (i) and it is easy to see that $Z_{(\tau(b'), j')}^{(\tau(b), j)} = - Z_{(b, j)}^{(b', j')}$ by evaluating both sides on the basis vectors $v_0^{j''}(b'')$.
 
 By looking on the basis vectors $v_0^{j''}(b'')$, we have that 
\[
ad(H_{\mathbf c}\vert_{V_0}) Z_{(b, j)}^{(b', j')} = (h_0(b') - h_0(b)) Z_{(b, j)}^{(b', j')}.
\]
We have used lemma~\ref{L:RelationCoefficientsHandF} (a) in the proof of this formula.

Case (ii): If $(b, j) \ne (\tau(b'), j')$ and either  $b \in {\mathcal E}^+$, $b' \in {\mathcal E}^-$ or  $b \in {\mathcal E}^-$, $b' \in {\mathcal E}^+$, we define
\[
Z_{(b, j)}^{(b', j')}(v_0^{j''}(b''))= \begin{cases} v_0^{j'}(b'), &\text{ if $(b'', j'') = (b, j)$;}\\ v_0^j(\tau(b)), &\text{ if $(b'', j'') = (\tau(b'), j')$;} \\  0, &\text{ otherwise.} \end{cases}
\]

This means
\[
\xi_{(b_1, j_1)}^{(b_2,  j_2)} = \begin{cases} 1, &\text{ if $(b_1, j_1) = ( b, j)$ and $(b_2, j_2) = (\tau(b'), j')$;}\\ 1, &\text{ if $(b_1, j_1) = ( \tau(b'), j')$ and $(b_2, j_2) = (b, j)$;}\\ 0, &\text{otherwise.} \end{cases}
\]

We get easily that  
\[
\langle Z_{(b, j)}^{(b', j')}(v), v' \rangle_{\mathbf c} + \langle v, Z_{(b, j)}^{(b', j')}(v')\rangle_{\mathbf c} = 0 
\]
 for all $v, v' \in V_0$ from the  equations~(\ref{CoordinateEquationSymplecticOdd}).  Note that  the pair $\tau(b'), \tau(b)$ falls also in the  case (ii) and it is easy to see that $Z_{(\tau(b'), j')}^{(\tau(b), j)} =  Z_{(b, j)}^{(b', j')}$ by evaluating both sides on the basis vectors $v_0^{j''}(b'')$.
 
 By looking on the basis vectors $v_0^{j''}(b'')$, we have that 
\[
ad(H_{\mathbf c}\vert_{V_0}) Z_{(b, j)}^{(b', j')} = (h_0(b') - h_0(b)) Z_{(b, j)}^{(b', j')}.
\]
We have used lemma~\ref{L:RelationCoefficientsHandF} (a) in the proof of this formula.

Case (iii): If $(b, j) = (\tau(b'), j')$ and either  $b \in {\mathcal E}^+$, $b' \in {\mathcal E}^-$ or  $b \in {\mathcal E}^-$, $b' \in {\mathcal E}^+$, we define
\[
Z_{(b, j)}^{(b', j')}(v_0^{j''}(b''))= \begin{cases} v_0^{j'}(b'), &\text{ if $(b'', j'') = (b, j)$;} \\  0, &\text{ otherwise.} \end{cases}
\]

This means
\[
\xi_{(b_1, j_1)}^{(b_2,  j_2)} = \begin{cases} 1, &\text{ if $(b_1, j_1) = ( b, j)$ and $(b_2, j_2) = (\tau(b), j)$;}\\  0, &\text{otherwise.} \end{cases}
\]

We get easily that  
\[
\langle Z_{(b, j)}^{(b', j')}(v), v' \rangle_{\mathbf c} + \langle v, Z_{(b, j)}^{(b', j')}(v')\rangle_{\mathbf c} = 0 
\]
 for all $v, v' \in V_0$ from the  equations~(\ref{CoordinateEquationSymplecticOdd}).  
  
By looking on the basis vectors $v_0^{j''}(b'')$, we have that 
\[
ad(H_{\mathbf c}\vert_{V_0}) Z_{(b, j)}^{(b', j')} = (h_0(b') - h_0(b)) Z_{(b, j)}^{(b', j')}.
\]
We have used lemma~\ref{L:RelationCoefficientsHandF} (a) in the proof of this formula.

From this, we get in the symplectic case that 
\[
\dim({\mathfrak p}''_0) =  \frac {1}{2} \left(\sum_{\begin{subarray}{c} (b, b') \in {\mathbf B} \times {\mathbf B}\\ 0 \in Supp(b) \cap Supp(b') \\ b \ne \tau(b')\\ \lambda(b)\geq \lambda(b') \end{subarray}} c(b) c(b')  +
\sum_{\begin{subarray}{c} b \in {\mathbf B}\\ 0 \in Supp(b)\\\lambda(b)\geq \lambda(\tau(b)) \end{subarray}} c(b) (c(b)  + 1)\right).
\]
Note that we have also used the fact that $(h_0(b') - h_0(b)) \geq 0$ is equivalent to $(0 - h_0(b)) \geq (0 - h_0(b'))$; in other words,  $\lambda(b) \geq \lambda(b')$.

We must now consider the orthogonal case. The set ${\mathcal E}$ can be partitionned as ${\mathcal E} = {\mathcal E}^{> 0} \coprod {\mathcal E}^{=0} \coprod {\mathcal E}^{< 0}$, where
\[
\begin{aligned}
{\mathcal E}^{>0} &= \{b \in {\mathcal E} \mid  b(i, j, 0) \text{ where $i, j \in {\mathcal I}$, $i \geq j$ and $(i + j) > 0$} \}\\
{\mathcal E}^{=0} &= \{b \in {\mathcal E} \mid  b(i, j, 0) \text{ where $i, j \in {\mathcal I}$, $i \geq j$ and $(i + j) = 0$} \} \quad \text{ and }\\
{\mathcal E}^{<0} &= \{b \in {\mathcal E} \mid b(i, j, 0) \text{ where $i, j \in {\mathcal I}$, $i \geq j$ and $(i + j) < 0$} \}.
\end{aligned}
\]

${\mathcal E}^{>0}$ consists of the ${\mathcal I}$-boxes strictly above the principal diagonale, ${\mathcal E}^{=0}$ consists of the ${\mathcal I}$-boxes on the principal diagonale, while  ${\mathcal E}^{<0}$ consists of the ${\mathcal I}$-boxes strictly below the principal diagonale. Note that if $b \in {\mathcal E}$, then $\langle v_0^s(b), v_0^s(\tau(b))\rangle_{\mathbf c} = 1$  for any $1 \leq s \leq c(b)$.

The equation~(\ref{E:LieCondition}) is equivalent to the equations: 
\begin{equation}\label{E:CoordinateEquationOrthogonalOdd}
\xi_{(b, j)}^{(b', j')} + \xi_{(b', j')}^{(b, j)} = 0 \quad \text{ if $b, b' \in {\mathcal E}$, $1 \leq j \leq c(b)$ and $1 \leq j' \leq c(b')$.}
\end{equation}

Order the basis ${\mathcal B}_0$ such that the first elements are the basis vectors of the form $v_0^j(b)$ with $b \in {\mathcal E}^{>0}$ and $1 \leq j \leq c(b)$, the following elements are the basis vectors of the form $v_0^j(b)$ with $b \in {\mathcal E}^{=0}$ and $1 \leq j \leq c(b)$ and finally the last elements are the basis vectors of the form $v_0^j(\tau(b))$ with $b \in {\mathcal E}^{>0}$ by keeping the same order of the first basis elements. Relative to this ordered basis, the linear transformation $Z_0$ as above has the form
\[
\begin{pmatrix} A_{11} & A_{12} & A_{13}\\ A_{21} & A_{22} & -A_{12}^T\\ A_{31} & - A_{21}^T & -A_{11}^T \end{pmatrix} \quad \text{$A_{13} = - A_{13}^T$, $A_{22} = - A_{22}^T$ and $A_{31} = - A_{31}^T$, } 
\]  
where $A_{11}$, $A_{13}$, $A_{31}$ are matrices of order $\delta'''_0 \times \delta'''_0$, $A_{21}$ is a matrix of order $\delta''_0 \times \delta'''_0$, $A_{12}$ is a matrix of order $\delta'''_0 \times \delta''_0$ and $A_{22}$ is a matrix of order $\delta''_0 \times \delta''_0$. Here we are using the same notation as in proposition~\ref{Prop_Indexation_Orbits_Odd} for $\delta''_0$ and $\delta'''_0$. 

We will define  linear transformations $Z_{(b, j)}^{(b', j')}:V_0 \rightarrow V_0$ when $b, b' \in {\mathcal E}$,  $1 \leq j \leq c(b)$ and $1 \leq j' \leq c(b')$ for six cases:
\begin{enumerate}[\upshape (i)]
\item either $b, b' \in {\mathcal E}^{>0}$ or $b, b' \in {\mathcal E}^{<0}$;

\item either $b \in {\mathcal E}^{=0}, b' \in {\mathcal E}^{>0}$ or $b \in {\mathcal E}^{< 0}, b' \in {\mathcal E}^{= 0}$;

\item either $b \in {\mathcal E}^{>0}, b' \in {\mathcal E}^{=0}$ or $b \in {\mathcal E}^{= 0}, b' \in {\mathcal E}^{< 0}$;

\item $(b', j') \ne (\tau(b), j)$ with $b \in {\mathcal E}^{< 0}$ and $b' \in {\mathcal E}^{> 0}$;

\item $(b', j') \ne (\tau(b), j)$ with $b, b' \in {\mathcal E}^{= 0}$;

\item $(b', j') \ne (\tau(b), j)$ with $b \in {\mathcal E}^{> 0}$ and $b' \in {\mathcal E}^{< 0}$.

\end{enumerate}

Case (i): If either $b, b' \in {\mathcal E}^{> 0}$ or $b, b' \in {\mathcal E}^{<0}$, we define
\[
Z_{(b, j)}^{(b', j')}(v_0^{j''}(b''))= \begin{cases} {\phantom -}v_0^{j'}(b'), &\text{ if $(b'', j'') = (b, j)$;}\\ -v_0^j(\tau(b)), &\text{ if $(b'', j'') = (\tau(b'), j')$;} \\ {\phantom -} 0, &\text{ otherwise.} \end{cases}
\]

This means
\[
\xi_{(b_1, j_1)}^{(b_2,  j_2)} = \begin{cases} {\phantom -}1, &\text{ if $(b_1, j_1) = ( b, j)$ and $(b_2, j_2) = (\tau(b'), j')$;}\\ -1, &\text{ if $(b_1, j_1) = ( \tau(b'), j')$ and $(b_2, j_2) = (b, j)$;}\\ {\phantom -}0, &\text{otherwise.} \end{cases}
\]

We get easily that  
\[
\langle Z_{(b, j)}^{(b', j')}(v), v' \rangle_{\mathbf c} + \langle v, Z_{(b, j)}^{(b', j')}(v')\rangle_{\mathbf c} = 0 
\]
 for all $v, v' \in V_0$ from the  equations~(\ref{E:CoordinateEquationOrthogonalOdd}).  Note that  the pair $\tau(b'), \tau(b)$ falls also in the  case (i) and it is easy to see that $Z_{(\tau(b'), j')}^{(\tau(b), j)} = - Z_{(b, j)}^{(b', j')}$ by evaluating both sides on the basis vectors $v_0^{j''}(b'')$.
 
 By looking on the basis vectors $v_0^{j''}(b'')$, we have that 
\[
ad(H_{\mathbf c}\vert_{V_0}) Z_{(b, j)}^{(b', j')} = (h_0(b') - h_0(b)) Z_{(b, j)}^{(b', j')}.
\]
We have used lemma~\ref{L:RelationCoefficientsHandF} (a) in the proof of this formula.

Case (ii): If either $b \in {\mathcal E}^{=0}$, $b' \in {\mathcal E}^{> 0}$ or $b\in {\mathcal E}^{<0}$, $b' \in {\mathcal E}^{=0}$, we define
\[
Z_{(b, j)}^{(b', j')}(v_0^{j''}(b''))= \begin{cases} {\phantom -}v_0^{j'}(b'), &\text{ if $(b'', j'') = (b, j)$;}\\ -v_0^j(\tau(b)), &\text{ if $(b'', j'') = (\tau(b'), j')$;} \\ {\phantom -} 0, &\text{ otherwise.} \end{cases}
\]

This means
\[
\xi_{(b_1, j_1)}^{(b_2,  j_2)} = \begin{cases} {\phantom -}1, &\text{ if $(b_1, j_1) = ( b, j)$ and $(b_2, j_2) = (\tau(b'), j')$;}\\ -1, &\text{ if $(b_1, j_1) = ( \tau(b'), j')$ and $(b_2, j_2) = (b, j)$;}\\ {\phantom -}0, &\text{otherwise.} \end{cases}
\]

We get easily that  
\[
\langle Z_{(b, j)}^{(b', j')}(v), v' \rangle_{\mathbf c} + \langle v, Z_{(b, j)}^{(b', j')}(v')\rangle_{\mathbf c} = 0 
\]
 for all $v, v' \in V_0$ from the  equations~(\ref{E:CoordinateEquationOrthogonalOdd}).  Note that  the pair $\tau(b'), \tau(b)$ falls also in the  case (ii) and it is easy to see that $Z_{(\tau(b'), j')}^{(\tau(b), j)} = - Z_{(b, j)}^{(b', j')}$ by evaluating both sides on the basis vectors $v_0^{j''}(b'')$.
 
By looking on the basis vectors $v_0^{j''}(b'')$, we have that 
\[
ad(H_{\mathbf c}\vert_{V_0}) Z_{(b, j)}^{(b', j')} = (h_0(b') - h_0(b)) Z_{(b, j)}^{(b', j')}.
\]
We have used lemma~\ref{L:RelationCoefficientsHandF} (a) in the proof of this formula.

Case (iii): If either $b \in {\mathcal E}^{>0}$, $b' \in {\mathcal E}^{= 0}$ or $b\in {\mathcal E}^{= 0}$, $b' \in {\mathcal E}^{<0}$, we define
\[
Z_{(b, j)}^{(b', j')}(v_0^{j''}(b''))= \begin{cases} {\phantom -}v_0^{j'}(b'), &\text{ if $(b'', j'') = (b, j)$;}\\ -v_0^j(\tau(b)), &\text{ if $(b'', j'') = (\tau(b'), j')$;} \\ {\phantom -} 0, &\text{ otherwise.} \end{cases}
\]

This means
\[
\xi_{(b_1, j_1)}^{(b_2,  j_2)} = \begin{cases} {\phantom -}1, &\text{ if $(b_1, j_1) = ( b, j)$ and $(b_2, j_2) = (\tau(b'), j')$;}\\ -1, &\text{ if $(b_1, j_1) = ( \tau(b'), j')$ and $(b_2, j_2) = (b, j)$;}\\ {\phantom -}0, &\text{otherwise.} \end{cases}
\]

We get easily that  
\[
\langle Z_{(b, j)}^{(b', j')}(v), v' \rangle_{\mathbf c} + \langle v, Z_{(b, j)}^{(b', j')}(v')\rangle_{\mathbf c} = 0 
\]
 for all $v, v' \in V_0$ from the  equations~(\ref{E:CoordinateEquationOrthogonalOdd}).  Note that  the pair $\tau(b'), \tau(b)$ falls also in the  case (iii) and it is easy to see that $Z_{(\tau(b'), j')}^{(\tau(b), j)} = - Z_{(b, j)}^{(b', j')}$ by evaluating both sides on the basis vectors $v_0^{j''}(b'')$.
 
By looking on the basis vectors $v_0^{j''}(b'')$, we have that 
\[
ad(H_{\mathbf c}\vert_{V_0}) Z_{(b, j)}^{(b', j')} = (h_0(b') - h_0(b)) Z_{(b, j)}^{(b', j')}.
\]
We have used lemma~\ref{L:RelationCoefficientsHandF} (a) in the proof of this formula.

Case (iv): If $(b', j') \ne (\tau(b), j)$ with $b \in {\mathcal E}^{<0}$, $b' \in {\mathcal E}^{> 0}$, we define
\[
Z_{(b, j)}^{(b', j')}(v_0^{j''}(b''))= \begin{cases} {\phantom -}v_0^{j'}(b'), &\text{ if $(b'', j'') = (b, j)$;}\\ -v_0^j(\tau(b)), &\text{ if $(b'', j'') = (\tau(b'), j')$;} \\ {\phantom -} 0, &\text{ otherwise.} \end{cases}
\]

This means
\[
\xi_{(b_1, j_1)}^{(b_2,  j_2)} = \begin{cases} {\phantom -}1, &\text{ if $(b_1, j_1) = ( b, j)$ and $(b_2, j_2) = (\tau(b'), j')$;}\\ -1, &\text{ if $(b_1, j_1) = ( \tau(b'), j')$ and $(b_2, j_2) = (b, j)$;}\\ {\phantom -}0, &\text{otherwise.} \end{cases}
\]

We get easily that  
\[
\langle Z_{(b, j)}^{(b', j')}(v), v' \rangle_{\mathbf c} + \langle v, Z_{(b, j)}^{(b', j')}(v')\rangle_{\mathbf c} = 0 
\]
 for all $v, v' \in V_0$ from the  equations~(\ref{E:CoordinateEquationOrthogonalOdd}).  Note that  the pair $\tau(b'), \tau(b)$ falls also in the  case (iv) and it is easy to see that $Z_{(\tau(b'), j')}^{(\tau(b), j)} = - Z_{(b, j)}^{(b', j')}$ by evaluating both sides on the basis vectors $v_0^{j''}(b'')$.
 
By looking on the basis vectors $v_0^{j''}(b'')$, we have that 
\[
ad(H_{\mathbf c}\vert_{V_0}) Z_{(b, j)}^{(b', j')} = (h_0(b') - h_0(b)) Z_{(b, j)}^{(b', j')}.
\]
We have used lemma~\ref{L:RelationCoefficientsHandF} (a) in the proof of this formula.

Case (v): If $(b', j') \ne (\tau(b), j)$ with $b \in {\mathcal E}^{=0}$, $b' \in {\mathcal E}^{= 0}$, we define
\[
Z_{(b, j)}^{(b', j')}(v_0^{j''}(b''))= \begin{cases} {\phantom -}v_0^{j'}(b'), &\text{ if $(b'', j'') = (b, j)$;}\\ -v_0^j(\tau(b)), &\text{ if $(b'', j'') = (\tau(b'), j')$;} \\ {\phantom -} 0, &\text{ otherwise.} \end{cases}
\]

This means
\[
\xi_{(b_1, j_1)}^{(b_2,  j_2)} = \begin{cases} {\phantom -}1, &\text{ if $(b_1, j_1) = ( b, j)$ and $(b_2, j_2) = (\tau(b'), j')$;}\\ -1, &\text{ if $(b_1, j_1) = ( \tau(b'), j')$ and $(b_2, j_2) = (b, j)$;}\\ {\phantom -}0, &\text{otherwise.} \end{cases}
\]

We get easily that  
\[
\langle Z_{(b, j)}^{(b', j')}(v), v' \rangle_{\mathbf c} + \langle v, Z_{(b, j)}^{(b', j')}(v')\rangle_{\mathbf c} = 0 
\]
 for all $v, v' \in V_0$ from the  equations~(\ref{E:CoordinateEquationOrthogonalOdd}).  Note that  the pair $\tau(b'), \tau(b)$ falls also in the  case (v) and it is easy to see that $Z_{(\tau(b'), j')}^{(\tau(b), j)} = - Z_{(b, j)}^{(b', j')}$ by evaluating both sides on the basis vectors $v_0^{j''}(b'')$.
 
By looking on the basis vectors $v_0^{j''}(b'')$, we have that 
\[
ad(H_{\mathbf c}\vert_{V_0}) Z_{(b, j)}^{(b', j')} = (h_0(b') - h_0(b)) Z_{(b, j)}^{(b', j')}.
\]
We have used lemma~\ref{L:RelationCoefficientsHandF} (a) in the proof of this formula.

Case (vi): If $(b', j') \ne (\tau(b), j)$ with $b \in {\mathcal E}^{>0}$, $b' \in {\mathcal E}^{< 0}$, we define
\[
Z_{(b, j)}^{(b', j')}(v_0^{j''}(b''))= \begin{cases} {\phantom -}v_0^{j'}(b'), &\text{ if $(b'', j'') = (b, j)$;}\\ -v_0^j(\tau(b)), &\text{ if $(b'', j'') = (\tau(b'), j')$;} \\ {\phantom -} 0, &\text{ otherwise.} \end{cases}
\]

This means
\[
\xi_{(b_1, j_1)}^{(b_2,  j_2)} = \begin{cases} {\phantom -}1, &\text{ if $(b_1, j_1) = ( b, j)$ and $(b_2, j_2) = (\tau(b'), j')$;}\\ -1, &\text{ if $(b_1, j_1) = ( \tau(b'), j')$ and $(b_2, j_2) = (b, j)$;}\\ {\phantom -}0, &\text{otherwise.} \end{cases}
\]

We get easily that  
\[
\langle Z_{(b, j)}^{(b', j')}(v), v' \rangle_{\mathbf c} + \langle v, Z_{(b, j)}^{(b', j')}(v')\rangle_{\mathbf c} = 0 
\]
 for all $v, v' \in V_0$ from the  equations~(\ref{E:CoordinateEquationOrthogonalOdd}).  Note that  the pair $\tau(b'), \tau(b)$ falls also in the  case (vi) and it is easy to see that $Z_{(\tau(b'), j')}^{(\tau(b), j)} = - Z_{(b, j)}^{(b', j')}$ by evaluating both sides on the basis vectors $v_0^{j''}(b'')$.
 
By looking on the basis vectors $v_0^{j''}(b'')$, we have that 
\[
ad(H_{\mathbf c}\vert_{V_0}) Z_{(b, j)}^{(b', j')} = (h_0(b') - h_0(b)) Z_{(b, j)}^{(b', j')}.
\]
We have used lemma~\ref{L:RelationCoefficientsHandF} (a) in the proof of this formula.

From this, we get in the orthogonal case that 
\[
\dim({\mathfrak p}''_0) =  \frac {1}{2} \left(\sum_{\begin{subarray}{c} (b, b') \in {\mathbf B} \times {\mathbf B}\\ 0 \in Supp(b) \cap Supp(b') \\ b \ne \tau(b')\\ \lambda(b)\geq \lambda(b') \end{subarray}} c(b) c(b')  +
\sum_{\begin{subarray}{c} b \in {\mathbf B}\\ 0 \in Supp(b)\\\lambda(b)\geq \lambda(\tau(b)) \end{subarray}} c(b) (c(b)  - 1)\right).
\]
Note that we have also used the fact that $(h_0(b') - h_0(b)) \geq 0$ is equivalent to $(0 - h_0(b)) \geq (0 - h_0(b'))$; in other words,  $\lambda(b) \geq \lambda(b')$.

From all the formulas above, the proposition follows.
\end{proof}

\begin{proposition}\label{P:Height_Jordan_Type_Odd}
Let $X$ be an element of the $G^{\iota}$-orbit ${\mathcal O}_{\mathbf c}$ where ${\mathbf c}  \in  {\mathfrak C}_{\delta}$ and let $c:{\mathbf B} \rightarrow {\mathbb N}$ the symmetric function corresponding to ${\mathbf c}$.  Then the partition corresponding to the Jordan type of $X$ is 
\[
\prod_{b \in {\mathbf B}} \vert Supp(b) \vert^{c(b)}.
\]
Here we have described the partition in multiplicative form.
\end{proposition}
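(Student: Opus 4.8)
The plan is to follow, almost verbatim, the argument used for Proposition~\ref{P:Height_Jordan_Type} in the even case. First I would invoke Remark~\ref{R:IsomoVVcSymplecticOdd}: via the ${\mathcal I}$-graded isometry $T_{\mathbf c}:V({\mathbf c}) \rightarrow V$ of Lemma~\ref{L:TcIsomorphismIodd} it suffices to compute the Jordan type of a single representative of the orbit ${\mathcal O}_{\mathbf c}$, and the convenient choice is $X = E_{\mathbf c}:V({\mathbf c}) \rightarrow V({\mathbf c})$, the first member of the ${\mathcal I}$-graded Jacobson--Morozov triple $(E_{\mathbf c}, H_{\mathbf c}, F_{\mathbf c})$ built in Notation~\ref{N:CCorrespondanceOdd}. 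Since the Jordan type of a nilpotent endomorphism is a conjugacy invariant and $T_{\mathbf c} E_{\mathbf c} T_{\mathbf c}^{-1} \in {\mathcal O}_{\mathbf c}$, this reduction is legitimate.

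Next I would unwind the construction of $E_{\mathbf c}$. By definition it is the direct sum, as an ${\mathcal I}$-graded linear transformation, of ${\mathbf c}({\mathcal O})$ copies of $E_{\mathcal O}$ over the $\langle\tau\rangle$-orbits ${\mathcal O} \in {\mathbf B}/\tau$; and within each orbit, viewed as a representation of the equioriented quiver $\Omega$ of type $A_{2m-1}$, $E_{\mathcal O}$ decomposes as $\oplus_{b \in {\mathcal O}} \phi_b$ with $\phi_b = E_{\mathcal O}\vert_{V(b)}$, as was shown in the proof of Proposition~\ref{P:JMTripleProof}. Collecting terms and writing $c:{\mathbf B}\rightarrow{\mathbb N}$ for the symmetric function attached to ${\mathbf c}$, one gets $E_{\mathbf c} \cong \bigoplus_{b \in {\mathbf B}} \phi_b^{\oplus c(b)}$ as a representation of $A_{2m-1}$, because the box $b$ occurs with multiplicity $c(b) = {\mathbf c}({\mathcal O})$, where ${\mathcal O}$ is the $\langle\tau\rangle$-orbit containing $b$. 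It then remains to verify that the restriction of $E_{\mathbf c}$ to the subspace $V(b)$, with basis $\{v_i(b) \mid i \in Supp(b)\}$ of Notation~\ref{N:BasisVOrbit}, is a single nilpotent Jordan block of size $\vert Supp(b) \vert$.

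The only point requiring care is the bookkeeping for the orbits with overlapping ${\mathcal I}$-box supports, i.e.\ for boxes on or straddling the principal diagonal, since there the formulas for $E_{\mathcal O}$ in~\ref{SS:EHFDefinition} carry the extra signs $-1$, $\pm\epsilon$ and factors $\epsilon^{k_1}$ (for instance $E_{\mathcal O}(v_0(b)) = \epsilon^{k_1} v_2(b)$ when $i = 0$ and $b$ is on the principal diagonal). Inspecting these formulas, however, one sees that on $V(b)$ the map $E_{\mathcal O}$ still sends each $v_i(b)$ with $i \in Supp(b)$, $i \ne \max(Supp(b))$, to a \emph{nonzero} scalar multiple of $v_{i+2}(b)$, and kills $v_{\max(Supp(b))}(b)$; so after rescaling the $v_i(b)$ by suitable nonzero constants the operator $E_{\mathcal O}\vert_{V(b)}$ becomes the standard Jordan block of size $\vert Supp(b)\vert$. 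Hence $V(b)$ is an $E_{\mathbf c}$-cyclic subspace contributing exactly one part equal to $\vert Supp(b)\vert$, and summing over all $b \in {\mathbf B}$ with multiplicities $c(b)$ yields the Jordan type $\prod_{b \in {\mathbf B}} \vert Supp(b) \vert^{c(b)}$, as claimed. I expect no genuine obstacle here: the argument is essentially that of Proposition~\ref{P:Height_Jordan_Type}, the only additional work being the sign/scaling remark just described.
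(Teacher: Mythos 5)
Your proposal is correct and follows essentially the same route as the paper: the paper's proof of this proposition simply states that it is identical to that of Proposition~\ref{P:Height_Jordan_Type}, i.e.\ reduce to the representative $E_{\mathbf c}$ via $T_{\mathbf c}$, decompose as a representation of $A_{2m-1}$ into $c(b)$ copies of each $V(b)$, and observe that each $V(b)$ is a cyclic subspace giving a single Jordan block of size $\vert Supp(b)\vert$. Your extra remark that the nonzero scalars ($\pm 1$, $\pm\epsilon$, $\epsilon^{k_1}$) in the overlapping case can be rescaled away is exactly the bookkeeping the paper leaves implicit.
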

\begin{proof}
The proof is identical to the one of proposition~\ref{P:Height_Jordan_Type} and is left to the reader.
\end{proof}

\subsection{}
As when $\vert {\mathcal I} \vert$ is even, we want to give now a compact way to describe the symmetric function $c: {\mathbf B} \rightarrow {\mathbb N}$ corresponding to the coefficient function ${\mathbf c} \in  {\mathfrak C}_{\delta}$ but this time when $\vert {\mathcal I} \vert$ is odd. In fact,  we will give two approachs to describe the symmetric function $c: {\mathbf B} \rightarrow {\mathbb N}$.

\begin{definition}
A   {\it  orthogonal symmetric ${\mathcal I}$-tableau} is the ${\mathcal I}$-diagram with its boxes filled with elements of ${\mathbb N}$ such that if $c_{i,j}$ denoted the entry in the row indexed by $i$ and in the column indexed by $j$,   then we have $c_{-j,-i} = c_{i, j}$ for all $i, j \in {\mathcal I}$ and $i \geq j$.  A   {\it  symplectic symmetric ${\mathcal I}$-tableau} is the ${\mathcal I}$-diagram with its boxes filled with elements of ${\mathbb N}$ such that if $c_{i,j}$ denoted the entry in the row indexed by $i$ and in the column indexed by $j$,   then we have $c_{-j,-i} = c_{i, j}$ for all $i, j \in {\mathcal I}$ and $i \geq j$ with the added condition that $c_{i, -i}$ is an even integer for all $i \in {\mathcal I}$, $i \geq 0$ (i.e. the entries on the principal diagonal are even). 

Note this is different from how we define orthogonal and symplectic symmetric tableau when $\vert {\mathcal I}\vert$ is even. The two notions are transposed here.
\end{definition}

\begin{example} \label{ExampleYoungDiagramOddCase}
For the ${\mathcal I} = \{4, 2, 0, -2, -4\}$, then 
\begin{center}
${\mathcal T}$ = 
\ytableausetup{centertableaux}
\begin{ytableau}
\none & \none [-4] & \none [-2] & \none [0] & \none [2] & \none [4] \\ \none [{\phantom -}4] & 0 & 1 & 2 & 0 & 0\\ \none [{\phantom -}2] & 1 & 1 & 3 & 1\\  \none [{\phantom -}0] & 2 & 3 & 2 \\ \none [-2] & 0 & 1\\ \none [-4] & 0 \\
\end{ytableau}
\end{center}
is an orthogonal symmetric ${\mathcal I}$-tableau, but not a symplectic symmetric ${\mathcal I}$-tableau.  
\begin{center}
${\mathcal T}'$ = 
\ytableausetup{centertableaux}
\begin{ytableau}
\none & \none [-4] & \none [-2] & \none [0] & \none [2] & \none [4] \\ \none [{\phantom -}4] & 2 & 1 & 3 & 0 & 1\\ \none [{\phantom -}2] & 1 & 0 & 2 & 1\\  \none [{\phantom -}0] & 3 & 2 & 2 \\ \none [-2] & 0 & 1\\ \none [-4] & 1 \\
\end{ytableau}
\end{center}
is a symplectic symmetric ${\mathcal I}$-tableau. We have kept the indices for the rows and columns  inscribed in both cases.
\end{example}

\begin{definition}
Given the symplectic (respectively orthogonal) symmetric ${\mathcal I}$-tableau ${\mathcal T} = (c_{i, j})_{i, j \in {\mathcal I}, i \geq j}$, we define its {\it dimension vector} $\delta({\mathcal T}) = (\delta_i)_{i \in {\mathcal I}} \in {\mathbb N}^{\mathcal I}$, where
\[
\delta_i = \sum_{\begin{subarray}{c} u, v \in {\mathcal I}\\ u \geq i \geq v \end{subarray}} c_{u, v} \quad \text{for all $i \in {\mathcal I}$.}
\]
\end{definition}

\begin{example} 
If ${\mathcal T}$ and ${\mathcal T}'$  are the symmetric ${\mathcal I}$-tableaux of example~\ref{ExampleYoungDiagramOddCase}, then their dimension vectors are  
\[
\begin{aligned}
\delta({\mathcal T}) &= (\delta_{4}, \delta_2, \delta_0, \delta_{-2}, \delta_{-4}) =  (3, 9, 15, 9, 3) \quad \text{ and }\\
\delta({\mathcal T}') &= (\delta_{4}, \delta_2, \delta_0, \delta_{-2}, \delta_{-4}) =  (7, 10, 16, 10, 7)
\end{aligned}
\]
\end{example}

\begin{lemma}
Let $\delta = (\delta_i)_{i \in {\mathcal I}} \in {\mathbb N}^{\mathcal I}$ with $\delta_{-i} = \delta_i$ for all $i \in {\mathcal I}$. Denote by ${\mathfrak T}_{\delta}$: the set of symplectic (respectively orthogonal) symmetric ${\mathcal I}$-tableaux ${\mathcal T}$ of dimension $\delta({\mathcal T}) = \delta = (\delta_i)_{i \in {\mathcal I}}$ and by ${\mathfrak R}_{\delta}$: the set of symplectic (respectively orthogonal) symmetric ${\mathcal I}$-tableaux ${\mathcal R} = (r_{i, j})_{i, j \in I, i \geq j}$ such that 
\[
r_{i, i} = \delta_i \quad \text{ and } \quad (r_{i, j} - r_{i + 2, j} - r_{i , j - 2} + r_{i + 2, j - 2}) \geq 0 \quad \text{ for all $i, j \in {\mathcal I}$ and $i \geq j$}
\]
where we set $r_{i, j} = 0$ if either $i \not\in {\mathcal I}$ or $j \not\in {\mathcal I}$ or both. Then the map $\varTheta:{\mathfrak T}_{\delta} \longrightarrow {\mathfrak R}_{\delta}$
\[
\varTheta({\mathcal T}) = \varTheta((c_{i, j})_{i, j \in {\mathcal I}, i \geq j}) = (r_{i, j})_{i, j \in I, i \geq j} \quad \text{where $r_{i, j} = \sum_{\begin{subarray}{c} u, v \in {\mathcal I}\\ u \geq i \geq j \geq v\end{subarray}} c_{u, v}$}  
\]
for all $i , j \in {\mathcal I}$, $i \geq j$,  is a well-defined bijection.
\end{lemma}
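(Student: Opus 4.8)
The plan is to reduce this to the purely combinatorial bijection over the shape of the ${\mathcal I}$-diagram, exactly as was done for $\vert {\mathcal I}\vert$ even in Lemma~\ref{L:RankComponentTableau}, and then to check the two extra constraints (symmetry, and evenness on the principal diagonal in the symplectic case). First, forgetting the symmetry and evenness conditions, the assignment $c = (c_{i,j}) \mapsto r = (r_{i,j})$ with $r_{i,j} = \sum_{u \geq i \geq j \geq v} c_{u,v}$ is a bijection from the set of fillings of the ${\mathcal I}$-diagram by non-negative integers onto the set of fillings $r$ with $r_{i,j} = 0$ for indices outside the diagram, $r_{i,i} = \delta_i$, and non-negative mixed second differences $r_{i,j} - r_{i+2,j} - r_{i,j-2} + r_{i+2,j-2} \geq 0$; the inverse is $c_{i,j} = r_{i,j} - r_{i+2,j} - r_{i,j-2} + r_{i+2,j-2}$. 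This is the content of section~2 of \cite{AD1982} (the condition "$\geq 0$" is precisely the requirement that the inverse image have non-negative entries). So it remains only to see that $\varTheta$ and $\varTheta^{-1}$ both preserve symmetry and, in the symplectic case, evenness of the entries on the principal diagonal.

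For symmetry: if $c_{-j,-i} = c_{i,j}$ for all $i \geq j$, then substituting $(u,v) \mapsto (-v,-u)$ in the defining sum gives $r_{i,j} = \sum_{u \geq i \geq j \geq v} c_{u,v} = \sum_{-v \geq i \geq j \geq -u} c_{-v,-u} = r_{-j,-i}$, so $r$ is symmetric. Conversely, if $r$ is symmetric, the four-term alternating sum defining $c_{i,j}$ is carried term by term to the one defining $c_{-j,-i}$ under $(i,j)\mapsto(-j,-i)$ using $r_{a,b} = r_{-b,-a}$; hence $c$ is symmetric. This handles the orthogonal case completely.

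For the symplectic case we must also track parities on the principal diagonal. Let $i \in {\mathcal I}$, $i \geq 0$. In $r_{i,-i} = \sum_{u \geq i \geq -i \geq v} c_{u,v}$ split the index set according to the sign of $u+v$: the part with $u+v < 0$ is carried bijectively onto the part with $u+v > 0$ by $(u,v)\mapsto(-v,-u)$, which fixes the summand by symmetry of $c$, so these two parts together contribute an even number; the part with $u+v = 0$ equals $\sum_{u \in {\mathcal I},\, u \geq i} c_{u,-u}$, which is even because each $c_{u,-u}$ is even by hypothesis. Hence $r_{i,-i}$ is even and $\varTheta({\mathcal T}) \in {\mathfrak R}_\delta$. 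Conversely, for ${\mathcal R} \in {\mathfrak R}_\delta$, symmetry of $r$ gives $r_{i,-i-2} = r_{i+2,-i}$, so
\[
c_{i,-i} = r_{i,-i} - r_{i+2,-i} - r_{i,-i-2} + r_{i+2,-i-2} = r_{i,-i} - 2 r_{i+2,-i} + r_{i+2,-(i+2)},
\]
a sum of even terms (the entries $r_{i,-i}$ and $r_{i+2,-(i+2)}$ lie on the principal diagonal of $r$), hence even; when $i = 0$ this reads $c_{0,0} = \delta_0 - 2 r_{2,0} + r_{2,-2}$, even since $\delta_0 = r_{0,0}$ is even. Thus $\varTheta^{-1}({\mathcal R}) \in {\mathfrak T}_\delta$, and $\varTheta$ is a bijection.

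The only genuine pitfall to keep in mind — and the reason this is worth stating separately from the $\vert{\mathcal I}\vert$-even lemma — is that here the even-diagonal constraint is attached to the symplectic tableaux rather than to the orthogonal ones, so the parity bookkeeping of the previous paragraph must be pinned to the correct family; one should also be careful with the degenerate indices, namely $i = 0$ (where the diagonal entry of $r$ is simply $\delta_0$, which is even by the same splitting argument applied to $\delta_0 = \sum_{u \geq 0 \geq v} c_{u,v}$) and the indices at the extreme ends of ${\mathcal I}$, where the convention $r_{i,j} = 0$ for $i$ or $j$ outside ${\mathcal I}$ must be used consistently. Everything else is formal manipulation over the ${\mathcal I}$-diagram, identical to the even case.
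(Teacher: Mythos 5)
Your proof is correct and follows essentially the same route as the paper: the paper's own proof of this lemma simply defers to the even-case argument of Lemma~\ref{L:RankComponentTableau} (reduction to the unconstrained bijection of section 2 of \cite{AD1982}, then checking that $\varTheta$ and $\varTheta^{-1}$ preserve symmetry and the parity of the principal-diagonal entries), and your write-up is exactly that argument carried out with the even-diagonal condition attached to the symplectic family, as it must be when $\vert{\mathcal I}\vert$ is odd.
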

\begin{proof}
The proof is similar to the proof of lemma~\ref{L:RankComponentTableau} and is left to the reader.
\end{proof}

\begin{example}
If ${\mathcal T}$ and ${\mathcal T}'$ are the symmetric ${\mathcal I}$-tableau  of  example~\ref{ExampleYoungDiagramOddCase}. Then
\begin{center}
${\varTheta(\mathcal T})$ = 
\ytableausetup{centertableaux}
\begin{ytableau}
\none & \none [-4] & \none [-2] & \none [0] & \none [2] & \none [4] \\ \none [{\phantom -}4] & 0 & 1 & 3 & 3 & 3\\ \none [{\phantom -}2] & 1 & 3 & 8 & 9\\  \none [{\phantom -}0] & 3 & 8 & 15 \\ \none [-2] & 3 & 9\\ \none [-4] & 3 \\
\end{ytableau}
\text{ and }
${\varTheta(\mathcal T}')$ = 
\ytableausetup{centertableaux}
\begin{ytableau}
\none & \none [-4] & \none [-2] & \none [0] & \none [2] & \none [4] \\ \none [{\phantom -}4] & 2 & 3 & 6 & 6 & 7\\ \none [{\phantom -}2] & 3 & 4 & 9 & 10\\  \none [{\phantom -}0] & 6 & 9 & 16 \\ \none [-2] & 6 & 10\\ \none [-4] & 7 \\
\end{ytableau}
\end{center}
 \end{example}

\begin{definition}\label{D:SymmetricTableauOddSymplectic}
Let $\delta = (\delta_i)_{i \in {\mathcal I}} \in {\mathbb N}^{\mathcal I}$ with $\delta_{-i} = \delta_i$ for all $i \in {\mathcal I}$ and  the coefficient function ${\mathbf c} \in  {\mathfrak C}_{\delta}$.  To ${\mathbf c}$, denote the corresponding symmetric function by $c: {\mathbf B} \rightarrow {\mathbb N}$. We define the symmetric ${\mathcal I}$-tableau ${\mathcal T}({\mathbf c})$ as follows: if $c_{i, j}$ is the entry at the row $i$ and column $j$ with $i, j \in {\mathcal I}$ and $i \geq j$, then 
\[
c_{i, j} = \begin{cases} c(b(i, j, 0)), &\text{ if $i + j \ne 0$;}\\  c(b(i, j, 0)), &\text{ if $i + j = 0$ in the orthogonal case;}\\c(b(i, j, 0)) + c(b(i, j, 1)), &\text{ if $i + j = 0$ in the symplectic case.} \end{cases}
\]

Because $c: {\mathbf B} \rightarrow {\mathbb N}$ is symmetric,  ${\mathcal T}({\mathbf c})$ is clearly a symmetric ${\mathcal I}$-tableau. Moreover in the symplectic case, because  we have that $c(b(i, j, 0)) = c(b(i, j, 1))$ when $i + j = 0$, we get that $c_{i, -i}$ is even. Thus ${\mathcal T}({\mathbf c})$ is an  symplectic symmetric ${\mathcal I}$-tableau.  

We have 
\[
\begin{aligned}
 \sum_{\begin{subarray}{c} u, v \in {\mathcal I}\\ u \geq i \geq v \end{subarray}} c_{u, v} &=  \sum_{\begin{subarray}{c} u, v \in {\mathcal I}\\ u \geq i \geq v \\ u + v \ne 0 \end{subarray}} c(b(u, v, 0))\\ & \quad\quad +  \begin{cases} \displaystyle\sum_{\begin{subarray}{c} u \in {\mathcal I}\\ u \geq i \geq -u \end{subarray}} (c(b(u, -u, 0)) + c(b(u, -u, 1))) \text{ in the symplectic case;} \\ \\ \displaystyle\sum_{\begin{subarray}{c} u \in {\mathcal I}\\ u \geq i \geq -u \end{subarray}} c(b(u, -u, 0))  \text{ in the orthogonal case;} \end{cases} \\ &= \sum_{\begin{subarray}{c} b \in {\mathbf B}\\ i \in Supp(b) \end{subarray}} c(b) = \delta_i.
 \end{aligned}
 \] 
Consequently $\delta({\mathcal T}({\mathbf c})) = \delta$ and ${\mathcal T}({\mathbf c}) \in {\mathfrak T}_{\delta}$. 
\end{definition}

\begin{lemma}\label{L:TableauOrbitOdd}
Let $\delta = (\delta_i)_{i \in {\mathcal I}} \in {\mathbb N}^{\mathcal I}$ with $\delta_{-i} = \delta_i$ for all $i \in {\mathcal I}$ and let ${\mathfrak g}_2/G^{\iota}$ denotes the set of  $G^{\iota}$-orbits  in ${\mathfrak g}_2$. Then
\begin{enumerate}[\upshape (a)]
\item the map ${\mathcal T}: {\mathfrak C}_{\delta} \rightarrow {\mathfrak T}_{\delta}$, where   ${\mathbf c} \mapsto {\mathcal T}({\mathbf c})$ is a bijection;
\item the map ${\mathfrak g}_2/G^{\iota} \rightarrow  {\mathfrak T}_{\delta} \quad \text{ defined by } \quad G^{\iota} \cdot X \mapsto {\mathcal T}({\mathbf c}_X)$ is a well-defined bijection. 
\end{enumerate}
\end{lemma}
\begin{proof}
(a) is obvious from the definition of ${\mathcal T}$ above and (b) is a corollary of proposition~\ref{Prop_Indexation_Orbits_Odd}
\end{proof}

\begin{notation}
We will denote the $G^{\iota}$-orbit  in ${\mathfrak g}_2$ corresponding to the ${\mathcal I}$-tableau ${\mathcal T}  \in {\mathfrak T}_{\delta}$ as defined in the bijection of lemma~\ref{L:TableauOrbitOdd} (b)  by ${\mathcal O}_{\mathcal T}$
\end{notation}

In \cite{BI2021},  M. Boos and G. Cerulli Irelli, were able to describe the partial order on the $G^{\iota}$-orbits in ${\frak g}_2$ for symmetric quivers in the orthogonal and symplectic cases for  symmetric quivers of type $A_m^{even}$ and $A_m^{odd}$. We will now describe their result in our case (i.e. $\vert {\mathcal I} \vert$ odd).

\begin{proposition}[Boos and Cerulli Irelli]\label{P:PartialOrderIOddSymplectic}
(In the situation of \ref{S:SetUpOdd}) Let  ${\mathcal I}$-tableaux ${\mathcal T}$, ${\mathcal T}' \in  {\mathfrak T}_{\delta}$ and  the corresponding $G^{\iota}$-orbits ${\mathcal O}_{\mathcal T}$ , ${\mathcal O}_{{\mathcal T}'}$ in ${\frak g}_2$. Then the  $G^{\iota}$-orbit  ${\mathcal O}_{\mathcal T}$ is contained in the Zariski closure of the   $G^{\iota}$-orbit ${\mathcal O}_{{\mathcal T}'}$ if and only if
\[
(r_{i, j})_{i, j \in {\mathcal I}, i \geq j} = {\mathcal R} = \varTheta({\mathcal T}) \leq \varTheta({\mathcal T}') = {\mathcal R}' = (r'_{i, j})_{i, j \in {\mathcal I}, i \geq j}
\]
where the inequality here between the tableaux ${\mathcal R}$ and ${\mathcal R}'$ means $r_{i, j} \leq r'_{i, j}$ for all $i, j \in {\mathcal I}$, $i \geq j$. 
\end{proposition}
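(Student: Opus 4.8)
The plan is to transcribe the argument given for the even case in the proof of Proposition~\ref{P:PartialOrderIEven}, replacing the equioriented quiver $A_{2m}$ by $A_{2m-1}$ throughout. First I would recall, via Lemma~\ref{G2AsRepresOdd} (b) and (e) together with Proposition~\ref{Prop_Indexation_Orbits_Odd} and Lemma~\ref{L:TableauOrbitOdd}, that an element $X$ of the orbit ${\mathcal O}_{{\mathcal T}''}$ (with ${\mathcal T}'' \in {\mathfrak T}_{\delta}$) corresponds, after forgetting the bilinear form, to the representation of the quiver $A_{2m-1}$ given by the maps $X_i : V_i \rightarrow V_{i+2}$ for $i \in {\mathcal I}$, $i \ne (2m-2)$, and that by the rank description in section~2 of \cite{AD1982} the rank of the composition $X_{i-2} \circ X_{i-4} \circ \cdots \circ X_{j+2} \circ X_j : V_j \rightarrow V_i$ (for $i, j \in {\mathcal I}$, $i > j$) equals $r_{i,j}''$, where $(r_{i,j}'')_{i,j \in {\mathcal I},\, i \geq j} = {\mathcal R}'' = \varTheta({\mathcal T}'')$; also $r_{i,i}'' = \dim(V_i) = \delta_i$.

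For the forward implication, suppose ${\mathcal O}_{\mathcal T} \subseteq \overline{{\mathcal O}_{{\mathcal T}'}}$. The ranks of the compositions above are lower semicontinuous functions on ${\mathfrak g}_2$ and, by Proposition~2.8 of \cite{AD1982}, they cannot increase under a degeneration; hence $r_{i,j} \leq r'_{i,j}$ for all $i, j \in {\mathcal I}$ with $i \geq j$, i.e. ${\mathcal R} = \varTheta({\mathcal T}) \leq \varTheta({\mathcal T}') = {\mathcal R}'$.

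For the converse, the essential input is Theorem~7.1 of \cite{BI2021}: the degeneration order on symplectic (respectively orthogonal) representations of a symmetric quiver of type $A_m^{odd}$ coincides with the degeneration order obtained by forgetting the symplectic (respectively orthogonal) structure and regarding the objects merely as representations of the underlying quiver $A_{2m-1}$. This applies directly in our normalization because, in the setup of \ref{S:SetUpOdd}, the ambient group in the orthogonal case is $G = O(V)$, so the $G^{\iota}$-orbits are exactly the orthogonal isomorphism classes considered by Boos and Cerulli Irelli (the more restrictive special orthogonal situation being deferred to the next section). Combining Theorem~7.1 of \cite{BI2021} with Theorem~5.2 of \cite{AD1982}, which for the equioriented quiver $A_{2m-1}$ gives ${\mathcal R} \leq {\mathcal R}' \Rightarrow {\mathcal O}_{\mathcal T} \subseteq \overline{{\mathcal O}_{{\mathcal T}'}}$, finishes the argument.

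The only point requiring care is bookkeeping: one must check that the datum ${\mathcal T}$ attached to a $G^{\iota}$-orbit really records the multiplicities of the indecomposable $\epsilon$-representations (Definition~\ref{D:SymmetricTableauOddSymplectic}) and that $\varTheta$ translates this multiplicity tableau into the rank tableau appearing in \cite{AD1982}. Both facts are already contained in Proposition~\ref{Prop_Indexation_Orbits_Odd} and Lemma~\ref{L:TableauOrbitOdd}, so the proof amounts to assembling these citations; no new computation is needed beyond the routine transcription of the even-case proof.
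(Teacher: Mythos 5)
Your proposal is correct and follows exactly the route the paper intends: the paper's own proof of this proposition simply says it is analogous to Proposition~\ref{P:PartialOrderIEven} and is left to the reader, and your transcription (rank semicontinuity via \cite{AD1982} for the forward direction, Theorem~7.1 of \cite{BI2021} plus Theorem~5.2 of \cite{AD1982} for the converse, with the correct observation that in the setup of \ref{S:SetUpOdd} the group is $O(V)$ so the orbits already coincide with the orthogonal isomorphism classes of Boos--Cerulli Irelli) is precisely that argument.
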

\begin{proof}
The proof is similar to the proof of the proposition~\ref{P:PartialOrderIEven} and is left to the reader.
\end{proof}

\begin{example}\label{E:5.28}
With the notation of \ref{S:SetUpOdd}, let $m = 3$, $G$ be the group of automorphisms of the finite dimensional vector space $V$ over ${\mathbf k}$ preserving  a fixed non-degenerate skew-symmetric form $\langle\ , \  \rangle:V \times V \rightarrow {\mathbf k}$,   ${\mathcal I} = \{4, 2, 0, -2, -4\}$, $\dim(V_0) = 4 = \delta_0$, $\dim(V_2) = 2 = \delta_2 = \delta_{-2}$, $\dim(V_4) = 2 = \delta_4 = \delta_{-4}$. In other words,  $G = Sp_{12}({\mathbf k})$ and $\iota(t)$ on $V_i$ for $i \in {\mathcal I}$ is given by $\iota(t) v= t^i v$ for all $v \in V_i$ and $t\in {\mathbf k}^{\times}$. In this case, $\dim({\mathfrak g}_2) = 12$ and there are 13 orbits. We will write for each of these orbits:  the tableau ${\mathcal T} \in {\mathfrak T}_{\delta}$, the tableau ${\mathcal R} \in  {\mathfrak R}_{\delta}$ and its dimension. 

\begin{table}[h]
	\begin{center}\renewcommand{\arraystretch}{1.25}
		\begin{tabular} {| l || r  | r  | r  | r  | r |  r | r | r  | r | c |}
			\hline
			Orbit &  $100$  & $010$ & $002$ & $110$ & $022$ & $012$ & $112$ & $122$ & $222$ & Jordan Dec.  \\ \hline
			${\mathcal O}_{12}^1$ & $0$ & $0$ & $1$ & $0$ & $0$ & $0$ & $0$ &  $0$ & $1$ &$1^2 5^2$\\ \hline
			${\mathcal O}_{11}^1$ & $1$ & $0$ & $1$ & $0$ & $0$ & $0$ & $0$  & $1$ & $0$ &$1^4 4^2$\\ \hline
			${\mathcal O}_{11}^2$ & $0$ & $0$ & $0$ & $0$ & $0$ & $0$ & $2$  & $0$ & $0$ &$3^4$\\ \hline
			${\mathcal O}_{10}^1$ & $1$ & $0$ & $0$ & $0$ & $0$ & $1$ & $1$  & $0$ & $0$ &$1^2 2^2 3^2$\\ \hline
			${\mathcal O}_{9}^1$ & $0$ & $0$ & $1$ & $1$ & $0$ & $0$ & $1$  & $0$ & $0$ &$1^2 2^2 3^2$\\ \hline
			${\mathcal O}_{8}^1$ & $2$ & $0$ & $1$ & $0$ & $1$ & $0$ & $0$  & $0$ & $0$ &$1^6 3^2$\\ \hline
			${\mathcal O}_{8}^2$ & $1$ & $1$ & $1$ & $0$ & $0$ & $0$ & $1$  & $0$ & $0$ &$1^6 3^2$\\ \hline
			${\mathcal O}_{7}^1$ & $2$ & $0$ & $0$ & $0$ & $0$ & $2$ & $0$  & $0$ & $0$ &$1^4 2^4$\\ \hline
			${\mathcal O}_{7}^2$ & $1$ & $0$ & $1$ & $1$ & $0$ & $1$ & $0$  & $0$ & $0$ &$1^4 2^4$\\ \hline
			${\mathcal O}_{5}^1$ & $2$ & $1$ & $1$ & $0$ & $0$ & $1$ & $0$  & $0$ & $0$ &$1^8 2^2$\\ \hline
			${\mathcal O}_{4}^1$ & $0$ & $0$ & $2$ & $2$ & $0$ & $0$ & $0$  & $0$ & $0$ &$1^4 2^4$\\ \hline
			${\mathcal O}_{3}^1$ & $1$ & $1$ & $2$ & $1$ & $0$ & $0$ & $0$  & $0$ & $0$ &$1^8 2^2$\\ \hline
			${\mathcal O}_{0}^1$ & $2$ & $2$ & $2$ & $0$ & $0$ & $0$ & $0$  & $0$ & $0$ &$1^{12}$\\ \hline
		\end{tabular}
	\end{center}
	\caption{List of the values of the coefficient function ${\mathbf c}$.}\label{T:Table3} 	
\end{table}
The columns of the  table~\ref{T:Table3}  are indexed by the  dimension of the indecomposable symplectic representation of $A_3^{odd}$ and Jordan type for each orbit.	
The subscript in the notation for each orbit is its dimension and the upperscript is there to distinguish between the different orbits of the same dimension.
Now  the tableau ${\mathcal T} \in {\mathfrak T}_{\delta}$, the tableau ${\mathcal R} \in  {\mathfrak R}_{\delta}$ for each orbit are
\begin{itemize}
\item For the orbit ${\mathcal O}_{12}^1$ of dimension 12

\begin{center}
${\mathcal T}_{12}^1$ = 
\ytableausetup{centertableaux}
\begin{ytableau}
\none & \none [-4] & \none [-2] & \none [0] & \none [2] & \none [4]\\ \none [{\phantom -}4] & 2 & 0 & 0 & 0 & 0\\ \none [{\phantom -}2] & 0 & 0 & 0 & 0\\  \none [{\phantom -}0] & 0 & 0 & 2\\ \none [-2] & 0 & 0 \\ \none [-4] & 0 \\ \end{ytableau}
\quad
\text{and}
\quad
${\mathcal R}_{12}^1$ = 
\ytableausetup{centertableaux}
\begin{ytableau}
\none & \none [-4] & \none [-2] & \none [0] & \none [2] & \none [4]\\ \none [{\phantom -}4] & 2 & 2 & 2 & 2 & 2\\ \none [{\phantom -}2] & 2 & 2 & 2 & 2\\  \none [{\phantom -}0] & 2 & 2 & 4\\ \none [-2] & 2 & 2 \\ \none [-4] & 2 \\ \end{ytableau}.

\end{center}

\item For the orbit ${\mathcal O}_{11}^1$ of dimension 11

\begin{center}
${\mathcal T}_{11}^1$ = 
\ytableausetup{centertableaux}
\begin{ytableau}
\none & \none [-4] & \none [-2] & \none [0] & \none [2] & \none [4]\\ \none [{\phantom -}4] & 0 & 1 & 0 & 0 & 1\\ \none [{\phantom -}2] & 1 & 0 & 0 & 0\\  \none [{\phantom -}0] & 0 & 0 & 2\\ \none [-2] & 0 & 0 \\ \none [-4] & 1 \\ \end{ytableau}
\quad
\text{and}
\quad
${\mathcal R}_{11}^1$ = 
\ytableausetup{centertableaux}
\begin{ytableau}
\none & \none [-4] & \none [-2] & \none [0] & \none [2] & \none [4]\\ \none [{\phantom -}4] & 0 & 1 & 1 & 1 & 2\\ \none [{\phantom -}2] & 1 & 2 & 2 & 2\\  \none [{\phantom -}0] & 1 & 2 & 4\\ \none [-2] & 1 & 2 \\ \none [-4] & 2 \\ \end{ytableau}.

\end{center}

\item For the orbit ${\mathcal O}_{11}^2$ of dimension 11

\begin{center}
${\mathcal T}_{11}^2$ = 
\ytableausetup{centertableaux}
\begin{ytableau}
\none & \none [-4] & \none [-2] & \none [0] & \none [2] & \none [4]\\ \none [{\phantom -}4] & 0 & 0 & 2 & 0 & 0\\ \none [{\phantom -}2] & 0 & 0 & 0 & 0\\  \none [{\phantom -}0] & 2 & 0 & 0\\ \none [-2] & 0 & 0 \\ \none [-4] & 0 \\ \end{ytableau}
\quad
\text{and}
\quad
${\mathcal R}_{11}^2$ = 
\ytableausetup{centertableaux}
\begin{ytableau}
\none & \none [-4] & \none [-2] & \none [0] & \none [2] & \none [4]\\ \none [{\phantom -}4] & 0 & 0 & 2 & 2 & 2\\ \none [{\phantom -}2] & 0 & 0 & 2 & 2\\  \none [{\phantom -}0] & 2 & 2 & 4\\ \none [-2] & 2 & 2 \\ \none [-4] & 2 \\ \end{ytableau}.

\end{center}

\item For the orbit ${\mathcal O}_{10}^1$ of dimension 10

\begin{center}
${\mathcal T}_{10}^1$ = 
\ytableausetup{centertableaux}
\begin{ytableau}
\none & \none [-4] & \none [-2] & \none [0] & \none [2] & \none [4]\\ \none [{\phantom -}4] & 0 & 0 & 1 & 0 & 1\\ \none [{\phantom -}2] & 0 & 0 & 1 & 0\\  \none [{\phantom -}0] & 1 & 1 & 0\\ \none [-2] & 0 & 0 \\ \none [-4] & 1 \\ \end{ytableau}
\quad
\text{and}
\quad
${\mathcal R}_{10}^1$ = 
\ytableausetup{centertableaux}
\begin{ytableau}
\none & \none [-4] & \none [-2] & \none [0] & \none [2] & \none [4]\\ \none [{\phantom -}4] & 0 & 0 & 1 & 1 & 2\\ \none [{\phantom -}2] & 0 & 0 & 2 & 2\\  \none [{\phantom -}0] & 1 & 2 & 4\\ \none [-2] & 1 & 2 \\ \none [-4] & 2 \\ \end{ytableau}.

\end{center}

\item For the orbit ${\mathcal O}_{9}^1$ of dimension 9

\begin{center}
${\mathcal T}_{9}^1$ = 
\ytableausetup{centertableaux}
\begin{ytableau}
\none & \none [-4] & \none [-2] & \none [0] & \none [2] & \none [4]\\ \none [{\phantom -}4] & 0 & 0 & 1 & 1 & 0\\ \none [{\phantom -}2] & 0 & 0 & 0 & 0\\  \none [{\phantom -}0] & 1 & 0 & 2\\ \none [-2] & 1 & 0 \\ \none [-4] & 0 \\ \end{ytableau}
\quad
\text{and}
\quad
${\mathcal R}_{9}^1$ = 
\ytableausetup{centertableaux}
\begin{ytableau}
\none & \none [-4] & \none [-2] & \none [0] & \none [2] & \none [4]\\ \none [{\phantom -}4] & 0 & 0 & 1 & 2 & 2\\ \none [{\phantom -}2] & 0 & 0 & 1 & 2\\  \none [{\phantom -}0] & 1 & 1 & 4\\ \none [-2] & 2 & 2 \\ \none [-4] & 2 \\ \end{ytableau}.

\end{center}

\item For the orbit ${\mathcal O}_{8}^1$ of dimension 8

\begin{center}
${\mathcal T}_{8}^1$ = 
\ytableausetup{centertableaux}
\begin{ytableau}
\none & \none [-4] & \none [-2] & \none [0] & \none [2] & \none [4]\\ \none [{\phantom -}4] & 0 & 0 & 0 & 0 & 2\\ \none [{\phantom -}2] & 0 & 2 & 0 & 0\\  \none [{\phantom -}0] & 0 & 0 & 2\\ \none [-2] & 0 & 0 \\ \none [- 4] & 2 \\ \end{ytableau}
\quad
\text{and}
\quad
${\mathcal R}_{8}^1$ = 
\ytableausetup{centertableaux}
\begin{ytableau}
\none & \none [-4] & \none [-2] & \none [0] & \none [2] & \none [4]\\ \none [{\phantom -}4] & 0 & 0 & 0 & 0 & 2\\ \none [{\phantom -}2] & 0 & 2 & 2 & 2\\  \none [{\phantom -}0] & 0 & 2 & 4\\ \none [-2] & 0 & 2 \\ \none [-4] & 2 \\ \end{ytableau}.

\end{center}

\item For the orbit ${\mathcal O}_{8}^2$ of dimension 8

\begin{center}
${\mathcal T}_{8}^2$ = 
\ytableausetup{centertableaux}
\begin{ytableau}
\none & \none [-4] & \none [-2] & \none [0] & \none [2] & \none [4]\\ \none [{\phantom -}4] & 0 & 0 & 1 & 0 & 1\\ \none [{\phantom -}2] & 0 & 0 & 0 & 1\\  \none [{\phantom -}0] & 1 & 0 & 2\\ \none [-2] & 0 & 1 \\ \none [-4] & 1 \\ \end{ytableau}
\quad
\text{and}
\quad
${\mathcal R}_{8}^2$ = 
\ytableausetup{centertableaux}
\begin{ytableau}
\none & \none [-4] & \none [-2] & \none [0] & \none [2] & \none [4]\\ \none [{\phantom -}4] & 0 & 0 & 1 & 1 & 2\\ \none [{\phantom -}2] & 0 & 0 & 1 & 2\\  \none [{\phantom -}0] & 1 & 1 & 4\\ \none [-2] & 1 & 2 \\ \none [-4] & 2 \\ \end{ytableau}.

\end{center}

\item For the orbit ${\mathcal O}_{7}^1$ of dimension 7

\begin{center}
${\mathcal T}_{7}^1$ = 
\ytableausetup{centertableaux}
\begin{ytableau}
\none & \none [-4] & \none [-2] & \none [0] & \none [2] & \none [4]\\ \none [{\phantom -}4] & 0 & 0 & 0 & 0 & 2\\ \none [{\phantom -}2] & 0 & 0 & 2 & 0\\  \none [{\phantom -}0] & 0 & 2 & 0\\ \none [-2] & 0 & 0 \\ \none [-4] & 2 \\ \end{ytableau}
\quad
\text{and}
\quad
${\mathcal R}_{7}^1$ = 
\ytableausetup{centertableaux}
\begin{ytableau}
\none & \none [-4] & \none [-2] & \none [0] & \none [2] & \none [4]\\ \none [{\phantom -}4] & 0 & 0 & 0 & 0 & 2\\ \none [{\phantom -}2] & 0 & 0 & 2 & 2\\  \none [{\phantom -}0] & 0 & 2 & 4\\ \none [-2] & 0 & 2 \\ \none [-4] & 2 \\ \end{ytableau}.

\end{center}

\item For the orbit ${\mathcal O}_{7}^2$ of dimension 7

\begin{center}
${\mathcal T}_{7}^2$ = 
\ytableausetup{centertableaux}
\begin{ytableau}
\none & \none [-4] & \none [-2] & \none [0] & \none [2] & \none [4]\\ \none [{\phantom -}4] & 0 & 0 & 0 & 1& 1\\ \none [{\phantom -}2] & 0 & 0 & 1 & 0\\  \none [{\phantom -}0] & 0 & 1 & 2\\ \none [-2] & 1 & 0 \\ \none [-4] & 1 \\ \end{ytableau}
\quad
\text{and}
\quad
${\mathcal R}_{7}^2$ = 
\ytableausetup{centertableaux}
\begin{ytableau}
\none & \none [-4] & \none [-2] & \none [0] & \none [2] & \none [4]\\ \none [{\phantom -}4] & 0 & 0 & 0 & 1 & 2\\ \none [{\phantom -}2] & 0 & 0 & 1 & 2\\  \none [{\phantom -}0] & 0 & 1 & 4\\ \none [-2] & 1 & 2 \\ \none [-4] & 2 \\ \end{ytableau}.

\end{center}

\item For the orbit ${\mathcal O}_{5}^1$ of dimension 5

\begin{center}
${\mathcal T}_{5}^1$ = 
\ytableausetup{centertableaux}
\begin{ytableau}
\none & \none [-4] & \none [-2] & \none [0] & \none [2] & \none [4]\\ \none [{\phantom -}4] & 0 & 0 & 0 & 0 & 2\\ \none [{\phantom -}2] & 0 & 0 & 1 & 1\\  \none [{\phantom -}0] & 0 & 1 & 2\\ \none [-2] & 0 & 1 \\ \none [-4] & 2 \\ \end{ytableau}
\quad
\text{and}
\quad
${\mathcal R}_{5}^1$ = 
\ytableausetup{centertableaux}
\begin{ytableau}
\none & \none [-4] & \none [-2] & \none [0] & \none [2] & \none [4]\\ \none [{\phantom -}4] & 0 & 0 & 0 & 0 & 2\\ \none [{\phantom -}2] & 0 & 0 & 1 & 2\\  \none [{\phantom -}0] & 0 & 1 & 4\\ \none [-2] & 0 & 2 \\ \none [-4] & 2 \\ \end{ytableau}.

\end{center}

\item For the orbit ${\mathcal O}_{4}^1$ of dimension 4

\begin{center}
${\mathcal T}_{4}^1$ = 
\ytableausetup{centertableaux}
\begin{ytableau}
\none & \none [-4] & \none [-2] & \none [0] & \none [2] & \none [4]\\ \none [{\phantom -}4] & 0 & 0 & 0 & 2 & 0\\ \none [{\phantom -}2] & 0 & 0 & 0 & 0\\  \none [{\phantom -}0] & 0 & 0 & 4\\ \none [-2] & 2 & 0 \\ \none [-4] & 0 \\ \end{ytableau}
\quad
\text{and}
\quad
${\mathcal R}_{4}^1$ = 
\ytableausetup{centertableaux}
\begin{ytableau}
\none & \none [-4] & \none [-2] & \none [0] & \none [2] & \none [4]\\ \none [{\phantom -}4] & 0 & 0 & 0 & 2 & 2\\ \none [{\phantom -}2] & 0 & 0 & 0 & 2\\  \none [{\phantom -}0] & 0 & 0 & 4\\ \none [-2] & 2 & 2 \\ \none [-4] & 2 \\ \end{ytableau}.

\end{center}

\item For the orbit ${\mathcal O}_{3}^1$ of dimension 3

\begin{center}
${\mathcal T}_{3}^1$ = 
\ytableausetup{centertableaux}
\begin{ytableau}
\none & \none [-4] & \none [-2] & \none [0] & \none [2] & \none [4]\\ \none [{\phantom -}4] & 0 & 0 & 0 & 1 & 1\\ \none [{\phantom -}2] & 0 & 0 & 0 & 1\\  \none [{\phantom -}0] & 0 & 0 & 4\\ \none [-2] & 1 & 1 \\ \none [-4] & 1 \\ \end{ytableau}
\quad
\text{and}
\quad
${\mathcal R}_{3}^1$ = 
\ytableausetup{centertableaux}
\begin{ytableau}
\none & \none [-4] & \none [-2] & \none [0] & \none [2] & \none [4]\\ \none [{\phantom -}4] & 0 & 0 & 0 & 1 & 2\\ \none [{\phantom -}2] & 0 & 0 & 0 & 2\\  \none [{\phantom -}0] & 0 & 0 & 4\\ \none [-2] & 1 & 2 \\ \none [-4] & 2 \\ \end{ytableau}.

\end{center}

\item For the orbit ${\mathcal O}_{0}^1$ of dimension 0

\begin{center}
${\mathcal T}_{0}^1$ = 
\ytableausetup{centertableaux}
\begin{ytableau}
\none & \none [-4] & \none [-2] & \none [0] & \none [2] & \none [4]\\ \none [{\phantom -}4] & 0 & 0 & 0 & 0 & 2\\ \none [{\phantom -}2] & 0 & 0 & 0 & 2\\  \none [{\phantom -}0] & 0 & 0 & 4\\ \none [-2] & 0 & 2 \\ \none [-4] & 2 \\ \end{ytableau}
\quad
\text{and}
\quad
${\mathcal R}_{0}^1$ = 
\ytableausetup{centertableaux}
\begin{ytableau}
\none & \none [-4] & \none [-2] & \none [0] & \none [2] & \none [4]\\ \none [{\phantom -}4] & 0 & 0 & 0 & 0 & 2\\ \none [{\phantom -}2] & 0 & 0 & 0 & 2\\  \none [{\phantom -}0] & 0 & 0 & 4\\ \none [-2] & 0 & 2 \\ \none [-4] & 2 \\ \end{ytableau}.

\end{center}

\end{itemize}

\begin{figure}[h]
\begin{center}
\begin{tikzpicture}
\draw (0, 0) -- (2, 3);
\draw (0, 0) -- (-2, 5);
\draw (2, 3) -- (6, 4);
\draw  (2, 3) -- (2, 7);
\draw (6, 4) -- (2, 9);
\draw (-2, 5) -- (-2, 7);
\draw (-2, 5) -- (2, 7);
\draw (-2, 7) -- (-4.5, 8);
\draw (-2, 7) -- (-2, 10); 
\draw (2, 7) -- (2, 8);
\draw (-4.5, 8) -- (-2, 11);
\draw (2, 8) -- (-2, 10);
\draw (2, 8) -- (2, 9);
\draw (2, 9) -- (2, 11);
\draw (-2, 10) -- (-2, 11);
\draw (-2, 10) -- (2, 11);
\draw (-2, 11) -- (0, 12);
\draw (2, 11) -- (0, 12);
\draw (0, 0) node[below] {${\mathcal O}_0^1$};
\draw (2, 3) node[left] {${\mathcal O}_3^1$};
\draw (6, 4) node[right] {${\mathcal O}_4^1$};
\draw (-2, 5) node[left] {${\mathcal O}_5^1$};
\draw (-2, 7) node[right] {${\mathcal O}_7^1$};
\draw (-4.5, 8) node[left] {${\mathcal O}_8^1$};
\draw (2, 7) node[right] {${\mathcal O}_7^2$};
\draw (2, 8) node[right] {${\mathcal O}_8^2$};
\draw (2, 9) node[right] {${\mathcal O}_9^1$};
\draw (-2, 10) node[left] {${\mathcal O}_{10}^1$};
\draw (-2, 11) node[left] {${\mathcal O}_{11}^1$};
\draw (2, 11) node[right] {${\mathcal O}_{11}^2$};
\draw (0, 12) node[above] {${\mathcal O}_{12}^1$};
\draw (0, 0) node {$\bullet$};
\draw (2, 3) node {$\bullet$};
\draw (6, 4) node {$\bullet$};
\draw (-2, 5) node {$\bullet$};
\draw (-2, 7) node {$\bullet$};
\draw (2, 7) node {$\bullet$};
\draw (-4.5, 8) node {$\bullet$};
\draw (2, 8) node {$\bullet$};
\draw (2, 9) node {$\bullet$};
\draw (-2, 10) node {$\bullet$};
\draw (-2, 11) node {$\bullet$};
\draw (2, 11) node {$\bullet$};
\draw (0, 12) node {$\bullet$};
\end{tikzpicture}
\end{center}
\caption{Poset obtained by the rank tableaux comparaison}\label{FigurePosetExample5.21}
\end{figure}
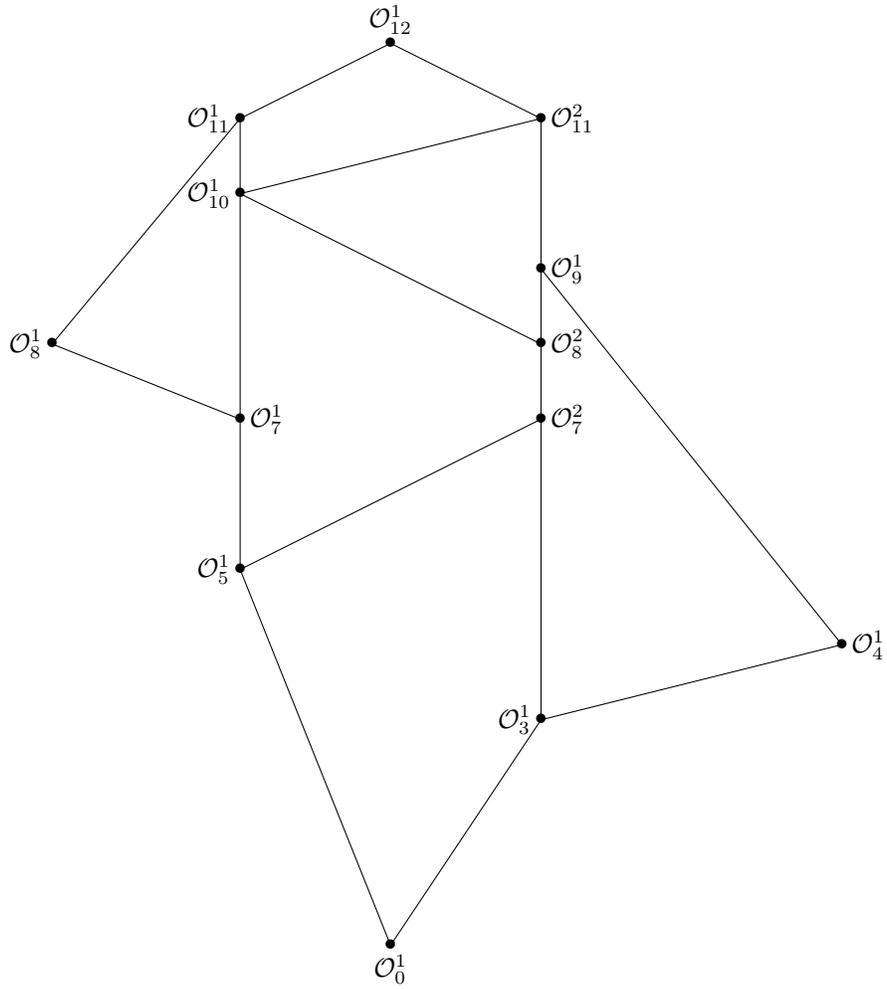

We have illustrated the Hasse diagram of the poset obtained when we compare the rank tableaux ${\mathcal R}$ for the set of orbits  in figure~\ref{FigurePosetExample5.21}.
\end{example}

\begin{example}\label{E:4.27}
With the notation of  \ref{S:SetUpOdd}, let $m = 3$, $G$ be the group of automorphisms of the finite dimensional vector space $V$ over ${\mathbf k}$ preserving  a fixed non-degenerate symmetric form $\langle\ , \  \rangle:V \times V \rightarrow {\mathbf k}$,   ${\mathcal I} = \{4, 2, 0, -2, -4\}$, $\dim(V_0) = 3 = \delta_0$, $\dim(V_2) = 2 = \delta_2 = \delta_{-2}$, $\dim(V_4) = 1 = \delta_4 = \delta_{-4}$. In other words,  $G = O_{9}({\mathbf k})$ and $\iota(t)$ on $V_i$ for $i \in {\mathcal I}$ is given by $\iota(t) v = t^i v$ for all $v \in V_i$ and all $t \in {\mathbf k}^{\times}$. In this case, $\dim({\mathfrak g}_2) = 8$ and there are 14 orbits. We will write for each of these orbits:  the tableau ${\mathcal T} \in {\mathfrak C}_{\delta}$, the tableau ${\mathcal R} \in  {\mathfrak R}_{\delta}$ and its dimension. 

\begin{table}[h]
	\begin{center}\renewcommand{\arraystretch}{1.25}
		\begin{tabular} {| l || r  | r  | r  | r  | r |  r | r | r  | r | c |}
			\hline
			Orbit &  $100$  & $010$ & $001$ & $110$ & $011$ & $012$ & $111$ & $112$ & $122$ & Jordan Dec.  \\ \hline
			${\mathcal O}_{8}^1$  & $0$ & $0$ & $1$ & $0$ & $1$ & $0$ & $1$ &  $0$ & $0$ &$1^1 3^1 5^1$\\ \hline
			${\mathcal O}_{7}^1$& $0$ & $0$ & $0$ & $0$ & $0$ & $1$ & $1$  & $0$ & $0$ &$2^2 5^1$\\ \hline
			${\mathcal O}_{7}^2$& $0$ & $0$ & $1$ & $0$ & $0$ & $0$ & $0$  & $0$ & $1$ &$1^1 4^2$\\ \hline
			${\mathcal O}_{6}^1$ & $0$ & $1$ & $2$ & $0$ & $0$ & $0$ & $1$  & $0$ & $0$ &$1^4 5^1$\\ \hline
			${\mathcal O}_{6}^2$& $0$ & $0$ & $0$ & $0$ & $1$ & $0$ & $0$  & $1$ & $0$ &$3^3$\\ \hline
			${\mathcal O}_{6}^3$ & $1$ & $0$ & $1$ & $0$ & $2$ & $0$ & $0$  & $0$ & $0$ &$1^3 3^2$\\ \hline
			${\mathcal O}_{5}^1$ & $0$ & $1$ & $1$ & $0$ & $0$ & $0$ & $0$  & $1$ & $0$ &$1^3 3^2$\\ \hline
			${\mathcal O}_{5}^2$ & $0$ & $0$ & $2$ & $1$ & $1$ & $0$ & $0$  & $0$ & $0$ &$1^2 2^2 3^1$\\ \hline
			${\mathcal O}_{5}^3$ & $1$ & $0$ & $0$ & $0$ & $1$ & $1$ & $0$  & $0$ & $0$ &$1^2 2^2 3^1$\\ \hline
			${\mathcal O}_{4}^1$ & $0$ & $0$ & $1$ & $1$ & $0$ & $1$ & $0$  & $0$ & $0$ &$1^1 2^4$\\ \hline
			${\mathcal O}_{4}^2$ & $1$ & $1$ & $2$ & $0$ & $1$ & $0$ & $0$  & $0$ & $0$ &$1^6 3^1$\\ \hline
			${\mathcal O}_{3}^1$ & $1$ & $1$ & $1$ & $0$ & $0$ & $1$ & $0$  & $0$ & $0$ &$1^5 2^2$\\ \hline
			${\mathcal O}_{2}^1$ & $0$ & $1$ & $3$ & $1$ & $0$ & $0$ & $0$  & $0$ & $0$ &$1^5 2^2$\\ \hline
			${\mathcal O}_{0}^1$ & $1$ & $2$ & $3$ & $0$ & $0$ & $0$ & $0$  & $0$ & $0$ &$1^{9}$\\ \hline
		\end{tabular}
	\end{center}
	\caption{List of the values on the coefficient function  ${\mathbf c}$.}\label{T:Table4}
\end{table}
The columns of the  table~\ref{T:Table4}  are indexed by the  dimension of the indecomposable orthogonal representation of $A_3^{odd}$ and Jordan type for each orbit.	
The subscript in the notation for each orbit is its dimension and the upperscript is there to distinguish between the different orbits of the same dimension.
Now  the tableau ${\mathcal T} \in {\mathfrak T}_{\delta}$, the tableau ${\mathcal R} \in  {\mathfrak R}_{\delta}$ for each orbit are
\begin{itemize}
\item For the orbit ${\mathcal O}_{8}^1$ of dimension 8

\begin{center}
${\mathcal T}_{8}^1$ = 
\ytableausetup{centertableaux}
\begin{ytableau}
\none & \none [-4] & \none [-2] & \none [0] & \none [2] & \none [4]\\ \none [{\phantom -}4] & 1 & 0 & 0 & 0 & 0\\ \none [{\phantom -}2] & 0 & 1 & 0 & 0\\  \none [{\phantom -}0] & 0 & 0 & 1\\ \none [-2] & 0 & 0 \\ \none [-4] & 0 \\ \end{ytableau}
\quad
\text{and}
\quad
${\mathcal R}_{8}^1$ = 
\ytableausetup{centertableaux}
\begin{ytableau}
\none & \none [-4] & \none [-2] & \none [0] & \none [2] & \none [4]\\ \none [{\phantom -}4] & 1 & 1 & 1 & 1 & 1\\ \none [{\phantom -}2] & 1 & 2 & 2 & 2\\  \none [{\phantom -}0] & 1 & 2 & 3\\ \none [-2] & 1 & 2 \\ \none [-4] & 1 \\ \end{ytableau}.

\end{center}

\item For the orbit ${\mathcal O}_{7}^1$  of dimension 7

\begin{center}
${\mathcal T}_{7}^1$ = 
\ytableausetup{centertableaux}
\begin{ytableau}
\none & \none [-4] & \none [-2] & \none [0] & \none [2] & \none [4]\\ \none [{\phantom -}4] & 1 & 0 & 0 & 0 & 0\\ \none [{\phantom -}2] & 0 & 0 & 1 & 0\\  \none [{\phantom -}0] & 0 & 1 & 0\\ \none [-2] & 0 & 0 \\ \none [-4] & 0 \\ \end{ytableau}
\quad
\text{and}
\quad
${\mathcal R}_{7}^1$ = 
\ytableausetup{centertableaux}
\begin{ytableau}
\none & \none [-4] & \none [-2] & \none [0] & \none [2] & \none [4]\\ \none [{\phantom -}4] & 1 & 1 & 1 & 1 & 1\\ \none [{\phantom -}2] & 1 & 1 & 2 & 2\\  \none [{\phantom -}0] & 1 & 2 & 3\\ \none [-2] & 1 & 2 \\ \none [-4] & 1 \\ \end{ytableau}.

\end{center}

\item For the orbit ${\mathcal O}_{7}^2$  of dimension 7

\begin{center}
${\mathcal T}_{7}^2$ = 
\ytableausetup{centertableaux}
\begin{ytableau}
\none & \none [-4] & \none [-2] & \none [0] & \none [2] & \none [4]\\ \none [{\phantom -}4] & 0 & 1 & 0 & 0 & 0\\ \none [{\phantom -}2] & 1 & 0 & 0 & 0\\  \none [{\phantom -}0] & 0 & 0 & 1\\ \none [-2] & 0 & 0 \\ \none [-4] & 0 \\ \end{ytableau}
\quad
\text{and}
\quad
${\mathcal R}_{7}^2$ = 
\ytableausetup{centertableaux}
\begin{ytableau}
\none & \none [-4] & \none [-2] & \none [0] & \none [2] & \none [4]\\ \none [{\phantom -}4] & 0 & 1 & 1 & 1 & 1\\ \none [{\phantom -}2] & 1 & 2 & 2 & 2\\  \none [{\phantom -}0] & 1 & 2 & 3\\ \none [-2] & 1 & 2 \\ \none [-4] & 1 \\ \end{ytableau}.

\end{center}

\item For the orbit ${\mathcal O}_{6}^1$ of dimension 6

\begin{center}
${\mathcal T}_{6}^1$ = 
\ytableausetup{centertableaux}
\begin{ytableau}
\none & \none [-4] & \none [-2] & \none [0] & \none [2] & \none [4]\\ \none [{\phantom -}4] & 1 & 0 & 0 & 0 & 0\\ \none [{\phantom -}2] & 0 & 0 & 0 & 1\\  \none [{\phantom -}0] & 0 & 0 & 2\\ \none [-2] & 0 & 1 \\ \none [-4] & 0 \\ \end{ytableau}
\quad
\text{and}
\quad
${\mathcal R}_{6}^1$ = 
\ytableausetup{centertableaux}
\begin{ytableau}
\none & \none [-4] & \none [-2] & \none [0] & \none [2] & \none [4]\\ \none [{\phantom -}4] & 1 & 1 & 1 & 1 & 1\\ \none [{\phantom -}2] & 1 & 1 & 1 & 2\\  \none [{\phantom -}0] & 1 & 1 & 3\\ \none [-2] & 1 & 2 \\ \none [-4] & 1 \\ \end{ytableau}.

\end{center}

\item For the orbit ${\mathcal O}_{6}^2$ of dimension 6

\begin{center}
${\mathcal T}_{6}^2$ = 
\ytableausetup{centertableaux}
\begin{ytableau}
\none & \none [-4] & \none [-2] & \none [0] & \none [2] & \none [4]\\ \none [{\phantom -}4] & 0 & 0 & 1 & 0 & 0\\ \none [{\phantom -}2] & 0 & 1 & 0 & 0\\  \none [{\phantom -}0] & 1 & 0 & 0\\ \none [-2] & 0 & 0 \\ \none [-4] & 0 \\ \end{ytableau}
\quad
\text{and}
\quad
${\mathcal R}_{6}^2$ = 
\ytableausetup{centertableaux}
\begin{ytableau}
\none & \none [-4] & \none [-2] & \none [0] & \none [2] & \none [4]\\ \none [{\phantom -}4] & 0 & 0 & 1 & 1 & 1\\ \none [{\phantom -}2] & 0 & 1 & 2 & 2\\  \none [{\phantom -}0] & 1 & 2 & 3\\ \none [-2] & 1 & 2 \\ \none [-4] & 1 \\ \end{ytableau}.

\end{center}

\item For the orbit ${\mathcal O}_{6}^3$ of dimension 6

\begin{center}
${\mathcal T}_{6}^3$ = 
\ytableausetup{centertableaux}
\begin{ytableau}
\none & \none [-4] & \none [-2] & \none [0] & \none [2] & \none [4]\\ \none [{\phantom -}4] & 0 & 0 & 0 & 0 & 1\\ \none [{\phantom -}2] & 0 & 2 & 0 & 0\\  \none [{\phantom -}0] & 0 & 0 & 1\\ \none [-2] & 0 & 0 \\ \none [-4] & 1 \\ \end{ytableau}
\quad
\text{and}
\quad
${\mathcal R}_{6}^3$ = 
\ytableausetup{centertableaux}
\begin{ytableau}
\none & \none [-4] & \none [-2] & \none [0] & \none [2] & \none [4]\\ \none [{\phantom -}4] & 0 & 0 & 0 & 0 & 1\\ \none [{\phantom -}2] & 0 & 2 & 2 & 2\\  \none [{\phantom -}0] & 0 & 2 & 3\\ \none [-2] & 0 & 2 \\ \none [-4] & 1 \\ \end{ytableau}.

\end{center}

\item For the orbit ${\mathcal O}_{5}^1$ of dimension 5

\begin{center}
${\mathcal T}_{5}^1$ = 
\ytableausetup{centertableaux}
\begin{ytableau}
\none & \none [-4] & \none [-2] & \none [0] & \none [2] & \none [4]\\ \none [{\phantom -}4] & 0 & 0 & 1 & 0 & 0\\ \none [{\phantom -}2] & 0 & 0 & 0 & 1\\  \none [{\phantom -}0] & 1 & 0 & 1\\ \none [-2] & 0 & 1 \\ \none [-4] & 0 \\ \end{ytableau}
\quad
\text{and}
\quad
${\mathcal R}_{5}^1$ = 
\ytableausetup{centertableaux}
\begin{ytableau}
\none & \none [-4] & \none [-2] & \none [0] & \none [2] & \none [4]\\ \none [{\phantom -}4] & 0 & 0 & 1 & 1 & 1\\ \none [{\phantom -}2] & 0 & 0 & 1 & 2\\  \none [{\phantom -}0] & 1 & 1 & 3\\ \none [-2] & 1 & 2 \\ \none [-4] & 1 \\ \end{ytableau}.

\end{center}

\item For the orbit ${\mathcal O}_{5}^2$ of dimension 5

\begin{center}
${\mathcal T}_{5}^2$ = 
\ytableausetup{centertableaux}
\begin{ytableau}
\none & \none [-4] & \none [-2] & \none [0] & \none [2] & \none [4]\\ \none [{\phantom -}4] & 0 & 0 & 0 & 1 & 0\\ \none [{\phantom -}2] & 0 & 1 & 0 & 0\\  \none [{\phantom -}0] & 0 & 0 & 2\\ \none [-2] & 1 & 0 \\ \none [-4] & 0 \\ \end{ytableau}
\quad
\text{and}
\quad
${\mathcal R}_{5}^2$ = 
\ytableausetup{centertableaux}
\begin{ytableau}
\none & \none [-4] & \none [-2] & \none [0] & \none [2] & \none [4]\\ \none [{\phantom -}4] & 0 & 0 & 0 & 1 & 1\\ \none [{\phantom -}2] & 0 & 1 & 1 & 2\\  \none [{\phantom -}0] & 0 & 1 & 3\\ \none [-2] & 1 & 2 \\ \none [-4] & 1 \\ \end{ytableau}.

\end{center}

\item For the orbit ${\mathcal O}_{5}^3$ of dimension 5

\begin{center}
${\mathcal T}_{5}^3$ = 
\ytableausetup{centertableaux}
\begin{ytableau}
\none & \none [-4] & \none [-2] & \none [0] & \none [2] & \none [4]\\ \none [{\phantom -}4] & 0 & 0 & 0 & 0& 1\\ \none [{\phantom -}2] & 0 & 1 & 1 & 0\\  \none [{\phantom -}0] & 0 & 1 & 0\\ \none [-2] & 0 & 0 \\ \none [-4] & 1 \\ \end{ytableau}
\quad
\text{and}
\quad
${\mathcal R}_{5}^3$ = 
\ytableausetup{centertableaux}
\begin{ytableau}
\none & \none [-4] & \none [-2] & \none [0] & \none [2] & \none [4]\\ \none [{\phantom -}4] & 0 & 0 & 0 & 0 & 1\\ \none [{\phantom -}2] & 0 & 1 & 2 & 2\\  \none [{\phantom -}0] & 0 & 2 & 3\\ \none [-2] & 0 & 2 \\ \none [-4] & 1 \\ \end{ytableau}.

\end{center}

\item For the orbit ${\mathcal O}_{4}^1$ of dimension 4

\begin{center}
${\mathcal T}_{4}^1$ = 
\ytableausetup{centertableaux}
\begin{ytableau}
\none & \none [-4] & \none [-2] & \none [0] & \none [2] & \none [4]\\ \none [{\phantom -}4] & 0 & 0 & 0 & 1 & 0\\ \none [{\phantom -}2] & 0 & 0 & 1 & 0\\  \none [{\phantom -}0] & 0 & 1 & 1\\ \none [-2] & 1 & 0 \\ \none [-4] & 0 \\ \end{ytableau}
\quad
\text{and}
\quad
${\mathcal R}_{4}^1$ = 
\ytableausetup{centertableaux}
\begin{ytableau}
\none & \none [-4] & \none [-2] & \none [0] & \none [2] & \none [4]\\ \none [{\phantom -}4] & 0 & 0 & 0 & 1 & 1\\ \none [{\phantom -}2] & 0 & 0 & 1 & 2\\  \none [{\phantom -}0] & 0 & 1 & 3\\ \none [-2] & 1 & 2 \\ \none [-4] & 1 \\ \end{ytableau}.

\end{center}

\item For the orbit ${\mathcal O}_{4}^2$ of dimension 4

\begin{center}
${\mathcal T}_{4}^2$ = 
\ytableausetup{centertableaux}
\begin{ytableau}
\none & \none [-4] & \none [-2] & \none [0] & \none [2] & \none [4]\\ \none [{\phantom -}4] & 0 & 0 & 0 & 0 & 1\\ \none [{\phantom -}2] & 0 & 1 & 0 & 1\\  \none [{\phantom -}0] & 0 & 0 & 2\\ \none [-2] & 0 & 1 \\ \none [-4] & 1 \\ \end{ytableau}
\quad
\text{and}
\quad
${\mathcal R}_{4}^2$ = 
\ytableausetup{centertableaux}
\begin{ytableau}
\none & \none [-4] & \none [-2] & \none [0] & \none [2] & \none [4]\\ \none [{\phantom -}4] & 0 & 0 & 0 & 0 & 1\\ \none [{\phantom -}2] & 0 & 1 & 1 & 2\\  \none [{\phantom -}0] & 0 & 1 & 3\\ \none [-2] & 0 & 2 \\ \none [-4] & 1 \\ \end{ytableau}.

\end{center}

\item For the orbit ${\mathcal O}_{3}^1$ of dimension 3

\begin{center}
${\mathcal T}_{3}^1$ = 
\ytableausetup{centertableaux}
\begin{ytableau}
\none & \none [-4] & \none [-2] & \none [0] & \none [2] & \none [4]\\ \none [{\phantom -}4] & 0 & 0 & 0 & 0 & 1\\ \none [{\phantom -}2] & 0 & 0 & 1 & 1\\  \none [{\phantom -}0] & 0 & 1 & 1\\ \none [-2] & 0 & 1 \\ \none [-4] & 1 \\ \end{ytableau}
\quad
\text{and}
\quad
${\mathcal R}_{3}^1$ = 
\ytableausetup{centertableaux}
\begin{ytableau}
\none & \none [-4] & \none [-2] & \none [0] & \none [2] & \none [4]\\ \none [{\phantom -}4] & 0 & 0 & 0 & 0 & 1\\ \none [{\phantom -}2] & 0 & 0 & 1 & 2\\  \none [{\phantom -}0] & 0 & 1 & 3\\ \none [-2] & 0 & 2 \\ \none [-4] & 1 \\ \end{ytableau}.

\end{center}

\item For the orbit ${\mathcal O}_{2}^1$ of dimension 2

\begin{center}
${\mathcal T}_{2}^1$ = 
\ytableausetup{centertableaux}
\begin{ytableau}
\none & \none [-4] & \none [-2] & \none [0] & \none [2] & \none [4]\\ \none [{\phantom -}4] & 0 & 0 & 0 & 1 & 0\\ \none [{\phantom -}2] & 0 & 0 & 0 & 1\\  \none [{\phantom -}0] & 0 & 0 & 3\\ \none [-2] & 1 & 1 \\ \none [-4] & 0 \\ \end{ytableau}
\quad
\text{and}
\quad
${\mathcal R}_{2}^1$ = 
\ytableausetup{centertableaux}
\begin{ytableau}
\none & \none [-4] & \none [-2] & \none [0] & \none [2] & \none [4]\\ \none [{\phantom -}4] & 0 & 0 & 0 & 1 & 1\\ \none [{\phantom -}2] & 0 & 0 & 0 & 2\\  \none [{\phantom -}0] & 0 & 0 & 3\\ \none [-2] & 1 & 2 \\ \none [-4] & 1 \\ \end{ytableau}.

\end{center}

\item For the orbit ${\mathcal O}_{0}^1$ of dimension 0

\begin{center}
${\mathcal T}_{0}^1$ = 
\ytableausetup{centertableaux}
\begin{ytableau}
\none & \none [-4] & \none [-2] & \none [0] & \none [2] & \none [4]\\ \none [{\phantom -}4] & 0 & 0 & 0 & 0 & 1\\ \none [{\phantom -}2] & 0 & 0 & 0 & 2\\  \none [{\phantom -}0] & 0 & 0 & 3\\ \none [-2] & 0 & 2 \\ \none [-4] & 1 \\ \end{ytableau}
\quad
\text{and}
\quad
${\mathcal R}_{0}^1$ = 
\ytableausetup{centertableaux}
\begin{ytableau}
\none & \none [-4] & \none [-2] & \none [0] & \none [2] & \none [4]\\ \none [{\phantom -}4] & 0 & 0 & 0 & 0 & 1\\ \none [{\phantom -}2] & 0 & 0 & 0 & 2\\  \none [{\phantom -}0] & 0 & 0 & 3\\ \none [-2] & 0 & 2 \\ \none [-4] & 1 \\ \end{ytableau}.

\end{center}

\end{itemize}

\begin{figure}[h]
\begin{center}
\begin{tikzpicture}
\draw (0, 0) -- (2, 3);
\draw (0, 0) -- (-2, 2);
\draw (2, 3) -- (-2, 4);
\draw  (2, 3) -- (2, 4);
\draw (-2, 2) -- (-2, 4);
\draw (-2, 4) -- (-3, 5);
\draw (-2, 4) -- (0, 5);
\draw (2, 4) -- (0, 5); 
\draw (2, 4) -- (3, 5);
\draw (-3, 5) -- (-3, 6);
\draw (-3, 5) -- (0, 6);
\draw (0, 5) -- (-1.32, 5.44);
\draw (-1.68, 5.56) -- (-3, 6);
\draw (0, 5) -- (0, 6);
\draw (3, 5) -- (0, 6);
\draw (3, 5) -- (3, 6);
\draw (-3, 6) -- (-2, 7);
\draw (0, 6) -- (-2, 7);
\draw (0, 6) -- (2, 7);
\draw (3, 6) -- (2, 7);
\draw (-2, 7) -- (0, 8);
\draw (2, 7) -- (0, 8);
\draw (0, 0) node[below] {${\mathcal O}_0^1$};
\draw (-2, 2) node[left] {${\mathcal O}_2^1$};
\draw (2, 3) node[right] {${\mathcal O}_3^1$};
\draw (-2.1, 4) node[left] {${\mathcal O}_4^1$};
\draw (2, 4) node[right] {${\mathcal O}_4^2$};
\draw (-3, 5) node[left] {${\mathcal O}_5^1$};
\draw (0.1, 5) node[right] {${\mathcal O}_5^2$};
\draw (3, 5) node[right] {${\mathcal O}_5^3$};
\draw (-3, 6) node[left] {${\mathcal O}_6^1$};
\draw (0, 6) node[above] {${\mathcal O}_{6}^2$};
\draw (3, 6) node[right] {${\mathcal O}_{6}^3$};
\draw (-2.1, 7) node[left] {${\mathcal O}_{7}^1$};
\draw (2, 7) node[right] {${\mathcal O}_{7}^2$};
\draw (0, 8) node[above] {${\mathcal O}_{8}^1$};
\draw (0, 0) node {$\bullet$};
\draw (-2, 2) node {$\bullet$};
\draw (2, 3) node {$\bullet$};
\draw (-2, 4) node {$\bullet$};
\draw (2, 4) node {$\bullet$};
\draw (-3, 5) node {$\bullet$};
\draw (0, 5) node {$\bullet$};
\draw (3, 5) node {$\bullet$};
\draw (-3, 6) node {$\bullet$};
\draw (0, 6) node {$\bullet$};
\draw (3, 6) node {$\bullet$};
\draw (-2, 7) node {$\bullet$};
\draw (2, 7) node {$\bullet$};
\draw (0, 8) node {$\bullet$};
\end{tikzpicture}
\end{center}
\caption{Poset obtained by the rank tableaux comparaison}\label{FigurePosetExampleA}
\end{figure}

We have illustrated the Hasse diagram of the poset obtained when we compare the rank tableaux ${\mathcal R}$ for the set of orbits  in figure~\ref{FigurePosetExampleA}.
\end{example}

\section{$G^{\iota}$-orbits in ${\mathfrak g}_2$ in the special orthogonal case when $\vert{\mathcal I}\vert$ is odd.}

\subsection{}
In this section, we will restrict ourself to the special orthogonal case when $\vert {\mathcal I} \vert$ is odd.  The special orthogonal case is trickier. The reason being  that special orthogonal isomorphism is more restricted that orthogonal isomorphism as we saw in \ref{SS:ExampleIsomorphismSpecialOrtho}.

\subsection{}\label{S:SetUpOddOrtho}
In this section,  
\begin{itemize}
\item $m$ is an integer $> 0$;
\item $\tilde G$ is  the group of automorphisms of a finite dimensional vector space $V$ over $k$ preserving a fixed non-degenerate symmetric form $\langle\ , \  \rangle: V \times V \rightarrow {\mathbf k}$;  in other words, $\tilde G = O(V)$; 
\item $G$ is the connected component of the identity element of the group $\tilde G$,  in other words, $G = SO(V)$;
\item $\epsilon = 1$;
\item $\mathfrak g$ is the Lie algebra of $G$, more precisely $X \in {\mathfrak g}$ if and only if $X:V \rightarrow V$ is an endomorphism of $V$ such that $\langle X(v_1), v_2\rangle + \langle v_1 , X(v_2)\rangle = 0$ for all $v_1, v_2 \in V$;
\item $V$ is identified to its dual $V^*$ by the isomorphism $\phi: V \rightarrow V^*$ defined by $v \mapsto \phi_v$, where $\phi_v: V \rightarrow {\mathbf k}$ is $\phi_v(x) = \langle v, x\rangle$ for all $x \in V$;  
\item ${\mathcal I} = \{n \in {\mathbb Z} \mid n \equiv 0 \pmod 2, -2m <  n  < 2m \}$;
\item $\oplus_{i \in {\mathcal I}} V_i$ is a direct sum decomposition of $V$ by  vector subspaces $V_i \ne 0$ such that 
\begin{itemize} \item $\langle v, v'\rangle = 0$ whenever $v \in V_i$, $v' \in V_j$ and $i + j \ne 0$;
			\item $V_{-i} = V_i^*$ for all $i \in {\mathcal I}$ under the identification given by $\phi$ above.
\end{itemize} 
Note that the restriction of the non-degenerate symmetric  form $\langle\ , \ \rangle$ to $V_0 \times V_0$ is a non-degenerate symmetric  form.
\item ${\mathcal B} = \coprod_{i \in {\mathcal I}} {\mathcal B}_i$ is a basis of $V$ such that each ${\mathcal B}_i = \{ u_{i, j} \mid  1 \leq j \leq \delta_i \}$  is a basis of $V_i$ for each $i \in {\mathcal I}$ and $i \ne 0$, where $\delta_i$ is the dimension of $V_i$, and ${\mathcal B}_0 = \{ u_{0, j} \mid 1 \leq j \leq \delta_0 \}$ is a basis of $V_0$, where $\delta_0$ is the dimension of $V_0$ and these bases are such that, for $i, j \in {\mathcal I}$, 
\[
\begin{aligned}
\langle u_{i, r}, u_{j, s}\rangle &= \begin{cases} 1, &\text{if $i , j \ne 0$, $i + j = 0$ and $r = s$;}\\ 0, &\text{otherwise;} \end{cases}
\\
\langle u_{0, r}, u_{0, s}\rangle &= \begin{cases} 1, &\text{if  $r + s = \delta_0 + 1$;}\\ 0, &\text{otherwise;}\\  \end{cases}
\\
\text{ and }
\\
\langle u_{0, r}, u_{i, s}\rangle &= \langle u_{i, s}, u_{0, r}\rangle = 0, \text{ if $i \ne 0$, $1 \leq s \leq \delta_i$, $1 \leq r \leq \delta_0$.}
\end{aligned}
\]

\item $\iota: {\mathbf k}^{\times} \rightarrow {\tilde G}$ is the homomorphism defined by $\iota(t) v = t^i v$ for all $i \in {\mathcal I}$, $v \in V_i$ and $t \in {\mathbf k}^{\times}$ and clearly $\iota(t) \in G$ for all $t \in {\mathbf k}^{\times}$;
\item ${\mathbf B}$ is the set of ${\mathcal I}$-boxes;
\item ${\mathbf B}/\tau$ denote the set of $\langle \tau\rangle$-orbits ${\mathcal O}$ in ${\mathbf B}$; 
\item $\nu = \vert {\mathbf B}/\tau \vert = m^2$.
\end{itemize}

\subsection{}
In proposition~ \ref{Prop_Indexation_Orbits_Odd}, we gave a parametrization of the set ${\mathfrak g}_2/{\tilde G}^{\iota}$ of ${\tilde G}^{\iota}$-orbits in ${\mathfrak g}_2$ by giving a bijection $\Upsilon: {\mathfrak g}_2/{\tilde G}^{\iota} \rightarrow {\mathfrak C}_{\delta}$. To ${\mathbf c} \in {\mathfrak C}_{\delta}$, we have denoted the corresponding ${\tilde G}^{\iota}$-orbit under $\Upsilon$ by ${\mathcal O}_{\mathbf c}$. From our observation in \ref{SS:ObservationDimensionOrthogonal} and lemma~\ref{G2AsRepresOdd} (a), we have that the indice $[ {\tilde G}^{\iota}: G^{\iota}] = [O(V_0): SO(V_0)] = 2$. Note here we used the fact that $\dim(V_0) \ne 0$.  Consequently the ${\tilde G}^{\iota}$-orbit  ${\mathcal O}_{\mathbf c}$ is either a $G^{\iota}$-orbit  or splits into two $G^{\iota}$-orbits. We need to give a criteria on  ${\mathbf c} \in  {\mathfrak C}_{\delta}$ for when the ${\tilde G}^{\iota}$-orbit ${\mathcal O}_{\mathbf c}$ splits or not. 

\begin{proposition}\label{P:OrbitSplittingSO}
Let ${\mathbf c} \in {\mathfrak C}_{\delta}$ and the  symmetric function  $c:{\mathbf B} \rightarrow {\mathbb N}$ associated to ${\mathbf  c}$. 
\begin{enumerate}[\upshape (a)]
\item There exists $i, j \in {\mathcal I}$, $i \geq 0 \geq j$ such that $c(b(i, j, 0)) > 0$.
\item If there is an element $i' \in {\mathcal I}$, $i' \geq 0$ such that $c(b(i', -i', 0)) > 0$, then the ${\tilde G}^{\iota}$-orbit ${\mathcal O}_{\mathbf c}$ does not split and it is a unique $G^{\iota}$-orbit ${\mathcal O}_{\mathbf c}$.
\item If $c(b(i, -i, 0)) = 0$ for all $i \in {\mathcal I}$, $i \geq 0$, then  the ${\tilde G}^{\iota}$-orbit ${\mathcal O}_{\mathbf c}$ splits into two $G^{\iota}$-orbits: ${\ }'{\mathcal O}_{\mathbf c}$ and ${\ }''{\mathcal O}_{\mathbf c}$. Moreover we have that  $\dim({\mathcal O}_{\mathbf c}) = \dim({\ }'{\mathcal O}_{\mathbf c}) = \dim({\ }''{\mathcal O}_{\mathbf c})$
\end{enumerate}
\end{proposition}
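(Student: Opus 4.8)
The plan is to analyze when the stabilizer of a representative $E_{\mathbf c} \in {\mathcal O}_{\mathbf c}$ in $\tilde G^\iota$ is contained in $G^\iota$. Recall that $[\tilde G^\iota : G^\iota] = 2$, with the nontrivial coset detected by $\det$ on the $O(V_0)$-factor (under the isomorphism $\Phi$ of lemma~\ref{G2AsRepresOdd}(d), an element $g \in \tilde G^\iota$ lies in $G^\iota$ iff $\det(g_0) = 1$). A standard orbit-counting argument shows that the $\tilde G^\iota$-orbit ${\mathcal O}_{\mathbf c}$ stays a single $G^\iota$-orbit precisely when the stabilizer $Z_{\tilde G^\iota}(E_{\mathbf c})$ meets the nontrivial coset, i.e.\ contains an element $g$ with $\det(g_0) = -1$; otherwise ${\mathcal O}_{\mathbf c}$ breaks into two $G^\iota$-orbits of equal dimension (since $G^\iota$ has index $2$ in $\tilde G^\iota$ and both pieces are translates of each other under any element of the nontrivial coset, hence isomorphic as varieties). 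So the whole proposition reduces to: the stabilizer of $E_{\mathbf c}$ contains an element of determinant $-1$ on $V_0$ if and only if some $c(b(i',-i',0)) > 0$.

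For part (a), I would argue combinatorially: $\delta_0 = \dim V_0 > 0$ by the standing hypothesis that all $V_i \ne 0$, and $\delta_0 = \sum_{b \in {\mathbf B},\, 0 \in Supp(b)} c(b)$; since every ${\mathcal I}$-box $b$ with $0 \in Supp(b)$ has $Supp(b) = \{i' \in {\mathcal I} : j \le i' \le i\}$ with $i \ge 0 \ge j$, at least one such $b = b(i,j,0)$ (or $b(i,j,1)$, but by symmetry of $c$ we may take $k=0$) has $c(b) > 0$. For part (b), suppose $c(b(i',-i',0)) > 0$ for some $i' \ge 0$. Using remark~\ref{SS:ExampleIsomorphismSpecialOrtho}: when ${\mathcal O}$ is the $\langle\tau\rangle$-orbit of $b(i_1,j_1,0)$ with $i_1 + j_1 = 0$ (or $i_1 \ge j_1 > 0$ or $0 > i_1 \ge j_1$), the orthogonal isomorphism class of $(V({\mathcal O}), \phi_{\mathcal O}, \langle\ ,\ \rangle_{\mathcal O})$ does not split into distinct special-orthogonal classes. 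Transferring to $V({\mathbf c})$ (working in $V = V({\mathbf c})$ via $T_{\mathbf c}$, as justified by remark~\ref{R:IsomoVVcSymplecticOdd}), one of the indecomposable summands of $E_{\mathbf c}$ is an indecomposable orthogonal representation $(V({\mathcal O}), \phi_{\mathcal O}, \langle\ ,\ \rangle_{\mathcal O})$ with ${\mathcal O} = \langle\tau\rangle$-orbit of $b(i',-i',0)$, which by lemma~\ref{L:TauBox}(f) satisfies $\tau(b)=b$; its $V_0$-component is one-dimensional and $\phi_{\mathcal O}$ is surjective onto $V_2({\mathcal O})$ (if $i'>0$), and in any case, one can build an orthogonal automorphism of $(V({\mathbf c}),\phi_{\mathbf c})$ that acts by $-1$ on the line $V_0(b(i',-i',0))$ and by $+1$ on all other basis vectors of $V_0({\mathbf c})$ — this commutes with $\phi_{\mathbf c}$ since that line is a $\tau$-fixed diagonal box whose $E_{\mathbf c}$-image lands in a subspace spanned by other basis vectors, and it preserves $\langle\ ,\ \rangle_{\mathbf c}$ since $\langle v_0(b),v_0(b)\rangle_{\mathbf c} = 1$ forces the Gram form on that line to be nondegenerate and the sign flip is an isometry of it. This element has $\det = -1$ on $V_0$, so it lies in $Z_{\tilde G^\iota}(E_{\mathbf c}) \setminus G^\iota$, proving ${\mathcal O}_{\mathbf c}$ does not split.

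For part (c), suppose $c(b(i,-i,0)) = 0$ for all $i \ge 0$. Then in the decomposition $E_{\mathbf c} = \oplus_{b} c(b)$-copies of indecomposables, \emph{no} summand is a $\tau$-fixed diagonal box. I must show $Z_{\tilde G^\iota}(E_{\mathbf c}) \subseteq G^\iota$, i.e.\ every orthogonal automorphism $g$ of $(V, \phi_V=E_{\mathbf c}, \langle\ ,\ \rangle)$ has $\det(g_0) = 1$. The key tool is lemma~\ref{L:DeterminantTheta0} and the discussion of \ref{SS:SpecialOrthoRepresentationsInSameOrthoClass}: for each orbit-summand of type $b(i_1,j_1,0)$ with $i_1 + j_1 > 0$ (overlapping supports, off-diagonal), the induced map $\theta_0$ on the $2$-dimensional $V_0$-piece has $\det(\theta_0) = \pm 1$ constrained by the way it must intertwine $\phi_{\mathcal O}$ — and more importantly, the full $g_0 \in O(V_0)$ decomposes over these summands compatibly. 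The plan is to decompose $V_0 = \oplus_{\mathcal O} V_0({\mathcal O})^{\oplus c({\mathcal O})}$ (paired off into hyperbolic pieces $V_0(b) \oplus V_0(\tau(b))$ via the bilinear form, since $b \ne \tau(b)$ when there are no $\tau$-fixed diagonal boxes), note $g$ permutes the isotypic pieces, and on each hyperbolic summand the constraint of commuting with $E_{\mathbf c}$ (via the surjectivity/injectivity of the $\phi$-maps out of and into degree $0$, exactly as in the proof of lemma~\ref{L:DeterminantTheta0}) forces $g_0$ restricted there to be block-triangular with mutually inverse diagonal blocks, hence determinant $1$; summing, $\det(g_0)=1$. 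The equality of dimensions is then automatic: $\tilde G^\iota$ acts transitively on ${\mathcal O}_{\mathbf c}$ which is a disjoint union of two $G^\iota$-orbits interchanged by any $g \in \tilde G^\iota \setminus G^\iota$, so they are isomorphic as varieties and $\dim({\mathcal O}_{\mathbf c}) = \dim({}'{\mathcal O}_{\mathbf c}) = \dim({}''{\mathcal O}_{\mathbf c})$.

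The main obstacle I anticipate is part (c): carefully showing that \emph{every} element of $O(V_0)$ stabilizing $E_{\mathbf c}$ has determinant $1$, rather than just exhibiting particular ones. This requires a clean decomposition of the $V_0$-part of the stabilizer, reducing to the hyperbolic pieces $V_0(b) \oplus V_0(\tau(b))$ and invoking the intertwining relations with the $\phi$-maps at the boundary of degree $0$ (much as in lemma~\ref{L:DeterminantTheta0}) to force the determinant on each piece to be $1$ — the subtlety being to handle isotypic components of multiplicity $> 1$ (where $g$ can mix copies) and components where $0$ is an interior vertex of $Supp(b)$ versus an endpoint. Once this structural description of the stabilizer is in hand, both the non-splitting criterion and the dimension equality follow formally from the index-$2$ orbit argument.
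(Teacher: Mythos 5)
Your index-$2$/stabilizer framework and part (a) are fine, but the explicit element you construct in part (b) is not in the stabilizer of $E_{\mathbf c}$ unless $i'=0$. By the formulas of notation~\ref{N:CCorrespondanceOdd} (orthogonal case, overlapping supports), for $b'=b(i',-i',0)$ with $i'>0$ one has $E_{\mathbf c}(v_0^{s}(b'))=v_2^{s}(b')$ and $E_{\mathbf c}(v_{-2}^{s}(b'))=-v_0^{s}(b')$; so if $S$ is $-1$ on the line spanned by $v_0^{s}(b')$ and $+1$ on all other basis vectors, then $S\,E_{\mathbf c}(v_0^{s}(b'))=v_2^{s}(b')$ while $E_{\mathbf c}\,S(v_0^{s}(b'))=-v_2^{s}(b')$, so $S$ does \emph{not} commute with $E_{\mathbf c}$ — your heuristic that the image "lands in other basis vectors" is exactly what breaks commutation when the source is negated and the target is fixed. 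The repair is the paper's move: negate the \emph{entire} indecomposable summand $U=T_{\mathbf c}(V(b'))$ (all degrees) and fix its orthogonal complement. This visibly commutes with $X_{\mathbf c}$, preserves the form because the diagonal box is $\tau$-fixed (so $U$ is self-paired), and has determinant $-1$ on $V_0$ precisely because $\dim U_0=1$; note that for an off-diagonal box one would have to negate $V(b)\oplus V(\tau(b))$ to preserve the form, giving determinant $+1$ on $V_0$, which is why the diagonal hypothesis is essential.

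For part (c) what you give is a program rather than a proof: the assertion that commutation with $E_{\mathbf c}$ forces every stabilizing $g_0\in O(V_0)$ to be block-triangular with mutually inverse diagonal blocks (hence $\det(g_0)=1$) is precisely the content that has to be established, and you yourself flag the multiplicity-$>1$ and interior-vertex issues as an unresolved obstacle. A completion along your lines is possible — use the intrinsic subspaces $\ker(E_{\mathbf c}^{a})\cap\operatorname{im}(E_{\mathbf c}^{b})\cap V_0$, whose "profiles" separate $v_0(b)$ from $v_0(\tau(b))$ exactly when $b$ is off-diagonal and pair under the form into $GL$-type blocks, with the mixing terms unipotent — but this amounts to determining the reductive part of the stabilizer and must actually be carried out. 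The paper takes a genuinely different route that avoids this computation: a minimal-counterexample contradiction, splitting off one copy of a pair $\{b',\tau(b')\}$ with $0\in Supp(b')$ and $\lambda(b')>0$, comparing $X_{\mathbf c}$ with $X'_{\mathbf c}=T_{\mathbf c}S_{{\mathcal O}'}E_{\mathbf c}S_{{\mathcal O}'}^{-1}T_{\mathbf c}^{-1}$, and invoking lemma~\ref{L:DeterminantTheta0} and \ref{SS:ExampleIsomorphismSpecialOrtho} (any orthogonal intertwiner between the two representatives of such an indecomposable has $\det(\theta_0)=-1$) to contradict the assumed non-splitting. Until you either carry out your stabilizer analysis or adopt such an argument, the splitting claim in (c) remains unproved; the equal-dimension statement and the index-$2$ bookkeeping are fine.
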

\begin{proof}
(a) We have 
\[
\sum_{\begin{subarray}{c} b \in {\mathbf B}\\ 0 \in Supp(b) \end{subarray}} c(b) = \sum_{\begin{subarray}{c} i, j \in {\mathcal I}\\ i \geq 0 \geq j \end{subarray}} c(b(i, j, 0)) = \delta_0 > 0
\]
and the result follows easily, because $c(b) \geq 0$ for all $b \in {\mathbf B}$. 

(b) Let $T_{\mathbf c}:V({\mathbf c}) \rightarrow V$ be the ${\mathcal I}$-graded isomorphism such that $\langle T_{\mathbf c}(u), T_{\mathbf c}(v)\rangle = \langle u, v\rangle_{\mathbf c}$ for all $u, v \in V({\mathbf c})$ constructed in proposition~\ref{Prop_Indexation_Orbits_Odd}. Using this isomorphism we saw that ${\mathcal O}_{\mathbf c}$ is the ${\tilde G}^{\iota}$-orbit of $T_{\mathbf c}E_{\mathbf c} T_{\mathbf c}^{-1}$. We will write this element $T_{\mathbf c}E_{\mathbf c}T_{\mathbf c}^{-1}$ as $X_{\mathbf c}: V \rightarrow V$. Thus $X_{\mathbf c}: V \rightarrow V$ is an ${\mathcal I}$-graded linear transformation of degree 2 such that $\langle X_{\mathbf c}(u), v\rangle + \langle u , X_{\mathbf c}(v)\rangle = 0$ for all $u, v \in V$. Let $X: V \rightarrow V$ be an ${\mathcal I}$-graded linear transformation of degree 2 such that $\langle X(v_1), v_2\rangle + \langle v_1 , X(v_2) \rangle = 0$ for all $v_1, v_2 \in V$ and $X \in {\mathcal O}_{\mathbf c}$.  Thus there exists ${\tilde g} \in {\tilde G}^{\iota}$ such that $Ad({\tilde g})(X_{\mathbf c}) = X$.

If we consider $X_{\mathbf c}$, $X$ and ${\tilde g}$ as in lemma~\ref{G2AsRepresOdd} (a) and (b);  in other words, if we write $X_{{\mathbf c}, i}: X_{\mathbf c}\vert_{V_i}: V_i \rightarrow V_{i + 2}$ and $X_{i}: X\vert_{V_i}: V_i \rightarrow V_{i + 2}$ for all $i \in {\mathcal I}$, $i \ne 2(m - 1)$ and ${\tilde g}_i = {\tilde g}\vert_{V_i}:V_i \rightarrow V_i$ for all $i \in {\mathcal I}$, we have $X_i \circ {\tilde g}_i = {\tilde g}_{i + 2} \circ X_{{\mathbf c}, i}$ for all $i \in {\mathcal I}$, $i \ne 2(m - 1)$. We have ${\tilde g}_i \in GL(V_i)$, $\det({\tilde g}_{-i}) = (\det({\tilde g}_i))^{-1}$ for all $i \in {\mathcal I}$, $i \ne 0$,  ${\tilde g}_0 \in O(V_0)$ and $\det({\tilde g}) = \det({\tilde g}_0)$. 

By hypothesis, there exists an ${\mathcal I}$-box $b' \in {\mathbf B}$ such that $b' = b(i', -i', 0)$ and $c(b') \ne 0$, where  $i' \in {\mathcal I}$, $i' \geq 0$.  The $\langle\tau\rangle$-orbit of $b'$ is ${\mathcal O}' = \{b'\}$. Consider the symmetric function $c':{\mathbf B} \rightarrow {\mathbb N}$ defined by
\[
c'(b) = \begin{cases} c(b), &\text{if $b \ne b'$;} \\ (c(b') - 1), &\text{if $b = b'$.}\end{cases}
\]
and the associated coefficient function ${\mathbf c}':{\mathbf B}/\tau \rightarrow {\mathbb N}$. From its definition in \ref{N:CCorrespondanceOdd}, we have that $V({\mathbf c})$ is the direct sum of $V({{\mathbf c}'})$ and one copy of $V({\mathcal O}')$. Moreover the orthogonal complement $V({\mathcal O}')^{\perp}$ of $V({\mathcal O}')$ is $V({{\mathbf c}'})$. Because $b'$ is on the principal diagonal and from our construction for the orthogonal representation of $V({\mathcal O}') = V(b')$, then we know that the ${\mathcal I}$-component $V_i(b')$ of $V(b')$ for $i \in {\mathcal I}$ are 1-dimensional for $i \in Supp(b')$ and 0-dimensional otherwise; in particular $\dim(V_0(b')) = 1$.   

Using the isomorphism $T_{\mathbf c}$, we get that $V$ is the direct sum of the ${\mathcal I}$-graded indecomposable orthogonal representation $U = T_{\mathbf c}(V(b'))$ and its orthogonal complement $U^{\perp} = T_{\mathbf c}(V({{\mathbf c}'}))$ such that $U_i$ is 1-dimensional exactly when $i \in Supp(b')$ and 0-dimensional otherwise.  We want to show that there exists an element $g$ of $G^{\iota}$ such that $Ad(g)(X_{\mathbf c}) = X$. If the element ${\tilde g}$ above is such that  ${\tilde g}$ of $G^{\iota}$, then we are done by taking ${\tilde g}$ for $g$. 

Assume now that  ${\tilde g} \not\in  G^{\iota}$, in other words, $\det({\tilde g}_0) = -1$. Consider the linear transformation $h:V \rightarrow V$ such that $h(v) = h(u + u^{\perp}) = -u + u^{\perp}$, where $v = u + u^{\perp}$, with $u \in U$ and $u^{\perp} \in U^{\perp}$. It is easy to see that $h$ preserve the bilinear form $\langle\ , \ \rangle$ and that $h \in {\tilde G}^{\iota}$. Also we have that $Ad(h)(X_{\mathbf c}) = X_{\mathbf c}$, because
\[
\begin{aligned}
Ad(h)X_{\mathbf c}(v) &= h X_{\mathbf c} h^{-1} (u + u^{\perp}) = h X_{\mathbf c}(-u + u^{\perp}) = h(-X_{\mathbf c}(u) + X_{\mathbf c}(u^{\perp}))\\ &= X_{\mathbf c}(u) + X_{\mathbf c}(u^{\perp}) = X_{\mathbf c}(v),
\end{aligned}
\]
where $v = u + u^{\perp}$, with $u \in U$ and $u^{\perp} \in U^{\perp}$. 

Consider now $g = {\tilde g}h \in {\tilde G}$. We have $Ad(g)(X_{\mathbf c}) = Ad({\tilde g} h)(X_{\mathbf c}) = Ad({\tilde g})(X_{\mathbf c}) = X$ and $\det(g) = \det({\tilde g}_0)\det(h_0) = 1$, because $\det({\tilde g}_0) = -1$ and $\det(h_0) = -1$ from the fact that  $\dim(U_0) = 1$ and the definition of $h$. So $g \in G^{\iota}$ and the ${\tilde G}^{\iota}$-orbit ${\mathcal O}_{\mathbf c}$ is a unique $G^{\iota}$-orbit ${\mathcal O}_{\mathbf c}$.

(c) We will prove this by contradiction.  Assume there exist a dimension vector $\delta = (\delta_i)_{i \in {\mathcal I}}$, a coefficient function  ${\mathbf c} \in {\mathfrak C}_{\delta}$ and the associated symmetric function $c:{\mathbf B} \rightarrow {\mathbb N}$ such that 
\begin{itemize}
\item $\delta_i > 0$ for all $i \in {\mathcal I}$;
\item $c(b(i, -i, 0)) = 0$ for all $i \in {\mathcal I}$, $i \geq 0$;
\item the ${\tilde G}^{\iota}$-orbit ${\mathcal O}_{\mathbf c}$ is a $G^{\iota}$-orbit, in other words the  ${\tilde G}^{\iota}$-orbit ${\mathcal O}_{\mathbf c}$ does not split;
\item the sum
\[
\sum_{{\mathcal O} \in {\mathbf B}/\tau} {\mathbf c}({\mathcal O})
\]
is minimal among all of the  ${\tilde G}^{\iota}$-orbit ${\mathcal O}_{\mathbf c}$ as above.
\end{itemize}

Let ${\mathbf B}' = \{b \in {\mathbf B} \mid b = b(i, j, 0) \text{ for some $i, j \in {\mathcal I}$, $i \geq 0 \geq j$ and $i + j \ne 0$}\}$. By (a) and,  because of our hypothesis,  ${\mathbf B}' \ne \emptyset$ and  there exist  $i', j' \in {\mathcal I}$ with $i' \geq 0 \geq j'$, $i' + j' > 0$ such that $b' = b(i', j', 0) \in {\mathbf B}'$ and $c(b') > 0$.  Note that $0 \in Supp(b')$ and $0 \in Supp(\tau(b'))$. Denote by ${\mathcal O}'$ the $\langle \tau\rangle$-orbit of $b'$. So ${\mathcal O}' = \{b', \tau(b')\}$.

Consider the symmetric function $c':{\mathbf B} \rightarrow {\mathbb N}$ defined by
\[
c'(b) = \begin{cases} c(b), &\text{if $b \not\in \{ b', \tau(b')\}$;} \\ (c(b') - 1), &\text{if $b \in \{ b', \tau(b')\}$.}\end{cases}
\]
and the associated coefficient function ${\mathbf c}':{\mathbf B}/\tau \rightarrow {\mathbb N}$. From its definition in \ref{N:CCorrespondanceOdd}, we have that $V({\mathbf c})$ is the direct sum of $V({{\mathbf c}'})$ and one copy of $V({\mathcal O}')$.  More precise,  
$V({\mathbf c})$: the direct sum as ${\mathcal I}$-graded vector space and  as  $\epsilon$-representations of ${\mathbf c}({\mathcal O})$ copies of $V({\mathcal O})$ with a ${\mathcal I}$-basis 
\[
{\mathcal B}_{\mathbf c} = \coprod_{{\mathcal O} \in {\mathbf B}/\tau} \coprod_{b \in {\mathcal O}} \{v_i^j(b) \mid i \in Supp(b), 1 \leq j \leq {\mathbf c}({\mathcal O})\}.
\]
and the bilinear form $\langle \  , \  \rangle_{\mathbf c}$ is described in \ref{N:CCorrespondanceOdd} with $\epsilon = 1$.  But $V({\mathbf c}')$: the direct sum as ${\mathcal I}$-graded vector space and  as   orthogonal representations of ${\mathbf c}'({\mathcal O})$ copies of $V({\mathcal O})$ with a ${\mathcal I}$-basis 
\[
{\mathcal B}_{{\mathbf c}'} = \coprod_{{\mathcal O} \in {\mathbf B}/\tau} \coprod_{b \in {\mathcal O}} \{v_i^j(b) \mid i \in Supp(b), 1 \leq j \leq {\mathbf c}'({\mathcal O})\}.
\]
and the bilinear form $\langle \  , \  \rangle_{{\mathbf c}'}$ is obtained by restriction of $\langle \  , \  \rangle_{\mathbf c}$ to $V({{\mathbf c}'})$ and $V({\mathcal O}')$ as ${\mathcal I}$-graded vector space and  as orthogonal representation  with a ${\mathcal I}$-basis $\{v_i^{c(b')}(b'),\    v_i^{c(b')}(\tau(b')) \mid i \in Supp(b')\}$ and the bilinear form $\langle \  , \  \rangle_{{\mathcal O}'}$ is obtained by restriction of $\langle \  , \  \rangle_{\mathbf c}$ to $V({\mathcal O}')$.   Note that  the orthogonal complement $V({\mathcal O}')^{\perp}$ of $V({\mathcal O}')$ is $V({{\mathbf c}'})$. 

Let $S_{{\mathcal O}'}:V({\mathbf c}) \rightarrow V({\mathbf c})$ be the ${\mathcal I}$-graded linear transformation of degree $0$ defined by
\[
S_{{\mathcal O}'}(u) = \begin{cases} u, &\text{if $u \in V({\mathbf c}')$;}\\ u, &\text{if $u \in V_i({\mathcal O}')$ and $i \ne 0$;} \\ v_0^{c(b')}(\tau(b')), &\text{if $u = v_0^{c(b')}(b')$;}\\  v_0^{c(b')}(b'), &\text{if $u = v_0^{c(b')}(\tau(b'))$.}
\end{cases}
\]

Let $T_{\mathbf c}:V({\mathbf c}) \rightarrow V$ be the ${\mathcal I}$-graded isomorphism such that $\langle T_{\mathbf c}(u), T_{\mathbf c}(v)\rangle = \langle u, v\rangle_{\mathbf c}$ for all $u, v \in V({\mathbf c})$ constructed in proposition~\ref{Prop_Indexation_Orbits_Odd}. 
We can now consider the two ${\mathcal I}$-graded linear transformations $E_{\mathbf c}:V({\mathbf c}) \rightarrow V({\mathbf c})$ and $E'_{\mathbf c} = S_{{\mathcal O}'}E_{\mathbf c}S_{{\mathcal O}'}^{-1}:V({\mathbf c}) \rightarrow V({\mathbf c})$ and their corresponding elements $X_{\mathbf c}= T_{\mathbf c}E_{\mathbf c}T^{-1}_{\mathbf c}:V \rightarrow V$ and $X'_{\mathbf c}= T_{\mathbf c}E'_{\mathbf c}T^{-1}_{\mathbf c}:V \rightarrow V$. Using this isomorphism,  we saw that ${\mathcal O}_{\mathbf c}$ is the ${\tilde G}^{\iota}$-orbit of $X_{\mathbf c}$, but also the ${\tilde G}^{\iota}$-orbit of $X'_{\mathbf c}$. From our construction, both of these are in the same $G^{\iota}$-orbit. Thus by our assumption,  there exists a special orthogonal element $g \in G^{\iota}$ such that $Ad(g) X_{\mathbf c} = X'_{\mathbf c}$, i.e. $det(g) = 1$. 

By considering the ${\mathcal I}$-graded vector subspace $U = T_{\mathbf c}(V({\mathcal O}'))$ and its orthogonal complement  $U^{\perp}  = T_{\mathbf c}(V({\mathbf c}'))$, we get that both are $Ad(g)$-invariant subspaces. In this case,  $\det(g) = \det(g\vert_U) \det(g\vert_{U^{\perp}}) = 1$ and by the minimality of ${\mathbf c}$, we have that  $\det(g\vert_{U^{\perp}}) = 1$.  Thus $\det(g\vert_U) = 1$, but this contradicts the fact we saw in \ref{SS:SpecialOrthoRepresentationsInSameOrthoClass} and \ref{SS:ExampleIsomorphismSpecialOrtho}  that  $E_{{\mathcal O}'}$ and $S_{{\mathcal O}'}\vert_{V({\mathcal O}')} E_{{\mathcal O}'} S^{-1}_{{\mathcal O}'}\vert_{V({\mathcal O}')}$ are not isomorphic relative to the special orthogonal group.  This completes the proof that the ${\tilde G}^{\iota}$-orbit ${\mathcal O}_{\mathbf c}$ splits into two $G^{\iota}$-orbits $'{\mathcal O}_{\mathbf c}$ and $''{\mathcal O}_{\mathbf c}$.  With the above notation, $'{\mathcal O}_{\mathbf c}$ is the $G^{\iota}$-orbit of $X_{\mathbf c}$ and $''{\mathcal O}_{\mathbf c}$ is the $G^{\iota}$-orbit of $X'_{\mathbf c}$ .
\end{proof}

\begin{remark}\label{R:DimensionForSplitting}
If there is at least one ${\tilde G}^{\iota}$-orbit ${\mathcal O}$ that splits into two $G^{\iota}$-orbits,  then the dimension $\dim(V)$ of $V$ (equivalently $\dim(V_0)$ of $V_0$) is even.  This follows easily from the fact that, if ${\mathcal O}_{\mathbf c}$ splits into two $G^{\iota}$-orbits, then $c(b(i, -i, 0)) = 0$ for all $i \in {\mathcal I}$, $i \geq 0$  for the symmetric function $c:{\mathbf B} \rightarrow {\mathbb N}$ corresponding  to ${\mathbf c}$ by proposition~\ref{P:OrbitSplittingSO} (c). Then
\[
\begin{aligned}
\delta_0 &= \dim(V_0) = \sum_{\begin{subarray}{c}  i, j \in {\mathcal I}\\ i \geq 0 \geq j \end{subarray}} c(b(i, j, 0)) = \sum_{\begin{subarray}{c} i, j \in {\mathcal I}\\ i \geq 0 \geq j\\ i + j \ne 0 \end{subarray}} c(b(i, j, 0)) + \sum_{\begin{subarray}{c} i \in {\mathcal I}\\ i \geq 0\end{subarray}} c(b(i, -i, 0)) \\ &= 2\sum_{\begin{subarray}{c}  i, j \in {\mathcal I}\\ i \geq 0 \geq j\\ i + j > 0 \end{subarray}} c(b(i, j, 0)) 
\end{aligned}
\]
because, if $i, j \in {\mathcal I}$, $i \geq 0 \geq j$ and $i + j > 0$, then  $c(b(i, j, 0)) = c(b(-j, -i, 0))$.  
\end{remark}

\begin{notation}\label{N:CCorrespondanceSpecialOrtho}
Using the same notation as in proposition~\ref{P:OrbitSplittingSO}, we can associated to each $G^{\iota}$-orbit: a Jacobson-Morozov triple. 
\begin{itemize}
\item If the ${\tilde G}^{\iota}$-orbit ${\mathcal O}_{\mathbf c}$ does not split (case (b) in the proposition~\ref{P:OrbitSplittingSO}), then the Jacobson-Morozov triple  associated to the $G^{\iota}$-orbit  ${\mathcal O}_{\mathbf c}$ is $(E_{\mathbf c}, H_{\mathbf c}, F_{\mathbf c})$ as in \ref{N:CCorrespondanceOdd}. 
\item If the ${\tilde G}^{\iota}$-orbit ${\mathcal O}_{\mathbf c}$ does split (case (c) in proposition ~\ref{P:OrbitSplittingSO}) and the $G^{\iota}$-orbit is ${\ }'{\mathcal O}_{\mathbf c}$, then the Jacobson-Morozov triple associated to the  $G^{\iota}$-orbit ${\ }'{\mathcal O}_{\mathbf c}$ is $(E_{\mathbf c}, H_{\mathbf c}, F_{\mathbf c})$ as in \ref{N:CCorrespondanceOdd}. We will denote this triple as $({\ }'E_{\mathbf c}, {\ }'H_{\mathbf c}, {\ }'F_{\mathbf c})$.
\item If the ${\tilde G}^{\iota}$-orbit ${\mathcal O}_{\mathbf c}$ does split (case (c) in proposition~\ref{P:OrbitSplittingSO}) and the $G^{\iota}$-orbit is ${\ }''{\mathcal O}_{\mathbf c}$, then the Jacobson-Morozov triple associated to the  $G^{\iota}$-orbit 
${\ }''{\mathcal O}_{\mathbf c}$ will be denoted as $({\ }''E_{\mathbf c}, {\ }''H_{\mathbf c}, {\ }''F_{\mathbf c})$.  If ${\mathcal O}' \in {\mathbf B}'/\tau$ is as in the proof of  proposition~\ref{P:OrbitSplittingSO} (c), where $''E_{\mathbf c} = S_{{\mathcal O}' } E_{\mathbf c} S^{-1}_{{\mathcal O}'}$, $''H_{\mathbf c} = S_{{\mathcal O}' } H_{\mathbf c} S^{-1}_{{\mathcal O}'}$ and $''F_{\mathbf c} = S_{{\mathcal O}' } F_{\mathbf c} S^{-1}_{{\mathcal O}'}$.  Note that if we had chosen another ${\mathcal O}' \in {\mathbf B}'/\tau$ is as in the proof of  proposition~\ref{P:OrbitSplittingSO} (c), then we get another Jacobson-Morozov triple, but it is in the same $G(V_{\mathbf c})^{\iota}$-orbit.
\end{itemize}

Using the isomorphism $T_{\mathbf c}:V_{\mathbf c} \rightarrow V$,  we can transport an associated Jacobson-Morozov triple to a $G^{\iota}$-orbit to the Lie algebra ${\mathfrak g}$.
\end{notation}

\begin{example}\label{E:5.7}
If we reconsider the example~\ref{E:4.27}, then none of the fourteen ${\tilde G}^{\iota}$-orbits splits because the dimension $\dim(V_0) = 3$ of $V_0$ is odd. Consequently all fourteen ${\tilde G}^{\iota}$-orbits are $G^{\iota}$-orbits.
\end{example}

\begin{example}\label{E:5.29}
With the notation of  \ref{S:SetUpOdd}, let $m = 2$, $G$ be the connected component of the identity element of  the group of automorphisms of the finite dimensional vector space $V$ over ${\mathbf k}$ preserving  a fixed non-degenerate symmetric form $\langle\ , \  \rangle:V \times V \rightarrow {\mathbf k}$,   ${\mathcal I} = \{2, 0, -2\}$, $\dim(V_0) = 2 = \delta_0$, $\dim(V_2) = 2 = \delta_2 = \delta_{-2}$. In other words,  $G = SO_{6}(k)$ and $\iota(t)$ on $V_i$ for $i \in {\mathcal I}$ is given by $\iota(t) v = t^i v$ for all $v \in V_i$ and all $t \in {\mathbf k}^{\times}$. In this case, $\dim({\mathfrak g}_2) = 4$ and there are five $G^{\iota}$-orbits. We will write for each of these orbits:  the tableau ${\mathcal T} \in {\mathfrak T}_{\delta}$, the tableau ${\mathcal R} \in  {\mathfrak R}_{\delta}$ and its dimension. 

\begin{table}[h]
	\begin{center}\renewcommand{\arraystretch}{1.25}
		\begin{tabular} {| l || r  | r  | r  | r  | c |}
			\hline
			Orbit(s) &  $10$  & $01$ & $11$ & $12$ & Jordan Dec.  \\ \hline
			${\mathcal O}_{4}$  & $0$ & $0$ & $2$ & $0$ & $3^2$\\ \hline
			${\mathcal O}_{3}$& $1$ & $1$ & $1$ & $0$  &$1^3 3^1$\\ \hline
			${\ }'{\mathcal O}_{2}$, ${\ }''{\mathcal O}_{2}$& $1$ & $0$ & $0$ & $1$ &$1^2 2^2$\\ \hline
			${\mathcal O}_{0}$ & $2$ & $2$ & $0$ & $0$ &$1^6$\\ \hline
		\end{tabular}
	\end{center}
	\caption{List of the values on the coefficient function  ${\mathbf c}$.}\label{T:Table5}
\end{table}
The columns of the  table~\ref{T:Table5}  are indexed by the  dimension of the indecomposable orthogonal representation of $A_2^{odd}$ and Jordan type for each orbit.	
The subscript in the notation for each orbit is its dimension. In this case, there are two $G^{\iota}$-orbits: $'{\mathcal O}_{2}$, ${\ }''{\mathcal O}_{2}$ of dimension 2. If we had been working with the group of automorphisms of the finite dimensional vector space $V$ over ${\mathbf k}$ preserving  a fixed non-degenerate symmetric form $\langle\ , \  \rangle:V \times V \rightarrow {\mathbf k}$ rather than the connected component of the identity element, these two orbits form for just one orthogonal orbit: $'{\mathcal O}_{2} \cup {\ }''{\mathcal O}_{2}$ of dimension 2.

Now  the tableau ${\mathcal T} \in {\mathfrak T}_{\delta}$, the tableau ${\mathcal R} \in  {\mathfrak R}_{\delta}$ for each orbit are
\begin{itemize}
\item For the orbit ${\mathcal O}_{4}$ of dimension 4

\begin{center}
${\mathcal T}_{4}$ = 
\ytableausetup{centertableaux}
\begin{ytableau}
\none & \none [-2] & \none [0] & \none [2] \\ \none [{\phantom -}2] & 2 & 0 & 0 \\  \none [{\phantom -}0] & 0 & 0 \\ \none [-2] & 0 \\  \end{ytableau}
\quad
\text{and}
\quad
${\mathcal R}_{4}$ = 
\ytableausetup{centertableaux}
\begin{ytableau}
\none & \none [-2] & \none [0] & \none [2] \\ \none [{\phantom -}2] & 2 & 2 & 2\\  \none [{\phantom -}0] & 2 & 2\\ \none [-2] & 2 \\ \end{ytableau}.

\end{center}

\item For the orbit ${\mathcal O}_{3}$ of dimension 3

\begin{center}
${\mathcal T}_{3}$ = 
\ytableausetup{centertableaux}
\begin{ytableau}
\none & \none [-2] & \none [0] & \none [2]\\ \none [{\phantom -}2] & 1 & 0 & 1\\  \none [{\phantom -}0] & 0 & 1\\ \none [-2] & 1 \\ \end{ytableau}
\quad
\text{and}
\quad
${\mathcal R}_{3}$ = 
\ytableausetup{centertableaux}
\begin{ytableau}
\none & \none [-2] & \none [0] & \none [2] \\ \none [{\phantom -}2] & 1 & 1 & 2 \\  \none [{\phantom -}0] & 1 & 2\\ \none [-2] & 2  \\ \end{ytableau}.

\end{center}

\item For the orbits $'{\mathcal O}_{2}$ and ${\ }''{\mathcal O}_{2}$ of dimension 2

\begin{center}
${\mathcal T}_{2}$ = 
\ytableausetup{centertableaux}
\begin{ytableau}
\none & \none [-2] & \none [0] & \none [2] \\ \none [{\phantom -}2] & 0 & 1 & 1\\  \none [{\phantom -}0] & 1 & 0\\ \none [-2] & 1 \\\end{ytableau}
\quad
\text{and}
\quad
${\mathcal R}_{2}$ = 
\ytableausetup{centertableaux}
\begin{ytableau}
\none & \none [-2] & \none [0] & \none [2] ]\\ \none [{\phantom -}2] & 0 & 1 & 2\\  \none [{\phantom -}0] & 1 & 2\\ \none [-2] & 2 \\ \end{ytableau}.

\end{center}

\item For the orbit ${\mathcal O}_{0}$ of dimension 0

\begin{center}
${\mathcal T}_{0}$ = 
\ytableausetup{centertableaux}
\begin{ytableau}
\none & \none [-2] & \none [0] & \none [2] \\ \none [{\phantom -}2] & 0 & 0 & 2\\  \none [{\phantom -}0] & 0 & 2\\ \none [-2] & 2 \\ \end{ytableau}
\quad
\text{and}
\quad
${\mathcal R}_{0}$ = 
\ytableausetup{centertableaux}
\begin{ytableau}
\none & \none [-2] & \none [0] & \none [2] \\ \none [{\phantom -}2] & 0 & 0 & 2\\  \none [{\phantom -}0] & 0  & 2 \\ \none [-2] &  2 \\  \end{ytableau}.

\end{center}

\end{itemize}

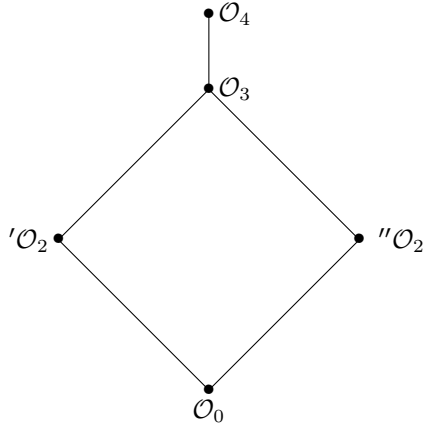
\begin{figure}[h]
\begin{center}
\begin{tikzpicture}
\draw (0, 0) -- (2, 2);
\draw (0, 0) -- (-2, 2);
\draw (2, 2) -- (0, 4);
\draw  (-2, 2) -- (0, 4);
\draw (0, 4) -- (0, 5);
\draw (0, 0) node[below] {${\mathcal O}_0$};
\draw (-2, 2) node[left] {${\ }'{\mathcal O}_2$};
\draw (2, 2) node[right] {${\  }''{\mathcal O}_2$};
\draw (0, 4) node[right] {${\mathcal O}_3$};
\draw (0, 5) node[right] {${\mathcal O}_4$};
\draw (0, 0) node {$\bullet$};
\draw (-2, 2) node {$\bullet$};
\draw (2, 2) node {$\bullet$};
\draw (0, 4) node {$\bullet$};
\draw (0, 5) node {$\bullet$};
\end{tikzpicture}
\end{center}
\caption{Poset obtained by the rank tableaux comparaison}\label{FigurePosetExample5.22}
\end{figure}

We have illustrated the Hasse diagram of the poset obtained when we compare the rank tableaux ${\mathcal R}$ for the set of orbits  in figure~\ref{FigurePosetExample5.22}. We could described each of these $G^{\iota}$-orbits as follows: an element of ${\mathfrak g}_2$ is given by the quiver representation
\[
\begin{CD}
	{\mathbf k}^2    @<X_0<<  {\mathbf k}^2   @< X_{-2} <<  {\mathbf k}^2
\end{CD}
\]
where $X_0$ and $X_{-2}$ are the matrices
\[
X_0 = \begin{bmatrix} a & b\\ c & d\end{bmatrix} \quad \text{ and } \quad X_{-2} = - \begin{bmatrix} d & b \\ c & a\end{bmatrix}.
\]
Then 
\[
\begin{aligned}
{\mathcal O}_4 &= \{(a, b, c, d) \in {\mathbf k}^4 \mid  (ad - bc) \ne 0\};\\ {\mathcal O}_3 &= \{(a, b, c, d) \in {\mathbf k}^4 \mid  (ad - bc) = 0 \text{ and either  $ad \ne 0$ or $bc \ne 0$ or $ab \ne 0$ or $cd \ne 0$}\};\\
{\mathcal O}_2 &= \{(a, b, c, d) \in {\mathbf k}^4 \mid \text{$ad = bc = ab = cd = 0$ and ($a, b, c, d) \ne (0, 0, 0, 0)$}\}\\
{\ }'{\mathcal O}_2 &=\{(a, b, c, d) \in {\mathbf k}^4 \mid \text{$a = c = 0$ and $(b, d) \ne (0, 0)$}\}\\
{\ }''{\mathcal O}_2 &=\{(a, b, c, d) \in {\mathbf k}^4 \mid \text{$b = d = 0$ and $(a, c) \ne (0, 0)$}\}\\
{\mathcal O}_0 &= \{(a, b, c, d) \in {\mathbf k}^4 \mid a = b= c = d = 0\}
\end{aligned}
\] 
We have included in this list the ${\tilde G}^{\iota}$-orbit ${\mathcal O}_2$ and how it splits into two $G^{\iota}$-orbits:  ${\ }'{\mathcal O}_2$ and ${\ }''{\mathcal O}_2$.
\end{example}

\begin{remark}
It would be interesting to know what is the Zariski closure of the $G^{\iota}$-orbits in this case. The article of Boos and Cerulli Irelli \cite{BI2021}  does not address this question.
\end{remark}

\section{Conditions for two $G^{\iota}$-orbits in ${\mathfrak g}_2$ to be in the same $G$-orbit.}

\subsection{}
In this section, we want to answer the question of when two $G^{\iota}$-orbits ${\mathcal O}$, ${\mathcal O}'$ in ${\mathfrak g}_2$ are in the same $G$-orbit where $G$ is either the reductive group $Sp(V)$ (respectively $SO(V)$) as in \ref{S:SetUpAeven} when $\vert {\mathcal I} \vert$ is even or the reductive group $Sp(V)$ (respectively $O(V)$) as in \ref{S:SetUpOdd} when $\vert {\mathcal I} \vert$ is odd or finally the reductive group $SO(V)$ as in \ref{S:SetUpOddOrtho} when $\vert {\mathcal I} \vert$ is odd.

Let two $G^{\iota}$-orbits ${\mathcal O}$, ${\mathcal O}'$ in ${\mathfrak g}_2$ whose  partitions corresponding to their Jordan  type are respectively $1^{a(1)} 2^{a(2)} 3^{a(3)} \dots$  and $1^{a'(1)} 2^{a'(2)} 3^{a'(3)} \dots$. If these two orbits are in the same $G$-orbit, then these two partitions will be equal (i.e. $a(k) = a'(k)$ for all $k \in {\mathbb N}$). This follows easily from the fact that they are $GL(V)$-conjugate. 

Thus in this section,   we will consider  two $G^{\iota}$-orbits ${\mathcal O}$, ${\mathcal O}'$ in ${\mathfrak g}_2$ that have the same partition corresponding to their Jordan type $1^{a(1)} 2^{a(2)} 3^{a(3)} \dots$ and  we will answer the question of when they are $G$-conjugate.

\subsection{}\label{SS:ConjClassJordanPart}
By propositions \ref{P:Height_Jordan_Type} and \ref{P:Height_Jordan_Type_Odd}, we know the  partition corresponding to the Jordan type for the elements in a $G^{\iota}$-orbit ${\mathcal O}$ and consequently in their $G$-orbit.  It is known  when the characteristic $p$ is 0 or  is sufficiently large that the nilpotent class in the Lie algebra of $Sp(V)$(respectively $O(V)$)  is the intersection of the nilpotent class of the Lie algebra of $GL(V)$ with the Lie algebra of $Sp(V)$ (respectively $O(V)$). Moreover a nilpotent class of $O(V)$ in its Lie algebra is also a nilpotent class of $SO(V)$ in its Lie algebra except when the   partition $1^{a(1)} 2^{a(2)} 3^{a(3)} \dots$ (in multiplicative form) corresponding to its Jordan  type is such that $a(k) = 0$ when $k \equiv 1 \pmod 2$ and  $a(k) \equiv 0 \pmod 2$ when $k \equiv 0 \pmod 2$.  We will say that these latter partitions  are {\it totally even}. See for example section 2.5 in \cite{S1982}. In this reference, it is quoted for unipotent classes, but it is also valid for the nilpotent classes when $p$ is $0$ or sufficiently large.

\begin{proposition}
Let $G$ be the reductive group $Sp(V)$ (respectively $O(V)$) in the situation either as in \ref{S:SetUpAeven} when $\vert {\mathcal I} \vert$  is even or  as in \ref{S:SetUpOdd} when $\vert {\mathcal I} \vert$ is odd and let ${\mathcal O}$ and ${\mathcal O}'$ be two $G^{\iota}$-orbits in ${\mathfrak g}_2$ who both have the same partition $1^{a(1)} 2^{a(2)} 3^{a(3)} \dots$ corresponding  to their Jordan type. Then these two $G^{\iota}$-orbits are in the same $G$-orbit. 
\end{proposition}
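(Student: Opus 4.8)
The statement asserts that for $G = Sp(V)$ or $G = O(V)$, two $G^{\iota}$-orbits in ${\mathfrak g}_2$ with the same Jordan type partition lie in the same $G$-orbit. The key input is recalled in \ref{SS:ConjClassJordanPart}: when $p=0$ or $p$ is large, a nilpotent $G$-orbit in ${\mathfrak g}$ is determined by its Jordan type partition (the parametrization of nilpotent orbits for $Sp(V)$ and $O(V)$ by partitions of $\dim V$ subject to the usual parity constraints), and moreover such an orbit is the intersection of the corresponding $GL(V)$-nilpotent orbit with ${\mathfrak g}$. So the proof is essentially a matter of chaining together facts already established: every element $X \in {\mathfrak g}_2$ is nilpotent (since ${\mathfrak g}_m$ for $m\neq 0$ consists of nilpotent elements, as noted in the introduction, and confirmed by the Jacobson--Morozov triple construction), hence lies in a single nilpotent $G$-orbit; and that $G$-orbit is pinned down by the Jordan type partition.

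First I would fix representatives: by Proposition~\ref{P:Height_Jordan_Type} (when $\vert{\mathcal I}\vert$ is even) and Proposition~\ref{P:Height_Jordan_Type_Odd} (when $\vert{\mathcal I}\vert$ is odd), if $X \in {\mathcal O} = {\mathcal O}_{\mathbf c}$ and $X' \in {\mathcal O}' = {\mathcal O}_{{\mathbf c}'}$ have the same partition $1^{a(1)}2^{a(2)}3^{a(3)}\cdots$, then $X$ and $X'$ are nilpotent elements of ${\mathfrak g}$ with identical Jordan type. Next I would invoke the classification recalled in \ref{SS:ConjClassJordanPart}: for $G = Sp(V)$ (resp. $G = O(V)$), nilpotent $G$-orbits in ${\mathfrak g}$ are in bijection with partitions of $\dim V$ satisfying the appropriate parity conditions, via the Jordan type map, and each such orbit equals the $GL(V)$-nilpotent orbit of that Jordan type intersected with ${\mathfrak g}$. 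Since $X$ and $X'$ both lie in ${\mathfrak g}$ and are $GL(V)$-conjugate (same Jordan type), they lie in the same intersection, hence in the same $G$-orbit. Then ${\mathcal O} \subseteq G\cdot X = G\cdot X' \supseteq {\mathcal O}'$, which gives the result, since both ${\mathcal O}$ and ${\mathcal O}'$ are single $G^{\iota}$-orbits contained in that common $G$-orbit.

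The one point requiring a little care is the distinction between $O(V)$ and $SO(V)$: a nilpotent $O(V)$-orbit can split into two $SO(V)$-orbits precisely when the partition is totally even, and the present proposition is deliberately stated only for $G = Sp(V)$ or $G = O(V)$ (the $SO(V)$ case being treated separately, and indeed the ${\mathcal I}$-tableau splitting phenomenon of Section 5 is the reflection of exactly this subtlety). So for the groups in the hypothesis there is no splitting to worry about, and the classification ``partition $\Leftrightarrow$ nilpotent orbit'' applies cleanly. I would state this explicitly to flag why $SO(V)$ is excluded.

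I do not expect a serious obstacle here; the main ``work'' is really just citing the right structural facts in the right order and making sure the hypotheses on $p$ (large, as in \ref{SS:CharacteristicK}) are in force so that the characteristic-zero-style classification of nilpotent orbits by partitions is valid. If anything, the subtle step is confirming that the Jordan type partitions arising from Propositions~\ref{P:Height_Jordan_Type} and \ref{P:Height_Jordan_Type_Odd} automatically satisfy the parity constraints needed for them to be genuine Jordan types of nilpotents in ${\mathfrak g}$ — but this is automatic, since the elements $E_{\mathbf c}$ are by construction in ${\mathfrak g}_2 \subseteq {\mathfrak g}$, so their Jordan types are necessarily admissible. Thus the proof is short: reduce to representatives, note they are nilpotent in ${\mathfrak g}$ with equal Jordan type, and apply the classification of nilpotent $Sp(V)$- and $O(V)$-orbits by partitions.
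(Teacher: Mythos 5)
Your proposal is correct and follows essentially the same route as the paper: representatives with equal Jordan type are $GL(V)$-conjugate, and the fact recalled in \ref{SS:ConjClassJordanPart} (a nilpotent $Sp(V)$- or $O(V)$-class is the intersection of the corresponding $GL(V)$-class with ${\mathfrak g}$) immediately gives $G$-conjugacy. The extra remarks about nilpotency of ${\mathfrak g}_2$, the hypotheses on $p$, and the exclusion of $SO(V)$ are consistent with the paper, which handles the $SO(V)$ subtlety in the subsequent propositions.
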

\begin{proof}
If their corresponding partitions are equal, then they are $GL(V)$-conjugate and by our observation in ~\ref{SS:ConjClassJordanPart} they are  $Sp(V)$-conjugate (resp.  $O(V)$-conjugate). 
\end{proof}

\begin{proposition}\label{P:OrbitNotTotallyEven}
Let $G$ be the reductive group $SO(V)$ in the situation either as in \ref{S:SetUpAeven} when $\vert {\mathcal I} \vert$ is even or  as in \ref{S:SetUpOddOrtho} when $\vert {\mathcal I} \vert$ is odd and let ${\mathcal O}$ and ${\mathcal O}'$ be two $G^{\iota}$-orbits in ${\mathfrak g}_2$ who both have the same  partition $1^{a(1)} 2^{a(2)} 3^{a(3)} \dots$ corresponding  to their Jordan type and this partition is not totally even. Then these two $G^{\iota}$-orbits are in the same $G$-orbit. 
\end{proposition}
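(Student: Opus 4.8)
The plan is to reduce the statement to the analogous fact for nilpotent orbits in the Lie algebra of $O(V)$ versus $SO(V)$, which was recalled in \ref{SS:ConjClassJordanPart}. First I would observe that since ${\mathcal O}$ and ${\mathcal O}'$ have the same Jordan type partition $1^{a(1)}2^{a(2)}3^{a(3)}\cdots$, they are $GL(V)$-conjugate; by the result quoted in \ref{SS:ConjClassJordanPart} (see section 2.5 in \cite{S1982}) they are therefore conjugate under $\widetilde{G} = O(V)$, i.e. they lie in the same $\widetilde{G}$-orbit $\widetilde{\mathcal O}$ in ${\mathfrak g}_2$. So there exists $\tilde g \in \widetilde{G}$ with $Ad(\tilde g)(X) = X'$ for chosen representatives $X \in {\mathcal O}$, $X' \in {\mathcal O}'$.

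Next I would use the hypothesis that the partition is \emph{not} totally even. By the criterion in \ref{SS:ConjClassJordanPart}, a nilpotent class of $O(V)$ in its Lie algebra that is not totally even is a single nilpotent class of $SO(V)$; equivalently, the nilpotent $SO(V)$-class of $X$ equals its $O(V)$-class. Concretely, this means the centralizer $Z_{O(V)}(X)$ is not contained in $SO(V)$: there is an element $h \in Z_{O(V)}(X)$ with $\det(h) = -1$. (This is exactly the structural fact underlying the non-totally-even case — the reductive part of the centralizer of such a nilpotent element surjects onto $O(V)/SO(V) \cong {\mathbb Z}/2{\mathbb Z}$.) I would want to phrase this carefully: it is a statement about the $O(V)$-centralizer of any nilpotent element whose Jordan partition is not totally even, and it is the content of the cited result in \cite{S1982}.

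Then I would combine the two: given $\tilde g$ with $Ad(\tilde g)(X) = X'$, if $\det(\tilde g) = 1$ we are done, and if $\det(\tilde g) = -1$ we replace $\tilde g$ by $\tilde g h$ where $h \in Z_{O(V)}(X)$ has $\det(h) = -1$; then $Ad(\tilde g h)(X) = Ad(\tilde g)(X) = X'$ and $\det(\tilde g h) = 1$, so $\tilde g h \in SO(V) = G$. Hence ${\mathcal O}$ and ${\mathcal O}'$ are in the same $G$-orbit. This is the same bookkeeping device used in the proof of Proposition~\ref{P:OrbitSplittingSO} (b), where an element of $O(V_0)$ of determinant $-1$ stabilizing the relevant $X_{\mathbf c}$ was constructed by hand; here the existence of such an element is supplied instead by the non-totally-even hypothesis on the Jordan type.

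The main obstacle is justifying cleanly that for a nilpotent $X$ in the Lie algebra of $O(V)$ with non-totally-even Jordan partition, the centralizer $Z_{O(V)}(X)$ contains an element of determinant $-1$ (equivalently, that the $O(V)$- and $SO(V)$-orbits of $X$ coincide). This is standard — it follows from the description of centralizers of nilpotent elements in classical Lie algebras, and is precisely what \ref{SS:ConjClassJordanPart} records from \cite{S1982} — but one should be careful to invoke it in the correct form (it is the statement that the non-totally-even classes do not split when passing from $O(V)$ to $SO(V)$), and to note that it holds under the running hypothesis on the characteristic $p$ from \ref{SS:CharacteristicK}. No genuinely new computation is needed beyond citing this fact and performing the determinant adjustment above.
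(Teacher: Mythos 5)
Your proposal is correct and follows essentially the same route as the paper: same Jordan partition gives $GL(V)$-conjugacy, hence $O(V)$-conjugacy, and the non-totally-even hypothesis together with the fact recalled in \ref{SS:ConjClassJordanPath} (from \cite{S1982}) that such classes do not split under $O(V)\supset SO(V)$ yields $SO(V)$-conjugacy. Your explicit reformulation via a determinant $-1$ element of the centralizer is just an unpacking of that same cited fact, so no genuinely new argument is involved.
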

\begin{proof}
If their corresponding partitions are equal, then they are $GL(V)$-conjugate and by our observation in ~\ref{SS:ConjClassJordanPart} and because the partition is not totally even, then they are  $SO(V)$-conjugate. 
\end{proof}

\subsection{}
So we are left to study  the case of the reductive group $G = SO(V)$ and  two $G^{\iota}$-orbits ${\mathcal O}$ and ${\mathcal O}'$ in ${\mathfrak g}_2$,  that  have the same partition $1^{a(1)} 2^{a(2)} 3^{a(3)} \dots$ corresponding  to their Jordan type  and this partition is totally even, i.e. the partition is of the form $2^{a(2)} 4^{a(4)} 6^{a(6)} \dots$ with $a(k) \equiv 0 \pmod 2$ when $k \equiv 0 \pmod 2$. 

In the case where $\vert {\mathcal I}\vert$ is odd and  ${\tilde G}$ denotes  the group $O(V)$,  if  ${\mathcal O}_{\mathbf c}$ denotes a ${\tilde G}^{\iota}$-orbit in ${\mathfrak g}_2$,  where  ${\mathbf c} \in {\mathfrak C}_{\delta}$ is a coefficient  function ${\mathbf c}:{\mathbf B}/\tau \rightarrow {\mathbb N}$ and $c:{\mathbf B} \rightarrow {\mathbb N}$ is the associated symmetric function to ${\mathbf c}$,  such that  $c(b(i, -i, 0)) > 0$ for at least one element $i \in {\mathcal I}$,  $i \geq 0$, then,  by  proposition~\ref{P:OrbitSplittingSO} (b), we know that this ${\tilde G}^{\iota}$-orbit  ${\mathcal O}_{\mathbf c}$  is a $G^{\iota}$-orbit, i.e. does not splits into two $G^{\iota}$-orbits:  ${}'{\mathcal O}_{\mathbf c}$ and ${}''{\mathcal O}_{\mathbf c}$. Moreover the partition  $1^{a(1)} 2^{a(2)} 3^{a(3)} \dots$ corresponding  to the Jordan type  of ${\mathcal O}_{\mathbf c}$ is not totally even.  This last fact follows because $\vert Supp(b(i,-i, 0)) \vert$ is odd for the ${\mathcal I}$-box $b(i, -i, 0)$ on the principal diagonal such that $c(b(i, -i, 0)) > 0$, where $i \in {\mathcal I}$, $i \geq  0$. 

In the case where $\vert {\mathcal I}\vert$ is odd, if  ${\tilde G}$ denotes  the group $O(V)$ and  ${\mathcal O}_{\mathbf c}$ denotes a ${\tilde G}^{\iota}$-orbit in ${\mathfrak g}_2$,  where  ${\mathbf c} \in {\mathfrak C}_{\delta}$ is a coefficient  function ${\mathbf c}:{\mathbf B}/\tau \rightarrow {\mathbb N}$ and $c:{\mathbf B} \rightarrow {\mathbb N}$ is the associated symmetric function to ${\mathbf c}$,    such that  $c(b(i, -i, 0)) = 0$ for all $i \in {\mathcal I}$,  $i \geq 0$, then,  by  proposition~\ref{P:OrbitSplittingSO} (c), we know that this ${\tilde G}^{\iota}$-orbit  ${\mathcal O}_{\mathbf c}$  splits into two $G^{\iota}$-orbits:  ${}'{\mathcal O}_{\mathbf c}$ and ${}''{\mathcal O}_{\mathbf c}$.  By the preceding paragraph, it is among these $G^{\iota}$-orbits that we must look for $G^{\iota}$-orbits whose partition  $1^{a(1)} 2^{a(2)} 3^{a(3)} \dots$ corresponding  to the Jordan type could be totally even. 

\begin{lemma}\label{L:TotallyEvenCaseCoefficient}
Let $G$ be the reductive group $SO(V)$   in the situation  as in \ref{S:SetUpAeven} when $\vert {\mathcal I} \vert$ is even (respectively as in \ref{S:SetUpOddOrtho} when $\vert {\mathcal I} \vert$ is odd) and let ${\mathcal O} = {\mathcal O}_{\mathbf c}$  (respectively ${}'{\mathcal O}_{\mathbf c}$ or ${}''{\mathcal O}_{\mathbf c}$) be a $G^{\iota}$-orbit in ${\mathfrak g}_2$ whose partition $2^{a(2)} 4^{a(4)} 6^{a(6)} \dots$ corresponding  to its Jordan type  is totally even, where ${\mathbf c} \in {\mathfrak C}_{\delta}$. Denote by $c:{\mathbf B} \rightarrow {\mathbb N}$: the symmetric function associated to ${\mathbf c}$.  If $b \in {\mathbf B}$ is such that  $\lambda(b) \not\equiv \vert {\mathcal I} \vert \pmod 2$, then $c(b) = 0$.  Recall  that $\lambda(b) = (i + j)/2$ for $b = b(i, j, k)$, with $i, j \in {\mathcal I}$,  $i \geq j$ and $0 \leq k \leq \mu(i, j)$ as defined in \ref{N:LambdaValue}. 
\end{lemma}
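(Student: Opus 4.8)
Recall that the partition attached to the Jordan type of the orbit $\mathcal O_{\mathbf c}$ (or its $SO$-pieces $'\mathcal O_{\mathbf c}$, $''\mathcal O_{\mathbf c}$) is, by Propositions~\ref{P:Height_Jordan_Type} and~\ref{P:Height_Jordan_Type_Odd}, the multiset $\prod_{b\in{\mathbf B}}|Supp(b)|^{c(b)}$. I would first translate the hypothesis ``the partition is totally even'' into a parity statement about the parts $|Supp(b)|$ that occur with nonzero multiplicity: totally even means that every part $k$ with $a(k)>0$ satisfies $k\equiv0\pmod2$ (and moreover $a(k)$ is even, but for this lemma only the $k\equiv0\pmod2$ half is needed). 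Hence, if $c(b)>0$, then $|Supp(b)|$ must be even. So the whole statement reduces to the combinatorial identity: for $b=b(i,j,k)\in{\mathbf B}$ with $i,j\in{\mathcal I}$, $i\ge j$, one has
\[
|Supp(b)|\equiv \lambda(b) + |{\mathcal I}| + 1 \pmod 2,
\]
equivalently $|Supp(b)|$ is even $\iff$ $\lambda(b)\equiv|{\mathcal I}|\pmod2$. Then the lemma is immediate: $\lambda(b)\not\equiv|{\mathcal I}|\pmod2$ forces $|Supp(b)|$ odd, hence $c(b)=0$.

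Next I would prove that congruence by a direct count. By definition $Supp(b)=\{i'\in{\mathcal I}\mid j\le i'\le i\}$, and the elements of ${\mathcal I}$ are consecutive integers of a fixed parity spaced by $2$ (odd in the $A_m^{even}$ case, even in the $A_m^{odd}$ case). Writing $i=j+2s$, we get $|Supp(b)|=s+1$, while $\lambda(b)=(i+j)/2=j+s$. Thus $|Supp(b)|-\lambda(b)=1-j$, so $|Supp(b)|\equiv\lambda(b)+1-j\pmod2$. In the even case $j$ is odd and $|{\mathcal I}|=2m$ is even, so $1-j$ is even and $|{\mathcal I}|$ is even, giving $|Supp(b)|\equiv\lambda(b)\equiv\lambda(b)+|{\mathcal I}|\pmod2$... wait, this needs care: $1-j$ even means $|Supp(b)|\equiv\lambda(b)\pmod2$, and $|{\mathcal I}|=2m$ even, so indeed $|Supp(b)|$ even $\iff\lambda(b)\equiv0\equiv|{\mathcal I}|\pmod2$. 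In the odd case $j$ is even, so $1-j$ is odd, hence $|Supp(b)|\equiv\lambda(b)+1\pmod2$; and $|{\mathcal I}|=2m-1$ is odd, so $|Supp(b)|$ even $\iff\lambda(b)+1\equiv0\iff\lambda(b)\equiv1\equiv|{\mathcal I}|\pmod2$. In both cases the claimed equivalence holds, so the lemma follows.

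The one genuine subtlety — and the step I would be most careful about — is the very first reduction: asserting that ``the partition is totally even'' implies every $|Supp(b)|$ with $c(b)>0$ is even. This is true because in the multiset $\prod_b|Supp(b)|^{c(b)}$ the exponent $a(k)$ of a given value $k$ is the sum of $c(b)$ over all $b$ with $|Supp(b)|=k$; if $k$ is odd and $a(k)=0$ then every such $c(b)$ vanishes. So the content of the lemma is entirely in the elementary congruence between $|Supp(b)|$, $\lambda(b)$ and $|{\mathcal I}|$, which the computation above settles; I would present that computation cleanly (splitting into the $A_m^{even}$ and $A_m^{odd}$ cases, or better, packaging $|{\mathcal I}|\bmod2$ with the parity of the $j$'s uniformly) and note that it does not matter which of $\mathcal O_{\mathbf c}$, $'\mathcal O_{\mathbf c}$, $''\mathcal O_{\mathbf c}$ we consider, since all three have the same Jordan partition by Proposition~\ref{P:OrbitSplittingSO}(c).
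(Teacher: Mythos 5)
Your proposal is correct and follows essentially the same route as the paper, whose proof is exactly the observation that $\vert Supp(b)\vert \equiv \lambda(b) + \vert{\mathcal I}\vert \pmod 2$ for every $b\in{\mathbf B}$, combined with the totally even hypothesis; you simply spell out the parity count that the paper leaves to the reader. One small slip: your displayed congruence $\vert Supp(b)\vert\equiv\lambda(b)+\vert{\mathcal I}\vert+1\pmod 2$ has a spurious $+1$ (it contradicts your own case-by-case check), but the equivalence you actually state and use, namely $\vert Supp(b)\vert$ even if and only if $\lambda(b)\equiv\vert{\mathcal I}\vert\pmod 2$, is the correct one, so the argument stands.
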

\begin{proof}
In both cases: $\vert {\mathcal I} \vert$ even or odd,  the result follows  easily from the fact that  if $b \in {\mathbf B}$, then $\vert Supp(b) \vert \equiv (\lambda(b) + \vert {\mathcal I} \vert) \pmod 2$ and  the partition corresponding to the  Jordan type is totally even. 
\end{proof}

\begin{notation}\label{N:SuppCardinality}
With the same notation and hypothesis as the above lemma, we get that if $c(b) \ne 0$ for a symmetric function $c:{\mathbf B} \rightarrow  {\mathbb N}$, then $\vert Supp(b) \vert$ is even.  For $b = b(i, j, k)$ with $i, j \in {\mathcal I}$, $i \ge j$ and $0 \leq k \leq \mu(i, j)$, we denote by 
\[
Supp^{top}(b) = \left\{ i' \in {\mathcal I}\  \left\vert\   i \geq i' \geq \left(\frac{i + j + 2}{2}\right)\right. \right\}
\]
This is the top-half of the elements in the support $Supp(b)$ of the ${\mathcal I}$-box $b \in {\mathbf B}$.
\end{notation}

We will consider separately the situation in \ref{S:SetUpAeven} when $\vert {\mathcal I} \vert$ is even  and the one in \ref{S:SetUpOddOrtho} when $\vert {\mathcal I} \vert$ is odd.

\begin{lemma}\label{L:MaximalIsotropicV(c)}
Let $G$ be the reductive group $SO(V)$  as in \ref{S:SetUpAeven} where $\vert {\mathcal I} \vert$ is even and let ${\mathcal O} = {\mathcal O}_{\mathbf c}$  be a $G^{\iota}$-orbit in ${\mathfrak g}_2$,  whose partition $2^{a(2)} 4^{a(4)} 6^{a(6)} \dots$ corresponding  to its Jordan type  is totally even, where ${\mathbf c}:{\mathbf B}/\tau \rightarrow {\mathbb N}$ is a coefficient function ${\mathbf c} \in {\mathfrak C}_{\delta}$. Denote by $c:{\mathbf B} \rightarrow {\mathbb N}$: the symmetric function associated to ${\mathbf c}$.  We will now recall the notation of \ref{N:CCorrespondance}.  $V({\mathbf c})$ is a vector space with its ${\mathcal I}$-basis 
\[
{\mathcal B}_{\mathbf c} = \coprod_{{\mathcal O} \in {\mathbf B}/\tau} \coprod_{b \in {\mathcal O}} \{v_i^j(b) \mid i \in Supp(b), 1 \leq j \leq {\mathbf c}(b)\}
\]
and  the non-degenerate bilinear form $\langle \  ,\   \rangle_{\mathbf c}: V({\mathbf c}) \times V({\mathbf c}) \rightarrow {\mathbf k}$,   $E_{\mathbf c}: V({\mathbf c}) \rightarrow V({\mathbf c})$ is also defined in \ref{N:CCorrespondance}. It is a nilpotent element whose partition corresponding to the Jordan type is  the even partition $2^{a(2)} 4^{a(4)} 6^{a(6)} \dots$. Then  the vector subspace 
\[
L({\mathbf c}) = E_{\mathbf c} [\ker(E^2_{\mathbf c})] + E^2_{\mathbf c} [\ker(E^4_{\mathbf c})]  + \dots + E^k_{\mathbf c} [\ker(E^{2k}_{\mathbf c})]  + \dots
\]
is the vector subspace of $V({\mathbf c})$ spanned by the basis
\[
\coprod_{\substack { b \in {\mathbf B} \\ c(b) \ne 0}} \{v_i^j(b) \mid i \in Supp^{top}(b), 1 \leq j \leq c(b)\}
\]
and this is a maximal isotropic subspace of $V({\mathbf c})$. 
\end{lemma}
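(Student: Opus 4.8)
The plan is to reduce everything to a computation on a single Jordan block of $E_{\mathbf c}$ and then reassemble. Recall from Proposition~\ref{P:JMTripleProof} and notation~\ref{N:CCorrespondance} that, as a linear map (equivalently as a representation of the quiver $A_{2m}$), $E_{\mathbf c}$ is the direct sum over $b \in {\mathbf B}$ of $c(b)$ copies of the indecomposable cyclic module $V(b)$, with basis $\{v_i^j(b) \mid i \in Supp(b)\}$ on which $E_{\mathbf c}$ acts by $v_i^j(b) \mapsto \pm v_{i+2}^j(b)$ (the precise sign, read off from~\ref{SS:EHFDefinition}, is irrelevant for the span computations below) and $v_i^j(b) \mapsto 0$ when $i = \max(Supp(b))$; this decomposition is $E_{\mathbf c}$-stable, so the kernel and image of any power of $E_{\mathbf c}$ split accordingly, and the symmetry of $\langle\ ,\ \rangle_{\mathbf c}$ (pairing $v_i^j(b)$ only with $v_{-i}^j(\tau(b))$) will be used directly. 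By Proposition~\ref{P:Height_Jordan_Type} the Jordan type is $\prod_b |Supp(b)|^{c(b)}$, so total evenness forces $|Supp(b)|$ even whenever $c(b)\ne 0$ (this is also recorded in notation~\ref{N:SuppCardinality}).

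First I would carry out the block computation. Fix $b$ with $c(b)\ne 0$ and write $|Supp(b)| = 2r$; then $\min(Supp(b)) = j = i - 2(2r-1)$ with $i = \max(Supp(b))$, so $(i+j+2)/2 = i - 2r + 2 = \lambda(b)+1$ and $Supp^{top}(b) = \{i' \in Supp(b) \mid i' > \lambda(b)\}$ is exactly the top $r$ elements of $Supp(b)$. On a copy of $V(b)$ one checks directly that $\ker(E_{\mathbf c}^{2\ell}) \cap V(b)$ is spanned by the top $2\ell$ vectors $v_{i'}^j(b)$ (or all of $V(b)$ once $2\ell \ge 2r$), and that applying $E_{\mathbf c}^{\ell}$ annihilates the bottom $\ell$ of these and carries the remaining ones onto the top $\min(\ell,\,2r-\ell)$ vectors; hence $E_{\mathbf c}^{\ell}[\ker(E_{\mathbf c}^{2\ell}) \cap V(b)]$ is contained in the span of $\{v_{i'}^j(b) \mid i' \in Supp^{top}(b)\}$ for every $\ell$, with equality at $\ell = r$. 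Summing over $\ell$, over $b$ with $c(b)\ne0$, and over $j$ yields $L({\mathbf c}) = \operatorname{span}\{v_i^j(b) \mid b \in {\mathbf B},\ c(b)\ne0,\ i \in Supp^{top}(b),\ 1 \le j \le c(b)\}$, the claimed basis.

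With the explicit basis in hand, isotropy is immediate from the formula for $\langle\ ,\ \rangle_{\mathbf c}$: a nonzero pairing $\langle v_i^j(b), v_{i'}^{j'}(b')\rangle_{\mathbf c}$ forces $b' = \tau(b)$, $i' = -i$, $j' = j$, but for both $v_i^j(b)$ and $v_{-i}^j(\tau(b))$ to lie in $L({\mathbf c})$ we would need $i > \lambda(b)$ and $-i > \lambda(\tau(b)) = -\lambda(b)$, i.e. $i > \lambda(b)$ and $i < \lambda(b)$, which is absurd (I use $\lambda(\tau(b)) = -\lambda(b)$, which follows at once from $\tau(b(i,j,k)) = b(-j,-i,\cdot)$ and from $\lambda = 0$ on the principal diagonal). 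Thus $L({\mathbf c})$ is isotropic. Finally, a dimension count closes the argument: $\dim L({\mathbf c}) = \sum_{b:\,c(b)\ne0} c(b)\,|Supp^{top}(b)| = \tfrac12 \sum_{b} c(b)\,|Supp(b)| = \tfrac12\dim V({\mathbf c})$, the middle equality because $|Supp^{top}(b)| = |Supp(b)|/2$ whenever $c(b)\ne0$ and the terms with $c(b)=0$ vanish. Since $\langle\ ,\ \rangle_{\mathbf c}$ is non-degenerate and $\dim V({\mathbf c})$ is even, a maximal isotropic subspace has dimension $\tfrac12\dim V({\mathbf c})$; an isotropic subspace of that dimension is automatically maximal, so $L({\mathbf c})$ is maximal isotropic.

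I expect the block computation in the second paragraph to be the only step needing genuine care: one must track precisely how $\ker(E_{\mathbf c}^{2\ell})$ and its image under $E_{\mathbf c}^{\ell}$ sit inside each Jordan block and verify that $\ell = r$ realizes the top half while no larger span is produced by other values of $\ell$. The signs occurring in the definition of $E_{\mathbf c}$ are a harmless distraction, and the remaining points—isotropy, the dimension count, and the passage from ``isotropic of half-dimension'' to ``maximal isotropic''—are routine.
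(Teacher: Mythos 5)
Your proof is correct and follows essentially the same route as the paper: a blockwise computation of $E_{\mathbf c}^{\ell}[\ker(E_{\mathbf c}^{2\ell})]$ on each summand $V(b)$ identifying the span of the top half of $Supp(b)$ (with equality at $\ell = r$), the same isotropy argument from the fact that the form only pairs $v_i^j(b)$ with $v_{-i}^j(\tau(b))$ together with $\lambda(\tau(b)) = -\lambda(b)$, and the half-dimension count giving maximality. One wording slip worth fixing: since $E_{\mathbf c}$ raises the grading, $E_{\mathbf c}^{\ell}$ annihilates the \emph{top} $\ell$ vectors of $\ker(E_{\mathbf c}^{2\ell})\cap V(b)$ and carries the bottom $\ell$ onto the top $\ell$; your stated image (the top $\min(\ell,\,2r-\ell)$ vectors) and the rest of the argument are unaffected.
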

\begin{proof}
Recall that the linear transformation $E_{\mathbf c}$ on the elements of the basis ${\mathcal B}_{\mathbf c}$  was described in notation~\ref{N:CCorrespondance}. Let ${\mathcal O}$ denote the $\langle \tau \rangle$-orbit of $b$, then,  if the orbit ${\mathcal O}$ has no overlapping ${\mathcal I}$-box supports,  $1 \leq j \leq {\mathbf c}({\mathcal O})$ and $i \in Supp(b)$, we have
\[
E_{\mathbf c}(v_i^j(b)) = \begin{cases} {\phantom -} 0, &\text{ if $i = \max(Supp(b))$;}\\ {\phantom -} v_{i + 2}^j(b), &\text{ if $i \ne \max(Supp(b))$ and $i > 0$;}\\  -v_{i + 2}^j(b), &\text{ if $i \ne \max(Supp(b))$ and $i < 0$;} \end{cases}
\]
while if the orbit ${\mathcal O}$ has overlapping ${\mathcal I}$-box supports,  $b = b(i_1, j_1, k_1) \in {\mathcal O}$, $1 \leq j \leq {\mathbf c}({\mathcal O})$  and $i \in Supp(b)$, then we have 
\[
E_{\mathbf c}(v_i^j(b)) = \begin{cases} {\phantom -} 0, &\text{ if $i = \max(Supp(b))$;}\\ {\phantom -} v_{i + 2}^j(b), &\text{ if $i \ne \max(Supp(b))$ and $i \geq 1$;}\\  -v_{i + 2}^j(b), &\text{ if  $i < -1$;}\\ {\phantom -} v_1^j(b), &\text{ if  $i = -1$ and $b$ is below  the principal diagonal;}\\ - v_1^j(b), &\text{ if $i = -1$ and $b$ is above  the principal diagonal;}\\  (- 1)^{k_1} v_1^j(b), &\text{ if $i = -1$ and $b$ is on  the principal diagonal.}
\end{cases}
\]
Let $b \in {\mathbf B}$ be an ${\mathcal I}$-box such that $c(b) \ne 0$. As noted in \ref{N:SuppCardinality}, then $\vert Supp(b) \vert$ is even. Write this cardinality as $2s$. 

From our definition of $E_{\mathbf c}$, we get easily that 
\begin{enumerate}[\upshape (i)]
\item for $1 \leq k \leq s$ and $1 \leq j \leq c(b)$, then 
\[
E^{2k}_{\mathbf c}(v_i^j(b)) = 0 \iff i > (\max(Supp(b)) - 4k);
\]
\item for $k > s$ and $1 \leq j \leq c(b)$, then $E^{2k}_{\mathbf c}(v_i^j(b)) = 0$ for all $i  \in Supp(b)$.
\end{enumerate}
If $1 \leq k \leq s$, $1 \leq j \leq c(b)$ and $i > (\max(Supp(b)) - 4k)$, then $v_i^j(b) \in \ker(E^{2k}_{\mathbf c})$ and 
\[
E^k_{\mathbf c}(v_i^j(b)) \text{ is } \begin{cases} 0, &\text{if $(i + 2k) > \max(Supp(b))$;}\\  \gamma v_{i + 2k}^j(b),  &\text{if $(i + 2k) \leq \max(Supp(b))$;} \end{cases}
\]
where $\gamma \in \{1, -1\}$.  

In this second case, i.e. $(i + 2k) \leq \max(Supp(b))$, then $(i + 2k)  \in Supp^{top}(b)$, because $
Supp(b) = \{i' \in {\mathcal I} \mid \max(Supp(b)) \geq i' \geq \min(Supp(b)) \}$,  with $\min(Supp(b)) =  (\max(Supp(b)) - 4s + 2)$ and
\[
Supp^{top}(b) = \{i' \in {\mathcal I} \mid \max(Supp(b)) \geq i'  \geq (\max(Supp(b)) - 2s + 2) \}.
\]
So in this second case, we have $(i + 2k) \leq \max(Supp(b))$ and also the lower inequality $(i + 2k) > (\max(Supp(b)) - 2k) \geq (\max(Supp(b)) - 2s)$, because $1 \leq k \leq s$.  This show that $(i + 2k) \in Supp^{top}(b)$. 

If $k > s$, $1 \leq j \leq c(b)$ and $i \in Supp(b)$, then $v_i^j(b) \in \ker(E^{2k}_{\mathbf c})$ and 
\[
E^k_{\mathbf c}(v_i^j(b)) \text{ is } \begin{cases} 0, &\text{if $(i + 2k) > \max(Supp(b))$;}\\  \gamma v_{i + 2k}^j(b),  &\text{if $(i + 2k) \leq \max(Supp(b))$;} \end{cases}
\]
where $\gamma \in \{1, -1\}$.  

In this second case, i.e. $(i + 2k) \leq \max(Supp(b))$, then $(i + 2k)  \in Supp^{top}(b)$.  because we have $(i + 2k) \leq \max(Supp(b))$ and also the lower inequality $(i + 2k) \geq  (\max(Supp(b)) - 4s + 2 + 2k) > (\max(Supp(b)) - 2s + 2)$, because $k > s$.  This show that $(i + 2k) \in Supp^{top}(b)$. 

Thus $L({\mathbf c})$ is included in the subspace of $V({\mathbf c})$ spanned by 
\[
\coprod_{\substack { b \in {\mathbf B} \\ c(b) \ne 0}} \{v_i^j(b) \mid i \in Supp^{top}(b), 1 \leq j \leq c(b)\}.
\]

Reciprocally if $b \in {\mathbf B}$ with $c(b) > 0$, $1 \leq j \leq c(b)$ and $i \in Supp^{top}(b)$, then we want to prove that $v_i^j(b) \in L({\mathbf c})$. Write again  $\vert Supp(b) \vert = 2s$ and we have that $Supp(b) = \{i' \in {\mathcal I} \mid \max(Supp(b)) \geq i' \geq \min(Supp(b)) \}$,  with $\min(Supp(b)) =  (\max(Supp(b)) - 4s + 2)$ . Because $i \in Supp^{top}(b)$, this means that $\max(Supp(b)) \geq i \geq (\max(Supp(b)) - 2s + 2)$. Write $i = \max(Supp(b)) - 2r$ with $0 \leq r \leq (s - 1)$.  We have  that 
\[
\begin{aligned}
\max(Supp(b)) \geq (i - 2r - 2) &\geq (\max(Supp(b)) - 4r - 2)\\  &\geq (\max(Supp(b)) - 4(s - 1) - 2 = \min(Supp(b)).
\end{aligned}
\]
Thus $(i - 2r - 2)  \in Supp(b)$ and we have  $E^{(r + 1)}_{\mathbf c}(v_{(i - 2r - 2)}^j(b)) = \gamma v_i^j(b)$, where $\gamma \in \{1, -1\}$.  
Note that $v_{(i - 2r - 2)}^j(b) \in \ker E_{\mathbf c}^{2(r + 1)}$, because  $(i - 2r - 2) + 4(r + 1) = (\max(Supp(b)) - 4r - 2) + 4(r + 1) > \max(Supp(b))$.
From all of the above, $L({\mathbf c})$ is the vector subspace of $V({\mathbf c})$ spanned by the basis
\[
\coprod_{\substack { b \in {\mathbf B} \\ c(b) \ne 0}} \{v_i^j(b) \mid i \in Supp^{top}(b), 1 \leq j \leq c(b)\}.
\]

The dimension $\dim(L({\mathbf c}))$ of $L({\mathbf c})$ is 
\[
\dim(L({\mathbf c})) = \sum_{b \in {\mathbf B}} c(b) \vert Supp^{top}(b) \vert = \frac{1}{2} \sum_{b \in {\mathbf B}} c(b) \vert Supp(b) \vert = \frac{1}{2} \dim(V({\mathbf c})).
\]

We will now show that $L({\mathbf c})$ is an isotropic subspace. From our construction of the bilinear form $\langle\  , \  \rangle_{\mathbf c}$ in notation~\ref{N:CCorrespondance}, we have $\langle v_i^j(b), v_{i'}^{j'}(b')\rangle \ne 0$ only if $j' = j$, $b' = \tau(b)$ and $i + i' = 0$.   Let $b \in {\mathbf B}$ with $c(b) > 0$, $1 \leq j \leq c(b)$,  $i \in Supp^{top}(b)$, $b' = \tau(b)$, $j' = j$ and  $i' \in Supp^{top}(b')$.  Write $\vert Supp(b) \vert = 2s$.  As above 
\[
Supp(b) = \{i'' \in {\mathcal I} \mid (\max(Supp(b)) - 4s + 2) \leq i'' \leq \max(Supp(b))\}.
\]
Thus $\vert Supp(\tau(b)) \vert = 2s$ and 
\[
Supp(\tau(b)) = \{i''' \in {\mathcal I} \mid  -\max(Supp(b)) \leq i''' \leq -(\max(Supp(b)) - 4s + 2)\}.
\]
From this we get that 
\[
Supp^{top}(b) = \{ i'' \in {\mathcal I} \mid \max(Supp(b)) \geq i'' \geq (\max(Supp(b)) - 2s + 2) \}
\]
and 
\[
Supp^{top}(\tau(b)) = \{ i''' \in {\mathcal I} \mid -(\max(Supp(b)) - 4s + 2) \geq i''' \geq -(\max(Supp(b)) - 2s) \}
\]
Now if $i \in Supp^{top}(b)$ and $i' \in Supp^{top}(\tau(b))$, then 
\[
(\max(Supp(b)) - 2s + 2) \leq i \leq \max(Supp(b)) 
\]
and 
\[
(-\max(Supp(b)) + 2s) \leq i' \leq (-\max(Supp(b)) + 4s - 2).
\]
From this, we get that $(i+ i') \geq 2$ and consequently $(i + i') \ne 0$ and $L({\mathbf c})$ is an isotropic subspace. It is a maximal isotropic subspace,  because $2 \dim(L({\mathbf c})) = \dim(V({\mathbf c}))$. 
\end{proof}

The following lemma is probably well-known. Its proof is included for completeness. 

\begin{lemma}\label{L:ConditionSO(V)orbit}
 Let ${\mathcal I}$ be even and using the notation of  \ref{S:SetUpAeven}, consider the set ${\mathcal J}$ of subsets $J$ of  $\{(i, j) \in {\mathcal I} \times {\mathbb N} \mid 1 \leq j \leq \delta_i\}$ such that  if $(i, j), (i', j) \in J$, then $i + i' \ne 0$ and  $J$ is maximal among such subsets.
\begin{enumerate}[\upshape (a)]
\item If $J \in {\mathcal J}$, then $2 \vert J \vert = \dim(V)$.
\item If $J \in {\mathcal J}$, then the subspace $L_J$ of $V$ spanned by $\{u_{i, j} \mid  (i, j) \in J\}$ is a maximal isotropic subspace of $V$.
\item Let  $J, J' \in {\mathcal J}$. Then the two maximal isotropic subspaces $L_J$ and $L_{J'}$ of $V$ are in the same $SO(V)$-orbit if and only if 
\[
 \vert \{(i, j) \in J \mid i < 0\}\vert  \equiv  \vert \{(i, j) \in J' \mid i < 0\}\vert  \pmod 2.
\]
\end{enumerate}
\end{lemma}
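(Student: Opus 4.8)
The plan is as follows. Throughout we are in the symmetric (orthogonal) case, so $\epsilon = 1$; recall also that $0 \notin {\mathcal I}$ because $\vert {\mathcal I}\vert$ is even, and that ${\mathcal I}$ is stable under $i \mapsto -i$. First I would dispose of (a) and (b). For a fixed $j$ and a fixed pair $\{i, -i\}$ with $i \in {\mathcal I}$, $i > 0$ and $1 \leq j \leq \delta_i = \delta_{-i}$, the condition defining ${\mathcal J}$ forbids having both $(i,j)$ and $(-i,j)$ in $J$, while maximality forces at least one of them to lie in $J$; hence $J$ contains exactly one element of each such pair. Thus $\vert J\vert = \sum_{i \in {\mathcal I},\, i > 0}\delta_i = \tfrac12\sum_{i \in {\mathcal I}}\delta_i = \tfrac12\dim(V)$, which is (a). For (b), if $(i,j),(i',j') \in J$ then, by the relations imposed on ${\mathcal B}$ in \ref{S:SetUpAeven}, $\langle u_{i,j}, u_{i',j'}\rangle \neq 0$ only when $i + i' = 0$ and $j = j'$, a configuration excluded by the definition of ${\mathcal J}$; so $L_J$ is isotropic, and since $\dim(L_J) = \vert J\vert = \tfrac12\dim(V)$ by (a), it is a maximal isotropic subspace.

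For (c), observe that the decomposition $V = \bigoplus_{i \in {\mathcal I},\, i > 0}(V_i \oplus V_{-i})$, in which $\langle u_{i,r}, u_{-i,s}\rangle$ equals $1$ for $r = s$ and $0$ otherwise, exhibits $(V, \langle\ ,\ \rangle)$ as an orthogonal direct sum of hyperbolic planes; in particular $V$ is a split orthogonal space of even dimension $2N$, where $N = \vert J\vert = \tfrac12\dim(V)$. I would then invoke the standard classification of maximal isotropic subspaces of such a space (see \cite{B1971-1975} or any reference on classical groups): the orthogonal group $O(V)$ acts transitively on them, the special orthogonal group $SO(V)$ has exactly two orbits, and two maximal isotropic subspaces $L$, $L'$ lie in the same $SO(V)$-orbit if and only if $N - \dim(L \cap L') \equiv 0 \pmod 2$. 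If one wishes a self-contained argument, one passes to the non-degenerate orthogonal space $(L \cap L')^{\perp}/(L \cap L')$ to reduce to the case $L \cap L' = \{0\}$, and there the map interchanging dual hyperbolic bases of $L$ and $L'$ is a product of $N$ reflections, hence lies in $SO(V)$ exactly when $N$ is even.

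The remaining step is purely combinatorial. Since $L_J$ and $L_{J'}$ are spanned by subsets of the basis ${\mathcal B}$, their intersection is spanned by $\{u_{i,j}\mid (i,j)\in J\cap J'\}$, so $\dim(L_J\cap L_{J'}) = \vert J\cap J'\vert$, and therefore $N - \dim(L_J\cap L_{J'}) = \vert J\vert - \vert J\cap J'\vert = \vert J\setminus J'\vert$. For each pair $\{i,-i\}$ with $i > 0$ and each $1 \leq j \leq \delta_i$, both $J$ and $J'$ contain exactly one of $(i,j)$, $(-i,j)$; the element of $J$ over this pair fails to lie in $J'$ precisely when $J$ and $J'$ make opposite choices there, so $\vert J\setminus J'\vert$ is the number of such pairs on which $J$ and $J'$ disagree. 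Writing $a = \vert\{(i,j)\in J\mid i < 0\}\vert$ and $a' = \vert\{(i,j)\in J'\mid i < 0\}\vert$, and letting $d_{-}$ (resp. $d_{+}$) denote the number of pairs on which $J$ chooses the negative (resp. positive) index and $J'$ the other, one has $a - a' = d_{-} - d_{+}$, while the number of disagreements is $d_{-} + d_{+} \equiv d_{-} - d_{+} \pmod 2$. Hence $\vert J\setminus J'\vert \equiv a - a' \pmod 2$, so $L_J$ and $L_{J'}$ are $SO(V)$-conjugate if and only if $a \equiv a' \pmod 2$, which is exactly the stated condition.

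The only genuine external input is the orbit count of $SO(V)$ on the maximal isotropic subspaces of an even-dimensional split orthogonal space; everything else is bookkeeping. Accordingly the point where I would be most careful is the verification that $\dim(V)$ is even so that this dichotomy applies — which is guaranteed by part (a) — and the correct identification of the parity invariant $N - \dim(L\cap L')$ with the count $\vert J\setminus J'\vert$.
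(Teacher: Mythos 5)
Your proof is correct, but it runs along a different track than the paper's. Parts (a) and (b) coincide with the paper's argument. For (c), the paper fixes the reference subspace $L_{J_+}$ with $J_+=\{(i,j)\mid i>0\}$, shows by a block-triangular determinant computation that any $g\in O(V)$ stabilizing $L_{J_+}$ lies in $SO(V)$, then constructs for each $J$ an explicit $g_J\in O(V)$ with $g_J(L_{J_+})=L_J$ and $\det(g_J)=(-1)^{\vert\{(i,j)\in J\mid i<0\}\vert}$, so that the parity of this count detects the $SO(V)$-orbit; combined with the two-orbit fact this gives the equivalence. You instead invoke the classical invariant that $L$ and $L'$ are $SO(V)$-conjugate if and only if $\dim L-\dim(L\cap L')$ is even, note that $\dim(L_J\cap L_{J'})=\vert J\cap J'\vert$ since both are coordinate subspaces for the basis ${\mathcal B}$, and check the parity identity $\vert J\setminus J'\vert\equiv a-a'\pmod 2$ via the disagreement count $d_-+d_+\equiv d_--d_+$; all of this is sound. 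Your route is shorter and symmetric in $J,J'$ (no reference subspace, no explicit group element), at the cost of importing the classical orbit criterion; the paper's route is self-contained apart from the two-orbit statement and, more importantly, its determinant formula for $g_J$ and the stabilizer computation are exactly what get reused in the odd analogue (lemma~\ref{L:ConditionSO(V)orbitOdd}) and in the surrounding propositions. One caveat: your optional ``self-contained'' sketch (swapping dual hyperbolic bases by a product of $N$ reflections) only produces an element of determinant $(-1)^N$ carrying $L$ to $L'$; to deduce the orbit dichotomy one still needs that the $O(V)$-stabilizer of a maximal isotropic subspace lies in $SO(V)$ — precisely the block-matrix step the paper carries out — so if you wanted to drop the appeal to the classical classification you would have to add that argument.
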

\begin{proof}
(a) From our notation in  \ref{S:SetUpAeven}, we have that 
\[
\dim(V) = \sum_{i \in {\mathcal I}} \delta_i =  \sum_{i \in {\mathcal I}}\vert \{ (i, j) \in {\mathcal I} \times {\mathbb N} \mid 1 \leq j \leq \delta_i\}\vert
\]
If $(i , j) \in {\mathcal I} \times {\mathbb N}$ is such that $1 \leq j \leq \delta_i$, then,  by the maximality of $J$, exactly one and only one of $(i, j)$, $(-i, j)$ belongs to $J$. Consequently  $2 \vert J \vert = \dim(V)$. 

(b) First we will show that $L_J$ is isotropic. From our hypothesis on the basis as defined in \ref{S:SetUpAeven}, we have that $\langle u_{i, j}, u_{i', j'}\rangle \ne 0$ if and only if $j = j'$ and $i + i' = 0$ when $(i, j), (i', j') \in {\mathcal I} \times {\mathbb N}$ are such that $1 \leq j \leq \delta_i$ and $1 \leq j' \leq \delta_{i'}$. But by our hypothesis on $J$, when $(i, j), (i', j') \in J$, then  $\langle u_{i, j}, u_{i', j'} \rangle = 0$. Otherwise we would have $j = j'$ and $i + i' = 0$. But this is impossible for two elements of $J$.  So $L_J$ is isotropic. Because $2\dim(L_J) = \dim(V)$ by (a), we get that $L_J$ is maximal. 

(c) We first need to remind some facts. Let $J_+ = \{(i, j) \in {\mathcal I} \times {\mathbb N} \mid i > 0, 1 \leq j \leq \delta_i\}$. Clearly $J_+ \in {\mathcal J}$.  Note that if $g \in O(V)$ is such that $g(L_{J_+}) = L_{J_+}$, then $g \in SO(V)$.  In fact by considering the matrix of $g$ in the basis $\{u_{i, j} \mid i \in {\mathcal I},  1 \leq j \leq \delta_i\}$ ordered properly, we get that this matrix is of the form 
\[
\begin{bmatrix} A & B\\ 0 & D\end{bmatrix} \quad \text{ such that } \quad  \begin{bmatrix} A & B\\ 0 & D\end{bmatrix}^T \begin{bmatrix} 0 & I\\ I & 0\end{bmatrix} \begin{bmatrix} A & B\\ 0 & D\end{bmatrix} = \begin{bmatrix} 0 & I\\ I & 0\end{bmatrix}.
\]
Consequently $A^TD = I$ ,  $\det(D) = (\det(A))^{-1}$ and $g \in SO(V)$. 

Let $g_J: V \rightarrow V$ denote the unique linear transformation such that
\[
g_J(u_{i, j}) = \begin{cases} u_{i, j}, &\text{if $i > 0$ and $(i, j) \in J$:}\\  u_{-i, j}, &\text{if $i >0$ and $(i, j) \not \in J$;}\\ u_{-i, j}, &\text{if $i < 0$ and $(i, j) \in J$;} \\ u_{i, j}, &\text{if $i < 0$ and $(i, j) \not\in J$.}
\end{cases}
\]
We can now show that $g_J \in O(V)$.  For $i_1, i_2 \in {\mathcal I}$, $1 \leq j_1 \leq \delta_{i_1}$ and $1 \leq j_2 \leq \delta_{i_2}$,   we must consider $\langle u_{i_1, j_1}, u_{i_2,j_2}\rangle$ and $\langle g_J(u_{i_1, j_1}), g_J(u_{i_2,j_2)}\rangle$.  We have 
\[
\langle u_{i_1, j_1}, u_{i_2, j_2}\rangle = \begin{cases} 1, &\text{ if $j_1 = j_2$ and $i_1 + i_2 = 0$;}\\ 0, &\text{otherwise.} \end{cases}
\]

If $j_1 \ne j_2$, then 
\[
\langle g_J(u_{i_1, j_1}), g_J(u_{i_2, j_2}) \rangle = \langle u_{i'_1, j_1}, u_{i'_2, j_2}\rangle = 0, 
\]
where $i'_1 \in \{i_1, -i_1\}$ and $i'_2  \in \{i_2, -i_2\}$, because $j_1 \ne j_2$.  

If $j_1 = j_2$ and $i_1 + i_2 = 0$, then we can assume by symmetry  that $i_1 > 0$ and $i_2 = -i_1 < 0$. Moreover $(i_1, j_1) \in J$ if and only if $(i_2, j_2) \not\in J$. Thus 
\[
\langle g_J(u_{i_1, j_1}), g_J(u_{i_2, j_2})\rangle = \begin{cases} \langle u_{i_1, j_1}, u_{i_2, j_2} \rangle, &\text{if $(i_1, j_1) \in J$;}\\  \langle u_{-i_1, j_1}, u_{-i_2, j_2} \rangle, &\text{if $(i_1, j_1) \not\in J$;} \end{cases} \quad = 1.
\]

If $j_1 = j_2$, $i_1 + i_2 \ne 0$ and  $i _1 - i_2 \ne 0$, then 
\[
\langle g_J(u_{i_1, j_1}), g_J(u_{i_2, j_2})\rangle = \langle u_{i'_1, j_1}, u_{i'_2, j_2}\rangle = 0
\]
because $i'_1 \in \{i_1, -i_1\}$, $i'_2 \in \{i_2, -i_2\}$ and $i'_1 + i'_2 \ne 0$ from our hypothesis.

Finally if $j_1 = j_2$, $i_1 + i_2 \ne 0$ and $i_1 = i_2$, then 
\[
\langle g_J(u_{i_1, j_1}), g_J(u_{i_2, j_2})\rangle  =  \langle g_J(u_{i_1, j_1}), g_J(u_{i_1, j_1})\rangle = \langle u_{i'_1, j_1}, u_{i'_1, j_1}\rangle = 0
\]
where  $i'_1 \in \{i_1, -i_1\}$. 

This completes the proof that $g_J \in O(V)$.   Note also that we have $g_J(L_{J_+}) =L_J$, because we only have to consider the basis element $u_{i, j}$ for $i > 0$ and in that case we get if $(i, j) \not\in J$, that $(-i, j) \in J$ and 
\[
g_J(u_{i, j}) =  \begin{cases} u_{i, j} \in L_J, &\text{if $i > 0$ and $(i, j) \in J$:}\\  u_{-i, j} \in L_J, &\text{if $i >0$ and $(i, j) \not \in J$.}
\end{cases}
\]

We can now compute the determinant of the matrix of $g_J$ relative to the basis $\{u_{i, j} \mid i \in {\mathcal I}, 1 \leq j \leq \delta_i\}$ where we consider first the basis elements $u_{i, j}$ for $(i, j) \in J_+$  and then the basis element $u_{-i, j}$  for $(i, j) \in J_+$ using the same total order for $J_+$. Using the Laplace expansion of $\det(g_J)$ and using recursion, we get easily that 
\[
\det(g_J) = (-1)^{\vert \{(i, j) \in J \mid i < 0\}\vert}
\]
If $g \in O(V)$ and $g(L_{J_+}) = L_J$, then $\det(g) = (-1)^{\vert \{(i, j) \in J \mid i < 0\}\vert}$, because 
\[
g(L_{J_+}) = L_J = g_J(L_{J_+})  \quad \Rightarrow \quad g^{-1}_Jg(L_{J_+}) =L_{J_+} \quad \text{ and } \quad g^{-1}_Jg \in SO(V)
\]
and $\det(g) = \det(g_J)$. 

We  complete the proof of (c).  If $L_J$ and $L_{J'}$ are in the same $SO(V)$-orbit, then they are both in the $SO(V)$-orbit of $L_{J_+}$ or both are not in the $SO(V)$-orbit of $L_{J_+}$. Thus 
\[
(-1)^{\vert \{(i, j) \in J \mid i < 0\}\vert} = (-1)^{\vert \{(i, j) \in J' \mid i < 0\}\vert},  
\]
 i.e.  $\vert \{(i, j) \in J \mid i < 0\}\vert  \equiv  \vert \{(i, j) \in J' \mid i < 0\}\vert  \pmod 2$.  
 
 Conversely if $\vert \{(i, j) \in J \mid i < 0\}\vert  \equiv  \vert \{(i, j) \in J' \mid i < 0\}\vert  \pmod 2$, then we get that $L_J$ and $L_{J'}$ are both in the $SO(V)$-orbit of $L_{J_+}$ or both are not in the $SO(V)$-orbit of $L_{J_+}$.  Because there are exactly two $SO(V)$-orbits on the set of maximal isotropic subspaces of $V$, we get that $L_J$ and $L_{J'}$ are in the same $SO(V)$-orbit. 
\end{proof}

\begin{notation}
Let $G$ be the reductive group $SO(V)$  as in \ref{S:SetUpAeven} where $\vert {\mathcal I} \vert$ is even and let ${\mathcal O} = {\mathcal O}_{\mathbf c}$ be a $G^{\iota}$-orbit  in ${\mathfrak g}_2$,  whose Jordan type is a  totally even partition $2^{a(2)} 4^{a(4)} 6^{a(6)} \dots$, where ${\mathbf c}:{\mathbf B}/\tau \rightarrow {\mathbb N}$ is a coefficient function ${\mathbf c} \in {\mathfrak C}_{\delta}$. Denote by $c:{\mathbf B} \rightarrow {\mathbb N}$: the symmetric function associated respectively to ${\mathbf c}$.  We will denote  
\[
\varXi({\mathcal O}) = \varXi({\mathcal O}_{\mathbf c}) =  \sum_{b \in {\mathbf B}} c(b) \vert \{i \in Supp^{top}(b) \mid i < 0\}\vert 
\]
and call $\varXi({\mathcal O})$: the {\it parity of the orbit} ${\mathcal O}$.  We will say that the $G^{\iota}$-orbit ${\mathcal O}$ is {\it even}. (respectively {\it odd}) if $\varXi({\mathcal O})$ is even  (respectively odd). 
\end{notation}

\begin{proposition}\label{P:TotallyEvenOrbitSOIEvenCase}
Let $G$ be the reductive group $SO(V)$  as in \ref{S:SetUpAeven} where $\vert {\mathcal I} \vert$ is even and let ${\mathcal O} = {\mathcal O}_{\mathbf c}$ and  ${\mathcal O}' = {\mathcal O}_{{\mathbf c}'}$  be two $G^{\iota}$-orbits in ${\mathfrak g}_2$,  who both have the same totally even partition $2^{a(2)} 4^{a(4)} 6^{a(6)} \dots$ corresponding  to their Jordan type, where ${\mathbf c}:{\mathbf B}/\tau \rightarrow {\mathbb N}$ and ${\mathbf c}':{\mathbf B}/\tau \rightarrow {\mathbb N}$ are both coefficient functions ${\mathbf c}, {\mathbf c}' \in {\mathfrak C}_{\delta}$. Denote by $c:{\mathbf B} \rightarrow {\mathbb N}$ and $c':{\mathbf B} \rightarrow {\mathbb N}$: the symmetric functions associated respectively to ${\mathbf c}$ and ${\mathbf c}'$.  Then these two $G^{\iota}$-orbits  ${\mathcal O}$ and  ${\mathcal O}' $ are in the same $G$-orbit if and only if   ${\mathcal O}$ and  ${\mathcal O}' $ have the same parity, i.e. they are both even or both odd.
\end{proposition}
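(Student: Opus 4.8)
The strategy is to reduce the question ``are $\mathcal O_{\mathbf c}$ and $\mathcal O_{{\mathbf c}'}$ in the same $SO(V)$-orbit?'' to a statement about maximal isotropic subspaces, for which we have the clean criterion of Lemma~\ref{L:ConditionSO(V)orbit}~(c). The key observation is that for a nilpotent element $X \in \mathfrak g_2$ whose Jordan type is totally even, the subspace
\[
L_X = X[\ker(X^2)] + X^2[\ker(X^4)] + \dots + X^k[\ker(X^{2k})] + \dots
\]
is a maximal isotropic subspace of $V$, and it is canonically attached to $X$ (no choices), hence it is equivariant: $L_{Ad(g)X} = g(L_X)$ for all $g \in O(V)$ (since $Ad(g)X = gXg^{-1}$, each $X^k[\ker(X^{2k})]$ is carried by $g$ to $(gXg^{-1})^k[\ker((gXg^{-1})^{2k})]$, and $\langle gu, gv\rangle = \langle u,v\rangle$ preserves isotropy). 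Isotropy itself is a standard fact for totally even Jordan type (it follows from the $\mathfrak{sl}_2$-theory: writing $X = E$ in a Jacobson--Morozov triple $(E,H,F)$, the form pairs the $r$-weight space with the $(-r)$-weight space, and $L_X$ is spanned by the strictly positive weight vectors in each Jordan string, which is isotropic because all weights occurring in each string are odd when the string has even length, so positive weights never pair with positive weights; the dimension count $2\dim L_X = \dim V$ is immediate from the multiplicative partition). Since $SO(V)$ acts transitively on each of its two orbits of maximal isotropics, and $O(V) = SO(V) \sqcup g_0 SO(V)$ with $g_0$ swapping the two $SO(V)$-orbits, two nilpotents $X, X'$ of totally even type are $SO(V)$-conjugate if and only if they are $GL(V)$-conjugate (equivalently $O(V)$-conjugate, which holds here since the type is totally even, by \ref{SS:ConjClassJordanPart}) \emph{and} $L_X$, $L_{X'}$ lie in the same $SO(V)$-orbit of maximal isotropics.

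\textbf{Reduction to the model.} Using Remark~\ref{R:IsomoVVc} and the isometric isomorphism $T_{\mathbf c}: V({\mathbf c}) \to V$ of Lemma~\ref{L:TcIsomorphism}, I may take $V = V({\mathbf c})$ and $X = E_{\mathbf c}$ as the representative of $\mathcal O_{\mathbf c}$; likewise $V = V({\mathbf c}')$ and $X' = E_{{\mathbf c}'}$ for $\mathcal O_{{\mathbf c}'}$. By Lemma~\ref{L:MaximalIsotropicV(c)}, $L_{E_{\mathbf c}} = L({\mathbf c})$ is exactly the span of $\{v_i^j(b) \mid b \in {\mathbf B},\ c(b)\neq 0,\ i \in Supp^{top}(b),\ 1 \le j \le c(b)\}$, a maximal isotropic subspace. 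The point now is that under the basis identification $T_{\mathbf c}$, the basis $\mathcal B_{\mathbf c}$ of $V({\mathbf c})$ is sent to the basis $\mathcal B$ of $V$, so $L({\mathbf c})$ corresponds to a subspace $L_J$ with
\[
J = \{(i, \iota_{\mathbf c}(b,j,i)) \mid b \in {\mathbf B},\ c(b)\neq 0,\ i \in Supp^{top}(b),\ 1 \le j \le c(b)\}
\]
for the appropriate indexing function $\iota_{\mathbf c}$ recording which basis vector $u_{i,\cdot}$ of $V_i$ the vector $v_i^j(b)$ is mapped to; in particular $J \in \mathcal J$ in the sense of Lemma~\ref{L:ConditionSO(V)orbit}. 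Then Lemma~\ref{L:ConditionSO(V)orbit}~(c) says $L_{E_{\mathbf c}}$ and $L_{E_{{\mathbf c}'}}$ are in the same $SO(V)$-orbit if and only if
\[
\bigl|\{(i,j') \in J \mid i < 0\}\bigr| \equiv \bigl|\{(i,j') \in J' \mid i < 0\}\bigr| \pmod 2,
\]
and the left-hand cardinality is precisely
\[
\sum_{b \in {\mathbf B}} c(b)\,\bigl|\{i \in Supp^{top}(b) \mid i < 0\}\bigr| = \varXi(\mathcal O),
\]
by the definition of $\varXi$ (the count over $J$ groups the $c(b)$ copies of each box $b$ together, each contributing $|\{i \in Supp^{top}(b) : i<0\}|$ negative indices, independently of the choice of indexing function). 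So $L_{E_{\mathbf c}} \sim_{SO(V)} L_{E_{{\mathbf c}'}}$ iff $\varXi(\mathcal O) \equiv \varXi(\mathcal O') \pmod 2$, i.e. iff $\mathcal O$ and $\mathcal O'$ have the same parity.

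\textbf{Conclusion and the main obstacle.} Combining the two paragraphs: $\mathcal O$ and $\mathcal O'$ are $G$-conjugate $\iff$ ($X,X'$ are $GL(V)$-conjugate, automatic here since they share the same Jordan type) and ($L_X, L_{X'}$ are $SO(V)$-conjugate) $\iff$ $\varXi(\mathcal O)\equiv\varXi(\mathcal O')\pmod 2$, which is exactly the claim. The forward implication uses equivariance $L_{Ad(g)X}=g(L_X)$; the reverse implication uses that two totally-even nilpotents with the same type and the same $SO(V)$-orbit of associated isotropic are $SO(V)$-conjugate --- this last fact is the substantive input and is the step I expect to be the main obstacle: it requires knowing that the $SO(V)$-stabilizer of $X$ acts transitively on the $O(V)$-orbit of $X$ that does \emph{not} change the $SO(V)$-class of $L_X$, equivalently that the only obstruction to $SO(V)$-conjugacy within a totally even $GL(V)$-class is the discrete invariant $L_X \bmod SO(V)$. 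This is classical (it is the statement that totally even classes are exactly the ones where an $O(V)$-class splits into two $SO(V)$-classes, distinguished by the $SO(V)$-orbit of the canonical Lagrangian), and I would cite it via the structure theory already invoked in \ref{SS:ConjClassJordanPart} (e.g. the references to \cite{S1982}), spelling out the $\mathfrak{sl}_2$-argument for why $L_X$ is the correct splitting invariant. Everything else --- the isotropy of $L({\mathbf c})$, the dimension count, the identification of the negative-index count with $\varXi$ --- is bookkeeping already done in Lemmas~\ref{L:MaximalIsotropicV(c)} and \ref{L:ConditionSO(V)orbit}.
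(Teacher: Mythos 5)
Your proposal is correct and follows essentially the same route as the paper: the canonical maximal isotropic subspace $L_X=\sum_{k\ge 1}X^{k}[\ker(X^{2k})]$, its equivariance, its identification with the span described in lemma~\ref{L:MaximalIsotropicV(c)} transported by $T_{\mathbf c}$ (lemma~\ref{L:TcIsomorphism}) to a subspace $L_{J({\mathbf c})}$, and the parity criterion of lemma~\ref{L:ConditionSO(V)orbit}~(c), with the negative-index count equal to $\varXi({\mathcal O})$. The one step you defer to a classical citation --- that $O(V)$-conjugacy together with $L_X\sim_{SO(V)}L_{X'}$ forces $SO(V)$-conjugacy --- is closed in the paper without outside input by a short contradiction: any conjugating element $g$ of determinant $-1$ carries $L_X$ to $L_{X'}$, and by the determinant formula in the proof of lemma~\ref{L:ConditionSO(V)orbit} such a $g$ interchanges the two $SO(V)$-families of maximal isotropics, so the two parities would have to differ; you already have all the ingredients to state this in place of the citation.
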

\begin{proof}
Recall that ${\mathcal O}_{\mathbf c}$ is the $G^{\iota}$-orbit of the element $X_{\mathbf c} = T_{\mathbf c} E_{\mathbf c} T^{-1}_{\mathbf c}: V \rightarrow V$ where $E_{\mathbf c}:V({\mathbf c}) \rightarrow V({\mathbf c})$ is defined in \ref{N:CCorrespondance}  and $T_{\mathbf c}:V({\mathbf c}) \rightarrow V$ is the ${\mathcal I}$-graded isomorphism defined in \ref{SS:IsomorphismV(c)andV}. We have shown in lemma~\ref{L:TcIsomorphism} that $\langle T_{\mathbf c}(u), T_{\mathbf c}(v)\rangle = \langle u, v \rangle_{\mathbf c}$ for all $u, v \in V({\mathbf c})$.  Using the notation of the lemma~\ref{L:MaximalIsotropicV(c)}, we get easily that $T_{\mathbf c}(L({\mathbf c}))$ is the maximal isotropic subspace of $V$ equal to 
\[
X_{\mathbf c}[\ker(X^2_{\mathbf c})] + X^2_{\mathbf c}[\ker(X^4_{\mathbf c})] + \dots + X^k_{\mathbf c}[\ker(X^{2k}_{\mathbf c})] + \dots
\]
and it is spanned by  
\[
\coprod_{\substack { b \in {\mathbf B} \\ c(b) \ne 0}} \{ T_{\mathbf c}(v_i^s(b)) \mid  i \in Supp^{top}(b) \text{ and  } 1 \leq s \leq c(b)\}.
\]
From our definition of $T_{\mathbf c}$, we get that this set is 
\[
\coprod_{\substack { b \in {\mathbf B} \\ c(b) \ne 0}} \{u_{i, j} \mid i \in Supp^{top}(b) \text{ and }  j \in J_i(b)\}
\]
where $J_i(b) = \{1 \leq j \leq \delta_i \mid u_{i, j} = T_{\mathbf c}(v_i^s(b)) \text{ for some $1 \leq s \leq c(b)$} \}$.   For all $i \in {\mathcal I}$ and $b, b' \in {\mathbf B}$,  note that $\vert J_i(b) \vert = c(b)$ and if $b \ne b'$  are two distinct ${\mathcal I}$-boxes such that $i \in Supp(b) \cap Supp(b')$, then $J_i(b) \cap J_i(b') = \emptyset $. Because $L({\mathbf c})$ is a maximal isotropic subspace of $V({\mathbf c})$, we get that 
\[
J({\mathbf c}) = \coprod_{\substack { b \in {\mathbf B} \\ c(b) \ne 0}} \{(i, j) \in {\mathcal I} \times {\mathbb N} \mid   i \in Supp^{top}(b) \text{ and }  j \in J_i(b)\} \in {\mathcal J}
\]
and $T_{\mathbf c}(L({\mathbf c})) = L_{J({\mathbf c})}$ with the notation of lemma~\ref{L:ConditionSO(V)orbit}.

We can do the same thing for the second orbit ${\mathcal O}_{{\mathbf c}'}$.  Now if ${\mathcal O}_{\mathbf c}$  and ${\mathcal O}_{{\mathbf c}'}$ are in the same $SO(V)$-orbit, then the two maximal isotropic subspaces $T_{\mathbf c}(L({\mathbf c}))$ and $T_{{\mathbf c}'}(L({\mathbf c}'))$ are in the same $SO(V)$-orbit and by lemma~\ref{L:ConditionSO(V)orbit}, we get
\[
\sum_{b \in {\mathbf B}} c(b) \vert \{i \in Supp^{top}(b) \mid i < 0\}\vert \equiv \sum_{b \in {\mathbf B}} c'(b) \vert \{i \in Supp^{top}(b) \mid i < 0\}\vert  \pmod 2.
\]

Reciprocally assume that 
\[
\sum_{b \in {\mathbf B}} c(b) \vert \{i \in Supp^{top}(b) \mid i < 0\}\vert \equiv \sum_{b \in {\mathbf B}} c'(b) \vert \{i \in Supp^{top}(b) \mid i < 0\}\vert  \pmod 2.
\]
 and the two orbits ${\mathcal O}_{\mathbf c}$  and ${\mathcal O}_{{\mathbf c}'}$ are not in the same $SO(V)$-orbit, but they are in the same $O(V)$-orbit because they both have the same partition $2^{a(2)} 4^{a(4)} 6^{a(6)} \dots$ corresponding  to their Jordan type.  This means that there exists $g_3X_{\mathbf c} g_3^{-1} = X_{{\mathbf c}'}$ with $g_3 \in O(V)$ and $\det(g_3) = -1$.  Consequently we have that $g_3 T_{\mathbf c}(L({\mathbf c})) g_3^{-1} = T_{{\mathbf c}'}(L({\mathbf c}')$ with $g_3 \in O(V)$ and $\det(g_3) = -1$.  One of these maximal isotropic subspaces, say $T_{\mathbf c}(L({\mathbf c}))$ is in the $SO(V)$-orbit of $L_{J_+}$ and the other $T_{{\mathbf c}'}(L({\mathbf c}'))$  is not in the $SO(V)$-orbit of $L_{J_+}$. By the proof of lemma~\ref{L:ConditionSO(V)orbit}, we must have
 \[
\sum_{b \in {\mathbf B}} c(b) \vert \{i \in Supp^{top}(b) \mid i < 0\}\vert \equiv 0 \pmod 2
\]
and 
\[
\sum_{b \in {\mathbf B}} c'(b) \vert \{i \in Supp^{top}(b) \mid i < 0\}\vert \equiv 1 \pmod 2.
\]
But this contradicts our hypothesis. So the two $G^{\iota}$-orbits ${\mathcal O}_{\mathbf c}$  and ${\mathcal O}_{{\mathbf c}'}$ are in the same $G$-orbit.

\end{proof}

\subsection{ }\label{SS:IoddGGiotaConjugaisonTotallyEven}
To conclude this section, we will consider the case where $\vert {\mathcal I} \vert$ is odd and the group $G$ is the reductive group $SO(V)$ as in \ref{S:SetUpOddOrtho}. Because of proposition~\ref{P:OrbitNotTotallyEven} and lemma~\ref{L:TotallyEvenCaseCoefficient}, we are left with the study of the $G^{\iota}$-orbit  ${\mathcal O}$ in ${\mathfrak g}_2$ being either ${}'{\mathcal O}_{\mathbf c}$ or ${}''{\mathcal O}_{\mathbf c}$ as defined in proposition~\ref{P:OrbitSplittingSO} (c) and its proof,  whose partition corresponding to its Jordan type is the totally even partition $2^{a(2)} 4^{a(4)} 6^{a(6)} \dots$, where  ${\mathbf c}:{\mathbf B}/\tau \rightarrow {\mathbb N}$ is a coefficient function ${\mathbf c} \in {\mathfrak C}_{\delta}$. Denote by $c:{\mathbf B} \rightarrow {\mathbb N}$: the symmetric function associated respectively to ${\mathbf c}$.  By lemma~\ref{L:TotallyEvenCaseCoefficient}, if $b \in {\mathbf B}$, then we have that $c(b) = 0$ whenever $\lambda(b)$ is even. 

Because $c(b(i, -i, 0)) = 0$ for all $i \in {\mathcal I}$, $i \geq 0$, we get as proved in  \ref{SS:IsomorphismV(c)andVOddCase} that $\dim(V_0) = \delta_0$ is even and we can write $\delta_0 =2 \delta'''_0$. Here we are using the same notation as in \ref{SS:IsomorphismV(c)andVOddCase}. 

The $G^{\iota}$-orbits ${}'{\mathcal O}_{\mathbf c}$ and ${}''{\mathcal O}_{\mathbf c}$ in ${\mathfrak g}_2$ were defined in  \ref{N:CCorrespondanceOdd} and proposition~\ref{P:OrbitSplittingSO} (c). Let $T_{\mathbf c}:V({\mathbf c}) \rightarrow V$ be the ${\mathcal I}$-graded isomorphism defined in \ref{SS:IsomorphismV(c)andVOddCase} such that $\langle T_{\mathbf c}(u), T_{\mathbf c}(v)\rangle = \langle u, v \rangle_{\mathbf c}$ for all $u, v \in V({\mathbf c})$.  If $E_{\mathbf c}:V({\mathbf c}) \rightarrow V({\mathbf c})$ is the linear transformation defined in \ref{N:CCorrespondanceOdd}, then  ${}'{\mathcal O}_{\mathbf c}$ is the $G^{\iota}$-orbit of  $X_{\mathbf c} = T_{\mathbf c} E_{\mathbf c} T^{-1}_{\mathbf c}: V \rightarrow V$.  

Let $b' \in {\mathbf B}$ be an ${\mathcal I}$-box   $b' = b(i', j', 0)$ such that $i', j' \in {\mathcal I}$ with $i' \geq 0 \geq j'$, $i' + j' > 0$ and $c(b') > 0$.   As noted in proposition~\ref{P:OrbitSplittingSO} (c), such an ${\mathcal I}$-box $b'$ exists. We get easily that $0 \in Supp(b') \cap Supp(\tau(b'))$. Because $i' + j' > 0$, then $b'$ is above the principal diagonal.    Denote by ${\mathcal O}' \in {\mathbf B}/\tau$: the $\langle \tau \rangle$-orbit of $b'$, i.e.  ${\mathcal O}' = \{b', \tau(b')\}$.  Consider $S_{{\mathcal O}'}:V({\mathbf c}) \rightarrow V({\mathbf c})$ defined in the proof of proposition~\ref{P:OrbitSplittingSO} (c) and denote by $E'_{\mathbf c}$: the linear transformation $E'_{\mathbf c}= S_{{\mathcal O}'} E_{\mathbf c}S^{-1}_{{\mathcal O}'}: V({\mathbf c}) \rightarrow V({\mathbf c})$, then  ${}''{\mathcal O}_{\mathbf c}$ is the $G^{\iota}$-orbit of  $X'_{\mathbf c} = T_{\mathbf c} E'_{\mathbf c} T^{-1}_{\mathbf c}: V \rightarrow V$. 

We will now recall the notation of \ref{N:CCorrespondance}.  $V({\mathbf c})$ is a vector space with its ${\mathcal I}$-basis 
\[
{\mathcal B}_{\mathbf c} = \coprod_{{\mathcal O} \in {\mathbf B}/\tau} \coprod_{b \in {\mathcal O}} \{v_i^j(b) \mid i \in Supp(b), 1 \leq j \leq {\mathbf c}({\mathcal O})\}
\]
and  the non-degenerate bilinear form $\langle \  ,\   \rangle_{\mathbf c}: V({\mathbf c}) \times V({\mathbf c}) \rightarrow {\mathbf k}$. 

\begin{lemma}\label{L:MaxIsotropicSubspaceOdd}
Using the notation of \ref{SS:IoddGGiotaConjugaisonTotallyEven},    we have the following.
\begin{enumerate}[\upshape (a)]
\item $0 \not \in Supp^{top}(b')$ and $0 \in Supp^{top}(\tau(b'))$. 
\item  The vector subspace 
\[
{}'L({\mathbf c}) = E_{\mathbf c} [\ker(E^2_{\mathbf c})] + E^2_{\mathbf c} [\ker(E^4_{\mathbf c})]  + \dots + E^k_{\mathbf c} [\ker(E^{2k}_{\mathbf c})]  + \dots
\]
is the vector subspace of $V({\mathbf c})$ spanned by the basis
\[
\coprod_{\substack { b \in {\mathbf B} \\ c(b) \ne 0}} \{v_i^j(b) \mid i \in Supp^{top}(b), 1 \leq j \leq c(b)\}
\]
and this is a maximal isotropic subspace of $V({\mathbf c})$. 
\item The vector subspace 
\[
{}''L({\mathbf c}) = E'_{\mathbf c} [\ker((E')^2_{\mathbf c})] + (E')^2_{\mathbf c} [\ker((E')^4_{\mathbf c})]  + \dots + (E')^k_{\mathbf c} [\ker((E')^{2k}_{\mathbf c})]  + \dots
\]
is the vector subspace of $V({\mathbf c})$ spanned by the basis
\[
\coprod_{\substack { b \in {\mathbf B} \\ c(b) \ne 0}} \{S_{{\mathcal O}'}(v_i^j(b)) \mid i \in Supp^{top}(b), 1 \leq j \leq c(b)\}
\]
and this is a maximal isotropic subspace of $V({\mathbf c})$. Moreover this last basis is equal to
\[
\left[\left[\coprod_{\substack { b \in {\mathbf B} \\ c(b) \ne 0}} \{v_i^j(b) \mid i \in Supp^{top}(b), 1 \leq j \leq c(b)\}\right] \setminus \{v_0^{c(b')}(\tau(b'))\}  \right] \cup \{v_0^{c(b')}(b')\}.
\]

\end{enumerate}
\end{lemma}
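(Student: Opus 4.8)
\textbf{Proof plan for Lemma~\ref{L:MaxIsotropicSubspaceOdd}.}

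The plan is to treat the three parts in order, building on the computations already done for the even case in Lemma~\ref{L:MaximalIsotropicV(c)}. First, for part~(a), I would simply unwind the definitions. Since $b' = b(i',j',0)$ with $i' \geq 0 \geq j'$ and $i'+j' > 0$, we have $\lambda(b') = (i'+j')/2 > 0$, so $Supp^{top}(b') = \{i'' \in {\mathcal I} \mid i' \geq i'' \geq (i'+j'+2)/2\}$ and the smallest element is $(i'+j'+2)/2 \geq 2 > 0$; hence $0 \notin Supp^{top}(b')$. Applying $\sigma$ (via Lemma~\ref{L:TauBox}(a), $Supp(\tau(b')) = \sigma(Supp(b'))$) and noting $\lambda(\tau(b')) = -\lambda(b') < 0$, we get $Supp^{top}(\tau(b')) = \{i'' \in {\mathcal I} \mid -j' \geq i'' \geq (-i'-j'+2)/2\}$, and since $-i'-j'+2 \leq 0$ while $-j' \geq 0$, we have $0 \in Supp^{top}(\tau(b'))$. (Here I use that $\delta_0 \ne 0$ forces $0 \in {\mathcal I}$, and that $\lambda(b)$ being odd — by Lemma~\ref{L:TotallyEvenCaseCoefficient} applied to the supports of boxes with $c(b) \ne 0$ — makes $\vert Supp(b)\vert$ even so that $Supp^{top}(b)$ is exactly the top half.)

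For part~(b), the argument is \emph{verbatim} the one in Lemma~\ref{L:MaximalIsotropicV(c)}: the formula for $E_{\mathbf c}$ on the basis ${\mathcal B}_{\mathbf c}$ from \ref{N:CCorrespondanceOdd} is, up to the extra cases at $i=0$ on the principal diagonal, identical to the even case, and for boxes with $c(b) \ne 0$ (so $\lambda(b)$ odd, $b$ not on the principal diagonal, by Lemma~\ref{L:TotallyEvenCaseCoefficient}) those extra cases never occur. So the same induction identifying $E^k_{\mathbf c}[\ker(E^{2k}_{\mathbf c})]$ with the span of the $v_i^j(b)$ with $i \in Supp^{top}(b)$ goes through; the dimension count gives $\dim({}'L({\mathbf c})) = \frac12 \dim(V({\mathbf c}))$, and the isotropy check uses that for $i \in Supp^{top}(b)$, $i' \in Supp^{top}(\tau(b))$ one has $i + i' \geq 2 > 0$, exactly as before. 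I would write this as ``the proof is identical to that of Lemma~\ref{L:MaximalIsotropicV(c)}'' with a one-line indication of why the principal-diagonal cases are irrelevant here.

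For part~(c), I would first observe that $S_{{\mathcal O}'}$ is an isometry of $\langle\ ,\ \rangle_{\mathbf c}$ fixing every $V_i({\mathbf c})$ (this is noted in the proof of Proposition~\ref{P:OrbitSplittingSO}(c)), and that $E'_{\mathbf c} = S_{{\mathcal O}'} E_{\mathbf c} S_{{\mathcal O}'}^{-1}$. Therefore $(E')^k_{\mathbf c}[\ker((E')^{2k}_{\mathbf c})] = S_{{\mathcal O}'}\left(E^k_{\mathbf c}[\ker(E^{2k}_{\mathbf c})]\right)$, so ${}''L({\mathbf c}) = S_{{\mathcal O}'}({}'L({\mathbf c}))$; since $S_{{\mathcal O}'}$ is an isometry and ${}'L({\mathbf c})$ is maximal isotropic by part~(b), so is ${}''L({\mathbf c})$, and its basis is the image under $S_{{\mathcal O}'}$ of the basis from part~(b). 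The final identification of that basis is a direct computation: $S_{{\mathcal O}'}$ is the identity on every basis vector except that it swaps $v_0^{c(b')}(b')$ and $v_0^{c(b')}(\tau(b'))$. By part~(a), the basis of ${}'L({\mathbf c})$ from part~(b) contains $v_0^{c(b')}(\tau(b'))$ (since $0 \in Supp^{top}(\tau(b'))$ and $c(b') = c(\tau(b'))$) but not $v_0^{c(b')}(b')$ (since $0 \notin Supp^{top}(b')$). So applying $S_{{\mathcal O}'}$ replaces $v_0^{c(b')}(\tau(b'))$ by $v_0^{c(b')}(b')$ and leaves everything else fixed, which is exactly the asserted description. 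The main (and really only) subtlety to be careful about is checking that in the basis of ${}'L({\mathbf c})$ the single vector $v_0^{c(b')}(\tau(b'))$ is the \emph{only} one of the swapped pair $\{v_0^{c(b')}(b'), v_0^{c(b')}(\tau(b'))\}$ that appears — this is precisely what part~(a) is set up to guarantee, so the lemma is organized so that (a) feeds directly into (c).
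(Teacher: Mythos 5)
Your proposal is correct and follows essentially the same route as the paper: part (a) by the same direct computation with $Supp^{top}$, part (b) by transporting the even-case argument of Lemma~\ref{L:MaximalIsotropicV(c)} (which is exactly what the paper does, leaving it to the reader), and part (c) via ${}''L({\mathbf c}) = S_{{\mathcal O}'}({}'L({\mathbf c}))$ together with (a) to see that only $v_0^{c(b')}(\tau(b'))$, not $v_0^{c(b')}(b')$, sits in the basis of ${}'L({\mathbf c})$. One small imprecision in (b): the extra $i=0$ cases in the formula for $E_{\mathbf c}$ do occur for boxes above or below the principal diagonal with $0$ in their support (e.g.\ for $b'$ itself) — only the on-diagonal case is excluded by Lemma~\ref{L:TotallyEvenCaseCoefficient} — but since $\epsilon = 1$ these cases still give $E_{\mathbf c}(v_0^j(b)) = \pm v_2^j(b)$, so the even-case induction, dimension count and isotropy argument carry over unchanged.
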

\begin{proof}

We can express the linear transformation $E_{\mathbf c}$ on the elements of the basis ${\mathcal B}_{\mathbf c}$. Let ${\mathcal O}$ denote the $\langle \tau \rangle$-orbit of $b$, then,  if the orbit ${\mathcal O}$ has no overlapping ${\mathcal I}$-box supports,  $1 \leq j \leq {\mathbf c}({\mathcal O})$ and $i \in Supp(b)$, we have
\[
E_{\mathbf c}(v_i^j(b)) = \begin{cases} {\phantom -} 0, &\text{if $i = \max(Supp(b))$;}\\ {\phantom -} v_{i + 2}^j(b), &\text{if $i \ne \max(Supp(b))$ and $i > 0$;}\\  -v_{i + 2}^j(b), &\text{if $i \ne \max(Supp(b))$ and $i < 0$;} \end{cases}
\]
while if the orbit ${\mathcal O}$ has overlapping ${\mathcal I}$-box supports,  $b = b(i_1, j_1, k_1) \in {\mathcal O}$, $1 \leq j \leq {\mathbf c}({\mathcal O})$  and $i \in Supp(b)$, then we have 
\[
E_{\mathbf c}(v_i^j(b)) = \begin{cases} {\phantom -} 0, &\text{if $i = \max(Supp(b))$;}\\ {\phantom -} v_{i + 2}^j(b), &\text{if $i \ne \max(Supp(b))$ and $i > 0$;}\\  -v_{i + 2}^j(b), &\text{if  $i < 0$;}\\   {\phantom -}  v_2^j(b), &\text{if $i = 0$, $\max(Supp(b)) > 0$ and $b$ is below the principal}\\ &\text{diagonal;}\\ {\phantom -}  v_2^j(b), &\text{if $i = 0$, $\max(Supp(b)) > 0$ and $b$ is above the principal}\\ &\text{diagonal;}  \end{cases}
\]
Note that if $b \in {\mathbf B}$ is such that $c(b) > 0$, then $b$ is not on the principal diagonal as noted in \ref{SS:IoddGGiotaConjugaisonTotallyEven}

(a) We have that $b' = b(i', j', 0)$ with $i', j' \in {\mathcal I}$, $i' \geq 0 \geq j'$ and $(i' + j') > 0$. Because the elements of  ${\mathcal I}$ are even integers, we have that $(i' + j') \geq 2$.  We have that $i'' \in Supp^{top}(b')$ if and only if 
\[
i' \geq i'' \geq \frac{(i' + j' + 2)}{2} \geq 2 \quad \text{ and } \quad 0 \not \in Supp^{top}(b').
\]
We have that $\tau(b') = b(-j', -i', 0)$. We have that $i'' \in Supp(\tau(b'))$ if and only if 
\[
-j' \geq i'' \geq \frac{(-j' - i' + 2)}{2}. 
\]
But $0 \geq j'$ implies that $-j' \geq 0$. $(i' + j') \geq 2$ implies that  $0 \geq (-j' - i' + 2)$ and $0 \in Supp^{top}(\tau(b'))$.

(b) The proof is similar to the proof in lemma~\ref{L:MaximalIsotropicV(c)} and is left to the reader.

(c) We have that $(E'_{\mathbf c})^{r} = S_{{\mathcal O}'} E_{\mathbf c}^{r} S_{{\mathcal O}'}^{-1}$ for all integers $r\in {\mathbb N}$.  From this,  we get easily that  
\[
\ker((E'_{\mathbf c})^{2k}) = \ker(S_{{\mathcal O}'}E_{\mathbf c}^{2k} S_{{\mathcal O}'}^{-1}) =  S_{{\mathcal O}'}(\ker(E_{\mathbf c}^{2k}))
\]
 and 
 \[
 (E'_{\mathbf c})^k [\ker((E'_{\mathbf c})^{2k})] = S_{{\mathcal O}'}E_{\mathbf c}^{k} S_{{\mathcal O}'}^{-1}[S_{{\mathcal O}'}(\ker(E_{\mathbf c}^{2k}))] = S_{{\mathcal O}'} (E_{\mathbf c}^k[\ker(E_{\mathbf c}^{2k})])
 \]
 Thus ${}''L({\mathbf c}) = S_{{\mathcal O}'}({}'L({\mathbf c}))$.  Because $S_{{\mathcal O}'}:V({\mathbf c}) \rightarrow V({\mathbf c})$ is an ${\mathcal I}$-graded isomorphism such that 
 $\langle S_{{\mathcal O}'}(u),  S_{{\mathcal O}'}(v)\rangle_{\mathbf c} = \langle u,  v\rangle_{\mathbf c}$ for all $u, v \in V({\mathbf c})$, then ${}''L({\mathbf c}) $ is a maximal isotropic subspace of $V({\mathbf c})$ and 
 \[
\coprod_{\substack { b \in {\mathbf B} \\ c(b) \ne 0}} \{S_{{\mathcal O}'}(v_i^j(b)) \mid i \in Supp^{top}(b), 1 \leq j \leq c(b)\}
\]
is a basis of  ${}''L({\mathbf c})$.  

For the latter description of this basis, we need to note that $S_{{\mathcal O}'}$ is the identity on all the vectors except for $v_0^{c(b')}(b')$ and $v_0^{c(b')}(\tau(b'))$. By (a), $v_0^{c(b')}(b') \not\in {}'L({\mathbf c})$ and $v_0^{c(b')}(\tau(b')) \in {}'L({\mathbf c})$. Thus we get that $S_{{\mathcal O}'}(v_0^{c(b')}(b')) = v_0^{c(b')}(\tau(b')) \not\in {}''L({\mathbf c})$ and  $S_{{\mathcal O}'}(v_0^{c(b')}(\tau(b')) = v_0^{c(b')}(b') \in {}''L({\mathbf c})$. This concludes the proof of (c). 
\end{proof}

The following lemma is probably well-known. Its proof is included for completeness. 

\begin{lemma}\label{L:ConditionSO(V)orbitOdd}
Let ${\mathcal I}$ be odd and using the notations of \ref{S:SetUpOddOrtho} and \ref{SS:IoddGGiotaConjugaisonTotallyEven}, consider the set ${\mathcal J}$ of subsets $J$ of $\{(i, j) \in {\mathcal I} \times {\mathbb N} \mid 1 \leq j \leq \delta_i\}$ such that 
\begin{itemize}
\item if $(i, j),  (i', j) \in J$ are such that $i, i' \ne 0$, then $i + i' \ne 0$ and
\item if $(0, j), (0, j') \in J$, then $j + j' \ne (\delta_0 + 1)$,
\end{itemize}
and moreover $J$ is maximal among such subsets.
\begin{enumerate}[\upshape (a)]
\item If $J \in {\mathcal J}$, then $2 \vert J \vert = \dim(V)$.
\item If $J \in {\mathcal J}$, then the subspace $L_J$ of $V$ spanned by $\{u_{i, j} \mid  (i, j) \in J\}$ is a maximal isotropic subspace of $V$.
\item Let  $J, J' \in {\mathcal J}$. Then the two maximal isotropic subspaces $L_J$ and $L_{J'}$ of $V$ are in the same $SO(V)$-orbit if and only if 
\begin{multline*}
 \vert \{(i, j) \in J \mid i < 0\} \cup \{(0, j) \in J \mid \delta'''_0 < j\} \vert  \equiv \\ \vert \{(i, j) \in J' \mid i < 0\}\cup \{(0, j) \in J' \mid \delta'''_0 < j\} \vert  \pmod 2.
\end{multline*}
Recall that $2\delta'''_0 = \delta_0$.
\end{enumerate}
\end{lemma}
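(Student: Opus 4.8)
The plan is to mimic the proof of Lemma~\ref{L:ConditionSO(V)orbit}, adapting the bookkeeping to the extra $V_0$-block. Introduce the pairing involution $\sigma$ on the index set $\{(i,j) : i\in{\mathcal I},\ 1\le j\le\delta_i\}$ defined by $\sigma(i,j)=(-i,j)$ for $i\ne 0$ and $\sigma(0,j)=(0,\delta_0+1-j)$. By the hypothesis inherited from \ref{SS:IoddGGiotaConjugaisonTotallyEven} that $\delta_0=2\delta'''_0$ is even, $\sigma$ has no fixed points, and the two conditions defining membership in ${\mathcal J}$ say exactly that $J$ meets each $\sigma$-orbit in at most one point; hence maximality of $J$ means $J$ meets each $\sigma$-orbit in exactly one point. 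Part (a) is then immediate: $2\lvert J\rvert$ equals the size of the whole index set, which is $\dim(V)$. For part (b), the description of $\langle\ ,\ \rangle$ on ${\mathcal B}$ in \ref{S:SetUpOddOrtho} shows that $\langle u_{i,j},u_{i',j'}\rangle\ne 0$ occurs precisely in the two situations forbidden in the definition of ${\mathcal J}$ (either $i,i'\ne 0$ with $j=j'$ and $i+i'=0$, or $i=i'=0$ with $j+j'=\delta_0+1$); therefore $L_J$ is isotropic, and it is maximal because $2\dim(L_J)=\dim(V)$ by (a).

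For part (c), I would first recall the standard facts that, $\dim(V)$ being even and the form split, $O(V)$ acts transitively on the set of maximal isotropic subspaces of $V$ and this set decomposes into exactly two $SO(V)$-orbits; consequently $L_J$ and $L_{J'}$ are $SO(V)$-conjugate iff some (equivalently, every) element of $O(V)$ carrying one to the other has determinant $1$. Fix the reference subset $J_+=\{(i,j):i>0\}\cup\{(0,j):1\le j\le\delta'''_0\}\in{\mathcal J}$; its $O(V)$-stabilizer is the Siegel parabolic and is contained in $SO(V)$ by the block-triangular argument of Lemma~\ref{L:ConditionSO(V)orbit} (using the complementary isotropic spanned by the remaining basis vectors). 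For $J\in{\mathcal J}$, define $g_J\in GL(V)$ on ${\mathcal B}$ by: for $i>0$, $g_J(u_{i,j})=u_{i,j}$ if $(i,j)\in J$ and $g_J(u_{i,j})=u_{-i,j}$ otherwise; for $i<0$, $g_J(u_{i,j})=u_{-i,j}$ if $(i,j)\in J$ and $g_J(u_{i,j})=u_{i,j}$ otherwise; for $1\le j\le\delta'''_0$, $g_J(u_{0,j})=u_{0,j}$ if $(0,j)\in J$ and $g_J(u_{0,j})=u_{0,\delta_0+1-j}$ otherwise; and for $\delta'''_0<j\le\delta_0$, $g_J(u_{0,j})=u_{0,\delta_0+1-j}$ if $(0,j)\in J$ and $g_J(u_{0,j})=u_{0,j}$ otherwise. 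One verifies by a case check, analogous to the even case but now also over the hyperbolic $V_0$-block, that $g_J\in O(V)$ and that $g_J(L_{J_+})=L_J$ (it is a bijection between the two bases, using maximality of $J$). Restricted to each two-dimensional $\sigma$-orbit, $g_J$ is the identity when the chosen representative lies in $J$ and is the transposition of the two basis vectors otherwise; each transposition contributes $-1$ to the determinant. Counting the transpositions gives $\det(g_J)=(-1)^{N(J)}$ with $N(J)=\lvert\{(i,j)\in J:i<0\}\rvert+\lvert\{(0,j)\in J:\delta'''_0<j\}\rvert$, which is the cardinality appearing in the statement.

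To finish, if $g\in O(V)$ satisfies $g(L_{J_+})=L_J$, then $g_J^{-1}g$ stabilizes $L_{J_+}$, hence lies in $SO(V)$, so $\det(g)=\det(g_J)=(-1)^{N(J)}$; thus $L_J$ lies in the $SO(V)$-orbit of $L_{J_+}$ iff $N(J)$ is even. Since there are exactly two $SO(V)$-orbits of maximal isotropic subspaces, $L_J$ and $L_{J'}$ are $SO(V)$-conjugate iff $N(J)\equiv N(J')\pmod 2$, which is the asserted congruence. The main obstacle is purely clerical: verifying carefully that $g_J\in O(V)$ on the $V_0$-block, where the pairing is the hyperbolic form (value $1$ exactly when $r+s=\delta_0+1$) rather than the ``diagonal'' pairing on the other $V_i$, and checking that the transposition count matches the exact index set in the statement; there is no conceptual difficulty beyond what is already present in Lemma~\ref{L:ConditionSO(V)orbit}.
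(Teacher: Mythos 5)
Your proposal is correct and follows essentially the same route as the paper's proof: the reference set $J_+$, the element $g_J\in O(V)$ defined case by case on ${\mathcal B}$ with $g_J(L_{J_+})=L_J$, the fact that the $O(V)$-stabilizer of $L_{J_+}$ lies in $SO(V)$, and the determinant count $\det(g_J)=(-1)^{N(J)}$ with $N(J)=\vert\{(i,j)\in J: i<0\}\vert+\vert\{(0,j)\in J:\delta'''_0<j\}\vert$, concluded via the two $SO(V)$-orbits of maximal isotropic subspaces. Your packaging through the fixed-point-free involution $\sigma$ (using $\delta_0=2\delta'''_0$ even) and the transposition count is only a mild streamlining of the paper's explicit case analysis, including the orthogonality check for $g_J$ on the hyperbolic $V_0$-block that you correctly flag as the remaining routine verification.
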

\begin{proof}
(a) From our notation in \ref{S:SetUpOddOrtho}, we have that 
\[
\dim(V) = \sum_{i \in {\mathcal I}} \delta_i =  \sum_{i \in {\mathcal I}}\vert \{ (i, j) \in {\mathcal I} \times {\mathbb N} \mid 1 \leq j \leq \delta_i\}\vert
\]
If $(i , j) \in {\mathcal I} \times {\mathbb N}$ is such that $1 \leq j \leq \delta_i$, then,  by the maximality of $J$, we have if $i \ne 0$, that exactly one and only one of $(i, j)$, $(-i, j)$ belongs to $J$, while if $i = 0$,  exactly one and only one of $(0, j)$, $(0, (\delta_0 + 1 - j))$ belongs to $J$. Consequently  $2 \vert J \vert = \dim(V)$. 

(b) First we will show that $L_J$ is isotropic. From our hypothesis on the basis as defined in \ref{S:SetUpOddOrtho}, we have that $\langle u_{i, j}, u_{i', j'}\rangle \ne 0$ if and  only if either $i, i' \ne 0$, $j = j'$ and $i + i' = 0$ or $i = i' = 0$, $j + j' = (\delta_0 + 1)$ when $(i, j), (i', j') \in {\mathcal I} \times {\mathbb N}$ are such that $1 \leq j \leq \delta_i$ and $1 \leq j' \leq \delta_{i'}$. But by our hypothesis on $J$, when $(i, j), (i', j') \in J$, then  $\langle u_{i, j}, u_{i', j'} \rangle = 0$. Otherwise we would have either $i, i' \ne 0$, $j = j'$ and $i + i' = 0$ or $i = i' = 0$ and $j + j' = \delta_0$. But this is impossible for two elements of $J$.  So $L_J$ is isotropic. Because $2\dim(L_J) = \dim(V)$ by (a), we get that $L_J$ is maximal. 

(c) We first need to remind some facts. Let 
\[
J_+ = \{(i, j) \in {\mathcal I} \times {\mathbb N} \mid i > 0, 1 \leq j \leq \delta_i\} \cup \{(0, j) \in {\mathcal I} \times {\mathbb N} \mid  1 \leq j \leq \delta'''_0\}.
\]
Clearly $J_+ \in {\mathcal J}$.  Note that if $g \in O(V)$ is such that $g(L_{J_+}) = L_{J_+}$, then $g \in SO(V)$.  In fact by considering the matrix of $g$ in the basis $\{u_{i, j} \mid i \in {\mathcal I},  1 \leq j \leq \delta_i\}$ ordered properly, i.e. the basis elements corresponding to the indexes in $J_+$ and then the one  corresponding to the indexes in the complement of $J_+$ in   $\{(i, j) \in {\mathcal I} \times {\mathbb N} \mid 1 \leq j \leq \delta_i\} $ in the right  order, we get that this matrix is of the form 
\[
\begin{bmatrix} A & B\\ 0 & D\end{bmatrix} \quad \text{ such that } \quad  \begin{bmatrix} A & B\\ 0 & D\end{bmatrix}^T \begin{bmatrix} 0 & I\\ I & 0\end{bmatrix} \begin{bmatrix} A & B\\ 0 & D\end{bmatrix} = \begin{bmatrix} 0 & I\\ I & 0\end{bmatrix}.
\]
Consequently $A^TD = I$ ,  $\det(D) = (\det(A))^{-1}$ and $g \in SO(V)$. 

Let $g_J: V \rightarrow V$ denote the unique linear transformation such that
\[
g_J(u_{i, j}) = \begin{cases} u_{i, j}, &\text{if $i > 0$ and $(i, j) \in J$:}\\  u_{-i, j}, &\text{if $i >0$ and $(i, j) \not \in J$;}\\  u_{i, j}, &\text{if $i = 0$, $1 \leq j \leq \delta'''_0$ and $(i, j) \in J$;}\\  u_{i, (\delta_0 + 1 - j)}, &\text{if $i = 0$, $1 \leq j \leq \delta'''_0$ and $(i, j) \not \in J$;}\\  u_{-i, j}, &\text{if $i < 0$ and $(i, j) \in J$;} \\ u_{i, j}, &\text{if $i < 0$ and $(i, j) \not\in J$;}\\ u_{i, (\delta_0 + 1 - j)}, &\text{if $i = 0$, $(\delta'''_0 + 1) \leq j \leq \delta_0$ and $(i, j) \in J$;}\\  u_{i, j}, &\text{if $i = 0$, $(\delta'''_0 + 1) \leq j \leq \delta_0$ and $(i, j) \not\in J$.}
\end{cases}
\]
We can now show that $g_J \in O(V)$.  For this, we have to consider the following two situations:
\begin{itemize}
\item $\langle u_{i_1, j_1}, u_{i_2, j_2}\rangle$ and $\langle g_J(u_{i_1, j_1}), g_J(u_{i_2, j_2}) \rangle$ whenever $i_1, i_2 \in {\mathcal I}$, with $i_1, i_2 \ne 0$,  $1 \leq j_1 \leq \delta_{i_1}$ and $1 \leq j_2 \leq \delta_{i_2}$;
\item $\langle u_{0, j_1}, u_{0, j_2}\rangle$ and $\langle g_J(u_{0, j_1}), g_J(u_{0, j_2}) \rangle$ whenever  $1 \leq j_1 \leq \delta_0$ and $1 \leq j_2 \leq \delta_0$.
\end{itemize}

We will now consider the first case:  $i_1, i_2 \in {\mathcal I}$ with $i_1, i_2 \ne 0$,  $1 \leq j_1 \leq \delta_{i_1}$ and $1 \leq j_2 \leq \delta_{i_2}$,   we must consider $\langle u_{i_1, j_1}, u_{i_2,j_2}\rangle$ and $\langle g_J(u_{i_1, j_1}), g_J(u_{i_2,j_2)}\rangle$.  We have 
\[
\langle u_{i_1, j_1}, u_{i_2, j_2}\rangle = \begin{cases} 1, &\text{ if $j_1 = j_2$ and $i_1 + i_2 = 0$;}\\ 0, &\text{otherwise.} \end{cases}
\]

If $j_1 \ne j_2$, then 
\[
\langle g_J(u_{i_1, j_1}), g_J(u_{i_2, j_2}) \rangle = \langle u_{i'_1, j_1}, u_{i'_2, j_2}\rangle = 0, 
\]
where $i'_1 \in \{i_1, -i_1\}$ and $i'_2  \in \{i_2, -i_2\}$, because $j_1 \ne j_2$.  

If $j_1 = j_2$ and $i_1 + i_2 = 0$, then we can assume by symmetry  that $i_1 > 0$ and $i_2 = -i_1 < 0$. Moreover $(i_1, j_1) \in J$ if and only if $(i_2, j_2) \not\in J$. Thus 
\[
\langle g_J(u_{i_1, j_1}), g_J(u_{i_2, j_2})\rangle = \begin{cases} \langle u_{i_1, j_1}, u_{i_2, j_2} \rangle, &\text{if $(i_1, j_1) \in J$;}\\  \langle u_{-i_1, j_1}, u_{-i_2, j_2} \rangle, &\text{if $(i_1, j_1) \not\in J$;} \end{cases} \quad = 1.
\]

If $j_1 = j_2$, $i_1 + i_2 \ne 0$ and  $i _1 - i_2 \ne 0$, then 
\[
\langle g_J(u_{i_1, j_1}), g_J(u_{i_2, j_2})\rangle = \langle u_{i'_1, j_1}, u_{i'_2, j_2}\rangle = 0
\]
because $i'_1 \in \{i_1, -i_1\}$, $i'_2 \in \{i_2, -i_2\}$ and $i'_1 + i'_2 \ne 0$ from our hypothesis.

Finally if $j_1 = j_2$, $i_1 + i_2 \ne 0$ and $i_1 = i_2$, then 
\[
\langle g_J(u_{i_1, j_1}), g_J(u_{i_2, j_2})\rangle  =  \langle g_J(u_{i_1, j_1}), g_J(u_{i_1, j_1})\rangle = \langle u_{i'_1, j_1}, u_{i'_1, j_1}\rangle = 0
\]
where  $i'_1 \in \{i_1, -i_1\}$. 

We will now consider the second case. If  $1 \leq j_1, j_2 \leq \delta_0$, then we must consider $\langle u_{0, j_1}, u_{0, j_2}\rangle$ and $\langle g_J(u_{0, j_1}), g_J(u_{0, j_2}) \rangle$. We have 
\[
\langle u_{0, j_1}, u_{0, j_2}\rangle = \begin{cases} 1, &\text{ if $j_1 + j_2 = (\delta_0 + 1)$;}\\ 0, &\text{otherwise;} \end{cases}
\]
If $\langle u_{0, j_1}, u_{0, j_2}\rangle = 1$,  we cannot have both $j_1 \geq (\delta'''_0 + 1)$ and $j_2 \geq  (\delta'''_0 + 1)$, because otherwise $(j_1 + j_2) \geq (\delta_0 + 2) > (\delta_0 + 1)$. Similarly we cannot have both $1 \leq j_1 \leq \delta'''_0$ and  $1 \leq j_2 \leq \delta'''_0$, because otherwise $(j_1 + j_2) \leq  2\delta'''_0 < (\delta_0 + 1)$.  Consequently we can assume, using symmetry,  that $1 \leq j_1 \leq \delta'''_0$ and $(\delta'''_0 + 1) \leq j_2 \leq \delta_0$ when  $\langle u_{0, j_1}, u_{0, j_2}\rangle = 1$. 

So when $\langle u_{0, j_1}, u_{0, j_2}\rangle = 1$, i.e $(j_1 +j_2) = (\delta_0 + 1)$, we have that $(0, j_1) \in J$ if and only if $(0, j_2) = (0, (\delta_0 + 1 - j_1)) \not \in J$
\[
g_J(u_{0, j_1}) = \begin{cases} u_{0, j_1}, &\text{if $(0, j_1) \in J$;}\\ u_{0, (\delta_0 + 1 - j_1)}, &\text{if $(0, j_1) \not\in J$;}\end{cases} 
\]
and
\[
g_J(u_{0, j_2}) = \begin{cases} u_{0, (\delta_0 + 1 - j_2)}, &\text{if $(0, j_2) \in J$;}\\ u_{0, j_2}, &\text{if $(0, j_2) \not\in J$.}\end{cases}
\]
Thus
\[
\langle g_J(u_{0, j_1}),  g_J(u_{0, j_2}) \rangle = \begin{cases} \langle u_{0, j_1}, u_{0, j_2} \rangle &\text{if $(0, j_1) \in J$;}\\   \langle u_{0, \delta_0 + 1 -  j_1}, u_{0,\delta_0 + 1 -  j_2} \rangle &\text{if $(0, j_1) \not\in J$;} \end{cases}\quad  = 1
\]
because $j_1 + j_2 = \delta_0 + 1$. 

If $\langle g_J(u_{0, j_1}),  g_J(u_{0, j_2}) \rangle  = \langle u_{0, j'_1}, u_{0, j'_2} \rangle = 1$, where $j'_1 \in \{j_1, (\delta_0 + 1 - j_1)\}$ and  $j'_2 \in \{j_2, (\delta_0 + 1 - j_2)\}$, then $j'_1 + j'_2 = (\delta_0 + 1)$. We thus have four cases to consider: either $j'_1 = j_1$, $j'_2 = j_2$ with $j_1 + j_2 = (\delta_0 + 1)$ or $j'_1 = (\delta_0 + 1 - j_1)$, $j'_2 = (\delta_0 + 1 - j_2)$ with $j_1 + j_2 = (\delta_0 + 1)$ or  $j'_1 =  j_1$, $j'_2 = (\delta_0 + 1 - j_2)$ with $j_1 =  j_2$ or $j'_1 = (\delta_0 + 1 - j_1)$, $j'_2 = j_2$ with $j_1 = j_2$.  

If $j_1 = j_2$, then we get that 
\[
\langle g_J(u_{0, j_1}), g_J(u_{0, j_2})\rangle = \begin{cases} \langle u_{0, j_1}, u_{0, j_1}\rangle, &\text{if $1 \leq j_1 \leq \delta'''_0$ and $(0, j_1) \in J$;} \\ \langle u_{0, (\delta_0 + 1 -  j_1)}, u_{0, (\delta_0 + 1 - j_1)}\rangle, &\text{if $1 \leq j_1 \leq \delta'''_0$ and $(0, j_1) \not \in J$;} \\
 \langle u_{0, (\delta_0 + 1 - j_1)}, u_{0, (\delta_0 + 1 - j_1)}\rangle, &\text{if $(\delta'''_0 + 1) \leq j_1 \leq \delta_0$}\\ &\text{and $(0, j_1) \in J$;} \\
 \langle u_{0,  j_1}, u_{0, j_1}\rangle, &\text{if $(\delta'''_0 + 1) \leq j_1 \leq \delta_0$}\\ &\text{and $(0, j_1) \not\in J$.} 
\end{cases}
\]
So if $j_1 = j_2$, we get that $\langle g_J(u_{0, j_1}), g_J(u_{0, j_2})\rangle = 0$ contrary to our hypothesis that $\langle g_J(u_{0, j_1}),  g_J(u_{0, j_2}) \rangle  = 1$.  

We are left with the two cases:  either $j'_1 = j_1$, $j'_2 = j_2$ with $j_1 + j_2 = (\delta_0 + 1)$ or $j'_1 = (\delta_0 + 1 - j_1)$, $j'_2 = (\delta_0 + 1 - j_2)$ with $j_1 + j_2 = (\delta_0 + 1)$. Thus we have that $\langle g_J(u_{0, j_1}), g_J(u_{0, j_2})\rangle = \langle u_{0, j_1}, u_{0, j_2}\rangle = 1$.

This completes the proof that $g_J \in O(V)$.   Note also that we have $g_J(L_{J_+}) =L_J$, because we only have to consider either the basis elements $u_{i, j}$ for $i > 0$ or the basis elements $u_{0, j}$ for $1 \leq j \leq \delta'''_0$. In the first  case,  if $(i, j) \not\in J$,  we get that $(-i, j) \in J$ and 
\[
g_J(u_{i, j}) =  \begin{cases} u_{i, j} \in L_J, &\text{if $i > 0$ and $(i, j) \in J$:}\\  u_{-i, j} \in L_J, &\text{if $i >0$ and $(i, j) \not \in J$;}
\end{cases}
\]
and, in the second case,  if  $(0, j) \not \in J$,  we get that $(0, (\delta_0 + 1 - j)) \in J$ and
\[
g_J(u_{0, j}) =  \begin{cases} u_{0, j} \in L_J, &\text{if $1 \leq j \leq \delta'''_0$ and $(0, j) \in J$;}\\  u_{0, (\delta_0 + 1 - j)} \in L_J, &\text{if $1 \leq j \leq \delta'''_0$ and $(i, j) \not \in J$.}
\end{cases}
\]

We can now compute the determinant of the matrix of $g_J$ relative to the basis $\{u_{i, j} \mid i \in {\mathcal I}, 1 \leq j \leq \delta_i\}$ where we consider first the basis elements $u_{i, j}$ for $i \in {\mathcal I}$, $i > 0$, $1 \leq j \leq \delta_i$,  secondly the basis elements $u_{0, j}$ for $1 \leq j \leq \delta'''_0$   and then thirdly the basis element $u_{-i, j}$   for $i \in {\mathcal I}$, $i > 0$, $1 \leq j \leq \delta_i$ and finally the basis elements $u_{0, (\delta_0 + 1 - j)}$ for $1 \leq j \leq \delta'''_0$.  Using the Laplace expansion of $\det(g_J)$ and using recursion, we get easily that 
\[
\det(g_J) = (-1)^{\vert \{(i, j) \in J \mid i < 0\}\vert + \vert\{(0, j) \in J \mid \delta'''_0 < j\}\vert}
\]
If $g \in O(V)$ and $g(L_{J_+}) = L_J$, then $\det(g) = (-1)^{\vert \{(i, j) \in J \mid i < 0\}\vert + \vert\{(0, j) \in J \mid \delta'''_0 < j\}\vert}$, because 
\[
g(L_{J_+}) = L_J = g_J(L_{J_+})  \quad \Rightarrow \quad g^{-1}_Jg(L_{J_+}) =L_{J_+} \quad \text{ and } \quad g^{-1}_Jg \in SO(V)
\]
and $\det(g) = \det(g_J)$. 

We  complete the proof of (c).  If $L_J$ and $L_{J'}$ are in the same $SO(V)$-orbit, then they are both in the $SO(V)$-orbit of $L_{J_+}$ or both are not in the $SO(V)$-orbit of $L_{J_+}$. Thus 
\[
(-1)^{\vert \{(i, j) \in J \mid i < 0\}\vert + \vert\{(0, j) \in J \mid \delta'''_0 < j\}\vert} = (-1)^{\vert \{(i, j) \in J' \mid i < 0\}\vert + \vert\{(0, j) \in J' \mid \delta'''_0 < j\}\vert},  
\]
 \begin{multline*}
 \vert \{(i, j) \in J \mid i < 0\}\vert +  \vert\{(0, j) \in J \mid \delta'''_0 < j\}\vert  \equiv\\  \vert \{(i, j) \in J' \mid i < 0\}\vert +  \vert\{(0, j) \in J' \mid \delta'''_0 < j\}\vert \pmod 2.
 \end{multline*}  
 
 Conversely if 
 \begin{multline*}
 \vert \{(i, j) \in J \mid i < 0\}\vert +  \vert\{(0, j) \in J \mid \delta'''_0 < j\}\vert  \equiv\\  \vert \{(i, j) \in J' \mid i < 0\}\vert +  \vert\{(0, j) \in J' \mid \delta'''_0 < j\}\vert \pmod 2,
 \end{multline*}  
then we get that $L_J$ and $L_{J'}$ are both in the $SO(V)$-orbit of $L_{J_+}$ or both are not in the $SO(V)$-orbit of $L_{J_+}$.  Because there are exactly two $SO(V)$-orbits on the set of maximal isotropic subspaces of $V$, we get that $L_J$ and $L_{J'}$ are in the same $SO(V)$-orbit. 

\end{proof}

\begin{lemma}\label{L:SO(V)OrbitSplittingEvenConjugacyClass}
Using the notation of \ref{SS:IoddGGiotaConjugaisonTotallyEven},  of lemma~\ref{L:MaxIsotropicSubspaceOdd} and consider the ${\mathcal I}$-graded isomorphism $T_{\mathbf c}: V({\mathbf c}) \rightarrow V$ defined in \ref{SS:IsomorphismV(c)andVOddCase} such that $\langle T_{\mathbf c}(u), T_{\mathbf c}(v)\rangle = \langle u, v \rangle_{\mathbf c}$ for all $u, v \in V({\mathbf c})$.  Then both subspaces $T_{\mathbf c}({}'L({\mathbf c}))$ and $T_{\mathbf c}({}''L({\mathbf c}))$ of $V$ are maximal isotropic subspaces of $V$ and these subspaces are not in the same $SO(V)$-orbit. Consequently the $G^{\iota}$-orbits ${}'{\mathcal O}_{\mathbf c}$  and ${}''{\mathcal O}_{\mathbf c}$ are not in the same $G$-orbit. Here $G = SO(V)$. 
\end{lemma}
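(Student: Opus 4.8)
The strategy is to reduce the statement to the preceding lemmas by a chain of reductions already set up in the excerpt. First, I would recall that ${}'{\mathcal O}_{\mathbf c}$ is the $G^{\iota}$-orbit of $X_{\mathbf c} = T_{\mathbf c} E_{\mathbf c} T^{-1}_{\mathbf c}$ and ${}''{\mathcal O}_{\mathbf c}$ is the $G^{\iota}$-orbit of $X'_{\mathbf c} = T_{\mathbf c} E'_{\mathbf c} T^{-1}_{\mathbf c}$, where $E'_{\mathbf c} = S_{{\mathcal O}'} E_{\mathbf c} S^{-1}_{{\mathcal O}'}$. The key observation is that the subspace $X_{\mathbf c}[\ker(X^2_{\mathbf c})] + X^2_{\mathbf c}[\ker(X^4_{\mathbf c})] + \dots$ is a $G$-invariant construction (it depends only on the conjugacy class of $X_{\mathbf c}$ in $GL(V)$, hence is carried by any $g \in G$ taking $X_{\mathbf c}$ to $X'_{\mathbf c}$ to the corresponding subspace built from $X'_{\mathbf c}$). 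Applying $T_{\mathbf c}$ to the descriptions in lemma~\ref{L:MaxIsotropicSubspaceOdd}, I get that this subspace equals $T_{\mathbf c}({}'L({\mathbf c}))$ for $X_{\mathbf c}$ and $T_{\mathbf c}({}''L({\mathbf c}))$ for $X'_{\mathbf c}$; since $T_{\mathbf c}$ is an ${\mathcal I}$-graded isometry and ${}'L({\mathbf c}), {}''L({\mathbf c})$ are maximal isotropic in $V({\mathbf c})$, both images are maximal isotropic subspaces of $V$.

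\textbf{Main computation.} The heart of the argument is to show $T_{\mathbf c}({}'L({\mathbf c}))$ and $T_{\mathbf c}({}''L({\mathbf c}))$ are in different $SO(V)$-orbits, using lemma~\ref{L:ConditionSO(V)orbitOdd}. For this I would translate the two bases of lemma~\ref{L:MaxIsotropicSubspaceOdd}(b),(c) through $T_{\mathbf c}$ into index sets $J, J' \in {\mathcal J}$ exactly as in the proof of proposition~\ref{P:TotallyEvenOrbitSOIEvenCase}: writing $J_i(b)$ for the set of second coordinates $j$ with $u_{i,j} = T_{\mathbf c}(v_i^s(b))$ for some $s$, one sets $J({\mathbf c}) = \coprod_{b, c(b) \ne 0} \{(i,j) : i \in Supp^{top}(b),\ j \in J_i(b)\}$ and defines $J'({\mathbf c})$ by the same recipe but with the basis of ${}''L({\mathbf c})$, which by lemma~\ref{L:MaxIsotropicSubspaceOdd}(c) differs from the basis of ${}'L({\mathbf c})$ only by replacing $v_0^{c(b')}(\tau(b'))$ with $v_0^{c(b')}(b')$. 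By lemma~\ref{L:MaxIsotropicSubspaceOdd}(a), $0 \notin Supp^{top}(b')$ while $0 \in Supp^{top}(\tau(b'))$, so this single swap changes exactly one index $(0, j)$ from one with $j \le \delta'''_0$ (coming from $T_{{\mathbf c},0}(v_0^{c(b')}(b'))$, since $b'$ is above the principal diagonal) to one with $j > \delta'''_0$ (coming from $T_{{\mathbf c},0}(v_0^{c(b')}(\tau(b')))$, since $\tau(b')$ is below), or vice versa — here I must check carefully, against the definition of $T_{{\mathbf c},0}$ in \ref{SS:IsomorphismV(c)andVOddCase}, that the entries assigned to boxes above the principal diagonal lie in $\{1,\dots,\delta'''_0\}$ and those below lie in $\{\delta'''_0 + 1, \dots, \delta_0\}$. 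Hence the parity $\vert\{(i,j) \in J : i < 0\}\vert + \vert\{(0,j) \in J : \delta'''_0 < j\}\vert$ changes by exactly $1$ between $J$ and $J'$, so the two parities are incongruent mod $2$, and lemma~\ref{L:ConditionSO(V)orbitOdd}(c) gives that $L_J = T_{\mathbf c}({}'L({\mathbf c}))$ and $L_{J'} = T_{\mathbf c}({}''L({\mathbf c}))$ are in distinct $SO(V)$-orbits.

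\textbf{Conclusion and main obstacle.} Finally I would argue by contraposition: if ${}'{\mathcal O}_{\mathbf c}$ and ${}''{\mathcal O}_{\mathbf c}$ were in the same $G$-orbit, there would be $g \in SO(V)$ with $g X_{\mathbf c} g^{-1} = X'_{\mathbf c}$; conjugation by $g$ would then carry the subspace $\sum_k X^k_{\mathbf c}[\ker(X^{2k}_{\mathbf c})]$ to $\sum_k (X'_{\mathbf c})^k[\ker((X'_{\mathbf c})^{2k})]$, i.e.\ $g\bigl(T_{\mathbf c}({}'L({\mathbf c}))\bigr) = T_{\mathbf c}({}''L({\mathbf c}))$, contradicting that these two maximal isotropic subspaces lie in different $SO(V)$-orbits. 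The main obstacle I anticipate is the bookkeeping in the middle step: verifying precisely, from the rather intricate definition of $T_{{\mathbf c},0}$ (which treats boxes above, on, and below the principal diagonal differently and involves the filled Young diagrams ${\mathcal Y}_1, {\mathcal Y}_2$), that swapping $v_0^{c(b')}(b')$ for $v_0^{c(b')}(\tau(b'))$ toggles exactly one membership in $\{(0,j) : \delta'''_0 < j\}$ and leaves all $\{(i,j):i<0\}$ counts unchanged, so that the total parity flips by exactly $1$. Everything else is a direct invocation of lemmas~\ref{L:MaxIsotropicSubspaceOdd} and~\ref{L:ConditionSO(V)orbitOdd} together with the $GL(V)$-equivariance of the kernel-image filtration construction.
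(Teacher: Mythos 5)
Your proposal is correct and follows essentially the same route as the paper's proof: identify the two maximal isotropic subspaces with $L_{J({\mathbf c})}$ and $L_{{}'J({\mathbf c})}$ via $T_{\mathbf c}$, observe that the single swap $v_0^{c(b')}(\tau(b')) \leftrightarrow v_0^{c(b')}(b')$ replaces $(0,\delta_0+1-\widehat{j})$ with $(0,\widehat{j})$ where $\widehat{j}\in[1,\delta'''_0]$ (because $b'\in{\mathbf B}^+$), flip the parity by one, and invoke lemma~\ref{L:ConditionSO(V)orbitOdd}(c) together with the conjugation-equivariance of the kernel-image filtration. The bookkeeping step you flag is exactly the verification the paper carries out, and it goes through as you anticipate.
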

\begin{proof}
As we saw in lemma~\ref{L:MaxIsotropicSubspaceOdd}, both ${}'L({\mathbf c})$ and  ${}''L({\mathbf c})$ are maximal isotropic subspaces of $V({\mathbf c})$, we get easily that $T_{\mathbf c}({}'L({\mathbf c}))$ and $T_{\mathbf c}({}''L({\mathbf c}))$ of $V$ are maximal isotropic subspaces of $V$. 

Let $X_{\mathbf c} = T_{\mathbf c} E_{\mathbf c} T_{\mathbf c}^{-1}: V \rightarrow V$ and $X'_{\mathbf c} = T_{\mathbf c} S_{{\mathcal O}'} E_{\mathbf c} S^{-1}_{{\mathcal O}'}T_{\mathbf c}^{-1}: V \rightarrow V$ as in the proof of proposition~\ref{P:OrbitSplittingSO}  (c).  As we have shown ${}'{\mathcal O}_{\mathbf c}$ is the $G^{\iota}$-orbit of $X_{\mathbf c}$ and ${}''{\mathcal O}_{\mathbf c}$ is the $G^{\iota}$-orbit of $X'_{\mathbf c}$.  We get easily that $T_{\mathbf c}({}'L({\mathbf c}))$ is equal to the subspace 
\[
X_{\mathbf c}[\ker(X^2_{\mathbf c})] + X^2_{\mathbf c}[\ker(X^4_{\mathbf c})] + \dots + X^k_{\mathbf c}[\ker(X^{2k}_{\mathbf c})] + \dots
\]
and it is spanned by  
\[
\coprod_{\substack { b \in {\mathbf B} \\ c(b) \ne 0}} \{ T_{\mathbf c}(v_i^s(b)) \mid   i \in Supp^{top}(b) \text{ and  } 1 \leq s \leq c(b)\}.
\]

From our definition of $T_{\mathbf c}$, we get that this set is 
\[
\coprod_{\substack { b \in {\mathbf B} \\ c(b) \ne 0}} \{u_{i, j} \mid i \in Supp^{top}(b) \text{ and } j \in J_i(b) \}
\]
where $J_i(b)$ is equal to
\[
 \begin{cases} \{1 \leq j \leq \delta_i \mid u_{i, j} = T_{\mathbf c}(v_i^s(b)) \text{ for some $1 \leq s \leq c(b)$} \}, &\text{if $i \ne 0$;}\\ \\ \{1 \leq j \leq \delta'''_0 \mid  u_{0, j} = T_{\mathbf c}(v_0^s(b)) \text{ for some $1 \leq s \leq c(b)$} \}, &\text{if $i = 0$ and $b \in {\mathbf B}^+$;}\\ \\ \{(\delta'''_0 + 1 )\leq j \leq \delta_0 \mid  u_{0, j} = T_{\mathbf c}(v_0^s(b)) \text{ for some $1 \leq s \leq c(b)$} \}, &\text{if $i = 0$ and $b \in {\mathbf B}^-$.} \end{cases}
\]
For all $i \in {\mathcal I}$ and $b, b' \in {\mathbf B}$,   note that $\vert J_i(b) \vert = c(b)$ and if $b \ne b'$  are two distinct ${\mathcal I}$-boxes such that $i \in Supp(b) \cap Supp(b')$, then $J_i(b) \cap J_i(b') = \emptyset $. Because ${}'L({\mathbf c})$ is a maximal isotropic subspace of $V({\mathbf c})$, we get that 
\[
J({\mathbf c}) = \coprod_{\substack { b \in {\mathbf B} \\ c(b) \ne 0}} \{(i, j) \in {\mathcal I} \times {\mathbb N} \mid   i \in Supp^{top}(b) \text{ and }  j \in J_i(b)\} \in {\mathcal J}
\]
and $T_{\mathbf c}({}'L({\mathbf c})) = L_{J({\mathbf c})}$ with the notation of lemma~\ref{L:ConditionSO(V)orbitOdd}.

Similarly $T_{\mathbf c}({}''L({\mathbf c}))$ is equal to the subspace 
\[
X'_{\mathbf c}[\ker((X')^2_{\mathbf c})] + (X')^2_{\mathbf c}[\ker((X')^4_{\mathbf c})] + \dots + (X')^k_{\mathbf c}[\ker((X')^{2k}_{\mathbf c})] + \dots
\]
and it is spanned by  
\[
\coprod_{\substack { b \in {\mathbf B} \\ c(b) \ne 0}} \{ T_{\mathbf c}(S_{{\mathcal O}'}(v_i^s(b))) \mid   i \in Supp^{top}(b) \text{ and  } 1 \leq s \leq c(b)\}.
\]

Because of  lemma~\ref{L:MaxIsotropicSubspaceOdd} (c), we can be more precise about this last set.  It is equal to 
\begin{multline*}
\left[\left[\coprod_{\substack { b \in {\mathbf B} \\ c(b) \ne 0}} \{ T_{\mathbf c}(v_i^s(b)) \mid  i \in Supp^{top}(b) \text{ and  } 1 \leq s \leq c(b)\}\right]  \setminus  \{T_{\mathbf c}(v_0^{c(b')}(\tau(b'))) \} \right]\\ \cup \{T_{\mathbf c}(v_0^{c(b')}(b'))\}
\end{multline*}

There is a unique ${\widehat {j}} \in [1, \delta'''_0]$ such that  $T_{\mathbf c}(v_0^{c(b')}(b')) = u_{0, {\widehat j}}$, because of our definition of $T_{\mathbf c}$ and the fact that $b' \in {\mathbf B}^+$.  Consequently $T_{\mathbf c}(v_0^{c(b')}(\tau(b'))) = u_{0, (\delta_0 + 1 - {\widehat  j}\, )}$ again from our definition of $T_{\mathbf c}$.  

Because of lemma~\ref{L:MaxIsotropicSubspaceOdd} (a), we have that $(0, (\delta_0 + 1 - {\widehat j}\,  ) ) \in J({\mathbf c})$,  $(0,  {\widehat j}\,  ) \not \in J({\mathbf c})$ and  
\[
{}'J({\mathbf c}) = \left[\left[J({\mathbf c}) \setminus \{(0, (\delta_0 + 1 - {\widehat j}\,   ) )\}\right] \cup \{(0, {\widehat j}\,  )\}\right]\in {\mathcal J}.
\]
We also have that  $T_{\mathbf c}({}''L({\mathbf c})) = L_{{}'J({\mathbf c})}$ with the notation of lemma~\ref{L:ConditionSO(V)orbitOdd}. 

Clearly we have that 
\begin{multline*}
\vert \{(i, j) \in J({\mathbf c}) \mid i < 0\} \cup \{(0, j) \in J({\mathbf c}) \mid \delta'''_0 < j\} \vert \\  \not \equiv \vert \{(i, j) \in {}'J({\mathbf c}) \mid i < 0\} \cup \{(0, j) \in {}'J({\mathbf c}) \mid \delta'''_0 < j\} \vert  \pmod 2
\end{multline*}
and  we get that the maximal isotropic subspaces  $T_{\mathbf c}({}'L({\mathbf c}))$ and $T_{\mathbf c}({}''L({\mathbf c}))$ are not in the same $SO(V)$-orbit because of lemma~\ref{L:ConditionSO(V)orbitOdd} (c).  One consequence of this is that the $G^{\iota}$-orbits  ${}'{\mathcal O}_{\mathbf c}$  and ${}''{\mathcal O}_{\mathbf c}$ are not in the same $G$-orbit. 

\end{proof}

\begin{notation}
Let $\vert {\mathcal I} \vert$ be odd and  $G$ be the reductive group $SO(V)$ as in \ref{S:SetUpOddOrtho}. Consider the $G^{\iota}$-orbits   ${}'{\mathcal O}_{\mathbf c}$ and ${}''{\mathcal O}_{\mathbf c}$ as defined in proposition~\ref{P:OrbitSplittingSO} (c) and its proof,  whose partition corresponding to their Jordan type is the totally even partition $2^{a(2)} 4^{a(4)} 6^{a(6)} \dots$, where  ${\mathbf c}:{\mathbf B}/\tau \rightarrow {\mathbb N}$ is a coefficient function ${\mathbf c} \in {\mathfrak C}_{\delta}$. Denote by $c:{\mathbf B} \rightarrow {\mathbb N}$: the symmetric function associated respectively to ${\mathbf c}$.  We will denote 
\[
 {}'\varXi({}'{\mathcal O}_{\mathbf c}) =  \sum_{b \in {\mathbf B}} c(b) \vert \{i \in Supp^{top}(b) \mid i < 0\}\vert  + \sum_{\substack{ b \in {\mathbf B}^-\\ 0 \in Supp^{top}(b)}} c(b)
\]
and is called the {\it parity of the orbit} ${}'{\mathcal O}_{\mathbf c}$.  Recall that ${\mathbf B}^-$ is the set of ${\mathcal I}$-boxes strictly below the principal diagonal.  We will say that the $G^{\iota}$-orbit ${}'{\mathcal O}_{\mathbf c}$ is even (respectively odd) if ${}'\varXi({}'{\mathcal O}_{\mathbf c})$ is even (respectively odd).

We will denote  ${}''\varXi({}''{\mathcal O}_{\mathbf c}) = {}'\varXi({}'{\mathcal O}_{\mathbf c})  - 1$ and it is called the {\it parity of the orbit} ${}''{\mathcal O}_{\mathbf c}$.   Similarly we will say that the $G^{\iota}$-orbit ${}''{\mathcal O}_{\mathbf c}$ is even (respectively odd) if ${}''\varXi({}''{\mathcal O}_{\mathbf c})$ is even (respectively odd).

By lemma~\ref{L:SO(V)OrbitSplittingEvenConjugacyClass}, we get that the $G^{\iota}$-orbit ${}'{\mathcal O}_{\mathbf c}$ is even (respectively odd) if and only if ${}''{\mathcal O}_{\mathbf c}$ is odd (respectively even).  We will denote by ${\mathcal O}^{even}_{\mathbf c}$ (respectively ${\mathcal O}^{odd}_{\mathbf c}$): the only $G^{\iota}$-orbit between ${}'{\mathcal O}_{\mathbf c}$ and ${}''{\mathcal O}_{\mathbf c}$ that is even (respectively odd). 
\end{notation}

\begin{proposition}
Let $G$ be the reductive group $SO(V)$  as in \ref{S:SetUpOddOrtho} where $\vert {\mathcal I} \vert$ is odd and let ${\mathcal O}$ and ${\mathcal O}'$  be two $G^{\iota}$-orbits such that ${\mathcal O} \in \{ {\mathcal O}^{even}_{\mathbf c}, {\mathcal O}^{odd}_{\mathbf c} \}$ and   ${\mathcal O}' \in \{ {\mathcal O}^{even}_{{\mathbf c}'}, {\mathcal O}^{odd}_{{\mathbf c}'} \}$,  who both have the same totally even partition $2^{a(2)} 4^{a(4)} 6^{a(6)} \dots$ corresponding  to their Jordan type, where ${\mathbf c}:{\mathbf B}/\tau \rightarrow {\mathbb N}$ and ${\mathbf c}':{\mathbf B}/\tau \rightarrow {\mathbb N}$ are both coefficient functions ${\mathbf c}, {\mathbf c}' \in {\mathfrak C}_{\delta}$. Denote by $c:{\mathbf B} \rightarrow {\mathbb N}$ and $c':{\mathbf B} \rightarrow {\mathbb N}$: the symmetric functions associated respectively to ${\mathbf c}$ and ${\mathbf c}'$.  Then these two $G^{\iota}$-orbits  ${\mathcal O}$ and  ${\mathcal O}' $ are in the same $G$-orbit if and only if they have the same parity.  In other words,  ${\mathcal O}$ and  ${\mathcal O}' $ are in the same $G$-orbit if and only if  either (${\mathcal O} = {\mathcal O}^{even}_{\mathbf c}$ and ${\mathcal O}' = {\mathcal O}^{even}_{{\mathbf c}'}$) or (${\mathcal O} = {\mathcal O}^{odd}_{\mathbf c}$ and ${\mathcal O}' = {\mathcal O}^{odd}_{{\mathbf c}'}$).
\end{proposition}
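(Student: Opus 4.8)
The strategy mirrors the proof of Proposition~\ref{P:TotallyEvenOrbitSOIEvenCase}, but using the odd-case machinery of lemmas~\ref{L:MaxIsotropicSubspaceOdd}, \ref{L:ConditionSO(V)orbitOdd} and \ref{L:SO(V)OrbitSplittingEvenConjugacyClass} in place of their even-case analogues. The central idea is that to a nilpotent element $X$ in ${\mathfrak g}_2$ whose Jordan type is a totally even partition one can attach the canonical maximal isotropic subspace
\[
L(X) = X[\ker(X^2)] + X^2[\ker(X^4)] + \dots + X^k[\ker(X^{2k})] + \dots,
\]
and this assignment is $SO(V)$-equivariant: if $g \in SO(V)$ then $L(Ad(g)X) = g\,L(X)$. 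Since $X$ and $X'$ (chosen from the two orbits) have the same Jordan type and this partition is totally even, they are automatically $O(V)$-conjugate by \ref{SS:ConjClassJordanPart}; so they are $SO(V)$-conjugate if and only if the two maximal isotropic subspaces $L(X)$ and $L(X')$ lie in the same $SO(V)$-orbit on the variety of maximal isotropics (which has exactly two $SO(V)$-orbits).

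First I would pick explicit representatives: for ${\mathcal O}$ take the element $X_{\mathbf c}$ (if ${\mathcal O} = {}'{\mathcal O}_{\mathbf c} = {\mathcal O}^{\bullet}_{\mathbf c}$) or $X'_{\mathbf c} = T_{\mathbf c}S_{{\mathcal O}'}E_{\mathbf c}S_{{\mathcal O}'}^{-1}T_{\mathbf c}^{-1}$ (if ${\mathcal O} = {}''{\mathcal O}_{\mathbf c}$), and similarly for ${\mathcal O}'$. By lemma~\ref{L:SO(V)OrbitSplittingEvenConjugacyClass} and its proof, the subspace $T_{\mathbf c}({}'L({\mathbf c}))$ equals $L_{J({\mathbf c})}$ and $T_{\mathbf c}({}''L({\mathbf c})) = L_{{}'J({\mathbf c})}$ with the notation of lemma~\ref{L:ConditionSO(V)orbitOdd}, where $J({\mathbf c})$ is the explicit index set built from $Supp^{top}(b)$ and the bijections $J_i(b)$. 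Then I would compute the relevant parity
\[
\vert \{(i, j) \in J({\mathbf c}) \mid i < 0\} \cup \{(0, j) \in J({\mathbf c}) \mid \delta'''_0 < j\} \vert
\]
and check directly that it is congruent mod $2$ to ${}'\varXi({}'{\mathcal O}_{\mathbf c})$; indeed the contribution $\sum_{b} c(b)\vert\{i \in Supp^{top}(b) \mid i < 0\}\vert$ accounts for the $i<0$ part, and the extra term $\sum_{b \in {\mathbf B}^-,\, 0 \in Supp^{top}(b)} c(b)$ accounts for the indices $(0,j)$ with $j > \delta'''_0$ that arise precisely from ${\mathcal I}$-boxes $b$ below the principal diagonal whose $Supp^{top}$ contains $0$, by the definition of $T_{{\mathbf c},0}$ in \ref{SS:IsomorphismV(c)andVOddCase}. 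Passing from ${}'{\mathcal O}$ to ${}''{\mathcal O}$ flips exactly one index (the swap of $(0,\widehat j\,)$ and $(0,\delta_0+1-\widehat j\,)$), which is why ${}''\varXi = {}'\varXi - 1$; this is exactly lemma~\ref{L:SO(V)OrbitSplittingEvenConjugacyClass}.

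Once the correspondence ``parity of the orbit $=$ parity of the index set $J$ mod $2$'' is established, the conclusion is immediate from lemma~\ref{L:ConditionSO(V)orbitOdd}(c): two such orbits ${\mathcal O}$, ${\mathcal O}'$ are $SO(V)$-conjugate iff $L_{J({\mathbf c})}$ and $L_{J'({\mathbf c}')}$ are in the same $SO(V)$-orbit iff the two index-set parities agree iff the two orbit parities agree, i.e. both are ${\mathcal O}^{even}$ or both are ${\mathcal O}^{odd}$. For the reverse implication, if they have the same parity one builds the $SO(V)$-element explicitly as a composite of the maps $g_J$ from lemma~\ref{L:ConditionSO(V)orbitOdd} conjugating one canonical isotropic to the other, combined with the known $O(V)$-conjugacy, exactly as in the last paragraph of the proof of Proposition~\ref{P:TotallyEvenOrbitSOIEvenCase}. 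The main obstacle I anticipate is purely bookkeeping: carefully matching the combinatorial data $Supp^{top}(b)$, the sets $J_i(b)$ coming from the filled Young diagrams in \ref{SS:IsomorphismV(c)andVOddCase}, and in particular verifying that the ``$i=0$ below the diagonal'' contributions land in the range $j > \delta'''_0$ so that the extra summand in ${}'\varXi$ is precisely $\vert\{(0,j) \in J({\mathbf c}) \mid \delta'''_0 < j\}\vert$ mod $2$; everything else is a direct transcription of the even-case argument.
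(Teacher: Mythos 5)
Your proposal is correct and takes essentially the same route as the paper: the canonical maximal isotropic subspace $L(X)=X[\ker(X^2)]+X^2[\ker(X^4)]+\cdots$, the parity criterion of lemma~\ref{L:ConditionSO(V)orbitOdd}, the splitting lemma~\ref{L:SO(V)OrbitSplittingEvenConjugacyClass}, the identification of ${}'\varXi$ with the parity of the index set $J({\mathbf c})$, and $O(V)$-conjugacy coming from the common totally even Jordan type. One small caveat on wording: the converse is not literally obtained by composing an $O(V)$-conjugator with the maps $g_J$ (for a totally even nilpotent the $O(V)$-centralizer lies in $SO(V)$, so a determinant $-1$ conjugator cannot be corrected); instead, as in the last paragraph of the proof of proposition~\ref{P:TotallyEvenOrbitSOIEvenCase} that you cite, one argues by contradiction that a determinant $-1$ conjugator would put the two canonical isotropics in different $SO(V)$-families, contradicting equal parities, which is exactly the paper's argument.
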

\begin{proof}
The proof is very similar to the one for proposition~\ref{P:TotallyEvenOrbitSOIEvenCase}. We will use the same notation as in the lemma~\ref{L:SO(V)OrbitSplittingEvenConjugacyClass} and its proof.  Let $X_{\mathbf c} = T_{\mathbf c} E_{\mathbf c} T_{\mathbf c}^{-1}: V \rightarrow V$ and $X'_{\mathbf c} = T_{\mathbf c} S_{{\mathcal O}'} E_{\mathbf c} S^{-1}_{{\mathcal O}'}T_{\mathbf c}^{-1}: V \rightarrow V$ as in the proof of proposition~\ref{P:OrbitSplittingSO}  (c).  As we have shown ${}'{\mathcal O}_{\mathbf c}$ is the $G^{\iota}$-orbit of $X_{\mathbf c}$ and ${}''{\mathcal O}_{\mathbf c}$ is the $G^{\iota}$-orbit of $X'_{\mathbf c}$. 

As we saw in the proof of lemma~\ref{L:SO(V)OrbitSplittingEvenConjugacyClass}, 
\[
X_{\mathbf c}[\ker(X^2_{\mathbf c})] + X^2_{\mathbf c}[\ker(X^4_{\mathbf c})] + \dots + X^k_{\mathbf c}[\ker(X^{2k}_{\mathbf c})] + \dots
\]
is a maximal isotropic subspace of $V$ and it is equal to $L_{J({\mathbf c})}$ with the notation of lemma~\ref{L:ConditionSO(V)orbitOdd}, where 
\[
J({\mathbf c}) = \coprod_{\substack { b \in {\mathbf B} \\ c(b) \ne 0}} \{(i, j) \in {\mathcal I} \times {\mathbb N} \mid   i \in Supp^{top}(b) \text{ and }  j \in J_i(b)\} \in {\mathcal J}
\]
with $J_i(b)$ being equal to
\[
 \begin{cases} \{1 \leq j \leq \delta_i \mid u_{i, j} = T_{\mathbf c}(v_i^s(b)) \text{ for some $1 \leq s \leq c(b)$} \}, &\text{if $i \ne 0$;}\\ \\ \{1 \leq j \leq \delta'''_0 \mid  u_{0, j} = T_{\mathbf c}(v_0^s(b)) \text{ for some $1 \leq s \leq c(b)$} \}, &\text{if $i = 0$ and $b \in {\mathbf B}^+$;}\\ \\ \{(\delta'''_0 + 1 )\leq j \leq \delta_0 \mid  u_{0, j} = T_{\mathbf c}(v_0^s(b)) \text{ for some $1 \leq s \leq c(b)$} \}, &\text{if $i = 0$ and $b \in {\mathbf B}^-$.} \end{cases}
\]

Similarly 
\[
X'_{\mathbf c}[\ker((X')^2_{\mathbf c})] + (X')^2_{\mathbf c}[\ker((X')^4_{\mathbf c})] + \dots + (X')^k_{\mathbf c}[\ker((X')^{2k}_{\mathbf c})] + \dots
\]
is a maximal isotropic subspace of $V$ and it is equal to $L_{{}'J({\mathbf c})}$ with the notation of lemma~\ref{L:ConditionSO(V)orbitOdd}, where 
\[
{}'J({\mathbf c}) = \left[\left[J({\mathbf c}) \setminus \{(0, (\delta_0 + 1 - {\widehat j}\,   ) )\}\right] \cup \{(0, {\widehat j}\,  )\}\right]\in {\mathcal J}.
\]
such that 	${\widehat j}$ being the  unique integer ${\widehat {j}} \in [1, \delta'''_0]$ such that  $T_{\mathbf c}(v_0^{c(b')}(b')) = u_{0, {\widehat j}}$,

The same thing can done for ${\mathbf c}'$. Let $X_{{\mathbf c}'} = T_{{\mathbf c}'} E_{{\mathbf c}'} T_{{\mathbf c}'}^{-1}: V \rightarrow V$ and $X'_{{\mathbf c}'} = T_{{\mathbf c}'} S_{{\mathcal O}''} E_{{\mathbf c}'} S^{-1}_{{\mathcal O}''}T_{{\mathbf c}'}^{-1}: V \rightarrow V$ as in the proof of proposition~\ref{P:OrbitSplittingSO}  (c).  Note here ${\mathcal O}'' \in {\mathbf B}/\tau$ is defined in an  analoguous way to how  ${\mathcal  O}' \in {\mathbf B}/\tau$ was defined.  ${\mathcal O}''$  is not necessarily equal to ${\mathcal O}'$, because ${\mathbf c}'$ is not necessarily equal to  ${\mathbf c}$.   As we have shown ${}'{\mathcal O}_{{\mathbf c}'}$ is the $G^{\iota}$-orbit of $X_{{\mathbf c}'}$ and ${}''{\mathcal O}_{{\mathbf c}'}$ is the $G^{\iota}$-orbit of $X'_{{\mathbf c}'}$.  So we have $J({\mathbf c}')$ and  ${}'J({\mathbf c}')$. 

If $X_{\mathbf c}$ is in the same $G$-orbit with $X_{{\mathbf c}'}$ (respectively ${}'X_{{\mathbf c}'}$) , then  we have that  the parity of 
\[
\vert \{(i, j) \in J({\mathbf c}) \mid i < 0\} \cup \{(0, j) \in J({\mathbf c}) \mid \delta'''_0 < j\} \vert
\]
is equal to the parity of 
\[
\vert \{(i, j) \in J({{\mathbf c}'}) \mid i < 0\} \cup \{(0, j) \in J({{\mathbf c}'}) \mid \delta'''_0 < j\} \vert 
\]
(respectively 
\[
\vert \{(i, j) \in {}'J({{\mathbf c}'}) \mid i < 0\} \cup \{(0, j) \in J({{\mathbf c}'}) \mid \delta'''_0 < j\} \vert )
\] 
by lemma~\ref{L:ConditionSO(V)orbitOdd}.  Similarly ${}'X_{\mathbf c}$ is in the same $G$-orbit with $X_{{\mathbf c}'}$ (respectively ${}'X_{{\mathbf c}'}$) , then  we have that  the parity of 
\[
\vert \{(i, j) \in {}'J({\mathbf c}) \mid i < 0\} \cup \{(0, j) \in {}'J({\mathbf c}) \mid \delta'''_0 < j\} \vert
\]
is equal to the parity of 
\[
\vert \{(i, j) \in J({{\mathbf c}'}) \mid i < 0\} \cup \{(0, j) \in J({{\mathbf c}'}) \mid \delta'''_0 < j\} \vert 
\]
(respectively 
\[
\vert \{(i, j) \in {}'J({{\mathbf c}'}) \mid i < 0\} \cup \{(0, j) \in J({{\mathbf c}'}) \mid \delta'''_0 < j\} \vert )
\] 
Note that if  $X_{\mathbf c}$ is in the same $G$-orbit with $X_{{\mathbf c}'}$ , then ${}'X_{\mathbf c}$ is in the same $G$-orbit with ${}'X_{{\mathbf c}'}$, while  $X_{\mathbf c}$ is in the same $G$-orbit with ${}'X_{{\mathbf c}'}$ , then ${}'X_{\mathbf c}$ is in the same $G$-orbit with $X_{{\mathbf c}'}$ as a consequence of lemma~\ref{L:SO(V)OrbitSplittingEvenConjugacyClass}. Thus ${\mathcal O}^{even}_{\mathbf c}$  is in the same $G$-orbit as  ${\mathcal O}^{even}_{{\mathbf c}'}$ and ${\mathcal O}^{odd}_{\mathbf c}$  is in the same $G$-orbit as  ${\mathcal O}^{odd}_{{\mathbf c}'}$.

Reciprocally assume that ${\mathcal O} = {\mathcal O}^{even}_{\mathbf c}$,  ${\mathcal O}' = {\mathcal O}^{even}_{{\mathbf c}'}$ and  ${\mathcal O}$ and ${\mathcal O}'$ are not in the same $G$-orbit. Note that they are in the same $O(V)$-orbit, because they both have the same partition $2^{a(2)} 4^{a(4)} 6^{a(6)} \dots $ corresponding to their Jordan type.  ${\mathcal O}$ is either the $G^{\iota}$-orbit of $X_{\mathbf c}$ or the $G^{\iota}$-orbit of ${}'X_{\mathbf c}$, but only one of these two possibilities prevails. We will denote this unique element by $Y_{\mathbf c}$.  Similarly  ${\mathcal O}'$ is either the $G^{\iota}$-orbit of $X_{{\mathbf c}'}$ or the $G^{\iota}$-orbit of ${}'X_{{\mathbf c}'}$, but only one of these two possibilities prevails. We will denote this unique element by $Y_{{\mathbf c}'}$.  Because the two orbits ${\mathcal O}$ and ${\mathcal O}'$ are not in the same $SO(V)$-orbit, but are in the same $O(V)$-orbit,   then there exists an element $g \in O(V)$ such that $\det(g) = -1$ and  $Y_{{\mathbf c}'} = g(Y_{\mathbf c})g^{-1}$.

The maximal isotropic subspace 
\[
Y_{\mathbf c}[\ker(Y^2_{\mathbf c})] + Y^2_{\mathbf c}[\ker(Y^4_{\mathbf c})] + \dots + Y^k_{\mathbf c}[\ker(Y^{2k}_{\mathbf c})] + \dots
\]
is equal to either  $L_{J({\mathbf c})}$  if $Y_{\mathbf c} = X_{\mathbf c}$ or $L_{{}'J({\mathbf c})}$  if $Y_{\mathbf c} = X'_{\mathbf c}$.  We will denote this unique maximal isotropic subspace by $L(Y_{\mathbf c})$. 

Similarly  the maximal isotropic subspace 
\[
Y_{{\mathbf c}'}[\ker(Y^2_{{\mathbf c}'})] + Y^2_{{\mathbf c}'}[\ker(Y^4_{{\mathbf c}'})] + \dots + Y^k_{{\mathbf c}'}[\ker(Y^{2k}_{{\mathbf c}'})] + \dots
\]
is equal to either  $L_{J({{\mathbf c}'})}$  if $Y_{{\mathbf c}'} = X_{{\mathbf c}'}$ or $L_{{}'J({{\mathbf c}'})}$  if $Y_{{\mathbf c}'} = X'_{{\mathbf c}'}$. We will denote this unique maximal isotropic subspace by $L(Y_{{\mathbf c}'})$. 

Consequently we have that $L(Y_{{\mathbf c}'}) = g(L(Y_{\mathbf c}))$,  where $g \in O(V)$ and $\det(g) = -1$ and these maximal isotropic subspaces are not in the same $SO(V)$-orbit. Thus only one of these maximal isotropic subspaces is in the $SO(V)$-orbit of $L_{J^+}$.  If $L(Y_{\mathbf c})$ is in the same $SO(V)$-orbit as $L_{J^+}$ and $L(Y_{{\mathbf c}'})$ is not in the same $SO(V)$-orbit as $L_{J^+}$, then  the parity of 
\[
\vert \{(i, j) \in J({\mathbf c}) \mid i < 0\} \cup \{(0, j) \in J({\mathbf c}) \mid \delta'''_0 < j\} \vert
\]
is equal to the parity of 
\[
\vert \{(i, j) \in J^+ \mid i < 0\} \cup \{(0, j) \in J^+ \mid \delta'''_0 < j\} \vert  = 0
\]
if $Y_{\mathbf c} = X_{\mathbf c}$, i.e. it is even, while either 
\[
\vert \{(i, j) \in J({{\mathbf c}'}) \mid i < 0\} \cup \{(0, j) \in J({{\mathbf c}'}) \mid \delta'''_0 < j\} \vert
\]
is odd if $Y_{{\mathbf c}'} = X_{{\mathbf c}'}$ or  
\[
\vert \{(i, j) \in {}'J({{\mathbf c}'}) \mid i < 0\} \cup \{(0, j) \in {}'J({{\mathbf c}'}) \mid \delta'''_0 < j\} \vert
\]
is odd if $Y_{{\mathbf c}'} = X'_{{\mathbf c}'}$
and
\[
\vert \{(i, j) \in {}'J({\mathbf c}) \mid i < 0\} \cup \{(0, j) \in {}'J({\mathbf c}) \mid \delta'''_0 < j\} \vert
\]
is equal to the parity of 
\[
\vert \{(i, j) \in J^+ \mid i < 0\} \cup \{(0, j) \in J^+ \mid \delta'''_0 < j\} \vert  = 0
\]
if $Y_{\mathbf c} = X'_{\mathbf c}$, i.e. it is even, while either 
\[
\vert \{(i, j) \in J({{\mathbf c}'}) \mid i < 0\} \cup \{(0, j) \in J({{\mathbf c}'}) \mid \delta'''_0 < j\} \vert
\]
is odd if $Y_{{\mathbf c}'} = X_{{\mathbf c}'}$ or  
\[
\vert \{(i, j) \in {}'J({{\mathbf c}'}) \mid i < 0\} \cup \{(0, j) \in {}'J({{\mathbf c}'}) \mid \delta'''_0 < j\} \vert
\]
is odd if $Y_{{\mathbf c}'} = X'_{{\mathbf c}'}$. Thus ${\mathcal O}$ has even parity and ${\mathcal O}'$ has odd parity contradicting our hypothesis. 

We can proceed analogously if $L(Y_{\mathbf c})$ is not in the same $SO(V)$-orbit as $L_{J^+}$ and $L(Y_{{\mathbf c}'})$ is in the same $SO(V)$-orbit as $L_{J^+}$.  In this case, ${\mathcal O}$ has odd parity and ${\mathcal O}'$ has even parity contradicting our hypothesis. 
\end{proof}

\section{Parabolic and Levi subgroups associated to $G^{\iota}$-orbits  in ${\mathfrak g}_2$.}

\subsection{}
Let $m$, $G$, ${\mathfrak g}$, $V$, ${\mathcal I}$, $\oplus_{i \in {\mathcal I}} V_i$, $\delta = (\delta_i)_{i \in {\mathcal I}}$,  ${\mathcal B}$, $\iota$, ${\mathbf B}$ and ${\mathbf B}/\tau$ as in either \ref{S:SetUpAeven} or  \ref{S:SetUpOdd} or \ref{S:SetUpOddOrtho}. More precisely $G$ should be a connected  reductive group. Thus in the situation of  \ref{S:SetUpOdd}, we restrict ourself only to the group $Sp(V)$, while in the situation of  \ref{S:SetUpOddOrtho}, $G$ is  the group $SO(V)$. In the situation of \ref{S:SetUpAeven} , $G$ is already connected.

Using the isomorphism $T_{\mathbf c}:V({\mathbf c}) \rightarrow V$, we can transfer results about the orbit ${\mathcal O}_{\mathbf c}$ in the Lie algebra ${\mathfrak g}_2$ to the orbit of $E_{\mathbf c}$ in the Lie algebra 
\[
{\mathfrak g}(V({\mathbf c})) =  \{Y \in End(V({\mathbf c})) \mid \langle Y(u), v\rangle + \langle u, Y(v)\rangle = 0 \quad \text{ for all $u, v \in V({\mathbf c})$}\}.
\]
In this section and the next, we will describe everything in $V({\mathbf c})$ and in ${\mathfrak g}(V({\mathbf c}))$ rather than in $V$ and ${\mathfrak g}$. 

\subsection{} 
In definition~\ref{D:LieHomo}, we have recalled the definition of Lusztig in 5.2 of \cite{L1995} for a parabolic subalgebra ${\mathfrak p}_X$ of ${\mathfrak g}$ and a Levi subalgebra ${\mathfrak l}_X$ of ${\mathfrak p}_X$ associated to an element $X \in {\mathfrak g}_2$. Using the same notation as in definition~\ref{D:LieHomo}, we have that 
\[
{\mathfrak p}_X = \bigoplus_{\begin{subarray}{c} r, r' \in {\mathbb Z} \\ r' \leq r \end{subarray}} {}_r{\mathfrak g}_{r'} \quad \text{ and }  \quad {\mathfrak l}_X = \bigoplus_{\begin{subarray}{c} r, r' \in {\mathbb Z} \\ r' = r \end{subarray}} {}_r{\mathfrak g}_{r'}.
\]

Denote by $P_X$: the parabolic subgroup of $G$ which has ${\mathfrak p}_X$  as Lie algebra, and by $L_X$: the Levi subgroup of $P_X$ whose Lie algebra is ${\mathfrak l}_X$.

\subsection{}
For the coefficient function ${\mathbf c} \in {\mathfrak C}_{\delta}$, we have constructed for an $G^{\iota}$-orbit ${\mathcal O}_{\mathbf c}$ (respectively ${\  }'{\mathcal O}_{\mathbf c}$, ${\  }''{\mathcal O}_{\mathbf c}$)  in \ref{N:CCorrespondance}, \ref{N:CCorrespondanceOdd} and \ref{N:CCorrespondanceSpecialOrtho},  a Jacobson-Morozov  triple $(E_{\mathbf c}, H_{\mathbf c}, F_{\mathbf c})$ (respectively $({\ }'E_{\mathbf c}, {\ }'H_{\mathbf c}, {\ }'F_{\mathbf c})$, $({\ }''E_{\mathbf c}, {\ }''H_{\mathbf c}, {\ }''F_{\mathbf c})$).  In this section, we will describe the parabolic subgroup $P_E$  and its Levi subgroup $L_E$ for the element $E = E_{\mathbf c} \in {\mathcal O}_{\mathbf c}$ (respectively $E = {\ }'E_{\mathbf c} \in {\ }'{\mathcal O}_{\mathbf c}$, $E = {\ }''E_{\mathbf c} \in {\ }''{\mathcal O}_{\mathbf c}$).

\begin{notation}\label{N:Basis_Adapted_To_C}
Let ${\mathbf c} \in {\mathfrak C}_{\delta}$ and the associated symmetric function $c: {\mathbf B} \rightarrow {\mathbb N}$.
For each $i \in {\mathcal I}$, recall that $V_i({\mathbf c})$ has the basis 
\[
{\mathcal B}_i = \{ v_{i}^j(b) \in V_i({\mathbf c}) \mid b \in {\mathbf B}, i \in Supp(b), 1 \leq j \leq c(b) \}
\]
is the basis of $V_i({\mathbf c})$. We know how the elements of the  Jacobson-Morozov triple act on the elements of ${\mathcal B}_i$.  

Write 
\[
\Lambda({\mathbf c}) = \{\lambda(b) \in {\mathbb Z} \mid b \in {\mathbf B}, c(b) \ne 0\} = \{\lambda_1 < \lambda_2 < \dots < \lambda_{\ell}\}.
\]
Recall that 
\[
\lambda(b) = \frac{(i + j)}{2} \quad \text{ if $b = b(i, j, k)$ with $i, j \in {\mathcal I}$, $i \geq j$ and $0 \leq k \leq \mu(i, j)$.}
\]

By the symmetry of the function $c$, it is easy to see that $\lambda_s + \lambda_{\ell + 1 - s} = 0$ for all $1 \leq s \leq \ell$.

For $1 \leq r \leq \ell$, denote by $U_r$: the subspace of $V({\mathbf c})$ spanned by the set of basis vectors 
\[
\widehat{\mathcal B}_r = \{ v_i^j(b) \mid b \in {\mathbf B}, 1 \leq j \leq c(b), i \in Supp(b) \text{ and } \lambda(b) = \lambda_r\}
\]
This gives us a direct sum decomposition of $V({\mathbf c})$: $V({\mathbf c}) = U_1 \oplus U_2 \oplus \dots \oplus U_{\ell}$.
\end{notation}

\begin{proposition}\label{P:ParabolicLeviAssociated}
Let $X \in {\mathfrak g}$ and $E = E_{\bf c}$ or ${\  }'E_{\mathbf c}$.  Then 
\begin{enumerate}[\upshape (a)]
\item $X \in {\mathfrak l}_{E}$ if and only if $X(U_r) \subseteq U_r$ for all $r = 1, 2, \dots, \ell$.
\item $X \in {\mathfrak p}_{E}$ if and only if $X(U_1 \oplus U_2 \dots \oplus U_r) \subseteq (U_1 \oplus U_2 \oplus \dots \oplus U_r)$ for all  $r = 1, 2, \dots, \ell$.
\end{enumerate}
\end{proposition}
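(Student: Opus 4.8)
The key observation is the spectral interpretation of the grading $\mathfrak g = \bigoplus_r ({}_r\mathfrak g)$: by definition~\ref{D:LieHomo} and the derivative computation already recorded (e.g. in the proof of proposition~\ref{DimensionFormulaEven}), we have ${}_r\mathfrak g = \{X \in \mathfrak g \mid \mathrm{ad}(H)(X) = rX\}$, where $H = H_{\mathbf c}$ (resp. ${}'H_{\mathbf c}$). So $X \in \mathfrak p_E$ means $X$ lies in the sum of the nonnegative $\mathrm{ad}(H)$-eigenspaces inside the second $\iota'$-grading, intersected with $\mathfrak g$ itself; and $X \in \mathfrak l_E$ means $X \in \mathfrak g$ commutes with $H$ (the diagonal $r' = r$ condition on ${}_r\mathfrak g_{r'}$, when intersected over all pieces of the bigrading of $\mathfrak g$, amounts precisely to $\mathrm{ad}(H)(X) = 0$ while $X \in \mathfrak g$). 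Thus both statements reduce to translating the $\mathrm{ad}(H_{\mathbf c})$-eigenspace condition into a statement about the subspaces $U_r$. Note here that $\mathfrak p_E$ and $\mathfrak l_E$ as defined require only knowing $H$ and $X \in \mathfrak g$; the grading by ${\mathfrak g}_m$ plays no role, so there is nothing to check beyond the $H$-action.

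The core of the argument is the following: on each basis vector $v_i^j(b)$ one has $H_{\mathbf c}(v_i^j(b)) = h_i(b)\, v_i^j(b)$, and by the lemma preceding notation~\ref{N:LambdaValue}, $h_i(b) = i - \lambda(b)$ for all $i \in Supp(b)$. Hence $H_{\mathbf c}$ acts on $V({\mathbf c})$ with eigenvalue $i - \lambda_r$ on the part of $U_r$ sitting in degree $i$. First I would make precise how $U_r$ and the degree decomposition interact: $U_r = \bigoplus_{i \in {\mathcal I}} (U_r \cap V_i({\mathbf c}))$, and $U_r \cap V_i({\mathbf c})$ is spanned by those $v_i^j(b)$ with $\lambda(b) = \lambda_r$ and $i \in Supp(b)$. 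For a linear transformation $X$ of degree $d$ — which suffices to consider because $\mathfrak g = \bigoplus_m {\mathfrak g}_m$ and one may test the conditions on homogeneous components — $X$ sends $U_r \cap V_i({\mathbf c})$ into $V_{i+d}({\mathbf c})$, and $\mathrm{ad}(H_{\mathbf c})(X)$ acts on the "block" $U_r \cap V_i({\mathbf c}) \to U_{r'} \cap V_{i+d}({\mathbf c})$ of $X$ by the scalar $(i + d - \lambda_{r'}) - (i - \lambda_r) = d + \lambda_r - \lambda_{r'}$. So the $\mathrm{ad}(H_{\mathbf c})$-eigenvalue of a block of $X$ mapping $U_r$-part to $U_{r'}$-part depends only on $d$ and on $\lambda_r - \lambda_{r'}$, not on $i$ — this is the decisive simplification.

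With that in hand: $X \in {\mathfrak l}_E$ iff $\mathrm{ad}(H_{\mathbf c})(X) = 0$ iff every block of $X$ mapping a $U_r$-part to a $U_{r'}$-part has $d + \lambda_r - \lambda_{r'} = 0$; I would then show this is equivalent to $X(U_r) \subseteq U_r$ for all $r$. The forward direction: if $X$ preserves the $\Lambda$-grading after the degree shift, then for a block $U_r \cap V_i \to U_{r'} \cap V_{i+d}$ to be nonzero we need $i + d$ to have $\lambda$-label consistent with $r$, and I would argue that the only way to stay inside the same $U$-family while shifting degree by $d$ forces $r' = r$ — wait, more carefully: $X(U_r) \subseteq U_r$ literally says the image of $U_r$ lies in $U_r$, i.e. $r' = r$ for every nonzero block out of $U_r$; combined with the degree-$d$ shift this gives $d + \lambda_r - \lambda_r = d$, and one needs $d = 0$ too? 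No — here I must be careful: $X(U_r)\subseteq U_r$ with $X$ homogeneous of degree $d$ is compatible with any $d$, since $U_r$ itself spans several degrees. The cleanest route is to \emph{not} assume $X$ homogeneous but instead argue directly on matrix blocks indexed by pairs $(r, r')$ (summing over $i$): $\mathrm{ad}(H_{\mathbf c})$ is not scalar on such a block unless we also fix $d$, so I would decompose $X = \sum_d X^{(d)}$ into homogeneous degree components, observe each $X^{(d)} \in \mathfrak g$ is forced only when... Actually $\mathfrak g$ is a graded Lie algebra so $X \in \mathfrak g$ iff each $X^{(d)} \in {\mathfrak g}_d$, and ${}_r\mathfrak g$ is also a sum over $d$; so it suffices to treat each $X^{(d)}$. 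For $X^{(d)}$: it lies in $\mathfrak l_E$ iff $d + \lambda_r - \lambda_{r'} = 0$ on every nonzero block, and since the $\lambda_r$ are strictly increasing this pins down $r'$ as a function of $r$ and $d$; I would then check that "$X^{(d)}(U_r) \subseteq U_r$ for all $r$" together with $X^{(d)}$ ranging over all $d$ is equivalent to the full condition, because $X(U_r) \subseteq U_r$ summed over the homogeneous pieces says each $X^{(d)}(U_r)\subseteq U_r$, forcing $r' = r$ hence $d = \lambda_{r'} - \lambda_r = 0$ on that block — so in fact $X \in \mathfrak l_E$ iff $X$ is homogeneous of degree... no. Let me just say: the main obstacle, and the step I would spend the most care on, is getting this bookkeeping exactly right — reconciling the degree grading, the $\Lambda$-grading by the $U_r$, and the $\mathrm{ad}(H_{\mathbf c})$-eigenvalue, and showing the three formulations match. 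Once the eigenvalue formula $d + \lambda_r - \lambda_{r'}$ is established, part (a) and part (b) both follow: for (b), $X \in \mathfrak p_E$ iff every nonzero block has $d + \lambda_r - \lambda_{r'} \geq 0$, i.e. $\lambda_{r'} \leq \lambda_r + d$; I would show (again decomposing into degree-homogeneous pieces and using that $U_r$ mixes degrees) that this is equivalent to $X(U_1 \oplus \cdots \oplus U_r) \subseteq U_1 \oplus \cdots \oplus U_r$ for all $r$ — the point being that $X$ respects the decreasing filtration $U_1 \oplus \cdots \oplus U_r$ iff no block goes from a low-$\lambda$ summand to a strictly higher-$\lambda$ summand by more than the degree shift allows, which is exactly the eigenvalue-$\geq 0$ condition. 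The case $E = {}''E_{\mathbf c}$ follows from the $E = {}'E_{\mathbf c}$ case by conjugating everything by $S_{{\mathcal O}'}$, but the proposition as stated only covers $E = E_{\mathbf c}$ or ${}'E_{\mathbf c}$, so I would simply note that $S_{{\mathcal O}'}$ permutes the $U_r$ (it only swaps $v_0^{c(b')}(b')$ with $v_0^{c(b')}(\tau(b'))$, and these have opposite $\lambda$-values unless both are $0$) — actually this needs a remark, but it is outside the stated claim. I would conclude by recording that since the parabolic $P_E$ and Levi $L_E$ are the connected subgroups with Lie algebras $\mathfrak p_E$, $\mathfrak l_E$, the description transfers to the group level in the obvious way.
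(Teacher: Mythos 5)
There is a genuine gap, and it is exactly the point where you stall. You translate Lusztig's definitions incorrectly: you take $X \in {\mathfrak l}_{E}$ to mean $\mathrm{ad}(H_{\mathbf c})(X) = 0$ and $X \in {\mathfrak p}_{E}$ to mean that $X$ lies in the sum of the nonnegative $\mathrm{ad}(H_{\mathbf c})$-eigenspaces. That is not what ${\mathfrak l}_E = \bigoplus_{r} {}_r{\mathfrak g}_{r}$ and ${\mathfrak p}_E = \bigoplus_{r' \leq r} {}_r{\mathfrak g}_{r'}$ say: the condition compares the $\mathrm{ad}(H_{\mathbf c})$-eigenvalue $r$ with the ${\mathcal I}$-degree $r'$ (the $\iota$-weight), not with $0$. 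Concretely, a homogeneous component of ${\mathcal I}$-degree $d$ and $\mathrm{ad}(H_{\mathbf c})$-eigenvalue $e$ lies in ${\mathfrak p}_E$ iff $d \leq e$ and in ${\mathfrak l}_E$ iff $d = e$; your versions ($e = 0$, resp.\ $e \geq 0$) would, for instance, exclude $E_{\mathbf c}$ itself from ${\mathfrak l}_E$, since $E_{\mathbf c} \in {}_2{\mathfrak g}_2$ has $e = d = 2 \neq 0$. This mistranslation is precisely what produces the contradiction you notice mid-argument (``one needs $d = 0$ too?'') and then leave unresolved, so the decisive bookkeeping step of the proof is missing as written.

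The fix is short and rescues your own block computation. Your eigenvalue formula is correct: a degree-$d$ block from the $U_r$-part to the $U_{r'}$-part has $\mathrm{ad}(H_{\mathbf c})$-eigenvalue $e = d + \lambda_r - \lambda_{r'}$, using $h_i(b) = i - \lambda(b)$. With the correct membership criteria, $d = e$ becomes $\lambda_{r'} = \lambda_r$, i.e.\ $r' = r$, and $d \leq e$ becomes $\lambda_{r'} \leq \lambda_r$, i.e.\ $r' \leq r$; the degree $d$ cancels, which is exactly the resolution of your ``obstacle.'' Since each $U_r$ is spanned by ${\mathcal I}$-homogeneous basis vectors, both conditions in the proposition may indeed be tested on the degree-homogeneous components of $X$, and the equivalences in (a) and (b) then follow. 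This corrected argument is essentially the paper's proof, which phrases the same comparison via the two cocharacters $\iota$ and $\iota'$: for a nonzero matrix coefficient of $X$ from $v_i^j(b)$ to $v_{i'}^{j'}(b')$ one gets the weights $i' - i$ and $h_{i'}(b') - h_i(b)$, and membership in ${\mathfrak l}_E$ (resp.\ ${\mathfrak p}_E$) forces $i' - h_{i'}(b') = i - h_i(b)$ (resp.\ $\leq$), i.e.\ $\lambda(b') = \lambda(b)$ (resp.\ $\lambda(b') \leq \lambda(b)$).
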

\begin{proof}
(a) $\Rightarrow)$ Let $X \in {\mathfrak l}_E$ and consider the basis vector $v_i^j(b)$ with $b \in {\mathbf B}$, $1 \leq j \leq c(b)$, $i \in Supp(b)$ and $\lambda(b) = \lambda_r$. We want to prove that $X(v_i^j(b)) \in U_r$. We have that $X = \sum_s X_s$ with $X_s \in {}_s{\mathfrak g}_s$. It is enough to prove the result for $X = X_s$. 

Write 
\[
X(v_i^j(b)) = \sum_{(b', j', i')}  \xi_{(b', j', i')} v_{i'}^{j'}(b')
\]
where  $\xi_{(b', j', i')} \in {\mathbf k}$ for all $(b', j', i')$ such that  $b' \in {\mathbf B}$, $1 \leq j' \leq c(b')$ and $i' \in Supp(b')$ and  the sum runs  over these $(b', j', i')$.  If $\xi_{(b', j', i')} \ne 0$, we want to prove that $\lambda(b') = \lambda_r$.

Because $X = X_s$, we have $Ad(\iota(t))X = t^s X$ and  $Ad(\iota'(t))X = t^s X$ for all $t \in {\mathbf k}^{\times}$. Recall that $\iota':{\mathbf k}^{\times} \rightarrow G$ is defined  in definition~\ref{D:LieHomo} and in the proof of proposition~\ref{DimensionFormulaSymplecticOdd}.  We have

\[
\begin{aligned}
Ad(\iota(t))X(v_i^j(b)) &= \iota(t) X( \iota(t^{-1})(v_i^j(b))) = \iota(t) X(t^{-i} v_i^j(b))\\
&= t^{-i} \iota(t) \left( \sum_{(b', j', i')}  \xi_{(b', j', i')} v_{i'}^{j'}(b')\right) \\
&=  \sum_{(b', j', i')}  t^{(i' - i)} \xi_{(b', j', i')} v_{i'}^{j'}(b')
\end{aligned}
\]

and 

\[
\begin{aligned}
Ad(\iota'(t))X(v_i^j(b)) &= \iota'(t) X( \iota'(t^{-1})(v_i^j(b))) = \iota'(t) X(t^{-h_i(b)} v_i^j(b)) \\
&= t^{-h_i(b)} \iota'(t) \left( \sum_{(b', j', i')}  \xi_{(b', j', i')} v_{i'}^{j'}(b')\right) \\
&=  \sum_{(b', j', i')}  t^{(h_{i'}(b') - h_i(b))} \xi_{(b', j', i')} v_{i'}^{j'}(b')
\end{aligned}
\]
where both sums  run over $(b', j', i')$ as above.

Because $Ad(\iota'(t))X = Ad(\iota(t)) X = t^sX$ for all $t \in {\mathbf k}^{\times}$, we get that 
\[
t^{(i' - i)} \xi_{(b', j', i')} = t^{(h_{i'}(b') - h_i(b))} \xi_{(b', j', i')} 
\]
for all $t \in {\mathbf k}^{\times}$ and all $(b', j', i')$ where $b' \in {\mathbf B}$, $1 \leq j' \leq c(b')$ and $i' \in Supp(b')$ . If $\xi_{(b', j', i')} \ne 0$, then $t^{(i' - i)}  = t^{(h_{i'}(b') - h_i(b))}$ for all $t \in {\mathbf k}^{\times}$ and consequently $i' - h_{i'}(b') = i - h_i(b)$. Thus $\lambda(b') = \lambda(b) = \lambda_r$ and $X(U_r) \subseteq U_r$.

$\Leftarrow)$ If $X \in {\mathfrak g}$ is such that $X(U_r) \subseteq U_r$ for all $r = 1, 2, \dots, \ell$, we must show that $X \in {\mathfrak l}_E$. We will first prove that $Ad(\iota(t)) X = Ad(\iota'(t))X$ for all $t \in {\mathbf k}^{\times}$ by computing both sides on the basis vector  $v_i^j(b)$ with $b \in {\mathbf B}$, $1 \leq j \leq c(b)$, $i \in Supp(b)$ and $\lambda(b) = \lambda_r$.

Write 
\[
X(v_i^j(b)) = \sum_{(b', j', i')}  \xi_{(b', j', i')} v_{i'}^{j'}(b')
\]
where $\xi_{(b', j', i')} \in {\mathbf k}$ for all $(b', j', i')$ such that  $b' \in {\mathbf B}$, $1 \leq j' \leq c(b')$ and $i' \in Supp(b')$  and the sum runs over these $(b', j', i')$ and $\lambda(b') = \lambda(b) = \lambda_r$. 

\[
\begin{aligned}
Ad(\iota(t))X(v_i^j(b)) &= \iota(t) X \iota(t^{-1})(v_i^j(b)) = \iota(t) X(t^{-i} v_i^j(b)) \\
&= t^{-i} \iota(t) \left( \sum_{(b', j', i')}  \xi_{(b', j', i')} v_{i'}^{j'}(b')\right) \\
&=  \sum_{(b', j', i')}  t^{(i' - i)} \xi_{(b', j', i')} v_{i'}^{j'}(b')
\end{aligned}
\]

and 

\[
\begin{aligned}
Ad(\iota'(t))X(v_i^j(b)) &= \iota'(t) X \iota'(t^{-1})(v_i^j(b)) = \iota'(t) X(t^{-h_i(b)} v_i^j(b)) \\
&= t^{-h_i(b)} \iota'(t) \left( \sum_{(b', j', i')}  \xi_{(b', j', i')} v_{i'}^{j'}(b')\right) \\
&=  \sum_{(b', j', i')}  t^{(h_{i'}(b') - h_i(b))} \xi_{(b', j', i')} v_{i'}^{j'}(b')
\end{aligned}
\]
where both sums run over $b' \in {\mathbf B}$, $1 \leq j' \leq c(b')$ ,  $i' \in Supp(b')$ and $\lambda(b') = \lambda(b) = \lambda_r$. 
Thus if $\xi_{(b', j', i')} \ne 0$ in the above sums, we get that $(i' - h_{i'}(b')) = (i - h_i(b))= \lambda_r$ and $(i' - i) = (h_{i'}(b') - h_i(b))$. Thus we have $Ad(\iota(t))X(v_i^j(b)) = Ad(\iota'(t))X(v_i^j(b)) $ for all basis vectors $v_i^j(b)$.  So $Ad(\iota(t)) X = Ad(\iota'(t))X$ for all $t \in {\mathbf k}^{\times}$.

We will now prove that $X \in {\mathfrak l}_E$. Write 
\[
X = \sum_{s, s'} {}_sX_{s'} \   \in \  \bigoplus_{s, s'} {}_s{\mathfrak g}_{s'}
\]
where ${}_sX_{s'} \in {}_s{\mathfrak g}_{s'}$ and the sum is over $s, s' \in {\mathbb Z}$ and there is a finite number of summands.  Then 
\[
Ad(\iota(t))X = \sum_{s, s'} Ad(\iota(t)) {}_sX_{s'} = \sum_{s, s'} t^{s'} {}_sX_{s'}
\]
and 
\[
Ad(\iota'(t))X = \sum_{s, s'} Ad(\iota'(t)) {}_sX_{s'} = \sum_{s, s'} t^{s} {}_sX_{s'}.
\]
By the above equality, we get that $t^{s'} = t^s$ for all $t \in {\mathbf k}^{\times}$ and $s' = s$ when ${}_sX_{s'} \ne 0$. Thus $X \in {\mathfrak l}_E$.

(b) $\Rightarrow)$ Let $X \in {\mathfrak p}_E$  and  the basis vector $v_i^j(b)$ with $b \in {\mathbf B}$, $1 \leq j \leq c(b)$, $i \in Supp(b)$ and $\lambda(b) = \lambda_r$, we want to prove that $X(v_i^j(b)) \in U_1 \oplus U_2 \oplus \dots  \oplus U_r$ for all $r = 1, 2, \dots, \ell$. We have 
\[
X = \sum_{\begin{subarray}{c} s, s'\\ s' \leq s\end{subarray}} {}_sX_{s'}  \   \in \  \bigoplus_{\begin{subarray}{c} s, s'\\ s' \leq s\end{subarray} } {}_s{\mathfrak g}_{s'}
\]
 with  ${}_sX_{s'} \in {}_s{\mathfrak g}_{s'}$. It is enough to prove the result for $X = {}_sX_{s'}$.
 
Write 
\[
X(v_i^j(b)) = \sum_{(b', j', i')}  \xi_{(b', j', i')} v_{i'}^{j'}(b')
\]
where $\xi_{(b', j', i')} \in {\mathbf k}$ for all $(b', j', i')$ such that  $b' \in {\mathbf B}$, $1 \leq j' \leq c(b')$ and $i' \in Supp(b')$  and the sum runs over these $(b', j', i')$.   If $\xi_{(b', j', i')} \ne 0$, we want to show that   $\lambda(b')  \leq \lambda_r$. 

Because $X = {}_sX_{s'}$ with $s' \leq s$, then $Ad(\iota(t))X = t^{s'} X$ and $Ad(\iota'(t))X = t^s X$ for all $t \in {\mathbf k}^{\times}$, with $s' \leq s$.  Now 
\[
\begin{aligned}
Ad(\iota(t))X(v_i^j(b)) &= \iota(t) X \iota(t^{-1})(v_i^j(b)) = \iota(t) X(t^{-i} v_i^j(b)) \\
&= t^{-i} \iota(t) \left( \sum_{(b', j', i')}  \xi_{(b', j', i')} v_{i'}^{j'}(b')\right) \\
&=  \sum_{(b', j', i')}  t^{(i' - i)} \xi_{(b', j', i')} v_{i'}^{j'}(b') =  \sum_{(b', j', i')}  t^{s'} \xi_{(b', j', i')} v_{i'}^{j'}(b')
\end{aligned}
\]

and 

\[
\begin{aligned}
Ad(\iota'(t))X(v_i^j(b)) &= \iota'(t) X \iota'(t^{-1})(v_i^j(b)) = \iota'(t) X(t^{-h_i(b)} v_i^j(b)) \\
&= t^{-h_i(b)} \iota'(t) \left( \sum_{(b', j', i')}  \xi_{(b', j', i')} v_{i'}^{j'}(b')\right) \\
&=  \sum_{(b', j', i')}  t^{(h_{i'}(b') - h_i(b))} \xi_{(b', j', i')} v_{i'}^{j'}(b') =  \sum_{(b', j', i')}  t^{s} \xi_{(b', j', i')} v_{i'}^{j'}(b')
\end{aligned}
\]
where both sums run over $b' \in {\mathbf B}$, $1 \leq j' \leq c(b')$ and  $i' \in Supp(b')$.

If $\xi_{(b', j', i')} \ne 0$ in the above equations, then $(i' - i) = s'$ in the first sum and $(h_{i'}(b') - h_{i}(b)) = s$ in the second sum. Because $s' \leq s$, we get that $(i' - i)\leq (h_{i'}(b') - h_i(b))$. Thus  $\lambda(b') = (i' - h_{i'}(b')) \leq (i - h_i(b)) = \lambda(b) = \lambda_r$ and we get easily that $X(U_1 \oplus U_2 \oplus \dots U_r) \subseteq (U_1 \oplus U_2 \oplus \dots \oplus U_r)$ for $r = 1, 2, \dots, \ell$.

$\Leftarrow)$  If $X \in {\mathfrak g}$ is such that $X(U_1 \oplus U_2 \oplus \dots \oplus U_r) \subseteq (U_1 \oplus U_2 \oplus \dots \oplus U_r)$ for all $r = 1, 2, \dots, \ell$, we must show that $X \in {\mathfrak p}_E$.  We can assume that $X \ne 0$.
If we write 
\[
X = \sum_{s, s'} {}_sX_{s'} \   \in \  \bigoplus_{s, s'} {}_s{\mathfrak g}_{s'}
\]
where the sum is finite and the summands are such that  ${}_sX_{s'} \in {}_s{\mathfrak g}_{s'}$ and ${}_sX_{s'} \ne 0$, then we must prove that $s' \leq s$.  

Note we have that 
\[
Ad(\iota(t))X = \sum_{s, s'} t^{s'} {}_sX_{s'} \quad \text{ and } \quad Ad(\iota'(t))X = \sum_{s, s'} t^{s} {}_sX_{s'}  
\]
for all $t \in {\mathbf k}^{\times}$.

Let $b \in {\mathbf B}$, $1 \leq j \leq c(b)$, $i \in Supp(b)$,  $\lambda(b) = \lambda_r$ for some $r = 1, 2, \dots, \ell$ such that $X(v_i^j(b)) \ne 0$. Such a basis vector exists because $X \ne 0$.  Write
\[
X(v_i^j(b)) = \sum_{(b', j', i')}  \xi_{(b', j', i')} v_{i'}^{j'}(b')
\]
where $\xi_{(b', j', i')} \in {\mathbf k}$ for all $(b', j', i')$ such that  $b' \in {\mathbf B}$, $1 \leq j' \leq c(b')$,  $i' \in Supp(b')$, $\lambda(b') \leq \lambda_r$  and the sum runs over these $(b', j', i')$, by our hypothesis.  

Write 
\[
{}_sX_{s'}(v_i^j(b)) = \sum_{(b', j', i')} \chi_{(b', j', i')}^{(s, s')} v_{i'}^{j'}(b')
\]
where  $\chi_{(b', j', i')}^{(s, s')} \in {\mathbf k}$ for all $(b', j', i')$ such that  $b' \in {\mathbf B}$, $1 \leq j' \leq c(b')$,  $i' \in Supp(b')$ and  the sum runs over these $(b', j', i')$.

Note that 
\[
\begin{aligned}
Ad(\iota(t))X(v_i^j(b)) &= \iota(t) X(t^{-i} v_i^j(b)) = \sum_{(b', j', i')} t^{(i' - i)}  \xi_{(b', j', i')} v_{i'}^{j'}(b') \\
&= \sum_{s, s'} t^{s'} {}_sX_{s'}(v_i^j(b)) = \sum_{(b', j', i')}\left[\sum_{s'} t^{s'} \left(\sum_s \chi_{(b', j', i')}^{(s, s')}\right) \right] v_{i'}^{j'}(b')
\end{aligned}
\]
and
\[
\begin{aligned}
Ad(\iota'(t))X(v_i^j(b)) &= \iota'(t) X(t^{-h_i(b)} v_i^j(b)) = \sum_{(b', j', i')} t^{(h_{i'}(b') - h_i(b))}  \xi_{(b', j', i')} v_{i'}^{j'}(b') \\
&= \sum_{s, s'} t^{s} {}_sX_{s'}(v_i^j(b)) = \sum_{(b', j', i')}\left[\sum_{s} t^{s} \left(\sum_{s'} \chi_{(b', j', i')}^{(s, s')}\right) \right] v_{i'}^{j'}(b')
\end{aligned}
\]
where  both sums run over $b' \in {\mathbf B}$, $1 \leq j' \leq c(b')$,   $i' \in Supp(b')$ and $\lambda(b') \leq \lambda(b) = \lambda_r$. 

Because $X(v_i^j(b)) \ne 0$, there exists $b'' \in {\mathbf B}$, $1 \leq j'' \leq c(b'')$,   $i'' \in Supp(b'')$ and $\lambda(b'') \leq \lambda(b) = \lambda_r$ such that $\xi_{(b'', j'', i'')} \ne 0$. If we  consider the coefficient of $v_{i''}^{j''}(b'')$ in the above expressions, we get that 
\[
t^{(i'' - i)}  \xi_{(b'', j'', i'')} = t^{s'} \left(\sum_s \chi_{(b'', j'', i'')}^{(s, s')}\right)  \text{ and }  t^{(h_{i''}(b'') - h_i(b))}  \xi_{(b'', j'', i'')} =  t^{s} \left(\sum_{s'} \chi_{(b'', j'', i'')}^{(s, s')}\right).
\]
Because $\xi_{(b'', j'', i'')} \ne 0$,  then we have $s' = (i'' - i)$ and $s = (h_{i''}(b'') - h_i(b))$. Now we have that $\lambda(b'') = (i'' - h_{i''}(b'')) \leq (i - h_i(b)) = \lambda(b)$. So $s' = (i'' - i) \leq (h_{i''}(b'') - h_i(b)) = s$.  This concludes the proof of (b).
 
\end{proof}

\subsection{}
We need to consider also the nilpotent element $E = {}''E_{\mathbf c}$ for the $G^{\iota}$-orbit ${}''{\mathcal O}_{\mathbf c}$ as described in notation~\ref{N:CCorrespondanceSpecialOrtho} for the  case (c) of proposition~\ref{P:OrbitSplittingSO}. From our hypothesis in case (c), we get that $0 \not\in \Lambda({\mathbf c})$. Let ${\mathcal O}'$ be the $\langle \tau \rangle$-orbit  defined in the proof of the proposition~\ref{P:OrbitSplittingSO}. We can write ${\mathcal O}' = \{b_1, b_2\}$ with $\tau(b_1) = b_2$ and $\lambda(b_1) < \lambda(b_2)$. This last inequality follows because both $\lambda(b_1)$ and $\lambda(b_2)$ are not zero. Write $\lambda(b_1) = \lambda_{r'}$ and thus $\lambda(b_2) = \lambda_{\ell + 1 - r'}$.

For $1 \leq r \leq \ell$, denote by ${}''U_r$: the subspace of $V$ spanned by the subset ${}''{\widehat {\mathcal B}}_r$, where 
\begin{itemize}
\item if $r \ne r', (\ell + 1 - r')$, then 
\[
{}''{\widehat {\mathcal B}}_r = \{v_i^j(b) \mid b \in {\mathbf B}, \lambda(b) = \lambda_r, i \in Supp(b), 1 \leq j \leq c(b)\};
\]
\item if $r = r'$, then
\[
\begin{aligned}
{}''{\widehat {\mathcal B}}_r &= \{v_i^j(b) \mid b \in {\mathbf B}, \lambda(b) = \lambda_r, i \in Supp(b), i \ne 0,  1 \leq j \leq c(b)\}\\
& \quad \bigcup \{v_0^j(b) \mid b \in {\mathbf B}, b \ne b_1, \lambda(b) = \lambda_r, 0 \in Supp(b), 1 \leq j \leq c(b) \}\\
& \quad \bigcup \{v_0^j(b_1) \mid 1 \leq j \leq (c(b_1) - 1)\} \bigcup \{v_0^{c(b_1)}(b_2)\}
\end{aligned}
\]
\item if $r = (\ell + 1 - r')$, then
\[
\begin{aligned}
{}''{\widehat {\mathcal B}}_r &= \{v_i^j(b) \mid b \in {\mathbf B}, \lambda(b) = \lambda_r, i \in Supp(b), i \ne 0,  1 \leq j \leq c(b)\}\\
& \quad \bigcup \{v_0^j(b) \mid b \in {\mathbf B}, b \ne b_2, \lambda(b) = \lambda_r, 0 \in Supp(b), 1 \leq j \leq c(b) \}\\
& \quad \bigcup \{v_0^j(b_2) \mid 1 \leq j \leq (c(b_2) - 1)\} \bigcup \{v_0^{c(b_2)}(b_1)\}
\end{aligned}
\]
\end{itemize}

\begin{proposition}
Let $X \in {\mathfrak g}$ and $E = {}''E_{\bf c}$.  Then 
\begin{enumerate}[\upshape (a)]
\item $X \in {\mathfrak l}_{E}$ if and only if $X({}''U_r) \subseteq {}''U_r$ for all $r = 1, 2, \dots, \ell$.
\item $X \in {\mathfrak p}_{E}$ if and only if $X({}''U_1 \oplus {}''U_2 \dots \oplus {}''U_r) \subseteq ({}''U_1 \oplus {}''U_2 \oplus \dots \oplus {}''U_r)$ for all $r = 1, 2, \dots, \ell$.
\end{enumerate}
\end{proposition}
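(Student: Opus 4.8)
The plan is to reduce this proposition to the one already proved for $E = E_{\mathbf c}$ and $E = {}'E_{\mathbf c}$ (proposition~\ref{P:ParabolicLeviAssociated}) by transporting everything through the ${\mathcal I}$-graded linear isomorphism $S_{{\mathcal O}'}:V({\mathbf c})\to V({\mathbf c})$ defined in the proof of proposition~\ref{P:OrbitSplittingSO} (c). The key observations are that $S_{{\mathcal O}'}$ preserves the bilinear form $\langle\ ,\ \rangle_{\mathbf c}$, that $S_{{\mathcal O}'}$ has degree $0$ and is an involution, and that by definition $({}''E_{\mathbf c},{}''H_{\mathbf c},{}''F_{\mathbf c}) = (S_{{\mathcal O}'}E_{\mathbf c}S_{{\mathcal O}'}^{-1}, S_{{\mathcal O}'}H_{\mathbf c}S_{{\mathcal O}'}^{-1}, S_{{\mathcal O}'}F_{\mathbf c}S_{{\mathcal O}'}^{-1})$. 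Since conjugation by $S_{{\mathcal O}'}$ is an automorphism of ${\mathfrak g}$ carrying the Jacobson--Morozov triple of $E_{\mathbf c}$ to that of ${}''E_{\mathbf c}$, it also carries the associated second grading, hence the parabolic and Levi subalgebras: $\widetilde{\phi}'' = \operatorname{Int}(S_{{\mathcal O}'})\circ\widetilde{\phi}$, $\iota'' = \operatorname{Int}(S_{{\mathcal O}'})\circ\iota$, so ${}_r{\mathfrak g}_{r'}$ for ${}''E_{\mathbf c}$ equals $S_{{\mathcal O}'}({}_r{\mathfrak g}_{r'})S_{{\mathcal O}'}^{-1}$ for $E_{\mathbf c}$, and therefore ${\mathfrak p}_{{}''E_{\mathbf c}} = S_{{\mathcal O}'}\,{\mathfrak p}_{E_{\mathbf c}}\,S_{{\mathcal O}'}^{-1}$ and ${\mathfrak l}_{{}''E_{\mathbf c}} = S_{{\mathcal O}'}\,{\mathfrak l}_{E_{\mathbf c}}\,S_{{\mathcal O}'}^{-1}$.

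First I would spell out how $S_{{\mathcal O}'}$ acts on the direct sum decomposition $V({\mathbf c}) = U_1\oplus\cdots\oplus U_\ell$ attached to $E_{\mathbf c}$ in notation~\ref{N:Basis_Adapted_To_C}. Since $S_{{\mathcal O}'}$ is the identity on every basis vector except $v_0^{c(b_1)}(b_1)$ and $v_0^{c(b_1)}(b_2)$ which it swaps (here $b_1, b_2$ and $b'$ play the same role — I would first note $b_1 = b'$ up to relabeling, with $\lambda(b_1) = \lambda_{r'} < 0 < \lambda_{\ell+1-r'} = \lambda(b_2)$), and since $v_0^{c(b_1)}(b_1)\in U_{r'}$ while $v_0^{c(b_1)}(b_2)\in U_{\ell+1-r'}$, the only subspaces affected are $U_{r'}$ and $U_{\ell+1-r'}$. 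A direct check then shows that $S_{{\mathcal O}'}(U_r) = {}''U_r$ for every $r$: for $r\ne r',\ell+1-r'$ this is trivial because $S_{{\mathcal O}'}$ is the identity on the spanning set, and for $r = r'$ (resp. $r = \ell+1-r'$) the spanning set ${}''{\widehat{\mathcal B}}_{r}$ is exactly the image under $S_{{\mathcal O}'}$ of ${\widehat{\mathcal B}}_r$ — one removes the vector $v_0^{c(b_1)}(b_1)$ and adds $v_0^{c(b_1)}(b_2)$ (resp. symmetrically), matching the definition of ${}''{\widehat{\mathcal B}}_r$ verbatim.

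Then the proof concludes as follows. For part (a): $X\in{\mathfrak l}_{{}''E_{\mathbf c}}$ iff $S_{{\mathcal O}'}^{-1}XS_{{\mathcal O}'}\in{\mathfrak l}_{E_{\mathbf c}}$ iff, by proposition~\ref{P:ParabolicLeviAssociated}(a), $S_{{\mathcal O}'}^{-1}XS_{{\mathcal O}'}(U_r)\subseteq U_r$ for all $r$ iff $X(S_{{\mathcal O}'}U_r)\subseteq S_{{\mathcal O}'}U_r$ for all $r$ iff $X({}''U_r)\subseteq{}''U_r$ for all $r$, using $S_{{\mathcal O}'}(U_r) = {}''U_r$. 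Part (b) is identical, replacing $U_r$ by $U_1\oplus\cdots\oplus U_r$ and ${}''U_r$ by ${}''U_1\oplus\cdots\oplus{}''U_r$, noting that $S_{{\mathcal O}'}(U_1\oplus\cdots\oplus U_r) = {}''U_1\oplus\cdots\oplus{}''U_r$ follows from the componentwise equality. One small point to verify carefully is that $S_{{\mathcal O}'}$ indeed belongs to $G$ (not merely $\widetilde G$) so that $\operatorname{Int}(S_{{\mathcal O}'})$ is an inner automorphism of $G$ preserving its parabolic/Levi structure — but this is precisely one of the facts established in \ref{SS:SpecialOrthoRepresentationsInSameOrthoClass} (the transformation $S$ there has $\langle Su, Sv\rangle = \langle u,v\rangle$ and $\det(S_0) = -1$, so it lies in $\widetilde G$; however $S_{{\mathcal O}'}$ is composed of such swaps and one must confirm that conjugation still works even if $S_{{\mathcal O}'}\notin G$, which it does since $\operatorname{Int}(S_{{\mathcal O}'})$ is an automorphism of ${\mathfrak g}$ carrying ${\mathfrak p}_{E_{\mathbf c}}$ to a parabolic subalgebra, and parabolicity is intrinsic).

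The main obstacle I anticipate is purely bookkeeping: getting the indices $r'$ and $\ell+1-r'$ and the roles of $b_1, b_2, b'$ to line up exactly, and checking that the three-case definition of ${}''{\widehat{\mathcal B}}_r$ above is genuinely $S_{{\mathcal O}'}({\widehat{\mathcal B}}_r)$ on the nose. There is no genuine mathematical difficulty once the conjugation principle is in place; the content is entirely in verifying $S_{{\mathcal O}'}(U_r) = {}''U_r$, which is a matter of comparing two explicit spanning sets.
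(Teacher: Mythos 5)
Your proof is correct, but it takes a different route from the paper's. The paper's own proof simply reruns the eigenvalue argument of proposition~\ref{P:ParabolicLeviAssociated}, noting that the only change in passing from $(E_{\mathbf c},H_{\mathbf c},F_{\mathbf c})$ to $({}''E_{\mathbf c},{}''H_{\mathbf c},{}''F_{\mathbf c})$ is that the action of $Ad(\iota'(t))$ is adapted by transposing $v_0^{c(b_1)}(b_1)$ and $v_0^{c(b_2)}(b_2)$; you instead reduce formally to that proposition by transport of structure along $S_{{\mathcal O}'}$. Your reduction is legitimate for the reasons you give: $S_{{\mathcal O}'}$ preserves $\langle\ ,\ \rangle_{\mathbf c}$, so $Ad(S_{{\mathcal O}'})$ is an automorphism of ${\mathfrak g}$ (membership in $G$ rather than ${\widetilde G}$ is irrelevant here, since ${\mathfrak g}$ is also the Lie algebra of ${\widetilde G}$ and $G$ is normal in ${\widetilde G}$); it is ${\mathcal I}$-graded of degree $0$, hence commutes with $\iota(t)$ and preserves each ${\mathfrak g}_{r'}$; and ${}''H_{\mathbf c}=S_{{\mathcal O}'}H_{\mathbf c}S_{{\mathcal O}'}^{-1}$, so the second grading for ${}''E_{\mathbf c}$ is the $Ad(S_{{\mathcal O}'})$-conjugate of the one for $E_{\mathbf c}$. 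Hence ${\mathfrak l}_{{}''E_{\mathbf c}}=Ad(S_{{\mathcal O}'})({\mathfrak l}_{E_{\mathbf c}})$ and ${\mathfrak p}_{{}''E_{\mathbf c}}=Ad(S_{{\mathcal O}'})({\mathfrak p}_{E_{\mathbf c}})$, and combined with the spanning-set check $S_{{\mathcal O}'}(U_r)={}''U_r$ (which does hold: $S_{{\mathcal O}'}$ fixes every basis vector except for the swap of the two vectors above, and that swap converts $\widehat{\mathcal B}_{r'}$ and $\widehat{\mathcal B}_{\ell+1-r'}$ into ${}''\widehat{\mathcal B}_{r'}$ and ${}''\widehat{\mathcal B}_{\ell+1-r'}$, using $c(b_1)=c(b_2)$) both parts follow at once from proposition~\ref{P:ParabolicLeviAssociated}. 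What your approach buys is that no eigenvalue computation is repeated; what the paper's buys is that it never has to discuss why conjugation by a possibly non-special orthogonal transformation is compatible with Lusztig's construction.

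Three small corrections of bookkeeping, none affecting the argument. With the paper's conventions $\lambda(b')>0$, so $b_2=b'$ and $b_1=\tau(b')$, not $b_1=b'$; the swap is symmetric, so this is harmless. The displayed identity should read $\iota''=\mathrm{Int}(S_{{\mathcal O}'})\circ\iota'$, while the separate fact $\mathrm{Int}(S_{{\mathcal O}'})\circ\iota=\iota$ (i.e. $S_{{\mathcal O}'}$ commutes with $\iota$) is exactly what you need to know that $Ad(S_{{\mathcal O}'})$ preserves the first grading. Finally, the closing appeal to ``parabolicity is intrinsic'' is not the relevant point; what matters is the statement you already made, namely that both gradings entering the definition of ${\mathfrak p}$ and ${\mathfrak l}$ transform correctly under $Ad(S_{{\mathcal O}'})$, so the image of ${\mathfrak p}_{E_{\mathbf c}}$ is precisely ${\mathfrak p}_{{}''E_{\mathbf c}}$ and not merely some parabolic subalgebra.
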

\begin{proof}
The proof is similar to the proof of proposition~\ref{P:ParabolicLeviAssociated} and is left to the reader. We just have to adapt the action of $Ad(\iota'(t))$ to this situation and this means transposing $v_0^{c(b_1)}(b_1)$ and $v_0^{c(b_2)}(b_2)$. Note that $c(b_1) = c(b_2)$.
\end{proof}

\begin{remark}\label{R:Remarks6.9}
We can observed the following:
\begin{enumerate}[\upshape (a)]
\item $\#\Lambda({\mathbf c}) = \ell$ is odd if and only if $0 \in \Lambda({\mathbf c})$. In that case, $\lambda_{\ell'} = 0$, where $\ell = 2\ell' - 1$. 
\item If $b \in {\mathbf B}$ is such that  $\lambda(b) = 0$, then $b$ is on the principal diagonal. Thus $0 \in \Lambda({\mathbf c})$ if and only if there exists at least an ${\mathcal I}$-box $b$ on the principal diagonal such that $c(b) \ne 0$. This follows easily from our interpretation of the function $\lambda$ via the ${\mathcal I}$-diagram. See notation~\ref{N:LambdaValue}
\item If $0 \in \Lambda({\mathbf c})$ and with the notation of (a) above, we have that $U_{\ell'} \ne 0$ and the bilinear form $\langle\ , \ \rangle$ of $V$ restricted to $U_{\ell'}$ is non-degenerate. Moreover if $X \in {\mathfrak g}$ is such that $X(U_{\ell'}) \subseteq U_{\ell'}$, then $X\vert_{U_{\ell'}}$ belongs to the Lie subalgebra of ${\mathfrak g}$ corresponding to the subspace $U_{\ell'}$. More precisely, we have 
\[
\langle X\vert_{U_{\ell'}}(u), v \rangle + \langle u, X\vert_{U_{\ell'}}(v) \rangle = 0 \quad \text{ for all } u, v \in U_{\ell'}
\]
\item For $1 \leq r \leq \ell$, we have $v_i^j(b) \in \widehat{\mathcal B}_r$ (respectively ${}''  \widehat{\mathcal B}_r$)  if and only if $v_{-i}^j(\tau(b)) \in \widehat{\mathcal B}_{\ell + 1 - r}$ (respectively ${}''\widehat{\mathcal B}_{\ell + 1 - r}$). For $\widehat{\mathcal B}_r$ and $\widehat{\mathcal B}_{\ell + 1 - r}$, we just need to recall that $c(b) = c(\tau(b))$, also $i \in Supp(b)$ if and only if $-i \in Supp(\tau(b))$ and finally $\lambda(b) = \lambda_r$ if and only if $\lambda(\tau(b)) = \lambda_{\ell + 1 - r}$.   For ${}''\widehat{\mathcal B}_r$ and ${}''\widehat{\mathcal B}_{\ell + 1 - r}$, the proof is similar and we just need to consider $r = r'$ or $\ell + 1 - r'$. 
 \item Let $X \in {\mathfrak l}_{E}$ where $E$ is either $E_{\mathbf c}$ or ${}'E_{\mathbf c}$ (respectively ${}''E_{\mathbf c}$)  and the basis vector $v_{i_1}^{j_1}(b_1) \in \widehat {\mathcal B}_r$ (respectively  ${}''\widehat{\mathcal B}_r$) of the subspace $U_r$ (respectively ${}''U_r$). Write 
 \[
 X(v_{i_1}^{j_1}(b_1)) = \sum_{(b_2, j_2, i_2)} \xi_{(b_1, j_1, i_1)}^{(b_2, j_2, i_2)} v_{i_2}^{j_2}(b_2)
 \]
 where $\xi_{(b_1, j_1, i_1)}^{(b_2, j_2, i_2)} \in {\mathbf k}$ and the sum runs over the triples $(b_2, j_2, i_2)$ where $v_{i_2}^{j_2}(b_2) \in \widehat {\mathcal B}_r$ (respectively ${}''\widehat{\mathcal B}_r$) and write for the basis vector $v_{-i_3}^{j_1}(\tau(b_3)) \in \widehat {\mathcal B}_{\ell + 1 - r}$ (respectively  ${}''\widehat{\mathcal B}_{\ell + 1 - r}$) of the subspace $U_{\ell + 1 - r}$ (respectively ${}''U_{\ell + 1 - r}$) 
 \[
 X(v_{-i_3}^{j_3}(\tau(b_3))) = \sum_{(b_4, j_4, i_4)} \psi_{(b_3, j_3, i_3)}^{(b_4, j_4, i_4)} v_{-i_4}^{j_4}(\tau(b_4))
 \]
where $ \psi_{(b_3, j_3, i_3)}^{(b_4, j_4, i_4)} \in {\mathbf k}$ and  the sum runs over the triples $(b_4, j_4, i_4)$ where $v_{-i_4}^{j_4}(\tau(b_4)) \in \widehat {\mathcal B}_{\ell + 1 - r}$ (respectively ${}''\widehat{\mathcal B}_r$). From 
\[
\langle X(v_{i_1}^{j_1}(b_1)) , v_{-i_3}^{j_3}(\tau(b_3))\rangle + \langle v_{i_1}^{j_1}(b_1) , X(v_{-i_3}^{j_3}(\tau(b_3)))\rangle = 0,
\]
we get that 
\[
\xi_{(b_1, j_1, i_1)}^{(b_3, j_3, i_3)} \langle v_{i_3}^{j_3}(b_3) , v_{-i_3}^{j_3}(\tau(b_3)\rangle  + 
\psi_{(b_3, j_3, i_3)}^{(b_1, j_1, i_1)} \langle v_{i_1}^{j_1}(b_1) , v_{-i_1}^{j_1}(\tau(b_3))\rangle
= 0
\]
Both $\langle v_{i_3}^{j_3}(b_3), v_{-i_3}^{j_3}(\tau(b_3))\rangle $ and $ \langle v_{i_1}^{j_1}(b_1), v_{-i_1}^{j_1}(\tau(b_1))\rangle$ are $\ne 0$. In fact they are equal to either $1$ or $\epsilon$. Thus 
\[
\psi_{(b_3, j_3, i_3)}^{(b_1, j_1, i_1)}  = - \frac{\langle v_{i_3}^{j_3}(b_3) , v_{-i_3}^{j_3}(\tau(b_3))\rangle} { \langle v_{i_1}^{j_1}(b_1), v_{-i_1}^{j_1}(\tau(b_1))\rangle} \xi_{(b_1, j_1, i_1)}^{b_3, j_3, i_3)}.
\]
In other words, $X\vert_{U_{\ell + 1 - r}}$ is obtained from $X\vert_{U_r}$.
\item Let $E$ be  either the nilpotent element $E_{\mathbf c}$ or ${}'E_{\mathbf c}$ (respectively  ${}''E_{\mathbf c}$).   It is known that if $E \in {\mathfrak l}_E$ and consequently $E(U_r) \subseteq U_r$ (respectively $E({}''U_r) \subseteq {}''U_r$) for all $r = 1, 2, \dots, \ell$, then
\begin{itemize}
\item the dimension of $U_r$ (respectively  ${}''U_r$) is 
\[
\sum_{\begin{subarray}{c} b \in {\mathbf B}\\ \lambda(b) = \lambda_r \end{subarray}} c(b) \vert Supp(b) \vert; 
\]
\item  the partition corresponding to the Jordan decomposition of the restriction $E\vert_{U_r}$ to $U_r$ (respectively $E\vert_{{}''U_r}$ to ${}''U_r$) is 
\[
\prod_{\begin{subarray}{c} b \in {\mathbf B}\\ \lambda(b) = \lambda_r \end{subarray}}  \vert Supp(b) \vert^{c(b)}.
\]
\end{itemize}
For the dimension of $U_r$, this follows by computing $\vert \widehat{\mathcal B}_r \vert$ or  $\vert {}''\widehat{\mathcal B}_r \vert$. Note that in the case of the dimension of ${}''U_r$ for $r = r'$ or $\ell + i - r'$, it follows from the fact that the function $c$ is symmetric and also $\vert Supp(\tau(b)) \vert = \vert Supp(b) \vert$.

As for the partition corresponding to the Jordan decomposition of the restriction $E\vert_{U_r}$ to $U_r$ (respectively $E\vert_{{}''U_r}$ to ${}''U_r$), it follows by the known action of the nilpotent element $E$ on the basis elements in $\widehat{\mathcal B}_r$ or  ${}''\widehat{\mathcal B}_r$. 

\end{enumerate}  

\end{remark}

We will now describe the isomorphism class of the Levi subalgebra ${\mathfrak l}_E$ and also the tuple of  partitions corresponding to the Jordan decomposition of $E\vert_{U_r}$.

\begin{proposition}\label{P:LieSubgroupDescription}
Let $E$ be  either the nilpotent element $E_{\mathbf c}$ or ${}'E_{\mathbf c}$ (respectively ${}''E_{\mathbf c}$). Denote by $n_r = \dim(U_r)$ (respectively $n_r = \dim({}''U_r)$) for $1 \leq r \leq \ell$. As we saw in remark~\ref{R:Remarks6.9} (f), 
\[
n_r = \sum_{\begin{subarray}{c} b \in {\mathbf B}\\ \lambda(b) = \lambda_r \end{subarray}} c(b) \vert Supp(b) \vert; 
\]
for $1 \leq r \leq \ell$.

\begin{itemize}
\item If $\# \Lambda({\mathbf c}) = \ell$ is even and write $\ell = 2\ell'$, then the map 
\[
{\mathfrak l}_{E} \rightarrow {\mathfrak {gl}}({n_1}) \oplus {\mathfrak {gl}}({n_2}) \oplus \dots \oplus {\mathfrak {gl}}({n_{\ell'}})
\] 
defined by  
\[
X \mapsto \begin{cases} ([X\vert_{U_1}]_{\widehat{\mathcal B}_1 }, [X\vert_{U_2}]_{\widehat{\mathcal B}_2 }, \dots, [X\vert_{U_{\ell'}}]_{\widehat{\mathcal B}_{\ell'} }), &\text{ if $E = E_{\mathbf c}$ or ${}'E_{\mathbf c}$;}\\  \\([X\vert_{{}''U_1}]_{{}''\widehat{\mathcal B}_1 }, [X\vert_{{}''U_2}]_{{}''\widehat{\mathcal B}_2 }, \dots, [X\vert_{{}''U_{\ell'}}]_{{}''\widehat{\mathcal B}_{\ell'} }), &\text{ if $E = {}''E_{\mathbf c}$;}
\end{cases}
\]
 is an isomorphism of Lie algebras and the tuple of  partitions corresponding to the Jordan decomposition of $E\vert_{U_r}$ as given in (f) of remark~\ref{R:Remarks6.9} is  
\[
\left(\prod_{\begin{subarray}{c} b \in {\mathbf B}\\ \lambda(b) = \lambda_r \end{subarray}}  \vert Supp(b) \vert^{c(b)}\right)_{1 \leq r \leq \ell'}.
\]
 \item If $\#  \Lambda({\mathbf c}) = \ell$ is odd and write $\ell = 2\ell' - 1$, then the map 
\[
{\mathfrak l}_{E} \rightarrow {\mathfrak {gl}}({n_1}) \oplus {\mathfrak {gl}}({n_2}) \oplus \dots \oplus {\mathfrak {gl}}({n_{\ell' - 1}}) \oplus {\mathfrak g'}
\]
defined by
\[
X \mapsto \begin{cases} ([X\vert_{U_1}]_{\widehat{\mathcal B}_1 }, [X\vert_{U_2}]_{\widehat{\mathcal B}_2 }, \dots, [X\vert_{U_{\ell'}}]_{\widehat{\mathcal B}_{\ell'} }), &\text{ if $E = E_{\mathbf c}$ or ${}'E_{\mathbf c}$;}\\  \\([X\vert_{{}''U_1}]_{{}''\widehat{\mathcal B}_1 }, [X\vert_{{}''U_2}]_{{}''\widehat{\mathcal B}_2 }, \dots, [X\vert_{{}''U_{\ell'}}]_{{}''\widehat{\mathcal B}_{\ell'} }), &\text{ if $E = {}''E_{\mathbf c}$;}
\end{cases}
\]
is an isomorphism of Lie algebras and the tuple of partitions corresponding to the Jordan decomposition of $E\vert_{U_r}$ as given in (f) in remark~\ref{R:Remarks6.9} is 
\[
\left(\prod_{\begin{subarray}{c} b \in {\mathbf B}\\ \lambda(b) = \lambda_r \end{subarray}}  \vert Supp(b) \vert^{c(b)}\right)_{1 \leq r \leq \ell'}.
\]
\end{itemize}
Here $[X\vert_{U_r}]_{\widehat{\mathcal B}_r }$ (respectively $[X\vert_{{}''U_r}]_{{}''\widehat{\mathcal B}_r }$)  is the matrix of $X\vert_{U_r}$ (respectively $X\vert_{{}''U_r}$) relative to the basis  $\widehat{\mathcal B}_r$ (respectively  ${}''\widehat{\mathcal B}_r$) of $U_r$ (respectively ${}''U_r$), ${\mathfrak {gl}}({n_r})$ is the general linear Lie algebra of $n_r \times n_r$-matrices  for $r = 1, 2, \dots, \ell'$ and ${\mathfrak g'}$ is the classical Lie algebra
\[
{\mathfrak g'} = 
\begin{cases} {\mathfrak {so}}({n_{\ell'}}), &\text{ if the bilinear form $\langle\  ,\   \rangle$ is symmetric;}\\  \\ {\mathfrak {sp}}({n_{\ell'}}), &\text{ if the bilinear form $\langle\  ,\   \rangle$ is skew-symmetric.}
\end{cases}
\]
\end{proposition}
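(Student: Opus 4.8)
The plan is to deduce this proposition directly from the two preceding propositions describing $\mathfrak{l}_E$ (the one for $E = E_{\mathbf{c}}$ or ${}'E_{\mathbf{c}}$, and the one for $E = {}''E_{\mathbf{c}}$), together with remark~\ref{R:Remarks6.9}. By those propositions, $X \in \mathfrak{l}_E$ if and only if $X$ preserves each $U_r$ (respectively ${}''U_r$) for $r = 1, 2, \dots, \ell$. So the first step is to set up the restriction map $X \mapsto (X\vert_{U_1}, \dots, X\vert_{U_\ell})$ and observe it is injective because $V(\mathbf{c}) = U_1 \oplus \dots \oplus U_\ell$ (this direct sum decomposition is recorded in notation~\ref{N:Basis_Adapted_To_C}, and the analogous statement for the ${}''U_r$ follows from the explicit bases ${}''\widehat{\mathcal B}_r$).

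The second step is the reduction from $\ell$ coordinates to $\ell'$ coordinates. Here I would invoke remark~\ref{R:Remarks6.9} (e): if $X \in \mathfrak{l}_E$, then $X\vert_{U_{\ell+1-r}}$ is completely determined by $X\vert_{U_r}$ via the pairing $\langle\ ,\ \rangle$ (the formula $\psi = -\frac{\langle v, v\rangle}{\langle v', v'\rangle}\xi$ in that remark), so the data in the "top half" $r = 1, \dots, \ell'$ (or $\ell'-1$ in the odd case) already suffices. Conversely, given arbitrary $X\vert_{U_r}$ for $r$ in the top half, one extends to the bottom half by that same formula and checks that the resulting $X$ lies in $\mathfrak{g}$, i.e. satisfies $\langle X u, v\rangle + \langle u, X v\rangle = 0$: on $U_r \times U_{\ell+1-r}$ this is exactly the defining relation, on $U_r \times U_s$ with $s \ne \ell+1-r$ both terms vanish since the restriction of $\langle\ ,\ \rangle$ to those summands is zero, and on $U_{\ell'} \times U_{\ell'}$ in the odd case one needs $X\vert_{U_{\ell'}}$ to lie in the appropriate classical Lie algebra — which is precisely remark~\ref{R:Remarks6.9} (c). This shows the map lands in, and surjects onto, $\mathfrak{gl}(n_1) \oplus \dots \oplus \mathfrak{gl}(n_{\ell'})$ in the even case and $\mathfrak{gl}(n_1) \oplus \dots \oplus \mathfrak{gl}(n_{\ell'-1}) \oplus \mathfrak{g}'$ in the odd case.

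The third step is to verify this bijection is a Lie algebra homomorphism: since restriction to an invariant subspace respects composition and bracket, $[X,Y]\vert_{U_r} = [X\vert_{U_r}, Y\vert_{U_r}]$, so the map is a homomorphism on each coordinate and hence on the product; combined with injectivity and surjectivity it is an isomorphism. Passing to matrices via the chosen bases $\widehat{\mathcal B}_r$ (respectively ${}''\widehat{\mathcal B}_r$) is then just a choice of coordinates on each $\mathrm{End}(U_r)$. Finally, the statement about the tuple of Jordan partitions of $E\vert_{U_r}$ is literally remark~\ref{R:Remarks6.9} (f), so I would simply cite it. I should remember to treat the case $E = {}''E_{\mathbf{c}}$ in parallel throughout, noting that $0 \notin \Lambda(\mathbf{c})$ there forces $\ell$ even (by remark~\ref{R:Remarks6.9} (a)), so only the even-$\ell$ formula applies to ${}''E_{\mathbf{c}}$, consistent with the statement.

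The main obstacle, and the only place needing genuine care rather than bookkeeping, is the surjectivity argument in the odd case at the middle index $\ell'$: one must check that the constraint imposed on $X\vert_{U_{\ell'}}$ by membership in $\mathfrak{g}$ is exactly membership in $\mathfrak{so}(n_{\ell'})$ or $\mathfrak{sp}(n_{\ell'})$ — no more and no less — and that the extension formula of remark~\ref{R:Remarks6.9} (e) is consistent at $r = \ell'$ (i.e. does not over-determine $X\vert_{U_{\ell'}}$, since $\ell+1-\ell' = \ell'$). This amounts to confirming that the restriction of $\langle\ ,\ \rangle$ to $U_{\ell'}$ is nondegenerate and has the same symmetry type ($\epsilon$) as $\langle\ ,\ \rangle$ on $V(\mathbf{c})$, which is remark~\ref{R:Remarks6.9} (c); everything else is routine.
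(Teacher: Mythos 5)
Your proposal is correct and takes essentially the same route as the paper's own (sketched) proof: injectivity from the decomposition $V({\mathbf c}) = U_1 \oplus \dots \oplus U_{\ell}$ combined with the relations of remark~\ref{R:Remarks6.9} (e), surjectivity by extending a given $\ell'$-tuple to the lower-half subspaces via those same relations and checking membership in ${\mathfrak g}$ (with remark~\ref{R:Remarks6.9} (c) handling the self-paired middle index in the odd case), and the Jordan partitions quoted from remark~\ref{R:Remarks6.9} (f). The consistency check you single out at $r = \ell'$ is precisely what the paper compresses into the phrase that the equations of remark~\ref{R:Remarks6.9} (e) and the definition of ${\mathfrak g}'$ imply $X \in {\mathfrak g}$.
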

\begin{proof}
We will only sketch this proof. Let $E$ be either the nilpotent element $E_{\mathbf c}$ or ${}'E_{\mathbf c}$.  It is clear that the map 
\[
X \mapsto ([X\vert_{U_1}]_{\widehat{\mathcal B}_1 }, [X\vert_{U_2}]_{\widehat{\mathcal B}_2 }, \dots, [X\vert_{U_{\ell'}}]_{\widehat{\mathcal B}_{\ell'} })
\]
is a linear Lie algebra homomorphism. If we consider its kernel, we get that $X$ is $0$ on $U_r$ for $r = 1, 2, \dots, \ell'$. By the equations in  remark~\ref{R:Remarks6.9} (e), we get also that $X$ is $0$ on $U_r$ for $r = \ell' + 1, \dots, \ell$. Because $V = U_1 \oplus U_2 \oplus \dots \oplus U_{\ell}$, we get that $X$ is $0$ and the kernel is trivial. So the map is injective. 

If we now consider $\ell'$-tuple $(A_1, A_2, \dots, A_{\ell'})$ of matrices such that $A_i \in {\mathfrak {gl}}_{n_i}$  for $i = 1, 2, \dots, (\ell' - 1)$ and $A_{\ell'}$ belongs to ${\mathfrak {gl}}_{n_{\ell'}}$
 in the case where $\# \Lambda({\mathbf c})$ is even and belongs to ${\mathfrak {g'}}$ in the case where $\# \Lambda({\mathbf c})$ is odd. We can define  the linear transformation $X$ by asking $[X\vert_{U_r}]_{\widehat{\mathcal B}_r }= A_r$ for $r = 1, 2, \dots, \ell'$. Then we can use the equations of remark~\ref{R:Remarks6.9} (e)  to define  $X$ on ${U_r}$ for $r = \ell' + 1, \dots, \ell$.  The equations of remark~\ref{R:Remarks6.9} (e) and the definition of ${\mathfrak g'}$ imply that $X \in {\mathfrak g}$. Because $X(U_r) \subseteq U_r$ for all $r = 0, 1, \dots, \ell$, then $X \in {\mathfrak l}_E$ and its image is the $\ell'$-tuple $(A_1, A_2, \dots, A_{\ell'})$.
 The proof when $E$ is  the nilpotent element ${}''E_{\mathbf c}$ is similar.
\end{proof}

\begin{corollary}
With the same notation as in the above proposition, we have that 
\[
\dim({\mathfrak l}_E)= \sum_{i = 1}^{(\ell' - 1)} n_i^2 + \begin{cases} n_{\ell'}^2, &\text{if $\# \Lambda({\mathbf c}) = \ell$ is even and $\ell = 2\ell'$;}\\ \\n_{\ell'}(n_{\ell'} - 1)/2, &\text{if $\#  \Lambda({\mathbf c})  = \ell$ is odd, $\ell = (2\ell' - 1)$ and}\\ &\text{the bilinear form $\langle\  ,\   \rangle$ is symmetric;} \\ \\n_{\ell'}(n_{\ell'} + 1)/2, &\text{if $\#  \Lambda({\mathbf c}) = \ell$ is odd, $\ell = (2\ell' - 1)$ and}\\ &\text{the bilinear form $\langle\  ,\   \rangle$ is skew-symmetric.}\end{cases}
\]
\end{corollary}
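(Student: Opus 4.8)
The plan is to read off $\dim(\mathfrak{l}_E)$ directly from the isomorphism in Proposition~\ref{P:LieSubgroupDescription}, using only the standard dimensions of the classical Lie algebras appearing as the factors. Since that proposition already establishes
\[
\mathfrak{l}_E \cong \mathfrak{gl}(n_1) \oplus \mathfrak{gl}(n_2) \oplus \dots \oplus \mathfrak{gl}(n_{\ell'-1}) \oplus \mathfrak{g}'',
\]
where $\mathfrak{g}'' = \mathfrak{gl}(n_{\ell'})$ when $\#\Lambda(\mathbf c) = \ell$ is even (with $\ell = 2\ell'$), and $\mathfrak{g}'' = \mathfrak{g}'$ is $\mathfrak{so}(n_{\ell'})$ or $\mathfrak{sp}(n_{\ell'})$ when $\ell$ is odd (with $\ell = 2\ell'-1$), it suffices to add the dimensions of the summands.

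First I would recall the elementary facts $\dim(\mathfrak{gl}(n)) = n^2$, $\dim(\mathfrak{so}(n)) = n(n-1)/2$, and $\dim(\mathfrak{sp}(n)) = n(n+1)/2$; here $\mathfrak{sp}(n)$ is only considered for $n$ even, which is automatically the case since $n_{\ell'} = \dim(U_{\ell'})$ is even in the skew-symmetric situation (the bilinear form restricted to $U_{\ell'}$ is non-degenerate and skew-symmetric by remark~\ref{R:Remarks6.9}~(c)). Then, summing the $\mathfrak{gl}(n_i)$ contributions for $i = 1, \dots, \ell'-1$ gives $\sum_{i=1}^{\ell'-1} n_i^2$, and adjoining the contribution of the last factor $\mathfrak{g}''$ yields exactly the three cases in the statement: $n_{\ell'}^2$ in the even case, $n_{\ell'}(n_{\ell'}-1)/2$ in the odd symmetric case, and $n_{\ell'}(n_{\ell'}+1)/2$ in the odd skew-symmetric case.

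There is really no obstacle here: this is a one-line bookkeeping corollary of Proposition~\ref{P:LieSubgroupDescription}. The only point that requires a brief remark is the parity of $n_{\ell'}$ in the odd skew-symmetric case (needed so that $\mathfrak{sp}(n_{\ell'})$ makes sense and has the stated dimension), which follows from remark~\ref{R:Remarks6.9}~(c): when $0 \in \Lambda(\mathbf c)$ the middle piece $U_{\ell'}$ carries a non-degenerate bilinear form of the same type as $\langle\ ,\ \rangle$, and a non-degenerate skew-symmetric form forces $\dim(U_{\ell'})$ to be even. With that observation in place, the corollary follows immediately by applying $\dim$ to the direct sum decomposition of $\mathfrak{l}_E$, so the proof is just: "This follows immediately from Proposition~\ref{P:LieSubgroupDescription} and the standard formulas $\dim(\mathfrak{gl}(n)) = n^2$, $\dim(\mathfrak{so}(n)) = n(n-1)/2$, $\dim(\mathfrak{sp}(n)) = n(n+1)/2$."
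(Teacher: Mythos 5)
Your proposal is correct and follows essentially the same route as the paper, which likewise deduces the corollary directly from the isomorphism of Proposition~\ref{P:LieSubgroupDescription} together with the standard dimension formulas for ${\mathfrak {gl}}(n)$, ${\mathfrak {so}}(n)$ and ${\mathfrak {sp}}(n)$. Your extra remark on the evenness of $n_{\ell'}$ in the skew-symmetric case is a harmless (and welcome) clarification of a point the paper only notes in corollary~\ref{C:LeviSubgpAssociatedNilpotent}.
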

\begin{proof}
This follows easily from the above isomorphism and the dimension of each Lie algebras.
\end{proof}

\begin{corollary}\label{C:LeviSubgpAssociatedNilpotent}
With the same notation as in the above proposition, we have that  the Levi subgroup $L_E$ is isomorphic to
\[
 \begin{cases}GL_{n_1}({\mathbf k}) \times GL_{n_2}({\mathbf k}) \times \dots \times GL_{n_{\ell'}}({\mathbf k})&\text{if $\#  \Lambda({\mathbf c}) = \ell$ is even and $\ell = 2\ell'$;}\\ \\GL_{n_1}({\mathbf k})  \times \dots \times GL_{n_{\ell' - 1}}({\mathbf k}) \times SO_{n_{\ell'}}({\mathbf k})&\text{if $\#  \Lambda({\mathbf c})  = \ell$ is odd, $\ell = (2\ell' - 1)$ and}\\ &\text{the bilinear form $\langle\  ,\   \rangle$ is symmetric;} \\ \\GL_{n_1}({\mathbf k})  \times \dots \times GL_{n_{\ell' - 1}}({\mathbf k}) \times Sp_{n_{\ell'}}({\mathbf k}) &\text{if $\#  \Lambda({\mathbf c})  = \ell$ is odd, $\ell = (2\ell' - 1)$ and}\\ &\text{the bilinear form $\langle\  ,\   \rangle$ is skew-symmetric.}\end{cases}
\]
Note that in this last case: $\#  \Lambda({\mathbf c})  = \ell$ is odd, $\ell = (2\ell' - 1)$ and the bilinear form $\langle\  ,\   \rangle$ is skew-symmetric, it is easy to check that $n_{\ell'}$ is even.
\end{corollary}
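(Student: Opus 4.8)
The plan is to deduce Corollary~\ref{C:LeviSubgpAssociatedNilpotent} directly from Proposition~\ref{P:LieSubgroupDescription} by a standard ``integrate the Lie algebra isomorphism to the group level'' argument. First I would recall that $L_E$ is, by definition, the connected subgroup of $G$ with Lie algebra ${\mathfrak l}_E$, and that $L_E$ is reductive (being a Levi factor of the parabolic $P_E$); since the characteristic $p$ is either $0$ or a large prime (see \ref{SS:CharacteristicK}), passing between a reductive group and its Lie algebra causes no trouble. The essential input is that in Proposition~\ref{P:LieSubgroupDescription} the map $X \mapsto ([X|_{U_1}]_{\widehat{\mathcal B}_1}, \dots, [X|_{U_{\ell'}}]_{\widehat{\mathcal B}_{\ell'}})$ is realized concretely: by Proposition~\ref{P:ParabolicLeviAssociated} (and its analogue for ${}''E_{\mathbf c}$), an element $X \in {\mathfrak l}_E$ is precisely one that preserves each $U_r$ (respectively ${}''U_r$), and by Remark~\ref{R:Remarks6.9}~(e) the restriction $X|_{U_{\ell+1-r}}$ is completely determined by $X|_{U_r}$ via the non-degenerate pairing between $U_r$ and $U_{\ell+1-r}$. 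The same description holds verbatim at the group level: $g \in L_E$ iff $g$ preserves each $U_r$, and $g|_{U_{\ell+1-r}}$ is the inverse-transpose-type map forced on it by the pairing.

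\textbf{Key steps.} (1) Show $g \in G$ lies in $L_E$ iff $g(U_r) = U_r$ for all $r$ (and, in the ${}''E_{\mathbf c}$ case, $g({}''U_r) = {}''U_r$); this is the group analogue of Proposition~\ref{P:ParabolicLeviAssociated}, obtained by conjugating the torus $\iota(\mathbf k^\times)\iota'(\mathbf k^\times)$ and observing that $L_E = Z_G(\iota'(\mathbf k^\times))\cap (\text{centralizer giving the grading})$ — more simply, $L_E$ is the centralizer in $G$ of the subtorus acting on $U_r$ by the scalar recording $\lambda_r$, and such a centralizer is exactly the stabilizer of the decomposition $V({\mathbf c}) = \bigoplus_r U_r$. (2) Observe that the bilinear form, restricted to $U_r \times U_{\ell+1-r}$, is a perfect pairing (by Notation~\ref{N:Basis_Adapted_To_C}, since $\lambda_s + \lambda_{\ell+1-s} = 0$ and $\langle v_i^j(b), v_{i'}^{j'}(b')\rangle \ne 0$ forces $b' = \tau(b)$, $i + i' = 0$, hence $\lambda(b') = -\lambda(b)$), and that when $\ell = 2\ell'-1$ the middle piece $U_{\ell'}$ (index $\lambda_{\ell'} = 0$) carries a non-degenerate restriction of $\langle\ ,\ \rangle$ of the same symmetry type as the original form (Remark~\ref{R:Remarks6.9}~(c)). (3) Conclude that the map $L_E \to \prod_{r=1}^{\ell'-1} GL(U_r) \times G(U_{\ell'})$ sending $g$ to $(g|_{U_1}, \dots, g|_{U_{\ell'}})$ is a well-defined isomorphism of algebraic groups: it is injective because $g$ is determined on $U_r$ for $r \le \ell'$ and then on $U_{\ell+1-r}$ by the pairing (using $g$ preserves $\langle\ ,\ \rangle$, exactly as in Remark~\ref{R:Remarks6.9}~(e) one level up); it is surjective because given an $\ell'$-tuple of invertible maps, with the last in $SO(U_{\ell'})$ or $Sp(U_{\ell'})$ as appropriate, one builds $g$ on the remaining $U_{\ell+1-r}$ by the forced formula and checks it preserves $\langle\ ,\ \rangle$; and it is a morphism with morphism inverse since everything is given by matrix entries. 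Finally substitute $n_r = \dim(U_r)$ (or $\dim({}''U_r)$), with the formula for $n_r$ already recorded in Proposition~\ref{P:LieSubgroupDescription}, and identify $G(U_{\ell'})$ with $SO_{n_{\ell'}}({\mathbf k})$ in the symmetric case and $Sp_{n_{\ell'}}({\mathbf k})$ in the skew-symmetric case; in the latter case $n_{\ell'}$ is even because $U_{\ell'}$ supports a non-degenerate alternating form.

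\textbf{Main obstacle.} The only real subtlety is the surjectivity/well-definedness of the group-level map in the ``odd'' case, where one must be sure that the reconstruction $g|_{U_{\ell+1-r}} = $ (inverse-transpose of $g|_{U_r}$ under the pairing) together with an element of $SO(U_{\ell'})$ (resp. $Sp(U_{\ell'})$) on the middle piece genuinely produces an element of $G = SO(V)$ (resp. $Sp(V)$), i.e. that the determinant condition is automatic — this is where the ``$SO$'' rather than ``$O$'' on the middle factor matters, and it mirrors exactly the computation in the proof of Proposition~\ref{P:LieSubgroupDescription} at the Lie-algebra level. The parenthetical last claim, that $n_{\ell'}$ is even when $\langle\ ,\ \rangle$ is skew-symmetric, is immediate from $\dim(U_{\ell'})$ being the dimension of a symplectic space, or alternatively from the explicit formula $n_{\ell'} = \sum_{\lambda(b)=0} c(b)|Supp(b)|$ together with the fact (Remark~\ref{R:Remarks6.9}~(b)) that $\lambda(b)=0$ forces $b$ on the principal diagonal, where in the symplectic-with-$\vert{\mathcal I}\vert$-odd situation the relevant supports have even cardinality. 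I would keep the write-up short, citing Proposition~\ref{P:LieSubgroupDescription}, Proposition~\ref{P:ParabolicLeviAssociated}, and Remark~\ref{R:Remarks6.9} for all the bookkeeping, and only spelling out the determinant point in the odd case.
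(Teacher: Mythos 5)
Your proposal is correct and follows essentially the same route as the paper, whose proof of this corollary is the one-line deduction from the Lie algebra isomorphism of Proposition~\ref{P:LieSubgroupDescription}: your argument is just the group-level version of that deduction, identifying $L_E$ with the stabilizer of the decomposition $\bigoplus_r U_r$ (equivalently the centralizer of the cocharacter $t\mapsto\iota(t)\iota'(t)^{-1}$), reconstructing $g$ on $U_{\ell+1-r}$ from $g\vert_{U_r}$ via the perfect pairing, and using the determinant bookkeeping to see that the middle factor is $SO_{n_{\ell'}}({\mathbf k})$ rather than $O_{n_{\ell'}}({\mathbf k})$. The determinant point and the evenness of $n_{\ell'}$ in the symplectic case, which you rightly single out, are exactly the details the paper leaves implicit, so spelling them out as you plan is appropriate.
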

\begin{proof}
This follows easily from the above isomorphism of Lie algebras.
\end{proof}

\section{Irreducible   $G^{\iota}$-equivariant local systems on $G^{\iota}$-orbits in ${\mathfrak g}_2$.}

\subsection{}
Let $m$, $G$, ${\mathfrak g}$, $V$, ${\mathcal I}$, $\oplus_{i \in {\mathcal I}} V_i$, $\delta = (\delta_i)_{i \in {\mathcal I}}$,  ${\mathcal B}$, $\iota$, ${\mathbf B}$ and ${\mathbf B}/\tau$ as in either \ref{S:SetUpAeven} or  \ref{S:SetUpOdd} or \ref{S:SetUpOddOrtho}. More precisely $G$ should be a connected  reductive group. Thus in the situation of  \ref{S:SetUpOdd}, we restrict ourself only to the group $Sp(V)$, while in the situation of  \ref{S:SetUpOddOrtho}, $G$ is to the group $SO(V)$. In the situation of \ref{S:SetUpAeven} , $G$ is already connected.

\subsection{}
Let ${\mathbf c}:{\mathbf B}/\tau \rightarrow {\mathbb N}$ be a coefficient function in ${\mathfrak C}_{\delta}$ and the associated symmetric function $c:{\mathbf B} \rightarrow {\mathbb N}$. Given a $G^{\iota}$-orbit ${\mathcal O}_{\mathbf c}$ (respectively ${}'{\mathcal O}_{\mathbf c}$ or ${}''{\mathcal O}_{\mathbf c}$) in ${\mathfrak g}_2$, we want to enumerate the irreducible   $G^{\iota}$-equivariant local systems on this $G^{\iota}$-orbit. For this,  we will use the notion of symbols as defined and noted in sections 11 to 13 of Lusztig's article \cite{L1984}. We won't recall these notions, but we will use them as in Lusztig's article.  

We have constructed in \ref{N:CCorrespondance}, \ref{N:CCorrespondanceOdd} and \ref{N:CCorrespondanceSpecialOrtho} for such an $G^{\iota}$-orbit:  a Jacobson-Morozov  triple $(E_{\mathbf c}, H_{\mathbf c}, F_{\mathbf c})$ (respectively $({\ }'E_{\mathbf c}, {\ }'H_{\mathbf c}, {\ }'F_{\mathbf c})$, $({\ }''E_{\mathbf c}, {\ }''H_{\mathbf c}, {\ }''F_{\mathbf c})$). In proposition~\ref{P:LieSubgroupDescription} and corollary~\ref{C:LeviSubgpAssociatedNilpotent},  we have described the Levi subalgebra ${\mathfrak l}_E$  and its corresponding Levi subgroup $L_E$ for the element $E = E_{\mathbf c}$ (respectively $E = {\ }'E_{\mathbf c}$, $E = {\ }''E_{\mathbf c}$).

\begin{remark}\label{R:CentralizerOverConnectedCentralizer}
Denote by $L_{E}$: the Levi subgroup associated to the nilpotent element $E$ as denoted in the previous section. From its definition, we have that $(L_{E}, \iota)$ is $2$-rigid and by proposition 4.2 in \cite{L1995}, we have that 
\[
Stab_{L^{\iota}_{E}}(E) / Stab_{L^{\iota}_{E}}^0(E) \rightarrow Stab_{L_{E}}(E) / Stab_{L_{E}}^0(E)
\]
is an isomorphism. By proposition 5.8 in \cite{L1995}, we also have that
\[
Stab_{L^{\iota}_{E}}(E) / Stab_{L^{\iota}_{E}}^0(E) \rightarrow Stab_{G^{\iota}}(E) / Stab_{G^{\iota}}^0(E)
\]
is an isomorphism.  So to enumerate the irreducible  $G^{\iota}$-equivariant local systems on $G^{\iota}$-orbit ${\mathcal O}_{\mathbf c}$ (respectively ${}'{\mathcal O}_{\mathbf c}$ or ${}''{\mathcal O}_{\mathbf c}$) in ${\mathfrak g}_2$, it is enough to only consider $L_{E}$ rather that $G^{\iota}$. Because of our description of $L_{E}$ in corollary~\ref{C:LeviSubgpAssociatedNilpotent} above and the results in section 11 to 13 of \cite{L1984}, then the combinatorics is known. 

\end{remark}

\subsection{}\label{SS:LocalSystemNumber}
For $GL_N({\mathbf k})$,  ($N \geq 1$),  it is known  that the unipotent classes  are in one-to-one correspondance with the partition of  $N = 1 \cdot i_1 + 2 \cdot i_2 + 3 \cdot i_3 + \dots$, where $i_1, i_2, i_3, \dots \geq 0$. For a unipotent element $u \in GL_N({\mathbf k})$ in the unipotent class ${\mathcal Cl}(u)$, there is one and only one irreducible $GL_N({\mathbf k})$-equivariant local system (up to isomorphism) on ${\mathcal Cl}(u)$, because it is easy to verify that the centralizer $Z_{GL_N({\mathbf k})}(u)$ of $u$ is connected. 

For $Sp_{2n}({\mathbf k})$ with $char({\mathbf k}) \ne 2$, it is known that the unipotent classes are in one-to-one correspondance with the partition of $2n = 1 \cdot i_1 + 2 \cdot i_2 + 3 \cdot i_3 + \dots$, where $i_1, i_2, i_3, \dots \geq 0$ and $i_1, i_3, i_5, \dots $ are even. ($i_a$ is the number of Jordan blocks of size $a$ of a unipotent element and, in the previous section, we have written this partition multiplicatively as $1^{i_1} 2^{i_2} 3^{i_3} \dots$.) For a unipotent element $u \in Sp_{2n}({\mathbf k})$ in the unipotent class ${\mathcal Cl}(u)$ corresponding to the partition $2n = 1 \cdot i_1 + 2 \cdot i_2 + 3 \cdot i_3 + \dots$, then the number of irreducible $Sp_{2n}({\mathbf k})$-equivariant local systems (up to isomorphism) on ${\mathcal Cl}(u)$ is $2^{\vert\{\text{ $a$  even } \mid\  i_a \geq 1\}\vert}$. See 10.4 in \cite{L1984}. 

 For $SO_N({\mathbf k})$, ($N \geq 1$) with $char({\mathbf k}) \ne 2$, it is known that the unipotent classes correspond to partitions of $N = 1 \cdot i_1 + 2 \cdot i_2 + 3 \cdot i_3 + \dots$, where $i_1, i_2, i_3, \dots \geq 0$ and $i_2, i_4, i_6, \dots $ are even. ($i_a$ is the number of Jordan blocks of size $a$ of a unipotent element and, in the previous section, we have written this partition multiplicatively as $1^{i_1} 2^{i_2} 3^{i_3} \dots$.) This correspondance is one-to-one except that there are two unipotent classes (said to be degenerate) corresponding to partitions such that $i_1 = i_3 = i_5 = \dots = 0$, i.e. the corresponding partition is totally even. For a non-degenerate unipotent element $u \in SO_N({\mathbf k})$ in the unipotent class ${\mathcal Cl}(u)$ corresponding to the partition $N = 1 \cdot i_1 + 2 \cdot i_2 + 3 \cdot i_3 + \dots$, then the number of irreducible $SO_N({\mathbf k})$-equivariant local systems (up to isomorphism) on ${\mathcal Cl}(u)$ is $2^{(\vert \{\text{ $a$  odd } \mid\  i_a \geq 1\}\vert - 1)}$. For a degenerate unipotent element $u \in SO_N({\mathbf k})$ in the unipotent class ${\mathcal Cl}(u)$ corresponding to the partition $N = 1 \cdot i_1 + 2 \cdot i_2 + 3 \cdot i_3 + \dots$, then there is one and only one irreducible $SO_N({\mathbf k})$-equivariant local system (up to isomorphism) on ${\mathcal Cl}(u)$. See 10.6 in \cite{L1984}. 
 
 \begin{proposition}\label{P:LocalSystemCardinality}
 Let ${\mathbf c}:{\mathbf B}/\tau \rightarrow {\mathbb N}$ be a coefficient function in ${\mathfrak C}_{\delta}$ and the associated symmetric function $c:{\mathbf B} \rightarrow {\mathbb N}$. 
 \begin{enumerate}[\upshape (a)]
 \item If $0 \not\in \Lambda({\mathbf c})$, then there exits only one and only one  irreducible $G^{\iota}$-equivariant local system (up to isomorphism) on the $G^{\iota}$-orbit ${\mathcal O}_{\mathbf c}$  (respectively ${}'{\mathcal O}_{\mathbf c}$ or ${}''{\mathcal O}_{\mathbf c}$) in ${\mathfrak g}_2$. This local system is the constant  local system.
 \item If $0 \in \Lambda({\mathbf c})$ and either ($\vert {\mathcal I} \vert$ is even and $\epsilon = 1$) or ($\vert {\mathcal I} \vert$ is odd and $\epsilon = -1$), then there exits only one and only one  irreducible $G^{\iota}$-equivariant local system (up to isomorphism) on the $G^{\iota}$-orbit ${\mathcal O}_{\mathbf c}$  in ${\mathfrak g}_2$. This local system is the constant  local system.
 \item If $0 \in \Lambda({\mathbf c})$,  $\vert {\mathcal I} \vert$ is even and $\epsilon = -1$, then there are   
 \[
 2^{\vert \{b\  \in\   {\mathbf B}\  \mid \  \lambda(b) = 0,\   c(b)   > 0 \}\vert}
 \]
 irreducible $G^{\iota}$-equivariant local systems (up to isomorphism) on the $G^{\iota}$-orbit ${\mathcal O}_{\mathbf c}$ in ${\mathfrak g}_2$.  
  \item If $0 \in \Lambda({\mathbf c})$,  $\vert {\mathcal I} \vert$ is odd and $\epsilon = 1$, then there are   
 \[
 2^{(\vert \{b\  \in \  {\mathbf B}\  \mid \  \lambda(b) = 0,\   c(b) > 0 \}\vert - 1)}
 \]
 irreducible $G^{\iota}$-equivariant local systems (up to isomorphism) on $G^{\iota}$-orbit ${\mathcal O}_{\mathbf c}$ in ${\mathfrak g}_2$.  
 \end{enumerate}
 \end{proposition}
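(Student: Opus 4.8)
The strategy is to reduce the computation of the number of irreducible $G^{\iota}$-equivariant local systems on the given $G^{\iota}$-orbit to a computation on the Levi subgroup $L_E$ attached to a Jacobson-Morozov triple, and then to invoke the classical counting results recalled in \ref{SS:LocalSystemNumber}. The starting point is remark~\ref{R:CentralizerOverConnectedCentralizer}: for $E$ equal to $E_{\mathbf c}$ (respectively ${}'E_{\mathbf c}$, ${}''E_{\mathbf c}$), the natural maps
\[
Stab_{L^{\iota}_{E}}(E) / Stab_{L^{\iota}_{E}}^0(E) \rightarrow Stab_{L_{E}}(E) / Stab_{L_{E}}^0(E) \rightarrow Stab_{G^{\iota}}(E) / Stab_{G^{\iota}}^0(E)
\]
are isomorphisms, so the number of irreducible $G^{\iota}$-equivariant local systems (up to isomorphism) on the orbit of $E$ equals the number of irreducible characters of the finite group $A(E) := Stab_{L_E}(E)/Stab^0_{L_E}(E)$, equivalently $\vert A(E)\vert$ since $A(E)$ is abelian in all the cases at hand (it is a product of component groups of centralizers of unipotent elements in $GL$, $Sp$, $SO$, each of which is an elementary abelian $2$-group). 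So the whole proposition becomes the assertion that $\vert A(E)\vert$ is $1$ in cases (a) and (b), is $2^{\vert\{b\in{\mathbf B}\mid\lambda(b)=0,\ c(b)>0\}\vert}$ in case (c), and is $2^{\vert\{b\in{\mathbf B}\mid\lambda(b)=0,\ c(b)>0\}\vert - 1}$ in case (d).

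\textbf{Key steps.} First I would translate the nilpotent $E$ into the language of unipotent classes: since $\exp$ (valid because $p$ is large, cf.\ \ref{SS:CharacteristicK}) carries nilpotent orbits bijectively onto unipotent classes preserving centralizers, $A(E)$ is the component group of the centralizer of the corresponding unipotent element. Next, by corollary~\ref{C:LeviSubgpAssociatedNilpotent}, $L_E$ is a product
\[
GL_{n_1}({\mathbf k}) \times \dots \times GL_{n_{\ell'-1}}({\mathbf k}) \times G',
\]
where $G'$ is $GL_{n_{\ell'}}$ when $\#\Lambda({\mathbf c})=\ell$ is even, and $G'$ is $SO_{n_{\ell'}}$ or $Sp_{n_{\ell'}}$ (according to whether the form is symmetric or skew) when $\ell$ is odd; and by proposition~\ref{P:LieSubgroupDescription} the image of $E$ in each factor has partition $\prod_{\lambda(b)=\lambda_r}\vert Supp(b)\vert^{c(b)}$ for $1\le r\le\ell'$. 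Since component groups of a direct product are the products of the component groups, and each $GL_{n_r}$-factor contributes a \emph{connected} centralizer (by \ref{SS:LocalSystemNumber}), only the factor $G'$ contributes. Now: if $0\notin\Lambda({\mathbf c})$ (case (a)) then $\ell$ is even by remark~\ref{R:Remarks6.9}(a), so $G'=GL_{n_{\ell'}}$ and $A(E)$ is trivial, giving the constant local system — this also covers the ${}'{\mathcal O}_{\mathbf c}$, ${}''{\mathcal O}_{\mathbf c}$ cases since there $0\notin\Lambda({\mathbf c})$ always. If $0\in\Lambda({\mathbf c})$ then $\ell$ is odd (remark~\ref{R:Remarks6.9}(a)), $G'$ is $SO_{n_{\ell'}}$ when $\epsilon=1$ and $Sp_{n_{\ell'}}$ when $\epsilon=-1$, and the relevant partition is the one indexed by $r=\ell'$, namely $\pi := \prod_{\lambda(b)=0}\vert Supp(b)\vert^{c(b)}$. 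Here I would observe, using remark~\ref{R:Remarks6.9}(b) and the interpretation of $\lambda$ via the ${\mathcal I}$-diagram, that $\lambda(b)=0$ forces $b$ to be on the principal diagonal, hence $\vert Supp(b)\vert$ is odd precisely when $\vert{\mathcal I}\vert$ is odd and even precisely when $\vert{\mathcal I}\vert$ is even; moreover the multiplicity of a given odd (resp.\ even) part size $a$ appearing in $\pi$ is $\sum_{\lambda(b)=0,\ \vert Supp(b)\vert = a} c(b)$, and one must check this is even or odd as the classical constraints require.

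\textbf{Finishing the count and the main obstacle.} With $\pi=\prod a^{i_a}$ as above, apply the formulas of \ref{SS:LocalSystemNumber}: for $Sp_{n_{\ell'}}$ the number of local systems on the class of the corresponding unipotent element is $2^{\vert\{a\ \mathrm{even}\mid i_a\ge 1\}\vert}$, and since (when $\vert{\mathcal I}\vert$ is even, $\epsilon=-1$) the distinct even part sizes occurring in $\pi$ are exactly the distinct values $\vert Supp(b)\vert$ for $b$ with $\lambda(b)=0$, $c(b)>0$ — and for distinct such $b$ one has \emph{distinct} support sizes (because an ${\mathcal I}$-box on the principal diagonal is determined up to $\tau$ by the pair $(i,-i)$, hence by $\vert Supp(b)\vert$) — the exponent equals $\vert\{b\in{\mathbf B}\mid\lambda(b)=0,\ c(b)>0\}\vert$, giving (c). Similarly for $SO_{n_{\ell'}}$ (case $\vert{\mathcal I}\vert$ odd, $\epsilon=1$): the unipotent element is non-degenerate — one must verify this, i.e.\ that $\pi$ is not totally even, which holds because every $\vert Supp(b)\vert$ with $\lambda(b)=0$ is odd and at least one such $b$ has $c(b)>0$ — so the number of local systems is $2^{(\vert\{a\ \mathrm{odd}\mid i_a\ge 1\}\vert - 1)} = 2^{(\vert\{b\mid\lambda(b)=0,\ c(b)>0\}\vert-1)}$, giving (d); and the intermediate cases ($\vert{\mathcal I}\vert$ even with $\epsilon=1$, i.e.\ $G'=SO$ with an odd-part-free partition, hence totally even hence degenerate class; or $\vert{\mathcal I}\vert$ odd with $\epsilon=-1$, i.e.\ $G'=Sp$ with partition having only odd part sizes, hence $\{a\ \mathrm{even}\mid i_a\ge 1\}=\emptyset$) both yield a unique local system, giving (b). The main obstacle I anticipate is the careful verification that component groups behave correctly under the identifications of remark~\ref{R:CentralizerOverConnectedCentralizer} together with the bookkeeping that the distinct principal-diagonal ${\mathcal I}$-boxes with nonzero coefficient contribute \emph{distinct} part sizes to $\pi$ (so that counting distinct part sizes is the same as counting such boxes), and the degenerate/non-degenerate dichotomy for $SO$ — in particular checking that in case (b) with $SO$ the relevant partition really is totally even and hence the class is of the degenerate type admitting a single local system rather than splitting.
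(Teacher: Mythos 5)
Your proposal is correct and follows essentially the same route as the paper's proof: reduce via remark~\ref{R:CentralizerOverConnectedCentralizer} to the Levi $L_E$ of corollary~\ref{C:LeviSubgpAssociatedNilpotent}, discard the $GL$-factors (connected centralizers), and apply the counts of \ref{SS:LocalSystemNumber} to the last factor using the partition from proposition~\ref{P:LieSubgroupDescription}, with the same parity/degeneracy case analysis (including the observation that distinct principal-diagonal boxes contribute distinct part sizes). The only cosmetic difference is your explicit detour through the exponential map to unipotent classes, which the paper avoids by having already noted in \ref{SS:ConjClassJordanPart} that the unipotent-class statements apply to nilpotent classes for $p$ large or zero.
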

 \begin{proof}
 By remark~\ref{R:CentralizerOverConnectedCentralizer}, we get that the number of irreducible $G^{\iota}$-equivariant local systems (up to isomorphism) on the $G^{\iota}$-orbit ${\mathcal O}_{\mathbf c}$ (respectively ${}'{\mathcal O}_{\mathbf c}$ or ${}''{\mathcal O}_{\mathbf c}$) in ${\mathfrak g}_2$ is 
 \[
 \vert Stab_{G^{\iota}}(E)/Stab^0_{G^{\iota}}(E)\vert =  \vert Stab_{L_E}(E)/Stab^0_{L_E}(E)\vert 
 \]
 We have described $L_E$ in corollary~\ref{C:LeviSubgpAssociatedNilpotent}.  In each case of this corollary, $L_E$ is a product of connected reductive groups and using external tensor product, we get the local systems from the ones on each factor. The first $(\ell' - 1)$ factors in this product  are isomorphic to a general linear group and from~\ref{SS:LocalSystemNumber} there is one and only one irreducible $L_E\vert_{U_r}$-equivariant local system (up to isomorphism) on the class of the restriction of $E$ to $U_r$, where $1 \leq r \leq (\ell' - 1)$. So to compute the number of irreducible $G^{\iota}$-equivariant local systems (up to isomorphism), we just have to consider the last factor in the product of $L_E$.
 
 (a) If $0 \not\in \Lambda({\mathbf c})$, then $\vert \Lambda({\mathbf c})\vert = \ell$ is even as note remark~\ref{R:Remarks6.9} (a) and the last factor in the product of $L_E$ is $GL_{n_{\ell'}}({\mathbf k})$ as we saw in corollary~\ref{C:LeviSubgpAssociatedNilpotent}. Here $\ell = 2\ell'$. In this case we have to consider a unipotent element $u$ in the  unipotent class corresponding to the partition 
 \[
\prod_{\begin{subarray}{c} b \in {\mathbf B}\\ \lambda(b) = \lambda_{\ell'} \end{subarray}}  \vert Supp(b) \vert^{c(b)}.
\]
 From~\ref{SS:LocalSystemNumber}, we get that there is one and only  irreducible $GL_N({\mathbf k})$-equivariant local system (up to isomorphism) on ${\mathcal Cl}(u)$. Thus there is one and only one irreducible $G^{\iota}$-equivariant local system (up to isomorphism) on the orbit ${\mathcal O}_{\mathbf c}$  (respectively ${}'{\mathcal O}_{\mathbf c}$ or ${}''{\mathcal O}_{\mathbf c}$) in ${\mathfrak g}_2$. This local system is the constant  local system.
 
 (b) If $0 \in \Lambda({\mathbf c})$,  $\vert {\mathcal I} \vert$ is even and $\epsilon = 1$, then $\vert \Lambda({\mathbf c}) \vert= \ell = (2\ell' - 1)$ is odd and and the last factor in the product of $L_E$ is $SO_{n_{\ell'}}({\mathbf k})$ as we saw in corollary~\ref{C:LeviSubgpAssociatedNilpotent}. In this case we have to consider a unipotent element $u$ in the  unipotent class corresponding to the partition 
 \[
\prod_{\begin{subarray}{c} b \in {\mathbf B}\\ \lambda(b) = \lambda_{\ell'} \end{subarray}}  \vert Supp(b) \vert^{c(b)}.
\]
Because $\vert {\mathcal I} \vert$ is even, then $\vert Supp(b) \vert$ is even for all ${\mathcal I}$-boxes $b$ on the principal diagonal. Note also if $b \in {\mathbf B}$ is on the principal diagonal, then $\tau(b)$ is also on the principal diagonal and the $\langle \tau \rangle$-orbit of $b$ is $\{b, \tau(b)\}$, where $\tau(b) \ne b$. Also $c(b) = c(\tau(b))$. So $u$ is a degenerate unipotent element of $L_E$ and then there is one and only one irreducible $SO_{n_{\ell'}}(k)$-equivariant local system (up to isomorphism) on ${\mathcal Cl}(u)$. Thus there is one and only one irreducible $G^{\iota}$-equivariant local system (up to isomorphism) on the orbit ${\mathcal O}_{\mathbf c}$  in ${\mathfrak g}_2$. 

If $0 \in \Lambda({\mathbf c})$,  $\vert {\mathcal I} \vert$ is odd and $\epsilon = -1$, then  $\vert \Lambda({\mathbf c}) \vert = \ell = (2\ell' - 1)$ is odd and and the last factor in the product of $L_E$ is $Sp_{n_{\ell'}}(k)$ as we saw in corollary~\ref{C:LeviSubgpAssociatedNilpotent}. In this case we have to consider a unipotent element $u$ in the  unipotent class corresponding to the partition 
 \[
\prod_{\begin{subarray}{c} b \in {\mathbf B}\\ \lambda(b) = \lambda_{\ell'} \end{subarray}}  \vert Supp(b) \vert^{c(b)}.
\]
Because $\vert {\mathcal I} \vert$ is odd, then $\vert Supp(b) \vert$ is odd for all ${\mathcal I}$-boxes $b$ on the principal diagonal.  Note also if $b \in {\mathbf B}$ is on the principal diagonal, then $\tau(b)$ is also on the principal diagonal and the $\langle \tau \rangle$-orbit of $b$ is $\{b, \tau(b)\}$, where $\tau(b) \ne b$. Also $c(b) = c(\tau(b))$. So in this case, there are no even part in the partition for the class ${\mathcal Cl}(u)$. Consequently from~\ref{SS:LocalSystemNumber}, there  is one and only one irreducible $Sp_{n_{\ell'}}(k)$-equivariant local system (up to isomorphism) on ${\mathcal Cl}(u)$. Thus there is one and only one irreducible $G^{\iota}$-equivariant local system (up to isomorphism) on the orbit ${\mathcal O}_{\mathbf c}$  in ${\mathfrak g}_2$. 

In case either ($\vert {\mathcal I} \vert$ is even and $\epsilon = 1$) or ($\vert {\mathcal I} \vert$ is odd and $\epsilon = -1$), then the unique irreducible $G^{\iota}$-equivariant local system (up to isomorphism) on the orbit ${\mathcal O}_{\mathbf c}$  is the constant  local system.

(c) If $0 \in \Lambda(c)$,  $\vert {\mathcal I} \vert$ is even and $\epsilon = -1$, then $\vert \Lambda({\mathbf c}) \vert = \ell = (2\ell' - 1)$ is odd and and the last factor in the product of $L_E$ is $Sp_{n_{\ell'}}(k)$ as we saw in corollary~\ref{C:LeviSubgpAssociatedNilpotent}. In this case we have to consider a unipotent element $u$ in the  unipotent class corresponding to the partition 
 \[
\prod_{\begin{subarray}{c} b \in {\mathbf B}\\ \lambda(b) = \lambda_{\ell'} \end{subarray}}  \vert Supp(b) \vert^{c(b)}.
\]
Because $\vert {\mathcal I} \vert$ is even, then $\vert Supp(b) \vert$ is even for all ${\mathcal I}$-boxes $b$ on the principal diagonal. Note also if $b \in {\mathbf B}$ is on the principal diagonal, then $\tau(b) = b$ and the $\langle \tau \rangle$-orbit of $b$ is $\{b\}$.  So all the parts of the partition corresponding to the class ${\mathcal Cl}(u)$ are even. From~\ref{SS:LocalSystemNumber}, there are 
 \[
 2^{\vert \{b\  \in\   {\mathbf B}\  \mid \  \lambda(b) = 0,\   c(b) > 0 \} \vert}
 \]
 irreducible $L_E$-equivariant local systems (up to isomorphism) on ${\mathcal Cl}(u)$. Consequently there are 
 \[
 2^{\vert \{b\  \in\   {\mathbf B}\  \mid \  \lambda(b) = 0,\   c(b) > 0 \} \vert}
 \]
 irreducible $G^{\iota}$-equivariant local systems (up to isomorphism) on the orbit ${\mathcal O}_{\mathbf c}$  in ${\mathfrak g}_2$. 

(d) If  $0 \in \Lambda(c)$,  $\vert {\mathcal I} \vert$ is odd and $\epsilon = 1$,  then $\vert \Lambda({\mathbf c}) \vert = \ell = (2\ell' - 1)$ is odd and and the last factor in the product of $L_E$ is $SO_{n_{\ell'}}(k)$ as we saw in corollary~\ref{C:LeviSubgpAssociatedNilpotent}. In this case we have to consider a unipotent element $u$ in the  unipotent class corresponding to the partition 
 \[
\prod_{\begin{subarray}{c} b \in {\mathbf B}\\ \lambda(b) = \lambda_{\ell'} \end{subarray}}  \vert Supp(b) \vert^{c(b)}.
\]
Because $\vert {\mathcal I} \vert$ is odd, then $\vert Supp(b) \vert$ is odd for all ${\mathcal I}$-boxes $b$ on the principal diagonal. Note also if $b \in {\mathbf B}$ is on the principal diagonal, then $\tau(b) = b$ and the $\langle \tau \rangle$-orbit of $b$ is $\{b\}$.  So all the parts of the partition corresponding to the class ${\mathcal Cl}(u)$ are odd. $u$ is a non-degenerate unipotent element. From~\ref{SS:LocalSystemNumber}, there are 
 \[
 2^{\vert \{b\  \in\   {\mathbf B}\  \mid \  \lambda(b) = 0,\   c(b) > 0 \}\vert  - 1}
 \]
irreducible $L_E$-equivariant local systems (up to isomorphism) on ${\mathcal Cl}(u)$. Consequently there are 
 \[
 2^{\vert \{b\  \in\   {\mathbf B}\  \mid \  \lambda(b) = 0,\   c(b) > 0 \}\vert - 1}
 \]
 irreducible $G^{\iota}$-equivariant local systems (up to isomorphism) on the orbit ${\mathcal O}_{\mathbf c}$  in ${\mathfrak g}_2$. 
 
 \end{proof}

\begin{example}
We will consider the orbits in example~\ref{E:4.23}. Recall that $G = Sp_6({\mathbf k})$ and $\iota(t) = \text{diag}(t^3, t, t, t^{-3}, t^{-1}, t^{-1})$. There are eight orbits. We will enumerate for each of these orbits ${\mathcal O}_{\mathbf c}$: the coefficient function  ${\bf c}$,   the tableau ${\mathcal T} \in {\mathfrak T}_{\delta}$, the set $\Lambda({\mathbf c})$, the integers $\ell$, $\ell'$, the dimensions: $n_1$, $n_2$, ... and $n_{\ell'}$, $L_{E_{\mathbf c}}$ (up to isomorphism) as in corollary~\ref{C:LeviSubgpAssociatedNilpotent}, the partition corresponding to the restriction of $E_{\mathbf c}$ on each of the factors of ${\mathfrak l}_{E_{\mathbf c}}$ given in corollary~\ref{P:LieSubgroupDescription}, the number of  irreducible   $G^{\iota}$-equivariant local systems (up to isomorphism) on the $G^{\iota}$-orbit  ${\mathcal O}_{\mathbf c}$ using proposition~\ref{P:LocalSystemCardinality} and  a list of irreducible   $G^{\iota}$-equivariant local systems on the $G^{\iota}$-orbit  ${\mathcal O}_{\mathbf c}$ using Lusztig's symbols as described in  \cite{L1984}, when there is more than one such irreducible   $G^{\iota}$-equivariant local system on the orbit. In this last case  when we list the  irreducible   $G^{\iota}$-equivariant local systems on the $G^{\iota}$-orbit, we will enumerate the symbols corresponding to the last factor of $L_{E_{\mathbf c}}$ that is a symplectic group. Note that,  for all the other factors,  they  are general linear groups and the local systems are trivial.

The table we gave in example~\ref{E:4.23} is below. 
\begin{table}[h]
	\begin{center}\renewcommand{\arraystretch}{1.25}
		\begin{tabular} {| l || r  | r  | r  | r  | r |  r | c |}
			\hline
			Orbit &  $10$  & $01_0$ & $01_1$ & $11_0$ & $11_1$ & $12$ & Jordan Decomposition \\ \hline
			${\mathcal O}_5^1$ & $0$ & $0$ & $1$ & $0$ & $1$ & $0$ & $2^1 4^1$\\ \hline
			${\mathcal O}_4^1$ & $0$ & $0$ & $0$ & $0$ & $0$ & $1$ & $3^2$\\ \hline
			${\mathcal O}_4^2$ & $0$ & $1$ & $0$ & $0$ & $1$ & $0$ & $1^2 4$\\ \hline
			${\mathcal O}_3^1$ & $0$ & $0$ & $1$ & $1$ & $0$ & $0$ & $2^3$\\ \hline
			${\mathcal O}_3^2$ & $1$ & $0$ & $2$ & $0$ & $0$ & $0$ & $1^2 2^2$\\ \hline
			${\mathcal O}_2^1$ & $0$ & $1$ & $0$ & $1$ & $0$ & $0$ & $1^2 2^2$\\ \hline
			${\mathcal O}_2^2$ & $1$ & $1$ & $1$ & $0$ & $0$ & $0$ & $1^4 2^1$\\ \hline
			${\mathcal O}_0^1$ & $1$ & $2$ & $0$ & $0$ & $0$ & $0$  & $1^6$\\ \hline
		\end{tabular}
	\end{center}
	\caption{List of the values of coefficient function ${\mathbf c}$.}\label{T:Table6}
\end{table}

\begin{itemize}
\item For the orbit ${\mathcal O}_5^1$ of dimension 5, we have that ${\bf c} = (0, 0, 1, 0 , 1, 0)$

\begin{center}
${\mathcal T}_5^1$ = 
\ytableausetup{centertableaux}
\begin{ytableau}
\none & \none [-3] & \none [-1] & \none [1] & \none [3] \\ \none [{\phantom -}3] & 1 & 0 & 0 & 0  \\ \none [{\phantom -}1] & 0 & 1 & 0\\  \none [-1] & 0 & 0 \\ \none [-3] & 0\\ \end{ytableau}
\end{center}
$\Lambda({\mathbf c}) = \{0\}$, $\ell = 1$, $\ell' = 1$, $n_1 = 6$, $L_{E_{\mathbf c}} = Sp_{6}({\mathbf k})$ and the partition corresponding to the nilpotent element $E_{\mathbf c}$  on the unique factor is $2^1 4^1$.  There are four  irreducible   $G^{\iota}$-equivariant local systems (up to isomorphism)  on the $G^{\iota}$-orbit  ${\mathcal O}_5^1$ and the  corresponding symbols are
\[
(\{0, 4\}, \{2\}), \quad (\{0, 2\}, \{4\}), \quad (\{0\}, \{2, 4\}), \quad (\{0, 2, 4\}, \emptyset)
\]

\item For the orbit ${\mathcal O}_4^1$ of dimension 4, we have that ${\bf c} = (0, 0, 0, 0 , 0, 1)$

\begin{center}
${\mathcal T}_4^1$ = 
\ytableausetup{centertableaux}
\begin{ytableau}
\none & \none [-3] & \none [-1] & \none [1] & \none [3] \\ \none [{\phantom -}3] & 0 & 1 & 0 & 0  \\ \none [{\phantom -}1] & 1 & 0 & 0\\  \none [-1] & 0 & 0 \\ \none [-3] & 0\\ \end{ytableau}
\end{center}
$\Lambda({\mathbf c}) = \{-1, 1\}$, $\ell = 2$, $\ell' = 1$, $n_1 = 3$, $L_{E_{\mathbf c}} \equiv GL_{3}({\mathbf k})$ and the partition corresponding to the nilpotent element $E_{\bf c}$  on the unique factor is $3^1$.  There is a unique  $G^{\iota}$-equivariant local system (up to isomorphism)  on the $G^{\iota}$-orbit  ${\mathcal O}_4^1$: the trivial local system.

\item For the orbit ${\mathcal O}_4^2$ of dimension 4, we have that ${\bf c} = (0, 1, 0, 0 , 1, 0)$

\begin{center}
${\mathcal T}_4^2$ = 
\ytableausetup{centertableaux}
\begin{ytableau}
\none & \none [-3] & \none [-1] & \none [1] & \none [3] \\ \none [{\phantom -}3] & 1 & 0 & 0 & 0  \\ \none [{\phantom -}1] & 0 & 0 & 1\\  \none [-1] & 0 & 1 \\ \none [-3] & 0\\ \end{ytableau}
\end{center}
$\Lambda({\mathbf c}) = \{-1, 0,  1\}$, $\ell = 3$, $\ell' = 2$, $n_1 = 1$, $n_2 = 4$, $L_{E_{\mathbf c}} \equiv GL_{1}({\mathbf k}) \times Sp_4({\mathbf k})$,  the partition corresponding to the nilpotent element $E_{\mathbf c}$  on the first factor is $1^1$ and the partition corresponding to the nilpotent element $E_{\mathbf c}$  on the second factor is $4^1$.  There are two  irreducible   $G^{\iota}$-equivariant local systems (up to isomorphism)  on the $G^{\iota}$-orbit  ${\mathcal O}_4^2$: on the first factor, it is always the trivial local system  and on the second factor, the  corresponding symbols for the local systems are
\[
(\emptyset, \{2\}), \quad (\{2\}, \emptyset)
\]

\item For the orbit ${\mathcal O}_3^1$ of dimension 3, we have that ${\bf c} = (0, 0, 1, 1 , 0, 0)$

\begin{center}
${\mathcal T}_3^1$ = 
\ytableausetup{centertableaux}
\begin{ytableau}
\none & \none [-3] & \none [-1] & \none [1] & \none [3] \\ \none [{\phantom -}3] & 0 & 0 & 1 & 0  \\ \none [{\phantom -}1] & 0 & 1 & 0\\  \none [-1] & 1 & 0 \\ \none [-3] & 0\\ \end{ytableau}
\end{center}
$\Lambda({\mathbf c})  = \{-2, 0,  2\}$, $\ell = 3$, $\ell' = 2$, $n_1 = 2$, $n_2 = 2$, $L_{E_{\mathbf c}} \equiv GL_{2}({\mathbf k}) \times Sp_2({\mathbf k})$,  the partition corresponding to the nilpotent element $E_{\bf c}$  on the first factor is $2^1$ and the partition corresponding to the nilpotent element $E_{\bf c}$  on the second factor is $2^1$.  There are two  irreducible   $G^{\iota}$-equivariant local systems (up to isomorphism)  on the $G^{\iota}$-orbit  ${\mathcal O}_3^1$: on the first factor, it is always the trivial local system  and on the second factor, the  corresponding symbols for the local systems are
\[
(\emptyset, \{1\}), \quad (\{1\}, \emptyset)
\]

\item For the orbit ${\mathcal O}_3^2$ of dimension 3, we have that ${\bf c} = (1, 0, 2, 0, 0, 0)$

\begin{center}
${\mathcal T}_3^2$ = 
\ytableausetup{centertableaux}
\begin{ytableau}
\none & \none [-3] & \none [-1] & \none [1] & \none [3] \\ \none [{\phantom -}3] & 0 & 0 & 0 & 1  \\ \none [{\phantom -}1] & 0 & 2 & 0\\  \none [-1] & 0 & 0 \\ \none [-3] & 1\\ \end{ytableau}
\end{center}
$\Lambda({\mathbf c}) = \{-3, 0,  3\}$, $\ell = 3$, $\ell' = 2$, $n_1 = 1$, $n_2 = 2$, $L_{E_{\mathbf c}} \equiv GL_{1}({\mathbf k}) \times Sp_4({\mathbf k})$,  the partition corresponding to the nilpotent element $E_{\bf c}$  on the first factor is $1^1$ and the partition corresponding to the nilpotent element $E_{\bf c}$  on the second factor is $2^2$.  There are two  irreducible   $G^{\iota}$-equivariant local systems (up to isomorphism)  on the $G^{\iota}$-orbit  ${\mathcal O}_3^2$: on the first factor, it is always the trivial local system  and on the second factor, the  corresponding symbols for the local systems are
\[
(\{0, 2\}, \{3\}), \quad (\{0, 3\}, \{2\})
\]

\item For the orbit ${\mathcal O}_2^1$ of dimension 2, we have that ${\bf c} = (0, 1, 0, 1, 0, 0)$

\begin{center}
${\mathcal T}_2^1$ = 
\ytableausetup{centertableaux}
\begin{ytableau}
\none & \none [-3] & \none [-1] & \none [1] & \none [3] \\ \none [{\phantom -}3] & 0 & 0 & 1 & 0  \\ \none [{\phantom -}1] & 0 & 0 & 1\\  \none [-1] & 1 & 1 \\ \none [-3] & 0\\ \end{ytableau}
\end{center}
$\Lambda({\mathbf c}) = \{-2, -1, 1, 2\}$, $\ell = 4$, $\ell' = 2$, $n_1 = 2$, $n_2 = 1$, $L_{E_{\mathbf c}} \equiv GL_{2}({\mathbf k}) \times GL_1({\mathbf k})$, the partition corresponding to the nilpotent element $E_{\bf c}$  on the unique factor is $2^1$ and the partition corresponding to the nilpotent element $E_{\bf c}$  on the second factor is $1^1$.  There is a unique  $G^{\iota}$-equivariant local system (up to isomorphism)  on the $G^{\iota}$-orbit  ${\mathcal O}_2^1$: the trivial local system.

\item For the orbit ${\mathcal O}_2^2$ of dimension 2, we have that ${\bf c} = (1, 1, 1, 0, 0, 0)$

\begin{center}
${\mathcal T}_2^2$ = 
\ytableausetup{centertableaux}
\begin{ytableau}
\none & \none [-3] & \none [-1] & \none [1] & \none [3] \\ \none [{\phantom -}3] & 0 & 0 & 0 & 1  \\ \none [{\phantom -}1] & 0 & 1 & 1\\  \none [-1] & 0 & 1 \\ \none [-3] & 1\\ \end{ytableau}
\end{center}
$\Lambda({\mathbf c})  = \{-3, -1, 0, 1,  3\}$, $\ell = 5$, $\ell' = 3$, $n_1 = 1$, $n_2 = 1$, $n_3 = 2$, $L_{E_{\mathbf c}} \equiv GL_{1}({\mathbf k}) \times GL_{1}({\mathbf k}) \times Sp_2({\mathbf k})$,  the partition corresponding to the nilpotent element $E_{\bf c}$  on the first two factors is $1^1$ and the partition corresponding to the nilpotent element $E_{\bf c}$  on the third factor is $2^1$.  There are two  irreducible   $G^{\iota}$-equivariant local systems (up to isomorphism)  on the $G^{\iota}$-orbit  ${\mathcal O}_2^2$: on the first two factor, it is always the trivial local system  and on the third factor, the  corresponding symbols for the local systems are
\[
(\emptyset, \{1\}), \quad (\{1\}, \emptyset)
\]

\item For the orbit ${\mathcal O}_0^1$ of dimension 0, we have that ${\bf c} = (1, 2, 0, 0, 0, 0)$

\begin{center}
${\mathcal T}_0^1$ = 
\ytableausetup{centertableaux}
\begin{ytableau}
\none & \none [-3] & \none [-1] & \none [1] & \none [3] \\ \none [{\phantom -}3] & 0 & 0 & 0 & 1  \\ \none [{\phantom -}1] & 0 & 0 & 2\\  \none [-1] & 0 & 2 \\ \none [-3] & 1\\ \end{ytableau}
\end{center}
$\Lambda({\mathbf c}) = \{-3, -1, 1, 3\}$, $\ell = 4$, $\ell' = 2$, $n_1 = 1$, $n_2 = 2$, $L_{E_{\mathbf c}} \equiv GL_{1}({\mathbf k}) \times GL_2({\mathbf k})$, the partition corresponding to the nilpotent element $E_{\bf c}$  on the first factor is $1^1$ and the partition corresponding to the nilpotent element $E_{\bf c}$  on the second factor is $1^2$.  There is a unique  $G^{\iota}$-equivariant local system (up to isomorphism)  on the $G^{\iota}$-orbit  ${\mathcal O}_0^1$: the trivial local system.

\end{itemize}

\end{example}

\begin{example}
We will consider the orbits in example~\ref{E:4.24}. Recall that $G = SO_{10}({\mathbf k})$ and $\iota(t) = \text{diag}(t^3, t^3,  t, t, t, t^{-3}, t^{-3}, t^{-1},  t^{-1}, t^{-1})$. There are eight orbits. We will enumerate for each of these orbits ${\mathcal O}_{\mathbf c}$: the coefficient function  ${\bf c}$,   the tableau ${\mathcal T} \in {\mathfrak T}_{\delta}$, the set $\Lambda({\mathbf c})$, the integers $\ell$, $\ell'$, the dimensions: $n_1$, $n_2$, ... and $n_{\ell'}$, $L_{E_{\mathbf c}}$ (up to isomorphism) as in corollary~\ref{C:LeviSubgpAssociatedNilpotent}, the partition corresponding to the restriction of $E_{\mathbf c}$ on each of the factors of ${\mathfrak l}_{E_{\mathbf c}}$ given in corollary~\ref{P:LieSubgroupDescription}, the number of  irreducible   $G^{\iota}$-equivariant local systems (up to isomorphism) on the $G^{\iota}$-orbit  ${\mathcal O}_{\mathbf c}$ using proposition~\ref{P:LocalSystemCardinality} and  a list of irreducible   $G^{\iota}$-equivariant local systems on the $G^{\iota}$-orbit  ${\mathcal O}_{\mathbf c}$ using Lusztig's symbols as described in  \cite{L1984}, when there is more than one such irreducible   $G^{\iota}$-equivariant local system on the orbit. In this last case  when we list the  irreducible   $G^{\iota}$-equivariant local systems on the $G^{\iota}$-orbit, we will enumerate the symbols corresponding to the last factor of $L_{E_{\mathbf c}}$ that is a special orthogonal group. Note that,  for all the other factors,  they  are general linear groups and the local systems are trivial. In the case of $\vert {\mathcal I}\vert$ even and $\epsilon = 1$, we have to be careful about the values in the boxes of the principal diagonal for the symmetric tableau ${\mathcal T} \in {\mathfrak T}_{\delta}$, the definition of the symmetric tableau is given in definition~\ref{D:SymmetricTableauIevenOrtho}.

The table we gave in example~\ref{E:4.24} is below.

\begin{table}[h]
	\begin{center}\renewcommand{\arraystretch}{1.25}
		\begin{tabular} {| l || r  | r  | r  | r  | r |  r | c |}
			\hline
			Orbit &  $10$  & $01$ & $11$ & $02$ & $12$ & $22$ & Jordan Decomposition \\ \hline
			${\mathcal O}_9^1$ & $0$ & $1$ & $0$ & $0$ & $0$ & $1$ & $1^2 4^2$\\ \hline
			${\mathcal O}_8^1$ & $0$ & $0$ & $1$ & $0$ & $1$ & $0$ & $2^2 3^2$\\ \hline
			${\mathcal O}_7^1$ & $1$ & $1$ & $0$ & $0$ & $1$ & $0$ & $1^4 3^2$\\ \hline
			${\mathcal O}_6^1$ & $0$ & $1$ & $2$ & $0$ & $0$ & $0$ & $1^2 2^4$\\ \hline
			${\mathcal O}_5^1$ & $1$ & $0$ & $1$ & $1$ & $0$ & $0$ & $1^2 2^4$\\ \hline
			${\mathcal O}_4^1$ & $1$ & $2$ & $1$ & $0$ & $0$ & $0$ & $1^6 2^2$\\ \hline
			${\mathcal O}_3^1$ & $2$ & $1$ & $0$ & $1$ & $0$ & $0$ & $1^6 2^2$\\ \hline
			${\mathcal O}_0^1$ & $2$ & $3$ & $0$ & $0$ & $0$ & $0$  & $1^{10}$\\ \hline
		\end{tabular}
	\end{center}
	\caption{List of the values of coefficient function ${\mathbf c}$.}\label{T:Table7}\end{table}

\begin{itemize}
\item For the orbit ${\mathcal O}_9^1$ of dimension 9,  we have that ${\bf c} = (0 , 1, 0, 0, 0, 1)$

\begin{center}
${\mathcal T}_9^1$ = 
\ytableausetup{centertableaux}
\begin{ytableau}
\none & \none [-3] & \none [-1] & \none [1] & \none [3] \\ \none [{\phantom -}3] & 2 & 0 & 0 & 0  \\ \none [{\phantom -}1] & 0 & 0 & 1\\  \none [-1] & 0 & 1 \\ \none [-3] & 0\\ \end{ytableau}
\end{center}
$\Lambda({\mathbf c}) = \{-1, 0, 1\}$, $\ell = 3$, $\ell' = 2$, $n_1 = 1$, $n_2 = 8$, $L_{\bf c} \equiv GL_{1}({\mathbf k}) \times SO_{8}({\mathbf k})$,  the partition corresponding to the nilpotent element $E_{\bf c}$  on the first  factor is $1^1$ and the partition corresponding to the nilpotent element $E_{\bf c}$  on the second factor is $4^2$.  There is a unique  $G^{\iota}$-equivariant local system (up to isomorphism) on the $G^{\iota}$-orbit  ${\mathcal O}_9^1$: the trivial local system.

\item For the orbit ${\mathcal O}_8^1$ of dimension 8, we have that ${\bf c} = (0 , 0, 1, 0, 1, 0)$

\begin{center}
${\mathcal T}_8^1$ = 
\ytableausetup{centertableaux}
\begin{ytableau}
\none & \none [-3] & \none [-1] & \none [1] & \none [3] \\ \none [{\phantom -}3] & 0 & 1 & 1 & 0  \\ \none [{\phantom -}1] & 1 & 0 & 0\\  \none [-1] & 1 & 0 \\ \none [-3] & 0\\ \end{ytableau}
\end{center}
$\Lambda({\mathbf c}) = \{-2, -1, 1, 2\}$, $\ell = 4$, $\ell' = 2$, $n_1 = 2$, $n_2 = 3$, $L_{\bf c} \equiv GL_{2}({\mathbf k}) \times GL_{3}({\mathbf k})$,  the partition corresponding to the nilpotent element $E_{\bf c}$  on the first  factor is $2^1$ and the partition corresponding to the nilpotent element $E_{\bf c}$  on the second factor is $3^1$.  There is a unique  $G^{\iota}$-equivariant local system (up to isomorphism)  on the $G^{\iota}$-orbit  ${\mathcal O}_8^1$: the trivial local system.

\item For the orbit ${\mathcal O}_7^1$ of dimension 7, we have that ${\bf c} = (1 , 1, 0, 0, 1, 0)$

\begin{center}
${\mathcal T}_7^1$ = 
\ytableausetup{centertableaux}
\begin{ytableau}
\none & \none [-3] & \none [-1] & \none [1] & \none [3] \\ \none [{\phantom -}3] & 0 & 1 & 0 & 1  \\ \none [{\phantom -}1] & 1 & 0 & 1\\  \none [-1] & 0 & 1 \\ \none [-3] & 1\\ \end{ytableau}
\end{center}
$\Lambda({\mathbf c}) = \{-3, -1, 1, 3\}$, $\ell = 4$, $\ell' = 2$, $n_1 = 1$, $n_2 = 4$, $L_{\bf c} \equiv GL_{1}({\mathbf k}) \times GL_{4}({\mathbf k})$,  the partition corresponding to the nilpotent element $E_{\bf c}$  on the first  factor is $1^1$ and the partition corresponding to the nilpotent element $E_{\bf c}$  on the second factor is $1^1 3^1$.  There is a unique  $G^{\iota}$-equivariant local system (up to isomorphism)  on the $G^{\iota}$-orbit  ${\mathcal O}_7^1$: the trivial local system.

\item For the orbit ${\mathcal O}_6^1$ of dimension 6, we have that ${\bf c} = (0 , 1, 2, 0, 0, 0)$

\begin{center}
${\mathcal T}_6^1$ = 
\ytableausetup{centertableaux}
\begin{ytableau}
\none & \none [-3] & \none [-1] & \none [1] & \none [3] \\ \none [{\phantom -}3] & 0 & 0 & 2 & 0  \\ \none [{\phantom -}1] & 0 & 0 & 1\\  \none [-1] & 2 & 1 \\ \none [-3] & 0\\ \end{ytableau}
\end{center}
$\Lambda({\mathbf c}) = \{-2, -1, 1, 2\}$, $\ell = 4$, $\ell' = 2$, $n_1 = 4$, $n_2 = 1$, $L_{\bf c} \equiv GL_{4}({\mathbf k}) \times GL_{1}({\mathbf k})$,  the partition corresponding to the nilpotent element $E_{\bf c}$  on the first  factor is $2^2$ and the partition corresponding to the nilpotent element $E_{\bf c}$  on the second factor is $1^1$.  There is a unique  $G^{\iota}$-equivariant local system (up to isomorphism)  on the $G^{\iota}$-orbit  ${\mathcal O}_6^1$: the trivial local system.

\item For the orbit ${\mathcal O}_5^1$ of dimension 5, we have that ${\bf c} = (0 , 1, 2, 0, 0, 0)$

\begin{center}
${\mathcal T}_5^1$ = 
\ytableausetup{centertableaux}
\begin{ytableau}
\none & \none [-3] & \none [-1] & \none [1] & \none [3] \\ \none [{\phantom -}3] & 0 & 0 & 1 & 1  \\ \none [{\phantom -}1] & 0 & 2 & 0\\  \none [-1] & 1 & 0 \\ \none [-3] & 1\\ \end{ytableau}
\end{center}
$\Lambda({\mathbf c}) = \{-3, -2, 0,  2, 3\}$, $\ell = 5$, $\ell' = 3$, $n_1 = 1$, $n_2 = 2$, $n_3 = 4$, $L_{\bf c} \equiv GL_{1}({\mathbf k}) \times GL_{2}({\mathbf k}) \times SO_4({\mathbf k})$,  the partition corresponding to the nilpotent element $E_{\bf c}$  on the first  factor is $1^1$, the partition corresponding to the nilpotent element $E_{\bf c}$  on the second factor is $2^1$ and the partition corresponding to the nilpotent element $E_{\bf c}$  on the third factor is $2^2$ .  There is a unique  $G^{\iota}$-equivariant local system (up to isomorphism)  on the $G^{\iota}$-orbit  ${\mathcal O}_5^1$: the trivial local system.

\item For the orbit ${\mathcal O}_4^1$ of dimension 4, we have that ${\bf c} = (1 , 2, 1, 0, 0, 0)$

\begin{center}
${\mathcal T}_4^1$ = 
\ytableausetup{centertableaux}
\begin{ytableau}
\none & \none [-3] & \none [-1] & \none [1] & \none [3] \\ \none [{\phantom -}3] & 0 & 0 & 1 & 1  \\ \none [{\phantom -}1] & 0 & 0 & 2\\  \none [-1] & 1 & 2 \\ \none [-3] & 1\\ \end{ytableau}
\end{center}
$\Lambda({\mathbf c}) = \{-3, -2, -1, 1,  2, 3\}$, $\ell = 6$, $\ell' = 3$, $n_1 = 1$, $n_2 = 2$, $n_3 = 2$, $L_{\bf c} = GL_{1}({\mathbf k}) \times GL_{2}({\mathbf k}) \times GL_2({\mathbf k})$,  the partition corresponding to the nilpotent element $E_{\bf c}$  on the first  factor is $1^1$, the partition corresponding to the nilpotent element $E_{\bf c}$  on the second factor is $2^1$ and the partition corresponding to the nilpotent element $E_{\bf c}$  on the third factor is $1^2$ .  There is a unique  $G^{\iota}$-equivariant local system  (up to isomorphism) on the $G^{\iota}$-orbit  ${\mathcal O}_4^1$: the trivial local system.

\item For the orbit ${\mathcal O}_3^1$ of dimension 3, we have that ${\bf c} = (2 , 1, 0, 1, 0, 0)$

\begin{center}
${\mathcal T}_3^1$ = 
\ytableausetup{centertableaux}
\begin{ytableau}
\none & \none [-3] & \none [-1] & \none [1] & \none [3] \\ \none [{\phantom -}3] & 0 & 0 & 0 & 2  \\ \none [{\phantom -}1] & 0 & 2 & 1\\  \none [-1] & 0 & 1 \\ \none [-3] & 2\\ \end{ytableau}

\end{center}
$\Lambda({\mathbf c}) = \{-3, -1, 0,  1,  3\}$, $\ell = 5$, $\ell' = 3$, $n_1 = 2$, $n_2 = 1$, $n_3 = 4$, $L_{\bf c} \equiv GL_{2}({\mathbf k}) \times GL_{1}({\mathbf k}) \times SO_4({\mathbf k})$,  the partition corresponding to the nilpotent element $E_{\bf c}$  on the first  factor is $1^2$, the partition corresponding to the nilpotent element $E_{\bf c}$  on the second factor is $1^1$ and the partition corresponding to the nilpotent element $E_{\bf c}$  on the third factor is $2^2$ .  There is a unique  $G^{\iota}$-equivariant local system (up to isomorphism)  on the $G^{\iota}$-orbit  ${\mathcal O}_3^1$: the trivial local system.

\item For the orbit ${\mathcal O}_0^1$ of dimension 0, we have that ${\bf c} = (2 , 3, 0, 0, 0, 0)$

\begin{center}
${\mathcal T}_0^1$ = 
\ytableausetup{centertableaux}
\begin{ytableau}
\none & \none [-3] & \none [-1] & \none [1] & \none [3] \\ \none [{\phantom -}3] & 0 & 0 & 0 & 2  \\ \none [{\phantom -}1] & 0 & 0 & 3\\  \none [-1] & 0 & 3 \\ \none [-3] & 2\\ \end{ytableau}
\end{center}
$\Lambda({\mathbf c}) = \{-3, -1,  1,  3\}$, $\ell = 4$, $\ell' = 2$, $n_1 = 2$, $n_2 = 3$, $L_{\bf c} \equiv GL_{2}({\mathbf k}) \times GL_{3}({\mathbf k})$,  the partition corresponding to the nilpotent element $E_{\bf c}$  on the first  factor is $1^2$ and the partition corresponding to the nilpotent element $E_{\bf c}$  on the second factor is $1^3$. There is a unique  $G^{\iota}$-equivariant local system (up to isomorphism)  on the $G^{\iota}$-orbit  ${\mathcal O}_0^1$: the trivial local system.
\end{itemize}
\end{example}

\begin{example}
We will consider the orbits in example~\ref{E:5.28}. Recall that $G = Sp_{12}({\mathbf k})$ and $\iota(t) = \text{diag}(t^4, t^4,  t^2, t^2, 1, 1, t^{-4}, t^{-4}, t^{-2},  t^{-2}, 1, 1)$. There are thirteen orbits. We will enumerate for each of these orbits ${\mathcal O}_{\mathbf c}$: the coefficient function  ${\bf c}$,   the tableau ${\mathcal T} \in {\mathfrak T}_{\delta}$, the set $\Lambda({\mathbf c})$, the integers $\ell$, $\ell'$, the dimensions: $n_1$, $n_2$, ... and $n_{\ell'}$, $L_{E_{\mathbf c}}$ (up to isomorphism) as in corollary~\ref{C:LeviSubgpAssociatedNilpotent}, the partition corresponding to the restriction of $E_{\mathbf c}$ on each of the factors of ${\mathfrak l}_{E_{\mathbf c}}$ given in corollary~\ref{P:LieSubgroupDescription}, the number of  irreducible   $G^{\iota}$-equivariant local systems (up to isomorphism) on the $G^{\iota}$-orbit  ${\mathcal O}_{\mathbf c}$ using proposition~\ref{P:LocalSystemCardinality} and  a list of irreducible   $G^{\iota}$-equivariant local systems on the $G^{\iota}$-orbit  ${\mathcal O}_{\mathbf c}$ using Lusztig's symbols as described in  \cite{L1984}, when there is more than one such irreducible   $G^{\iota}$-equivariant local system on the orbit. In this last case  when we list the  irreducible   $G^{\iota}$-equivariant local systems on the $G^{\iota}$-orbit, we will enumerate the symbols corresponding to the last factor of $L_{E_{\mathbf c}}$ that is a symplectic group. Note that,  for all the other factors,  they  are general linear groups and the local systems are trivial. In the case of $\vert {\mathcal I}\vert$ odd and $\epsilon = -1$, we have to be careful about the values in the boxes of the principal diagonal for the symmetric tableau ${\mathcal T} \in {\mathfrak T}_{\delta}$, the definition of the symmetric tableau is given in definition~\ref{D:SymmetricTableauOddSymplectic}

The table we gave in example~\ref{E:5.28} is below.

\begin{table}[h]
	\begin{center}\renewcommand{\arraystretch}{1.25}
		\begin{tabular} {| l || r  | r  | r  | r  | r |  r | r | r  | r | c |}
			\hline
			Orbit &  $100$  & $010$ & $002$ & $110$ & $022$ & $012$ & $112$ & $122$ & $222$ & Jordan Dec.  \\ \hline
			${\mathcal O}_{12}^1$ & $0$ & $0$ & $1$ & $0$ & $0$ & $0$ & $0$ &  $0$ & $1$ &$1^2 5^2$\\ \hline
			${\mathcal O}_{11}^1$ & $1$ & $0$ & $1$ & $0$ & $0$ & $0$ & $0$  & $1$ & $0$ &$1^4 4^2$\\ \hline
			${\mathcal O}_{11}^2$ & $0$ & $0$ & $0$ & $0$ & $0$ & $0$ & $2$  & $0$ & $0$ &$3^4$\\ \hline
			${\mathcal O}_{10}^1$ & $1$ & $0$ & $0$ & $0$ & $0$ & $1$ & $1$  & $0$ & $0$ &$1^2 2^2 3^2$\\ \hline
			${\mathcal O}_{9}^1$ & $0$ & $0$ & $1$ & $1$ & $0$ & $0$ & $1$  & $0$ & $0$ &$1^2 2^2 3^2$\\ \hline
			${\mathcal O}_{8}^1$ & $2$ & $0$ & $1$ & $0$ & $1$ & $0$ & $0$  & $0$ & $0$ &$1^6 3^2$\\ \hline
			${\mathcal O}_{8}^2$ & $1$ & $1$ & $1$ & $0$ & $0$ & $0$ & $1$  & $0$ & $0$ &$1^6 3^2$\\ \hline
			${\mathcal O}_{7}^1$ & $2$ & $0$ & $0$ & $0$ & $0$ & $2$ & $0$  & $0$ & $0$ &$1^4 2^4$\\ \hline
			${\mathcal O}_{7}^2$ & $1$ & $0$ & $1$ & $1$ & $0$ & $1$ & $0$  & $0$ & $0$ &$1^4 2^4$\\ \hline
			${\mathcal O}_{5}^1$ & $2$ & $1$ & $1$ & $0$ & $0$ & $1$ & $0$  & $0$ & $0$ &$1^8 2^2$\\ \hline
			${\mathcal O}_{4}^1$ & $0$ & $0$ & $2$ & $2$ & $0$ & $0$ & $0$  & $0$ & $0$ &$1^4 2^4$\\ \hline
			${\mathcal O}_{3}^1$ & $1$ & $1$ & $2$ & $1$ & $0$ & $0$ & $0$  & $0$ & $0$ &$1^8 2^2$\\ \hline
			${\mathcal O}_{0}^1$ & $2$ & $2$ & $2$ & $0$ & $0$ & $0$ & $0$  & $0$ & $0$ &$1^{12}$\\ \hline
		\end{tabular}
	\end{center}
	\caption{List of the values of coefficient function ${\mathbf c}$.}\label{T:Table8}\end{table}

\begin{itemize}
\item For the orbit ${\mathcal O}_{12}^1$ of dimension 12,  we have that ${\bf c} = (0, 0, 1, 0 , 0, 0, 0, 0, 1)$

\begin{center}
${\mathcal T}_{12}^1$ = 
\ytableausetup{centertableaux}
\begin{ytableau}
\none & \none [-4] & \none [-2] & \none [0] & \none [2] & \none [4]\\ \none [{\phantom -}4] & 2 & 0 & 0 & 0 & 0\\ \none [{\phantom -}2] & 0 & 0 & 0 & 0\\  \none [{\phantom -}0] & 0 & 0 & 2\\ \none [-2] & 0 & 0 \\ \none [-4] & 0 \\ \end{ytableau}
\end{center}
$\Lambda({\mathbf c}) = \{0\}$, $\ell = 1$, $\ell' = 1$, $n_1 = 12$, $L_{E_{\mathbf c}} = Sp_{12}({\mathbf k})$,  the partition corresponding to the nilpotent element $E_{\bf c}$  on the unique factor is $1^2 5^2$. There is a unique  $G^{\iota}$-equivariant local system (up to isomorphism)  on the $G^{\iota}$-orbit  ${\mathcal O}_{12}^1$: the trivial local system. 

\item For the orbit ${\mathcal O}_{11}^1$ of dimension 11,  we have that ${\bf c} = (1, 0, 1, 0 , 0, 0, 0, 1, 0)$

\begin{center}
${\mathcal T}_{11}^1$ = 
\ytableausetup{centertableaux}
\begin{ytableau}
\none & \none [-4] & \none [-2] & \none [0] & \none [2] & \none [4]\\ \none [{\phantom -}4] & 0 & 1 & 0 & 0 & 1\\ \none [{\phantom -}2] & 1 & 0 & 0 & 0\\  \none [{\phantom -}0] & 0 & 0 & 2\\ \none [-2] & 0 & 0 \\ \none [-4] & 1 \\ \end{ytableau}
\end{center}
$\Lambda({\mathbf c}) = \{-4, -1, 0, 1, 4\}$, $\ell = 5$, $\ell' = 3$, $n_1 = 1$, $n_2 = 4$, $n_3 =2$, $L_{E_{\mathbf c}} \equiv GL_{1}({\mathbf k}) \times GL_4({\mathbf k}) \times Sp_2({\mathbf k})$,  the partition corresponding to the nilpotent element $E_{\bf c}$  on the first factor is $1^1$, the partition corresponding to the nilpotent element $E_{\bf c}$  on the second factor is $4^1$ and the partition corresponding to the nilpotent element $E_{\bf c}$  on the third factor is $1^2$. There is a unique  $G^{\iota}$-equivariant local system (up to isomorphism)  on the $G^{\iota}$-orbit  ${\mathcal O}_{11}^1$: the trivial local system.

\item For the orbit ${\mathcal O}_{11}^2$ of dimension 11,  we have that ${\bf c} = (0, 0, 0, 0 , 0, 0, 2, 0, 0)$

\begin{center}
${\mathcal T}_{11}^2$ = 
\ytableausetup{centertableaux}
\begin{ytableau}
\none & \none [-4] & \none [-2] & \none [0] & \none [2] & \none [4]\\ \none [{\phantom -}4] & 0 & 0 & 2 & 0 & 0\\ \none [{\phantom -}2] & 0 & 0 & 0 & 0\\  \none [{\phantom -}0] & 2 & 0 & 0\\ \none [-2] & 0 & 0 \\ \none [-4] & 0 \\ \end{ytableau}
\end{center}
$\Lambda({\mathbf c}) = \{-2, 2\}$, $\ell = 2$, $\ell' = 1$, $n_1 = 6$, $L_{E_{\mathbf c}} \equiv GL_{6}({\mathbf k})$,  the partition corresponding to the nilpotent element $E_{\bf c}$  on the unique factor is $3^2$. There is a unique  $G^{\iota}$-equivariant local system (up to isomorphism)  on the $G^{\iota}$-orbit  ${\mathcal O}_{11}^2$: the trivial local system.

\item For the orbit ${\mathcal O}_{10}^1$ of dimension 10,  we have that ${\bf c} = (1, 0, 0, 0 , 0, 1, 1, 0, 0)$

\begin{center}
${\mathcal T}_{10}^1$ = 
\ytableausetup{centertableaux}
\begin{ytableau}
\none & \none [-4] & \none [-2] & \none [0] & \none [2] & \none [4]\\ \none [{\phantom -}4] & 0 & 0 & 1 & 0 & 1\\ \none [{\phantom -}2] & 0 & 0 & 1 & 0\\  \none [{\phantom -}0] & 1 & 1 & 0\\ \none [-2] & 0 & 0 \\ \none [-4] & 1 \\ \end{ytableau}
\end{center}
$\Lambda({\mathbf c}) = \{-4, -2, -1, 1, 2, 4\}$, $\ell = 6$, $\ell' = 3$, $n_1 = 1$, $n_2 = 3$, $n_3 = 2$,  $L_{E_{\mathbf c}} \equiv GL_{1}({\mathbf k}) \times GL_3({\mathbf k}) \times GL_2({\mathbf k})$,  the partition corresponding to the nilpotent element $E_{\bf c}$  on the first factor is $1^1$, the partition corresponding to the nilpotent element $E_{\bf c}$  on the second factor is $3^1$ and the partition corresponding to the nilpotent element $E_{\bf c}$  on the third factor is $2^1$. There is a unique  $G^{\iota}$-equivariant local system (up to isomorphism)  on the $G^{\iota}$-orbit  ${\mathcal O}_{10}^1$: the trivial local system.

\item For the orbit ${\mathcal O}_{9}^1$ of dimension 9,   we have that ${\bf c} = (0, 0, 1, 1 , 0, 0, 1, 0, 0)$

\begin{center}
${\mathcal T}_{9}^1$ = 
\ytableausetup{centertableaux}
\begin{ytableau}
\none & \none [-4] & \none [-2] & \none [0] & \none [2] & \none [4]\\ \none [{\phantom -}4] & 0 & 0 & 1 & 1 & 0\\ \none [{\phantom -}2] & 0 & 0 & 0 & 0\\  \none [{\phantom -}0] & 1 & 0 & 2\\ \none [-2] & 1 & 0 \\ \none [-4] & 0 \\ \end{ytableau}

\end{center}
$\Lambda({\mathbf c}) = \{-3, -2,  0, 2, 3\}$, $\ell = 5$, $\ell' = 3$, $n_1 = 2$, $n_2 = 3$, $n_3 = 2$  $L_{E_{\mathbf c}} \equiv GL_{2}({\mathbf k}) \times GL_3({\mathbf k}) \times Sp_2({\mathbf k})$,  the partition corresponding to the nilpotent element $E_{\bf c}$  on the first factor is $2^1$, the partition corresponding to the nilpotent element $E_{\bf c}$  on the second factor is $3^1$ and the partition corresponding to the nilpotent element $E_{\bf c}$  on the third factor is $1^2$. There is a unique  $G^{\iota}$-equivariant local system (up to isomorphism)  on the $G^{\iota}$-orbit  ${\mathcal O}_{9}^1$: the trivial local system.

\item For the orbit ${\mathcal O}_{8}^1$ of dimension 8,   we have that ${\bf c} = (2, 0, 1, 0 , 1, 0, 0, 0, 0)$

\begin{center}
${\mathcal T}_{8}^1$ = 
\ytableausetup{centertableaux}
\begin{ytableau}
\none & \none [-4] & \none [-2] & \none [0] & \none [2] & \none [4]\\ \none [{\phantom -}4] & 0 & 0 & 0 & 0 & 2\\ \none [{\phantom -}2] & 0 & 2 & 0 & 0\\  \none [{\phantom -}0] & 0 & 0 & 2\\ \none [-2] & 0 & 0 \\ \none [-4] & 2 \\ \end{ytableau}

\end{center}
$\Lambda({\mathbf c}) = \{-4, 0, 4\}$, $\ell = 3$, $\ell' = 2$, $n_1 = 2$, $n_2 = 8$,  $L_{E_{\mathbf c}} \equiv GL_{2}({\mathbf k}) \times Sp_8({\mathbf k})$,  the partition corresponding to the nilpotent element $E_{\bf c}$  on the first factor is $1^2$ and the partition corresponding to the nilpotent element $E_{\bf c}$  on the second factor is $1^2 3^2$. There is a unique  $G^{\iota}$-equivariant local system (up to isomorphism)  on the $G^{\iota}$-orbit  ${\mathcal O}_{8}^1$: the trivial local system.

\item For the orbit ${\mathcal O}_{8}^2$ of dimension 8,   we have that ${\bf c} = (1, 1, 1, 0 , 0, 0, 1, 0, 0)$

\begin{center}
${\mathcal T}_{8}^2$ = 
\ytableausetup{centertableaux}
\begin{ytableau}
\none & \none [-4] & \none [-2] & \none [0] & \none [2] & \none [4]\\ \none [{\phantom -}4] & 0 & 0 & 1 & 0 & 1\\ \none [{\phantom -}2] & 0 & 0 & 0 & 1\\  \none [{\phantom -}0] & 1 & 0 & 2\\ \none [-2] & 0 & 1 \\ \none [-4] & 1 \\ \end{ytableau}
\end{center}
$\Lambda({\mathbf c}) = \{-4, -2, 0, 2, 4\}$, $\ell = 5$, $\ell' = 3$, $n_1 = 1$, $n_2 = 4$, $n_3 = 2$,  $L_{E_{\mathbf c}} \equiv GL_{1}({\mathbf k}) \times GL_4({\mathbf k}) \times Sp_2({\mathbf k})$,  the partition corresponding to the nilpotent element $E_{\bf c}$  on the first factor is $1$, the partition corresponding to the nilpotent element $E_{\bf c}$  on the second factor is $1^1 3^1$ and the partition corresponding to the nilpotent element $E_{\bf c}$  on the third factor is $1^2$. There is a unique  $G^{\iota}$-equivariant local system (up to isomorphism)  on the $G^{\iota}$-orbit  ${\mathcal O}_{8}^2$: the trivial local system.

\item For the orbit ${\mathcal O}_{7}^1$ of dimension 7,   we have that ${\bf c} = (2, 0, 0, 0 , 0, 2, 0, 0, 0)$

\begin{center}
${\mathcal T}_{7}^1$ = 
\ytableausetup{centertableaux}
\begin{ytableau}
\none & \none [-4] & \none [-2] & \none [0] & \none [2] & \none [4]\\ \none [{\phantom -}4] & 0 & 0 & 0 & 0 & 2\\ \none [{\phantom -}2] & 0 & 0 & 2 & 0\\  \none [{\phantom -}0] & 0 & 2 & 0\\ \none [-2] & 0 & 0 \\ \none [-4] & 2 \\ \end{ytableau}
\end{center}
$\Lambda({\mathbf c}) = \{-4, -1, 1, 4\}$, $\ell = 4$, $\ell' = 2$, $n_1 = 2$, $n_2 = 4$,  $L_{E_{\mathbf c}} \equiv GL_{2}({\mathbf k}) \times GL_4({\mathbf k})$,  the partition corresponding to the nilpotent element $E_{\bf c}$  on the first factor is $1^2$ and the partition corresponding to the nilpotent element $E_{\bf c}$  on the second factor is $2^2$. There is a unique  $G^{\iota}$-equivariant local system (up to isomorphism) on the $G^{\iota}$-orbit  ${\mathcal O}_{7}^1$: the trivial local system.

\item For the orbit ${\mathcal O}_{7}^2$ of dimension 7,   we have that ${\bf c} = (1, 0, 1, 1 , 0, 1, 0, 0, 0)$

\begin{center}
${\mathcal T}_{7}^2$ = 
\ytableausetup{centertableaux}
\begin{ytableau}
\none & \none [-4] & \none [-2] & \none [0] & \none [2] & \none [4]\\ \none [{\phantom -}4] & 0 & 0 & 0 & 1& 1\\ \none [{\phantom -}2] & 0 & 0 & 1 & 0\\  \none [{\phantom -}0] & 0 & 1 & 2\\ \none [-2] & 1 & 0 \\ \none [-4] & 1 \\ \end{ytableau}
\end{center}
$\Lambda({\mathbf c})  = \{-4, -3, -1, 0, 1, 3, 4\}$, $\ell = 7$, $\ell' = 4$, $n_1 = 1$, $n_2 = 2$, $n_3 = 2$, $n_4 = 2$,   $L_{E_{\mathbf c}} \equiv GL_{1}({\mathbf k}) \times GL_2({\mathbf k}) \times GL_2({\mathbf k}) \times Sp_2({\mathbf k})$,  the partition corresponding to the nilpotent element $E_{\bf c}$  on the first factor is $1^1$, the partition corresponding to the nilpotent element $E_{\bf c}$  on the second factor is $2^1$, the partition corresponding to the nilpotent element $E_{\bf c}$  on the third factor is $2^1$ and the partition corresponding to the nilpotent element $E_{\bf c}$  on the fourth factor is $1^2$. There is a unique  $G^{\iota}$-equivariant local system (up to isomorphism) on the $G^{\iota}$-orbit  ${\mathcal O}_{7}^2$: the trivial local system.

\item For the orbit ${\mathcal O}_{5}^1$ of dimension 5,   we have that ${\bf c} = (2, 1, 1, 0 , 0, 1, 0, 0, 0)$

\begin{center}
${\mathcal T}_{5}^1$ = 
\ytableausetup{centertableaux}
\begin{ytableau}
\none & \none [-4] & \none [-2] & \none [0] & \none [2] & \none [4]\\ \none [{\phantom -}4] & 0 & 0 & 0 & 0 & 2\\ \none [{\phantom -}2] & 0 & 0 & 1 & 1\\  \none [{\phantom -}0] & 0 & 1 & 2\\ \none [-2] & 0 & 1 \\ \none [-4] & 2 \\ \end{ytableau}
\end{center}
$\Lambda({\mathbf c})  = \{-4, -2, -1, 0, 1, 2, 4\}$, $\ell = 7$, $\ell' = 4$, $n_1 = 2$, $n_2 = 1$, $n_3 = 2$, $n_4 = 2$,   $L_{E_{\mathbf c}} \equiv GL_{2}({\mathbf k}) \times GL_1({\mathbf k}) \times GL_2({\mathbf k}) \times Sp_2({\mathbf k})$,  the partition corresponding to the nilpotent element $E_{\bf c}$  on the first factor is $1^2$, the partition corresponding to the nilpotent element $E_{\bf c}$  on the second factor is $1^1$, the partition corresponding to the nilpotent element $E_{\bf c}$  on the third factor is $2^1$ and the partition corresponding to the nilpotent element $E_{\bf c}$  on the fourth factor is $1^2$. There is a unique  $G^{\iota}$-equivariant local system (up to isomorphism) on the $G^{\iota}$-orbit  ${\mathcal O}_{5}^1$: the trivial local system.

\item For the orbit ${\mathcal O}_{4}^1$ of dimension 4,   we have that ${\bf c} = (0, 0, 2, 2 , 0, 0, 0, 0, 0)$

\begin{center}
${\mathcal T}_{4}^1$ = 
\ytableausetup{centertableaux}
\begin{ytableau}
\none & \none [-4] & \none [-2] & \none [0] & \none [2] & \none [4]\\ \none [{\phantom -}4] & 0 & 0 & 0 & 2 & 0\\ \none [{\phantom -}2] & 0 & 0 & 0 & 0\\  \none [{\phantom -}0] & 0 & 0 & 4\\ \none [-2] & 2 & 0 \\ \none [-4] & 0 \\ \end{ytableau}
\end{center}
$\Lambda({\mathbf c}) = \{-3,  0, 3\}$, $\ell = 3$, $\ell' = 2$, $n_1 = 4$, $n_2 = 4$,   $L_{E_{\mathbf c}} \equiv GL_{4}({\mathbf k}) \times  Sp_4({\mathbf k})$,  the partition corresponding to the nilpotent element $E_{\bf c}$  on the first factor is $2^2$ and the partition corresponding to the nilpotent element $E_{\bf c}$  on the second factor is $1^4$. There is a unique  $G^{\iota}$-equivariant local system (up to isomorphism)  on the $G^{\iota}$-orbit  ${\mathcal O}_{4}^1$: the trivial local system.

\item For the orbit ${\mathcal O}_{3}^1$ of dimension 3,   we have that ${\bf c} = (1, 1, 2, 1, 0, 0, 0, 0, 0)$

\begin{center}
${\mathcal T}_{3}^1$ = 
\ytableausetup{centertableaux}
\begin{ytableau}
\none & \none [-4] & \none [-2] & \none [0] & \none [2] & \none [4]\\ \none [{\phantom -}4] & 0 & 0 & 0 & 1 & 1\\ \none [{\phantom -}2] & 0 & 0 & 0 & 1\\  \none [{\phantom -}0] & 0 & 0 & 4\\ \none [-2] & 1 & 1 \\ \none [-4] & 1 \\ \end{ytableau}
\end{center}
$\Lambda({\mathbf c}) = \{-4, -3, -2,  0, 2, 3, 4\}$, $\ell = 7$, $\ell' = 4$, $n_1 = 1$, $n_2 = 2$, $n_3 = 1$, $n_4 = 4$,   $L_{E_{\mathbf c}} \equiv GL_{1}({\mathbf k}) \times GL_2({\mathbf k}) \times GL_1({\mathbf k}) \times  Sp_4({\mathbf k})$,  the partition corresponding to the nilpotent element $E_{\bf c}$  on the first factor is $1^1$,  the partition corresponding to the nilpotent element $E_{\bf c}$  on the second factor is $2^1$,  the partition corresponding to the nilpotent element $E_{\bf c}$  on the third factor is $1^1$ and the partition corresponding to the nilpotent element $E_{\bf c}$  on the fourth factor is $1^4$. There is a unique  $G^{\iota}$-equivariant local system (up to isomorphism)  on the $G^{\iota}$-orbit  ${\mathcal O}_{3}^1$: the trivial local system.

\item For the orbit ${\mathcal O}_{0}^1$ of dimension 0,   we have that ${\bf c} = (2, 2, 2, 0, 0, 0, 0, 0, 0)$

\begin{center}
${\mathcal T}_{0}^1$ = 
\ytableausetup{centertableaux}
\begin{ytableau}
\none & \none [-4] & \none [-2] & \none [0] & \none [2] & \none [4]\\ \none [{\phantom -}4] & 0 & 0 & 0 & 0 & 2\\ \none [{\phantom -}2] & 0 & 0 & 0 & 2\\  \none [{\phantom -}0] & 0 & 0 & 4\\ \none [-2] & 0 & 2 \\ \none [-4] & 2 \\ \end{ytableau}
\end{center}
$\Lambda({\mathbf c}) = \{-4, -2,  0, 2, 4\}$, $\ell = 5$, $\ell' = 3$, $n_1 = 2$, $n_2 = 2$, $n_3 = 4$,   $L_{E_{\mathbf c}} \equiv GL_{2}({\mathbf k}) \times GL_2({\mathbf k}) \times  Sp_4({\mathbf k})$,  the partition corresponding to the nilpotent element $E_{\bf c}$  on the first factor is $1^2$,  the partition corresponding to the nilpotent element $E_{\bf c}$  on the second factor is $1^2$ and the partition corresponding to the nilpotent element $E_{\bf c}$  on the third factor is $1^4$. There is a unique  $G^{\iota}$-equivariant local system (up to isomorphism)  on the $G^{\iota}$-orbit  ${\mathcal O}_{0}^1$: the trivial local system.

\end{itemize}
\end{example}

\begin{example}
We will consider the orbits in examples~\ref{E:4.27} and~\ref{E:5.7}. Recall that $G = SO_{9}({\mathbf k})$ and $\iota(t) = \text{diag}(t^4, t^2,  t^2,  1, 1,  1,  t^{-2},  t^{-2},  t^{-4})$.  Here 
\[
SO_{9}({\mathbf k}) = \{ A \in GL_{9}({\mathbf k}) \mid A^T J A = J  \text{  and  }  \det(A) = 1\}
\]
where 
\[
J = \begin{bmatrix} 0 & 0 & 0 & 0 & 0 & 0 & 0 & 0 & 1\\ 0 & 0 & 0 & 0 & 0 & 0 & 0 & 1 & 0 \\  0 & 0 & 0 & 0 & 0 & 0 & 1 & 0 & 0\\  0 & 0 & 0 & 0 & 0 & 1 & 0 & 0 & 0\\  0 & 0 & 0 & 0 & 1 & 0 & 0 & 0 & 0\\  0 & 0 & 0 & 1 & 0 & 0 & 0 & 0 & 0\\  0 & 0 & 1 & 0 & 0 & 0 & 0 & 0 & 0\\  0 & 1 & 0 & 0 & 0 & 0 & 0 & 0 & 0\\  1 & 0 & 0 & 0 & 0 & 0 & 0 & 0 & 0 \end{bmatrix}
\]
There are fourteen orbits. Note that because $\dim(V_0) = 3$ is odd, none of these orbits splits as we saw in example~\ref{E:5.7}. Thus they are all $G^{\iota}$-orbits as we have stated in example~\ref{E:5.7}.  We will enumerate for each of these orbits ${\mathcal O}_{\mathbf c}$: the coefficient function  ${\bf c}$,   the tableau ${\mathcal T} \in {\mathfrak T}_{\delta}$, the set $\Lambda({\mathbf c})$, the integers $\ell$, $\ell'$, the dimensions: $n_1$, $n_2$, ... and $n_{\ell'}$, $L_{E_{\mathbf c}}$ (up to isomorphism) as in corollary~\ref{C:LeviSubgpAssociatedNilpotent}, the partition corresponding to the restriction of $E_{\mathbf c}$ on each of the factors of ${\mathfrak l}_{E_{\mathbf c}}$ given in corollary~\ref{P:LieSubgroupDescription}, the number of  irreducible   $G^{\iota}$-equivariant local systems (up to isomorphism) on the $G^{\iota}$-orbit  ${\mathcal O}_{\mathbf c}$ using proposition~\ref{P:LocalSystemCardinality} and  a list of irreducible   $G^{\iota}$-equivariant local systems on the $G^{\iota}$-orbit  ${\mathcal O}_{\mathbf c}$ using Lusztig's symbols as described in  \cite{L1984}, when there is more than one such irreducible   $G^{\iota}$-equivariant local system on the orbit. In this last case  when we list the  irreducible   $G^{\iota}$-equivariant local systems on the $G^{\iota}$-orbit, we will enumerate the symbols corresponding to the last factor of $L_{E_{\mathbf c}}$ that is a special orthogonal group. Note that,  for all the other factors,  they  are general linear groups and the local systems are trivial. 

The table we gave in  examples~\ref{E:4.27} and~\ref{E:5.7} is below.

\begin{table}[h]
	\begin{center}\renewcommand{\arraystretch}{1.25}
		\begin{tabular} {| l || r  | r  | r  | r  | r |  r | r | r  | r | c |}
			\hline
			Orbit(s) &  $100$  & $010$ & $001$ & $110$ & $011$ & $012$ & $111$ & $112$ & $122$ & Jordan Dec.  \\ \hline
			${\mathcal O}_{8}^1$ & $0$ & $0$ & $1$ & $0$ & $1$ & $0$ & $1$ &  $0$ & $0$ &$1^1 3^1 5^1$\\ \hline
			${\mathcal O}_{7}^1$ & $0$ & $0$ & $0$ & $0$ & $0$ & $1$ & $1$  & $0$ & $0$ &$2^2 5^1$\\ \hline
			${\mathcal O}_{7}^2$ & $0$ & $0$ & $1$ & $0$ & $0$ & $0$ & $0$  & $0$ & $1$ &$1^1 4^2$\\ \hline
			${\mathcal O}_{6}^1$ & $0$ & $1$ & $2$ & $0$ & $0$ & $0$ & $1$  & $0$ & $0$ &$1^4 5^1$\\ \hline
			${\mathcal O}_{6}^2$ & $0$ & $0$ & $0$ & $0$ & $1$ & $0$ & $0$  & $1$ & $0$ &$3^3$\\ \hline
			${\mathcal O}_{6}^3$ & $1$ & $0$ & $1$ & $0$ & $2$ & $0$ & $0$  & $0$ & $0$ &$1^3 3^2$\\ \hline
			${\mathcal O}_{5}^1$ & $0$ & $1$ & $1$ & $0$ & $0$ & $0$ & $0$  & $1$ & $0$ &$1^3 3^2$\\ \hline
			${\mathcal O}_{5}^2$ & $0$ & $0$ & $2$ & $1$ & $1$ & $0$ & $0$  & $0$ & $0$ &$1^2 2^2 3^1$\\ \hline
			${\mathcal O}_{5}^3$ & $1$ & $0$ & $0$ & $0$ & $1$ & $1$ & $0$  & $0$ & $0$ &$1^2 2^2 3^1$\\ \hline
			${\mathcal O}_{4}^1$ & $0$ & $0$ & $1$ & $1$ & $0$ & $1$ & $0$  & $0$ & $0$ &$1^1 2^4$\\ \hline
			${\mathcal O}_{4}^2$ & $1$ & $1$ & $2$ & $0$ & $1$ & $0$ & $0$  & $0$ & $0$ &$1^6 3^1$\\ \hline
			${\mathcal O}_{3}^1$ & $1$ & $1$ & $1$ & $0$ & $0$ & $1$ & $0$  & $0$ & $0$ &$1^5 2^2$\\ \hline
			${\mathcal O}_{2}^1$ & $0$ & $1$ & $3$ & $1$ & $0$ & $0$ & $0$  & $0$ & $0$ &$1^5 2^2$\\ \hline
			${\mathcal O}_{0}^1$ & $1$ & $2$ & $3$ & $0$ & $0$ & $0$ & $0$  & $0$ & $0$ &$1^{9}$\\ \hline
		\end{tabular}
	\end{center}
\caption{List of the values of coefficient function ${\mathbf c}$.}\label{T:Table9}\end{table}

\begin{itemize}
\item For the orbit ${\mathcal O}_{8}^1$ of dimension 8,   we have that ${\bf c} = (0, 0, 1, 0, 1, 0, 1, 0, 0)$

\begin{center}
${\mathcal T}_{8}^1$ = 
\ytableausetup{centertableaux}
\begin{ytableau}
\none & \none [-4] & \none [-2] & \none [0] & \none [2] & \none [4]\\ \none [{\phantom -}4] & 1 & 0 & 0 & 0 & 0\\ \none [{\phantom -}2] & 0 & 1 & 0 & 0\\  \none [{\phantom -}0] & 0 & 0 & 1\\ \none [-2] & 0 & 0 \\ \none [-4] & 0 \\ \end{ytableau}
\end{center}
$\Lambda({\mathbf c}) = \{0\}$, $\ell = 1$, $\ell' = 1$, $n_1 = 9$,   $L_{E_{\mathbf c}}=   SO_9({\mathbf k})$,  the partition corresponding to the nilpotent element $E_{\bf c}$  on the unique factor is $1^1 3^1 5^1$.  There is are  four  $G^{\iota}$-equivariant local systems (up to isomorphism)  on the $G^{\iota}$-orbit  ${\mathcal O}_{8}^1$ and the  corresponding symbols are
\[
(\{0, 4\}, \{2\}), \quad (\{0, 2\}, \{4\}), \quad (\{2, 4\}, \{0\}), \quad (\{0, 2, 4\}, \emptyset).
\]

\item For the orbits ${\mathcal O}_{7}^1$  of dimension 7,   we have that ${\bf c} = (0, 0, 0, 0, 0, 1, 1, 0, 0)$

\begin{center}
${\mathcal T}_{7}^1$ = 
\ytableausetup{centertableaux}
\begin{ytableau}
\none & \none [-4] & \none [-2] & \none [0] & \none [2] & \none [4]\\ \none [{\phantom -}4] & 1 & 0 & 0 & 0 & 0\\ \none [{\phantom -}2] & 0 & 0 & 1 & 0\\  \none [{\phantom -}0] & 0 & 1 & 0\\ \none [-2] & 0 & 0 \\ \none [-4] & 0 \\ \end{ytableau}
\end{center}
$\Lambda({\mathbf c}) = \{-1, 0, 1\}$, $\ell = 3$, $\ell' = 2$, $n_1 = 2$, $n_2 = 5$,   $L_{E_{\mathbf c}} \equiv   GL_2({\mathbf k}) \times SO_5({\mathbf k})$,  the partition corresponding to the nilpotent element $E_{\bf c}$  on the first  factor is $2^1$ and  the partition corresponding to the nilpotent element $E_{\bf c}$  on the second factor is $5^1$.    There is a unique  $G^{\iota}$-equivariant local system (up to isomorphism) on the $G^{\iota}$-orbit  ${\mathcal O}_{7}^1$: the trivial local system.

\item For the orbits ${\mathcal O}_{7}^2$ of dimension 7,   we have that ${\bf c} = (0, 0, 1, 0, 0, 0, 0, 0, 1)$

\begin{center}
${\mathcal T}_{7}^2$ = 
\ytableausetup{centertableaux}
\begin{ytableau}
\none & \none [-4] & \none [-2] & \none [0] & \none [2] & \none [4]\\ \none [{\phantom -}4] & 0 & 1 & 0 & 0 & 0\\ \none [{\phantom -}2] & 1 & 0 & 0 & 0\\  \none [{\phantom -}0] & 0 & 0 & 1\\ \none [-2] & 0 & 0 \\ \none [-4] & 0 \\ \end{ytableau}
\end{center}
$\Lambda({\mathbf c})  = \{-1, 0, 1\}$, $\ell = 3$, $\ell' = 2$, $n_1 = 4$, $n_2 = 1$,   $L_{E_{\mathbf c}} \equiv   GL_4({\mathbf k}) \times SO_1({\mathbf k})$,  the partition corresponding to the nilpotent element $E_{\bf c}$  on the first  factor is $4^1$ and  the partition corresponding to the nilpotent element $E_{\bf c}$  on the second factor is $1^1$.    There is a unique  $G^{\iota}$-equivariant local system (up to isomorphism) on the $G^{\iota}$-orbit  ${\mathcal O}_{7}^2$: the trivial local system.

\item For the orbit ${\mathcal O}_{6}^1$ of dimension 6,   we have that ${\bf c} = (0, 1, 2, 0, 0, 0, 1, 0, 0)$

\begin{center}
${\mathcal T}_{6}^1$ = 
\ytableausetup{centertableaux}
\begin{ytableau}
\none & \none [-4] & \none [-2] & \none [0] & \none [2] & \none [4]\\ \none [{\phantom -}4] & 1 & 0 & 0 & 0 & 0\\ \none [{\phantom -}2] & 0 & 0 & 0 & 1\\  \none [{\phantom -}0] & 0 & 0 & 2\\ \none [-2] & 0 & 1 \\ \none [-4] & 0 \\ \end{ytableau}
\end{center}
$\Lambda({\mathbf c})  = \{-2, 0, 2\}$, $\ell = 3$, $\ell' = 2$, $n_1 = 1$, $n_2 = 7$,   $L_{E_{\mathbf c}}  \equiv   GL_1({\mathbf k}) \times SO_7({\mathbf k})$,  the partition corresponding to the nilpotent element $E_{\bf c}$  on the first  factor is $1^1$ and  the partition corresponding to the nilpotent element $E_{\bf c}$  on the second factor is $1^2 5^1$.    There are two  $G^{\iota}$-equivariant local systems (up to isomorphism)  on the $G^{\iota}$-orbit  ${\mathcal O}_{6}^1$ and the  corresponding symbols are
\[
(\{0, 4\}, \{1\}), \quad (\{1, 4\}, \{0\}).
\]

\item For the orbits ${\mathcal O}_{6}^2$  of dimension 6,    we have that ${\bf c} = (0, 0, 0, 0, 1, 0, 0, 1, 0)$

\begin{center}
${\mathcal T}_{6}^2$ = 
\ytableausetup{centertableaux}
\begin{ytableau}
\none & \none [-4] & \none [-2] & \none [0] & \none [2] & \none [4]\\ \none [{\phantom -}4] & 0 & 0 & 1 & 0 & 0\\ \none [{\phantom -}2] & 0 & 1 & 0 & 0\\  \none [{\phantom -}0] & 1 & 0 & 0\\ \none [-2] & 0 & 0 \\ \none [-4] & 0 \\ \end{ytableau}
\end{center}
$\Lambda({\mathbf c})  = \{-2, 0, 2\}$, $\ell = 3$, $\ell' = 2$, $n_1 = 3$, $n_2 = 3$,   $L_{E_{\mathbf c}} \equiv   GL_3({\mathbf k}) \times SO_3({\mathbf k})$,  the partition corresponding to the nilpotent element $E_{\bf c}$  on the first  factor is $3^1$ and  the partition corresponding to the nilpotent element $E_{\bf c}$  on the second factor is $3^1$.    There is a unique  $G^{\iota}$-equivariant local system (up to isomorphism) on the $G^{\iota}$-orbit  ${\mathcal O}_{6}^2$: the trivial local system.

\item For the orbit ${\mathcal O}_{6}^3$ of dimension 6,    we have that ${\bf c} = (1, 0, 1, 0, 2, 0, 0, 0, 0)$

\begin{center}
${\mathcal T}_{6}^3$ = 
\ytableausetup{centertableaux}
\begin{ytableau}
\none & \none [-4] & \none [-2] & \none [0] & \none [2] & \none [4]\\ \none [{\phantom -}4] & 0 & 0 & 0 & 0 & 1\\ \none [{\phantom -}2] & 0 & 2 & 0 & 0\\  \none [{\phantom -}0] & 0 & 0 & 1\\ \none [-2] & 0 & 0 \\ \none [-4] & 1 \\ \end{ytableau}
\end{center}
$\Lambda({\mathbf c})  = \{-4, 0, 4\}$, $\ell = 3$, $\ell' = 2$, $n_1 = 1$, $n_2 = 7$,   $L_{E_{\mathbf c}} \equiv   GL_1({\mathbf k}) \times SO_7({\mathbf k})$,  the partition corresponding to the nilpotent element $E_{\bf c}$  on the first  factor is $1^1$ and  the partition corresponding to the nilpotent element $E_{\bf c}$  on the second factor is $1^1 3^2$.    There are two  $G^{\iota}$-equivariant local systems (up to isomorphism)  on the $G^{\iota}$-orbit  ${\mathcal O}_{6}^3$ and the  corresponding symbols are
\[
(\{0, 2\}, \{3\}), \quad (\{0, 3\}, \{2\}).
\]

\item For the orbits ${\mathcal O}_{5}^1$ of dimension 5,    we have that ${\bf c} = (0, 1, 1, 0, 0, 0, 0, 1, 0)$

\begin{center}
${\mathcal T}_{5}^1$ = 
\ytableausetup{centertableaux}
\begin{ytableau}
\none & \none [-4] & \none [-2] & \none [0] & \none [2] & \none [4]\\ \none [{\phantom -}4] & 0 & 0 & 1 & 0 & 0\\ \none [{\phantom -}2] & 0 & 0 & 0 & 1\\  \none [{\phantom -}0] & 1 & 0 & 1\\ \none [-2] & 0 & 1 \\ \none [-4] & 0 \\ \end{ytableau}
\end{center}
$\Lambda({\mathbf c})  = \{-2, 0, 2\}$, $\ell = 3$, $\ell' = 2$, $n_1 = 4$, $n_2 = 1$,   $L_{E_{\mathbf c}}  \equiv   GL_4({\mathbf k}) \times SO_1({\mathbf k})$,  the partition corresponding to the nilpotent element $E_{\bf c}$  on the first  factor is $1^1 3^1$ and  the partition corresponding to the nilpotent element $E_{\bf c}$  on the second factor is $1^1$.    There is a unique  $G^{\iota}$-equivariant local system (up to isomorphism)  on the $G^{\iota}$-orbit  ${\mathcal O}_{5}^1$: the trivial local system.

\item For the orbit ${\mathcal O}_{5}^2$ of dimension 5,   we have that ${\bf c} = (0, 0, 2, 1, 1, 0, 0, 0, 0)$

\begin{center}
${\mathcal T}_{5}^2$ = 
\ytableausetup{centertableaux}
\begin{ytableau}
\none & \none [-4] & \none [-2] & \none [0] & \none [2] & \none [4]\\ \none [{\phantom -}4] & 0 & 0 & 0 & 1 & 0\\ \none [{\phantom -}2] & 0 & 1 & 0 & 0\\  \none [{\phantom -}0] & 0 & 0 & 2\\ \none [-2] & 1 & 0 \\ \none [-4] & 0 \\ \end{ytableau}
\end{center}
$\Lambda({\mathbf c})  = \{-3, 0, 3\}$, $\ell = 3$, $\ell' = 2$, $n_1 = 2$, $n_2 = 5$,   $L_{E_{\mathbf c}}  \equiv   GL_2({\mathbf k}) \times SO_5({\mathbf k})$,  the partition corresponding to the nilpotent element $E_{\mathbf c}$  on the first  factor is $2^1$ and  the partition corresponding to the nilpotent element $E_{\bf c}$  on the second factor is $1^2 3^1$.    There are two  $G^{\iota}$-equivariant local systems (up to isomorphism)  on the $G^{\iota}$-orbit  ${\mathcal O}_{5}^2$ and the  corresponding symbols are
\[
(\{0, 3\}, \{1\}), \quad (\{1, 3\}, \{0\}).
\]

\item For the orbits ${\mathcal O}_{5}^3$ of dimension 5,   we have that ${\bf c} = (1, 0, 0, 0, 1, 1, 0, 0, 0)$

\begin{center}
${\mathcal T}_{5}^3$ = 
\ytableausetup{centertableaux}
\begin{ytableau}
\none & \none [-4] & \none [-2] & \none [0] & \none [2] & \none [4]\\ \none [{\phantom -}4] & 0 & 0 & 0 & 0& 1\\ \none [{\phantom -}2] & 0 & 1 & 1 & 0\\  \none [{\phantom -}0] & 0 & 1 & 0\\ \none [-2] & 0 & 0 \\ \none [-4] & 1 \\ \end{ytableau}
\end{center}
$\Lambda({\mathbf c})  = \{-4, -1, 0, 1,  4\}$, $\ell = 5$, $\ell' = 3$, $n_1 = 1$, $n_2 = 2$,  $n_3 = 3$,   $L_{E_{\mathbf c}} \equiv   GL_1({\mathbf k}) \times GL_2({\mathbf k}) \times SO_3({\mathbf k})$,  the partition corresponding to the nilpotent element $E_{\bf c}$  on the first  factor is $1^1$, the partition corresponding to the nilpotent element $E_{\bf c}$  on the second factor is $2^1$ and  the partition corresponding to the nilpotent element $E_{\bf c}$  on the third factor is $3^1$.    There is a unique  $G^{\iota}$-equivariant local system (up to isomorphism) on the $G^{\iota}$-orbit  ${\mathcal O}_{5}^3$: the trivial local system.

\item For the orbits ${\mathcal O}_{4}^1$ of dimension 4,   we have that ${\bf c} = (0, 0, 1, 1, 0, 1, 0, 0, 0)$

\begin{center}
${\mathcal T}_{4}^1$ = 
\ytableausetup{centertableaux}
\begin{ytableau}
\none & \none [-4] & \none [-2] & \none [0] & \none [2] & \none [4]\\ \none [{\phantom -}4] & 0 & 0 & 0 & 1 & 0\\ \none [{\phantom -}2] & 0 & 0 & 1 & 0\\  \none [{\phantom -}0] & 0 & 1 & 1\\ \none [-2] & 1 & 0 \\ \none [-4] & 0 \\ \end{ytableau}
\end{center}
$\Lambda({\mathbf c})  = \{-3, -1, 0, 1,  3\}$, $\ell = 5$, $\ell' = 3$, $n_1 = 2$, $n_2 = 2$,  $n_3 = 1$,   $L_{E_{\mathbf c}} \equiv   GL_2({\mathbf k}) \times GL_2({\mathbf k}) \times SO_1({\mathbf k})$,  the partition corresponding to the nilpotent element $E_{\mathbf c}$  on the first  factor is $2^1$, the partition corresponding to the nilpotent element $E_{\bf c}$  on the second factor is $2^1$ and  the partition corresponding to the nilpotent element $E_{\bf c}$  on the third factor is $1^1$.    There is a unique  $G^{\iota}$-equivariant local system (up to isomorphism)  on the $G^{\iota}$-orbit  ${\mathcal O}_{4}^1$: the trivial local system.

\item For the orbit ${\mathcal O}_{4}^2$ of dimension 4,   we have that ${\bf c} = (1, 1, 2, 0, 1, 0, 0, 0, 0)$

\begin{center}
${\mathcal T}_{4}^2$ = 
\ytableausetup{centertableaux}
\begin{ytableau}
\none & \none [-4] & \none [-2] & \none [0] & \none [2] & \none [4]\\ \none [{\phantom -}4] & 0 & 0 & 0 & 0 & 1\\ \none [{\phantom -}2] & 0 & 1 & 0 & 1\\  \none [{\phantom -}0] & 0 & 0 & 2\\ \none [-2] & 0 & 1 \\ \none [-4] & 1 \\ \end{ytableau}
\end{center}
$\Lambda({\mathbf c})  = \{-4, -2, 0, 2,  4\}$, $\ell = 5$, $\ell' = 3$, $n_1 = 1$, $n_2 = 1$,  $n_3 = 5$,   $L_{E_{\mathbf c}} \equiv   GL_1({\mathbf k}) \times GL_1({\mathbf k}) \times SO_5({\mathbf k})$,  the partition corresponding to the nilpotent element $E_{\bf c}$  on the first  factor is $1^1$, the partition corresponding to the nilpotent element $E_{\bf c}$  on the second factor is $1^1$ and  the partition corresponding to the nilpotent element $E_{\bf c}$  on the third factor is $1^2 3^1$.    There are two  $G^{\iota}$-equivariant local systems (up to isomorphism)  on the $G^{\iota}$-orbit  ${\mathcal O}_{4}^2$ and the  corresponding symbols are
\[
(\{0, 3\}, \{1\}), \quad (\{1, 3\}, \{0\}).
\]

\item For the orbits ${\mathcal O}_{3}^1$ of dimension 3, we have that ${\bf c} = (1, 1, 1, 0, 0, 1, 0, 0, 0)$

\begin{center}
${\mathcal T}_{3}^1$ = 
\ytableausetup{centertableaux}
\begin{ytableau}
\none & \none [-4] & \none [-2] & \none [0] & \none [2] & \none [4]\\ \none [{\phantom -}4] & 0 & 0 & 0 & 0 & 1\\ \none [{\phantom -}2] & 0 & 0 & 1 & 1\\  \none [{\phantom -}0] & 0 & 1 & 1\\ \none [-2] & 0 & 1 \\ \none [-4] & 1 \\ \end{ytableau}
\end{center}
$\Lambda({\mathbf c})  = \{-4, -2, -1, 0, 1,  2,  4\}$, $\ell = 7$, $\ell' = 4$, $n_1 = 1$, $n_2 = 1$,  $n_3 = 2$, $n_4 = 1$,   $L_{E_{\mathbf c}} \equiv   GL_1({\mathbf k}) \times GL_1({\mathbf k}) \times GL_2({\mathbf k}) \times SO_1({\mathbf k})$,  the partition corresponding to the nilpotent element $E_{\bf c}$  on the first  factor is $1^1$, the partition corresponding to the nilpotent element $E_{\mathbf c}$  on the second factor is $1^1$, the partition corresponding to the nilpotent element $E_{\bf c}$  on the third factor is $2^1$, and  the partition corresponding to the nilpotent element $E_{\bf c}$  on the fourth factor is $1^1$.    There is a unique  $G^{\iota}$-equivariant local system (up to isomorphism)  on the $G^{\iota}$-orbit  ${\mathcal O}_{3}^1$: the trivial local system.

\item For the orbit ${\mathcal O}_{2}^1$ of dimension 2,  we have that ${\bf c} = (0, 1, 3, 1, 0, 0, 0, 0, 0)$

\begin{center}
${\mathcal T}_{2}^1$ = 
\ytableausetup{centertableaux}
\begin{ytableau}
\none & \none [-4] & \none [-2] & \none [0] & \none [2] & \none [4]\\ \none [{\phantom -}4] & 0 & 0 & 0 & 1 & 0\\ \none [{\phantom -}2] & 0 & 0 & 0 & 1\\  \none [{\phantom -}0] & 0 & 0 & 3\\ \none [-2] & 1 & 1 \\ \none [-4] & 0 \\ \end{ytableau}
\end{center}
$\Lambda({\mathbf c})  = \{-3, -2,  0,   2,  3\}$, $\ell = 5$, $\ell' = 3$, $n_1 = 2$, $n_2 = 1$,  $n_3 = 3$,   $L_{E_{\mathbf c}} \equiv   GL_2({\mathbf k}) \times GL_1({\mathbf k})  \times SO_3({\mathbf k})$,  the partition corresponding to the nilpotent element $E_{\bf c}$  on the first  factor is $2^1$, the partition corresponding to the nilpotent element $E_{\bf c}$  on the second factor is $1^1$ and the partition corresponding to the nilpotent element $E_{\bf c}$  on the third factor is $1^3$.    There is a unique  $G^{\iota}$-equivariant local system (up to isomorphism) on the $G^{\iota}$-orbit  ${\mathcal O}_{2}^1$: the trivial local system.

\item For the orbit ${\mathcal O}_{0}^1$ of dimension 0,  we have that ${\bf c} = (1, 2, 3, 0, 0, 0, 0, 0, 0)$

\begin{center}
${\mathcal T}_{0}^1$ = 
\ytableausetup{centertableaux}
\begin{ytableau}
\none & \none [-4] & \none [-2] & \none [0] & \none [2] & \none [4]\\ \none [{\phantom -}4] & 0 & 0 & 0 & 0 & 1\\ \none [{\phantom -}2] & 0 & 0 & 0 & 2\\  \none [{\phantom -}0] & 0 & 0 & 3\\ \none [-2] & 0 & 2 \\ \none [-4] & 1 \\ \end{ytableau}
\end{center}
$\Lambda({\mathbf c}) = \{-4, -2,  0,   2,  4\}$, $\ell = 5$, $\ell' = 3$, $n_1 = 1$, $n_2 = 2$,  $n_3 = 3$,   $L_{E_{\mathbf c}} \equiv   GL_1({\mathbf k}) \times GL_2({\mathbf k})  \times SO_3({\mathbf k})$,  the partition corresponding to the nilpotent element $E_{\bf c}$  on the first  factor is $1^1$, the partition corresponding to the nilpotent element $E_{\bf c}$  on the second factor is $1^2$ and the partition corresponding to the nilpotent element $E_{\bf c}$  on the third factor is $1^3$.    There is a unique  $G^{\iota}$-equivariant local system (up to isomorphism) on the $G^{\iota}$-orbit  ${\mathcal O}_{0}^1$: the trivial local system.

\end{itemize}

\end{example}

\begin{example}
We will consider the orbits in example~\ref{E:5.29}. Recall that $G = SO_6({\mathbf k})$ and  $\iota(t) = diag(t^2, t^2, 1, t^{-2}, t^{-2}, 1)$. Here 
\[
SO_6({\mathbf k}) = \{A \in GL_6({\mathbf k}) \mid A^T J A = J \text{ and } \det(A) = 1\}
\]
where
\[
J = \begin{bmatrix} 0 & 0 & 0 & 1 & 0 & 0\\ 0 & 0 & 0 & 0 & 1 & 0\\ 0 & 0 & 0 & 0 & 0 & 1\\ 1 & 0 & 0 & 0 & 0 & 0\\ 0 & 1 & 0 & 0 & 0 & 0\\ 0 & 0 & 1 & 0 & 0 & 0\end{bmatrix} .
\]
In this case, $\dim({\mathfrak g}_2) = 4$ and there are five $G^{\iota}$-orbits.  We will enumerate for each of these orbits ${\mathcal O}_{\mathbf c}$: the coefficient function  ${\bf c}$,   the tableau ${\mathcal T} \in {\mathfrak T}_{\delta}$, the set $\Lambda({\mathbf c})$, the integers $\ell$, $\ell'$, the dimensions: $n_1$, $n_2$, ... and $n_{\ell'}$, $L_{E_{\mathbf c}}$ (up to isomorphism) as in corollary~\ref{C:LeviSubgpAssociatedNilpotent}, the partition corresponding to the restriction of $E_{\mathbf c}$ on each of the factors of ${\mathfrak l}_{E_{\mathbf c}}$ given in corollary~\ref{P:LieSubgroupDescription}, the number of  irreducible   $G^{\iota}$-equivariant local systems (up to isomorphism) on the $G^{\iota}$-orbit  ${\mathcal O}_{\mathbf c}$ using proposition~\ref{P:LocalSystemCardinality} and  a list of irreducible   $G^{\iota}$-equivariant local systems on the $G^{\iota}$-orbit  ${\mathcal O}_{\mathbf c}$ using Lusztig's symbols as described in  \cite{L1984}, when there is more than one such irreducible   $G^{\iota}$-equivariant local system on the orbit. In this last case  when we list the  irreducible   $G^{\iota}$-equivariant local systems on the $G^{\iota}$-orbit, we will enumerate the symbols corresponding to the last factor of $L_{E_{\mathbf c}}$ that is a special orthogonal group. Note that,  for all the other factors,  they  are general linear groups and the local systems are trivial. 

The table we gave in example~\ref{E:5.29} is below.
\begin{table}[h]
	\begin{center}\renewcommand{\arraystretch}{1.25}
		\begin{tabular} {| l || r  | r  | r  | r  | c |}
			\hline
			Orbit(s) &  $10$  & $01$ & $11$ & $12$ & Jordan Dec.  \\ \hline
			${\mathcal O}_{4}$  & $0$ & $0$ & $2$ & $0$ & $3^2$\\ \hline
			${\mathcal O}_{3}$& $1$ & $1$ & $1$ & $0$  &$1^3 3^1$\\ \hline
			${\ }'{\mathcal O}_{2}$, ${\ }''{\mathcal O}_{2}$& $1$ & $0$ & $0$ & $1$ &$1^2 2^2$\\ \hline
			${\mathcal O}_{0}$ & $2$ & $2$ & $0$ & $0$ &$1^6$\\ \hline
		\end{tabular}
	\end{center}
	\caption{List of the values on the coefficient function  ${\mathbf c}$.}\label{T:Table10}
\end{table}

There are two $G^{\iota}$-orbits: ${\mathcal O}_{2}$, ${\ }'{\mathcal O}_{2}$ of dimension 2. 

\begin{itemize}
\item For the orbit ${\mathcal O}_{4}$ of dimension 4

\begin{center}
${\mathcal T}_{4}$ = 
\ytableausetup{centertableaux}
\begin{ytableau}
\none & \none [-2] & \none [0] & \none [2] \\ \none [{\phantom -}2] & 2 & 0 & 0 \\  \none [{\phantom -}0] & 0 & 0 \\ \none [-2] & 0 \\  \end{ytableau}
\end{center}
$\Lambda({\mathbf c}) = \{0\}$, $\ell = 1$, $\ell' = 1$, $n_1 = 6$,   $L_{E_{\mathbf c}} \equiv   SO_6({\mathbf k})$,  the partition corresponding to the nilpotent element $E_{\bf c}$  on the only  factor is $3^2$.    There is a unique  $G^{\iota}$-equivariant local system (up to isomorphism) on the $G^{\iota}$-orbit  ${\mathcal O}_{4}$: the trivial local system.

\item For the orbit ${\mathcal O}_{3}$ of dimension 3

\begin{center}
${\mathcal T}_{3}$ = 
\ytableausetup{centertableaux}
\begin{ytableau}
\none & \none [-2] & \none [0] & \none [2]\\ \none [{\phantom -}2] & 1 & 0 & 1\\  \none [{\phantom -}0] & 0 & 1\\ \none [-2] & 1 \\ \end{ytableau}
\end{center}
$\Lambda({\mathbf c}) = \{-2, 0, 2\}$, $\ell = 3$, $\ell' = 2$, $n_1 = 1$, $n_2 = 4$,    $L_{E_{\mathbf c}} \equiv   GL_1({\mathbf k}) \times SO_4({\mathbf k})$,  the partition corresponding to the nilpotent element $E_{\bf c}$  on the first   factor is $1^1$, the partition corresponding to the nilpotent element $E_{\bf c}$  on the second   factor is $1^1 3^1$.    There are two  $G^{\iota}$-equivariant local systems (up to isomorphism) on the $G^{\iota}$-orbit  ${\mathcal O}_{3}$ and the  corresponding symbols are
\[
(\{0\}, \{2\}), \quad (\{0, 2\}, \emptyset).
\]

\item For the orbits ${\ }'{\mathcal O}_{2}$ and ${\ }''{\mathcal O}_{2}$ of dimension 2

\begin{center}
${\mathcal T}_{2}$ = 
\ytableausetup{centertableaux}
\begin{ytableau}
\none & \none [-2] & \none [0] & \none [2] \\ \none [{\phantom -}2] & 0 & 1 & 1\\  \none [{\phantom -}0] & 1 & 0\\ \none [-2] & 1 \\\end{ytableau}
\end{center}
$\Lambda({\mathbf c}) = \{-2, -1, 1, 2\}$, $\ell = 4$, $\ell' = 2$, $n_1 = 1$, $n_2 = 2$,   $L_{E_{\mathbf c}} \equiv   GL_1({\mathbf k}) \times GL_2({\mathbf k})$,  the partition corresponding to the nilpotent element $E_{\bf c}$  on the first  factor is $1^1$, the partition corresponding to the nilpotent element $E_{\bf c}$  on the second  factor is $2^1$.    There is a unique  $G^{\iota}$-equivariant local system (up to isomorphism) on each of the $G^{\iota}$-orbits  ${\  }'{\mathcal O}_{2}$ and ${\ }''{\mathcal O}_{2}$: the trivial local system.

\item For the orbit ${\mathcal O}_{0}$ of dimension 0

\begin{center}
${\mathcal T}_{0}$ = 
\ytableausetup{centertableaux}
\begin{ytableau}
\none & \none [-2] & \none [0] & \none [2] \\ \none [{\phantom -}2] & 0 & 0 & 2\\  \none [{\phantom -}0] & 0 & 2\\ \none [-2] & 2 \\ \end{ytableau}
\end{center}
$\Lambda({\mathbf c}) = \{-2, 0, 2\}$, $\ell = 3$, $\ell' = 2$, $n_1 = 2$, $n_2 = 2$,   $L_{E_{\mathbf c}} \equiv   GL_2({\mathbf k}) \times GL_2({\mathbf k})$,  the partition corresponding to the nilpotent element $E_{\bf c}$  on the first  factor is $1^2$, the partition corresponding to the nilpotent element $E_{\bf c}$  on the second  factor is $1^2$.    There is a unique  $G^{\iota}$-equivariant local system (up to isomorphism) on the $G^{\iota}$-orbit  ${\mathcal O}_{0}$: the trivial local system.

\end{itemize}
\end{example}

\begin{example}
Consider the even case with  $m = 1$ for a symplectic group. So ${\mathcal I} = \{1, -1\}$ and we have a vector space $V$ over ${\mathbf k}$ with a non-degenerate skew-symmetric bilinear form $\langle \  , \  \rangle: V \times V \rightarrow {\mathbf k}$. $G$ is the group of automorphisms of $V$ preserving the bilinear form $\langle \  , \  \rangle$. So $G = Sp(V)$. Suppose $V$ has a direct sum decomposition $\oplus_{i \in {\mathcal I}} V_i$ such that $\dim(V_1) = \dim(V_{-1}) = \delta > 0$  and $\langle u, v \rangle = 0$ whenever $u \in V_i$,  $v \in V_{j}$ with $i, j \in {\mathcal I}$ and $i + j \ne 0$.  Let $\iota: {\mathbf k}^{\times} \rightarrow G$ be the homomorphism  of groups defined by $\iota(t) v = t^i v$ for all $t \in {\mathbf k}^{\times}$ and $v \in V_i$ for any $i \in {\mathcal I}$.  

There is a basis ${\mathcal B} = {\mathcal B}_1 \coprod  {\mathcal B}_{-1}$ of $V$ such that ${\mathcal B}_i = \{u_{i, j} \mid 1 \leq j \leq \delta\}$ is a basis of $V_i$ for all $i \in {\mathcal I}$ such that, for $i, i' \in {\mathcal I}$ and $1 \leq j, j' \leq \delta$, we have
\[
\langle u_{i, j}, u_{i', j'}\rangle = \begin{cases} {\phantom -}1, &\text{if $i + i' = 0$, $i > 0$ and $j = j'$;}\\ -1, &\text{if $i + i' = 0$, $i < 0$ and $j = j'$;}\\  {\phantom -}0, &\text{otherwise.} \end{cases}
\]

As we saw in lemma~\ref{G2AsRepresEven} and corollary~\ref{C:DimensionG2EvenCase}, ${\mathfrak g}_2$ can be identified to the vector space $\{ M \in Mat_{\delta \times \delta}({\mathbf k}) \mid M^T = M\}$ and the  dimension 
\[
\dim({\mathfrak g}_2) =  \frac{\delta(\delta + 1)}{2}.
\]

In this case,  the symplectic symmetric tableaux corresponding to the orbits are of the form
\begin{center}
${\mathcal T}_{\alpha}$ = 
\ytableausetup{centertableaux}
\begin{ytableau}
\none & \none [-1] & \none [1]  \\ \none [{\phantom -}1] & \alpha & \beta \\   \none [-1] & \beta \\ \end{ytableau}
\end{center}
where $\alpha, \beta \in {\mathbb N}$ and $\alpha + \beta = \delta$. Thus there are $(1 + \delta)$ such tableaux, one for each integer $\alpha = 0, 1, 2, \dots, \delta$ and consequently there are $(\delta + 1)$  $G^{\iota}$-orbits: ${\mathcal O}_{\alpha}$  for ${\alpha} = 0, 1, 2, \dots, \delta$.  The tableau ${\mathcal T}_{\alpha}$ corresponds to the orbit 
\[
{\mathcal O}_{\alpha} = \{ M \in Mat_{\delta \times \delta}({\mathbf k}) \mid M^T = M  \text{ and rank(M) = ${\alpha}$}\}.
\]
From the proposition~\ref{DimensionFormulaEven}, we get that 
\[
\dim({\mathcal O}_{\alpha}) = {\alpha}(\delta - {\alpha}) + \frac{{\alpha}({\alpha} + 1)}{2}.
 \]
 
 The case $\alpha = 0$ correspond to the zero element and is easy to analyse. The dimension $\dim({\mathcal O}_0) = 0$ and there is a unique $G^{\iota}$-equivariant irreducible local system (up to isomorphism): the trivial local system.  From now on, we will assume that $\alpha > 0$. 
 
A Jacobson-Morozov triple $(E_{\alpha}, H_{\alpha}, F_{\alpha})$ corresponding to the orbit ${\mathcal O}_{\alpha}$ is 
\[
E_{\alpha} = \begin{bmatrix}  E_{11} & E_{12} & E_{13} & E_{14}\\ E_{21} & E_{22} & E_{23} & E_{24} \\ E_{31} & E_{32} & E_{33} & E_{34}\\ E_{41} & E_{42} & E_{43} & E_{44} \end{bmatrix} \quad \text{with $E_{13} = I_{\alpha \times \alpha}$ and all others $E_{ij} = 0$;}
\]
\[
H_{\alpha} = \begin{bmatrix}  H_{11} & H_{12} & H_{13} & H_{14}\\ H_{21} & H_{22} & H_{23} & H_{24} \\ H_{31} & H_{32} & H_{33} & H_{34}\\ H_{41} & H_{42} & H_{43} & H_{44} \end{bmatrix} \quad \text{with $H_{11} = I_{\alpha \times \alpha}$,  $H_{33} = - I_{\alpha \times \alpha}$  and all others $H_{ij} = 0$;} 
\]
\[
F_{\alpha} = \begin{bmatrix}  F_{11} & F_{12} & F_{13} & F_{14}\\ F_{21} & F_{22} & F_{23} & F_{24} \\ F_{31} & F_{32} & F_{33} & F_{34}\\ F_{41} & F_{42} & F_{43} & F_{44} \end{bmatrix}  \quad \text{with $F_{31} = I_{\alpha \times \alpha}$ and all others $F_{ij} = 0$.}
\]

The Levi subgroup $L$ associated to element $E_{\alpha}$ is the subgroup of matrices
\[
\begin{bmatrix}  A_{11} & 0 & B_{11} & 0\\ 0 & A_{22} & 0 &0 \\ C_{11} & 0 & D_{11} & 0\\ 0 & 0 & 0 & D_{22} \end{bmatrix} 
\]
such that 
\[
\begin{bmatrix}  A_{11} & B_{11} \\ C_{11} & D_{11} \end{bmatrix} \in Sp_{2\alpha}({\mathbf k}), \quad  A_{22}, D_{22} \in GL_{(\delta - \alpha)}({\mathbf k})   \text{ and } A_{22} D_{22} = I_{(\delta - \alpha) \times (\delta - \alpha)}.
\]

So $L$ is isomorphic to $GL_{(\delta - \alpha)}({\mathbf k}) \times Sp_{2\alpha}({\mathbf k})$ and we can consider $E_{\alpha}$ restricted each of the corresponding component of the Lie subalgebra ${\mathfrak l}$ of $L$. On the first component ${\mathfrak gl}_{(\delta - \alpha)}({\mathbf k})$, the nilpotent element  has for the Jordan decomposition: the corresponding partition: $1^{(\delta - \alpha)}$.  On the second component ${\mathfrak sp}_{2\alpha}({\mathbf k})$,  the nilpotent element  has for the Jordan decomposition: the corresponding partition: $2^{\alpha}$. Consequently on the $G^{\iota}$-orbit ${\mathcal O}_{\alpha}$, there are two $G^{\iota}$-equivariant irreducible local systems (up to isomorphism). We can compute the corresponding symbols  as described in  \cite{L1984}. We use the same notation as in this reference for the interval.  

If $\alpha \equiv 0 \pmod 2$, then the corresponding symbols for the $G^{\iota}$-equivariant irreducible local systems are
\begin{itemize}
\item $(\{0, 2, 4, \dots, \alpha\}, \{3, 5, \dots , (\alpha + 1)\}) \in \Psi_{2\alpha, 1}$  corresponding to the interval  $\emptyset$  in $C$ and  with defect $= 1$;\\
\item $(\{0, 3, 5, \dots , (\alpha + 1)\}, \{2, 4, \dots, \alpha\}) \in \Psi_{2\alpha, 1}$  corresponding to the interval  $\{2, 3, \dots , (\alpha + 1)\}$ in $C$ and  with defect $ = 1$.
\end{itemize}

If $\alpha \equiv 1 \pmod 2$, then the corresponding symbols for the $G^{\iota}$-equivariant irreducible local systems are
\begin{itemize}
\item $(\{2, 4, \dots, (\alpha - 1)\}, \{1, 3, 5, \dots , \alpha\}) \in \Psi_{2\alpha, -1}$  corresponding to the interval $\emptyset$  in $C$ and  with defect $= -1$; \\
\item $(\{1, 3, 5, \dots , \alpha\}, \{2, 4, \dots, (\alpha - 1)\}) \in \Psi_{2\alpha, 1}$  corresponding to the interval $\{1, 2, \dots , \alpha\}$ in $C$ and  with defect $ = 1$.
\end{itemize}

It is known that these are related with representations of Weyl groups.  In fact as stated in \cite{L1984}, there is a bijection between $\Psi_{2\alpha, 1}$ and the set $W^{\vee}(C_{\alpha})$ of irreducible representations of the Weyl group $W(C_{\alpha})$ of type $C_{\alpha}$. See 12.2 in \cite{L1984}.  $W(C_{\alpha})$  is the group of permutations $w$ of the set ${\mathcal E} = \{\alpha, \dots , 2, 1, -1, -2, \dots, - \alpha\}$ such that $w(-k) = -w(k)$ for all $k \in {\mathcal E}$.  The elements  of the set $W^{\vee}(C_{\alpha})$ are parametrized by ordered pairs of partitions $(0 \leq a_1 \leq a_2 \leq \dots \leq a_{m'}, 0 \leq b_1 \leq b_2 \leq \dots \leq b_{m''})$ with $\sum_{k'} a_{k'} + \sum_{k''} b_{k''} = \alpha$. See 2.7 (i) in \cite{L1977} for the precise description of the corresponding irreducible representation.

If $\alpha \equiv 0 \pmod 2$, we will write $\alpha = 2m$, then we have the following correspondance:
\begin{itemize}
\item $(\{0, 2, 4, \dots, \alpha\}, \{3, 5, \dots , (\alpha + 1)\}) \in \Psi_{2\alpha, 1}$ correspond to the ordered pair of partitions $(\emptyset, 2^{m})$ with total sum equal to  $\alpha$;\\
\item $(\{0, 3, 5, \dots , (\alpha + 1)\}, \{2, 4, \dots, \alpha\}) \in \Psi_{2\alpha, 1}$ correspond to the ordered pair of partitions $(1^m, 1^m)$ with total sum equal of $\alpha$.
\end{itemize}

If $\alpha \equiv 1 \pmod 2$, we will write $\alpha = (2m - 1)$, then we have the following correspondance:
\begin{itemize}
\item $(\{2, 4, \dots, (\alpha - 1)\}, \{1, 3, 5, \dots , \alpha\}) \in \Psi_{2\alpha, -1}$  correspond to the ordered pair of partitions $(2^{(m - 1)},  \emptyset)$ with total sum equal to  $(\alpha - 1)$; \\
\item $(\{1, 3, 5, \dots , \alpha\}, \{2, 4, \dots, (\alpha - 1)\}) \in \Psi_{2\alpha, 1}$  correspond to the ordered pair of partitions $(1^m,  1^{(m - 1)})$ with total sum equal to  $\alpha$.\\
\end{itemize}

Note that we used the bijection  $\Psi_{2(\alpha - 1), 1} \rightarrow \Psi_{2\alpha, -1}$  that maps 
\[
(\{2, 4, \dots, (\alpha - 1)\}, \{1, 3, \dots, (\alpha - 4)\}) \text{  to  } (\{2, 4, \dots, (\alpha - 1)\}, \{1, 3, 5, \dots , \alpha\})
\]
and then consider the representation of the appropriate Weyl group corresponding to the symbol $(\{2, 4, \dots, (\alpha - 1)\}, \{1, 3, \dots, (\alpha - 4)\})$. 

\end{example}

\begin{example}
Consider the even case with  $m = 1$ for a special orthogonal group. So ${\mathcal I} = \{1, -1\}$ and we have a vector space $V$ over ${\mathbf k}$ with a non-degenerate symmetric bilinear form $\langle \  , \  \rangle: V \times V \rightarrow {\mathbf k}$. $G$ is the connected component of the identity element of the group of automorphisms of $V$ preserving the bilinear form $\langle \  , \  \rangle$. So $G = SO(V)$. Suppose $V$ has a direct sum decomposition $\oplus_{i \in {\mathcal I}} V_i$ such that $\dim(V_1) = \dim(V_{-1}) = \delta > 0$  and $\langle u, v \rangle = 0$ whenever $u \in V_i$,  $v \in V_{j}$ with $i, j \in {\mathcal I}$ and $i + j \ne 0$.  Let $\iota: {\mathbf k}^{\times} \rightarrow G$ be the homomorphism  of groups defined by $\iota(t) v = t^i v$ for all $t \in {\mathbf k}^{\times}$ and $v \in V_i$ for any $i \in {\mathcal I}$.  

There is a basis ${\mathcal B} = {\mathcal B}_1 \coprod  {\mathcal B}_{-1}$ of $V$ such that ${\mathcal B}_i = \{u_{i, j} \mid 1 \leq j \leq \delta\}$ is a basis of $V_i$ for all $i \in {\mathcal I}$ such that, for $i, i' \in {\mathcal I}$ and $1 \leq j, j' \leq \delta$, we have
\[
\langle u_{i, j}, u_{i', j'}\rangle = \begin{cases} 1, &\text{if $i + i' = 0$ and $j = j'$;} \\  0, &\text{otherwise.} \end{cases}
\]

As we saw in lemma~\ref{G2AsRepresEven} and corollary~\ref{C:DimensionG2EvenCase},  ${\mathfrak g}_2$ can be identified to the vector space $\{ M \in Mat_{\delta \times \delta}({\mathbf k}) \mid M^T = - M\}$ and the  dimension 
\[
\dim({\mathfrak g}_2) =  \frac{\delta(\delta - 1)}{2}.
\]

In this case,  the orthogonal symmetric tableaux corresponding to the orbits are of the form
\begin{center}
${\mathcal T}_{\alpha}$ = 
\ytableausetup{centertableaux}
\begin{ytableau}
\none & \none [-1] & \none [1]  \\ \none [{\phantom -}1] & 2\alpha & \beta \\   \none [-1] & \beta \\ \end{ytableau}
\end{center}
where $\alpha, \beta \in {\mathbb N}$ and $2\alpha + \beta = \delta$. Thus there are $(1 + \lfloor \frac{\delta}{2} \rfloor)$ such tableaux, one for each integer $\alpha = 0, 1, 2, \dots,  \lfloor \frac{\delta}{2} \rfloor$ , where $\lfloor x \rfloor = \max\{n \in {\mathbb Z} \mid n \leq x\}$.  Consequently there are $(1 + \lfloor \frac{\delta}{2} \rfloor)$  $G^{\iota}$-orbits: ${\mathcal O}_{\alpha}$  for ${\alpha} = 0, 1, 2, \dots, \lfloor \frac{\delta}{2} \rfloor$.  The tableau ${\mathcal T}_{\alpha}$ correspond to the orbit 
\[
{\mathcal O}_{\alpha} = \{ M \in Mat_{\delta \times \delta}({\mathbf k}) \mid M^T = -M  \text{ and rank(M) = $2\alpha$}\}.
\]
From the proposition~\ref{DimensionFormulaEven}, we get that 
\[
\dim({\mathcal O}_{\alpha}) = \alpha(2\alpha + 2\beta - 1). 
 \]

Note that if $\delta = 2m$ with $m \in {\mathbb N}$, then the top orbit is ${\mathcal O}_m$, where $\alpha = m$, $\beta = 0$,  and we get that $\dim({\mathcal O}_m) = 2m^2  - m = (2m - 1)m = \delta(\delta - 1)/2$; while if $\delta = (2m + 1)$, then the top orbit is ${\mathcal O}_m$, where $\alpha = m$, $\beta = 1$,  and we get that $\dim({\mathcal O}_m) = m(2m + 1) = \delta(\delta - 1)/2$. 

 The case $\alpha = 0$ correspond to the zero element and is easy to analyse. The dimension $\dim({\mathcal O}_0) = 0$ and there is a unique $G^{\iota}$-equivariant irreducible local system (up to isomorphism): the trivial local system.  From now on, we will assume that $\alpha > 0$. 
 
A Jacobson-Morozov triple $(E_{\alpha}, H_{\alpha}, F_{\alpha})$ corresponding to the orbit ${\mathcal O}_{\alpha}$ is 
\[
E_{\alpha} = \begin{bmatrix}  E_{11} & E_{12} & E_{13} & E_{14} & E_{15} & E_{16}\\ E_{21} & E_{22} & E_{23} & E_{24}& E_{25} & E_{26} \\ E_{31} & E_{32} & E_{33} & E_{34} & E_{35} & E_{36}\\ E_{41} & E_{42} & E_{43} & E_{44} & E_{45} & E_{46} \\ E_{51} & E_{52} & E_{53} & E_{54} & E_{55} & E_{56} \\ E_{61} & E_{62} & E_{63} & E_{64} & E_{65} & E_{66}  \end{bmatrix} 
\]
with $E_{15} = I_{\alpha \times \alpha}$, $E_{24} = -I_{\alpha \times \alpha}$ and all others $E_{ij} = 0$;
\[
H_{\alpha} = \begin{bmatrix}  H_{11} & H_{12} & H_{13} & H_{14} &  H_{15} &  H_{16}\\ H_{21} & H_{22} & H_{23} & H_{24} &  H_{25} &  H_{26}\\ H_{31} & H_{32} & H_{33} & H_{34} &  H_{35} &  H_{36}\\ H_{41} & H_{42} & H_{43} & H_{44} &  H_{45} &  H_{46} \\ H_{51} & H_{52} & H_{53} & H_{54} &  H_{55} &  H_{56} \\ H_{61} & H_{62} & H_{63} & H_{64} &  H_{65} &  H_{66}  \end{bmatrix}
\]
with $H_{11} = I_{\alpha \times \alpha}$, $H_{22} = I_{\alpha \times \alpha}$,   $H_{44} = - I_{\alpha \times \alpha}$,  $H_{55} = - I_{\alpha \times \alpha}$  and all others $H_{ij} = 0$; 
\[
F_{\alpha} = \begin{bmatrix}  F_{11} & F_{12} & F_{13} & F_{14} &F_{15} & F_{16}\\ F_{21} & F_{22} & F_{23} & F_{24} & F_{25} & F_{26} \\ F_{31} & F_{32} & F_{33} & F_{34} &F_{35} & F_{36}\\ F_{41} & F_{42} & F_{43} & F_{44} & F_{45} & F_{46} \\ F_{51} & F_{52} & F_{53} & F_{54} & F_{55} & F_{56} \\ F_{61} & F_{62} & F_{63} & F_{64} & F_{65} & F_{66}\end{bmatrix}
\]
with $F_{42} = -I_{\alpha \times \alpha}$, $F_{51} = I_{\alpha \times \alpha}$ and all others $F_{ij} = 0$.

The Levi subgroup $L$ associated to element $E_{\alpha}$ is the subgroup of matrices
\[
\begin{bmatrix}  A_{11} & 0 & B_{11} & 0\\ 0 & A_{22} & 0 &0 \\ C_{11} & 0 & D_{11} & 0\\ 0 & 0 & 0 & D_{22} \end{bmatrix} 
\]
such that 
\[
\begin{bmatrix}  A_{11} & B_{11} \\ C_{11} & D_{11} \end{bmatrix} \in SO_{4\alpha}({\mathbf k}), \quad  A_{22}, D_{22} \in GL_{\beta}({\mathbf k})   \text{ and } A_{22} D_{22} = I_{\beta \times \beta}.
\]

So $L$ is isomorphic to $GL_{\beta}({\mathbf k}) \times SO_{4\alpha}({\mathbf k})$ and we can consider $E_{\alpha}$ restricted each of the corresponding component of the Lie subalgebra ${\mathfrak l}$ of $L$. On the first component ${\mathfrak gl}_{\beta}({\mathbf k})$, the nilpotent element  has for the Jordan decomposition: the corresponding partition: $1^{\beta}$.  On the second component ${\mathfrak so}_{4\alpha}({\mathbf k})$,  the nilpotent element  has for the Jordan decomposition: the corresponding partition: $2^{2\alpha}$. Consequently on the $G^{\iota}$-orbit ${\mathcal O}_{\alpha}$, there is exactly one $G^{\iota}$-equivariant irreducible local system (up to isomorphism). 

When $\alpha > 0$, then the corresponding symbol for the $G^{\iota}$-equivariant irreducible local system on the orbit ${\mathcal O}_{\alpha}$ is the unordered pair 
\[
(\{1, 3, \dots, (2\alpha - 1)\}, \{1, 3, \dots, (2\alpha - 1)\}) \in \Psi'_{4\alpha, 0}.
\]
This symbol has defect 0. Here we  computed  the corresponding symbol  as  in  \cite{L1984} and  we use the same notation as in this reference.  It is known that this symbol for  the orbit ${\mathcal O}_{\alpha}$ is related to an irreducible representation of Weyl group $W(D_{2\alpha})$.  This irreducible representation correspond to the unordered pair of partitions $(1^{\alpha}, 1^{\alpha})$ whose total sum is $2\alpha$.
 
\end{example}

\section*{Acknowledgements}
The author thanks George Lusztig for helpful discussions and inspiration.  The author also want to thank Daniel Ciubotaru for  answering  questions  about his article \cite{C2008}.


\end{document}